\title{Symplectic homology and the Eilenberg--Steenrod axioms}
\author{Kai Cieliebak}
\address{Universit\"at Augsburg, Universit\"atsstrasse 14, D-86159 Augsburg, Germany}
\email{kai.cieliebak@math.uni-augsburg.de}
\urladdr{}
\author{Alexandru Oancea}
\address{Sorbonne Universit\'e, Institut de Math\'ematiques de Jussieu-Paris Rive Gauche, Case 247, 4 place Jussieu, F-75005, Paris, France}
\email{alexandru.oancea@imj-prg.fr}
\urladdr{}
\newtheorem{PARA}{}[section]
\newtheorem{theorem}[PARA]{Theorem}
\newtheorem{corollary}[PARA]{Corollary}
\newtheorem{lemma}[PARA]{Lemma}
\newtheorem{proposition}[PARA]{Proposition}
\newtheorem{conjecture}[PARA]{Conjecture}
\theoremstyle{definition}
\newtheorem{definition}[PARA]{Definition}
\newtheorem{remark}[PARA]{Remark}
\newtheorem{example}[PARA]{Example}
\newcommand{\cC}{\mathcal{C}}
\newcommand{\cH}{\mathcal{H}}
\newcommand{\cL}{\mathcal{L}}
\newcommand{\cM}{\mathcal{M}}
\newcommand{\cP}{\mathcal{P}}
\newcommand{\cX}{\mathcal{X}}
\renewcommand{\C}{{\mathbb{C}}}
\renewcommand{\H}{{\mathbb{H}}}
\newcommand{\K}{{\mathbb{K}}}
\newcommand{\N}{{\mathbb{N}}}
\renewcommand{\Q}{{\mathbb{Q}}}
\renewcommand{\R}{{\mathbb{R}}}
\renewcommand{\Z}{{\mathbb{Z}}}
\newcommand{\im}{\mbox{im}\,}        
\newcommand{\id}{\mbox{ id}}         
\newcommand{\ind}{\mbox{ind}}
\newcommand{\Hom}{\mbox{Hom}}
\newcommand{\eps}{{\varepsilon}}
\newcommand{\om}{{\omega}}
\newcommand{\CZ}{\mbox{CZ}}
\def\NABLA#1{{\mathop{\nabla\kern-.5ex\lower1ex\hbox{$#1$}}}}
\def\Nabla#1{\nabla\kern-.5ex{}_{#1}}
\def\Tabla#1{\Tilde\nabla\kern-.5ex{}_{#1}}
\renewcommand{\Tilde}{\widetilde}
\newcommand{\p}{{\partial}}
\newcommand{\wh}{\widehat}
\newcommand{\wt}{\widetilde}
\newcommand{\ol}{\overline}
\newcommand{\MM}{\mathcal{M}}
\newcommand{\NN}{\mathcal{N}}
\begin{document}



\begin{abstract}
We give a definition of symplectic homology for pairs of filled Liouville
cobordisms, and show that it satisfies analogues of the Eilenberg-Steenrod axioms except for the dimension axiom. The resulting long exact sequence of
a pair generalizes various earlier long exact sequences such as the handle
attaching sequence, the Legendrian duality sequence, and the exact sequence
relating symplectic homology and Rabinowitz Floer homology. New consequences
of this framework include a Mayer-Vietoris exact sequence for symplectic homology, invariance of Rabinowitz Floer homology under subcritical handle attachment, and a new product on Rabinowitz Floer homology
unifying the pair-of-pants product on symplectic homology with a secondary
coproduct on positive symplectic homology. 

In the appendix, joint with
Peter Albers, we discuss obstructions to the existence of certain Liouville cobordisms.
\end{abstract}

\maketitle

\tableofcontents


\section{Introduction} 

To begin with, a story. At the Workshop on Conservative Dynamics and Symplectic Geometry held at IMPA, Rio de Janeiro in August 2009, the participants had seen in the course of a single day at least four kinds of Floer homologies for non-compact objects, among which wrapped Floer homology, symplectic homology, Rabinowitz-Floer homology, and linearized contact homology. The second author was seated in the audience next to Albert Fathi, who at some point suddenly turned to him and exclaimed: \emph{``There are too many such homologies!"}. Hopefully this paper can serve as a structuring answer: although there are indeed several versions of symplectic homology (non-equivariant, $S^1$-equivariant, Lagrangian, each coming in several flavors determined by suitable action truncations), we show that they all obey the same axiomatic pattern, very much similar to that of the Eilenberg-Steenrod axioms for singular homology. In order to exhibit such a structured behaviour we need to extend the definition of symplectic homology to pairs of cobordisms endowed with an exact filling. 

We find it useful to explain immediately our definition, although there is a price to pay regarding the length of this Introduction. 

We need to first recall the main version of symplectic homology that is currently in use, which can be interpreted as dealing with cobordisms with empty negative end. This construction associates to a Liouville domain, meaning an exact symplectic manifold $(W^{2n},\omega,\lambda)$, $\omega=d\lambda$ such that $\alpha=\lambda|_{\p W}$ is a positive contact form (see~\S\ref{sec:Liouville-cobordisms}), a \emph{symplectic homology group $SH_*(W)$} which is an invariant of the symplectic completion $(\wh W,\wh\omega)=(W,\omega)\, \cup \, \big([1,\infty)\times\p W, d(r\alpha)\big)$. The generators of the underlying chain complex can be thought of as being either the critical points of a Morse function on $W$ which is increasing towards the boundary, or the positively parameterized closed orbits of the Reeb vector field $R_\alpha$ on $\p W$ defined by $d\alpha(R_\alpha,\cdot)=0$, $\alpha(R_\alpha)=1$. Since the generators of the underlying complex are closed Hamiltonian orbits, we also refer to symplectic homology as being a \emph{theory of closed strings} (compare with the discussion of Lagrangian symplectic homology, or wrapped Floer homology, further below).
We interpret a Liouville domain $(W,\omega,\lambda)$ as an exact symplectic filling of its contact boundary $(M,\xi=\ker\alpha)$, or as an exact cobordism from the empty set to $M$, which we call the \emph{positive boundary of $W$}, also denoted $M=\p^+W$. 

The implementation of this setup is the following. One considers on $\wh W$ (smooth time-dependent $1$-periodic approximations of) Hamiltonians $H_\tau$ which are identically zero on $W$ and equal to the linear function $\tau r-\tau$, $r\in[1,\infty)$ on the symplectization part $[1,\infty)\times M$, where $\tau>0$ is different from the period of a closed Reeb orbit on $M$. 
One then sets 
$$
SH_*(W)=\lim\limits^{\longrightarrow}_{\tau\to\infty} FH_*(H_\tau)
$$ 
where $FH_*(H_\tau)$ stands for Hamiltonian Floer homology of $H_\tau$ which is generated by closed Hamiltonian orbits of period $1$, 
and the direct limit is considered with respect to continuation maps induced by increasing homotopies of Hamiltonians. The dynamical interpretation of these homology groups reflects the fact that the Hamiltonian vector field of a function $h(r)$ defined on the symplectization part $[1,\infty)\times M$ is equal to $X_h(r,x)=h'(r)R_\alpha(x)$. A schematic picture for the Hamiltonians underlying symplectic homology of such cobordisms with empty negative end is given in Figure~\ref{fig:SHdomains}, in which the arrows indicate the location of the two kinds of generators for the underlying complex, constant orbits in the interior of the cobordism and nonconstant orbits located in the ``bending" region near the positive boundary. The vertical thick dotted arrow in Figure~\ref{fig:SHcobordisms} indicates that we consider a limit over $\tau\to\infty$. 

\begin{figure} [ht]
\centering
\input{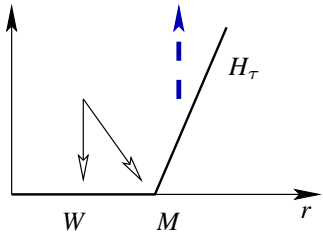}
\caption{Symplectic homology of a domain}
\label{fig:SHdomains}
\end{figure}

Key to our construction is the notion of \emph{Liouville cobordism with filling}. The definition of a Liouville cobordism $W^{2n}$ is similar to that of a Liouville domain, with the notable difference that we allow the volume form $\alpha\wedge(d\alpha)^{n-1}$ determined by $\alpha$ on $\p W$ to define the opposite of the boundary orientation on some of the components of $\p W$, the collection of which is called \emph{the negative boundary of $W$} and is denoted $\p^-W$, while \emph{the positive boundary of $W$} is $\p^+ W=\p W\setminus \p^- W$. In addition, we assume that one is given a Liouville domain $F$ whose positive boundary is isomorphic to the contact negative boundary of $W$, so that the concatenation $F\circ W$ is a Liouville domain with positive boundary $\p^+W$. 

Given a Liouville cobordism $W$ with filling $F$, the output of the closed theory is a symplectic homology group $SH_*(W)$. Although we drop the filling $F$ from the notation for the sake of readability, this homology group does depend on $F$. The dependence is  well understood in terms of the geometric augmentation of the contact homology algebra of $\p^-W$ induced by the filling, see~\cite{BOcont}. Symplectic homology $SH_*(W)$ is an invariant of the Liouville homotopy class of $W$ with filling, and the generators of the underlying chain complex can be thought of as being of one of the following three types: negatively parameterized closed Reeb orbits on $\p^-W$, constants in $W$, and positively parameterized closed Reeb orbits on $\p^+W$. 

To implement this setup one considers (smooth time-dependent $1$-periodic approximations of) Hamiltonians $H_{\mu,\tau}$ described as follows: they are equal to the linear function $\tau r-\tau$ on the symplectization part $[1,\infty)\times\p^+W$, they are identically equal to $0$ on $W$, they are equal to the linear function $-\mu r+\mu$ on some finite but large part of the negative symplectization $(\delta,1]\times\p^-W\subset F$ with $\delta>0$, and finally they are constant on the remaining part of $F$. Here $\tau>0$ is required not to be equal to the period of a closed Reeb orbit on $\p^+W$, 
and $\mu>0$ is required not to be equal to the period of a closed Reeb orbit on $\p^-W$. 
Finally, one sets 
$$
SH_*(W)=\lim\limits^{\longrightarrow}_{b\to\infty} \lim\limits^{\longleftarrow}_{a\to-\infty} \lim\limits^{\longrightarrow}_{\mu,\tau\to\infty}  FH_*^{(a,b)}(H_{\mu,\tau}),
$$
where $FH_*^{(a,b)}$ denotes Floer homology \emph{truncated in the finite action window $(a,b)$}. 

Though the definition may seem frightening when compared to the one for Liouville domains, it is actually motivated analogously by the dynamical interpretation of the groups that we wish to construct. Let us consider the corresponding shape of Hamiltonians depicted in Figure~\ref{fig:SHcobordisms}. (The vertical thick dotted arrows in Figure~\ref{fig:SHcobordisms} indicate that we consider limits over $\mu\to\infty$ and $\tau\to\infty$.) A Hamiltonian $H_{\mu,\tau}$ has $1$-periodic orbits either in the regions where it is constant, or in the small ``bending" regions near $\{\delta\}\times\p^-W$ and $\p^\pm W$ where it acquires some derivative with respect to the symplectization coordinate $r$. The role of the finite action window $(a,b)$ in the definition is to take into account only the orbits located in the areas indicated by arrows in Figure~\ref{fig:SHcobordisms}: as $\mu$ and $\tau$ increase, the orbits located deep inside the filling $F$ have very negative action and naturally fall outside the action window. The order of the limits on the extremities of the action window, first an inverse limit on $a\to-\infty$ and then a direct limit on $b\to\infty$, is important. It has two motivations: (i) the inverse limit functor is not exact except when applied to an inverse system consisting of finite dimensional vector spaces. Should one wish to exchange the order of the limits on $a$ and $b$, such a finite dimensionality property would typically not hold on the inverse system indexed by $a\to-\infty$, and this would have implications on the various exact sequences that we construct in the paper. (ii) With this definition, symplectic homology of a cobordism is a ring with unit (see~\S\ref{sec:products}). Should one wish to reverse the order of the limits on $a$ and $b$, this would not be true anymore. 

\begin{figure} [ht]
\centering
\input{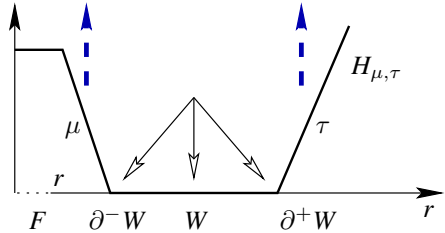}
\caption{Symplectic homology of a cobordism}
\label{fig:SHcobordisms}
\end{figure}

It turns out that the full structure of symplectic homology involves in a crucial way a definition that is yet more involved, namely that of \emph{symplectic homology groups of a pair of filled Liouville cobordisms}. To give the definition of such a pair it is important to single out the operation of \emph{composition of cobordisms} which we already implicitly used above. Given cobordisms $W$ and $W'$ such that $\p^+W=\p^-W'$ as contact manifolds, one forms the Liouville cobordism $W\circ W' = W \ _{\p^+ W}\!\cup_{\p^-W'} \ W'$ by gluing the two cobordisms along the corresponding boundary. The resulting Liouville structure is well-defined up to homotopy. A \emph{pair of Liouville cobordisms} $(W,V)$ then consists of a Liouville cobordism $(W,\omega,\lambda)$ together with a codimension $0$ submanifold with boundary $V\subset W$ such that $(V,\omega|_V,\lambda|_V)$ is a Liouville cobordism and $(W\setminus V,\omega|,\lambda|)$ is the disjoint union of two Liouville cobordisms $W^{bottom}$ and $W^{top}$ such that $W=W^{bottom}\circ V\circ W^{top}$. We allow any of the cobordisms $W^{bottom}$, $W^{top}$, or $V$ to be empty. If $V=\varnothing$ we think of the pair $(W,\varnothing)$ as being the cobordism $W$ itself. A convenient abuse of notation is to allow $V=\p^+W$ or $V=\p^-W$, in which case we think of $V$ as being a trivial collar cobordism on $\p^\pm W$. This setup does not allow for $V=\p W$ in case the latter has both negative and positive components, 
{\color{black}but one can extend it in this direction without much
  difficulty at the price of somewhat burdening the notation, see
  Remark~\ref{rmk:multilevel} and Section~\ref{sec:multilevel}.} 
A \emph{pair of Liouville cobordisms with filling} is a pair $(W,V)$ as above, together with an exact filling $F$ of $\p^-W$. In this case the cobordism $V$ inherits a natural filling $F\circ W^{bottom}$. See Figure~\ref{fig:cobordism_pair}. 

\begin{figure} [ht]
\centering
\input{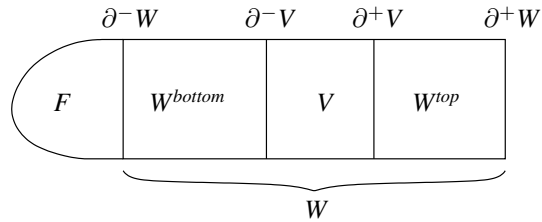}
\caption{Cobordism pair $(W,V)$ with filling $F$}
\label{fig:cobordism_pair}
\end{figure}

Given a cobordism pair $(W,V)$ with filling $F$ we define a symplectic homology group $SH_*(W,V)$ by a procedure similar to the above, involving suitable direct and inverse limits and based on Hamiltonians that have the more complicated shape depicted in Figure~\ref{fig:SHcobordism_pair}. The Hamiltonians depend now on three parameters $\mu,\nu,\tau>0$ and the vertical thick dotted arrows in Figure~\ref{fig:SHcobordism_pair} indicate that we consider limits over $\mu,\nu,\tau\to\infty$. One sets
$$
SH_*(W,V)=\lim\limits^{\longrightarrow}_{b\to\infty} \lim\limits^{\longleftarrow}_{a\to-\infty} \lim\limits^{\longrightarrow}_{\mu,\tau\to\infty}  \lim\limits^{\longleftarrow}_{\nu\to\infty} FH_*^{(a,b)}(H_{\mu,\nu,\tau}).
$$
This is as complicated as it gets.  The definition is again motivated by the dynamical interpretation of the groups that we wish to construct. For a given finite action window and for suitable choices of the parameters the orbits that are taken into account in $FH_*(H_{\mu,\nu,\tau})$ are located in the regions indicated by arrows in Figure~\ref{fig:SHcobordism_pair}. They correspond (from left to right in the picture) to negatively parameterized closed Reeb orbits on $\p^-W$, to constants in $W^{bottom}$, to negatively parameterized closed Reeb orbits on $\p^- V$, to positively parameterized closed Reeb orbits on $\p^+V$, to constants in $W^{top}$, and finally to positively parameterized closed Reeb orbits on $\p^+W$ (see~\S\ref{sec:excision}). 

We wish to emphasise at this point the fact that the above groups of periodic orbits \emph{cannot be singled out solely from action considerations}. Filtering by the action and considering suitable subcomplexes or quotient complexes is the easiest way to extract useful information from some large chain complex, but this is not enough for our purposes here. Indeed, getting hold of enough tools in order to single out the desired groups of orbits was one of the major difficulties that we encountered. We gathered these tools in~\S\ref{sec:confinement}, and there are no less than {\color{black}four} of them: a robust maximum principle due to Abouzaid and Seidel~\cite{Abouzaid-Seidel} (Lemma~\ref{lem:no-escape}), an asymptotic behaviour lemma which appeared for the first time in~\cite{BOcont} (Lemma~\ref{lem:asy}), a new stretch-of-the-neck argument tailored to the situation at hand (Lemma~\ref{lem:neck}), {\color{black}and a new mechanism to exclude certain Floer trajectories asymptotic to constant orbits (Lemma~\ref{lem:constant})}. The simultaneous use of these tools is illustrated by the proof of the Excision Theorem~\ref{thm:excision}.

\begin{figure} [ht]
\centering
\input{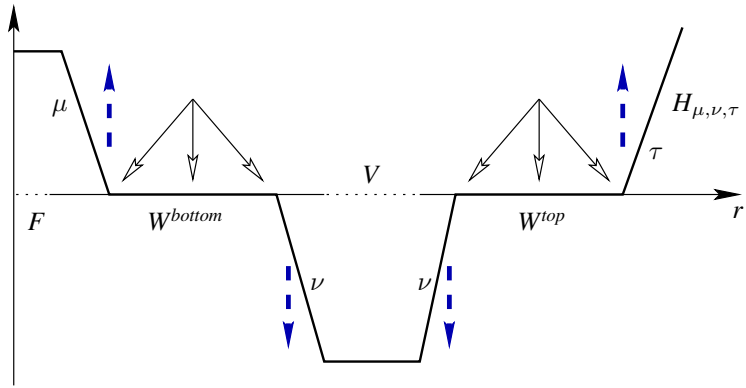}
\caption{Symplectic homology of a cobordism pair}
\label{fig:SHcobordism_pair}
\end{figure}

Important particular cases of such relative symplectic homology groups are the \emph{symplectic homology groups of a filled Liouville cobordism relative to (a part of) its boundary}. Recalling that we think of a contact type hypersurface in $W$ as a trivial collar cobordism, we obtain groups $SH_*(W,\p^\pm W)$. It turns out that these can be equivalently defined using Hamiltonians of a much simpler shape, as shown in Figure~\ref{fig:SHrelboundary} below. It is then straightforward to define also symplectic homology groups $SH_*(W,\p W)$, which play a role in the formulation of Poincar\'e duality, see~\S\ref{sec:Poincare_duality}. 
We refer to~\S\ref{sec:SHWA} for the details of the definitions. 

\begin{figure} [ht]
\centering
\input{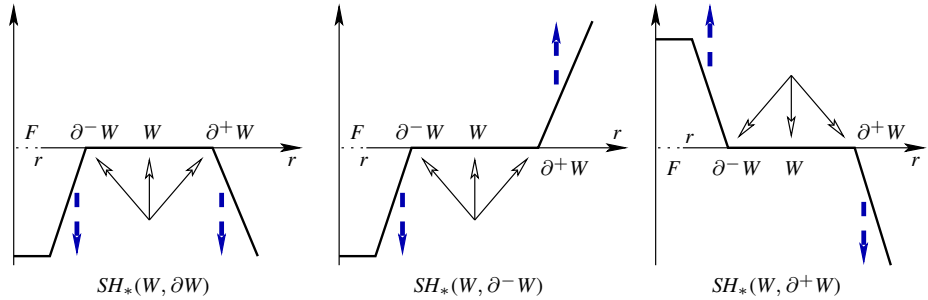}
\caption{Symplectic homology of a cobordism relative to its boundary}
\label{fig:SHrelboundary}
\end{figure}

{\color{black} 
\begin{remark}\label{rmk:multilevel} Our previous conventions for Liouville pairs do not allow to interpret $SH_*(W,\p W)$ as symplectic homology of the pair $(W,[0,1]\times\p W)$ in case $\p W$ has both negative and positive components. To remedy for this one needs to further extend the setup to \emph{pairs of multilevel Liouville cobordisms with filling}, see~\S\ref{sec:multilevel}.
\end{remark}
} 

The mnemotechnic rule for all these constructions is the following: 
\begin{center}
\emph{To compute $SH_*(W,V)$ one must use a family of Hamiltonians that vanish on $W\setminus V$, that go to $-\infty$ near $\p V$ and that go to $+\infty$ near $\p W\setminus \p V$.}
\end{center} 

Some of these shapes of Hamiltonians already appeared, if only implicitly, in Viterbo's foundational paper~\cite{Viterbo99}, as well as in~\cite{Ci02}. We make their use systematic. 

These constructions have Lagrangian analogues, which we will refer to as
the \emph{open theory}. The main notion is that of an
\emph{exact Lagrangian cobordism with filling}, meaning an exact
Lagrangian submanifold $L\subset W$ of a Liouville cobordism $W$,
which intersects $\p W$ transversally, and such that $\p^-L=L\cap
\p^-W$ is the Legendrian boundary of an exact Lagrangian submanifold
$L_F\subset F$ inside the filling $F$ of $W$. We call $L_F$ an
\emph{exact Lagrangian filling}. There is also an obvious notion of
\emph{exact Lagrangian pair with filling}. The open theory associates
to such a pair $(L,K)$ a \emph{Lagrangian symplectic homology group}
$SH_*(L,K)$, which is an invariant of the Hamiltonian isotopy class
preserving boundaries of the pair $(L,K)$ inside the Liouville pair
$(W,V)$. (In the case of a single Lagrangian $L$ with empty negative
boundary this is known under the name of \emph{wrapped Floer homology
  of $L$}.) Formally the implementation of the Lagrangian setup is the
same, using exactly the same shapes of Hamiltonians for a Lagrangian
Floer homology group. The generators of the relevant chain complexes
are then Hamiltonian chords which correspond either to Reeb chords
with endpoints on the relevant Legendrian boundaries, or to constants
in the interior of the relevant Lagrangian cobordisms. One can also
mix the closed and open theories together as in~\cite{Ekholm-Oancea},
see~\S\ref{sec:LagSH}, and there are also $S^1$-equivariant closed
theories, see~\S\ref{sec:S1equiv}. In order to streamline the
discussion, we shall restrict in this Introduction to the
non-equivariant closed theory described above.  

{\bf Remark }(grading).
For simplicity we shall restrict in this paper to
Liouville domains $W$ whose first Chern class vanishes. In this case
the filtered Floer homology groups are $\Z$-graded by the
Conley-Zehnder index, 
where the grading depends on the choice of a trivialisation of the canonical
bundle of $W$ for each free homotopy classes of loops. If $c_1(W)$ is
non-zero the groups are only graded modulo twice the minimal Chern
number. 

As announced in the title, one way to state our results is in terms of the Eilenberg-Steenrod axioms for a homology theory. We define a category which we call \emph{the Liouville category with fillings} whose objects are pairs of Liouville cobordisms with filling, and whose morphisms are exact embeddings of pairs of Liouville cobordisms with filling. Such an exact embedding of a pair $(W,V)$ with filling $F$ into a pair $(W',V')$ with filling $F'$ is an exact codimension $0$ embedding $f:W\hookrightarrow W'$, meaning that $f^*\lambda'-\lambda$ is an exact $1$-form, together with an extension $\bar f:F\circ W\hookrightarrow F'\circ W'$ which is also an exact codimension $0$ embedding, and such that $f(V)\subset V'$. A cobordism triple $(W,V,U)$ (with filling) is a topological triple such that $(W,V)$ and $(V,U)$ are cobordism pairs (with filling). 

\begin{theorem} \label{thm:ESSH}
Symplectic homology with coefficients in a field $\K$ defines a contravariant functor from the Liouville category with fillings to the category of graded $\K$-vector spaces. It associates to a pair $(W,V)$ with filling the symplectic homology groups $SH_*(W,V)$, and to an exact embedding $f:(W,V)\hookrightarrow (W',V')$ between pairs with fillings a linear map 
$$
f_!:SH_*(W',V')\to SH_*(W,V)
$$    
called \emph{Viterbo transfer map}, or \emph{shriek map}. This functor satisfies the following properties: 

(i) {\sc (homotopy)} If $f$ and $g$ are homotopic through exact embeddings, then
$$
f_!=g_!.
$$ 

(ii) {\sc (exact triangle of a pair)} Given a pair $(W,V)$ for which we denote the inclusions $V\stackrel{i}\longrightarrow W\stackrel{j}\longrightarrow (W,V)$, there is a functorial exact triangle in which the map $\p$ has degree $-1$
\begin{equation*} 
\xymatrix
@C=10pt
@R=18pt
{
SH_*(W,V) \ar[rr]^{j_!} & & 
SH_*(W) \ar[dl]^{i_!} \\ & SH_*(V) \ar[ul]^-\p_-{[-1]}  
}
\end{equation*}
Here we identify as usual a cobordism $W$ with the pair $(W,\varnothing)$.

(iii) {\sc (excision)} For any cobordism triple $(W,V,U)$, the transfer map induced by the inclusion $(W\setminus \mbox{int}(U), V\setminus \mbox{int}(U))\stackrel{i}\longrightarrow (W,V)$ is an isomorphism: 
$$
i_!:SH_*(W,V)\stackrel\simeq\longrightarrow SH_*(W\setminus \mbox{int}(U),V\setminus \mbox{int}(U)).
$$
\end{theorem}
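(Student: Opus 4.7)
The plan is to work entirely at the level of the truncated Floer chain complexes $FC^{(a,b)}_*(H_{\mu,\nu,\tau})$ and to pass to the prescribed limits $\varinjlim_{b}\varprojlim_{a}\varinjlim_{\mu,\tau}\varprojlim_{\nu}$ only at the very end. The four confinement tools gathered in~\S\ref{sec:confinement} --- the no-escape lemma (Lemma~\ref{lem:no-escape}), the asymptotic behaviour lemma (Lemma~\ref{lem:asy}), the stretch-of-the-neck lemma (Lemma~\ref{lem:neck}) and the constant-orbit exclusion lemma (Lemma~\ref{lem:constant}) --- are what make it possible, for any fixed action window and for suitable parameters, to pin down precisely which periodic orbits contribute to each complex and to ensure that Floer trajectories do not wander outside the prescribed geometric regions.

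For the transfer map attached to an exact embedding $f\colon (W,V)\hookrightarrow (W',V')$, I would construct a monotone family of Hamiltonians on $\widehat{F'\circ W'}$ that interpolates between the sandwich profile $H'_{\mu,\nu,\tau}$ for $(W',V')$ and a Hamiltonian whose pullback by $\bar f$ agrees with $H_{\mu,\nu,\tau}$ on $\widehat{F\circ W}$. The no-escape and asymptotic lemmas then promote the induced continuation morphism to a chain map on the filtered complexes, and passing to the direct and inverse limits in the prescribed order defines $f_!$. Functoriality $(g\circ f)_! = f_!\circ g_!$ and the homotopy axiom~(i) follow by the standard concatenation of monotone homotopies together with a parameterized Floer moduli space producing a chain homotopy, the confinement tools again ruling out bubbling outside the relevant regions.

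For the exact triangle~(ii), the plan is to partition, on each truncated complex $FC^{(a,b)}_*(H_{\mu,\nu,\tau})$ modelling the pair $(W,V)$, the generators into two families: those localised in the $V$-region, and those localised in $W^{bottom}\sqcup W^{top}$ together with collars of $\p^\pm W$. The key geometric point is that the $V$-family spans a \emph{subcomplex}: this is not a consequence of action filtration but of the no-escape lemma across $\p^\pm V$, possibly combined with a stretch-of-the-neck argument to rule out differentials from $(W\setminus V)$-orbits to $V$-orbits. One then identifies the subcomplex with the complex computing $SH_*(V)$ and the quotient with the one computing $SH_*(W,V)$, yielding a short exact sequence of truncated complexes and hence a connecting map $\p$ of degree $-1$. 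Passing to the limits preserves exactness precisely because the inverse limits apply only to systems of finite-dimensional vector spaces and are placed before the direct limits --- exactly the motivation given for the ordering of limits in the Introduction. Naturality under exact embeddings is a chain-level diagram chase.

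The excision statement~(iii) is the main obstacle and reduces to Theorem~\ref{thm:excision}. The strategy is, for each fixed $(a,b,\mu,\nu,\tau)$ with the parameters large enough relative to the action window, to identify the truncated Floer complex for $(W,V)$ with that for $(W\setminus\mathrm{int}(U),V\setminus\mathrm{int}(U))$. On generators, the mnemotechnic rule and the no-escape lemma show that both complexes pick out the same orbits, all localised in regions disjoint from a collar of $\p U$. The delicate part is the identification of differentials: a stretch-of-the-neck argument along $\p U$ forces any Floer cylinder that would enter $U$ to break, and the limiting building is excluded by the asymptotic lemma combined with the no-escape lemma, which together prevent the existence of the putative pieces in the symplectization of $\p U$. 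Once the chain complexes are identified, passing to the limits in the prescribed order (so that the exactness arguments on finite-dimensional inverse systems still apply) yields the isomorphism $i_!$.
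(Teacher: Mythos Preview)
Your treatment of the transfer map and of the homotopy axiom is essentially the paper's, and nothing needs to be added there.

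For the exact triangle of a pair there is a genuine gap. You work on the complex $FC^{(a,b)}_*(H_{\mu,\nu,\tau})$ for the pair $(W,V)$ and propose to split it into a $V$-subcomplex and a $(W\setminus V)$-quotient. But the total filtered homology of this Hamiltonian, after the prescribed limits, computes $SH_*(W,V)$ itself --- not $SH_*(W)$ --- so the middle term of the triangle is simply absent from your picture. Moreover, the confinement lemmas give the opposite ordering to the one you claim: for the one-step Hamiltonians the paper actually uses (Figure~\ref{fig:H-one-step}), Lemma~\ref{lem:ordering} yields $I\prec III\prec II$, so the $V$-orbits (group~$II$) form a \emph{quotient}, not a subcomplex. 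The paper's route is different in kind: it realises the transfer map as a chain-level continuation map $f\colon FC_I(L)\to FC_{II}(H)$ between \emph{two distinct} Hamiltonians $L\le H$ (\S\ref{sec:transfer-map-revisited}), and the heart of the argument is Proposition~\ref{prop:SHrel-dynamical-cone}, which identifies $SH_*^\heartsuit(W,V)[-1]$ with the homology of the cone $C(f)$. The triangle is then the tautological cone triangle. The authors remark explicitly that they could not carry a direct subcomplex/quotient argument through for all flavours $\heartsuit$; the mapping-cone reformulation is what makes the proof go.

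For excision your neck-stretching sketch along $\p U$ does not address the main difficulty. The paper does not identify the two chain complexes directly; it first proves the splitting $SH_*^\heartsuit(W,V)\cong SH_*^\heartsuit(W^{bottom},\p^-V)\oplus SH_*^\heartsuit(W^{top},\p^+V)$ (Theorem~\ref{thm:excision}), from which excision for triples is a formal consequence. The obstacle in that proof is that the Hamiltonians $H_\nu$ carry spurious orbits of type $II^-$, located near $(1+\eps)\times\p^-V$, whose actions genuinely lie in the fixed window $(a,b)$ and which no confinement lemma removes. They disappear only in the inverse limit $\nu\to\infty$, via Lemma~\ref{lem:vanishing}: the continuation maps $FH^{(a,b)}_{II^-}(K_{\nu'})\to FH^{(a,b)}_{II^-}(K_\nu)$ vanish once $\nu'-\nu>(b-a)/\eps$, because the relevant action window has shifted out of $(a,b)$. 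Your argument has no mechanism to handle these orbits.
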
 

These are symplectic analogues of the first Eilenberg-Steenrod axioms
for a homology theory~\cite{ES52}. The one fact that may be puzzling
about our terminology is that we insist on calling this a homology
theory, though it defines a contravariant functor. Our arguments  
are the following. The first one is geometric: With
$\Z/2$-coefficients we have an isomorphism $SH_*(T^*M)\simeq H_*(\cL
M)$ between the symplectic homology of the cotangent bundle of a
closed manifold $M$ and the homology of $\cL M$, the space of free
loop on $M$. Moreover, the product structure on $SH_*(T^*M)$ is
isomorphic to the Chas-Sullivan product structure on $H_*(\cL M)$, and
the latter naturally lives on homology since it extends the
intersection product on $H_*(M)\cong H_{n+*}(T^*M,T^*M\setminus
M)$. The second one is algebraic and uses the $S^1$-equivariant
version of symplectic homology (see~\S\ref{sec:S1equiv}): We wish that
$S^1$-equivariant homology with coefficients in any ring $R$ be
naturally a $R[u]$-module, with $u$ a formal variable of degree $-2$,
and that multiplication by $u$ be nilpotent. In contrast,
$S^1$-equivariant cohomology should naturally be a $R[u]$-module, with
$u$ of degree $+2$, and
multiplication by $u$ should typically \emph{not} be nilpotent. This
is exactly the kind of structure that we have on the $S^1$-equivariant
version of our symplectic homology groups. The third one is an
algebraic argument that refers to the $0$-level part of the
$S^1$-equivariant version of a filled Liouville cobordism: Given such
a cobordism $W^{2n}$, this $0$-level part is denoted
$SH_k^{S^1,=0}(W)$ and can be expressed either as the degree $n+k$
part of $H_*(W,\p W)\otimes R[u^{-1}]$, with $R$ the ground ring and
$u$ of degree $-2$, or as the degree $n-k$ part of $H^*(W)\otimes
R[u]$. Since $H^*(W)\otimes R[u]$ is nontrivial in arbitrarily
negative degrees, it is only the first expression that allows
the interpretation of $SH_*^{S^1,=0}(W)$ as the singular (co)homology
group of a topological space via the Borel construction. This natural
emphasis on homology determines our interpretation of the induced maps
as shriek or transfer maps.  

\emph{Our bottom line is that the theory is homological in nature, but contravariant because the induced maps are shriek maps}. 

Note that in the case of a pair $(W,V)$ the simplest expression for $SH_k^{S^1,=0}(W,V)$ is obtained by identifying it with the degree $n-k$ part of the \emph{co}homology group $H^*(W,V)\otimes R[u^{-1}]$. To turn this into homology one needs to use excision followed by Poincar\'e duality, and the expression gets more cumbersome. A similar phenomenon happens for the non-equivariant version $SH_*^{=0}(W,V)$. In order to simplify the notation we always identify the $0$-level part of symplectic homology with singular cohomology throughout the paper.

{\bf Remark }(coefficients).
The symplectic homology
groups are defined with coefficients in an arbitrary ring $R$, and
statement~(i) in Theorem~\ref{thm:ESSH} is valid with arbitrary
coefficients too. Field coefficients are necessary only for
statements~(ii) and~(iii). As a general fact, the statements in this
paper which involve exact triangles are only valid with field
coefficients, and the proof of excision does require such an exact
triangle, see~\S\ref{sec:excision}. The reason is that we define our
symplectic homology groups as a first-inverse-then-direct-limit over
symplectic homology groups truncated in a finite action window. The
various exact triangles involving symplectic homology are obtained by
passing to the limit in the corresponding exact triangles for such
finite action windows, at which point arises naturally the question of
the exactness of the direct limit functor and of the inverse limit
functor. While the direct limit functor is exact, the inverse limit
functor is not. Nevertheless, the inverse limit functor is exact when
applied to inverse systems consisting of finite dimensional vector
spaces, which is the case for symplectic homology groups truncated in
a finite action window. In order to extend the exact triangle of a
pair (and also the other exact triangles which we establish in this
paper) to arbitrary coefficients one would need to modify the
definition of our groups by passing to the limit at chain level and
use a version of the Mittag-Leffler condition, a path that we shall
not pursue here. See also the discussion of factorisation homology
below, the discussion in~\S\ref{sec:cones}, and Remark~\ref{rmk:localization-chain-level}. 
More generally, one can define symplectic homology with coefficients in a local system with fibre $\K$, see~\cite{Ritter09,Abouzaid-cotangent}, and most of the results of this paper adapt in a quite straightforward way to that setup. {\color{black}One notable exception are the duality results in~\S\ref{sec:coh-and-duality}, in which the treatment of local coefficients would be more delicate.
}

{\color{black}In view of the above discussion, we henceforth adopt the following convention:}

{\color{black}{\bf Convention} (coefficients). {\em In this paper we use constant coefficients in a field $\K$.}}

Let us now discuss briefly the two other Eilenberg-Steenrod axioms, namely the direct sum axiom and the dimension axiom, and explain why they do not, and indeed cannot, have a symplectic counterpart. 
(I) The \emph{direct sum axiom} expresses the fact that the homology of an arbitrary disjoint union of topological spaces is naturally isomorphic to the direct sum of their homologies, whereas in contrast a cohomology theory would involve a direct product. The distinction between direct sums and direct products is not relevant in the setup of Liouville domains, which are by definition compact and therefore consist of at most finitely many connected components. Passing to arbitrary disjoint unions would mean to go from Liouville domains to Liouville manifolds as in~\cite{Seidel07}, and the contravariant nature of the functor would imply that it behaves as a direct product. This is one of the reasons why~\cite{Seidel07} refers to the same object as ``symplectic \emph{co}homology". However, in view of the extension of the definition to cobordisms this appears to be only a superficial distinction. The deeper fact is that, whichever way one turns it around, symplectic homology of a cobordism with nonempty negative boundary is an object of a mixed homological-cohomological nature because its definition involves both a direct limit (on $b\to\infty$) and an inverse limit (on $a\to-\infty$). We actually present in~\S\ref{sec:algebraic_duality} an example showing that  algebraic duality fails already in the case of symplectic homology of a trivial cobordism. 
(II) The \emph{dimension axiom} of Eilenberg and Steenrod expresses the fact that the value of the functor on any pair homotopy equivalent to a pair of CW-complexes is determined by its value on a point. This fact relies on the homotopy axiom and illustrates the strength of the latter: since any ball is homotopy equivalent to a point, the homotopy axiom allows one to go up in dimension for computations. As a matter of fact the dimension of a space plays no role in the definition of a homology theory in the sense of Eilenberg and Steenrod, although it is indeed visible homologically via the fact that the homology of a pair consisting of an $n$-ball and of its boundary is concentrated in degree $n$. In contrast, symplectic homology is a dimension dependent theory. Moreover, it cannot be determined by its value on a single object. \emph{No change in dimension is possible, and no dimension axiom can exist.} For example, symplectic homology vanishes on the $2n$-dimensional ball since the latter is subcritical, but the theory is nontrivial. The symplectic analogue of the class of CW-complexes is that of Weinstein manifolds, and the whole richness of symplectic homology is encoded in the way it behaves under critical handle attachments, see~\cite{Bourgeois-Ekholm-Eliashberg-1}. One could say that it is determined by its value on the elementary cobordisms corresponding to a single critical handle attachment, but that would be an essentially useless statement, since it would involve all possible contact manifolds and all their possible exact fillings. The complexity of symplectic homology reflects that of Reeb dynamics and is such that there is no analogue of the dimension axiom. 

We show in~\S\ref{sec:Poincare_duality} how to interpret Poincar\'e duality by defining an appropriate version of symplectic cohomology, and we establish in~\S\ref{sec:Mayer-Vietoris} a Mayer-Vietoris exact triangle. 

It is interesting to note at this point a formal similarity with the recent development of \emph{factorisation homology}, see the paper~\cite{Ayala-Francis} by Ayala and Francis as well as the references therein. Roughly speaking, a factorisation homology theory is a graded vector space valued \emph{monoidal} functor defined on some category of open topological manifolds of \emph{fixed dimension $n$}, with morphism spaces given by topological embeddings, and which obeys a dimension axiom involving the notion of an $E_n$-algebra. (Such a category includes in particular that of closed manifolds of dimension $n-1$, which are identified with open trivial cobordisms of one dimension higher, a procedure very much similar to our viewpoint on contact hypersurfaces as trivial cobordisms.) If one forgets the monoidal property then one essentially recovers the restriction of an Eilenberg-Steenrod homology theory to a category of manifolds of fixed dimension. Conjecturally the symplectic analogue of a factorisation homology theory should involve some differential graded algebra (DGA) enhancement of symplectic homology in the spirit of~\cite{Ekholm-Oancea}, and the axioms satisfied by factorisation homology should provide a reasonable approximation to the structural properties satisfied by such a DGA enhancement. 

One other lesson that the authors have learned from Ayala and Francis~\cite{Ayala-Francis} is that the Eilenberg-Steenrod axioms can, and probably should, be formulated at chain level. More precisely, the target of a homological functor is naturally the category of chain complexes up to homotopy rather than that of graded $R$-modules. This kind of formulation in the case of symplectic homology seems to lie at close hand using the methods of our paper, but we shall not deal with it.

A fruitful line of thought, pioneered by Viterbo in the case of Liouville domains~\cite{Viterbo99}, is to compare the symplectic homology groups of a pair $(W,V)$ with the singular cohomology groups, the philosophy being that the difference between the two measures the amount of homologically interesting dynamics on the relevant contact boundary. The singular cohomology $H^{n-*}(W,V)$ is visible through the Floer complex generated by the constant orbits in $W\setminus V$ of any of the Hamiltonians $H_{\mu,\nu,\tau}$, see Figure~\ref{fig:SHcobordism_pair}, with the degree shift being dictated by our normalisation convention for the Conley-Zehnder index, and this Floer complex coincides with the Morse complex since we work on symplectically aspherical manifolds and  the Hamiltonian is essentially constant in the relevant region~\cite{SZ92}. Note also that these constant orbits are singled out among the various types of orbits involved in the computation of $SH_*(W,V)$ by the fact that their action is close to zero, whereas all the other orbits have negative or positive action bounded away from zero. Accordingly, we denote the resulting homology group by $SH_*^{=0}(W,V)$, with the understanding that we have an isomorphism 
$$
SH_*^{=0}(W,V)\simeq H^{n-*}(W,V).
$$
In the case of a Liouville domain (Figure~\ref{fig:SHdomains}) we see that these constant orbits form a subcomplex since all the other orbits have positive action. As such, for a Liouville domain there is a natural map $H^{n-*}(W)\to SH_*(W)$. In the case of a cobordism or of a pair of cobordisms such a map does not exist anymore since the orbits on level zero do not form a subcomplex anymore. The correct way to heal this apparent ailment is to consider symplectic homology groups \emph{truncated in action with respect to the zero level}, which we denote 
$$
SH_*^{>0}(W,V),\qquad SH_*^{\ge 0}(W,V),\qquad SH_*^{\le 0}(W,V),\qquad SH_*^{<0}(W,V).
$$
Their meaning is the following. Each of them respectively takes into account, \emph{among the orbits involved in the definition of $SH_*(W,V)$}, the ones which have strictly positive action (on $\p^+V$ and $\p^+W$), non-negative action (on $\p^+V$, $\p^+W$, and $W\setminus V$), non-positive action (on $\p^-V$, $\p^-W$, and $W\setminus V$), negative action (on $\p^-V$ and $\p^-W$). We refer to~\S\ref{sec:SHWA} and~\S\ref{sec:SHpair} for the definitions. 

We have maps $SH_*^{<0}(W,V)\to SH_*^{\le 0}(W,V)\to SH_*(W,V)$
induced by inclusions of subcomplexes, and also maps $SH_*(W,V)\to
SH_*^{\ge 0}(W,V)\to SH_*^{>0}(W,V)$ induced by projections onto
quotient complexes. The group $SH_*^{=0}(W,V)$ can be thought of as a
homological cone since it completes the map $SH_*^{<0}(W,V)\to
SH_*^{\le 0}(W,V)$ to an exact triangle. The various maps which
connect these groups are conveniently depicted as forming an
octahedron as in diagram~\eqref{eq:SHoctahedron}. The continuous arrows preserve the degree, whereas the dotted arrows decrease the degree by $1$. Among the eight triangles forming the surface of the octahedron, the four triangles whose sides consist of one dotted arrow and two continuous arrows are exact triangles (see Proposition~\ref{prop:taut-triang-WV}), and the four triangles whose sides consist either of three continuous arrows or of one continuous arrow and two dotted arrows are commutative. The structure of this octahedron is exactly the same as the one involved in the octahedron axiom for a triangulated category~\cite[Chapter~1]{KS}, and for a good reason: this tautological octahedron can be deduced from the octahedron axiom of a triangulated category starting from (the chain level version of) a commuting triangle which involves $SH_*^{<0}$, $SH_*^{\le 0}$, and $SH_*$, and in which the composition of the natural maps $SH_*^{<0}\to SH_*^{\le 0}\to SH_*$ is the natural map $SH_*^{<0}\to SH_*$. Turning this around, this action-filtered octahedron can serve as an interpretation of the octahedron axiom for a triangulated category fit for readers with a preference for variational methods over homological methods.

\begin{equation} \label{eq:SHoctahedron}
\xymatrix{
& & SH_* \ar[ddl] \ar[ddr] & \\
& & & \\
& SH_*^{\ge 0} \ar@{.>}[dl]^{[-1]} \ar '[r] [rr] & & SH_*^{>0} \ar@{.>}[dl]_{[-1]} \ar@{.>}@/^/[dddll]^{[-1]}  \\
SH_*^{<0} \ar[rr] \ar[uuurr] & & SH_*^{\le 0} \ar[ddl] \ar[uuu] & \\
& & & \\
& SH_*^{=0} \simeq H^{n-*} \ar@{.>}[uul]^{[-1]} \ar '[uu] [uuu] & &  
}
\end{equation}

Our uniform and emotional notation for these groups is 
$$
SH_*^\heartsuit(W,V),\qquad \heartsuit\in\{\varnothing, >0,\ge 0, =0, \le 0, <0\},
$$
with the meaning that $SH_*^\varnothing=SH_*$. 

\begin{definition}
A functor from the Liouville category with fillings to the category of graded $\K$-vector spaces satisfying the conclusions of Theorem~\ref{thm:ESSH} is called a \emph{Liouville homology theory}.
\end{definition}

\begin{theorem} For $\heartsuit\in\{\varnothing, >0,\ge 0, =0, \le 0, <0\}$ the action filtered symplectic homology group $SH_*^\heartsuit$ with coefficients in a field $\K$ defines a Liouville homology theory. 

The octahedron~\eqref{eq:SHoctahedron} defines a diagram of natural transformations which is compatible with the functorial exact sequence of a pair.
\end{theorem}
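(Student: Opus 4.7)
The plan is to bootstrap each of the three axioms from the unfiltered theory $SH_*$ (Theorem~\ref{thm:ESSH}) to the six filtered variants $SH_*^\heartsuit$ by exploiting the fact that the action value zero is geometrically distinguished: among the generators of any admissible Floer complex $FC_*^{(a,b)}(H_{\mu,\nu,\tau})$, the orbits of action close to zero are exactly the constant orbits in the flat region $W\setminus V$, while all other (Reeb-like) generators have action uniformly bounded away from zero as the parameters $\mu,\nu,\tau$ grow. Hence each approximating complex splits canonically into a subcomplex $FC_*^{<0}$, an intermediate filtration level $FC_*^{\le 0}$, and the corresponding quotients, with the $=0$ level being the resulting cone. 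Passing to the same nested limits as in the definition of $SH_*(W,V)$ yields the six functors $SH_*^\heartsuit$ together with the twelve arrows of~\eqref{eq:SHoctahedron}.

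First I would verify that the Viterbo shriek map $f_!$ of Theorem~\ref{thm:ESSH} preserves each action-sign piece. An exact embedding $f:(W,V)\hookrightarrow(W',V')$ is realized at Floer level by a monotone continuation homotopy between admissible Hamiltonians, and the standard energy estimate shows continuation maps are action-decreasing; hence they preserve the filtration by action sign and induce shriek maps $f_!:SH_*^\heartsuit(W',V')\to SH_*^\heartsuit(W,V)$ for every $\heartsuit$. The homotopy axiom~(i) then transfers verbatim from $SH_*$, because the chain homotopy produced by a homotopy of exact embeddings is itself action-decreasing and restricts to each sub/quotient complex.

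For the exact triangle of a pair~(ii), I would reuse the cone construction already carried out for $SH_*$ (compare $\S\ref{sec:SHpair}$), but applied separately inside each action-sign window. At each approximating level the short exact sequence of Floer complexes underlying the triangle for $(W,V)$ respects the filtration by action sign, because the generators detected on $V$ embed into those detected on $W$ in a way compatible with the zero-action splitting. Exactness survives passage to the nested limits because the relevant $FH_*^{(a,b)}$ are finite-dimensional $\mathfrak{k}$-vector spaces, so both direct and inverse limits preserve exactness, exactly as emphasized in the Introduction's remark on coefficients. For excision~(iii), the chain-level identification underlying Theorem~\ref{thm:excision} is produced by the confinement lemmas of~$\S\ref{sec:confinement}$ and provides a canonical bijection between generators on both sides in each action window; this bijection sends constants to constants and, more generally, preserves the sign of the action, so $i_!$ preserves the action filtration and induces isomorphisms on every $SH_*^\heartsuit$.

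Finally, the naturality of the octahedron follows because each of its twelve arrows is, at each approximating level, either the inclusion of a subcomplex filtered by action sign, the projection onto the corresponding quotient, or the associated connecting map, and each of these commutes with continuation maps by the same action-preservation argument. Compatibility with the exact sequence of a pair is then a formal consequence of the snake lemma applied to the bifiltered chain complex: the short exact sequence defining the pair triangle is itself filtered by action sign and yields, on each filtered piece, the corresponding exact triangle of pairs, which assemble into a diagram of natural transformations compatible with~\eqref{eq:SHoctahedron}. The hard part will be the careful bookkeeping of the five nested limits (over $b,a,\mu,\tau,\nu$) against the six filtration operations and twelve octahedral arrows; this is routine but tedious, and relies crucially on finite-dimensionality over $\mathfrak{k}$ at each finite stage to keep inverse limits exact.
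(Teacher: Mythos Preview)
Your proposal has a genuine gap at the exact triangle of a pair (axiom (ii)) for the filtered flavors. You write that ``at each approximating level the short exact sequence of Floer complexes underlying the triangle for $(W,V)$ respects the filtration by action sign.'' But there is no such short exact sequence at each Hamiltonian level: the exact triangle for $SH_*$ is not obtained from an honest subcomplex/quotient relationship between Floer complexes for $V$, $W$, and $(W,V)$ for a single Hamiltonian. It is obtained by identifying $SH_*(W,V)[-1]$ with the homology of the cone of the chain-level transfer map $FC_I(L_{i,j})\to FC_{II}(H_{i,j})$ (Proposition~\ref{prop:SHrel-dynamical-cone}). The source and target of this map live in Floer complexes for \emph{different} Hamiltonians, and the action filtrations on the three objects are not simply compatible. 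The authors say this explicitly in the Introduction: ``While it is possible to show directly starting from the definitions that the groups $SH_*(W,V)$, $SH_*(W)$, and $SH_*(V)$ fit into an exact triangle, we did not succeed in proving this directly for the truncated versions $SH_*^\heartsuit$.''

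What is actually required is the statement that the cone of the \emph{restricted} transfer map $f_{i,j}^\heartsuit$ computes $SH_*^\heartsuit(W,V)[-1]$, and this does not follow from the case $\heartsuit=\varnothing$ by filtering. Concretely, in the paper's notation the relative group $SH_*^{<0}(W,V)$ is generated by orbits of type $(I,III)^{<0}=I^{-+}$, i.e.\ by \emph{both} the negatively and positively parameterized Reeb orbits near $\p^-V$ in the relative Hamiltonian, whereas the source and target of $f^{<0}$ involve only the $I^-$ and $II^-$ groups. The identification $C(f^{<0})\simeq FC_{I^{-+}}(H)[-1]$ is a nontrivial homotopy equivalence obtained by embedding $f^{<0}$ into a grid of distinguished triangles (Proposition~\ref{prop:diag}) and applying Lemma~\ref{lem:ip} to auxiliary continuation maps that are shown to be homotopy equivalences by geometric arguments. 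The paper carries this out case by case for $\heartsuit\in\{\varnothing,=0,<0\}$ and deduces the remaining three flavors formally; see the proof of Proposition~\ref{prop:SHrel-dynamical-cone} and the remarks following it. Your homotopy and excision sketches are broadly on track (transfer maps are action-nonincreasing; excision is proved in finite action windows and hence passes to all $\heartsuit$), and your outline of the octahedron compatibility via Lemma~\ref{lem:diag} matches Proposition~\ref{prop:compatibility-exact-triangles}. But axiom~(ii) is where the real work lies, and your ``restrict to each action window'' step is precisely the step the paper could not make work without the mapping-cone machinery of \S\ref{sec:cones}--\S\ref{sec:adapted}.
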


In particular, each of the symplectic homology groups $SH_*^\heartsuit$ defines a Liouville homotopy invariant of the pair $(W,V)$. Note that such an invariance statement can only hold provided we truncate the action with respect to the zero value, which is the level of constant orbits. Indeed, answering a question of Polterovich and Shelukhin, we can define symplectic homology groups $SH_*^{(a,b)}(W,V)$ truncated in an arbitrary action interval $(a,b)\subset \R$, see~\S\ref{sec:SHpair}, and the exact triangle of a pair still holds for $SH_*^{(a,b)}$. However, the homotopy axiom would generally break down and the resulting homology groups would not be Liouville homotopy invariant, except if the interval is either small and centered at $0$, or semi-infinite with the finite end close enough to zero, which are the cases that we consider. Failure of Liouville homotopy invariance for most truncations by the action can be easily detected by rescaling the symplectic form. We believe this action filtration carries interesting information for cobordisms in the form of spectral invariants, or more generally persistence modules~\cite{Polterovich-Shelukhin}.

\emph{What do we gain from this extension of the theory of symplectic homology from Liouville domains to Liouville cobordisms, and from having singled out the action filtered symplectic homology groups $SH_*^\heartsuit$~?} Firstly, a broad unifying perspective. Secondly, new computational results. We refer to~\S\ref{sec:variants}, \S\ref{sec:applications}, and~\S\ref{sec:products} for a full discussion, and give here a brief overview.

(a) Our point of view encompasses symplectic homology, wrapped Floer homology, Rabinowitz-Floer homology, $S^1$-equivariant symplectic homology, linearized contact homology, non-equivariant linearized contact homology. Indeed: 

In view of~\cite{Cieliebak-Frauenfelder-Oancea} Rabinowitz-Floer homology of a separating contact hypersurface $\Sigma$ in a Liouville domain $W$ is $SH_*(\Sigma)$, understood to be computed with respect to the natural filling $\mbox{int}(\Sigma)$. 

We show in~\S\ref{sec:S1equiv} that Viterbo's $S^1$-equivariant symplectic homology $SH_*^{S^1}$ and its flavors $SH_*^{S^1,\heartsuit}$ define Liouville homology theories, and the same is true for negative and periodic cyclic homology. The Gysin exact sequences are diagrams of natural transformations which are compatible with the exact triangles of pairs and with the octahedron~\eqref{eq:SHoctahedron}. 

In view of~\cite{BO3+4} linearized contact homology is encompassed by $SH_*^{S^1,>0}$ and non-equivariant linearized contact homology is encompassed by $SH_*^{>0}$. Moreover, our enrichment of symplectic homology to (pairs of) cobordisms indicates several natural extensions of linearized contact homology theories which blend homology with cohomology and whose definition involves the ``banana", i.e. the genus zero curve with two positive punctures, see also~\cite{BEE-product} and Remark~\ref{rmk:SFT}. Indeed, such an enrichment should exist at the level of contact homology too, i.e. non-linearized. 

(b) Most of the key exact sequences established in recent years for symplectic invariants involving pseudo-holomorphic curves appear to us as instances of the exact triangle of a pair. Examples are the critical handle attaching exact sequence~\cite{Bourgeois-Ekholm-Eliashberg-1}, the new subcritical handle attaching exact sequence of~\S\ref{sec:subcritical-handle}, see also~\cite{BvK}, the exact sequence relating Rabinowitz-Floer homology and symplectic homology~\cite{Cieliebak-Frauenfelder-Oancea}, the Legendrian duality exact sequence~\cite{Ekholm-Etnyre-Sabloff}. We discuss these in detail in~\S\ref{sec:applications}. Our point of view embeds all these isolated results into a much larger framework and establishes compatibilities between exact triangles, e.g. with Gysin exact triangles, see~\S\ref{sec:S1equiv}.

(c) Since our setup covers Rabinowitz-Floer homology, it clarifies in particular the functorial behaviour of the latter. Unlike for symplectic homology, a cobordism does not give rise to a transfer map but rather to a correspondence 
$$
SH_*(\p^- W)\longleftarrow SH_*(W)\longrightarrow SH_*(\p^+W).
$$ 
This allows us in particular to prove invariance of Rabinowitz-Floer homology under subcritical handle attachment and understand its behaviour under critical handle attachment as a formal consequence of~\cite{Bourgeois-Ekholm-Eliashberg-1}. See~\S\ref{sec:applications}.

(d) We describe in~\S\ref{sec:products} which of the symplectic
homology groups carry product structures, with respect to which
transfer maps are ring homomorphisms as in the classical case of
symplectic homology of a Liouville domain. As a consequence we
construct a degree $-n$ product on Rabinowitz-Floer homology which 
induces a degree $1-n$ coproduct on positive symplectic homology.

(e) We give a uniform treatment of vanishing and finite dimensionality results in~\S\ref{sec:vanishing}.       

(f) We establish in~\S\ref{sec:Mayer-Vietoris} Mayer-Vietoris exact triangles for all flavors $SH_*^\heartsuit$. To the best of our knowledge such exact triangles have not appeared previously in the literature.

A word about our method of proof. We already mentioned the confinement lemmas of~\S\ref{sec:confinement}. There are two other important ingredients in our construction: continuation maps and mapping cones. We now describe their roles. It turns out that the key map of the theory is the transfer map 
$$
i_!:SH_*^\heartsuit(W)\to SH_*^\heartsuit(V)
$$
induced by the inclusion $i:V\hookrightarrow W$ for a pair of Liouville cobordisms $(W,V)$ with filling, see~\S\ref{sec:transfer}. It is instrumental for our constructions to interpret this transfer map as a \emph{continuation map} determined by a suitable increasing homotopy of Hamiltonians. (Compare with the original definition~\cite{Viterbo99} of the transfer map for Liouville domains, where its continuation nature is only implicit and truncation by the action plays the main role.) The next step is to interpret the homological mapping cone of the transfer map as being isomorphic to the group $SH_*^\heartsuit(W,V)$ shifted in degree down by $1$ (Proposition~\ref{prop:SHrel-dynamical-cone}). This is achieved via a systematic use of homological algebra for mapping cones, see~\S\ref{sec:cones}, in which a higher homotopy invariance property of the Floer chain complex plays a key role (Lemma~\ref{lem:continuation_maps}). While it is possible to show directly starting from the definitions that the groups $SH_*(W,V)$, $SH_*(W)$, and $SH_*(V)$ fit into an exact triangle, we did not succeed in proving this directly for the truncated versions $SH_*^\heartsuit$. The situation was unlocked and the arguments were streamlined upon adopting the continuation map and mapping cone point of view. 

We implicitly described the structure of the paper in the body of the Introduction, so we shall not repeat it here. The titles of the sections should now be self-explanatory. We end the Introduction by mentioning two further directions that unfold naturally from the present paper. The first one is to extend symplectic homology, which is a \emph{linear theory} in the sense that its output is valued in graded $R$-modules, possibly endowed with a ring structure, to a \emph{nonlinear theory} at the level of DGAs. This is accomplished for $SH_*^{>0}$ of Liouville domains in~\cite{Ekholm-Oancea}, but the other flavors may admit similar extensions too. The second one is a further categorical extension of the theory to the level of the wrapped Fukaya category, in the spirit of~\cite{Abouzaid-Seidel} where this is again accomplished for Liouville domains. We expect in particular a meaningful theory of wrapped Fukaya categories for cobordisms, with interesting applications.

\noindent {\bf Acknowledgements}. Thanks to Gr\'egory Ginot for inspiring discussions on factorisation homology, to Mihai Damian for having kindly provided the environment that allowed us to overcome one last obstacle in the proof, to St\'ephane Guillermou and Pierre Schapira for help with correcting a sign error in~\S\ref{sec:cones},
{\color{black} and to the referee for valuable suggestions. }
The authors acknowledge the hospitality of the Institute for Advanced
Study, Princeton, NJ in 2012 and of the Simons Center for Geometry and
Physics, Stonybrook, NY in 2014, when part of this work was carried out. 
{\color{black} K.C. was supported by DFG grant CI 45/6-1.}
A.O. was supported by the European Research Council via the Starting Grant StG-259118-STEIN and by the Agence Nationale de la Recherche, France via the project ANR-15-CE40-0007-MICROLOCAL. A.O. acknowledges support from the School of Mathematics at the Institute for Advanced Study in Princeton, NJ during the Spring Semester 2017, funded by the Charles Simonyi Endowment.


\section{Symplectic (co)homology for filled Liouville cobordisms}\label{sec:SH}

Symplectic homology for Liouville domains was introduced by
Floer--Hofer~\cite{FH94,CFH95} and Viterbo~\cite{Viterbo99}. 
In this section we extend their definition to filled Liouville
cobordisms. Since symplectic homology is a well established theory, we
will omit many details of the construction and concentrate on the new
aspects. For background we refer to the excellent
account~\cite{Abouzaid-cotangent}.  

\subsection{Liouville cobordisms} \label{sec:Liouville-cobordisms}

A \emph{Liouville cobordism} $(W,\lambda)$ consists of a compact
manifold with boundary $W$ and a $1$-form $\lambda$ such that
$d\lambda$ is symplectic and $\lambda$ restricts to a contact form on
$\p W$. We refer to $\lambda$ as the \emph{Liouville form}. If the
dimension of $W$ is $2n$ the last condition means that
$\lambda\wedge(d\lambda)^{n-1}$ defines a volume form on $\p W$. We
denote by $\p^+W\subset \p W$ the union of the components for which
the orientation induced by $\lambda\wedge(d\lambda)^{n-1}$ coincides
with the boundary orientation of $W$ and call it the \emph{convex
  boundary} of $W$. We call $\p^-W=\p W\setminus \p^+W$ the
\emph{concave boundary} of $W$.  
The convex/concave boundaries of $W$ are contact manifolds $(\p^\pm
W,\alpha^\pm:=\lambda|_{\p^\pm W})$.\footnote{Unless otherwise stated
  our contact manifolds will be always cooriented and equipped with
  chosen contact forms.}  
We refer to~\cite[Chapter~11]{Cieliebak-Eliashberg-book} for an
exhaustive discussion of Liouville cobordisms and their homotopies. 
A \emph{Liouville domain} is a Liouville cobordism such that $\p W=\p^+W$. 

\begin{example} Given a Riemannian manifold $(N,g)$, its unit
    codisk bundle $D^*_rN:=\{(q,p)\in T^*N\;\bigl|\; \|p\|_g\le r\}$
    is a Liouville domain with the canonical Liouville form
    $\lambda=p\,dq$, whereas $T^*_{r,R}N:=D^*_RN\setminus
    \mbox{int}\, D^*_rN$ for $r<R$ is a Liouville cobordism with
    concave boundary given by $S^*_rN:=\p D^*_rN$.
\end{example}

Define the \emph{Liouville vector field} $Z\in\cX(W)$ by
$\iota_Zd\lambda=\lambda$ and denote by $\alpha^\pm$ the restriction
of $\lambda$ to $\p^\pm W$. It is a consequence of the definitions
that $Z$ is transverse to $\p W$ and points outwards along $\p^+W$,
and inwards along $\p^- W$. The flow $\phi_Z^t$ of the vector field
$Z$ defines Liouville trivialisations of collar neighborhoods
$\NN^\pm$ of $\p^\pm W$ 
$$
\Psi^+:\big((1-\eps,1]\times \p^+ W,r\alpha^+\big) \to (\NN^+,\lambda),
$$
$$
\Psi^-:\big([1,1+\eps)\times \p^- W,r\alpha^-\big) \to (\NN^-,\lambda),
$$
via the map 
$$
(r,x)\mapsto\varphi_Z^{\ln r}(x).
$$

Given a contact manifold $(M,\alpha)$, its
\emph{symplectization} is given by $(0,\infty)\times M$ with the
Liouville form $r\alpha$. We call $(0,1]\times M$ and
$[1,\infty)\times M$ (both equipped with the form $r\alpha$) the
  \emph{negative}, respectively \emph{positive part of the symplectization}.  

Given a Liouville cobordism $(W,\lambda)$, we define its
\emph{completion} by  
$$
\wh W = ((0,1]\times \p^- W) \sqcup _{\Psi^-} W \ _{\Psi^+}\!\!\sqcup \ [1,\infty)\times \p^+W,
$$
with the obvious Liouville form still denoted by $\lambda$.

Given a contact manifold $(M,\alpha)$ we define a \emph{
  (Liouville) filling} to be a Liouville domain $(F,\lambda)$ together
with a diffeomorphism $\varphi:\p F\to M$ such that $\varphi^*\alpha=
\lambda|_{\p F}$. 

We view a Liouville cobordism $(W,\omega,\lambda)$ as a morphism from the
concave boundary to the convex boundary, $W:(\p^- W,\alpha^-)\to (\p
^+W,\alpha^+)$.  
We view a Liouville domain $W$ as a cobordism from $\varnothing$ to its convex boundary. 
Given two Liouville cobordisms $W$ and $W'$ together with an
identification $\varphi:(\p^-W,\alpha^-)\stackrel \cong\to
(\p^+W',\alpha'^+)$, we define their \emph{composition} by 
$$
W\circ W' = W \sqcup_{\varphi:\p^-W\stackrel\cong\to \p^+W'} W'.
$$
The gluing is understood to be compatible with the trivialisations
$\Psi^-$ and $\Psi'^+$, so that the Liouville forms glue smoothly.

\subsection{Filtered Floer homology}

A contact manifold $(M,\alpha)$ carries a canonical \emph{Reeb
  vector field} $R_\alpha\in\cX(M)$ defined by the conditions
$i_{R_\alpha}d\alpha=0$ and $\alpha(R_\alpha)=1$. We refer to the
closed integral curves of $R_\alpha$ as \emph{closed Reeb orbits}, or
just {\em Reeb orbits}. We
denote by $\mbox{Spec}(M,\alpha)$ the set of periods of closed Reeb
orbits. This is the critical value set of the action functional given
by integrating the contact form on closed loops, and a version of
Sard's theorem shows that $\mbox{Spec}(M,\alpha)$ is a closed
nowhere dense subset of $[0,\infty)$. If $M$ is compact the set
$\mbox{Spec}(M,\alpha)$ is bounded away from $0$ since the Reeb
vector field is nonvanishing.  

Consider the symplectization $((0,\infty)\times M,r\alpha)$ and let
$h:(0,\infty)\times M\to \R$ be a function that depends only on the
radial coordinate, i.e. $h(r,x)=h(r)$. Its Hamiltonian vector field,
defined by $d(r\alpha)(X_h,\cdot)=-dh$, is given by
$$
X_h(r,x)=h'(r)R_\alpha(x).
$$
The $1$-periodic orbits of $X_h$ on the level $\{r\}\times M$ are
therefore in one-to-one correspondence with the closed Reeb orbits
with period $h'(r)$. Here we understand that a Reeb orbit of negative
period is parameterized by $-R_\alpha$, whereas a $0$-periodic Reeb
orbit is by convention a constant.  

Let $(W,\lambda)$ be a Liouville domain and $\wh W$ its completion. 
We define the class 
$$
\cH(\wh W)
$$ 
of \emph{admissible Hamiltonians on $\wh W$}
to consist of functions $H:S^1\times\wh W\to \R$ such that in the
complement of some compact set $K\supset W$ we have $H(r,x)=ar+c$ with
$a,c\in\R$ and $a\notin \pm\mbox{Spec}(\p W,\alpha)\cup\{0\}$. In
particular, $H$ has no $1$-periodic orbits outside the compact set
$K$.  

An almost complex structure $J$ on the symplectization
$((0,\infty)\times M,r\alpha)$ is called \emph{cylindrical } if it
preserves $\xi=\ker\alpha$, if $J|_\xi$ is independent of $r$ and compatible
with $d(r\alpha)|_\xi$, and if $J(r\p_r)=R_\alpha$. Such almost
complex structures are compatible with $d(r\alpha)$ and are
invariant with respect to dilations $(r,x)\mapsto (cr,x)$, $c>0$. In
the definition of Floer homology for admissible Hamiltonians on
$\wh W$ we shall use almost complex structures which are
cylindrical outside some compact set that contains $W$, which we
call \emph{admissible almost complex structures on $\wh W$}.

Consider an admissible Hamiltonian $H$ and an admissible almost
complex structure $J$ on the completion $\wh W$ of a Liouville domain
$W$. To define the filtered Floer homology we use the same notation
and sign conventions as in~\cite{Cieliebak-Frauenfelder-Oancea}, which
match those of \cite{Ci02,Abouzaid-Seidel,Ekholm-Oancea}:  
$$
d\lambda(\cdot,J\cdot)=g_J \qquad\text{(Riemannian metric)},
$$
$$
d\lambda(X_H,\cdot)=-dH,\qquad X_H=J\nabla H \qquad\text{(Hamiltonian vector field)},
$$
$$
\cL\wh W:=C^\infty(S^1,\wh W), \qquad S^1=\R/\Z \qquad \text{(loop space)},
$$
$$
A_H:\cL\wh W\to \R,\qquad A_H(x):=\int_{S^1}x^*\lambda -
\int_{S^1}H(t,x(t))\,dt \qquad\text{(action)},
$$
$$
\nabla A_H(x)=-J(x)(\dot x-X_H(t,x)) \qquad\text{($L^2$-gradient)},
$$
$$
u:\R\to\cL W,\qquad \p_su=\nabla A_H(u(s,\cdot)) \qquad\text{(gradient
  line)}
$$
\begin{equation}\label{eq:Floer}
\Longleftrightarrow \p_s u + J(u)(\p_t u-X_H(t,u))=0
\qquad\text{(Floer equation)},  
\end{equation}
$$
\cP(H):=\mbox{Crit}(A_H) = \{\text{$1$-periodic orbits of the Hamiltonian vector field $X_H$}\} ,
$$
$$
\hspace{.5cm} \mathcal M(x_-,x_+;H,J)=\{u:\R\times S^1\to W \mid
\p_su= \nabla A_H(u(s,\cdot)),\ u(\pm\infty,\cdot)=x_\pm\}/\R 
$$
$$
\mbox{(moduli space of Floer trajectories connecting $x_\pm\in\cP(H)$)},
$$
$$
\dim \cM(x_-,x_+;H,J)=CZ(x_+)-CZ(x_-)-1,
$$
$$
   A_H(x_+)-A_H(x_-) = \int_{\R\times S^1}|\p_su|^2ds\,dt = \int_{\R\times S^1}u^*(d\lambda-dH\wedge dt).  
$$
Here the formula expressing the dimension of the moduli space in terms
of Conley-Zehnder indices is to be understood with respect to a
symplectic trivialisation of $u^*TW$.  

Let $\K$ be a {\color{black}field} and $a<b$ with $a,b\notin\mbox{Spec}(\p W,\alpha)$. We define the filtered Floer
chain groups with coefficients in $\K$ by  
$$
FC_*^{<b}(H) = \bigoplus_{\scriptsize \begin{array}{c} x\in \cP(H)
    \\ A_H(x)<b \end{array}} \K\cdot x,\qquad
FC_*^{(a,b)}(H) = FC_*^{<b}(H)/FC_*^{<a}(H),
$$
with the differential $\p:FC_*^{(a,b)}(H)\to FC_{*-1}^{(a,b)}(H)$
given by
$$
\p x_+=\sum_{\tiny CZ(x_-)=CZ(x_+)-1} \#\mathcal M(x_-,x_+;H,J)\cdot x_-.
$$
Here $\#$ denotes the signed count of points with respect to suitable
orientations. 
We think of the cylinder $\R\times S^1$ as the twice punctured Riemann
sphere, with the positive puncture at $+\infty$ as incoming, and the
negative puncture at $-\infty$ as outgoing. This terminology makes
reference to the corresponding asymptote being an input, respectively
an output for the Floer differential. Note that the differential
decreases both the action $A_H$ and the Conley-Zenhder index. 
The filtered Floer homology is now defined as 
$$
   FH_*^{(a,b)}(H) = \ker\p/\im\p. 
$$
Note that for $a<b<c$ the short exact sequence
$$
   0 \to FC_*^{(a,b)}(H) \to FC_*^{(a,c)}(H) \to FC_*^{(b,c)}(H) \to 0
$$
induces a {\em tautological exact triangle} 
\begin{equation}\label{eq:taut1}
   FH_*^{(a,b)}(H) \to FH_*^{(a,c)}(H) \to FH_*^{(b,c)}(H)
   \to FH_*^{(a,b)}(H)[-1].
\end{equation}

{\color{black}\noindent {\bf Remark.} We will suppress the field $\K$ from the notation. As noted in the Introduction, the definition can also be given with coefficients in a commutative ring, and more generally with coefficients in a local system as
in~\cite{Ritter09,Abouzaid-cotangent}. 
}

\subsection{Restrictions on Floer trajectories} \label{sec:confinement}

We shall frequently make use of the following three lemmas to exclude
certain types of Floer trajectories.  
The first one is an immediate consequence of Lemma~7.2
in~\cite{Abouzaid-Seidel}, see also~\cite[Lemma~19.3]{Ritter}. 
{\color{black} Since our setup differs slightly from the one there,
we include the proof for completeness.}

\begin{lemma}[no escape lemma] \label{lem:no-escape}
Let $H$ be an admissible Hamiltonian on a completed Liouville domain
$(\wh W,\om,\lambda)$. Let $V\subset\wh W$ be a compact subset with
smooth boundary $\p V$ such that $\lambda|_{\p V}$ is a positive contact form, 
{\color{black} $J$ is cylindrical near $\p V$, and $H=h(r)$ in cylindrical
coordinates $(r,x)$ near $\p V=\{r=1\}$.} 
If both asymptotes of a Floer cylinder $u:\R\times
S^1\to\wh W$ are contained in $V$, then $u$ is entirely contained in $V$. 

The result continues to hold if $H_s$ depends on the coordinate
$s\in\R$ on the cylinder $\R\times S^1$ such that $\p_sH_s\leq0$ and
the action $A_{h_s}(r)=rh_s'(r)-h_s(r)$ satisfies $\p_s A_{h_s}(r)\leq
0$ for $r$ near $1$.  
\end{lemma}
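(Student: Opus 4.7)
My plan is to argue by contradiction: suppose $u$ escapes $V$ and set $\Sigma:=u^{-1}(\wh W\setminus V)$. Since both asymptotes of $u$ lie in $V$, $\Sigma$ is a non-empty, relatively compact open subset of $\R\times S^1$. After replacing the level $\{r=1\}$ by a nearby regular one if necessary---a Sard-type perturbation that does not affect the conclusion by continuity---I may assume $u\pitchfork\p V$, so that $\overline\Sigma$ is a smooth compact surface with boundary $\p\Sigma=u^{-1}(\p V)$. I will show that the geometric energy
\[
E:=\int_\Sigma|\p_su|^2\,ds\,dt
\]
is simultaneously $\ge 0$ (automatic) and $\le 0$, so that $E=0$; this in turn forces $r\circ u$ to be locally constant on $\Sigma$ and hence equal to its boundary value $1$, contradicting $r\circ u>1$ on $\Sigma$.

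The next step is to apply Stokes in the cylindrical region. Because $H=h(r)$ is time-independent there, one has the identity $|\p_su|^2\,ds\wedge dt=u^*\bigl(d\lambda-dH\wedge dt\bigr)=u^*d(\lambda-H\,dt)$, so that
\[
E=\int_{\p\Sigma}\bigl(u^*\lambda-(H\circ u)\,dt\bigr)=\int_{\p\Sigma}u^*\alpha\;-\;h(1)\int_{\p\Sigma}dt,
\]
using $r\circ u\equiv 1$ and $\lambda|_{\p V}=\alpha$ on $\p\Sigma$. The decisive observation---which is what allows the argument to go through for arbitrary $h$---is that $dt$ is a closed $1$-form on $\R\times S^1$, so a second application of Stokes gives $\int_{\p\Sigma}dt=\int_\Sigma d(dt)=0$. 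The $h(1)$-term thus drops out and I am reduced to showing $\int_{\p\Sigma}u^*\alpha\le 0$.

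For this I would unpack $u^*\alpha$ using the cylindrical splitting $T\wh W=\R\p_r\oplus\R R_\alpha\oplus\xi$. Writing $\p_su=a\p_r+bR_\alpha+\xi_s$ and $\p_tu=c\p_r+dR_\alpha+\xi_t$, the Floer equation combined with $J\p_r=R_\alpha$ and $JR_\alpha=-\p_r$ at $r=1$ yields the relations $a=d-h'(1)$ and $b=-c$. A short calculation with the positively oriented unit boundary tangent $v=(c,-a)/\sqrt{a^2+c^2}$ (so that the outward normal is $-\nabla(r\circ u)/|\nabla(r\circ u)|$) then gives
\[
\alpha(du(v))=-|\nabla(r\circ u)|+h'(1)\,dt(v)\qquad\text{along }\p\Sigma.
\]
Integrating and invoking $\int_{\p\Sigma}dt=0$ once more annihilates the $h'(1)$-term and produces $E=-\int_{\p\Sigma}|\nabla(r\circ u)|\,d\tau\le 0$. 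I anticipate that this pointwise identity will be the main technical obstacle: the individual terms have no definite sign, and the argument closes only because the two applications of the closedness of $dt$ kill both $h(1)$ and $h'(1)$. From $E=0$ and the pointwise expression $|\p_su|^2=|\nabla(r\circ u)|^2/r+r|\xi_t|_g^2$---obtained from the Floer equation together with the compatibility of $J|_\xi$ with $d\alpha|_\xi$---one reads off $\nabla(r\circ u)\equiv 0$ on $\Sigma$, completing the contradiction.

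For the $s$-dependent statement I would repeat the same argument with three bookkeeping adjustments. First, the energy identity acquires an interior correction $\int_\Sigma\p_sH_s\,ds\,dt$, which is $\le 0$ by the first hypothesis. Second, the constant $h'(1)$ becomes the $s$-dependent $h_s'(1)$. Third, the boundary term $\int_{\p\Sigma}(h_s'(1)-h_s(1))\,dt=\int_{\p\Sigma}A_{h_s}(1)\,dt$ no longer vanishes by closedness; instead, Stokes rewrites it as $\int_\Sigma\p_sA_{h_s}(1)\,ds\,dt$, which is $\le 0$ by the second hypothesis. All three contributions being non-positive, the conclusion $E=0$ and hence $\Sigma=\varnothing$ follows exactly as before.
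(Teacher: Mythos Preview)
Your proof is correct and essentially the same as the paper's: both argue by contradiction, apply Stokes to reduce the energy to a boundary integral, exploit the closedness of $dt$ to kill the $A_h(1)$-term (the paper does this in one step via $\int_{\partial S}A_h(r_0)\,\beta=0$, you in two via $h(1)$ and $h'(1)$ separately), and then identify the remainder as $-\int_{\partial\Sigma}|\nabla(r\circ u)|\leq 0$ --- the paper's invariant form $\int_{\partial S}dr\circ du\circ(-j)$ is exactly your coordinate expression. One minor point: your final pointwise decomposition $|\partial_su|^2=|\nabla(r\circ u)|^2/r+r|\xi_t|^2$ is only defined where $u$ lands in the collar (and has a small typo in the first coefficient), but this is harmless since $E=0$ already gives $\partial_su\equiv 0$ on all of $\Sigma$, whence $u|_\Sigma$ lies in an $X_H$-orbit tangent to $\partial V$ --- which is how the paper concludes.
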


{\color{black}
\begin{proof}
Assume first that $H$ is $s$-independent. 
Arguing by contradiction, suppose that $u$ leaves the set $V$. 
After replacing $V$ by the set $\{r\leq r_0\}$ for a constant $r_0>1$
close to $1$, we may assume that $u$ leaves $V$ and is transverse to $\p V$. 
In cylindrical coordinates near $\p V$ we have $X_H=h'(r)R$ and
$\lambda=r\alpha$, where $R$ is the Reeb vector field of
$\alpha=\lambda|_{\p V}$, so the functions $H=h(r)$ and
$\lambda(X_H)=rh'(r)$ are both constant along $\p V$. 
Note that their difference equals the action $A_h(r)$. 

Now $S:=u^{-1}(\wh W\setminus\mbox{Int}\,V)$ is a compact surface with
boundary. We denote by $j$ and $\beta$ the restrictions of the complex
structure and the $1$-form $dt$ from the cylinder $\R\times S^1$ to $S$,
so that on $S$ the Floer equation for $u$ can be written as
$\bigl(du-X_H(u)\otimes\beta\bigr)^{0,1}=0$. We estimate the energy of $u|_S$: 
\begin{eqnarray*}
   E(u|_S) & = & \frac12\int_S|du-X_H\otimes\beta|^2\mbox{vol}_S \\
   &=& \int_S(u^*d\lambda-u^*dH\wedge\beta) \\
   &=& \int_S d\bigl(u^*\lambda-(u^*H)\beta\bigr) + \int_S (u^*H)d\beta \\
   &=& \int_{\p S} \bigl(u^*\lambda-(u^*H)\beta\bigr) \\
   &=& \int_{\p S} \lambda\bigl(du-X_H(u)\otimes\beta\bigr) \\
   &=& \int_{\p S} \lambda\Bigl(J\circ\bigl(du-X_H(u)\otimes\beta\bigr)\circ(-j)\Bigr) \\
   &=& \int_{\p S} dr\circ du\circ(-j) \\
   & \leq & 0.
\end{eqnarray*}
Here the \textcolor{black}{equality} in the 4-th line follows from Stokes' theorem and
$d\beta\equiv 0$. The equality in the 5-th line holds because the
$r$-component of $u|_{\p S}$ equals $r_0$ and thus
$$
   \int_{\p S} u^*\bigl(\lambda(X_H)-H\bigr)\beta 
   = \int_{\p S}A_h(r_0)\beta 
   = \int_{S}A_h(r_0)d\beta = 0. 
$$
The equality in the 6-th line follows from the Floer equation,
and the equality in the 7-th line from $\lambda\circ J=dr$ and $dr(X_H)=0$
along $\p V$. The last inequality follows from the fact that for each tangent
vector $\xi$ to $\p S$ defining its boundary orientation, $j\xi$
points into $S$, so $du(j\xi)$ points out of $V$ and $dr\circ
du(j\xi)\geq 0$. Since $E(u|_S)$ is nonnegative, it follows that
$E(u|_S)=0$, and therefore $du-X_H(u)\otimes\beta\equiv 0$. So each
connected component of $u|_S$ is contained in an $X_H$-orbit, and
since $X_H$ is tangent to $\p V$, $u(S)$ is entirely contained in $\p
V$. This contradicts the hypothesis that $u$ leaves $V$ and the lemma
is proved for $s$-independent $H$.  

If $H_s$ is $s$-dependent we get an additional term
$\int_S (u^*\p_sH_s)ds\wedge dt\leq 0$ in the third line, \textcolor{black}{so the equality in the 4-th
line becomes an inequality $\leq$. The equality in the 5-th line also becomes an
inequality $\leq$} due to the nonpositive additional term in
$$
   \int_{\p S}A_{h_s}(r_0)\beta 
   = \int_{S}A_{h_s}(r_0)d\beta + \int_S\p_sA_{h_s}(r_0)ds\wedge dt \leq 0. 
$$
This proves the lemma for $s$-dependent $H_s$.
\end{proof}

{\bf Remark. }
The proof shows that Lemma~\ref{lem:no-escape} continues to hold if the cylinder
$\R\times S^1$ is replaced by a general Riemann surface $S$ with a
$1$-form $\beta$ satisfying $H\,d\beta\leq 0$ and $A_h(r)d\beta\leq 0$
for all $r$ near $1$. In this case we can allow $H$ to depend on $s$
in holomorphic coordinates $s+it$ on a region $U\subset S$ in which
$\beta=c\,dt$ for a constant $c\geq 0$, with the requirements 
$\p_sH_s\leq0$ and $\p_s A_{h_s}(r)\leq 0$ as before.  
This generalization underlies the definition of product
structures in Section~\ref{sec:products}. 
}

The second lemma summarises an argument that has appeared first
in~\cite[pages~654-655]{BOcont}. {\color{black} Since the conventions
in~\cite{BOcont} differ from ours, we include the short
proof for completeness.}

\begin{lemma}[asymptotic behaviour lemma] \label{lem:asy}
Let $(\R_+\times M,r\alpha)$ be the symplectization of a contact
manifold $(M,\alpha)$. Let $H=h(r)$ be a Hamiltonian depending only on
the radial coordinate $r\in\R_+$, and let $J$ be a cylindrical
almost complex structure. Let $u=(a,f):{\color{black}\R_\pm}\times S^1\to
\R_+\times M$ be a solution of the Floer equation~\eqref{eq:Floer} 
with $\lim_{s\to\pm\infty} u(s,\cdot)=(r_\pm,\gamma_\pm(\cdot))$ for
{\color{black} suitably parameterized} Reeb orbits $\gamma_\pm$. 

(i) Assume $h''(r_-)>0$. Then either there exists
$(s_0,t_0)\in\R\times S^1$ such that $a(s_0,t_0)>r_-$, or
$u$ is constant equal to $(r_-,\gamma_-)$. 

(ii) Assume $h''(r_+)<0$. Then either there exists
$(s_0,t_0)\in\R\times S^1$ such that $a(s_0,t_0)>r_+$, or
$u$ is constant equal to $(r_+,\gamma_+)$. 
\end{lemma}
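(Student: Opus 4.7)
The plan is to prove part~(i); part~(ii) follows by the symmetric argument at the opposite end of the cylinder. First I would write $u=(a,f)$ in the symplectization coordinates and use the cylindricity of $J$ to decompose the Floer equation~\eqref{eq:Floer} along the splitting $T(\R_+\times M)=\R\p_r\oplus\R R_\alpha\oplus\xi$. The radial and Reeb components give the first-order identities
\begin{equation*}
\p_s a = a\bigl(\alpha(\p_t f)-h'(a)\bigr),\qquad \p_t a = -a\,\alpha(\p_s f),
\end{equation*}
and the contact component reads $\pi(\p_s f)+J|_\xi\pi(\p_t f)=0$, with $\pi:TM\to\xi$ the projection.

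Differentiating these identities and using the pullback relation $\p_s\alpha(\p_t f)-\p_t\alpha(\p_s f)=d\alpha(\p_s f,\p_t f)$ together with the compatibility of $J|_\xi$ with $d\alpha|_\xi$, a short calculation (essentially that of~\cite[pp.~654--655]{BOcont}) yields the pointwise identity
\begin{equation*}
\Delta a + a\,h''(a)\,\p_s a \;=\; \frac{|\nabla a|^2}{a} + a\,|\pi(\p_t f)|^2 \;\geq\; 0.
\end{equation*}
This exhibits $a$ as a subsolution of the linear elliptic operator $L=\Delta+c(s,t)\p_s$ with bounded coefficient $c(s,t)=a(s,t)\,h''(a(s,t))$, which is the analytic engine of the proof.

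For part~(i), I would argue by contradiction, assuming $a(s,t)\le r_-$ everywhere on $\R_-\times S^1$ and aiming to conclude that $u\equiv(r_-,\gamma_-)$. Since $a\to r_-$ as $s\to-\infty$ and $h''(r_-)>0$, the coefficient $c(s,t)$ is close to the strictly positive value $r_-h''(r_-)$ near the asymptote. The strong maximum principle applied to the subsolution $a$ would give $a\equiv r_-$ as soon as $a$ attains its supremum at any interior point of $\R_-\times S^1$. The main obstacle is that a priori the value $r_-$ is only attained in the limit as $s\to-\infty$, which is not an interior point. I plan to resolve this by passing to the coordinate $\sigma=e^s$, which turns the puncture into an actual boundary point $\sigma=0$, and by invoking the standard exponential convergence of a Floer cylinder to a nondegenerate Reeb asymptote: the function $\tilde a:=r_--a\ge 0$ extends continuously across $\sigma=0$ with $\tilde a(0,t)=0$ and with all $\sigma$-derivatives vanishing there. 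Applying the Hopf boundary lemma to the supersolution $\tilde a$ at $\sigma=0$ --- in which the positivity of $c$ near the asymptote supplies exactly the sign needed on the first-order coefficient --- then forces $\tilde a\equiv 0$.

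Once $a\equiv r_-$, plugging back into the identities of the first paragraph yields $\alpha(\p_t f)=h'(r_-)$ and $\alpha(\p_s f)=0$, while the equality case of the subsolution identity forces $\pi(\p_t f)=0$ and hence $\pi(\p_s f)=0$ via the contact component of the Floer equation. Thus $\p_s f\equiv 0$ and $\p_t f=h'(r_-)R_\alpha(f)$, and matching the prescribed asymptote identifies $f$ with $\gamma_-$, so $u\equiv(r_-,\gamma_-)$. Part~(ii) is proved by the analogous argument at the other puncture, using the coordinate $\sigma=e^{-s}$ and the sign hypothesis $h''(r_+)<0$ to secure the correct sign for the Hopf lemma step.
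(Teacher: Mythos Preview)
Your approach is genuinely different from the paper's and considerably more elaborate. The paper does not derive a second-order elliptic equation for $a$; instead it uses a first-order integral inequality. After recording the component form of the Floer equation (analogous to your first display), the paper integrates the $\partial_s a$ equation over $S^1$ and studies the average $\overline{a}(s)=\int_0^1 a(s,t)\,dt$. Assuming $a\le r_-$ throughout and restricting if necessary to a smaller half-cylinder on which $h''(a)\ge0$, convexity of $h$ near $r_-$ together with $\int_0^1 f^*\alpha \ge \int_{\gamma_-}\alpha = h'(r_-)$ (Stokes and $f^*d\alpha\ge0$) yield $\overline{a}'(s)\ge0$. Since $\overline{a}(-\infty)=r_-$, this forces $\overline{a}(s)\ge r_-$, hence $a\equiv r_-$; the remaining conclusions then follow as in your final paragraph. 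Part (ii) is reduced to (i) by the substitution $h\mapsto -h$, $u(s,t)\mapsto u(-s,-t)$.

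Your maximum-principle route has two fragile points that the averaging argument sidesteps. First, you invoke exponential convergence to a \emph{nondegenerate} asymptote; the lemma as stated does not assume nondegeneracy, and without it the decay need not be exponential, so there is no reason for $\partial_\sigma\tilde a$ to vanish at $\sigma=0$. Second, and more seriously, after the change $\sigma=e^{s}$ the operator becomes $\sigma^2\partial_\sigma^2+(1+c)\sigma\partial_\sigma+\partial_t^2$, which degenerates at $\sigma=0$; the standard Hopf boundary lemma requires uniform ellipticity up to the boundary and does not apply here without additional justification. The paper's argument needs no regularity at infinity beyond the limit $a\to r_-$ and no PDE machinery beyond Stokes' theorem, so it is both simpler and applies in the generality actually asserted.
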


{\color{black}\begin{proof}
In coordinates $(s,t)\in\R_\pm\times S^1$, the Floer equation for
$u=(a,f)$ with Hamiltonian $H=h(r)$ writes out as 
\begin{equation}\label{eq:Floer-symp}
   \p_sa-\alpha(\p_tf)+h'(a)=0,\quad 
   \p_ta+\alpha(\p_sf)=0, \quad
   \pi_\xi\p_sf+J(f)\pi_\xi\p_tf=0,
\end{equation}
where $\pi_\xi:TM\to\xi=\ker\alpha$ is the projection along the Reeb
vector field $R$. In case (i), suppose $h''(r_-)>0$ and $a(s,t)\leq r_-$ for
all $(s,t)\in\R_-\times S^1$. After replacing $\R_-\times S^1$ by a
smaller half-cylinder we may assume that $h''(a(s,t))\geq 0$ for all
$(s,t)\in\R_-\times S^1$. Then the average $\ol a(s):=\int_0^1a(s,t)dt$ satisfies
\begin{align*}
   \ol a'(s) &= \int_0^1\p_sa(s,t)dt \cr
   &= \int_0^1\alpha(\p_sf)(s,t)dt - \int_0^1h'\bigl(a(s,t)\bigr)dt
   \cr   
   &\geq \int_0^1f^*\alpha(s) - \int_0^1h'(r_-)dt \cr
   &\geq \int_{\gamma_-}\alpha - h'(r_-) = h'(r_-) - h'(r_-) = 0.
\end{align*}
Here the second equality follows from the first equation
in~\eqref{eq:Floer-symp}, the first inequality from $a(s,t)\leq r_-$
and $h''(a(s,t))\geq 0$, and the second inequality from Stokes' theorem
and $f^*d\alpha\geq 0$. For the third equality observe that
$x_-(t)=\bigl(r_-,\gamma_-(t)\bigr)$ is a $1$-periodic orbit of
$X_H=h'(r)R$ iff $\dot\gamma_-=h'(r_-)R$, so that $\int_{\gamma_-}\alpha=h'(r_-)$.  

Now $\ol a'(s)\geq 0$ and $\ol a(-\infty)=r_-$ imply that $\ol
a(s)\geq r_-$ for all $s$, which is compatible with $a(s,t)\leq r_-$
only if $a(s,t)=r_-$ for all $(s,t)$. Then all of the preceding
inequalities are equalities, in particular $f^*d\alpha\equiv 0$, and
therefore $u(s,t)=\bigl(r_-,\gamma_t(t)$ for all $(s,t)$. This proves
case (i). Case (ii) follows from case (i) by replacing $h$ by $-h$ and
$u(s,t)$ by $u(-s,-t)$. 
\end{proof}}

Lemma~\ref{lem:asy} can be rephrased by saying that nonconstant Floer trajectories
must rise above their output asymptote if the Hamiltonian is convex at
the asymptote, and they must rise above their input asymptote if the
Hamiltonian is concave at the asymptote. Combined with
Lemma~\ref{lem:no-escape}, it forbids Floer trajectories of the kind
shown in Figure~\ref{fig:asy}. 

\begin{figure} [ht]
\centering
\input{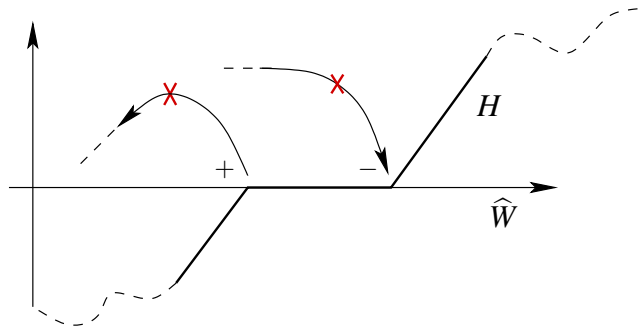}
\caption{Such Floer trajectories are forbidden by Lemma~\ref{lem:asy} in combination with Lemma~\ref{lem:no-escape}.}
\label{fig:asy}
\end{figure}

The third lemma follows from a neck stretching argument using the
compactness theorem in symplectic field theory (SFT). We refer to
Figure~\ref{fig:stretch} for a sketch of a situation in which a
certain kind of Floer trajectory is forbidden by this technique.  

\begin{lemma}[neck stretching lemma]\label{lem:neck} 
Let $H$ be an admissible Hamiltonian on a completed Liouville domain
$(\wh W,\lambda)$. Let $V\subset\wh W$ be a compact subset with
smooth boundary $\p V$ such that $H\equiv c$ near $\p V$
and $\lambda|_{\p V}$ is a positive contact form. Let $J_R$ be the
compatible almost complex structure on $\wh W$ obtained from $J$ by
inserting a cylinder of length $2R$ around $\p V$. Then for
sufficiently large $R$ there exists no $J_R$-Floer cylinder
$u:\R\times S^1\to\wh W$ with asymptotic orbits $x_\pm$ at
$\pm\infty$ such that 
\begin{enumerate}
\item $x_-\subset \mbox{int}V$ and $x_+\subset\wh W\setminus V$ with $A_H(x_+)<-c$, or
\item $x_+\subset V$ and $x_-\subset\wh W\setminus V$ with $A_H(x_-)>-c$. 
\end{enumerate}
\end{lemma}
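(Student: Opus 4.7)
The plan is to argue by contradiction, invoking SFT-style neck-stretching compactness and then reducing to a short action computation at a Reeb orbit asymptote of the limit building.

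Suppose there is a sequence $R_n\to\infty$ and $J_{R_n}$-Floer cylinders $u_n:\R\times S^1\to\wh W$ satisfying the hypotheses of~(1) or~(2); their energies $E(u_n)=A_H(x_+)-A_H(x_-)$ are uniformly bounded. Because $H\equiv c$ near $\p V$, the Floer equation reduces to the pseudo-holomorphic equation in the stretched neck, so one can apply neck-stretching compactness (in the spirit of the standard SFT compactness theorem, adapted to the Floer setting). Along a subsequence, $u_n$ converges to a broken Floer-SFT building whose components live in the completion $V\cup[1,\infty)\times\p V$, in the symplectization $(\R\times\p V,d(e^\rho\alpha))$, and in the completion $(\wh W\setminus V)\cup(0,1]\times\p V$, with the Hamiltonian extended by the constant $c$ on the attached cylindrical ends; consecutive components are matched at Reeb orbit asymptotes on $\p V$.

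Since the cylinder $u$ has asymptotes on opposite sides of $\p V$, the limit contains at least one Reeb orbit interface on $\p V$. Let $u^{\mathrm{top}}$ denote the component in the completion of $\wh W\setminus V$ containing $x_+$ (in case~(1)) or $x_-$ (in case~(2)). Since the domain of $u_n$ is a cylinder, its neck-stretching degeneration decomposes into a chain of cylindrical components (plus possibly bubble components detached from the main chain), so $u^{\mathrm{top}}$ is itself a Floer cylinder, with one asymptote at $x_\pm$ and a single Reeb orbit asymptote $\gamma$ at the attached cylindrical end $(0,1]\times\p V$, located at $\rho=\ln r\to-\infty$.

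The heart of the argument is a short action calculation. On the attached cylindrical end $\lambda=e^\rho\alpha$ and $H\equiv c$, so the loop $\tilde\gamma^\rho(t)=(\rho,\gamma(Tt))$ with $\gamma$ of period $T>0$ has action
\[
A_H(\tilde\gamma^\rho)=e^\rho T-c\;\longrightarrow\;-c\quad\text{as}\quad\rho\to-\infty,
\]
so the Reeb orbit asymptote of $u^{\mathrm{top}}$ carries limiting action $-c$. Action monotonicity along the Floer cylinder $u^{\mathrm{top}}$ (where $A_H$ is non-decreasing in the Floer $s$-parameter) then gives $-c\leq A_H(x_+)$ in case~(1) and $A_H(x_-)\leq-c$ in case~(2), each directly contradicting the corresponding hypothesis.

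The main technical obstacle is making the SFT-type compactness precise in the Hamiltonian-perturbed setting, in particular verifying that the main top piece is a single Floer cylinder with one Reeb orbit asymptote. This rests on the hypothesis that $H$ is constant near $\p V$ (so the Floer equation there coincides with the pseudo-holomorphic equation and cylindrical $J_{R_n}$'s behave as in genuine SFT stretching) together with the topological fact that a cylinder degenerating through a hypersurface produces a chain of cylinders plus detached bubbles.
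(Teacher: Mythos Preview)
Your approach is essentially the paper's: argue by contradiction, invoke SFT neck-stretching compactness, and derive an action inequality at a Reeb-orbit interface on $\p V$ that contradicts the hypothesis. Two points where the paper is more careful than your write-up are worth flagging.

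First, you invoke SFT compactness on the strength of the Floer energy bound $E(u_n)=A_H(x_+)-A_H(x_-)$, but SFT compactness in the stretched neck requires a \emph{Hofer} energy bound on the restriction $(a,f)$ of $u_n$ to $u_n^{-1}([-\eps,\eps]\times\p V)$. The paper spends a paragraph deriving this: using $dH=0$ near $\p V$ and Stokes' theorem, one bounds $\int_\Sigma(\psi'(a)\,da\wedge f^*\alpha+\psi(a)f^*d\alpha)$ uniformly in $R$ by the Floer energy, for any nondecreasing $\psi$ with the right boundary values. This is a genuine ingredient you have omitted.

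Second, rather than passing to the full limit building and arguing that the component containing $x_\pm$ is a cylinder with one Reeb asymptote, the paper works at finite $k$: since $x_\pm$ lie on different components of the limit, for large $k$ there is a separating embedded loop $\delta_k\subset\R\times S^1$ with $u_k\circ\delta_k$ $C^1$-close to a Reeb orbit $\gamma$ on $\p V$. One then applies action monotonicity directly on the two half-cylinders of $u_k$ cut by $\delta_k$. This sidesteps the structural claim about the limit building, and it makes the orientation bookkeeping transparent: $\delta_k$ is parameterized as the positive boundary of the $\wh V$-side, so in case~(1) it winds in the positive $S^1$-direction and $A_H(\gamma)=\int_\gamma\alpha-c\geq -c$, while in case~(2) it winds negatively and $A_H(-\gamma)=-\int_\gamma\alpha-c\leq -c$. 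Your formula $A_H(\tilde\gamma^\rho)=e^\rho T-c$ tacitly assumes positive parameterization, which is only correct in case~(1); in case~(2) the asymptote is negatively parameterized and the action is $-e^\rho T-c$. Both limit to $-c$, so your conclusion survives, but the paper's separating-loop device handles this more cleanly.
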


\begin{figure} [ht]
\centering
\input{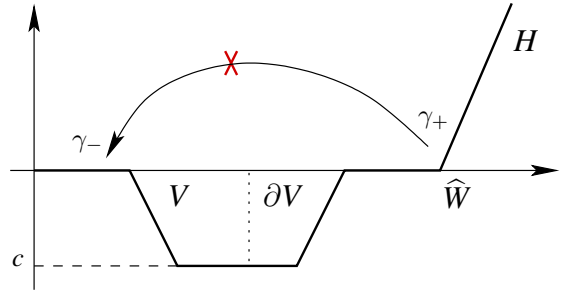}
\caption{Such Floer trajectories are forbidden if $-c>A_H(x_+)$.}
\label{fig:stretch}
\end{figure}

\begin{proof}
Let us first describe more precisely the neck stretching along $M=\p
V$. Pick a tubular neighborhood $[-\eps,\eps]\times M$ of $M$ in $\wh
W$ on which $H\equiv c$ and $\lambda=e^\rho\alpha$, where
$\alpha=\lambda|_M$ and $\rho$ denotes the coordinate on $\R$. Let $J$
be a compatible almost complex structure on $\wh W$ whose restriction
$J_0$ to $[-\eps,\eps]\times M$ is independent of $\rho$ and maps
$\xi=\ker\alpha$ to $\xi$ and $\p_\rho$ to $R_\alpha$. Let $\phi_R$ be
any diffeomorphism $[-R,R]\to[-\eps,\eps]$ with derivative $1$ near
the boundary. Then we define $J_R$ on $\wh W$ by
$(\phi_R\times\id)_*J_0$ on $[-\eps,\eps]\times M$, and by $J$ outside
$[-\eps,\eps]\times M$.  

Consider a $J_R$-Floer cylinder $u:\R\times S^1\to\wh W$ with
asymptotic orbits $x_\pm$. Its Floer energy is given by  
$$
   A_H(x_+)-A_H(x_-) = \int_{\R\times S^1}|\p_su|^2ds\,dt = \int_{\R\times S^1}u^*(d\lambda-dH\wedge dt).  
$$
Set $\Sigma=u^{-1}([-\eps,\eps]\times M)$ and write the restriction of $u$ to $\Sigma$ as 
$$
   u|_\Sigma=(\phi_R\circ a,f),\qquad (a,f):\Sigma\to[-R,R]\times M. 
$$
Let $\psi:[-R,R]\to[e^{-\eps},e^\eps]$ be any nondecreasing function which equals $e^{\phi_R}$ 
on the boundary. Using non-negativity of the integrand in the Floer energy, vanishing of $dH$ on $[-\eps,\eps]\times M$, and Stokes' theorem, we obtain
\begin{align*}
   A_H(x_+)-A_H(x_-) 
   &\geq \int_{\Sigma}u^*(d\lambda-dH\wedge dt)
   = \int_{\Sigma}u^*d\lambda\cr
   &= \int_{\Sigma}(a,f)^*d(e^{\phi_R}\alpha)
   = \int_{\Sigma}(a,f)^*d(\psi\alpha)\cr
   &= \int_{\Sigma}\Bigl(\psi'(a)da\wedge f^*\alpha+\psi(a)f^*d\alpha\Bigr).
\end{align*} 
Since $(a,f)$ is $J_0$-holomorphic, $da\wedge f^*\alpha$ and
$f^*d\alpha$ are nonnegative $2$-forms on $\Sigma$. Since
$\psi'(a)\geq 0$ and $\psi(a)\geq e^{-\eps}$,
and $\psi$ was arbitrary with the given boundary conditions, this
yields a uniform bound (independent of $R$) on the Hofer energy of
$(a,f)$ (see~\cite{BEHWZ,CM}).  

Now suppose that there exists a sequence $R_k\to\infty$ and
$J_{R_k}$-Floer cylinders $u_k:\R\times S^1\to\wh W$ with asymptotic
orbits $x_\pm$ lying on different sides of $M$. By the SFT
compactness theorem~\cite{BEHWZ,CM}, $u_k$ converges in the limit to a
broken cylinder consisting of components in the completions of $V$ and
$\wh W\setminus V$ satisfying the Floer equation and $J_0$-holomorphic
components in $\R\times M$, glued along closed Reeb orbits in
$M$. Since $x_\pm$ lie on different sides of $M$, the punctures
asymptotic to $x_\pm$ lie on different components. Hence for
large $k$ there exists a separating embedded loop
$\delta_k\subset\R\times S^1$ such that $u_k\circ\delta_k$ is
$C^1$-close to a (positively parameterized) closed Reeb orbit $\gamma$
on $M$ (which we view as a loop in $\wh W$ lying on $\p V$). Here
$\delta_k$ is parameterized as a positive boundary of the 
component of $\R\times S^1$ that is mapped to $\wh V$. Now we
distinguish two cases.  

Case (i): $x_-\subset V$ and $x_+\subset\wh W\setminus V$. Then
$\delta_k$ winds around the cylinder in the positive $S^1$-direction,
and since the Hamiltonian action increases along Floer cylinders we
conclude 
$$
   A_H(x_+)\geq A_H(\gamma)\geq A_H(x_-).  
$$ 
Since $\int_\gamma\lambda=\int_\gamma\alpha\geq 0$, we obtain 
$A_H(\gamma)=\int_\gamma\lambda-\int_0^1c\,dt\geq -c$ 
and hence $A_H(x_+)\geq -c$. 

Case (ii): $x_+\subset V$ and $x_-\subset\wh W\setminus V$. Then
$\delta_k$ winds around the cylinder in the negative $S^1$-direction,
and since the Hamiltonian action increases along Floer cylinders we
conclude 
$$
   A_H(x_+)\geq A_H(-\gamma)\geq A_H(x_-).  
$$ 
Since $\int_\gamma\lambda=\int_\gamma\alpha\geq 0$, we obtain 
$A_H(-\gamma)=-\int_\gamma\lambda-\int_0^1c\,dt\leq -c$
and hence $A_H(x_-)\leq -c$. 
\end{proof}

{\color{black}Our fourth lemma prohibits certain trajectories asymptotic to constant
Hamiltonian orbits.  
We consider the setup consisting of a completed Liouville domain $\wh W$, a cobordism $V\subset W$ such that $(W,V)$ is a Liouville pair, i.e. $W=W^{bottom}\circ V\circ W^{top}$, and a Hamiltonian $H:\wh W\to\R$ which is constant on $V$, which depends only on the radial coordinate $r$ in an open neighborhood of $\p V$, and which is either strictly convex or strictly concave as a function of $r$ outside $V$ in each component of the given neighborhood of $\p V$. Denote by $c$ the value of $H$ on $V$. 

Let $f:V\to\R$ be a Morse function which depends only on the radial coordinate $r$ in some neighborhood of $\p V$ and such that $\p^\pm V$ are regular level sets. 
We require the gradient of $f$ to point inside/outside $V$ along $\p^-V$ if $H$ is concave/convex near $\p^-V$, and 
to point inside/outside $V$ along $\p^+V$ if $H$ is concave/convex near $\p^+V$.

Given $\epsilon>0$ we denote by $V^\epsilon = ([1-\epsilon,1]\times \p^- V)\cup V \cup ([1,1+\epsilon]\times\p^+ V)$ an $\epsilon$-thickening of $V$ inside $\wh W$. For $\epsilon>0$ small enough let
$$
   H_{f,\epsilon}:S^1\times\wh W\to\R
$$ 
be a smooth Hamiltonian which is equal to $c+\epsilon^2 f$ on $V$, which is equal to $H$ outside $V^\epsilon$, and which smoothly interpolates between $H$ and $c+\epsilon^2 f$ on $[1-\epsilon,1]\times\p^- V$ and $[1,1+\epsilon]\times\p^+ V$ as a function of $r$ which is either concave or convex, according to $H$ being concave or convex on each of these regions. 

We consider admissible almost complex structures on $\wh W$ which are time-independent on $V$, cylindrical near $\p V$, and such that the gradient flow of $f$ is Morse-Smale.

\begin{lemma} \label{lem:constant} Let $f:V\to\R$ be a Morse function and $H_{f,\epsilon}$ a Hamiltonian as above. For $\epsilon>0$ small enough the following hold:

(1) If the gradient of $f$ points inside $V$ along $\p^- V$, then there is no Floer trajectory for $H_{f,\epsilon}$ which is asymptotic at the positive end to a constant orbit given by a critical point of $f$ and which is asymptotic at the negative end to an orbit in $W^{bottom}$. 

(2) If the gradient of $f$ points outside $V$ along $\p^- V$, then there is no Floer trajectory for $H_{f,\epsilon}$ which is asymptotic at the negative end to a constant orbit given by a critical point of $f$ and which is asymptotic at the positive end to an orbit in $W^{bottom}$. 
\end{lemma}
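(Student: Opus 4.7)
Case (2) reduces to case (1) via the reversal $u(s,t)\mapsto u(-s,-t)$ of the Floer equation, which swaps positive and negative asymptotes, so I focus on case (1). I argue by contradiction: assume there exist $\epsilon_k\to 0^+$ and Floer trajectories $u_k$ for $H_{f,\epsilon_k}$ with $u_k(-\infty)=x_-$ a fixed $1$-periodic orbit of $H$ in $W^{bottom}$ and $u_k(+\infty)=p$ a fixed critical point of $f$ in $V$.

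The first step is Morse--Bott compactness. The unperturbed Hamiltonian $H$ has the whole cobordism $V$ as a Morse--Bott critical manifold of constant orbits of action $-c$, which the perturbation $\epsilon^2 f$ resolves into isolated critical orbits at the critical points of $f$. Following the Morse--Bott Floer compactness scheme of Bourgeois--Oancea and Frauenfelder, up to a subsequence the $u_k$ converge as $\epsilon_k\to 0$ to a broken cascade
\[
x_- \;\xrightarrow{\;v_0\;}\; q_0\in V \;\xrightarrow{\;-\nabla f\;}\; p,
\]
where $v_0$ is a Floer trajectory for the unperturbed $H$ with $v_0(-\infty)=x_-$ and $v_0(+\infty)=q_0\in V$, and the second arrow denotes one or several concatenated $-\nabla f$-gradient flow lines inside $V$ from $q_0$ to $p$. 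The direction is $-\nabla f$, not $+\nabla f$, because the action $A_{H_{f,\epsilon}}|_V = -c-\epsilon^2 f$ strictly increases along the cascade, which amounts to $f$ decreasing.

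Next I would use the case (1) hypothesis: $\nabla f$ pointing inside $V$ along $\p^-V$ is equivalent to $-\nabla f$ pointing strictly outward along $\p^-V$, so by Morse--Smaleness and compactness of $V$ there is a uniform $\delta=\delta(f,p)>0$ with the property that every $-\nabla f$-trajectory issuing from the one-sided collar $U_\delta=\{q\in V:\mathrm{dist}(q,\p^-V)<\delta\}$ exits $V$ through $\p^-V$ in finite forward time, and therefore cannot converge to the interior critical point $p$. Hence the asymptote $q_0$ of $v_0$ must satisfy $\mathrm{dist}(q_0,\p^-V)\ge\delta$, uniformly in $\epsilon_k$.

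The last and most delicate step is to rule out the existence of a Floer trajectory $v_0$ of $H$ from $x_-\in W^{bottom}$ to an interior $q_0\in V\setminus U_\delta$. Since $H\equiv c$ on $V$, on $v_0^{-1}(V)\subset\R\times S^1$ the Floer equation reduces to the pure $J$-holomorphic equation $\p_sv_0+J\p_tv_0=0$; moreover a one-sided application of Lemma~\ref{lem:no-escape} along the positive contact hypersurface $\p^+V$ prevents $v_0$ from exiting $V$ through $\p^+V$. Performing the neck-stretching of Lemma~\ref{lem:neck} along $\p^-V$ degenerates $v_0$ in the limit to a Floer component in the completion of $W^{bottom}$ with positive asymptote at closed Reeb orbits of $\p^-V$, together with a $J$-holomorphic punctured sphere $v_0^V$ in the completion of $V$ at its negative end, with negative punctures at those Reeb orbits and image containing the regular point $q_0$. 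The monotonicity lemma for $J$-holomorphic curves then forces $\mathrm{Area}(v_0^V)\ge\pi\delta^2$; combining this with a careful tracking of actions through the neck-stretching (exploiting that $H_{f,\epsilon}$ has derivative of order $\epsilon^2$ on the $V$-side of $\p^-V$) to bound the area from above yields the desired contradiction. Making this final confinement step rigorous is the main technical obstacle of the proof.
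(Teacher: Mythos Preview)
Your Morse--Bott compactness framework and the reduction of (2) to (1) via the reversal $u(s,t)\mapsto u(-s,-t)$ are both sound, and your Step~2 (ruling out $q_0$ near $\p^-V$ via the direction of $-\nabla f$) is essentially the paper's Case~3. The genuine gap is Step~3.

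First, neck-stretching cannot be applied to the single limit curve $v_0$: neck-stretching is a deformation of the almost complex structure $J\leadsto J_R$, and $v_0$ is a Floer cylinder for a \emph{fixed} $J$. You would have to set up a double limit $(\epsilon_k,R_k)\to(0,\infty)$ from the start, and even then the piece $v_0^V$ you describe would be a punctured $J$-holomorphic sphere in the negative completion of $V$ with only negative punctures at Reeb orbits on $\p^-V$. Its $d\lambda$-area is controlled by the periods of those Reeb orbits, which are bounded \emph{below} by the minimal period on $\p^-V$ and have no reason to be small; the ``order $\epsilon^2$'' smallness of $\nabla H_{f,\epsilon}$ on $V$ is irrelevant here because $v_0$ is a trajectory for the unperturbed $H$, whose energy $A_H(q_0)-A_H(x_-)=-c-A_H(x_-)$ is fixed. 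So the upper bound you are hoping for is not available, and the monotonicity contradiction does not close.

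The idea you are missing is much simpler: a \emph{removable singularity} argument. Since $H\equiv c$ on $V$, on $v_0^{-1}(V)$ the Floer equation is the genuine Cauchy--Riemann equation. As $s\to+\infty$ the map $v_0$ converges to the \emph{point} $q_0\in V\setminus\p V$, so the positive end $[s_0,\infty)\times S^1\cong\dot D$ is a punctured holomorphic disc in $V$ with a continuous extension to the puncture, hence the puncture is removable. Thus $v_0$ extends to a Floer map defined on a Riemann sphere with a \emph{single} negative puncture asymptotic to an orbit in $W^{bottom}$. Applying Lemma~\ref{lem:no-escape} (in its Riemann-surface form) with the compact set $F\circ W^{bottom}$, whose boundary is $\p^-V$, gives an immediate contradiction: the unique asymptote lies in $F\circ W^{bottom}$, yet the image of $v_0$ contains $q_0\in V\setminus\p V$, which does not. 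This replaces your entire Step~3 when $q_0\in\mathrm{int}\,V$.

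You also need a separate argument for $q_0\in\p^+V$, which your Step~2 does not exclude and your Step~3 does not clearly cover. Here the paper does not analyse $v_0$ at all but goes back to the approximating sequence $u_n$: since the broken limit touches $\p^+V$, for large $n$ the image of $u_n$ enters a collar $(1-\delta,1]\times\p^+V$ free of critical points of $f$, while both asymptotes $p_+,x_-$ of $u_n$ lie in $W^{bottom}\cup V$ minus that collar; the no-escape lemma applied to $u_n$ with boundary $\{1-\delta\}\times\p^+V$ gives the contradiction.
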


\begin{proof}

To prove (1) we argue by contradiction and assume without loss of generality that there is a sequence of positive real numbers $\epsilon_n\to 0$ and a sequence of Floer trajectories $u_n:\R\times S^1\to \wh W$ solving $\p_s u_n + J_t(u_n)(\p_t u_n -X_{H_{f,\epsilon_n}}(u_n))=0$ such that $\lim_{s\to\infty} u_n(s,t)=p_+$, $\lim_{s\to-\infty} u_n(s,t)=x_-(t)$, with $p_+$ a critical point of $f$, $x_-:S^1\to\wh W$ a $1$-periodic orbit of $H$ inside $W^{bottom}$, and $J=(J_t)$ an admissible almost complex structure which is time-independent on $V$ and such that the flow of the gradient of $f$ for the corresponding Riemannian metric is Morse-Smale.

We interpret $V$ as a Morse-Bott critical manifold with boundary for the action functional $A_H$, and we view $H_{f,\epsilon_n}$, $n\ge 1$ as determining a sequence of Morse perturbations of $A_H$ along $V$. The Morse-Bott compactness theorem proved in a more restricted Hamiltonian setting in~\cite[Proposition~4.7]{BOauto}, and in a
general SFT setting in~\cite{BEHWZ,CM}, applies to our situation. Indeed, the fact that the Morse-Bott manifold $V$ has boundary plays no role and the proof of~\cite[Proposition~4.7]{BOauto} carries over \emph{mutatis mutandis}. 

It follows that, up to extracting a subsequence, the sequence $u_n$ converges in the terminology of~\cite[Definition~4.2]{BOauto} to a 
broken Floer trajectory $\mathbf{[u]}$ with gradient fragments. The critical manifold $V$ may be disconnected, but all its components are located on the same action level $A_H=-c$. Since Floer trajectories for $H$ strictly increase the action from the asymptote at the negative puncture to the asymptote at the positive puncture, we infer that each level of the limit $\mathbf{[u]}$ contains at most one gradient trajectory of $f$. Moreover, $\mathbf{[u]}$ has a representative $\mathbf{\bar u}=(\mathbf{u}_1,\dots,\mathbf{u}_\ell)$ described as follows: there exists $1\le i\le \ell$ such that 
\begin{itemize}
\item $\mathbf{u}_1,\dots,\mathbf{u}_{i-1}$ are Floer trajectories for $H$, with $\mathbf{u}_1(-\infty)=x_-$, $\mathbf{u}_j(+\infty)=\mathbf{u}_{j+1}(-\infty)$ for $1\le j\le i-2$. 
\item $\mathbf{u}_i$ is a Floer trajectory with one gradient fragment, i.e. $\mathbf{u}_i=(u_i,\gamma_i)$ with $u_i$ a Floer trajectory for $H$ and $\gamma_i:[0,+\infty)\to V$ a negative gradient trajectory for $f$, i.e. solving $\dot\gamma_i=-\nabla f (\gamma_i)$, subject to the following conditions: $\mathbf{u}_{i-1}(+\infty)=\mathbf{u}_i(-\infty)$ if $i>1$ and $\mathbf{u}_i(-\infty)=x_-$ if $i=1$; $u_i(+\infty)=\gamma_i(0)\in V$; and $\gamma_i(+\infty)=p_+$ if $i=\ell$.
\item $\mathbf{u}_{i+1},\dots,\mathbf{u}_\ell$ are negative gradient trajectories $\mathbf{u}_j=\gamma_j:\R\to V$ for $f$, i.e. solving $\dot\gamma_j=-\nabla f(\gamma_j)$, $j=i+1,\dots,\ell$, subject to the conditions $\gamma_j(-\infty)=\gamma_{j-1}(+\infty)$ for $j=i+1,\dots,\ell$, and $\gamma_\ell(+\infty)=p_+$.
\end{itemize}

We now focus on the level $\mathbf{u}_i=(u_i,\gamma_i)$. Three situations can arise: 

Case~1: $\gamma(0)\in V\setminus \p V$.  Then the Floer trajectory $u_i$ solves the Cauchy-Riemann equation $\p_s u + J(u) \p_t u=0$ on some half-cylinder $[s_0,+\infty)\times S^1$ for $s_0\gg 0$. We identify biholomorphically $[s_0,+\infty)\times S^1$ with a punctured disc $\dot D$ and, by assumption, $u:\dot D\to V$ admits a continuous extension at the puncture. Thus $0\in D$ is a removable singularity and we can view $u_i:\R\times S^1\to \wh W$ as being defined on a Riemann sphere with a single negative puncture, on which it solves a Floer equation. The asymptote at the negative puncture is located in $W^{bottom}$ by assumption, and the image of $u_i$ intersects $\p^- V$. Then Lemma~\ref{lem:no-escape} gives a contradiction. 

Case~2: $\gamma(0)\in \p^+ V$. Pick $\delta>0$ such that $[1-\delta,1]\times\p^+V$ does not contain critical points of $f$. Since $\mathbf{[u]}$ is the limit of the sequence $u_n$, there exists $n_0\ge 1$ such that the image of $u_n$ intersects the set $(1-\delta,1]\times\p^+ V$. By assumption both asymptotes of $u_n$ are located in $W^{bottom}\cup V \setminus ([1-\delta,1]\times\p^+ V)$, and Lemma~\ref{lem:no-escape} again gives a contradiction. 

Case~3: $\gamma(0)\in \p^- V$. The map $\gamma_i:[0,\infty)\to V$ solves $\dot\gamma_i=-\nabla f(\gamma_i)$ and enters $V$ in positive time, but at the same time $-\nabla f$ points outwards along $\p V$, which is a contradiction. 

The proof of (2) is entirely analogous: cases 1 and 2 are treated exactly in the same way, while case 3 is proved similarly to (1) using that negative gradient trajectories of a Morse function on $V$ whose gradient points outwards along $\p V$ must exit $V$ in negative time. 
\end{proof}

\begin{remark} \label{rmk:constant} The conclusions of Lemma~\ref{lem:constant} most likely do not hold if one exchanges ``positive" and ``negative" in either of the statements (1) or (2). Although we do not have an explicit example involving Floer trajectories, i.e. twice punctured spheres, we can easily give an example involving pairs of pants. Consider to this effect a Liouville domain $W$ and the trivial cobordism $V=[\frac 1 2,1]\times\p W$ over the boundary. As discussed in~\S\ref{sec:products}, the symplectic homology group $SH_*^{\le 0}(V)=SH_*^{\le 0}(\p W)$ is a unital graded commutative ring, and the unit maps to $1\in H^{n-*}(\p W)$ under the 
projection $SH_*^{\le 0}(V)\to SH_*^{=0}(V)\simeq H^{n-*}(\p W)$. Assume now that the map $SH_*^{<0}(V)\to SH_*^{\le 0}(V)$ is nontrivial -- which holds for example in the case of unit cotangent bundles of closed manifolds -- and consider a class $\alpha\neq 0$ in its image. Since $1\cdot \alpha=\alpha\neq 0$ we infer the existence of at least one solution to a Floer equation defined on a pair of pants with two positive punctures and one negative puncture, asymptotic at one of the positive punctures to a constant orbit inside $V$, and asymptotic at the two other punctures to orbits located in $W^{bottom}=W\setminus V$.
\end{remark}
}

\subsection{Symplectic homology of a filled Liouville cobordism} \label{sec:SHWA}
Let $(W,\lambda)$ be a Liouville cobordism and $(F,\lambda)$ a
Liouville filling of $(\p^-W,\alpha^-=\lambda_{\p^-W})$. We compose $F$ and $W$
to the Liouville domain  
$$
   W_F := F\circ W
$$
and denote its completion by $\wh W_F$. We define the class
$$
\cH(W;F)
$$
of {\em admissible Hamiltonians on $\wh W_F$ with respect to the
  filling $F$} to consist of functions $H:S^1\times \wh W_F\to \R$ such
that $H\in\cH(\wh W_F)$ and $H= 0$ on $W$. When there is no danger of
confusion we shall use the notation  
$$
\cH(W)
$$ 
for the set $\cH(W;F)$ and refer to its elements as \emph{admissible Hamiltonians on $W$}. 

\begin{remark}
For the purposes of this section it would have been enough to
  define admissible Hamiltonians by the condition $H\le 0$ on
  $W$. This would have allowed for cofinal families consisting of
  Hamiltonians with nondegenerate $1$-periodic orbits. The definition
  that we have adopted requires to use small perturbations in order to
  define Floer homology and is slightly cumbersome in that
  respect. However, it will prove very convenient when we come to the
  definition of symplectic homology groups for pairs.
\end{remark}

Next we consider continuation maps. Let $H_-\ge H_+$ be admissible
Hamiltonians and $H_s$, $s\in\R$ be a decreasing homotopy through
admissible Hamiltonians such that $H_s=H_\pm$ near $\pm\infty$. Let
$J_s$ be a homotopy of admissible almost complex structures. Solutions
of the Floer equation $\p_s u+J_s(u)(\p_t u-X_{H_s}(u))=0$ satisfy a
maximum principle in the region where all the Hamiltonians $H_s$ are
linear and all the almost complex structures are cylindrical, and
their count defines continuation maps $FH_*(H_+)\to FH_*(H_-)$. Since
the homotopy is decreasing, the action increases along solutions of
the preceding $s$-dependent Floer equation, so it decreases under
the continuation map. We infer from this the existence of filtered
continuation maps $FH_*^{(-\infty,b)}(H_+)\to
FH_*^{(-\infty,b)}(H_-)$, $b\in\R$, and more generally the existence
of filtered continuation maps 
$$
FH_*^{(a,b)}(H_+)\to FH_*^{(a,b)}(H_-), \qquad a<b.
$$
For an admissible Hamiltonian $H$ we also have natural morphisms
determined by inclusions of and quotients by appropriate subcomplexes 
$$
FH_*^{(a,b)}(H)\to FH_*^{(a',b')}(H),\qquad a\le a',\ b\le b'.
$$
These morphisms commute with the continuation morphisms, and we obtain
more general versions of the latter  
$$
FH_*^{(a,b)}(H_+)\to FH_*^{(a',b')}(H_-), \qquad a\le a',\ b\le b'.
$$
Given real numbers $-\infty<a<b<\infty$, we define the
\emph{filtered symplectic homology groups of $W$ (with respect
  to the filling $F$)} to be  
\begin{equation}\label{eq:SH*abW}
SH_*^{(a,b)}(W)=\lim^{\longrightarrow}_{H\in\cH(W;F)} FH_*^{(a,b)}(H).
\end{equation}
The direct limit is taken here with respect to continuation maps and
with respect to the partial order $\prec$ on $\cH(W;F)$ defined as
follows: $H\prec K$ if and only if $H(t,x)\le K(t,x)$ for all $(t,x)$. Note that in a cofinal
family the Hamiltonian necessarily goes to $+\infty$ on $F\cup
([1,\infty)\times \p^+W)$. Recall also that, in order to achieve
    nondegeneracy of the $1$-periodic orbits, the Hamiltonian $H$
    needs to be perturbed on $W$ where it is constant equal to
    zero. Our convention is that we compute the direct limit using a
    cofinal family for which the size of the perturbation goes to
    zero.  

Taking the direct limit in~\eqref{eq:taut1} we obtain for $a<b<c$ the tautological
exact triangle
\begin{equation}\label{eq:taut2}
   SH_*^{(a,b)}(W) \to SH_*^{(a,c)}(W) \to SH_*^{(b,c)}(W)
   \to SH_*^{(a,b)}(W)[-1].
\end{equation}

\begin{definition} \label{defi:SH(W)}
We define six versions of \emph{symplectic homology groups of $W$
  (with respect to the filling $F$)}:
$$
SH_*(W)=\lim^{\longrightarrow}_{b\to\infty}\lim^{\longleftarrow}_{a\to -\infty} SH_*^{(a,b)}(W) \qquad \mbox{\sc (full symplectic homology)}
$$
$$
SH_*^{>0}(W)= \lim^{\longrightarrow}_{b\to\infty} \lim^{\longleftarrow}_{a\searrow 0} SH_*^{(a,b)}(W) \qquad \mbox{\sc (positive symplectic homology)}
$$
$$
SH_*^{\ge 0}(W)= \lim^{\longrightarrow}_{b\to\infty} \lim^{\longrightarrow}_{a\nearrow 0} SH_*^{(a,b)}(W) \qquad \mbox{\sc (non-negative symplectic homology)}
$$
$$
SH_*^{= 0}(W)= \lim^{\longleftarrow}_{b\searrow 0} \lim^{\longrightarrow}_{a\nearrow 0} SH_*^{(a,b)}(W) \qquad \mbox{\sc (zero-level symplectic homology)}
$$
$$
SH_*^{\le 0}(W)= \lim^{\longleftarrow}_{b\searrow 0} \lim^{\longleftarrow}_{a\to-\infty} SH_*^{(a,b)}(W) \qquad \mbox{\sc (non-positive symplectic homology)}
$$
$$
SH_*^{< 0}(W)= \lim^{\longrightarrow}_{b\nearrow 0} \lim^{\longleftarrow}_{a\to-\infty} SH_*^{(a,b)}(W) \qquad \mbox{\sc (negative symplectic homology)}
$$
\end{definition}

Since the actions of Reeb orbits are bounded away from
zero, the direct/inverse limits as $a$ (or $b$) goes to zero stabilize
for $a$ (respectively $b$) sufficiently close to zero, so they are not
actual limits. Note that the actual inverse limits as $a\to-\infty$ in these
definitions are always applied to finite dimensional vector spaces
when considering field coefficients. 
This ensures that the inverse and direct limits preserve
exactness of sequences; see~\cite{CF} for further discussion of the
order of limits, and also~\cite[Chapter~8]{ES52} for a discussion of
exactness.  

The geometric content of the definition is the following. Let $H$ be a
Hamiltonian as depicted in Figure~\ref{fig:H-heuristic}, which is
constant and very positive on $F\setminus([\delta,1]\times \p F)$ with
$0<\delta<1$, which is linear of negative slope with respect to the
$r$-coordinate on $[\delta,1]\times\p F$, which vanishes on $W$, and
which is linear of positive slope with respect to the $r$-coordinate
on $[1,\infty[\times \p^+W$. The $1$-periodic orbits of $H$ fall in
    four classes, denoted $F$ (orbits in the filling),
    ${I^-}$ (orbits that correspond to negatively parameterized
    closed Reeb orbits on $\p^-W$), ${I^0}$ (constant orbits in
    $W$), and ${I^+}$ (orbits that correspond to positively
    parameterized closed Reeb orbits on $\p^+W$). As $\delta\to 0$ and
    as the absolute values of the slopes go to $\infty$, Hamiltonians
    of this type form a cofinal family in $\mathcal{H}(W;F)$. The
    action of orbits in the class ${F}$ becomes very negative
    and falls outside any fixed and finite action window $(a,b)$, so
    that the homology groups $SH_*^{(a,b)}(W)$ take into account only
    orbits of type ${I^{-0+}}$. Each flavour of symplectic
    homology group $SH_*^{\heartsuit}(W)$,
    $\heartsuit\in\{\varnothing, >0,\ge 0, =0, \le 0, <0\}$, with
    $SH_*^\varnothing(W)$ as a notation for $SH_*(W)$, respectively
    takes into account orbits in the class ${I^{-0+}}$,
    ${I^+}$, ${I^{0+}}$, ${I^{0}}$,
    ${I^{-0}}$, ${I^-}$ for arbitrarily large values of
    the slope. As such, each of these symplectic homology groups
    corresponds to a certain count of negatively parameterized closed
    Reeb orbits on $\p^-W$, of constant orbits in $W$, and of
    positively parameterized closed Reeb orbits on $\p^+W$.  

\begin{figure} [ht]
\centering
\input{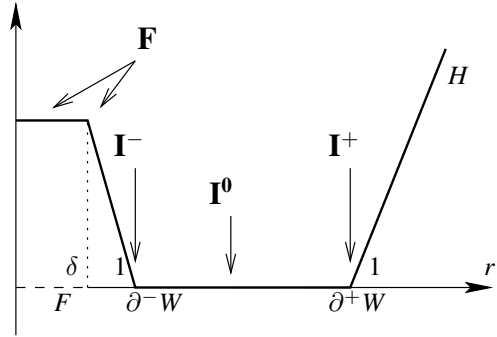}
\caption{Cofinal family of Hamiltonians for $SH_*^{\heartsuit}(W)$}
\label{fig:H-heuristic}
\end{figure}

The next proposition will be proved as
Proposition~\ref{prop:homotopy_invariance_transfer} below. 

\begin{proposition} \label{prop:invariance-SHW}
Each of the above six versions of symplectic homology is an invariant
of the Liouville homotopy type of the pair $(W;F)$.  
\end{proposition}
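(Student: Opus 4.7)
The plan is to establish, for any smooth one-parameter family $(W,\lambda_s,F_s)$, $s\in[0,1]$, of filled Liouville cobordisms, canonical isomorphisms between the six groups $SH_*^\heartsuit$ at the endpoints. The strategy combines a Moser/Liouville-flow argument producing a diffeomorphism of completions with the continuation-map machinery for Floer homology already implicit in the definition of $SH_*^\heartsuit$.

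First I would construct an exact symplectomorphism $\Phi:\wh W_{F_0}\to\wh W_{F_1}$ intertwining the cylindrical-end structures. The family $\lambda_s$ on the compact manifold $W\sqcup F$ differs infinitesimally by an exact $1$-form, so a standard Moser-type deformation equation of the form $\iota_{X_s}d\lambda_s=-\dot\lambda_s+df_s$ defines a time-dependent vector field $X_s$; since the Liouville vector fields $Z_s$ point inward along $\p^-W$ and outward along $\p^+W$ uniformly in $s$, one can extend $X_s$ over the cylindrical ends using the Liouville flow so that the time-$1$ map $\Phi$ is a diffeomorphism of completions satisfying $\Phi^*\lambda_1=\lambda_0+d\sigma$ for some compactly supported function $\sigma$.

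Next I would verify that pullback by $\Phi$ sends $\cH(W;F_1)$ to $\cH(W;F_0)$ up to a harmless cofinal modification on the cylindrical ends: if $H_1\in\cH(W;F_1)$ is linear with slope $\tau$ outside a compact set, then $H_1\circ\Phi$ is linear of the same slope outside a possibly larger compact set after a reparametrisation of the radial coordinate, and still vanishes on $W$. Since $\Phi$ is an exact symplectomorphism, the Hamiltonian action of closed orbits is shifted by a uniformly bounded constant. Thus $\Phi$ induces isomorphisms $FH_*^{(a,b)}(H_1)\cong FH_*^{(a,b)}(H_1\circ\Phi)$ that commute with continuation maps, and after passing to the direct limit over cofinal families one obtains isomorphisms $SH_*^{(a,b)}(W_0)\cong SH_*^{(a,b)}(W_1)$ for $(a,b)$ outside the varying Reeb spectra of $\p^\pm W$.

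The main obstacle, and presumably the reason the proof is deferred until the transfer map is in place, is to check compatibility with the six-fold limit structure of Definition~\ref{defi:SH(W)}. The key observation is that admissible Hamiltonians vanish on $W$ throughout the homotopy, so the action level $0$ separating the $\heartsuit$-subcomplexes is preserved rigidly by $\Phi$; the actions of orbits near $\p^-W$ stay negative and those near $\p^+W$ stay positive uniformly in $s$. It remains to argue that cofinal families of Hamiltonians can be chosen to vary piecewise continuously with $s$ across the chambers of the Reeb spectrum, so that the isomorphisms on $SH_*^{(a,b)}$ assemble compatibly with direct and inverse limits. This is an instance of the higher homotopy property of Floer continuation maps, and once in place, commuting isomorphisms with the direct and inverse limits completes the proof for all six flavors.
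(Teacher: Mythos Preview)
Your approach differs substantially from the paper's, and there is a concrete error in the second paragraph. The paper defers the proof to Proposition~\ref{prop:homotopy_invariance_transfer}, where it is obtained as a byproduct of the transfer map machinery: for deformations of the Liouville structure that are constant along $\p W$, the identity (viewed as an exact embedding) induces a transfer map which is an isomorphism by a direct continuation argument; for deformations that change the contact form on $\p W$, one attaches to $\p W$ topologically trivial collar cobordisms carrying Liouville structures interpolating between the two boundary contact forms, and then invokes functoriality of the transfer map under composition of cobordisms (Proposition~\ref{prop:func-transfer}). No global Moser argument is used.

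Your Moser/pullback idea is natural, but the assertion ``and still vanishes on $W$'' is false: if $H_1\in\cH(W;F_1)$ vanishes on $W$, then $H_1\circ\Phi$ vanishes on $\Phi^{-1}(W)$, and the Moser vector field $X_s$ has no reason to be tangent to $\p^\pm W$, so $\Phi^{-1}(W)\neq W$ in general. Thus $\Phi^*\cH(W;F_1)$ is not contained in $\cH(W;F_0)$, even after cofinal adjustments on the ends, and the direct limit of $FH_*^{(a,b)}(H_1\circ\Phi)$ computes $SH_*^{(a,b)}(\Phi^{-1}(W);\lambda_0)$ rather than $SH_*^{(a,b)}(W;\lambda_0)$. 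Closing this gap requires comparing the symplectic homologies of the two isotopic cobordisms $W$ and $\Phi^{-1}(W)$ inside the same completion --- which is essentially the statement you are trying to prove, and whose resolution (via continuation or transfer maps between nested copies) brings you back to the paper's argument. As a minor aside, your claim that actions are ``shifted by a uniformly bounded constant'' undersells the situation: since $\Phi$ is exact and loops are closed, $\int\gamma_0^*d\sigma=0$ and actions are preserved exactly.
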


The following computation is fundamental in applications.

\begin{proposition} \label{prop:SH=0W}
Let $\dim\, W=2n$. Then we have a canonical isomorphism 
$$
SH_*^{=0}(W)\cong H^{n-*}(W).
$$
\end{proposition}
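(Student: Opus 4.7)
The plan is to exhibit a cofinal family of admissible Hamiltonians for which the filtered Floer complex in a small action window around zero reduces to the Morse cochain complex of a Morse function on $W$, hence computes $H^{n-*}(W)$.

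I would first fix a Morse function $f:W\to\R$ whose gradient (for some Riemannian metric) points outward along $\p W=\p^-W\sqcup\p^+W$ and which depends only on the cylindrical coordinate $r$ in a small collar of $\p W$. Its critical points then lie in $\mathrm{int}(W)$, and with this boundary behavior the Morse cochain complex of $f$ computes the absolute cohomology $H^*(W)$. I would then build a cofinal family $\{H_n\}_{n\geq 1}\subset\cH(W;F)$ of Hamiltonians of the shape in Figure~\ref{fig:H-heuristic}, with slopes $\mu_n,\tau_n\to\infty$, $\delta_n\to 0$, and shrinking bending regions, and perturb each $H_n$ on $W$ to $\tilde H_n:=H_n+\epsilon_n f$ with $\epsilon_n\to 0$, arranged so that $\tilde H_n$ still depends only on $r$ in the chosen collar of $\p W$. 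By the convention on perturbation size recalled after Definition~\ref{defi:SH(W)}, this provides a cofinal family for the direct limit defining $SH^{(a,b)}_*(W)$.

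Next I would fix an action window $(a,b)$ with $a<0<b$ and $\max(|a|,|b|)<T_0$, where $T_0$ is the minimum period of a closed Reeb orbit on either $\p^-W$ or $\p^+W$. The $1$-periodic orbits of $\tilde H_n$ split into the four classes of Figure~\ref{fig:H-heuristic}: constants deep inside $F$, whose action tends to $-\infty$ as $\mu_n\to\infty$ and $\delta_n\to 0$; orbits of type $I^\pm$ sitting over closed Reeb orbits of period $T\geq T_0$ on $\p^\pm W$, whose action a direct computation using the Liouville form shows to equal $\pm T$; and orbits of type $I^0$ which, for $\epsilon_n$ small, are precisely the critical points of $f$ viewed as constants, with action of order $\epsilon_n$. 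For $n$ large and $\epsilon_n$ sufficiently small the action window therefore selects only the critical points of $f$, so
\[
   FC^{(a,b)}_*(\tilde H_n)=\bigoplus_{p\in\mathrm{Crit}(f)}\mathfrak{k}\cdot p,
\]
graded by $\mathrm{CZ}(p)=n-\mathrm{ind}_f(p)$, which is well-defined since $c_1(W)=0$.

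Finally I would pick time-independent admissible almost complex structures $J_n$ that on $W$ equal a fixed $J_0$ whose associated metric is the one used to compute $\nabla f$, and that are cylindrical near $\p W$ and at infinity. By Lemma~\ref{lem:no-escape} applied with $V=W$, any Floer cylinder whose two asymptotes lie inside $W$ must stay in $W$, where $\tilde H_n=\epsilon_n f$ is a $C^2$-small time-independent Hamiltonian on the symplectically aspherical manifold $W$. The theorem of Salamon--Zehnder~\cite{SZ92} then identifies, for $\epsilon_n$ sufficiently small, the Floer trajectories between critical points of $f$ with the negative gradient trajectories of $f$. Since the Floer differential decreases the Conley--Zehnder index by one, and hence \emph{increases} the Morse index by one, this realizes $FC^{(a,b)}_k(\tilde H_n)$ as the Morse cochain complex of $f$ in degree $n-k$, so $FH^{(a,b)}_k(\tilde H_n)\cong H^{n-k}(W)$. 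Continuation maps between different $\tilde H_n$ restrict to the identity on this constant subcomplex, so the direct limit is $H^{n-k}(W)$, and the remaining limits as $a\nearrow 0$ and $b\searrow 0$ are trivial. The hard part will be the confinement step, since the Salamon--Zehnder identification makes sense only after ruling out Floer cylinders that wander out of $W$; this is where Lemma~\ref{lem:no-escape} plays the decisive role, and why arranging $\tilde H_n$ to be radial near $\p W$ is important.
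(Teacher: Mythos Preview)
Your overall strategy---reduce to Morse theory on $W$ via a small Hamiltonian and the Salamon--Zehnder identification---matches the paper's, and your orbit/action analysis is correct. However, your confinement step contains a genuine gap: Lemma~\ref{lem:no-escape} cannot be applied with $V=W$. Inspecting the proof of that lemma, the region $V$ must locally be the sublevel set $\{r\le r_0\}$, i.e.\ the Liouville vector field must point \emph{out} of $V$ along all of $\p V$. For the cobordism $W$ this holds at $\p^+W$ but fails at the concave boundary $\p^-W$, where the Liouville vector field points \emph{into} $W$; the final sign in the proof of the lemma flips and no contradiction is obtained. So the no-escape lemma only prevents escape through $\p^+W$ and says nothing about Floer cylinders dipping into the filling $F$. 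Since your $\tilde H_n$ have large slopes $\mu_n$ on the $F$ side, you cannot invoke~\cite{SZ92} there, and the argument breaks down precisely at the ``hard part'' you flagged.

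The paper circumvents this by a different organization. Rather than computing $FH_*^{(-\eps,\eps)}$ for every member of the cofinal family, it fixes a \emph{single} Hamiltonian $K$ which is $C^2$-small on all of $W_F$ (so in particular the value $c$ on $F$ and the slope near $\p^-W$ are small). For such a $K$ the global Salamon--Zehnder identification of Floer with Morse applies on $W_F$ (no-escape with $V=W_F$ handles the positive end), and Morse trajectories between critical points in $W$ cannot enter $F$ simply because the Morse function takes larger values there. This gives $FH_*^{(-\eps,\eps)}(K)\cong H^{n-*}(W)$. The passage to the full direct limit is then handled by a separate continuation argument: for any larger $H$ in the cofinal family the generators of $FC_*^{(-\eps,\eps)}(H)$ are still only the critical points in $W$, and the monotone continuation map from $K$ to $H$ is an isomorphism. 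You could repair your proof either by adopting this two-step structure, or by supplying an independent argument (e.g.\ an energy/monotonicity bound) that rules out excursions into $F$ for trajectories of energy $<2\eps$.
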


\begin{proof}
{\color{black} Consider a Hamiltonian $K$ of the shape as in Figure~\ref{fig:H-heuristic}. 
Since $\wh W_F$ is symplectically aspherical, it follows
from~\cite[Theorem~7.3]{SZ92} (see
also~\cite[Proposition~1.4]{Viterbo99}) that if $K$ is sufficiently
$C^2$-small on $W$, then its Floer chain complex reduces to the Morse
cochain complex for an appropriate choice of almost complex structure. 
Fix such a $K$ and denote by $c>0$ its constant value on the filling
$F$. Pick $\eps$ with $0<\eps<c$, so that the constant orbits in $F$
have action $-c<-\eps$. Since the Conley-Zehnder index of a critical
point is related to its Morse index by $\mbox{CZ}=n-\mbox{Morse}$, we
get a canonical isomorphism $FH_*^{(-\eps,\eps)}(K)\cong H^{n-*}(W)$. 

Consider any other Hamiltonian $H$ of the shape as in Figure~\ref{fig:H-heuristic}
with $K\leq H$. We choose $\eps$ smaller than the smallest action of a
closed Reeb orbit on $\p W$. Then all nonconstant orbits of $H$ have
action outside $(-\eps,\eps)$ and a monotone homotopy from $K$ to $H$
yields a continuation isomorphism $FH_*^{(-\eps,\eps)}(K)\stackrel{\cong}\to
FH_*^{(-\eps,\eps)}(H)$, which induces in the direct limit over $H$ a
canonical isomorphism $FH_*^{(-\eps,\eps)}(K)\stackrel{\cong}\to
SH_*^{(-\eps,\eps)}(W) = SH_*^{=0}(W)$. }
\end{proof}

\begin{remark} 
If $W$ is a Liouville domain we have 
$$
SH_*^{<0}(W)=0,\qquad SH_*^{\le 0}(W)=SH_*^{=0}(W),\qquad SH_*^{\ge 0}(W)=SH_*(W),
$$ 
and the group $SH_*^{>0}(W)$ coincides by definition with the group $SH_*^+(W)$ of~\cite{BOcont}.
If $W$ is a Liouville cobordism with Liouville filling $F$ we have (by
a standard continuation argument)
$$
SH_*^{>0}(W)\cong SH_*^{>0}(W_F).
$$
\end{remark}

\begin{proposition} \label{prop:taut-triang-W}
The following ``tautological" exact triangles hold for the symplectic homology groups of $W$:
\begin{equation*} 
\xymatrix
@C=10pt
@R=18pt
{
SH_*^{<0} \ar[rr] & & 
SH_* \ar[dl] \\ & SH_*^{\ge 0} \ar[ul]^{[-1]}  
}
\qquad 
\xymatrix
@C=10pt
@R=18pt
{
SH_*^{\le 0} \ar[rr] & & 
SH_* \ar[dl] \\ & SH_*^{> 0} \ar[ul]^{[-1]}  
}
\end{equation*}
\begin{equation*}
\xymatrix
@C=10pt
@R=18pt
{
SH_*^{< 0} \ar[rr] & & 
SH_*^{\le 0} \ar[dl] \\ & SH_*^{= 0} \ar[ul]^{[-1]}  
}
\qquad
\xymatrix
@C=10pt
@R=18pt
{
SH_*^{= 0} \ar[rr] & & 
SH_*^{\ge 0} \ar[dl] \\ & SH_*^{> 0} \ar[ul]^{[-1]}  
}
\end{equation*}
\end{proposition}

\begin{proof}
We prove the exactness of the triangle 
\begin{equation}\label{eq:first-ex-tr}
SH_*^{\le 0}(W)\to SH_*(W)\to SH_*^{>0}(W) \to SH_*^{\le 0}(W)[-1]\,.
\end{equation}
The proofs for the other three triangles are similar and left to the reader. 

Let $\varepsilon>0$ be smaller than the minimal period of a closed
characteristic on $\p^+W$. It follows from the definitions that  
$$
SH_*^{\le 0}(W) = \lim^{\longleftarrow}_{a\to-\infty} SH_*^{(a,\varepsilon)}(W)
$$
and 
$$
SH_*^{>0}(W) = \lim^{\longrightarrow}_{b\to\infty} SH_*^{(\varepsilon,b)}(W).
$$
For fixed $a,b\in\mathbb{R}$ such that
$-\infty<a<0<\varepsilon<b<\infty$ we have from~\eqref{eq:taut2} an exact triangle  
$$
SH_*^{(a,\varepsilon)}(W)\to SH_*^{(a,b)}(W)\to SH_*^{(\varepsilon,b)}(W)\to SH_*^{(a,\varepsilon)}(W)[-1]\,.
$$
All the terms in this exact triangle are finite dimensional vector spaces. The inverse limit functor is exact on directed systems consisting of finite dimensional vector spaces, and the direct limit functor is always exact. We then obtain~\eqref{eq:first-ex-tr} by first taking  the inverse limit on $a\to-\infty$, and then taking the direct limit on $b\to\infty$. 
\end{proof}

\noindent \emph{Symplectic homology groups relative to boundary components}. 
Let $A\subset \p W$ be a union of boundary components of $W$ and denote 
$$
A^\pm=A\cap \p^\pm W.
$$
We further assume that $A^-$ is a union of boundaries of components of $F$. We refer to such an $A$ as \emph{an admissible subset of $\p W$}.

\noindent {\bf Examples}. 
One obvious choice is $A^-=\p ^-W$, which satisfies the assumption for any $F$. If each component of $F$ has connected boundary then one can take $A^-\subset \p^-W$ arbitrary. If $F$ consists of a single connected component then the only possible choices are $A^-=\p^- W$ or $A^-=\varnothing$. Note also that, if $A$ satisfies the assumption, then $A^c:=\p W\setminus A$ also does.

Let $F_{A^-}$ denote the filling of $(A^-,\alpha^-)$ consisting of the union of the components of $F$ with boundary contained in $A^-$. Denote 
$$
(\wh W_F\setminus W)_A= \mbox{int}\,F_{A^-}\cup ((1,\infty)\times A^+), 
$$
so that 
$$
\wh W_F\setminus W = (\wh W_F\setminus W)_A \sqcup (\wh W_F\setminus W)_{A^c}.
$$
Given real numbers $-\infty<a<b<\infty$, we define the \emph{filtered symplectic homology groups of $W$ relative to $A$ (with respect to the filling $F$)} to be 
\begin{equation} \label{eq:SH*abWA}
SH_*^{(a,b)}(W,A)=
\lim^{\longrightarrow}_{\scriptsize \begin{array}{c} H\in\cH(W;F) \\ H\to\infty \mbox{ on } (\wh W_F\setminus W)_{A^c} \end{array}} 
\lim^{\longleftarrow}_{\scriptsize \begin{array}{c} H\in\cH(W;F) \\ H\to-\infty \mbox{ on } (\wh W_F\setminus W)_A \end{array}} 
FH_*^{(a,b)}(H).
\end{equation}

\begin{definition} \label{defi:SHWA}
We define six flavors of \emph{symplectic homology groups of $W$ relative to $A$, or symplectic homology groups of the pair $(W,A)$}, 
$$
SH_*^\heartsuit(W,A), \qquad \heartsuit\in \{\varnothing,>0,\ge 0, =0, \le 0, <0\},
$$
by the formulas in Definition~\ref{defi:SH(W)} with $SH_*^{(a,b)}(W)$ replaced by $SH_*^{(a,b)}(W,A)$.
The notation $SH_*^\heartsuit$ with $\heartsuit=\varnothing$ refers to $SH_*$.
\end{definition}

We refer to Figure~\ref{fig:1} for an illustration of several significant cases of Hamiltonians used in the computation of relative symplectic homology groups. The case $A=\varnothing$ corresponds to Figure~\ref{fig:H-heuristic}. In each case, in the limit the orbits that appear in the filling either fall below or fall above any fixed and finite action window, so that only orbits appearing near $W$ are taken into account. As an example, $SH_*(W,\p^-W)$ corresponds to a a certain count of positively parameterized closed Reeb orbits on $\p^-W$, of constant orbits in $W$, and of positively parameterized closed Reeb orbits on $\p^+W$. Similar interpretations hold for $SH_*(W,\p^+W)$, $SH_*(W,\p W)$, and also for all their $\heartsuit$-flavors. In Figure~\ref{fig:1} we encircled with a dashed line the region which contains the orbits that are taken into account. The mnemotechnic rule is the following: 
\begin{center}
\emph{To compute $SH_*^{\heartsuit}(W,A)$ one must use a family of Hamiltonians that go to $-\infty$ near $A$ and that go to $+\infty$ near $\p W\setminus A$.}
\end{center} 

\begin{figure} [ht]
\centering
\input{SH-rel.pstex_t}
\caption{Shape of Hamiltonians for $SH_*(W,A)$ with $A=\varnothing,\p W,\p^-W,\p^+W$}
\label{fig:1}
\end{figure}

Our notation is motivated by the following analogue of
Proposition~\ref{prop:SH=0W}, {\color{black}which is proved in the same way.}

\begin{proposition} \label{prop:SH=0WA}
Let $\dim\, W=2n$ and $A\subset \p W$ be admissible. Then we have a canonical isomorphism 
$$
SH_*^{=0}(W,A)\cong H^{n-*}(W,A).
$$
\hfill{$\square$}
\end{proposition}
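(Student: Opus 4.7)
The plan is to imitate the proof of Proposition~\ref{prop:SH=0W}, with the Morse-theoretic computation adapted to the relative boundary structure. First I would construct a model Hamiltonian $K$ on $\wh W_F$ of the shape dictated by the mnemonic rule for $SH_*(W,A)$: on $W$ the Hamiltonian equals $\epsilon f$ for a Morse function $f$ on $W$ and small $\epsilon>0$, while outside $W$ it depends only on the radial coordinate and goes to $+\infty$ near $\p W\setminus A$ and to $-\infty$ near $A$. In order for $\epsilon f$ on $W$ to glue smoothly to the shape of $K$ in the exterior, $\nabla f$ must point strictly inward along $A$ and strictly outward along $\p W\setminus A$.

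Next, fix $\epsilon'>0$ smaller than both $\epsilon\max_W|f|$ and the smallest period of a closed Reeb orbit on $\p W$. By choosing the slopes of $K$ sufficiently large and outside the Reeb spectrum of $\p W$, all nonconstant $1$-periodic orbits of $K$ have action outside $(-\epsilon',\epsilon')$, while constants at critical points of $f$ lie inside this window. Lemma~\ref{lem:no-escape} applied with $V=W$ confines Floer trajectories whose asymptotes are constants in $W$ to $W$ itself. Combined with~\cite[Theorem~7.3]{SZ92}, this identifies $FC_*^{(-\epsilon',\epsilon')}(K)$ with the Morse cochain complex of $f$ on $W$---the cochain direction arising because the Floer differential decreases the Conley--Zehnder index and $CZ(p)=n-\mathrm{ind}(p)$, so Morse index is raised by one.

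A standard result in Morse theory on manifolds with boundary now identifies the cohomology of this cochain complex with $H^*(W,A)$: with $\nabla f$ pointing inward along $A$ and outward along $\p W\setminus A$, the descending gradient flow of $-f$ exits along $\p W\setminus A$, so $HM_*(-f)\cong H_*(W,\p W\setminus A)$, and Poincar\'e--Lefschetz duality yields $HM^*(f)\cong H^*(W,A)$. After the degree shift by $n$ this gives $FH_j^{(-\epsilon',\epsilon')}(K)\cong H^{n-j}(W,A)$. For any admissible Hamiltonian $H$ of the appropriate shape with $H\succ K$, a monotone continuation homotopy induces an isomorphism on $(-\epsilon',\epsilon')$-filtered Floer homology, since no nonconstant orbits contribute in this window. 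Passing to the direct limit over such $H$ and to the direct and inverse limits in $a,b$---which stabilize once $|a|,|b|$ are smaller than the Reeb action gap---gives the desired isomorphism. The main obstacle will be confirming that the Floer setup forces precisely the inward/outward gradient conditions on $f$ that, on the Morse side, yield $H^*(W,A)$ rather than some other relative (co)homology; once this matching is checked, the remainder is a faithful adaptation of Proposition~\ref{prop:SH=0W}.
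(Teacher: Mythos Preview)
Your overall approach is correct and is precisely what the paper intends (it says only that the proof is ``in the same way'' as Proposition~\ref{prop:SH=0W}). The boundary conditions on $f$ and the resulting Morse-theoretic identification with $H^*(W,A)$ are the key new ingredient, and you have them right.

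There is one genuine imprecision: Lemma~\ref{lem:no-escape} cannot be applied with $V=W$. That lemma requires the Liouville vector field to point outward along all of $\partial V$ (this is what ``$\lambda|_{\partial V}$ is a positive contact form'' means and how it is used in the proof, where one needs $r$ to increase when leaving $V$). At $\partial^-W$ the Liouville field points \emph{into} $W$, so the hypothesis fails there. You can apply the lemma with $V=W_F=F\cup W$ to confine trajectories to $W_F$, preventing escape beyond $\partial^+W$; a separate argument is then needed on the concave side. One route: once confined to the compact set $W_F$, the small-energy argument behind~\cite[Theorem~7.3]{SZ92} shows that for $\epsilon$ small the only Floer cylinders between constants in $W$ are $t$-independent gradient trajectories of $K$, and a direct check of the direction of $-\nabla K$ near $\partial^-W$ (in either case $A^-=\emptyset$ or $A^-\neq\emptyset$) shows such trajectories cannot exit through $\partial^-W$ and still return to a critical point in $W$. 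The paper's own proof of Proposition~\ref{prop:SH=0W} is equally terse on this confinement issue, so the gap is minor but worth flagging.
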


The tautological exact triangles described in Proposition~\ref{prop:taut-triang-W} also exist for the relative symplectic homology groups $SH_*^{\heartsuit}(W,A)$ (same proof). Also, the relative symplectic homology groups $SH_*^\heartsuit(W,A)$ are invariants of the Liouville homotopy type of the pair $(W,F)$ (see~\S\ref{sec:ex-triangle-pair}, compare Propositions~\ref{prop:invariance-SHW} and~\ref{prop:invariance-SHWV}).

\subsection{Symplectic homology groups of a pair of filled Liouville cobordisms}\label{sec:SHpair}

A \emph{Liouville pair, or pair of Liouville cobordisms}, is a triple
$(W,V,\lambda)$ where $(W,\lambda)$ is a Liouville
cobordism and $V\subset W$ is a codimension $0$ submanifold with
boundary such that  
\renewcommand{\theenumi}{\roman{enumi}}
\begin{enumerate}
\item $(V,\lambda|_V)$ is a Liouville cobordism;
\item $\overline{W\setminus V}$ is a disjoint union of two (possibly
  empty) Liouville cobordisms $W^{bottom}$ and $W^{top}$ such that  
$$
W=W^{bottom}\circ V \circ W^{top}.
$$
\end{enumerate}
We fix a filling $F$ of $W$ and define $W_F$, $\wh W_F$ as above. 
We define the class
$$
\cH(W,V;F)
$$
of \emph{admissible Hamiltonians on $(W,V)$ with respect to the filling $F$} to consist of elements $H:S^1\times \wh W_F\to \R$ such that $H\in\cH(\wh W_F)$ and $H= 0$ on $W\setminus V$ (see Figure~\ref{fig:H-cob}).
Given real numbers $-\infty<a<b<\infty$, we define \emph{the action-filtered symplectic homology groups of $(W,V)$ (with respect to the filling $F$)} to be 
\begin{equation} \label{eq:SH*abWV}
SH_*^{(a,b)}(W,V)=
\lim^{\longrightarrow}_{\scriptsize \begin{array}{c} H\in\cH(W,V;F) \\ H\to\infty \mbox{ on } (\wh W_F\setminus W) \end{array}} 
\lim^{\longleftarrow}_{\scriptsize \begin{array}{c} H\in\cH(W,V;F) \\ H\to-\infty \mbox{ on } \mbox{int}\,V \end{array}} 
FH_*^{(a,b)}(H).
\end{equation}

\begin{definition} \label{defi:SHWV}
We define six flavors of \emph{symplectic homology groups of the
  Liouville pair $(W,V)$}, 
$$
SH_*^\heartsuit(W,V), \qquad \heartsuit\in \{\varnothing,>0,\ge 0, =0, \le 0, <0\},
$$
by the formulas in Definition~\ref{defi:SH(W)} with $SH_*^{(a,b)}(W)$ replaced by $SH_*^{(a,b)}(W,V)$.
The notation $SH_*^\heartsuit$ with $\heartsuit=\varnothing$ refers to $SH_*$.
\end{definition}

To describe the geometric content of the definition we consider a
cofinal family of Hamiltonians $H$ of the shape described in
Figure~\ref{fig:H-cob}. Heuristically, each of the groups
$SH_*^\heartsuit(W,V)$ represents a certain count of negatively
parameterized closed Reeb orbits on $\p^-W$ and $\p^-V$, of constant
orbits in $\overline{W\setminus V}$, and of positively parameterized
closed Reeb orbits on $\p^+V$ and $\p^+W$, which correspond to
generators of type ${I^{-0+}}$ and ${III^{-0+}}$ in
Figure~\ref{fig:H-cob}. However, unlike in the case of (relative)
symplectic homology groups for a single cobordism, it is not possible
to arrange the parameters of the Hamiltonians in the cofinal family so
that for a fixed and finite value of the action window $(a,b)$ the
group $FH_*^{(a,b)}(H)$ takes into account only orbits of types
${I^{-0+}}$ and ${III^{-0+}}$. Instead, we will use in
\S\ref{sec:excision} below an indirect argument relying on the 
confinement lemmas in~\S\ref{sec:confinement} and on the
properties of continuation maps in order to prove an isomorphism
between $SH_*^\heartsuit(W,V)$ and $SH_*^\heartsuit(W^{bottom},\p^-V)\oplus
SH_*^\heartsuit(W^{top},\p^+V)$ (Theorem~\ref{thm:excision}). There
we will also see (Corollary~\ref{cor:tub-nbhd}) that
Definition~\ref{defi:SHWA} is a special case of
Definition~\ref{defi:SHWV} by taking for $V$ a tubular neighbourhood
of a union of boundary components $A$. 

The following three results generalize the corresponding ones for a single cobordism. 

\begin{proposition} \label{prop:invariance-SHWV}
Each of the above six versions of symplectic homology is an invariant
of the Liouville homotopy type of the triple $(W,V,F)$.  
\end{proposition}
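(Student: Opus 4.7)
The plan is to extend the standard Liouville homotopy invariance of Floer theory (as in Proposition~\ref{prop:invariance-SHW}, whose proof the authors defer to Proposition~\ref{prop:homotopy_invariance_transfer}) to the setting of a pair of filled Liouville cobordisms. Given a smooth path $(W_s,V_s,\lambda_s,F_s)_{s\in[0,1]}$ of Liouville triples, a parametric Moser argument (see~\cite[Chapter~11]{Cieliebak-Eliashberg-book}) produces a family of exact diffeomorphisms $\phi_s\colon \wh{(W_0)}_{F_0}\to \wh{(W_s)}_{F_s}$ compatible with the decomposition $W=W^{bottom}\circ V\circ W^{top}$ and intertwining the Liouville forms up to a compactly supported exact $1$-form. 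This reduces the claim to the invariance of $SH_*^\heartsuit$ under such a family of exact isotopies of the completed manifold.

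Next I would use $\phi_s$ to identify, up to compactly supported perturbation that is absorbed in the limits, the class $\cH(W_s,V_s;F_s)$ of admissible Hamiltonians with $\cH(W_0,V_0;F_0)$. Given $H\in\cH(W_1,V_1;F_1)$ of the shape of Figure~\ref{fig:SHcobordism_pair}, the family $H_s:=\phi_s^*H$ yields a Floer continuation map between $FH_*^{(a,b)}(\phi_0^*H)$ and $FH_*^{(a,b)}(H)$. Since the isotopy is through exact symplectomorphisms, the action functional is preserved level-by-level (up to an exact term that vanishes on closed orbits), so this continuation map respects the action filtration; its inverse is obtained by running the isotopy backward, and so it is an isomorphism on each $FH_*^{(a,b)}$.

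The third step is to check compatibility with all the structure maps entering Definition~\ref{defi:SHWV}: the monotone continuation maps in the parameters $\mu,\nu,\tau$, and the natural maps $FH_*^{(a,b)}\to FH_*^{(a',b')}$ for $a\leq a'$, $b\leq b'$. Since these are induced by inclusions and quotients of action-filtered subcomplexes, and the $\phi_s$-continuation isomorphisms commute with them up to chain homotopy (by the standard higher-homotopy property of Floer continuation, cf.\ Lemma~\ref{lem:continuation_maps}), the isomorphisms descend to the iterated direct and inverse limits. Exactness of these limits in the relevant regime (direct limits are always exact; inverse limits are exact on systems of finite-dimensional vector spaces, which is our setting for field coefficients on a fixed finite action window) ensures that the resulting map $SH_*^\heartsuit(W_0,V_0;F_0)\to SH_*^\heartsuit(W_1,V_1;F_1)$ is an isomorphism for each flavor $\heartsuit\in\{\varnothing,>0,\geq 0,=0,\leq 0,<0\}$.

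The main obstacle is controlling the admissibility condition ``$H\equiv 0$ on $W\setminus V$'' along the isotopy, since the region $W_s\setminus V_s$ moves with $s$. The interpolating Hamiltonians between $H_0$ and $\phi_1^*H$ need not lie in $\cH(W_0,V_0;F_0)$ in the strict sense. I would overcome this either by slightly enlarging the class of admissible Hamiltonians (allowing $H$ small on $W\setminus V$ and showing cofinality is preserved), or more robustly by invoking the confinement lemmas of~\S\ref{sec:confinement}: the no-escape lemma (Lemma~\ref{lem:no-escape}) together with the asymptotic behaviour lemma (Lemma~\ref{lem:asy}) confines the Floer trajectories entering the continuation map to the regions where the interpolation has the correct shape, so that no stray orbits cross the action window and the filtered continuation maps are well-defined on the nose. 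This confinement argument is the same mechanism used later for the excision theorem, and is the substantive input beyond the purely formal Moser/continuation machinery.
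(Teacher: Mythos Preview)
Your direct approach is plausible in outline, but the paper takes a genuinely different and more indirect route that sidesteps exactly the obstacle you flag.

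The paper's proof (deferred to Proposition~\ref{prop:invariance-pair-WV}) does not attempt to track the complicated admissible class $\cH(W,V;F)$ along a Liouville homotopy at all. Instead, the argument proceeds in two steps. First, Proposition~\ref{prop:SHrel-dynamical-cone} establishes an isomorphism
\[
SH_*^\heartsuit(W,V)[-1]\;\cong\;SH_*^{\heartsuit,cone}(W,V),
\]
where the right-hand side is the limit of homologies of mapping cones of the chain-level transfer maps $f_{i,j}^\heartsuit$ from $W$ to $V$. Second, homotopy invariance of $SH_*^\heartsuit(W,V)$ then follows from the homotopy invariance of the absolute transfer map $f_!^\heartsuit\colon SH_*^\heartsuit(W;\lambda)\to SH_*^\heartsuit(V;\lambda)$ (Proposition~\ref{prop:homotopy_invariance_transfer}) together with the naturality of the cone construction under chain maps. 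In other words, the invariance of the relative groups is reduced to the invariance of the absolute groups and of the map between them.

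What each approach buys: your direct argument, if carried through, would stay closer to the original Definition~\ref{defi:SHWV} and not require the substantial detour through the cone identification and the full machinery of \S\ref{sec:adapted}--\S\ref{sec:ex-triangle-pair}. But the obstacle you identify---that the condition $H\equiv 0$ on $W\setminus V$ does not persist along the isotopy, so the interpolating Hamiltonians leave the admissible class---is real, and the confinement lemmas alone do not obviously resolve it for all flavors $\heartsuit$ (recall that the paper explicitly notes in the introduction that for the truncated versions $SH_*^\heartsuit$ it ``did not succeed in proving this directly''). The paper's cone approach avoids this entirely: it only ever needs homotopy invariance for the absolute groups $SH_*^\heartsuit(W)$ and $SH_*^\heartsuit(V)$ and for the transfer map, where the admissible Hamiltonians are simpler and the standard argument of Proposition~\ref{prop:homotopy_invariance_transfer} applies. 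As a bonus, this same mechanism simultaneously yields the exact triangle of the pair and its functoriality.
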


\begin{proof}
See Proposition~\ref{prop:invariance-pair-WV} below.
\end{proof}

\begin{proposition} \label{prop:SH=0WV}
Let $\dim\, W=2n$. Then we have a canonical isomorphism 
$$
SH_*^{=0}(W,V)\cong H^{n-*}(W,V).
$$
\end{proposition}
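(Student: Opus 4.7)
The plan is to mirror the proof of Proposition~\ref{prop:SH=0W}, with the small Morse perturbation now supported on $W\setminus V=W^{bottom}\sqcup W^{top}$ rather than on all of $W$, and with the extra escape channels into $\mathrm{int}\,V$ and into the filling being controlled by Lemma~\ref{lem:constant}. First I would fix a cofinal family of Hamiltonians $H=H_{\mu,\nu,\tau}\in\cH(W,V;F)$ of the shape of Figure~\ref{fig:SHcobordism_pair}: $H$ vanishes on $W\setminus V$, and the four bending regions attached to $\p^-W$, $\p^-V$, $\p^+V$, $\p^+W$ are strictly convex or strictly concave as functions of the radial coordinate, with signs matching the direction of growth of $H$ away from $W\setminus V$.

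On $W\setminus V$ I would perturb $H$ to $H_{f,\eps}:=H+\eps^2 f$, where $f$ is a Morse function whose gradient points outward along $\p^-W\cup\p^+W$ and inward along $\p^-V\cup\p^+V$; by the case analysis of Lemma~\ref{lem:constant}, these are exactly the gradient directions forced by the chosen convexity/concavity of $H$ at the four bending regions. For $\eps>0$ sufficiently small, the only $1$-periodic orbits of $H_{f,\eps}$ in a small action window $(-\eps_0,\eps_0)$ are the critical points of $f$, with action $-\eps^2 f(p)$; all other orbits have action bounded away from $0$ uniformly as $\mu,\nu,\tau\to\infty$.

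By \cite[Theorem~7.3]{SZ92} applied on a tubular neighborhood of each component of $W\setminus V$, any Floer trajectory between two such critical points whose image remains in this neighborhood is a Morse gradient trajectory of $f$. Confinement of all relevant trajectories to such a neighborhood follows by combining Lemma~\ref{lem:no-escape}, applied to the compact set $W_F$ with positive contact boundary $\p^+W$, to rule out escape above $\p^+W$, with Lemma~\ref{lem:constant} applied to both Liouville pairs $(W_F,W^{bottom})$ and $(W_F,W^{top})$, which forbids Floer cylinders whose asymptotes are a critical point of $f$ and an orbit lying on the opposite side of a component of $\p V\cup\p^-W$. Therefore $FH_*^{(-\eps_0,\eps_0)}(H_{f,\eps})$ reduces canonically to
\begin{equation*}
C^*_{\mathrm{Morse}}(W^{bottom},\p^-V)\oplus C^*_{\mathrm{Morse}}(W^{top},\p^+V),
\end{equation*}
graded with critical points of Morse index $k$ placed in Floer degree $n-k$ according to the convention $\mathrm{CZ}=n-\mathrm{Morse}$.

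Taking limits is now routine: monotone continuation within the cofinal family is controlled by the same confinement lemmas and induces the identity on the above Morse cochain complex, so both the direct limits over $\mu,\tau$ and inverse limit over $\nu$ defining $SH_*^{(a,b)}(W,V)$, and the outer limits $a\nearrow 0$, $b\searrow 0$ defining $SH_*^{=0}(W,V)$, stabilise. Excising the open set $V\setminus\p V$ from the pair $(W,V)$ identifies the cohomology of the above complex with $H^*(W,V)$, and the degree shift yields the canonical isomorphism $SH_*^{=0}(W,V)\cong H^{n-*}(W,V)$. The technical crux is matching the convexity/concavity of $H$ at the four bending regions with the gradient directions of $f$ so that Lemma~\ref{lem:constant} can be invoked on both sides of $\p V$ as well as at $\p^-W$ and $\p^+W$; once this matching is verified and the almost complex structure is chosen accordingly, the remainder of the argument is a direct transcription of the proof of Proposition~\ref{prop:SH=0W}.
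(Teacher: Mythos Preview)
Your proposal has a genuine gap. The assertion that ``all other orbits have action bounded away from $0$ uniformly as $\mu,\nu,\tau\to\infty$'' is false, and the paper's own proof opens by flagging precisely this obstruction: for Hamiltonians of the shape in Figure~\ref{fig:H-cob}, the orbits of type $II^-$ (negatively parameterized Reeb orbits on $\p^-V$, located near $(1+\eps)\times\p^-V$) have action approximately $-(1+\eps)T+\eps\nu$, where $T$ is the Reeb period. For any fixed window $(-\eps_0,\eps_0)$ and any sufficiently large $\nu$, there exist Reeb periods $T\approx\eps\nu/(1+\eps)$ producing $II^-$ orbits with action in that window. These are \emph{generators} of $FC_*^{(-\eps_0,\eps_0)}(H_{\mu,\nu,\tau})$, not merely potential endpoints of Floer trajectories. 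Lemma~\ref{lem:constant} can forbid certain trajectories between constant orbits and orbits across $\p V$, but it cannot delete generators from the chain complex; so even with perfect confinement you do not reduce to the Morse complex on $W\setminus V$ at the level of a single Hamiltonian.

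What is actually needed is to show that the $II^-$ contribution vanishes in the inverse limit over $\nu\to\infty$. That is exactly the content of Lemma~\ref{lem:vanishing}, which is one of the key inputs to the Excision Theorem~\ref{thm:excision}. The paper therefore takes the indirect route: apply excision to obtain $SH_*^{=0}(W,V)\cong SH_*^{=0}(W^{bottom},\p^-V)\oplus SH_*^{=0}(W^{top},\p^+V)$, invoke Proposition~\ref{prop:SH=0WA} on each summand (where no $II^-$ orbits arise), and finish with topological excision in singular cohomology. Your direct approach could in principle be completed, but only by reproving the substance of Lemma~\ref{lem:vanishing}; as written, the step that discards the $II^-$ orbits is unjustified.
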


{\color{black}\begin{proof}
The proof of Proposition~\ref{prop:SH=0W} does not carry over to this
situation because Hamiltonians as in Figure~\ref{fig:H-cob} may have
nonconstant orbits of action zero of type $II^-$. Instead, we combine
the Excision Theorem~\ref{thm:excision} with
Proposition~\ref{prop:SH=0WA} and excision in singular cohomology to
obtain canonical isomorphisms
\begin{align*}
   SH_*^{=0}(W,V)
   &\cong SH_*^{=0}(W^{bottom},\p^-V) \oplus SH_*^{=0}(W^{top},\p^+V) \cr
   &\cong H^{n-*}(W^{bottom},\p^-V) \oplus H^{n-*}(W^{top},\p^+V) \cr
   &\cong H^{n-*}(W,V).
\end{align*}
\end{proof}}

The proof of the following proposition is verbatim the same as the one
of Proposition~\ref{prop:taut-triang-W}.  
{\color{black} Recall to this effect that we are using field coefficients, and note that $SH_*^{(a,b)}(W,V)$ is finite dimensional for any choice of parameters $-\infty<a<b<\infty$. This holds because in the nondegenerate case there are only a finite number of closed Reeb orbits on $\p(\overline{W\setminus V})$ with action smaller than $\max(|a|,|b|)$, and only these orbits contribute to the relevant Floer complex for the cofinal family of Hamiltonians described in~\S\ref{sec:excision}. } 

\begin{proposition}  \label{prop:taut-triang-WV}
The following tautological exact triangles hold for the symplectic homology groups of a pair $(W,V)$:
\begin{equation*} 
\xymatrix
@C=10pt
@R=18pt
{
SH_*^{<0} \ar[rr] & & 
SH_* \ar[dl] \\ & SH_*^{\ge 0} \ar[ul]^{[-1]}  
}
\qquad 
\xymatrix
@C=10pt
@R=18pt
{
SH_*^{\le 0} \ar[rr] & & 
SH_* \ar[dl] \\ & SH_*^{> 0} \ar[ul]^{[-1]}  
}
\end{equation*}
\begin{equation*}
\xymatrix
@C=10pt
@R=18pt
{
SH_*^{< 0} \ar[rr] & & 
SH_*^{\le 0} \ar[dl] \\ & SH_*^{= 0} \ar[ul]^{[-1]}  
}
\qquad
\xymatrix
@C=10pt
@R=18pt
{
SH_*^{= 0} \ar[rr] & & 
SH_*^{\ge 0} \ar[dl] \\ & SH_*^{> 0} \ar[ul]^{[-1]}  
}
\end{equation*}
\hfill{$\square$}
\end{proposition}

{\color{black}
\subsection{Pairs of multilevel Liouville cobordisms with filling} \label{sec:multilevel} 
As mentioned in the Introduction, according to our conventions for pairs of Liouville cobordisms the symplectic homology group $SH_*(W,\p W)$ cannot be interpreted as $SH_*(W,[0,1]\times \p W)$ in case $\p W$ has both negative and positive components. We explain in this section a further extension of the setup which removes this limitation.

Let $\ell\ge 0$ be an integer. A \emph{Liouville cobordism with $\ell$ levels} is, in case $\ell\ge 1$, a disjoint union $W=W_1\sqcup W_2\sqcup\dots\sqcup W_\ell$ of Liouville cobordisms, called \emph{levels}, and is the empty set if $\ell=0$. We think of $W_1$ as being the ``bottom-most" level, and of $W_\ell$ as being the ``top-most" level. Each $W_i$ may itself be disconnected. Our previous definition of Liouville cobordisms corresponds to the case $\ell=1$. We also refer to such a $W$ as being a \emph{multilevel Liouville cobordism}. 

Let $V$ and $W$ be two Liouville cobordisms with $\ell$ levels. We say that $V$ and $W$ \emph{can be interweaved} if $\p^+V_i=\p^-W_i$ for $i=1,\dots,\ell$ and $\p^+W_i=\p^-V_{i+1}$ for $i=1,\dots,\ell-1$. The \emph{interweaving of $V$ and $W$}, denoted $V\diamond W$, is the Liouville cobordism with one level $V_1\circ W_1\circ \dots\circ V_\ell\circ W_\ell$. We allow in the definition the bottom-most or the top-most level of $V$ or $W$ to be empty, and in that case the condition for interweaving $V$ and $W$ which involves that level has to be understood as being void. In the case of cobordisms with one level, interweaving specialises to composition. See Figure~\ref{fig:interweave}.

\begin{figure}
         \begin{center}
\input{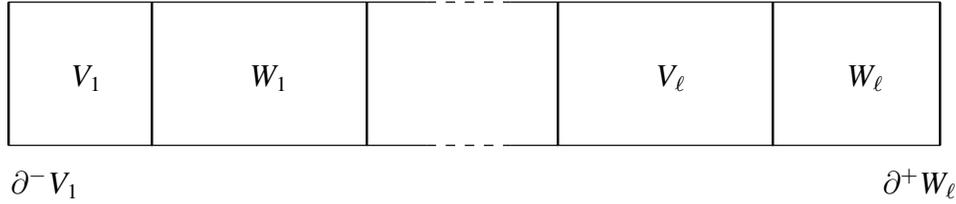}
         \end{center}
\caption{Interweaving of two multilevel cobordisms  \label{fig:interweave}}
\end{figure}

Given a Liouville cobordism $W$ with $\ell\ge 1$ levels, a \emph{Liouville filling} for $W$ is a Liouville cobordism with $\ell$ levels $F=F_1\sqcup\dots\sqcup F_\ell$ such that $F_1$ is a nonempty Liouville domain and $F$ and $W$ can be interweaved. In the case $\ell=1$, this notion specialises to our previous notion of a Liouville filling. 

Given a Liouville cobordism $W$ with one level, a \emph{Liouville sub-cobordism $V\subset W$} is a codimension $0$ submanifold such that with respect to the induced Liouville form $V$ and $V^c=\overline{W\setminus V}$ are multilevel Liouville cobordisms that can be interweaved. If $V$ has only one level then $(W,V)$ is a Liouville pair in the sense of~\S\ref{sec:SHpair}. 

Given a multilevel Liouville cobordism $W$, a \emph{Liouville sub-cobordism $V\subset W$} consists of a collection of (possibly empty) multilevel Liouville sub-cobordisms, one for each of the levels of $W$. We speak in such a situation of a \emph{pair of multilevel Liouville cobordisms}. In case $W$ has a filling, we speak of a \emph{pair of multilevel Liouville cobordisms with filling}. 

Let $(W,V)$ be a pair of multilevel Liouville cobordisms with filling $F$. Denote $W_F=F\diamond W$ and consider the symplectization $\wh W_F$. We define the class
$$
\cH(W,V;F)
$$
of \emph{admissible Hamiltonians on $(W,V)$ with respect to the filling $F$} to consist of elements $H:S^1\times \wh W_F\to \R$ such that $H\in\cH(\wh W_F)$ and $H= 0$ on $W\setminus V$ (see Figure~\ref{fig:H-cob-multi}).
Given real numbers $-\infty<a<b<\infty$, we define \emph{the action-filtered symplectic homology groups of $(W,V)$ (with respect to the filling $F$)} to be 
\begin{equation} \label{eq:SH*abWV-multilevel}
SH_*^{(a,b)}(W,V)=
\lim^{\longrightarrow}_{\scriptsize \begin{array}{c} H\in\cH(W,V;F) \\ H\to\infty \mbox{ on } (\wh W_F\setminus W) \end{array}} 
\lim^{\longleftarrow}_{\scriptsize \begin{array}{c} H\in\cH(W,V;F) \\ H\to-\infty \mbox{ on } \mbox{int}\,V \end{array}} 
FH_*^{(a,b)}(H).
\end{equation}

\begin{definition} \label{defi:SHWV-multilevel}
We define six flavors of \emph{symplectic homology groups of the
  multilevel Liouville pair $(W,V)$}, 
$$
SH_*^\heartsuit(W,V), \qquad \heartsuit\in \{\varnothing,>0,\ge 0, =0, \le 0, <0\},
$$
by the formulas in Definition~\ref{defi:SH(W)} with $SH_*^{(a,b)}(W)$ replaced by $SH_*^{(a,b)}(W,V)$.
The notation $SH_*^\heartsuit$ with $\heartsuit=\varnothing$ refers to $SH_*$.
\end{definition}

The above definition obviously specialises to Definition~\ref{defi:SHWV} in case $W$ is a filled Liouville cobordism with one level.

\begin{figure}
         \begin{center}
\input{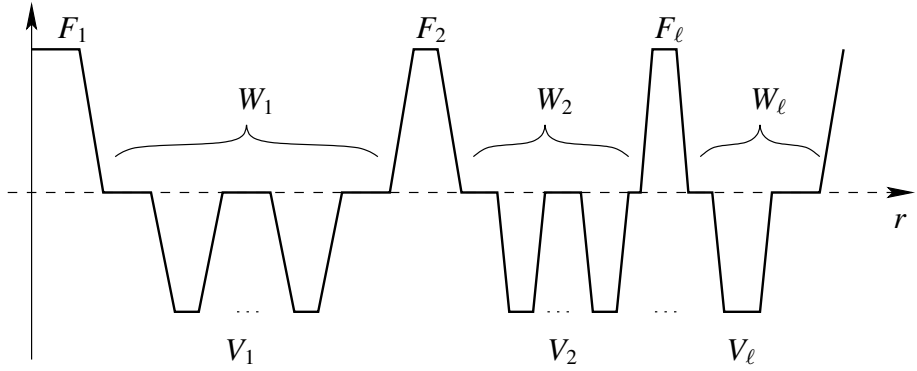}
         \end{center}
\caption{Hamiltonian in $\mathcal{H}(W,V;F)$ for a multilevel cobordism  \label{fig:H-cob-multi}}
\end{figure}

Within the paper we state and prove all the results for pairs of one level Liouville cobordisms with filling. However, all these results hold more generally for pairs $(W,V)$ of multilevel Liouville cobordisms with filling. The formulation of these more general statements is verbatim the same. The proofs are only superficially more involved: a repeated application of the Excision Theorem~\ref{thm:excision} allows one to restrict to the case where $W$ is a one level cobordism with filling, and the case of a multilevel sub-cobordism $V$ is treated in exactly the same way as that of a one level sub-cobordism. For these reasons, we will not give in the sequel any more details regarding multilevel Liouville cobordisms and will restrict to one level pairs. 

}

\section{Cohomology and duality} \label{sec:coh-and-duality}

\subsection{Symplectic cohomology for a pair of filled Liouville cobordisms} 

We continue with the notation of the previous section. Our definition of symplectic cohomology for a pair of filled Liouville cobordisms 
extends the one for Liouville domains
used in~\cite[\S2.5]{Cieliebak-Frauenfelder-Oancea}.  

The starting point of the definition is the dualization of the Floer
chain complex with coefficient {\color{black}field $\K$}. We denote 
$$
FC^*_{>a}(H) =\prod_{\scriptsize \begin{array}{c} x\in\cP(H)\\ 
    A_H(x)>a\end{array}} \K\cdot x.
$$
The grading is given by the Conley-Zehnder index, and the differential 
$\delta : FC^k_{>a}(H)\to FC^{k+1}_{>a}(H)$ is defined by  
$$
\delta x_- = \sum_{CZ(x_+)=CZ(x_-)+1} \#\cM(x_-,x_+;H,J)\cdot x_+.
$$
The differential increases the action, so that $FC^*_{>b}(H)\subset FC^*_{>a}(H)$ is a subcomplex if $a<b$. We define \emph{filtered Floer cochain groups}
$$
FC^*_{(a,b)}(H)=FC^*_{>a}(H)/FC^*_{>b}(H).
$$
We have a natural identification
$$
FC^*_{(a,b)}(H)\cong FC_*^{(a,b)}(H)^\vee,\qquad \delta = \p^\vee,
$$
where $FC_*^{(a,b)}(H)^\vee = \Hom_R(FC_*^{(a,b)}(H),R)$. 

We have natural morphisms at filtered cochain level defined by shifting the action window
$$
FC^*_{(a',b')}(H)\to FC^*_{(a,b)}(H),\qquad a\le a',\ b\le b'. 
$$
These morphisms are dual to the ones defined on Floer chain groups. Also, given admissible Hamiltonians $H_-\ge H_+$ and a decreasing homotopy from $H_-$ to $H_+$, we have filtered continuation maps which commute with the differentials 
$$
FC^*_{(a,b)}(H_-)\to FC^*_{(a,b)}(H_+).
$$
These continuation maps are dual to the ones defined on Floer chain groups, and commute with the morphisms defined by shifting the action window. The homotopy type of the continuation maps does not depend on the choice of decreasing homotopy with fixed endpoints. 

Let $W$ be a Liouville cobordism with filling $F$, and let $A\subset
\p W$ be an admissible union of boundary components as
in~\S\ref{sec:SHWA}. Recall also the notation $A^c=\p W\setminus A$ 
and $(\wh W_F\setminus W)_A=\mbox{int}\,F_{A^-}\cup ((1,\infty)\times A^+)$, and
  recall also the class $\cH(W;F)$ of admissible Hamiltonians
  from~\S\ref{sec:SHWA}. Let $-\infty<a<b<\infty$ be real numbers. We
  define the \emph{filtered symplectic cohomology groups of $W$
    relative to $A$ (with respect to the filling $F$)} to be 
\begin{equation} \label{eq:cohSH*abWA}
SH^*_{(a,b)}(W,A)= \lim^{\longrightarrow}_{\scriptsize \begin{array}{c} H\in\cH(W;F) \\ H\to-\infty \mbox{ on } (\wh W_F\setminus W)_{A} \end{array}} 
\lim^{\longleftarrow}_{\scriptsize \begin{array}{c} H\in\cH(W;F) \\ H\to\infty \mbox{ on } (\wh W_F\setminus W)_{A^c} \end{array}} 
FH^*_{(a,b)}(H).
\end{equation}
The mnemotechnic rule is the same as in the case of symplectic homology:
\begin{center}
\emph{To compute $SH^*_{(a,b)}(W,A)$ one must use a family of Hamiltonians that go to $-\infty$ near $A$ and that go to $+\infty$ near $\p W\setminus A$.}
\end{center} 

\begin{definition} \label{defi:cohWA}
We define six flavors of \emph{symplectic cohomology groups of $W$ relative to $A$}, or \emph{symplectic cohomology groups of the pair $(W,A)$}, 
$$
SH^*_\heartsuit(W,A),\qquad \heartsuit\in\{\varnothing,\ge 0, >0,=0,\le 0,<0\},
$$
by the following formulas (the notation $SH^*_\varnothing$ refers to $SH^*)$:
$$
SH^*(W,A)=\lim^{\longrightarrow}_{a\to-\infty}\lim^{\longleftarrow}_{b\to \infty} SH^*_{(a,b)}(W,A) \qquad \mbox{\sc (full symplectic cohomology)}
$$
$$
SH^*_{<0}(W,A)= \lim^{\longrightarrow}_{a\to-\infty} \lim^{\longleftarrow}_{b\nearrow 0} SH^*_{(a,b)}(W,A) \qquad \mbox{\sc (negative symplectic cohomology)}
$$
$$
SH^*_{\le 0}(W,A)= \lim^{\longrightarrow}_{a\to-\infty} \lim^{\longrightarrow}_{b\searrow 0} SH^*_{(a,b)}(W,A) \quad \! \mbox{\sc (non-positive symplectic cohomology)}
$$
$$
SH^*_{= 0}(W,A)= \lim^{\longleftarrow}_{a\nearrow 0} \lim^{\longrightarrow}_{b\searrow 0} SH^*_{(a,b)}(W,A) \qquad \mbox{\sc (zero-level symplectic cohomology)}
$$
$$
SH^*_{\ge 0}(W,A)= \lim^{\longleftarrow}_{a\nearrow 0} \lim^{\longleftarrow}_{b\to\infty} SH^*_{(a,b)}(W,A) \quad \mbox{\sc (non-negative symplectic cohomology)}
$$
$$
SH^*_{> 0}(W,A)= \lim^{\longrightarrow}_{a\searrow 0} \lim^{\longleftarrow}_{b\to\infty} SH^*_{(a,b)}(W,A) \qquad \mbox{\sc (positive symplectic cohomology)}
$$
\end{definition}

Let now $(W,V)$ be a pair of Liouville cobordisms with filling $F$ as in~\S\ref{sec:SHpair}, and recall the class $\cH(W,V;F)$ of admissible Hamiltonians for the pair $(W,V)$ with respect to the filling $F$. Let $-\infty<a<b<\infty$ be real numbers. We define the \emph{filtered symplectic cohomology groups of $(W,V)$ (with respect to the filling $F$)} to be
\begin{equation} \label{eq:cohSH*abWV}
SH^*_{(a,b)}(W,V)=
\lim^{\longrightarrow}_{\scriptsize \begin{array}{c} H\in\cH(W,V;F) \\ H\to-\infty \mbox{ on } V \end{array}} 
\lim^{\longleftarrow}_{\scriptsize \begin{array}{c} H\in\cH(W,V;F) \\ H\to\infty \mbox{ on } (\wh W_F\setminus W) \end{array}} 
FH^*_{(a,b)}(H).
\end{equation}

\begin{definition} \label{defi:cohWV}
We define six flavors of \emph{symplectic cohomology groups of the
  Liouville pair $(W,V)$}, 
$$
SH^*_\heartsuit(W,V), \qquad \heartsuit\in \{\varnothing,>0,\ge 0, =0, \le 0, <0\},
$$
by the formulas in Definition~\ref{defi:cohWA} with $SH^*_{(a,b)}(W,A)$ replaced by $SH^*_{(a,b)}(W,V)$. The notation $SH^*_\varnothing$ refers to $SH^*$.
\end{definition}

The discussion from~\S\ref{sec:SHpair} regarding the geometric content of the definition holds for cohomology as well. The following proposition is proved similarly to Proposition~\ref{prop:SH=0WV}. 

\begin{proposition} \label{prop:cohSH=0WA}
Let $(W,V)$ be a pair of Liouville cobordisms with filling of
dimension $2n$. Then we have a canonical isomorphism 
$$
SH^*_{=0}(W,V)\cong H_{n-*}(W,V).
$$
\hfill{$\square$}
\end{proposition}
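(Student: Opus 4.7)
The plan is to mirror the proof of Proposition~\ref{prop:SH=0WV}, replacing singular cohomology by singular homology throughout and exploiting the systematic duality between the Floer chain and cochain complexes encoded in the identification $FC^*_{(a,b)}(H)=FC_*^{(a,b)}(H)^\vee$ with $\delta=\p^\vee$.

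First I would establish the absolute relative-boundary analog $SH^*_{=0}(W,A)\cong H_{n-*}(W,A)$ along the lines of the proof of Proposition~\ref{prop:SH=0W}. For a cofinal family of admissible Hamiltonians $K$ that are $C^2$-small on $W$ (and tend to $-\infty$ on $(\widehat W_F\setminus W)_A$, to $+\infty$ on $(\widehat W_F\setminus W)_{A^c}$, as prescribed by the mnemotechnic rule after Definition~\ref{defi:cohWA}), pick $\eps>0$ smaller than the smallest Reeb period on $\p W$ and smaller than the absolute value of $K$ on the parts of the filling where $K$ is large. By~\cite{SZ92} the Floer chain complex $FC_*^{(-\eps,\eps)}(K)$ is canonically identified with the Morse cochain complex of a small Morse function $f$ on $W$ adapted to $A$, under $\mathrm{CZ}=n-\mathrm{Morse}$. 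Dualizing this identification gives an isomorphism between $FC^*_{(-\eps,\eps)}(K)$ in cohomological degree $k$ and the Morse chain complex of $f$ in degree $n-k$, whose homology is $H_{n-k}(W,A)$. Monotone continuation maps between such Hamiltonians induce isomorphisms on $FH^*_{(-\eps,\eps)}$ (since all nonconstant orbits lie outside $(-\eps,\eps)$), and passing to the inverse/direct limits appearing in Definition~\ref{defi:cohWA} yields $SH^*_{=0}(W,A)\cong H_{n-*}(W,A)$.

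Second, I would invoke the cohomological analog of the Excision Theorem~\ref{thm:excision}. Since we work with field coefficients $\mathfrak{k}$ and each of the action-windowed groups $FH_*^{(a,b)}(H)$ entering the construction is finite-dimensional, the cohomological excision isomorphism can be obtained from the homological one by applying $\mathrm{Hom}_\mathfrak{k}(-,\mathfrak{k})$ levelwise before passing to the limits; alternatively it follows from the same confinement lemmas of~\S\ref{sec:confinement} applied directly to the dual complex. Either way we get
$$
SH^*_{=0}(W,V)\cong SH^*_{=0}(W^{bottom},\p^-V)\oplus SH^*_{=0}(W^{top},\p^+V).
$$
Combining this with the first step applied to each summand and with excision in singular homology,
$$
SH^*_{=0}(W,V)\cong H_{n-*}(W^{bottom},\p^-V)\oplus H_{n-*}(W^{top},\p^+V)\cong H_{n-*}(W,V),
$$
which is the desired canonical isomorphism.

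The main obstacle is checking compatibility between the $\mathfrak{k}$-linear dualization and the inversion of direct/inverse limits that occurs when passing from Definition~\ref{defi:SHWV} to Definition~\ref{defi:cohWV}. This is precisely where the field hypothesis and the finite-dimensionality of each $FC_*^{(a,b)}(H)$ at the cofinal stage are used: duality turns the direct limit over $H$ (with $H\to\infty$ on $\widehat W_F\setminus W$) into an inverse limit and the inverse limit (with $H\to-\infty$ on $V$) into a direct limit, matching exactly the definition of $SH^*_{=0}(W,V)$ and its boundary-relative variant. Once this bookkeeping is in place the argument is a verbatim dualization of the proof of Proposition~\ref{prop:SH=0WV}.
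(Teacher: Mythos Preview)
Your proposal is correct and follows essentially the same approach as the paper, which simply indicates that the proposition is ``proved similarly to Proposition~\ref{prop:SH=0WV}'': first establish the boundary-relative case $SH^*_{=0}(W,A)\cong H_{n-*}(W,A)$ by dualizing the Morse-theoretic identification, then apply (the cohomological version of) the Excision Theorem~\ref{thm:excision} together with excision in singular homology. Your remark on the limit bookkeeping is a bit more cautious than necessary here, since for $\heartsuit=``{=}0"$ the action-window limits stabilize and the relevant Floer groups are finite-dimensional, but the argument is sound.
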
 

\subsection{Poincar\'e duality}\label{sec:Poincare_duality} 

The differences and the similarities between symplectic homology and symplectic cohomology are
mainly dictated by the order in which we consider direct and inverse
limits. We illustrate this by the following theorem, which was one of
our guidelines for the definitions.

\begin{theorem}[Poincar\'e duality]\label{thm:Poincare} 
Let $W$ be a filled Liouville cobordism and $A\subset \p W$ be an
admissible union of connected components. Then we have a canonical isomorphism
$$
SH_*^\heartsuit(W,A)\cong SH^{-*}_{-\heartsuit}(W,A^c).
$$
Here the symbol $\heartsuit$ takes the values $\varnothing,>0,\ge 0, =0, \le 0, <0$, and $-\heartsuit$ is by convention equal to $\varnothing,< 0,\le 0,=0,\ge 0,> 0$, respectively.
\end{theorem}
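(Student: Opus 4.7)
The plan is to realize Poincar\'e duality at chain level via time reversal, and then to check that the resulting isomorphism passes through all the limits appearing in Definitions~\ref{defi:SHWA} and~\ref{defi:cohWA}. Given an admissible Hamiltonian $H\in\cH(W;F)$ and an admissible almost complex structure $J_t$ on $\wh W_F$, set
$$
H'(t,x) := -H(-t,x), \qquad J'_t := J_{-t}.
$$
Then $H'\in\cH(W;F)$ as well; if $H\to+\infty$ on some region then $H'\to-\infty$ there, and vice versa. The map $x\mapsto x'$ with $x'(t):=x(-t)$ is a bijection between $1$-periodic orbits of $H$ and of $H'$, and a direct computation yields $A_{H'}(x')=-A_H(x)$ and $\CZ(x')=-\CZ(x)$. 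Moreover, the map $u\mapsto v$ with $v(s,t):=u(-s,-t)$ is a bijection between Floer cylinders for $(H,J)$ from $x_-$ to $x_+$ and Floer cylinders for $(H',J')$ from $x_+'$ to $x_-'$. Choosing compatible coherent orientations, we obtain for every action window $(a,b)$ a degree-reversing isomorphism
$$
\Phi_H\colon FC_*^{(a,b)}(H;J) \xrightarrow{\ \sim\ } FC^{-*}_{(-b,-a)}(H';J')
$$
which intertwines the Floer differential $\p$ with the Floer codifferential $\delta$.

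Next I would verify compatibility with continuation maps and action-window morphisms. Given a decreasing homotopy $H_s$ from $H_-$ to $H_+$ (with $H_-\ge H_+$), the family $K_s:=(H_{-s})'$ is a decreasing homotopy from $H_+'$ to $H_-'$ (with $H_+'\ge H_-'$), and the bijection $v(s,t):=u(-s,-t)$ identifies their parametric moduli spaces. Consequently $\Phi$ intertwines the homological continuation $FC_*(H_+)\to FC_*(H_-)$ (the direct-limit direction for $SH_*(W,A)$) with the cohomological continuation $FC^*(H_+')\to FC^*(H_-')$ (the direct-limit direction for $SH^*(W,A^c)$, where the direct limit is in the direction of decreasing Hamiltonian). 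Analogously it intertwines the inverse-limit morphisms on both sides, and the shift-of-window maps $FC_*^{(a,b)}\to FC_*^{(a',b')}$ for $(a,b)\subseteq(a',b')$ with $FC^*_{(-b,-a)}\to FC^*_{(-b',-a')}$. Since $H\mapsto H'$ interchanges the admissibility conditions ``$H\to+\infty$ on $(\wh W_F\setminus W)_{A^c}$'' and ``$H'\to-\infty$ on $(\wh W_F\setminus W)_{A^c}$'' (and similarly on $A$), it sends cofinal families for $SH_*^{(a,b)}(W,A)$ to cofinal families for $SH^{-*}_{(-b,-a)}(W,A^c)$. Passing to the direct and inverse limits in the Hamiltonian direction yields
$$
SH_*^{(a,b)}(W,A)\cong SH^{-*}_{(-b,-a)}(W,A^c).
$$

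Finally, the outer and inner limits over the action window in each flavor $SH_*^\heartsuit$ correspond, under the substitution $(a,b)\mapsto(-b,-a)$, exactly to the limits defining $SH^*_{-\heartsuit}$: outer $b\to\infty$ becomes outer $a'\to-\infty$, inner $a\searrow 0$ becomes inner $b'\nearrow 0$, inner $a\nearrow 0$ becomes inner $b'\searrow 0$, and so forth through the six cases $\heartsuit\in\{\varnothing,>0,\ge0,=0,\le0,<0\}$, each matching the prescribed order of limits in Definitions~\ref{defi:SHWA} and~\ref{defi:cohWA}. Combining everything produces the claimed isomorphism $SH_*^\heartsuit(W,A)\cong SH^{-*}_{-\heartsuit}(W,A^c)$. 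The hard part will be the sign bookkeeping: one must check that a coherent orientation scheme for the moduli spaces $\cM(x_-,x_+;H,J)$ pushes forward via $u\mapsto u(-\cdot,-\cdot)$ to a coherent scheme for $\cM(x_+',x_-';H',J')$, so that the signed counts defining $\p$ and $\delta$ agree under $\Phi$; this is standard but delicate, and may produce an overall normalizing sign that has to be absorbed into the definition of $\Phi$.
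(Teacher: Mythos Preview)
Your proposal is correct and follows essentially the same approach as the paper: define the chain-level involution $H\mapsto\bar H$, $x\mapsto\bar x$, $u\mapsto\bar u$ to obtain $FC_*^{(a,b)}(H,J)\cong FC^{-*}_{(-b,-a)}(\bar H,\bar J)$, check that $H\mapsto\bar H$ exchanges the two limit directions in the definitions of $SH_*^{(a,b)}(W,A)$ and $SH^{-*}_{(-b,-a)}(W,A^c)$, and then match the $(a,b)$-limits for each $\heartsuit$ with those for $-\heartsuit$. The paper is slightly terser---it cites \cite{Cieliebak-Frauenfelder-Oancea} for the Conley--Zehnder identity $\CZ(\bar x)=-\CZ(x)$ and for compatibility with continuation maps rather than verifying them directly---while you spell out the continuation-map compatibility and flag the orientation bookkeeping explicitly; both are reasonable choices and neither changes the argument.
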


\begin{proof} Given a time-dependent $1$-periodic Hamiltonian $H:S^1\times\wh W\to\R$ we denote $\bar H:S^1\times\wh W\to \R$, $\bar H(t,x)=-H(-t,x)$. Given a time-dependent $1$-periodic family of almost complex structures $J=(J_t)_{t\in S^1}$ on $\wh W$, we denote $\bar J=(\bar J_t)$, $t\in S^1$ with $\bar J_t=J_{-t}$. Given a loop $x:S^1\to \wh W$, we denote $\bar x:S^1\to \wh W$, $\bar x(t)=x(-t)$. Given a cylinder $u:\R\times S^1\to \wh W$, we denote $\bar u:\R\times S^1\to \wh W$, $\bar u(s,t)=u(-s,-t)$. 

The key to the proof of Poincar\'e duality for symplectic homology is the canonical isomorphism, which will be also referred to as Poincar\'e duality, 
\begin{equation} \label{eq:PDchainlevel}
FC_*^{(a,b)}(H,J)\cong FC^{-*}_{(-b,-a)}(\bar H,\bar J),
\end{equation} 
obtained by mapping each $1$-periodic orbit
$x$ of $H$ to the $1$-periodic orbit of $\bar H$ given by the oppositely parameterized loop $\bar x$, and
each Floer cylinder $u$ for $(H,J)$ to the cylinder $\bar u$, which is a Floer cylinder for $(\bar H,\bar J)$. Note that the positive and negative punctures
get interchanged when passing from $u$ to $\bar u$, so that a chain complex is transformed into a cochain complex. It is straightforward that $A_{\bar H}(\bar x)=-A_H(x)$. It is less straightforward, but true, that $CZ(\bar x)=-CZ(x)$. The proof follows from~\cite[Lemma~2.3]{Cieliebak-Frauenfelder-Oancea}, taking into account that the flows of $\bar H$ and $H$ satisfy the relation $\varphi^t_{\bar H}=\varphi^{-t}_H$. We refer to~\cite[Proposition~2.2]{Cieliebak-Frauenfelder-Oancea} for a discussion of this Poincar\'e duality isomorphism in the context of autonomous Hamiltonians, and for a precise statement of its compatibility with continuation maps. 

The isomorphism~\eqref{eq:PDchainlevel} directly implies a canonical isomorphism 
\begin{equation} \label{eq:PDSHab}
SH_*^{(a,b)}(W,A)\cong SH^{-*}_{(-b,-a)}(W,A^c).
\end{equation}
To see this, note that the class of admissible Hamiltonians $\cH(W;F)$
is stable under the involution $H\mapsto \bar H$. It follows that we can present $SH^{-*}_{(-b,-a)}(W,A^c)$ as a first-inverse-then-direct limit on $FH^{-*}_{(-b,-a)}(\bar H)$ for $H\in \cH(W;F)$, whereas $SH_*^{(a,b)}(W,A)$ is presented as a first-inverse-then-direct limit on $FH_*^{(a,b)}(H)$. In view of~\eqref{eq:PDchainlevel} it is enough to see that the inverse and direct limits in the definitions  are taken over the same sets. Indeed, for $SH_*^{(a,b)}(W,A)$ the inverse limit is taken over Hamiltonians $H$ that go to $-\infty$ on $(\wh W_F\setminus W)_A$, which is equivalent to $\bar H$ going to $\infty$ on $(\wh W_F\setminus W)_A$, and this is precisely the directed set for the inverse limit in the definition of $SH^{-*}_{(-b,-a)}(W,A^c)$. A similar discussion holds for the direct limit. 

The isomorphisms $SH_*^\heartsuit(W,A)\cong SH^{-*}_{-\heartsuit}(W,A^c)$ follow from~\eqref{eq:PDSHab} and from the definitions. We analyse the case $\heartsuit=``>0"$ and leave the other cases to the reader. In the definition of $SH_*^{>0}(W,A)$ the inverse limit is taken over $a\searrow 0$ and the direct limit is taken over $b\to\infty$, which is equivalent to $-a\nearrow 0$ and $-b\to-\infty$. After relabelling $(-b,-a)=(a',b')$, this is the same as $b'\nearrow 0$ and $a'\to-\infty$, which corresponds to the definition of $SH^{-*}_{<0}(W,A^c)$. 
\end{proof}

\subsection{Algebraic duality and universal coefficients} \label{sec:algebraic_duality}

We discuss in this section the algebraic duality between homology and
cohomology in the symplectic setting that we consider. {\color{black}Recall that we use field coefficients.} 

The starting observation is that, given a degree $k$, real numbers
$a<b$, admissible Hamiltonians $H\le H'$, an admissible decreasing
homotopy $(H_s)$, $s\in \R$ connecting $H'$ to $H$, and a regular
homotopy of almost complex structures $(J_s)$, $s\in \R$ connecting an
almost complex structure $J'$ which is regular for $H'$ to an almost
complex structure $J$ which is regular for $H$, there are canonical
identifications  
$$
FC^k_{(a,b)}(H,J)\cong FC_k^{(a,b)}(H,J)^\vee,\qquad \sigma^k\cong (\sigma_k)^\vee,
$$
where $\sigma_k:FC_k^{(a,b)}(H,J)\to FC_k^{(a,b)}(H',J')$,
$\sigma^k:FC^k_{(a,b)}(H',J')\to FC^k_{(a,b)}(H,J)$ are the
continuation maps induced by the homotopy $(H_s,J_s)$. These
identifications follow from the definitions and hold with arbitrary
coefficients.  

We now turn to the relationship between $SH_*^{(a,b)}(W,V)$ and
$SH^*_{(a,b)}(W,V)$. Since we work in a finite action window $(a,b)$,
both the direct and the inverse limits in the definition of
$SH_*^{(a,b)}(W,V)$ and $SH^*_{(a,b)}(W,V)$ eventually stabilize, so
that we can compute these groups using only one suitable
Hamiltonian. The universal coefficient theorem then implies with
coefficients in a field $\K$ the existence of a canonical
isomorphism (see for example~\cite[\S V.7]{Br}) 
\begin{equation} \label{eq:dualitykab}
SH^k_{(a,b)}(W,V;\K)\cong  SH_k^{(a,b)}(W,V;\K)^\vee.
\end{equation}

The issue of comparing $SH^k_\heartsuit(W,V)$ and $SH_k^\heartsuit(W,V)$ becomes therefore a purely algebraic one, as it amounts to comparing via duality the various double limits involved in Definitions~\ref{defi:SH(W)} and~\ref{defi:cohWA} (see also Definitions~\ref{defi:SHWV} and~\ref{defi:cohWV}). The key property is the following: given a direct system of modules $M_\alpha$ and a module $N$ over some ground ring $R$, the natural map 
\begin{equation}\label{eq:Homlim}
\Hom_R(\lim_{\longrightarrow} M_\alpha,N)\stackrel \simeq \longrightarrow \lim_{\longleftarrow} \Hom_R(M_\alpha,N)
\end{equation}
is an isomorphism. However, it is generally \emph{not true} that, given an inverse system $M_\alpha$, the natural map 
$$
\Hom_R(\lim_{\longleftarrow} M_\alpha,N)\longleftarrow \lim_{\longrightarrow} \Hom_R(M_\alpha,N)
$$
is an isomorphism (the two sets actually have different cardinalities
in general). In our situation, {\color{black}$N=R$ is the coefficient field
$\K$.}

We omit in the sequel the field $\K$ from the notation. 

\begin{proposition}\label{prop:dualityk}
Let $(W,V)$ be a pair of Liouville cobordisms with filling. Using
field coefficients we have canonical isomorphisms  
$$
SH^k_\heartsuit(W,V)\cong SH_k^\heartsuit(W,V)^\vee,\qquad \heartsuit\in \{>0,\ge 0,=0\}
$$
and
$$
SH_k^\heartsuit(W,V)\cong SH^k_\heartsuit(W,V)^\vee,\qquad
\heartsuit\in \{<0,\le 0,=0\}. 
$$ 
\end{proposition}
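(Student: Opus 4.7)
The plan is to reduce each of the six claimed isomorphisms to the chain-level duality~\eqref{eq:dualitykab} by interchanging the $\Hom$-functor with appropriate limits. The essential ingredients are~(a) the algebraic identity~\eqref{eq:Homlim} that dualization turns a direct limit into an inverse limit, and (b)~the observation recalled after Definition~\ref{defi:SH(W)} that the limits in which the parameter $a$ or $b$ tends to $0$ eventually stabilize and hence are represented by a single finite-dimensional vector space.

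First I would inspect the defining double-limits of $SH_k^\heartsuit(W,V)$ and $SH^k_\heartsuit(W,V)$ case by case. Each involves exactly one ``non-stabilizing'' limit: for $\heartsuit\in\{>0,\ge 0\}$ this is the limit over $b\to\infty$, while for $\heartsuit\in\{<0,\le 0\}$ it is the limit over $a\to-\infty$. In the zero-level case both limits stabilize and the groups are already finite-dimensional, so the claimed duality is immediate from~\eqref{eq:dualitykab}. Crucially, in each of the remaining four cases the non-stabilizing limit appears as a \emph{direct} limit on precisely the side one needs to dualize --- on the homology side when $\heartsuit\in\{>0,\ge 0\}$, and on the cohomology side when $\heartsuit\in\{<0,\le 0\}$ --- which is exactly what makes~\eqref{eq:Homlim} applicable.

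Second, I would absorb the stabilizing limits by fixing a sufficiently small $\epsilon>0$, reducing each side to a single honest limit of finite-dimensional vector spaces. For the positive symplectic (co)homology this reads
\[
  SH_k^{>0}(W,V)=\lim\nolimits^{\longrightarrow}_{b\to\infty}SH_k^{(\epsilon,b)}(W,V),
  \qquad
  SH^k_{>0}(W,V)=\lim\nolimits^{\longleftarrow}_{b\to\infty}SH^k_{(\epsilon,b)}(W,V),
\]
and analogous presentations hold for the non-negative and for the two negative flavors (with $\epsilon$ replaced by $-\epsilon$ in the latter, and with the non-trivial limits taken over $a\to-\infty$ in the negative cases). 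Given these presentations, each identification reduces to a single application of~\eqref{eq:Homlim} followed by the termwise duality~\eqref{eq:dualitykab}. For instance, in the positive case,
\[
  SH_k^{>0}(W,V)^\vee
  \;\cong\;\lim\nolimits^{\longleftarrow}_{b\to\infty}\bigl(SH_k^{(\epsilon,b)}(W,V)\bigr)^\vee
  \;\cong\;\lim\nolimits^{\longleftarrow}_{b\to\infty}SH^k_{(\epsilon,b)}(W,V)
  \;=\;SH^k_{>0}(W,V),
\]
and a parallel computation, dualizing the cohomology side instead, handles the two negative flavors.

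The main obstacle is organizational rather than conceptual: one must verify in each case that, after commuting a stabilizing limit past the non-stabilizing one, the resulting presentation literally matches the defining system on the target side. This commutation is always legitimate because a stabilizing limit coincides with evaluation at a sufficiently small parameter and hence commutes with every functor, in particular with $\Hom(-,\mathfrak{k})$ and with any other limit. No duality statement is claimed --- nor is one available by this argument --- for the full groups $SH_*$ and $SH^*$: both their outer and inner limits are non-stabilizing, so neither the rearrangement of $\lim^{\longrightarrow}_{b\to\infty}$ and $\lim^{\longleftarrow}_{a\to-\infty}$ nor a direct application of~\eqref{eq:Homlim} to the whole double limit is available here.
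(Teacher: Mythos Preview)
Your proposal is correct and follows essentially the same approach as the paper: exploit that the limit over the parameter tending to $0$ stabilizes, then apply the termwise duality~\eqref{eq:dualitykab} together with~\eqref{eq:Homlim} to the remaining non-stabilizing direct limit. The paper's proof is terser but identical in substance; the one point it makes slightly more explicit is that for $\heartsuit\in\{<0,\le 0\}$ the reversed duality $SH_k^{(a,b)}(W,V)\cong SH^k_{(a,b)}(W,V)^\vee$ (needed because you dualize the cohomology side) requires finite-dimensionality of the filtered groups, which you mention but do not foreground.
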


\begin{proof}
Assume first $\heartsuit\in \{>0,\ge 0,=0\}$. In all three cases, the
limit over $a\to 0$ in the definition of $SH_*^\heartsuit(W,V)$ and
$SH^*_\heartsuit(W,V)$ stabilizes, and the result follows
from~\eqref{eq:dualitykab} and~\eqref{eq:Homlim} applied to the limit $b\to\infty$.
 
Assume now $\heartsuit\in \{<0,\le 0,=0\}$. In all three cases the
limit over $b\to 0$ in the definition of $SH_*^\heartsuit(W,V)$ and
$SH^*_\heartsuit(W,V)$ stabilizes, and the result follows again
from~\eqref{eq:Homlim} applied to the limit $a\to-\infty$, by
rewriting~\eqref{eq:dualitykab} as  
$$
SH_k^{(a,b)}(W,V)\cong SH^k_{(a,b)}(W,V)^\vee.
$$
This holds because the vector spaces which are involved are finite dimensional. 
\end{proof}

\begin{corollary}\label{cor:dualityk}
(a) Let $(W,V)$ be a pair of filled Liouville cobordisms with vanishing first Chern class. Suppose
that $\p V$ and $\p W$ carry only finitely many closed Reeb orbits of
any given degree. Then with field coefficients we have for all flavors
$\heartsuit$ canonical isomorphisms  
$$
   SH^k_\heartsuit(W,V)\cong SH_k^\heartsuit(W,V)^\vee\qquad\text{and}\qquad
   SH_k^\heartsuit(W,V)\cong SH^k_\heartsuit(W,V)^\vee. 
$$ 
(b) Let $W$ be a Liouville domain. Then with field coefficients we have
canonical isomorphisms  
$$
   SH^k(W)\cong SH_k(W)^\vee.
$$
\end{corollary}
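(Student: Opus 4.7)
The strategy is to reduce both claims to the chain-level duality~\eqref{eq:dualitykab} $SH^k_{(a,b)}(W,V) \cong SH_k^{(a,b)}(W,V)^\vee$, and to use the algebraic identity~\eqref{eq:Homlim} to commute the dualization functor $\Hom_\mathfrak{k}(-,\mathfrak{k})$ with the direct and inverse limits appearing in Definitions~\ref{defi:SHWV} and~\ref{defi:cohWV}. Direct limits dualize to inverse limits, but the converse may fail, so the whole point is to force enough finite-dimensional stabilization that the problematic direction becomes an actual isomorphism.

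For part (a), my plan is to use the finiteness hypothesis to show that $SH_k^{(a,b)}(W,V)$ is finite-dimensional in each Conley-Zehnder degree $k$ and stabilizes as $a \to -\infty$ and $b \to \infty$. Indeed, for any cofinal family of admissible Hamiltonians for the pair $(W,V)$, the generators of the Floer chain complex in degree $k$ consist of finitely many constant orbits in $W \setminus V$ (living in a bounded range of degrees with action close to zero, determined by a Morse function on $W \setminus V$) together with $1$-periodic orbits corresponding to closed Reeb orbits on $\p V \cup \p W$ of degree close to $k$ up to a controlled shift. The finiteness hypothesis yields only finitely many such Reeb orbits, whose actions therefore form a finite, bounded set of modulus at most some $c_k$. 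Consequently, for $a < -c_k$ and $b > c_k$ the degree-$k$ part of the complex does not depend on $(a,b)$, all limits in Definitions~\ref{defi:SHWV} and~\ref{defi:cohWV} stabilize in each degree, and $SH_k^\heartsuit(W,V)$ and $SH^k_\heartsuit(W,V)$ are finite-dimensional. The duality~\eqref{eq:dualitykab} at the stable window then promotes at once to both asserted isomorphisms, using that dualization is an involution on finite-dimensional vector spaces.

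For part (b), I would use that for a Liouville domain $W$ every admissible Hamiltonian in $\cH(W;F)$ vanishes on $W$ and is linear of positive slope at infinity, so all its $1$-periodic orbits have action at least $-\eps$ for arbitrarily small $\eps > 0$ (only the small Morse perturbations on $W$ contribute at slightly negative action). Hence $SH_*^{<0}(W) = 0$ and, crucially, the inverse limit over $a \to -\infty$ in the definition of $SH_*(W)$ as well as the direct limit over $a \to -\infty$ in the definition of $SH^*(W)$ both stabilize at any $a_0 < 0$ whose absolute value lies below the minimal Reeb period on $\p W$. This reduces $SH_k(W)$ to a plain direct limit over $b \to \infty$ of the finite-dimensional groups $SH_k^{(a_0,b)}(W)$, and $SH^k(W)$ to the corresponding inverse limit of $SH^k_{(a_0,b)}(W)$. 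Applying~\eqref{eq:Homlim} to the direct limit and~\eqref{eq:dualitykab} at each finite window identifies $SH_k(W)^\vee$ with this inverse limit, that is, with $SH^k(W)$.

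The main subtlety, arising in part (a), is verifying the claimed degree-wise stabilization rigorously: one must compare the Conley-Zehnder degrees of $1$-periodic Hamiltonian orbits with those of the corresponding Reeb orbits \emph{uniformly} across the cofinal family of Hamiltonians, and ensure that the small Morse perturbations used on the constant regions do not introduce unbounded drift in degree or action as the slope parameters tend to infinity. Once this uniform finiteness is established, both halves of part (a) follow formally from the finite-window duality.
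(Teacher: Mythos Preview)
Your proposal is correct and follows essentially the same approach as the paper's proof. For part (a) the paper says only that ``all inverse limits remain finite dimensional'' under the finiteness hypothesis, which is precisely your stabilization observation spelled out; for part (b) the paper invokes the identity $SH_k(W)=SH_k^{\geq 0}(W)$ for a Liouville domain and then Proposition~\ref{prop:dualityk}, which is exactly your direct argument that the $a$-limit stabilizes at any small $a_0<0$, reducing everything to a single direct limit in $b$ and the identity~\eqref{eq:Homlim}.
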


\begin{proof}
Part (a) follows from the proof as Proposition~\ref{prop:dualityk},
using that all inverse limits remain finite dimensional. Part (b)
holds because for a Liouville domain we have $SH_k(W)=SH_k^{\ge 0}(W)$. 
\end{proof}

\begin{remark}
Proposition~\ref{prop:dualityk} illustrates the fact
that the full symplectic homology and cohomology groups of a cobordism
or of a pair of cobordisms are of a mixed homological-cohomological
nature. This is due to the presence of both a direct and of an
inverse limit in the definitions. As such, the full version
$SH_*(W,V)$ does not satisfy in general any form of algebraic
duality. In fact, in Example~\ref{ex:Peter} below we construct a
Liouville cobordism $W$ for which in some degree $k$ (and with
$\Z_2$-coefficients) neither $SH^k(W)\cong SH_k(W)^\vee$ nor
$SH_k(W)\cong SH^k(W)^\vee$ holds. 
\end{remark}

\section{Homological algebra and mapping cones} \label{sec:cones}

\subsection{Cones and distinguished triangles}\label{sec:triangles} 

Let $R$ be a ring. Let {\sf Ch} denote the category of chain complexes of $R$-modules. The objects of this category are chain complexes of $R$-modules, and the morphisms are chain maps of degree $0$. Let {\sf Kom} denote the category of chain complexes of $R$-modules up to homotopy. The objects are the same as the ones of {\sf Ch}, and the morphisms are equivalence classes of degree $0$ chain maps with respect to the equivalence relation given by homotopy equivalence. We use homological $\mathbb Z$-grading, and we use the following notational conventions : 
\renewcommand{\theenumi}{\roman{enumi}}
\begin{enumerate}
\item given a morphism $X\longrightarrow Y$ in {\sf Kom}, we use the notation $X\stackrel f\longrightarrow Y$ for a specific representative $f$ of this morphism. Thus $f$ is a morphism in {\sf Ch}. 
\item all diagrams are understood to be commutative in {\sf Kom}. If we specify representatives in {\sf Ch} for the morphisms, we say that a diagram is \emph{strictly commutative} if it commutes in {\sf Ch}. 
\item we use the notation 
$$
\xymatrix
@C=30pt
@R=30pt
{\ar @{} [dr] |s X \ar[r]^f \ar[d]_\varphi & Y \ar[d]^\psi \\ 
X' \ar[r]_g & Y' 
} 
$$
for a diagram in {\sf Ch} which is commutative modulo a \emph{specified} homotopy $s$, i.e. such that $\psi f - g\varphi = s\p_X + \p_{Y'}s$. In particular, the diagram 
$$
\xymatrix
@C=30pt
@R=30pt
{X \ar[r]^f \ar[d]_\varphi & Y \ar[d]^\psi \\ 
X' \ar[r]_g & Y' 
} 
$$
is commutative in {\sf Kom}. 
\item given a chain complex $X=\{(X_n),\p_X\}$ and $k\in\mathbb Z$, we define the shifted complex $X[k]$ by 
$$
X[k]_n=X_{n+k},\quad n\in\mathbb Z,\qquad \p_{X[k]}=(-1)^k\p_X.
$$ 
Given a morphism $f:X\to Y$, we define $f[k]:X[k]\to Y[k]$ as $f[k]=f$. 
\end{enumerate}

Our conventions for cones and distinguished triangles follow the ones of Kashiwara and Schapira~\cite[Chapter~1]{KS}, except that we use dual homological grading. Given a chain map $f:X \to Y$, we define its \emph{cone} to be the chain complex 
$$
C(f)=Y\oplus X[-1],\qquad \p_{C(f)}=\left(\begin{array}{cc} \p_{Y} & f \\ 0 & \p_{X[-1]}\end{array}\right) = \left(\begin{array}{cc} \p_Y & f \\ 0 & -\p_X\end{array}\right)
$$
We have in particular a short exact sequence of chain complexes 
\begin{equation} \label{les:cone}
\xymatrix
{
0\ar[r] &  Y \ar[r]^-{\alpha(f)} & C(f) \ar[r]^-{\beta(f)} & X[-1] \ar[r] &  0
} 
\end{equation}
where $\alpha(f)=\left(\begin{array}{c} \mbox{Id}_{Y} \\ 0 \end{array}\right)$ is the canonical inclusion, and $\beta(f)=\left(\begin{array}{cc} 0 & \mbox{Id}_{X[-1]} \end{array}\right)$ is the canonical projection. For simplicity we abbreviate in the sequel the identity maps by $1$, e.g. we write $\alpha(f)=\left(\begin{array}{c} 1 \\ 0 \end{array}\right)$ and 
$\beta(f)=\left(\begin{array}{cc} 0 & 1 \end{array}\right)$. 

One of the key features of the cone construction is that the connecting homomorphism in the homology long exact sequence associated to the short exact sequence~\eqref{les:cone} is equal to $f_*$, the morphism induced by $f$. 

By definition, a \emph{triangle} in {\sf Kom} is a sequence of morphisms
\begin{equation} \label{les:triangle}
\xymatrix
@C=50pt
{
X \ar[r]^-f & Y \ar[r]^-g & Z \ar[r]^-h & X[-1] 
}
\end{equation}
A \emph{distinguished triangle} is a triangle which is isomorphic in {\sf Kom} to a triangle of the form 
\begin{equation} \label{les:model}
\xymatrix
@C=50pt
{
X \ar[r]^f &  Y \ar[r]^-{\alpha(f)} & C(f) \ar[r]^-{\beta(f)} & X[-1]
}
\end{equation}
We call~\eqref{les:model} a \emph{model distinguished triangle}. 

It follows from the definition that a distinguished triangle~\eqref{les:triangle} induces a long exact sequence in homology 
\begin{equation}\label{eq:hlescone}
\xymatrix
{
\cdots H_*(X) \ar[r]^-{f_*} & H_*(Y) \ar[r]^-{g_*} & H_*(Z) \ar[r]^-{h_*} & H_{*-1}(X) \ar[r]^-{f_*} & \cdots
} 
\end{equation}
We shall often represent such a long exact sequence as 
$$
\xymatrix
{H(X) \ar[rr]^{f_*} && H(Y) \ar[dl]^{g_*} \\
& H(Z) \ar[ul]^-{h_*}_-{[-1]} &
}
$$
We call such a diagram an \emph{exact triangle}. 

The above definition of the class of distinguished triangles makes {\sf Kom} into a \emph{triangulated category} in the sense of Verdier. This means that the class of distinguished triangles satisfies Verdier's axioms (TR0)--(TR5) (see for example~\cite[\S\S1.4-1.5]{KS}). One of the essential axioms is (TR3): a triangle~\eqref{les:triangle} is distinguished if and only if the triangle 
$$
\xymatrix
@C=50pt
{
Y \ar[r]^-g & Z \ar[r]^-h & X[-1] \ar[r]^-{-f[-1]} & Y[-1]
}
$$
is distinguished. This follows from Lemma~\ref{lem:rotate}(i) below, see also~\cite[Lemma~1.4.2]{KS}. 

\begin{lemma} \label{lem:rotate}

Let $f:X\to Y$ be a morphism in {\sf Ch}. 

(i) \cite[Lemma~1.4.2]{KS} There exists a morphism in {\sf Ch} 
$$
\Phi : X[-1]\to C(\alpha(f))
$$
which is an isomorphism in {\sf Kom}, with an explicit homotopy inverse in {\sf Ch} denoted
$$
\Psi:C(\alpha(f))\to X[-1],
$$ 
and such that the diagram below commutes in {\sf Kom}:
$$
\xymatrix
@C=50pt
@R=30pt 
{
Y \ar[r]^-{\alpha(f)} \ar@{=}[d] & C(f) \ar[r]^-{\beta(f)} \ar@{=}[d] & X[-1] \ar[r]^-{-f[-1]} \ar@<.5ex>[d]^{\Phi} &  
Y[-1] \ar@{=}[d] \\
Y \ar[r]_-{\alpha(f)} & C(f) \ar[r]_-{\alpha(\alpha(f))} & C(\alpha(f)) \ar[r]_-{\beta(\alpha(f))} \ar@<.5ex>[u]^{\Psi} & Y[-1]
}
$$

(ii) There exists a morphism in {\sf Ch} 
$$
\tau:Y[-1]\to C(\beta(f))
$$
which is an isomorphism in {\sf Kom}, with an explicit homotopy inverse in {\sf Ch} denoted 
$$
\sigma:C(\beta(f))\to Y[-1],
$$
and such that the diagram below commutes in {\sf Kom} 
$$
\xymatrix
@C=50pt
@R=30pt 
{
C(f) \ar[r]^-{\beta(f)} \ar@{=}[d] & X[-1] \ar[r]^-{-f[-1]} \ar@{=}[d] & Y[-1] \ar@<.5ex>[d]^\tau \ar[r]^-{-\alpha(f)[-1]} & C(f)[-1] \ar@{=}[d] \\
C(f) \ar[r]_-{\beta(f)} & X[-1] \ar[r]_-{\alpha(\beta(f))[-1]} & C(\beta(f)) \ar@<.5ex>[u]^{\sigma} \ar[r]_-{\beta(\beta(f))}  & C(f)[-1]
}
$$
\end{lemma}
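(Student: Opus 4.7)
The plan for both parts is identical. The claim reduces in each case to showing that a natural ``rotated'' triangle is isomorphic in {\sf Kom} to the model distinguished triangle, which amounts to writing down explicit degree $0$ chain maps ($\Phi$ and $\Psi$ for part~(i); $\tau$ and $\sigma$ for part~(ii)), verifying they are chain maps, checking that $\Psi\Phi={\rm id}$ (resp.\ $\sigma\tau={\rm id}$) on the nose, producing an explicit degree $+1$ homotopy witnessing $\Phi\Psi\simeq{\rm id}$ (resp.\ $\tau\sigma\simeq{\rm id}$), and verifying the required squares commute strictly when possible and up to explicit homotopies otherwise. Part~(i) is classical~\cite[Lemma~1.4.2]{KS}, and my main concern there is to translate into the sign conventions adopted above.

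For part~(i), identifying $C(\alpha(f))=Y\oplus X[-1]\oplus Y[-1]$ with cone differential $\bigl(\begin{smallmatrix}\p_Y & f & 1 \\ 0 & -\p_X & 0 \\ 0 & 0 & -\p_Y\end{smallmatrix}\bigr)$, I will take $\Phi=\bigl(\begin{smallmatrix}0\\1\\-f\end{smallmatrix}\bigr)$ and $\Psi=\bigl(\begin{smallmatrix}0 & 1 & 0\end{smallmatrix}\bigr)$. The identity $\Psi\Phi=1$ is immediate, and the homotopy $1-\Phi\Psi\simeq 0$ is witnessed by the degree $+1$ map sending the first summand $Y$ identically to the third summand $Y[-1]$. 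The right square $\beta(\alpha(f))\Phi=-f[-1]$ is strict; the middle square $\alpha(\alpha(f))\simeq\Phi\beta(f)$ is witnessed by a similar ``identity between the two copies of $Y$'' homotopy $C(f)\to C(\alpha(f))[1]$.

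For part~(ii), identifying $C(\beta(f))=X[-1]\oplus Y[-1]\oplus X[-2]$ with differential $\bigl(\begin{smallmatrix}-\p_X & 0 & 1 \\ 0 & -\p_Y & -f \\ 0 & 0 & \p_X\end{smallmatrix}\bigr)$ (obtained by applying the cone recipe to $\beta(f)$), I will take $\tau=\bigl(\begin{smallmatrix}0\\-1\\0\end{smallmatrix}\bigr)$ and $\sigma=\bigl(\begin{smallmatrix}-f & -1 & 0\end{smallmatrix}\bigr)$. The identity $\sigma\tau=1$ is immediate, and $1-\tau\sigma\simeq0$ via the degree $+1$ map sending the first $X[-1]$ summand identically to the third $X[-2]$ summand (viewed one degree up). The right square $\beta(\beta(f))\tau=-\alpha(f)[-1]$ is strict; the middle square $\alpha(\beta(f))\simeq\tau\circ(-f[-1])$ is witnessed by the homotopy $\bigl(\begin{smallmatrix}0\\0\\1\end{smallmatrix}\bigr):X[-1]\to C(\beta(f))[1]$; and the left square is tautological.

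The only non-obvious ingredient is the $-f$ entry in $\sigma$: the naive choice $(0,-1,0)$ fails to be a chain map because the differential of $C(\beta(f))$ couples the $Y[-1]$ and $X[-2]$ summands by $-f$, and this coupling must be absorbed by enlarging $\sigma$ in its first coordinate. The main obstacle throughout is purely bookkeeping --- tracking the signs $(-1)^k$ arising from $\p_{X[k]}=(-1)^k\p_X$ and the Koszul signs built into the cone differential --- but once the correct representatives have been guessed, the verifications are one-line matrix multiplications using only $\p_Y f=f\p_X$.
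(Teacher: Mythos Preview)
Your proposal is correct and follows essentially the same route as the paper: the maps $\Phi,\Psi,\tau,\sigma$ and the homotopies $K,L$ you write down coincide exactly with those in the paper's proof. The only cosmetic difference is that for the middle squares the paper verifies the strict identities $\Psi\,\alpha(\alpha(f))=\beta(f)$ and $\sigma\,\alpha(\beta(f))=-f[-1]$ and then invokes the homotopy inverse property, whereas you produce explicit homotopies directly; both arguments are equally short and yield the same conclusion.
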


\begin{proof}
(i) (following \cite{KS}) Taking into account that $C(\alpha(f))=Y\oplus X[-1]\oplus Y[-1]$, we define in matrix form 
$$
\Phi=\left(\begin{array}{c} 0 \\ 1 \\ -f \end{array}\right),\qquad \Psi=\left(\begin{array}{ccc} 0 & 1 & 0 \end{array}\right).
$$
(Here $1$ stands for $\mbox{Id}_{X[-1]}$ according to our convention.) A direct verification shows that these are chain maps, and also that the third square in the diagram commutes in {\sf Ch}, i.e. $\beta(\alpha(f))\Phi=-f[-1]$. Such verifications formally amount to elementary multiplications of matrices. For example: 
$$
\p_{C(\alpha(f))}\Phi =
{ \left(\begin{array}{ccc} \p_Y & f & 1 \\ 0 & \p_{X[-1]} & 0 \\ 0 & 0 & \p_{Y[-1]} \end{array}\right)}
{\left(\begin{array}{c} 0 \\ 1 \\ -f \end{array}\right)} 
= 
{\left(\begin{array}{c} 0 \\ \p_{X[-1]} \\ -\p_{Y[-1]}f \end{array}\right)} 
$$
and
$$
\beta(\alpha(f))\Phi = \left(\begin{array}{ccc} 0 & 0 & 1 \end{array}\right) \left(\begin{array}{c} 0 \\ 1 \\ -f \end{array}\right) = -f.
$$

The second square in the diagram is commutative in {\sf Kom}. Indeed, direct verification shows that $\Psi\alpha(\alpha(f))=\beta(f)$. On the other hand, the maps $\Phi$ and $\Psi$ are homotopy inverses to each other. Indeed, direct verification shows that $\Psi\Phi=\mbox{Id}_{X[-1]}$ and 
$$
\mbox{Id}_{C(\alpha(f))}-\Phi\Psi = \left(\begin{array}{ccc} 1 & 0 & 0 \\ 0 & 0 & 0 \\ 0 & f & 1 \end{array}\right) = \p_{C(\alpha(f))}K + K\p_{C(\alpha(f))},
$$
where $K:C(\alpha(f))\to C(\alpha(f))[1]$ is a homotopy given in matrix form by 
$$
K=\left(\begin{array}{ccc} 0 & 0 & 0 \\ 0 & 0 & 0 \\ 1 & 0 & 0 \end{array}\right).
$$

(ii) Taking into account that $C(\beta(f))=X[-1]\oplus Y[-1]\oplus X[-2]$ we define in matrix form 
$$
\tau=\left(\begin{array}{c} 0 \\ -1 \\ 0\end{array}\right),\qquad \sigma=\left(\begin{array}{ccc} -f & -1 & 0 \end{array}\right).
$$
Here $1$ stands for $\mbox{Id}_{Y[-1]}$. Direct verification shows that these are chain maps, that $\beta(\beta(f))\tau=-\alpha(f)[-1]$ so that the third square is commutative in {\sf Ch}, and that $\sigma\alpha(\beta(f))=-f[-1]$. 

Commutativity in {\sf Kom } of the second square follows again from the fact that $\sigma$ and $\tau$ are homotopy inverses to each other. Indeed, we have $\sigma\tau=\mbox{Id}_{Y[-1]}$, whereas 
$$
\mbox{Id}_{C(\beta(f))}-\tau\sigma = \left(\begin{array}{ccc} 1 & 0 & 0 \\ -f & 0 & 0 \\ 0 & 0 & 1 \end{array}\right) = \p_{C(\beta(f))} L + L \p_{C(\beta(f))},
$$
where $L:C(\beta(f))\to C(\beta(f))[1]$ is a homotopy defined in matrix form by 
$$
L = \left(\begin{array}{ccc} 0 & 0 & 0 \\ 0 & 0 & 0 \\ 1 & 0 & 0 \end{array}\right).
$$
\end{proof}

\begin{remark}
One consequence of Lemma~\ref{lem:rotate} (i.e. axiom (TR3)) is that a triangle 
$$
\xymatrix
@C=50pt
{
X \ar[r]^-f & Y \ar[r]^-g & Z \ar[r]^-h & X[-1] 
}
$$ 
is distinguished if and only if the triangle
$$
\xymatrix
@C=50pt
{
X[-1] \ar[r]^-{-f[-1]} & Y[-1] \ar[r]^-{-g[-1]} & Z[-1] \ar[r]^-{-h[-1]} & X[-2] 
}
$$
is distinguished. The triangle 
$$
\xymatrix
@C=50pt
{
X[-1] \ar[r]^-{f[-1]} & Y[-1] \ar[r]^-{g[-1]} & Z[-1] \ar[r]^-{h[-1]} & X[-2] 
}
$$
is in general \emph{not} distinguished, but rather \emph{anti-}distinguished in the sense of~\cite[Definition~1.5.9]{KS}. The class of distinguished triangles is distinct from that of anti-distinguished triangles, as explained to us by S.~Guillermou. 
\end{remark}

We use Lemma~\ref{lem:rotate} in order to replace by cones in {\sf Kom} the kernels and cokernels of certain maps in {\sf Ch}.

\begin{lemma} \label{lem:ip} 
Let 
\begin{equation} \label{les:1}
0\longrightarrow A \stackrel i \longrightarrow B \stackrel p \longrightarrow C \longrightarrow 0
\end{equation}
be a short exact sequence in {\sf Ch} which is split as a short exact sequence of $R$-modules. 

(i) Given a splitting $s:C\to B$, i.e. a {\color{black} degree $0$} map such that $ps=\mbox{Id}_C$, there is a canonical chain map $f:C[1]\to A$ and there are canonical identifications in {\sf Ch}
$$
B= C(f),\qquad i= \alpha(f),\qquad p= \beta(f).
$$

(ii) 
The maps
$$
\Phi:C\stackrel \simeq \longrightarrow C(i), \qquad \tau:A[-1]\stackrel \simeq \longrightarrow C(p)
$$
defined in (i) and (ii) of Lemma~\ref{lem:rotate} are isomorphisms in {\sf Kom} and they determine isomorphisms of distinguished triangles
$$
\xymatrix
@C=50pt
@R=30pt
{
A \ar[r]^-i \ar@{=}[d] & B \ar[r]^-p \ar@{=}[d] & C \ar[r]^-{-f[-1]} \ar[d]^{\Phi}_\simeq & A[-1] \ar@{=}[d] \\
A \ar[r]_-i & B \ar[r]_-{\alpha(i)} & C(i) \ar[r]_-{\beta(i)} & A[-1]
}
$$
and
$$
\xymatrix
@C=50pt
@R=30pt
{
B \ar[r]^-p \ar@{=}[d] & C \ar[r]^-{-f[-1]} \ar@{=}[d] & A[-1] \ar[r]^-{-i[-1]} \ar[d]^\tau_\simeq & B[-1] \ar@{=}[d] \\
B \ar[r]_{p} & C \ar[r]_-{\alpha(p)} & C(p) \ar[r]_-{\beta(p)} & B[-1]
}
$$
In particular, the homology long exact sequences determined by the top and bottom line in each of the above diagrams are isomorphic. 

(iii) Assume that the splitting $s:C\to B$ is a chain map. We then have an isomorphism in {\sf Kom} 
$$
A\stackrel \simeq \longrightarrow C(s).
$$ 
(The same holds if we assume that the splitting $s$ is homotopic to a chain map.) 
\end{lemma}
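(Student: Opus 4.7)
For (i), I would use the splitting to identify $B$ with $A\oplus C$ as graded $R$-modules via $(a,c)\mapsto i(a)+s(c)$ (the decomposition $B=\im i\oplus s(C)$ follows from $b-s(pb)\in\ker p=\im i$ and injectivity of $i$). The strategy is then to compute the matrix of $\p_B$ in these block coordinates: the $A$-column is $\p_A$ because $i$ is a chain map, while for $c\in C$ the element $\p_B s(c)-s(\p_C c)$ lies in $\ker p=\im i$ (since $p$ is a chain map and $ps=\id_C$), so there is a unique degree $-1$ map $f\colon C\to A$, equivalently a degree $0$ map $f\colon C[1]\to A$, characterised by $i\circ f=\p_B s-s\p_C$. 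Applying $\p_B$ to this defining relation and using $\p_B^2=0$ yields $\p_A f+f\p_C=0$, which is the chain-map condition for $f\colon C[1]\to A$ with the sign convention $\p_{C[1]}=-\p_C$. Tracing through the definitions, the matrix of $\p_B$ in the decomposition $A\oplus C$ coincides with the differential of $C(f)$ (with $Y=A$, $X=C[1]$), and $i,p$ become $\alpha(f),\beta(f)$ respectively.

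For (ii), the plan is to apply Lemma~\ref{lem:rotate} to the chain map $f$ produced in (i). The rotated model distinguished triangle for $f$ reads
\[
A\xrightarrow{i}B\xrightarrow{p}C\xrightarrow{-f[-1]}A[-1],
\]
using the identifications $i=\alpha(f)$ and $p=\beta(f)$ from (i). Lemma~\ref{lem:rotate}(i) then supplies $\Phi\colon C=X[-1]\to C(\alpha(f))=C(i)$ together with a homotopy inverse $\Psi$ and with the required (strict/homotopy) commutativity of squares in $\mathsf{Kom}$; symmetrically, Lemma~\ref{lem:rotate}(ii) supplies $\tau\colon A[-1]=Y[-1]\to C(\beta(f))=C(p)$ with inverse $\sigma$. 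The isomorphism of the two homology long exact sequences is then automatic by the five lemma applied to the resulting homotopy-commutative diagrams.

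For (iii), first assume $s$ itself is a chain map. Then $\p_B s=s\p_C$, so the defining identity $i\circ f=\p_B s-s\p_C$ forces $f=0$ by injectivity of $i$. Hence the $R$-module splitting of (i) is a splitting of chain complexes, $B\cong A\oplus C$ with diagonal differential, and therefore $C(s)\cong A\oplus C(\id_C)$. Since $C(\id_C)$ is contractible, the inclusion $\psi\colon A\to C(s)$, $a\mapsto(i(a),0)$, is a homotopy equivalence; I would make this explicit by exhibiting $\phi\colon C(s)\to A$ defined by $\phi(b,c)=i^{-1}(b-s(pb))$ (using $b-s(pb)\in\im i$), checking that it is a chain map with $\phi\psi=\id_A$, and verifying that $K(b,c)=(0,pb)$ realises the homotopy $\p K+K\p=\id_{C(s)}-\psi\phi$. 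For the parenthetical case, when $s$ is only homotopic to a chain map $s'$, functoriality of the mapping cone in $\mathsf{Kom}$ gives $C(s)\simeq C(s')$, and the previous case applied to $s'$ concludes.

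The main bookkeeping obstacle is keeping track of the shifts and signs throughout, in particular the identification $C=(C[1])[-1]$ with $\p_{C[1]}=-\p_C$, so that the matrix of $\p_B$ produced in (i) matches precisely with the matrix of $\p_{C(f)}$ used in Lemma~\ref{lem:rotate}, and so that the connecting morphism appears with the correct sign $-f[-1]$ in part (ii).
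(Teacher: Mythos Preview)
Your arguments for (i) and (ii) are correct and match the paper's proof essentially verbatim: the paper also defines $f$ via $if=\p_Bs-s\p_C$ using injectivity of $i$, and then simply invokes Lemma~\ref{lem:rotate} for (ii).

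For (iii) your route diverges from the paper's. You observe directly that $f=0$ (since $s$ is the splitting used in the decomposition, so $\p_Bs=s\p_C$ forces $if=0$), giving $B\cong A\oplus C$ as complexes, hence $C(s)\cong A\oplus C(\id_C)$ with the second summand contractible. The paper instead stays more structural: it builds a chain map $\pi:B\to A$ with $\ker\pi=\im s$, obtains a new split short exact sequence $0\to C\xrightarrow{s}B\xrightarrow{\pi}A\to 0$, and then feeds this back into part (ii) to conclude $A\simeq C(s)$ via the map $\Phi$. Your approach is more hands-on and self-contained (and your explicit homotopy $K$ checks out), while the paper's approach illustrates how (iii) is formally a consequence of (ii).

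There is a small gap in your treatment of the parenthetical. You write that if $s$ is only homotopic to a chain map $s'$, then ``functoriality of the mapping cone in $\mathsf{Kom}$ gives $C(s)\simeq C(s')$''. But the cone $C(s)$ is only defined when $s$ is a chain map, so this appeal is circular as stated. The paper's fix is cleaner and makes the parenthetical essentially vacuous: if $s-s'=\p_Bh+h\p_C$ and $s'$ is a chain map, then $\p_Bs-s\p_C=\p_Bs'-s'\p_C=0$, so $s$ is already a chain map and the main case applies directly.
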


\begin{proof}
{\color{black}For item (i) let $(i\ s):C(f)=A\oplus C\stackrel{\cong}\to B$ be the
isomorphism of $R$-modules induced by $s$. Since $p(\p_Bs-s\p_C)=0$
and $i$ is injective, we can define $f:C[1]\to A$ uniquely by
$if=\p_Bs-s\p_C$ and one checks that this map has the desired properties. 
Item (ii) is simply a rephrasal of Lemma~\ref{lem:rotate}. }

Item (iii) is a consequence of (ii) as follows. Let us write $s=\left(\begin{array}{c} \varphi \\ 1 \end{array}\right)$ with $\varphi:C\to A$. Viewing $B$ as the cone of $f$ as in (i), the condition that $s$ is a chain map translates into $\varphi\p_C=\p_A\varphi +f$. (This in turn can be reinterpreted as saying that $-\varphi$ is a chain homotopy between $f$ and $0$.) 

We consider the map $\pi:B=A\oplus C\to A$ given by $\pi=\left(\begin{array}{cc} 1 & -\varphi \end{array}\right)$. Then $\pi$ is a chain map and $\ker\pi = \im s$, so that we have a split short exact sequence 
$$
0\longrightarrow C \stackrel s \longrightarrow B \stackrel \pi \longrightarrow A\longrightarrow 0
$$
and we conclude using the first assertion in (ii). 

The class of chain maps is closed under homotopies: if $s$ is homotopic to a chain map, then it is an actual chain map. 
\end{proof}

\noindent {\bf Remark.} It is \emph{not} true that a short exact sequence of complexes $0\to A\stackrel{i}\longrightarrow B\stackrel p \longrightarrow C\to 0$ can always be completed to a distinguished triangle $A\stackrel i \longrightarrow B \stackrel p \longrightarrow C \longrightarrow A[-1]$. Thus the splitting assumption in Lemma~\ref{lem:ip} is necessary. Indeed, consider the example of the short exact sequence of $\Z$-modules
$$
\xymatrix{0 \ar[r] & \Z \ar[r]^i_{\times 2} & \Z \ar[r]^p & \Z/2\ar[r] & 0}
$$ 
where $p$ is the canonical projection and $i$ is multiplication by $2$, thought of as an exact sequence of chain complexes supported in degree $0$. The cone of $i$ is equal to $\Z$ in degrees $0$ and $1$, with differential $\left(\begin{array}{cc} 0 & \times 2\\ 0 & 0 \end{array}\right)$. The map $\left(\begin{array}{cc} p & 0 \end{array}\right):C(i)\to \Z/2$ is a quasi-isomorphism, yet $\Z/2$ is not homotopy equivalent to $C(i)$ since the only morphism $\Z/2\to C(i)$ is the zero map. This shows that the above short exact sequence cannot be completed to a distinguished triangle.

\begin{proposition} \label{prop:diag}
Let 
$$
\xymatrix
@C=30pt
@R=30pt
{X \ar[r]^f \ar[d]_\varphi & Y \ar[d]^\psi \\ 
X' \ar[r]_g & Y' 
} 
$$
be a commutative diagram in {\sf Kom}.
This can be completed to a diagram whose rows and columns are distinguished triangles in
{\sf Kom} and in which all squares are commutative (in {\sf Kom}), except the bottom right square which is anti-commutative
$$
\xymatrix
@C=30pt
@R=30pt
{X \ar[r]^f \ar[d]_\varphi & Y \ar[r] \ar[d]_\psi & Z \ar[r] \ar[d]_\chi & X[-1] \ar[d] \\
X' \ar[r]^g \ar[d] & Y' \ar[r] \ar[d] & Z' \ar[r] \ar[d] & X'[-1] \ar[d] \\
X'' \ar[r]^h \ar[d] & Y'' \ar[r] \ar[d] & Z'' \ar @{} [dr] |{-} \ar[r] \ar[d] & X''[-1] \ar[d] \\
X[-1] \ar[r] & Y[-1] \ar[r] & Z[-1] \ar[r] & X[-2] 
}
$$
\end{proposition}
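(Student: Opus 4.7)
The plan is to construct every object and morphism explicitly as a mapping cone at chain level, realize the third row as a distinguished triangle via Lemma~\ref{lem:ip}, and then verify the commutativity of the nine small squares by direct matrix manipulations.

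First I would lift the given square to {\sf Ch}: choose representatives $f,g,\varphi,\psi$ and a chain homotopy $s:X\to Y'[1]$ (a degree $+1$ map) satisfying $\psi f-g\varphi=\p_{Y'}s+s\p_X$. Then set
$$
Z:=C(f),\quad Z':=C(g),\quad X'':=C(\varphi),\quad Y'':=C(\psi),\quad Z'':=C(\chi),
$$
where $\chi:Z\to Z'$ and $h:X''\to Y''$ are the chain maps
$$
\chi=\begin{pmatrix}\psi & s\\ 0 & \varphi\end{pmatrix},\qquad h=\begin{pmatrix}g & -s\\ 0 & f\end{pmatrix},
$$
with respect to the decompositions $Z=Y\oplus X[-1]$, $Z'=Y'\oplus X'[-1]$, $X''=X'\oplus X[-1]$, $Y''=Y'\oplus Y[-1]$. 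A direct matrix computation using the homotopy identity for $s$ shows that $\chi$ and $h$ are chain maps. The first two rows, the first two columns, and the third column of the diagram are then distinguished triangles by the very definition of the mapping cones, and the four upper-left squares commute strictly in {\sf Ch}.

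The key step is to exhibit the third row $X''\xrightarrow{h}Y''\to Z''\to X''[-1]$ as a distinguished triangle. Writing $Z''=C(\chi)=Y'\oplus X'[-1]\oplus Y[-1]\oplus X[-2]$ and $Y''=Y'\oplus Y[-1]$, one checks from the block matrix for $\p_{Z''}$ that $Y''$ sits as a subcomplex in $Z''$ via the inclusion of the first and third summands. The quotient $Z''/Y''=X'[-1]\oplus X[-2]$ has differential $\left(\begin{smallmatrix}-\p_{X'}&\varphi\\0&\p_X\end{smallmatrix}\right)$, which differs from $\p_{X''[-1]}=\left(\begin{smallmatrix}-\p_{X'}&-\varphi\\0&\p_X\end{smallmatrix}\right)$ only by the sign of the off-diagonal term, and the sign-twisted map $\tilde q(x',x)=(x',-x)$ is an isomorphism of chain complexes from the quotient to $X''[-1]$. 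Applying Lemma~\ref{lem:ip}(i) to the resulting split short exact sequence $0\to Y''\to Z''\to X''[-1]\to 0$ with the obvious splitting $s_{sp}(u,v)=(0,u,0,-v)$, one computes $\p_{Z''}s_{sp}-s_{sp}\p_{X''[-1]}=(gu-sv,0,fv,0)\in Y''$, which means the canonical chain map produced by the lemma is exactly $h$. This identifies $Z''$ with $C(h)$ and yields the desired distinguished triangle $X''\xrightarrow{h}Y''\to Z''\to X''[-1]$. The fourth row and column are the shifts of the first ones.

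The remaining verification is that of the nine small squares. The four interior squares of the upper-left $3\times 3$ block commute strictly in {\sf Ch} from the explicit matrices for $\chi,h,\alpha(\cdot),\beta(\cdot)$ and the homotopy identity. The four edge squares involving a shift commute in {\sf Kom} by arguments of the type used in Lemma~\ref{lem:rotate}. Only the corner square, with vertices $Z'',X''[-1],Z[-1],X[-2]$, fails to commute strictly: the composite $Z''\xrightarrow{\pi}X''[-1]\xrightarrow{\beta(\varphi)[-1]}X[-2]$ sends $(y',x',y,x)\mapsto(x',-x)\mapsto -x$, while the composite $Z''\xrightarrow{\beta(\chi)}Z[-1]\xrightarrow{\beta(f)[-1]}X[-2]$ sends $(y',x',y,x)\mapsto(y,x)\mapsto x$; the two differ by $-1$, which is exactly the sign introduced in $\tilde q$. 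This is a chain-level manifestation of the $3\times 3$-lemma for triangulated categories, equivalently a double application of the octahedron axiom to the two factorisations of $\psi f=g\varphi$. The main obstacle in the proof is the consistent sign bookkeeping throughout: every sign from Lemma~\ref{lem:rotate}, every $(-1)^k$ in a shifted differential, and the sign in $\tilde q$ must propagate coherently, and the net effect is precisely this single anti-commutative square at the corner.
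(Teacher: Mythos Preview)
Your proposal is correct and follows essentially the same route as the paper: lift to {\sf Ch} with a chosen homotopy $s$, set $Z=C(f)$, $Z'=C(g)$, $X''=C(\varphi)$, $Y''=C(\psi)$, $Z''=C(\chi)$ with $\chi=\left(\begin{smallmatrix}\psi&s\\0&\varphi\end{smallmatrix}\right)$ and $h=\left(\begin{smallmatrix}g&-s\\0&f\end{smallmatrix}\right)$, identify $C(\chi)$ with $C(h)$, and then check the squares by matrix computation.

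The one packaging difference is how you produce the identification $C(\chi)\simeq C(h)$. The paper writes down an explicit isomorphism $I:C(\chi)\to C(h)$ as a $4\times4$ permutation/sign matrix (together with $J(\varphi):C(\varphi)[-1]\to C(\varphi[-1])$ for the fourth column) and then defines the maps $A,B,C,D$ by transporting $\alpha(h),\beta(h),\alpha(f[-1]),\beta(f[-1])$ along these isomorphisms; all nine squares are then checked by one-line matrix identities, the last yielding $D\beta(\chi)=-\beta(\varphi[-1])B$. You instead recognize $Y''\hookrightarrow Z''$ as a split subcomplex and invoke Lemma~\ref{lem:ip}(i) to recover $h$; this is exactly the same identification (your sign-twist $\tilde q$ is the paper's $J(\varphi)$ in disguise), but it explains \emph{why} the isomorphism exists rather than just exhibiting it. One caution: your appeal to ``arguments of the type used in Lemma~\ref{lem:rotate}'' for the four edge squares is vague --- with the paper's explicit $A,B,C,D$ those squares in fact commute strictly in {\sf Ch}, and since your maps differ from the paper's only by the sign isomorphism $\tilde q$, the same is true for yours after a direct check; it is worth writing these out rather than waving at Lemma~\ref{lem:rotate}.
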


\begin{remark}
This statement, attributed to Verdier, is proved in Beilinson, Bernstein, Deligne~\cite[Proposition~1.1.11]{BBD} by a repeated use of the octahedron axiom (TR5). This is also proved in~\cite[Lemma~2.6]{May_additivity} under the name ``$3\times 3$ Lemma", where it is shown that it is actually equivalent to the octahedron axiom. The same statement appears as Exercise~10.2.6 in~\cite{Weibel}. Our proof is more explicit and produces a diagram in which all the squares except the initial one and the bottom right one are commutative in {\sf Ch}, and in which the bottom right square is anti-commutative in {\sf Ch}. This result encompasses~\cite[Lemma~2.18]{BO3+4} and~\cite[Lemma~5.7]{BOGysin}. {\color{black}For completeness, we will reprove~\cite[Lemma~5.7]{BOGysin} as Lemma~\ref{lem:diag} below as a consequence of Proposition~\ref{prop:diag} (under an additional splitting assumption).}
\end{remark}

\begin{proof}[Proof of Proposition~\ref{prop:diag}] We start with the square 
$$
\xymatrix
@C=30pt
@R=30pt
{\ar @{} [dr] |s X \ar[r]^f \ar[d]_\varphi & Y \ar[d]^\psi \\ 
X' \ar[r]_g & Y' 
} 
$$
which is commutative modulo the homotopy $s$, meaning in our notation that 
\begin{equation} \label{eq:homotopy-s}
\psi f - g\varphi = s\p_X + \p_{Y'}s.
\end{equation}
We construct the grid diagram in the statement by a repeated use of the cone construction. 
The first two lines and the first two columns are constructed as model distinguished triangles. More precisely, we define 
$$
Z=C(f)=Y\oplus X[-1],\qquad Z'=C(g)=Y'\oplus X'[-1], \qquad \chi=\left(\begin{array}{cc} \psi & s \\ 0 & \varphi \end{array}\right).
$$
The condition that $\chi$ is a chain map is equivalent to equation~\eqref{eq:homotopy-s}, and the second and third square formed by the first two lines are then commutative in {\sf Ch}: 
$$
\xymatrix
@C=30pt
@R=30pt
{X \ar[r]^f \ar[d]_\varphi & Y \ar[r]^-{\alpha(f)} \ar[d]_\psi & Z \ar[r]^-{\beta(f)} \ar[d]_\chi & X[-1] \ar[d]_{\varphi[-1]} \\
X' \ar[r]^g & Y' \ar[r]^-{\alpha(g)} & Z' \ar[r]^-{\beta(g)} & X'[-1] 
}
$$
Similarly, we define 
$$
X''=C(\varphi)=X'\oplus X[-1],\qquad Y''=C(\psi)=Y'\oplus Y[-1],\qquad h= \left(\begin{array}{cc} g & -s \\ 0 & f \end{array}\right).
$$
Again, the condition that $h$ is a chain map is equivalent to equation~\eqref{eq:homotopy-s} and the first two columns determine a diagram in which the second and third square are commutative in {\sf Ch}:
$$
\xymatrix
@C=30pt
@R=30pt
{X \ar[r]^f \ar[d]_\varphi & Y \ar[d]_\psi \\
X' \ar[r]^g \ar[d]_-{\alpha(\varphi)} & Y' \ar[d]_-{\alpha(\psi)} \\
X'' \ar[r]^h \ar[d]_-{\beta(\varphi)} & Y'' \ar[d]_-{\beta(\psi)} \\
X[-1] \ar[r]^{f[-1]} & Y[-1]  
}
$$

We define 
$$
Z''=C(\chi).
$$
We construct the third and fourth columns of the grid diagram as model distinguished triangles, and we are left to specify the morphisms $A,B,C,D$ below:
$$
\xymatrix
@C=30pt
@R=30pt
{X \ar[r]^f \ar[d]_\varphi & Y \ar[r]^-{\alpha(f)} \ar[d]_\psi & C(f) \ar[r]^-{\beta(f)} \ar[d]_\chi & X[-1] \ar[d]^-{\varphi[-1]} \\
X' \ar[r]^g \ar[d]_-{\alpha(\varphi)} & Y' \ar[r]^-{\alpha(g)} \ar[d]_-{\alpha(\psi)} & C(g) \ar[r]^-{\beta(g)} \ar[d]_-{\alpha(\chi)} & X'[-1] \ar[d]^-{\alpha(\varphi[-1])} \\
C(\varphi) \ar[r]^h \ar[d]_-{\beta(\varphi)} & C(\psi) \ar@{.>}[r]^{A} \ar[d]_-{\beta(\psi)} & C(\chi) \ar@{.>}[r]^{B} \ar[d]_-{\beta(\chi)} & C(\varphi[-1]) \ar[d]^-{\beta(\varphi[-1])} \\
X[-1] \ar[r]^-{f[-1]} & Y[-1] \ar@{.>}[r]^{C} & C(f)[-1] \ar@{.>}[r]^{D} & X[-2] 
}
$$

The key point is that we have isomorphisms of chain complexes 
$$
\xymatrix
@C=30pt
@R=8pt
{
I:C(\chi) \ar[r]^-{\simeq} \ar@{=}[d] & C(h), \ar@{=}[d] \\
Y'\oplus X'[-1]\oplus Y[-1]\oplus X[-2] & Y'\oplus Y[-1] \oplus X'[-1] \oplus X[-2]
}
$$
$$
I:=\left(\begin{array}{cccc} 1 & 0 & 0 & 0 \\ 0 & 0 & 1 & 0 \\ 0 & 1 & 0 & 0 \\ 0 & 0 & 0 & -1
\end{array}\right)
$$
and 
\begin{equation} \label{eq:Jf}
\xymatrix
@C=30pt
@R=8pt
{
J(f):C(f)[-1] \ar[r]^-{\simeq} \ar@{=}[d] & C(f[-1]), \ar@{=}[d] \\
Y[-1]\oplus X[-2] & Y[-1]\oplus X[-2]
}
\end{equation}
$$
J(f):=\left(\begin{array}{cc} 1 & 0 \\ 0 & -1 \end{array}\right).
$$
One checks directly that the maps $I$ and $J(f)$ commute with the differentials. 

The third line in our diagram, involving the maps $A$ and $B$, is defined using the isomorphisms $I$ and $J(\varphi)$ from the model distinguished triangle associated to $h$, i.e. $A=I^{-1}\alpha(h)$, $B=J(\varphi)\beta(h)I$:
$$
\xymatrix
@C=30pt
@R=30pt
{C(\varphi) \ar[r]^h \ar@{=}[d] & C(\psi) \ar@{.>}[r]^-{A} \ar@{=}[d] & C(\chi) \ar@{.>}[r]^-{B} \ar[d]_-{I}^-{\simeq} & C(\varphi[-1]) \\
C(\varphi) \ar[r]^h  & C(\psi) \ar[r]^-{\alpha(h)} & C(h) \ar[r]^-{\beta(h)} & C(\varphi)[-1] \ar[u]^-{J(\varphi)}_-{\simeq} 
}
$$
In matrix form we have 
$$
A=\left(\begin{array}{cc} 1 & 0 \\ 0 & 0 \\ 0 & 1 \\ 0 & 0 \end{array}\right),\qquad 
B=\left(\begin{array}{cccc} 0 & 1 & 0 & 0 \\ 0 & 0 & 0 & 1 \end{array}\right).
$$

The fourth line in our diagram, involving the maps $C$ and $D$, is defined using the isomorphism $J(f)$ from the model distinguished triangle associated to $f[-1]$, i.e. $C=J(f)^{-1}\alpha(f[-1])$, $D=\beta(f[-1])J(f)$:
$$
\xymatrix
@C=30pt
@R=30pt
{X[-1] \ar[r]^-{f[-1]} \ar@{=}[d] & Y[-1] \ar@{.>}[r]^-{C} \ar@{=}[d] & C(f)[-1] \ar@{.>}[r]^-{D} \ar[d]_-{J(f)}^-{\simeq} & X[-2] \ar@{=}[d] \\
X[-1] \ar[r]^-{f[-1]} & Y[-1] \ar[r]^-{\alpha(f[-1])} & C(f[-1]) \ar[r]^-{\beta(f[-1])} & X[-2] 
}
$$
In matrix form we have 
$$
C=\left(\begin{array}{c} 1 \\ 0 \end{array}\right),\qquad D=\left(\begin{array}{cc} 0 & -1 \end{array}\right).
$$

A direct check shows that 
$$
A\alpha(\psi)=\alpha(\chi)\alpha(g),\qquad B\alpha(\chi)=\alpha(\varphi[-1])\beta(g),\qquad 
C\beta(\psi)=\beta(\chi)A,
$$
and
$$
D\beta(\chi)=-\beta(\varphi[-1])B.
$$
\end{proof}

{\color{black} 
For later use, we recall Lemma~5.7 from~\cite{BOGysin} and show how it follows from Proposition~\ref{prop:diag} under an additional assumption. 

\begin{lemma}[{\cite[Lemma~5.7]{BOGysin}}] \label{lem:diag} 
Let  
\begin{equation} \label{eq:fgh} 
\xymatrix{ 
0\ar[r] & A \ar[r]^i \ar[d]^f & B \ar[r]^p \ar[d]^g & C \ar[r] 
\ar[d]^h & 0 \\ 
0\ar[r] & A' \ar[r]^{i'}  & B' \ar[r]^{p'}  & C' \ar[r] 
 & 0 
} 
\end{equation}  
be a morphism of short exact sequences of complexes. 
We then have a diagram whose rows and columns are exact and in which all squares are commutative, except the bottom right one which is anti-commutative. 
$$
\xymatrix
@C=30pt
@R=30pt
{H_*(A) \ar[r]^{i_*} \ar[d]^{f_*} & H_*(B) \ar[r]^{p_*} \ar[d]^{g_*} & H_*(C) \ar[r] \ar[d]^{h_*} & H_{*-1}(A) \ar[d]^{f_*} \\
H_*(A') \ar[r]^{i'_*} \ar[d]^{\alpha(f)_*} & H_*(B') \ar[r]^{p'_*} \ar[d]^{\alpha(g)_*} & H_*(C') \ar[r] \ar[d]^{\alpha(h)_*} & H_{*-1}(A') \ar[d]^{\alpha(f)_*} \\
H_*(C(f)) \ar[r] \ar[d]^{\beta(f)_*} & H_*(C(g)) \ar[r] \ar[d]^{\beta(g)_*} & H_*(C(h)) \ar @{} [dr] |{-} \ar[r] \ar[d]^{\beta(h)_*} & H_{*-1}(C(f)) \ar[d]^{\beta(f)_*} \\
H_{*-1}(A) \ar[r]^{i_*} & H_{*-1}(B) \ar[r]^{p_*} & H_{*-1}(C) \ar[r] & H_{*-2}(A) 
}
$$
\end{lemma}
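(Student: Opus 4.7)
The plan is to apply Proposition~\ref{prop:diag} to the strictly commutative square
$$
\xymatrix@C=30pt@R=20pt{A \ar[r]^i \ar[d]_f & B \ar[d]^g \\ A' \ar[r]_{i'} & B'}
$$
(with trivial homotopy $s=0$) and then to identify the resulting cones with $C$, $C'$ and $C(h)$ using Lemma~\ref{lem:ip}(ii). Since we work with field coefficients, the short exact sequences in~\eqref{eq:fgh} split in each degree as sequences of $\mathfrak{k}$-vector spaces, so the splitting hypothesis of Lemma~\ref{lem:ip} is automatic. Proposition~\ref{prop:diag} then produces a $3\times 3$ grid in {\sf Kom} whose rows and columns are distinguished triangles, with all squares commuting except the bottom right which is anti-commutative. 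Its first two rows are the model triangles $A\stackrel{i}\to B\to C(i)\to A[-1]$ and $A'\stackrel{i'}\to B'\to C(i')\to A'[-1]$; its first two columns are the model triangles of $f$ and $g$; the induced map $\chi:C(i)\to C(i')$ appearing in the third column is $\chi=\mathrm{diag}(g,f)$ with respect to the decompositions $C(i)=B\oplus A[-1]$ and $C(i')=B'\oplus A'[-1]$; and the third row is $C(f)\to C(g)\to C(\chi)\to C(f)[-1]$.

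The next step is to invoke Lemma~\ref{lem:ip}(ii), which furnishes quasi-isomorphisms $\Phi:C\to C(i)$ and $\Phi':C'\to C(i')$ in {\sf Kom} identifying the first two rows of the grid with the distinguished triangles coming from the original short exact sequences. A chain-level computation, using the explicit formulas for $\Phi,\Phi'$ deduced from Lemma~\ref{lem:rotate}(i), the formula for $\chi$ above, and the chain-map condition for $g$ with respect to the chosen splittings (which produces the relevant chain homotopy as a component of $g$), shows that $\chi\circ\Phi$ is homotopic to $\Phi'\circ h$. Consequently the third column of the grid is isomorphic in {\sf Kom} to $C\stackrel{h}\to C'\to C(h)\to C[-1]$, and the third row becomes $C(f)\to C(g)\to C(h)\to C(f)[-1]$.

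Finally, passing to homology completes the argument: each distinguished triangle descends to a long exact sequence whose connecting morphisms are the maps induced by the corresponding morphisms of chain complexes, in view of~\eqref{eq:hlescone}; commutative squares in {\sf Kom} descend to commutative squares in homology, while the bottom-right anti-commutative square remains anti-commutative. This yields precisely the $4\times 4$ diagram asserted in Lemma~\ref{lem:diag}. The main technical hurdle is the chain-level verification in the second step that $\chi\circ\Phi\simeq\Phi'\circ h$, which is a routine but somewhat tedious bookkeeping using the explicit matrix formulas from the proofs of Lemma~\ref{lem:rotate} and Proposition~\ref{prop:diag}; everything else is a formal consequence of the already established machinery.
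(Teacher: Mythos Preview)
Your proof is correct and follows the same overall strategy as the paper---reduce to Proposition~\ref{prop:diag} using the splitting available over a field---but you apply Proposition~\ref{prop:diag} to a different square. You use the visible square $(i,i';f,g)$ with trivial homotopy and then identify $C(i)\simeq C$, $C(i')\simeq C'$, $C(\chi)\simeq C(h)$ afterward via Lemma~\ref{lem:ip}(ii). The paper instead first uses Lemma~\ref{lem:ip}(i) to write $B=C(\varphi)$, $B'=C(\varphi')$ for chain maps $\varphi:C[1]\to A$, $\varphi':C'[1]\to A'$ coming from the splittings, extracts from the matrix form $g=\left(\begin{smallmatrix}f&t\\0&h\end{smallmatrix}\right)$ a homotopy $t$ for the square $(\varphi,\varphi';h[1],f)$, and applies Proposition~\ref{prop:diag} to \emph{that} square. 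With this choice the resulting grid already has $B,B',C,C',C(f),C(g),C(h)$ as its entries (no post-hoc identifications), and only a column shift plus one sign swap is needed to match the stated diagram.

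What each approach buys: the paper's back-shifted square produces the desired diagram directly, at the price of introducing the auxiliary maps $\varphi,\varphi'$ and the homotopy $t$. Your approach starts from the most natural square but then requires the compatibility $\chi\Phi\simeq\Phi'h$ and the further verification that the induced homotopy equivalence $C(\chi)\simeq C(h)$ intertwines $\alpha,\beta$ on both sides. A small simplification of your ``tedious bookkeeping'': rather than chasing $\chi\Phi\simeq\Phi'h$, note that with $\Psi=\left(\begin{smallmatrix}0&1&0\end{smallmatrix}\right)$ one has $\Psi'\chi=h\Psi$ \emph{strictly} in {\sf Ch}, so the induced map $\left(\begin{smallmatrix}\Psi'&0\\0&\Psi\end{smallmatrix}\right):C(\chi)\to C(h)$ is a chain map that visibly commutes with $\alpha$ and $\beta$, and is a homotopy equivalence since $\Psi,\Psi'$ are.
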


\begin{proof}
Up to changes in notation, this is exactly Lemma~5.7 in~\cite{BOGysin}. 

To wrap up the story, we show here how this result follows from Proposition~\ref{prop:diag} under the additional assumption that the short exact sequences are split as sequences of $R$-modules (this is always the case if $R$ is field or, more generally, if we work with chain complexes of free $R$-modules). 

Choose splittings $s:C\to B$ and $s':C'\to B'$. By Lemma~\ref{lem:ip}, these determine canonical chain maps $\varphi:C[1]\to A$ and $\varphi':C'[1]\to A'$, together with canonical identifications $B=C(\varphi)$, $i=\alpha(\varphi)$, $p=\beta(\varphi)$, $B'=C(\varphi')$, $i'=\alpha(\varphi')$, $p'=\beta(\varphi')$. 

The map $g:B\to B'$ can then be identified with a map $C(\varphi)\to C(\varphi')$ written in matrix form as 
$$
g=\left(\begin{array}{cc} f & t \\ 0 & h \end{array}\right) : A\oplus C \to A'\oplus C'. 
$$

The condition that $g$ is a chain map is then equivalent to the three relations 
$$
f\p_A = \p_{A'} f,\qquad h\p_C = \p_{C'}h,\qquad f\varphi - \varphi'h=\p_{A'} t - t\p_C.
$$
We interpret the last relation as $f\varphi-\varphi'h[1]=\p_{A'}t+t\p_{C[1]}$, which means that the square 
\begin{equation}\label{eq:square-t}
\xymatrix
@C=30pt
@R=30pt
{\ar @{} [dr] |t C[1] \ar[r]^\varphi \ar[d]_{h[1]} & A \ar[d]^f \\ 
C'[1] \ar[r]_{\varphi'} & A' 
} 
\end{equation}
is commutative up to a homotopy given by $t:C\to A'$. The initial diagram~\eqref{eq:fgh} appears then as the horizontal extension of this commutative square in {\sf Kom} to a map of distinguished triangles. 

We now apply Proposition~\ref{prop:diag} to the square~\eqref{eq:square-t} in order to obtain the grid diagram
$$
\xymatrix
@C=30pt
@R=30pt
{\ar @{} [dr] |t C[1] \ar[r]^\varphi \ar[d]_{h[1]} & A \ar[r]^i \ar[d]_f & B \ar[r]^p \ar[d]_g & C \ar[d]_h \\
C'[1] \ar[r]^{\varphi'} \ar[d] & A' \ar[r]^{i'} \ar[d] & B' \ar[r]^{p'} \ar[d] & C' \ar[d] \\
C(h[1]) \ar[r] \ar[d] & C(f) \ar[r] \ar[d] & C(g) \ar @{} [dr] |{-} \ar[r] \ar[d] & C(h) \ar[d] \\
C \ar[r] & A[-1] \ar[r] & B[-1] \ar[r] & C[-1] 
}
$$
The anti-commutativity of the bottom right corner can be traded for anti-commu\-tativity of the bottom left corner by changing the sign of the two bottom middle vertical arrows. The grid diagram in the statement of the lemma is then obtained by passing to homology.  
\end{proof}
}

\subsection{Uniqueness of the cone}\label{sec:cone-uniqueness}

We now spell out what is the additional piece of structure that is needed in order for the cone of a map to be uniquely and canonically defined up to homotopy. 

(i) \noindent \emph{$\operatorname{Hom}$ complexes}. Let $X,Y$ be chain complexes of $R$-modules and denote 
$$
\operatorname{Hom}_d(X,Y),\qquad d\in \Z
$$
the $R$-module of $R$-linear maps of degree $d$. This is a chain complex with differential 
$$
\p:\operatorname{Hom}_d(X,Y)\to \operatorname{Hom}_{d-1}(X,Y),
$$
$$
\p\Phi=\p_Y\Phi - (-1)^{|\Phi|}\Phi\p_X,
$$
where $|\Phi|=d$ denotes the degree of a map $\Phi\in\operatorname{Hom}_d(X,Y)$. The space of degree $d$ cycles 
$$
Z_d(X,Y)=\ker(\p:\operatorname{Hom}_d(X,Y)\to \operatorname{Hom}_{d-1}(X,Y))
$$ 
is the space of degree $d$ chain maps $X\to Y$. Two degree $d$ chain maps are homologous, i.e. they differ by an element of 
$$
B_d(X,Y):=\mbox{Im}(\p:\operatorname{Hom}_{d+1}(X,Y)\to\operatorname{Hom}_d(X,Y)),
$$ 
if and only if they are chain homotopic.

\noindent {\bf Remark/Notation.} We denote a degree $d$ map $f$ from $X$ to $Y$ by 
$$
f:X\stackrel d\longrightarrow Y.
$$ 
We \emph{do not} use the notation $f:X\to Y[d]$, which we reserve for chain maps. This distinction is relevant in practice when using cones because the differential of the complex $Y[d]$ is not $\p_Y$, but $(-1)^d\p_Y$.

(ii) \noindent \emph{Chain maps between cones}. Let
$$
\xymatrix
@C=30pt
@R=30pt
{\ar @{} [dr] |s X \ar[r]^f \ar[d]_\varphi & Y \ar[d]^\psi \\ 
X' \ar[r]_g & Y' 
} 
$$
be a diagram of degree $0$ chain maps which is commutative modulo a prescribed degree $1$ homotopy $s\in\operatorname{Hom}_1(X,Y')$, meaning that $\psi f - g\varphi = \p(s)$. 
We have an induced chain map 
$$
\xymatrix
@C=5pt
@R=8pt
{
\chi_s={\left(\begin{array}{cc} \psi & s \\ 0 & \varphi[-1] \end{array}\right)} \qquad : & C(f)\ar[rrr]\ar@{=}[d] & & & C(g). \ar@{=}[d] \\
& Y\oplus X[-1] & & & Y'\oplus X'[-1]
}
$$
The homotopy class of the map $\chi_s$ depends only on the equivalence class of the homotopy $s$ modulo $B_1(X,Y')$. Indeed, if $t\in\operatorname{Hom}_1(X,Y')$ is another map such that $\psi f - g\varphi = \p(t)$ then $s-t\in Z_1(X,Y')$. If 
$s-t\in B_1(X,Y')$, meaning that 
$$
s-t=\p(b)
$$ 
with $b\in\operatorname{Hom}_2(X,Y')$, then 
$$
\chi_s-\chi_t=\p\left(\begin{array}{cc} 0 & b \\ 0 & 0 \end{array}\right)\in B_0(C(f),C(g)),
$$
meaning that $\chi_s$ and $\chi_t$ are chain homotopic.

(iii) \emph{Lifts of $B_0$ modulo $B_1$}. Denote $B_1=B_1(X,Y)$, $Z_1=Z_1(X,Y)$, $\operatorname{Hom}_1=\operatorname{Hom}_1(X,Y)$. Let 
$$
V_1\subset \operatorname{Hom}_1
$$ 
be a subspace such that $V_1\cap Z_1=B_1$ and $V_1+Z_1=\operatorname{Hom}_1$. Equivalently, $B_1\subset V_1$ is a subspace and $\p$ induces an isomorphism $V_1/B_1\stackrel\simeq\rightarrow B_0$. We call $V_1$ a \emph{linear lift of $B_0$ modulo $B_1$}.

Let such a linear lift $V_1\subset \operatorname{Hom}_1(X,Y)$ be given. Given two homotopic maps $f,g\in \operatorname{Hom}_0(X,Y)$, i.e. $f-g=\p(s)$, we can assume without loss of generality that $s\in V_1$. The map $s$ is uniquely defined modulo $B_1$, which implies that the homotopy class of the map $\chi_s:C(f)\to C(g)$ is well-defined. 

Thus, given a lift $V_1\subset \operatorname{Hom}_1(X,Y)$, the cone of any map $X\to Y$ is uniquely defined in {\sf Kom}.

\subsection{Directed, bi-directed, and doubly directed systems}\label{sec:directed-systems}

We now explain a setup in which one can speak of limits of ordered systems of mapping cones. {\color{black}The motivation for the definitions to follow lies in the definition of symplectic homology as a direct/inverse limit over directed systems in which the morphisms are Floer continuation maps in Floer homology. To this effect, the reader my find it useful to refer to \S\ref{sec:Floer-continuation} and~\S\ref{sec:transfer}.}
We begin with a few definitions. 

A \emph{directed set} is a partially ordered set $(I,\prec)$ such that for any $i,j$ there exists $k$ with $i,j\prec k$. An \emph{inversely directed set} is a partially ordered set $(I,\prec)$ such that for any $i,j$ there exists $\ell$ with $\ell\prec i,j$. Equivalently, we require that $I$ with the opposite order be a directed set. A \emph{bi-directed set} is a partially ordered set $(I,\prec)$ which is both directed and inversely directed. Our typical example is $I=\R$. 

A \emph{system in {\sf Kom} indexed by $I$ } is a collection of chain complexes $X(i)$, $i\in I$ together with chain maps $\varphi_i^j:X(i)\to X(j)$, $i\prec j$ such that
$\varphi_j^k \varphi_i^j=\varphi_i^k$ for $i\prec j\prec k$ and $\varphi_i^i=\mbox{Id}_{X(i)}$ in {\sf Kom}. More precisely, there exist maps $x_{ijk}\in \mbox{Hom}_1(X(i),X(k))$, $i\prec j \prec k$ and $x_i\in \mbox{Hom}_1(X(i),X(i))$ such that 
$$
\varphi_i^k-\varphi_j^k \varphi_i^j=\p(x_{ijk}),\qquad \mbox{Id}_{X(i)}-\varphi_i^i=\p(x_i).
$$
We speak of a \emph{directed system}, of an \emph{inversely directed system}, and of a \emph{bi-directed system} if $(I,\prec)$ is a directed set, an inversely directed set, respectively a bi-directed set. We call the maps $\varphi_i^j$ \emph{structure maps}. 

More generally, let $(I^+,\prec)$ be a directed set and $(I^-,\prec)$
be an inversely directed set. A \emph{doubly directed set modelled on
  $I^\pm$} is a subset $I\subset I^-\times I^+$ with the following two
properties: 
\begin{itemize}
\item if $(i,j)\in I$ then $(i',j)\in I$ for all $i'\prec i$ and
$(i,j')\in I$ for all $j'\prec j$; 
\item for every $j\in I^+$ there exists $i\in I^-$ such that $(i,j)\in
I$. 
\end{itemize}
Our typical example is $I^\pm=\R_\pm^*$ and $I=\{(a,b)\in
\R_-^*\times\R_+^*\, : \, a\le f(b)\}$, where $f:\R_+^*\to \R_-^*$ is
a decreasing function such that $f(b)\to-\infty$ as $b\to\infty$.  

 A \emph{doubly directed system in {\sf Kom} indexed by the doubly directed set $I$} is a collection of chain complexes $X(i,j)$, $(i,j)\in I$ together with chain maps $\varphi_{i'j}^{ij}:X(i',j)\to X(i,j)$ for $i'\prec i$ and $\varphi_{ij}^{ij'}:X(i,j)\to X(i,j')$ for $j\prec j'$ with respect to which every $X(i,\cdot)$ is a directed system and every $X(\cdot,j)$ is an inversely directed system, and such that all diagrams 
\begin{equation} \label{eq:diag-doubly-directed}
 \xymatrix{
 X(i',j) \ar[r] \ar[d] & X(i,j) \ar[d] \\
 X(i',j') \ar[r] & X(i,j') 
 }
 \end{equation} 
are commutative in {\sf Kom}, for any choice of indices such that $i'\prec i$, $j\prec j'$ and $(i,j),(i',j),(i,j'),(i',j')\in I$. We call the maps $\varphi_{i'j}^{ij}$ and $\varphi_{ij}^{ij'}$ \emph{structure maps}.

Given a map of bi-directed systems or a map of doubly directed systems, which means a collection of chain maps indexed by the relevant indexing set which commute in {\sf Kom} with the chain maps defining each of the systems, we are interested in understanding conditions under which the cone of that map is itself a bi-directed, respectively a doubly directed system. The two situations are similar, except for more cumbersome notation in the case of doubly directed systems since we need to work with two indexing variables $(i,j)$ rather than with just one index variable $i$. For this reason we shall focus in the sequel on bi-directed systems and indicate how the discussion adapts to doubly directed systems.

Let $\{X(i),\varphi_i^j\}$, $\{Y(i),\psi_i^j\}$ be two bi-directed systems in {\sf Kom} with the same index set $I$. A map of bi-directed systems in {\sf Kom} is a collection of chain maps $f_i:X(i)\to Y(i)$, $i\in I$ such that $\psi_i^jf_i$ and $f_j\varphi_i^j$ are homotopic for all $i\prec j$. Given $s_i^j\in\operatorname{Hom}_1(X(i),Y(j))$, $i\prec j$ such that $\psi_i^jf_i-f_j\varphi_i^j=\p(s_i^j)$, denote $\chi_i^j=\chi_{s_i^j}$. We then have a commutative diagram
$$
\xymatrix
@C=30pt
@R=30pt
{\ar @{} [dr] |{s_i^j} X(i) \ar[r]^{f_i} \ar[d]_{\varphi_i^j} & Y(i) \ar[d]^{\psi_i^j} \ar[r] & C(f_i) \ar[d]^{\chi_i^j} \ar[r] & X(i)[-1] \ar[d] \\ 
X(j) \ar[r]_{f_j} & Y(j) \ar[r] & C(f_j) \ar[r] & X(j)[-1] 
} 
$$ 
We are interested in finding conditions under which $\{C(f_i),\chi_i^j\}$ is a bi-directed system in {\sf Kom}. 

Let us consider the following condition: 

(B) \emph{There exists a collection $\{b_{ijk}\}$, $i\prec j \prec k$ with $b_{ijk}\in\mbox{Hom}_1(X(i),Y(k))$ such that}
$$
s_i^k-\psi_j^ks_i^j-s_j^k\varphi_i^j+f_k x_{ijk} - y_{ijk}f_i=\p(b_{ijk}),\qquad i,j,k.
$$
Here it is understood that $\{x_{ijk}\}$, $\{y_{ijk}\}$ and $\{s_i^j\}$ are given as above.
A direct computation then shows that  
$$
\chi_i^k-\chi_j^k\chi_i^j=\p\left(\begin{array}{cc} y_{ijk} & b_{ijk} \\ 0 & -x_{ijk} \end{array}\right), \qquad i,j,k.
$$ 
Indeed, the off-diagonal term on the left hand side is $s_i^k-\psi_j^ks_i^j-s_j^k\varphi_i^j$, while the off-diagonal term on the right hand side is $\p(b_{ijk})-f_k x_{ijk} + y_{ijk}f_i$. 

\noindent {\bf Remark.} Condition (B) is motivated both by the outcome of preliminary computations for bi-directed systems in {\sf Ch} and by the example of Floer continuation maps discussed below.

Condition (B) is clearly independent of the choice of $\{s_i^j\}$, $\{x_{ijk}\}$, and $\{y_{ijk}\}$ up to homotopy. This motivates the stronger condition (C) below, of a more intrinsic nature. \textcolor{black}{For the statement, recall the notion of a lift of $B_0$ mod $B_1$ from~\S\ref{sec:cone-uniqueness}.(iii).}

(C) \emph{We are given the data of collections of lifts of $B_0$ mod $B_1$:}
$$
\{X_i^j\subset \operatorname{Hom}_1(X(i),X(j))\},\qquad i\prec j,
$$
$$
\{Y_i^j\subset \operatorname{Hom}_1(Y(i),Y(j))\},\qquad i\prec j,
$$
$$
\{V_i^j\subset \operatorname{Hom}_1(X(i),Y(j))\},\qquad i\prec j
$$
\emph{such that $(\psi_{j}^k)_*V_i^j\subset V_i^k$, $(\varphi_i^{j})^*V_j^k\subset V_i^k$, $(f_k)_*X_i^k\subset V_i^k$, and $(f_i)^*Y_i^k\subset V_i^k$}.

We claim that
$$
(C) \Longrightarrow (B).
$$
For the proof we start by choosing $s_i^j\in V_i^j$, $x_{ijk}\in X_i^k$, $y_{ijk}\in Y_i^k$. We then remark that $-y_{ijk}f_i+s_i^k+f_kx_{ijk}$ and $\psi_j^ks_i^j+s_j^k\varphi_i^j$ are both contracting homotopies for $\psi_j^k\psi_i^jf_i-f_k\varphi_j^k\varphi_i^j$, so that their difference is a cycle. Now condition (C) implies that both these homotopies lie in $V_i^k$, which implies that their difference is a boundary $\p(b_{ijk})$. 

Condition (B) implies that $\{C(f_i),\chi_i^j\}$ is a bi-directed system in {\sf Kom}. The same holds in particular under condition (C). 

We now indicate how the discussion adapts to the case of a map $\{f_{ij}:X(i,j)\to Y(i,j)\}$ between doubly directed systems indexed by the same doubly directed set $I$. Denote $\varphi_{i'j}^{ij},\varphi_{ij}^{ij'}$ the structure maps for $\{X(i,j)\}$, and denote $\psi_{i'j}^{ij},\psi_{ij}^{ij'}$ the structure maps for $\{Y(i,j)\}$. Denote $\sigma_{i'j}^{ij'}$, $\tau_{i'j}^{ij'}$ the homotopies that express the commutativity in {\sf Kom} of the diagrams~\eqref{eq:diag-doubly-directed}: 
$$
\varphi_{ij}^{ij'}\varphi_{i'j}^{ij} - \varphi_{i'j'}^{ij'}\varphi_{i'j}^{i'j'}=\p(\sigma_{i'j}^{ij'}),\qquad 
\psi_{ij}^{ij'}\psi_{i'j}^{ij} - \psi_{i'j'}^{ij'}\psi_{i'j}^{i'j'}=\p(\tau_{i'j}^{ij'}).
$$
Denote $s_{i'j}^{ij}$ and $s_{ij}^{ij'}$ the homotopies that express the fact that $f_{\cdot j}$ and $f_{i\cdot}$ are maps of directed systems. 

The analogue of condition~(B) for doubly-directed systems is the following:

($\tilde{\mbox{B}}$) \emph{We require condition $\mbox{(B)}$ to hold for each of the maps of directed systems 
$f_{i\cdot}$ and $f_{\cdot j}$, and in addition we require that there exists a collection $\{B_{i'j}^{ij'}\}$ with $B_{i'j}^{ij'}\in \mbox{Hom}_1(X(i',j),Y(i,j'))$ such that}
$$
\psi_{ij}^{ij'}s_{i'j}^{ij} + s_{ij}^{ij'}\varphi_{i'j}^{ij} - \psi_{i'j'}^{ij'}s_{i'j}^{i'j'}-s_{i'j'}^{ij'}\varphi_{i'j}^{i'j'} + f_{ij'}\sigma_{i'j}^{ij'}-\tau_{i'j}^{ij'}f_{i'j}=\p(B_{i'j}^{ij'}).
$$
Similarly to the case of bi-directed systems, a direct computation shows that 
$$
\chi_{ij}^{ij'}\chi_{i'j}^{ij}-\chi_{i'j'}^{ij'}\chi_{i'j}^{i'j'}=\p\left(\begin{array}{cc} \tau_{i'j}^{ij'} & B_{i'j}^{ij'} \\ 0 & -\sigma_{i'j}^{ij'}\end{array}\right),
$$
where $\chi_{ab}^{cd}:C(f_{ab})\to C(f_{cd})$ are the maps induced between cones, as before. It is important to note that condition~($\tilde{\mbox{B}}$) is of the same nature as condition (B), and the only difference between the two is that condition ($\tilde{\mbox{B}}$)  takes into account the additional conditions of commutativity up to homotopy which are involved in the definition of a doubly directed system.

One can also phrase for doubly directed systems an analogue ($\tilde{\mbox{C}}$) of condition (C) for bi-directed systems, but we shall not need it and therefore we do not make it explicit.

\noindent \emph{Limiting objects.} Let now the coefficient ring be a field $\K$, and recall~\cite{ES52} that the inverse limit functor is exact on inversely directed systems consisting of finite dimensional vector spaces. Let $\{f_{ij}:X(i,j)\to Y(i,j)\}$ be a map of doubly directed systems, and assume that each $X(i,j)$ and $Y(i,j)$ has finite dimensional homology in each degree. Under condition ($\tilde{\mbox{B}}$) we obtain in the first-inverse-then-direct-limit a homology exact triangle
$$
\xymatrix
{\lim\limits^{\longrightarrow}_{j}\lim\limits^{\longleftarrow}_{i} H(X(i,j)) \ar[rr]^{\lim\limits^{\longrightarrow}_{j}\lim\limits^{\longleftarrow}_{i} (f_{ij})_*} && \lim\limits^{\longrightarrow}_{j}\lim\limits^{\longleftarrow}_{i} H(Y(i,j)) \ar[dl] \\
& \lim\limits^{\longrightarrow}_{j}\lim\limits^{\longleftarrow}_{i} H(C(f_{ij})) \ar[ul]_-{[-1]} &
}
$$

\noindent {\bf Remark.} The following question is relevant. When is  
$$
\lim\limits^{\longrightarrow}_{j}\lim\limits^{\longleftarrow}_{i}X(i,j) \longrightarrow
\lim\limits^{\longrightarrow}_{j}\lim\limits^{\longleftarrow}_{i}Y(i,j) \longrightarrow 
\lim\limits^{\longrightarrow}_{j}\lim\limits^{\longleftarrow}_{i} C(f_{ij}) \longrightarrow
\lim\limits^{\longrightarrow}_{j}\lim\limits^{\longleftarrow}_{i} X(i,j)[-1] 
$$
a (model) distinguished triangle? This is related to exactness criteria for the inverse limit functor and to the so-called Mittag-Leffler condition, see for example~\cite{CF} and the references therein. 

\subsection{Floer continuation maps}\label{sec:Floer-continuation}

We now show how condition ($\tilde{\mbox{B}}$) above is satisfied in the case of Floer continuation maps for a doubly directed system of Hamiltonians. In order to streamline the discussion we shall actually treat the case of a directed system of Hamiltonians, the case of doubly directed systems being conceptually equivalent, except for the more complicated notation.

\noindent \emph{Higher continuation maps.} Let $K\le L$ be two Hamiltonians and let $(FC(K),\p_K)$, $(FC(L),\p_L)$ be the Floer complexes for some choice of regular almost complex structures $J_K$ and $J_L$. An $s$-dependent Hamiltonian $H=H_s$, $s\in \R$ such that $H_s=L$ for $s\ll 0$, $H_s=K$ for $s\gg 0$, and $\p_sH\le 0$, together with an $s$-dependent almost complex structure interpolating between $J_L$ and $J_K$, determines a degree $0$ chain map  
$$
\varphi_H: FC(K)\to FC(L).
$$
We refer to $H$ as a \emph{decreasing Hamiltonian homotopy (from $L$ to $K$)}, and to $\varphi_H$ as the associated \emph{continuation map}. 

Given two decreasing Hamiltonian homotopies $H^0$ and $H^1$ from $L$ to $K$, the choice of a homotopy $\{H^\lambda\}$, $\lambda\in [0,1]$ between the two, together with the choice of a homotopy of almost complex structures which we ignore from the notation, determines a degree $1$ map 
$$
\varphi_{\{H^\lambda\}}:FC(K)\stackrel {+1}\longrightarrow  FC(L).
$$
We refer to $\{H^\lambda\}$ as a \emph{homotopy of homotopies, or $1$-homotopy}, and to $\varphi_{\{H^\lambda\}}$ as the associated \emph{degree $1$ continuation map}. This is in general not a chain map. However, it is a chain homotopy between $\varphi_{H^0}$ and $\varphi_{H^1}$: 
$$
\varphi_{H^1}-\varphi_{H^0} =\p(\varphi_{\{H^\lambda\}}) = \p_K\varphi_{\{H^\lambda\}} + \varphi_{\{H^\lambda\}}\p_H.
$$

We now go one step further. Given two $1$-homotopies $\{H_\mu^0\}$ and $\{H_\mu^1\}$, $\mu\in [0,1]$ the choice of a homotopy $\{H_\mu^\lambda\}$, $\lambda\in[0,1]$ connecting them, together with the choice of a homotopy of homotopies of almost complex structures which we ignore from the notation, determines a degree $2$ map
$$
\varphi_{\{H_\mu^\lambda\}}:FC(K)\stackrel {+2}\longrightarrow  FC(L). 
$$
We refer to $\{H_\mu^\lambda\}$ as a \emph{$2$-homotopy}, and to $\varphi_{\{H_\mu^\lambda\}}$ as the associated \emph{degree $2$ continuation map}. This is in general not a chain map. However, if $\{H^0_\mu\}$ and $\{H^1_\mu\}$ coincide at $\mu=0$ and at $\mu=1$, and if $\{H_\mu^\lambda\}$ is constant at $\mu=0$ and at $\mu=1$, the map $\varphi_{\{H_\mu^\lambda\}}$ is a contracting chain homotopy for $\varphi_{\{H_\mu^1\}}-\varphi_{\{H_\mu^0\}}$:
$$
\varphi_{\{H_\mu^1\}}-\varphi_{\{H_\mu^0\}} = \p(\varphi_{\{H_\mu^\lambda\}}).
$$

More generally, denote $I=[0,1]$ and, for $d\ge 0$, consider the $d$-dimensional cube $I^d$. (If $d=0$ then $I^d$ consists of a single point.) A generic pair $\{H_{s,z},J_{s,z}\}$, $z\in I^d$, $s\in\R$ consisting of an $I^d$-family of decreasing Hamiltonian homotopies from $L$ to $K$ and of an $I^d$-family of $s$-dependent almost complex structures which all coincide with $J_L$ for $s\ll 0$ and with $J_K$ for $s\gg 0$, 
determines a map 
$$
\varphi_{\{H_{s,z},J_{s,z}\}}\in \operatorname{Hom}_d(FC(K),FC(L)). 
$$
This map is defined on a generator $x\in FC(K)$ by 
$$
x\mapsto \sum_{|x|-|y|=-d}\# \mathcal{M}(y,x;\{H_{s,z},J_{s,z}\})y
$$
and then extended by linearity. Here $\mathcal{M}(y,x;\{H_{s,z},J_{s,z}\})$ denotes the moduli space of solutions to the Floer equation in the chosen $I^d$-family, asymptotic to $y$ at $-\infty$ and asymptotic to $x$ at $+\infty$. 
In other words, the map $\varphi_{\{H_{s,z},J_{s,z}\}}$ counts index $-d$ solutions of the Floer equation within the $d$-dimensional family parameterized by $I^d$. 
We refer to $\{H_{s,z},J_{s,z}\}$ as a \emph{$d$-homotopy}, and to $\varphi_{\{H_{s,z},J_{s,z}\}}$ as the associated \emph{degree $d$ continuation map}. 

Let $\{H^0,J^0\}$ and $\{H^1,J^1\}$ be two $d$-homotopies \emph{which are equal on $\p I^d$}. For any choice of a $(d+1)$-homotopy $\{H^\lambda,J^\lambda\}$, $\lambda\in [0,1]$ which interpolates between the two, and which is \emph{constant on $(\p I^d)\times I\subset I^d\times I=I^{d+1}$}, the associated degree $d+1$ continuation map $\varphi_{\{H^\lambda,J^\lambda\}}$ is a contracting chain homotopy for $\varphi_{\{H^1,J^1\}}-\varphi_{\{H^0,J^0\}}$: 
$$
\varphi_{\{H^1,J^1\}}-\varphi_{\{H^0,J^0\}} = \p(\varphi_{\{H^\lambda,J^\lambda\}}).
$$

We have thus proved the following
\begin{lemma} \label{lem:continuation_maps}
The difference between any two degree $d$ continuation maps determined by $d$-homotopies which coincide on $\p I^d$ is homotopic to zero. A contracting homotopy is provided by any degree $d+1$ continuation map determined by an interpolating $(d+1)$-homotopy which is constant on $(\p I^d)\times I\subset I^d\times I=I^{d+1}$.

\hfill{$\square$}  
\end{lemma}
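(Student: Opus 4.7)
The plan is a standard Floer-theoretic cobordism argument at chain level. For asymptotic orbits $y\in\cP(L)$ and $x\in\cP(K)$ with $|y|=|x|+d$, consider the full parametrized moduli space
$$
\wt\cM(y,x)=\{((\lambda,z),u)\,:\,(\lambda,z)\in I^{d+1}=I\times I^d,\ u\in\cM(y,x;H^\lambda_{s,z},J^\lambda_{s,z})\},
$$
where $\lambda\in I$ is the interpolation variable. For generic interpolating $(d+1)$-homotopy in the class prescribed by the statement, $\wt\cM(y,x)$ is a smooth manifold of expected dimension $|x|-|y|+(d+1)=1$, and its Gromov--Floer compactification $\ol{\wt\cM}(y,x)$ is a compact oriented $1$-manifold with boundary whose signed boundary count vanishes.

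The boundary of $\ol{\wt\cM}(y,x)$ decomposes into three types of strata. First, the parameter boundary at $\lambda\in\{0,1\}$ contributes, with opposite signs reflecting the orientation of $I$, the coefficients of $y$ in $\varphi_{\{H^0,J^0\}}(x)$ and $\varphi_{\{H^1,J^1\}}(x)$. Second, breaking at $-\infty$ produces pairs $(u_1,u_2)$ where $u_1$ is an $\R$-modded autonomous Floer cylinder for $(H_L,J_L)$ from $y$ to some orbit $y'$ with $|y'|=|y|+1$, and $u_2$ is a continuation cylinder from $y'$ to $x$ in the full $(d+1)$-parameter family; summed over $y'$ this produces the coefficient of $y$ in $\p_L\varphi_{\{H^\lambda,J^\lambda\}}(x)$. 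Breaking at $+\infty$ analogously yields the coefficient of $y$ in $\varphi_{\{H^\lambda,J^\lambda\}}\p_K(x)$, carrying a sign $(-1)^{d+1}$ coming from permuting the gluing parameter past the $(d+1)$-dimensional parameter space $I^{d+1}$.

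The third stratum is the parameter boundary along $(\p I^d)\times I\subset I^{d+1}$, and it is here that the constancy hypothesis is used crucially. Since $(H^\lambda,J^\lambda)$ is $\lambda$-independent on this subset, for each $z\in\p I^d$ the fiber over $\{z\}\times I$ equals $I\times\cM(y,x;H^0_{s,z},J^0_{s,z})$. Viewed as a family parameterized by the $(d-1)$-dimensional set $\p I^d$, the underlying Floer problem has expected dimension $|x|-|y|+(d-1)=-1$, hence is empty for generic choice, and the redundant $I$-factor does not contribute. Equating the signed total of the three boundary contributions to zero gives the chain-level identity
$$
\varphi_{\{H^1,J^1\}}-\varphi_{\{H^0,J^0\}}=\p_L\varphi_{\{H^\lambda,J^\lambda\}}-(-1)^{d+1}\varphi_{\{H^\lambda,J^\lambda\}}\p_K=\p(\varphi_{\{H^\lambda,J^\lambda\}}),
$$
which is exactly the desired chain homotopy.

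The main obstacle is the relative transversality statement: one must perturb the $(d+1)$-homotopy within the class of interpolations that are constant on $(\p I^d)\times I$ and equal to the prescribed $d$-homotopies at $\lambda\in\{0,1\}$, while simultaneously achieving transversality for the interior $1$-parameter moduli, for the $d$-parameter boundary strata at $\lambda\in\{0,1\}$, for the lower-dimensional family over $(\p I^d)\times I$, and for all broken configurations. This is a standard relative genericity result achieved by perturbing $J^\lambda$ (and if needed $H^\lambda$) on the complement of $\p I^{d+1}$ in $I^{d+1}$ and in a neighborhood of the prescribed boundary strata, and the cobordism conclusion then follows by inspection as above.
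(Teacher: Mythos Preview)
Your proof is correct and follows essentially the same approach as the paper. The paper treats this lemma as a direct consequence of the preceding discussion (hence the $\square$ with no separate proof): it spells out the cases $d=0,1,2$ and then asserts the general relation $\varphi_{\{H^1,J^1\}}-\varphi_{\{H^0,J^0\}}=\p(\varphi_{\{H^\lambda,J^\lambda\}})$ without analyzing the moduli space boundary in detail. Your argument supplies exactly those details --- the $1$-dimensional parametrized moduli space, the three boundary strata, and in particular the reason why the $(\p I^d)\times I$ stratum contributes nothing (constancy in $\lambda$ reduces it to a $(d-1)$-parameter problem of virtual dimension $-1$) --- which is the standard mechanism the paper has in mind but leaves implicit.
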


This statement generalizes to higher homotopies the well-known fact that any two continuation maps in Floer theory are homotopic, so that the morphism that they induce in homology is independent of all choices. This last property is sometimes referred to as Floer homology being a connected simple system in the sense of Conley.

\bigskip

\noindent \emph{Directed systems of continuation maps.} 

Let $\{K_i\}$, $\{L_i\}$ be two directed systems of Hamiltonians, meaning that $K_i\le K_j$ and $L_i\le L_j$ for $i\prec j$. Let $\{K_i^j\}$, $\{L_i^j\}$, $i\prec j$ be decreasing homotopies from $K_j$ to $K_i$, respectively from $L_j$ to $L_i$, yielding continuation maps $\varphi_i^j:FC(K_i)\to FC(K_j)$, $\psi_i^j:FC(L_i)\to FC(L_j)$. Then 
$$
\{FC(K_i),\varphi_i^j\},\qquad \{FC(L_i),\psi_i^j\}
$$  
are bi-directed systems in {\sf Kom}. 

Assume further that $K_i\le L_i$ for all $i$. Let $H_i$ be a decreasing homotopy from $L_i$ to $K_i$, yielding continuation maps $f_i:FC(K_i)\to FC(L_i)$. The collection $\{f_i\}$ is then a map of bi-directed systems in {\sf Kom}. 

Indeed, the maps $\psi_i^j f_i$ and $f_j\varphi_i^j$ are homotopic via a degree $1$ continuation map
$$
s_i^j:FC(K_i)\stackrel{+1}\longrightarrow FC(L_j)
$$
that is associated to a $1$-homotopy $\cH_i^j$ connecting $L_i^j\# H_i$ and $H_j\# K_i^j$. Here $\#$ denotes the gluing of Hamiltonians for a large enough value of the gluing parameter. 

Similarly, the maps $\varphi_i^k$ and $\varphi_j^k\varphi_i^j$, respectively $\psi_i^k$ and $\psi_j^k\psi_i^j$, are homotopic via degree $1$ maps
$$
x_{ijk}:FC(K_i)\stackrel{+1}\longrightarrow FC(K_k),\qquad y_{ijk}:FC(L_i)\stackrel{+1}\longrightarrow FC(L_k),
$$
that are associated to $1$-homotopies $K_{ijk}$ connecting $K_i^k$ and $K_j^k\# K_i^j$, respectively $L_{ijk}$ connecting $L_i^k$ and $L_j^k\# L_i^j$. 

We claim that condition (B) is satisfied in this setup. In view of Lemma~\ref{lem:continuation_maps} it is enough to show that both $\psi_j^ks_i^j + s_j^k\varphi_i^j$ and $f_kx_{ijk}+s_i^k-y_{ijk}f_i$ are degree $1$ Floer continuation maps induced by $1$-homotopies parameterized by $\lambda\in [0,1]$ with the same endpoints $L_j^k\#L_i^j\#H_i$ at $\lambda=0$ and $H_k\# K_j^k\# K_i^j$ at $\lambda=1$. Consider the following diagram, where in each entry we have indicated a composition of Floer continuation maps and the $0$-homotopy which induces it, and where on each arrow we have indicated a homotopy between the target and source maps, together with the $1$-homotopy which induces it. The main point is that a concatenation of $1$-homotopies induces the sum of the corresponding degree $1$ maps, and the reversal of the direction of a $1$-homotopy induces minus the corresponding degree $1$ map. The composition of the bottom horizontal arrows is thus a degree $1$-continuation map which equals $\psi_j^ks_i^j+s_j^k\varphi_i^j$, while the composition of the other three arrows is a degree $1$ continuation map which equals $f_kx_{ijk}+s_i^k-y_{ijk}f_i$. The corresponding $1$-homotopies do have the same endpoints at $\lambda=0$ and $\lambda=1$, as expected. 
$$
\xymatrix
@C=50pt
@R=5pt
{
\psi_i^kf_i \ar[rr]^{s_i^k}_{\cH_i^k}& & f_k\varphi_i^k \\
L_i^k\# H_i \ar[dddddddd]_{y_{ijk}f_i}^{L_{ijk}\# H_i} & & {\color{black}H_k}\# K_i^k \ar[dddddddd]^{f_kx_{ijk}}_{H_k\#K_{ijk}} \\
& & \\
& & \\
& & \\
& & \\
& & \\
& & \\
& & \\
\psi_j^k\psi_i^jf_i \ar[r]^{\psi_j^ks_i^j}_{L_j^k\#\cH_i^j} & \psi_j^kf_j\varphi_i^j \ar[r]^{s_j^k\varphi_i^j}_{\cH_j^k\# K_i^j} & f_k\varphi_j^k\varphi_i^j \\
L_j^k\# L_i^j\# H_i & L_j^k\# H_j \# K_i^j & H_k\# K_j^k\# K_i^j
}
$$
{\color{black} It follows from the results in Section~\ref{sec:directed-systems} that
the system 
$$
\{C(f_i),\chi_i^j\}
$$
of cones $C(f_i)$ and induced maps $\chi_i^j:C(f_i)\to C(f_i)$} is a directed system in {\sf Kom}. In particular the homotopy type of the maps $\chi_i^j$ does not depend on the choice of $1$-homotopies. 

Similarly, for a doubly directed system of Hamiltonians we obtain a doubly directed system 
$$
\{C(f_{ij}),\chi_{ab}^{cd}\}
$$
in {\sf Kom}, together with the fact that the homotopy type of the maps $\chi_{ab}^{cd}$ does not depend on the choice of $1$-homotopies.

\section{The transfer map and homotopy invariance}

Given a Liouville cobordism pair $(W,V)$ we construct in this section a transfer map 
$$
f_!^\heartsuit:SH_*^\heartsuit(W)\to SH_*^\heartsuit(V)
$$
for $\heartsuit\in\{\varnothing,\ge 0, >0,=0,\le 0,<0\}$ that is
invariant under homotopy of Liouville structures. This generalizes to cobordisms the transfer map defined
for Liouville domains by Viterbo in~\cite{Viterbo99}. The whole
structure that we exhibit on symplectic homology is actually governed
by the underlying chain level map. Indeed, we prove
in~\S\ref{sec:exact-triangle-pair} that the shifted symplectic
homology groups of the pair $SH_*^\heartsuit(W,V)[-1]$ are isomorphic
to the homology of the cone of the chain level transfer map. 

We recall that we use coefficients in a field $\K$.

\subsection{The transfer map} \label{sec:transfer}

{\color{black}Let $(W,V)$ be a Liouville cobordism pair with filling $F$. Recall from~\S\ref{sec:SHWA} the definition of the symplectic homology groups
$$
SH_*^\heartsuit(W)=\lim_b \lim_a \lim^{\longrightarrow}_{H\in\cH(W;F)} FH_*^{(a,b)}(H),
$$
where $\cH(W;F)$ is the class of Hamiltonians $H:S^1\times \wh W_F\to \R$ which are zero on $W$ and are linear of non-critical slope in the complement of $W_F$, and the meaning of the limits involving $a$ and $b$ is determined by the value of $\heartsuit$. In the previous formula the first direct limit is considered with respect to continuation maps $FH_*^{(a,b)}(H_+)\to FH_*^{(a,b)}(H_-)$ for $H_+\le H_-$ induced by non-increasing homotopies $H_s$, $s\in \R$ which are equal to $H_\pm$ for $s$ near $\pm\infty$.

The transfer map will be defined as a limit of a directed system of continuation maps. For that purpose the definition of $SH_*^\heartsuit(V)$, which involves Hamiltonians defined on $\wh V_{F\circ W^{bottom}}=F\circ W^{bottom}\circ V\circ [1,\infty)\times\p^+V$, needs to be recast in terms of Hamiltonians defined on $\wh W_F=F\circ W\circ [1,\infty)\times\p^+W$. The manifold $\wh W_F$ is the domain of the Hamiltonians involved in the definition of $SH_*^\heartsuit(W)$. 

Denote by $\cH^W(V;F)$ the space of Hamiltonians $H:S^1\times \wh W_F\to\R$ such that $H\in\cH(\wh W_F)$ and $H=0$ on $V$. 

\begin{lemma} \label{lem:SHV-alternate-direct-limit}
For any two real numbers $-\infty < a < b <\infty$ we have 
$$
SH_*^{(a,b)}(V)= \lim^{\longrightarrow}_{H\in\cH^W(V;F)} FH_*^{(a,b)}(H). 
$$ 
\end{lemma}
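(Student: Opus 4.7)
The plan is to exhibit a cofinal subfamily of $\cH^W(V;F)$ consisting of natural extensions of Hamiltonians from $\cH(V;F\circ W^{bottom})$, and to show that for each such extension the filtered Floer complex coincides on the nose with the one for the underlying Hamiltonian on $\wh V_{F\circ W^{bottom}}$.

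Given $K\in\cH(V;F\circ W^{bottom})$ and auxiliary parameters $(c_{\rm top},\tau')$ with $c_{\rm top}>0$ large and $\tau'>0$ non-critical, I would construct $\tilde K=\tilde K(K,c_{\rm top},\tau')\in\cH^W(V;F)$ as follows. Using the Liouville flow, identify an outward collar $[1,1+\epsilon]\times\p^+V$ of $\p^+V$ inside $W^{top}$ with the initial segment of the positive symplectization $[1,\infty)\times\p^+V\subset\wh V_{F\circ W^{bottom}}$. On this collar, and on $F\cup W^{bottom}\cup V$, set $\tilde K=K$. On the remainder of $W^{top}$, interpolate steeply from $K(1+\epsilon)$ up to a plateau at value $c_{\rm top}$ using slopes strictly greater than $b$. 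Finally, set $\tilde K=c_{\rm top}+\tau'(r-1)$ on $[1,\infty)\times\p^+W$, smoothed near $r=1$. Letting $(K,c_{\rm top},\tau')$ vary, this family is cofinal in $\cH^W(V;F)$: given $H\in\cH^W(V;F)$ with slope $\tau_H$ near $\p^+W$, choose $K$ dominating $H$ on $V\cup\{\text{collar}\}$, $c_{\rm top}\geq\sup_{W^{top}}H$, and $\tau'\geq\tau_H$.

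The key observation is that, for such parameters, no $1$-periodic orbit of $\tilde K$ lying outside $V':=F\cup W^{bottom}\cup V\cup\{\text{collar}\}$ has action in $(a,b)$. A direct computation of the action $A_H(x)=r_0h'(r_0)-h(r_0)$ for radial Hamiltonians $h(r)$ shows that orbits in the steep transition inside $W^{top}$ correspond to Reeb orbits on $\p^+V$ of period $T>b$, with action $\approx T>b$; constant orbits in the plateau have action $\approx-c_{\rm top}<a$; and orbits in the smoothed bending near $\p^+W$ have action $\approx T-c_{\rm top}<a$ once $c_{\rm top}>\tau'+|a|$. The no-escape lemma (Lemma~\ref{lem:no-escape}) applied to $V'$, whose boundary is a contact-type hypersurface on which $\tilde K=K$ depends only on the radial coordinate and near which $J$ is taken cylindrical, then confines Floer cylinders with both asymptotes in $V'$ to stay inside $V'$. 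Combined with the preceding orbit analysis, this yields a tautological chain-level identification $FC_*^{(a,b)}(\tilde K)=FC_*^{(a,b)}(K)$.

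It remains to verify that continuation maps between extensions $\tilde K_1\leq\tilde K_2$ associated with $K_1\leq K_2$ restrict to continuation maps $FC_*^{(a,b)}(K_1)\to FC_*^{(a,b)}(K_2)$; this follows from the $s$-dependent version of Lemma~\ref{lem:no-escape} applied to monotone homotopies of extensions arranged so that $\p_sA_{h_s}(r)\leq 0$ near $\p V'$. Passing to the direct limit on both sides then yields the stated identification. The main technical care in the proof lies in tuning the transition slopes and plateau value across the cofinal family so that the action estimates above hold uniformly, which is a routine explicit calculation for radial Hamiltonians on symplectization-type neighborhoods.
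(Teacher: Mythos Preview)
Your overall strategy---build a cofinal family of extensions $\tilde K\in\cH^W(V;F)$ of Hamiltonians $K\in\cH(V;F\circ W^{bottom})$, then use Lemma~\ref{lem:no-escape} to identify the filtered Floer complexes---is exactly the paper's strategy. The gap is in your specific construction of $\tilde K$ and the associated action estimate.

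You claim that in the steep transition from $K(1+\epsilon)$ to the plateau at $c_{\rm top}$ the orbits have period $T>b$ and action $\approx T>b$. But a smooth transition to a constant plateau necessarily contains a \emph{concave} bending where the slope decreases from its maximum back to $0$. In that bending the radial coordinate is $r\approx 1+\epsilon'$ (bounded, since you work inside the fixed cobordism $W^{top}$) while $h(r)$ is already close to $c_{\rm top}$, so the action of the orbit of period $T$ is $rT-h(r)\approx (1+\epsilon')T-c_{\rm top}$. As $T$ ranges over $(0,T_{\max})$ this sweeps through $0$, and in particular lands in $(a,b)$ for $T$ near $c_{\rm top}/(1+\epsilon')$. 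The Reeb spectrum of $\p^+V$ will generically contain periods in the resulting bad interval of width $\approx (b-a)/(1+\epsilon')$, so you cannot exclude these orbits by action alone. Your estimate ``action $\approx T$'' is only valid in the convex bending at the \emph{bottom} of the transition, where $h$ is still small.

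The paper's remedy is to avoid keeping $W^{top}$ fixed: instead, one extends the linear part of slope $\nu_k$ of $H^V_k$ all the way out to $r=R_k$ with $R_k\to\infty$, and only then levels off on the Liouville-rescaled copy $R_kW^{top}$, followed by a slope $\tau_k\in(\nu_k/4,\nu_k/2)$ near $\p^+W$. The concave bending now sits at $r\approx R_k$, and the maximal action of an orbit there is $\le R_k(\nu_k-\eta_k)-\nu_k(R_k-1)=\nu_k-R_k\eta_k$, which is pushed below $a$ by taking $R_k>(\nu_k-a)/\eta_k$. So the essential idea you are missing is that the bending must occur at large radial coordinate, not that the Hamiltonian value must be large.
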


\begin{proof}
By definition we have 
$$
SH_*^{(a,b)}(V)= \lim^{\longrightarrow}_{H\in\cH(V;F)} FH_*^{(a,b)}(H), 
$$
and we claim that the two limits are equal. Recall that the space $\cH(V;F)$ consists of Hamiltonians $H:\wh V_{F\circ W^{bottom}}\to\R$ which are linear outside a compact set and such that $H=0$ on $V$. The claim is a consequence of the existence of a special cofinal family in $\cH^W(V;F)$ constructed as follows. See Figure~\ref{fig:KH-new}.
Consider a sequence $(\nu_k)$, $k\in\Z_-$ of positive real numbers such that $\nu_k\notin\mbox{Spec}(\p^+V)$ and $\nu_k\to\infty$ as $k\to\infty$, and let $H^V_k:\wh V_{F\circ W^{bottom}}\to\R$ be a cofinal family in $\cH(V;F)$ such that $H^V_k(r,x)=\nu_k(r-1)$ on $[1,\infty)\times\p^+V$. Consider further sequences 
$$
(\eta_k),\quad (R_k),\quad (\tau_k),\qquad k\in\Z_+ 
$$
such that 
\begin{itemize}
\item $\eta_k>0$ is smaller than the distance from $\nu_k$ to $\mbox{Spec}(\p^+V)$, and $\eta_k\to 0$ as $k\to\infty$;
\item $R_k>\max(1,(\nu_k - a)/\eta_k)$;
\item $\nu_k/4 <\tau_k<\nu_k/2$ and $\tau_k\notin\mbox{Spec}(\p^+W)$.  
\end{itemize}
Let $H_k:\wh W_F\to\R$ be a Hamiltonian which is equal to $H^V_k$ on $F\circ W^{bottom}\circ V \circ [1,R_k]\times\p^+V$, which is constant equal to $\nu_k(R_k-1)$ on $R_kW^{top}$, and which is equal to $\nu_k(R_k-1)+\tau_k(r-R_k)$ on $[R_k,\infty)\times\p^+W$. Here $R_kW^{top}$ stands for the image of $W^{top}$ by the flow of the Liouville vector field at time $\ln R_k$.

The Hamiltonian $H_k$ has three more groups of $1$-periodic orbits in addition to those of the Hamiltonian $H^V_k$: 
\begin{itemize}
\item[($III^-$)]orbits corresponding to positively parameterized closed Reeb orbits on $\p^+V=\p^-W^{top}$ and located near $R_k\p^+V$. 
\item [($III^0$)]constants in $R_kW^{top}$.
\item [($III^+$)]orbits corresponding to positively parameterized closed Reeb orbits on $\p^+W=\p^+W^{top}$ and located near $R_k\p^+W^{top}$. 
\end{itemize}
The orbits in group $III^0$ have action $-\nu_k(R_k-1)$, the maximal
action of an orbit in group $III^-$ is smaller than
$-\nu_k(R_k-1)+R_k(\nu_k-\eta_k)=\nu_k-R_k\eta_k$, and the maximal
action of an orbit in group $III^+$ is smaller than
$-\nu_k(R_k-1)+R_k\nu_k/2 = -\nu_k(R_k/2-1)$. The largest of these
actions is the one in group $III^-$, which however falls below the
action window $(a,b)$ due to the condition $R_k>\max(1,(\nu_k -
a)/\eta_k)$, so that the orbits contributing to the Floer complex in
the action window $(a,b)$ are the same for $H^V_k$ and for $H_k$. 
Lemma~\ref{lem:no-escape} for $s$-dependent Hamiltonians (decreasing
in $s$ outside $V_{F\circ W^{bottom}}$)
shows that the continuation Floer trajectories for the family $H^V_k$ and for the family $H_k$ stay within a neighborhood of $V_{F\circ W^{bottom}}$, where the two Hamiltonians coincide. These continuation Floer trajectories are therefore the same, and they define the same continuation maps in the two directed systems at hand. We obtain 
$$
SH_*^{(a,b)}(V) = \lim^{\longrightarrow}_{k\to\infty}FH_*^{(a,b)}(H^V_k) = \lim^{\longrightarrow}_{k\to\infty}FH_*^{(a,b)}(H_k). 
$$
Since $H_k$, $k\in\Z_-$ is a cofinal family in $\cH^W(V;F)$, the conclusion of the Lemma follows. 
\end{proof}

We obviously have $\cH(W;F)\subset \cH^W(V;F)$, and for each Hamiltonian $K$ in $\cH(W;F)$ there exists a Hamiltonian $H$ in $\cH^W(V;F)$ such that $K\le H$ (while the converse is \emph{not} true). For any two such Hamiltonians we have continuation maps 
$$
f_{HK}^{(a,b)}:FC_*^{(a,b)}(K)\to FC_*^{(a,b)}(H)
$$ 
induced by non-increasing homotopies which are linear at infinity, and these continuation maps define a morphism between the directed systems determined by $\cH(W;F)$ and $\cH^W(V;F)$. 

\begin{definition}
The \emph{Viterbo transfer map in the action window $(a,b)$} is the limit continuation map 
$$
f_!^{(a,b)} : SH_*^{(a,b)}(W)\to SH_*^{(a,b)}(V), \qquad f_!^{(a,b)}
:= \lim^{\longrightarrow}_{\stackrel{K\le H}{K\in\cH(W;F),\, H\in\cH^W(V;F)}}f_{HK}^{(a,b)}. 
$$
\end{definition}

By general properties of the continuation maps the Viterbo transfer maps $f_!^{(a,b)}$ fit into a doubly-directed system, inverse on $a$ and direct on $b$.

\begin{definition} \label{defi:transfer-map}
For $\heartsuit\in\{\varnothing,\ge 0, >0,=0,\le 0,<0\}$ the \emph{Viterbo transfer map} 
$$
f_!^\heartsuit:SH_*^\heartsuit(W)\to SH_*^\heartsuit(V)
$$
is defined as
$$
f_!^\heartsuit = \lim_b \lim_a f_!^{(a,b)},
$$ 
where the limits are inverse or direct according to the value of $\heartsuit$, as in Definition~\ref{defi:SH(W)}. 
\end{definition}

}

\begin{figure}
         \begin{center}
\input{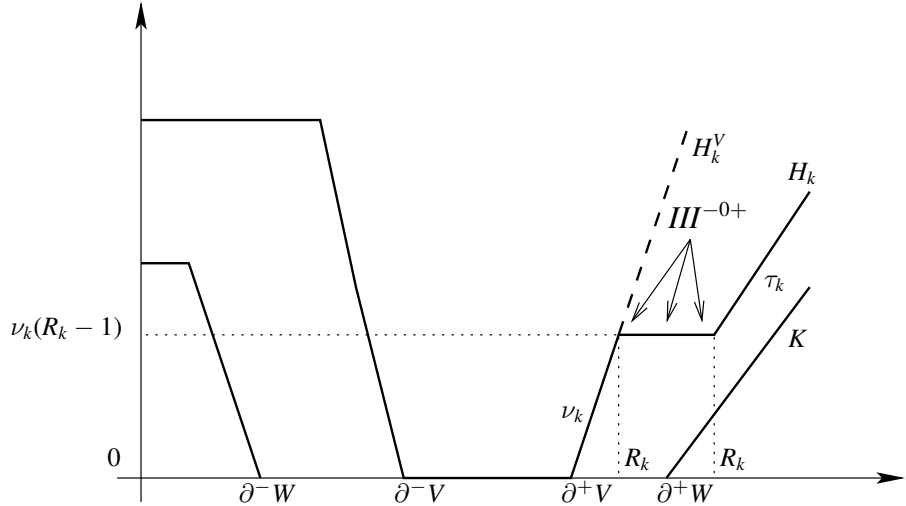}
         \end{center}
\caption{Hamiltonians for the definition of the transfer map  \label{fig:KH-new}}
\end{figure}

\begin{proposition}[Functoriality of the transfer map] \label{prop:func-transfer}
Let $U\subset V\subset W$ be a triple of Liouville cobordisms with
filling. Let 
$f_{VW}^\heartsuit$, $f_{UW}^\heartsuit$, $f_{UV}^\heartsuit$ be the transfer maps for the pairs $(W,V)$, $(W,U)$, and $(V,U)$ respectively, for $\heartsuit\in\{\varnothing,\ge 0, >0,=0,\le 0,<0\}$. Then 
$$
f_{UW}^\heartsuit=f_{UV}^\heartsuit\circ f_{VW}^\heartsuit.
$$
\end{proposition}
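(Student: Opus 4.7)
The plan is to reduce the statement to the standard gluing property of Floer continuation maps, by expressing all three transfer maps as direct limits of continuation maps over cofinal families of Hamiltonians defined on the common completion $\wh W_F$.

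Using Lemma~\ref{lem:SHV-alternate-direct-limit} applied to both pairs $V \subset W$ and $U \subset W$, we can compute
$$
SH_*^{(a,b)}(V) = \lim^{\longrightarrow}_{H \in \cH^W(V;F)} FH_*^{(a,b)}(H),\qquad SH_*^{(a,b)}(U) = \lim^{\longrightarrow}_{G \in \cH^W(U;F)} FH_*^{(a,b)}(G),
$$
where $\cH^W(V;F)$, $\cH^W(U;F)$ denote Hamiltonians in $\cH(\wh W_F)$ vanishing on $V$, respectively on $U$. Since $U \subset V \subset W$, any Hamiltonian vanishing on $W$ automatically vanishes on $V$ and $U$, and any Hamiltonian vanishing on $V$ vanishes on $U$, giving chains of inclusions $\cH(W;F) \subset \cH^W(V;F) \subset \cH^W(U;F)$. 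Given any $K \in \cH(W;F)$ we can thus find $H \in \cH^W(V;F)$ with $K \le H$, and then $G \in \cH^W(U;F)$ with $H \le G$, so that cofinal triples $K_k \le H_k \le G_k$ exist.

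For such a triple, Definition~\ref{defi:transfer-map} identifies the three transfer maps with direct limits, followed by the appropriate inverse/direct limits in $a,b$, of the continuation maps
$$
f^{(a,b)}_{H_k K_k} : FC_*^{(a,b)}(K_k) \to FC_*^{(a,b)}(H_k),\quad f^{(a,b)}_{G_k H_k}:FC_*^{(a,b)}(H_k) \to FC_*^{(a,b)}(G_k),
$$
together with $f^{(a,b)}_{G_k K_k}: FC_*^{(a,b)}(K_k) \to FC_*^{(a,b)}(G_k)$. Now choose non-increasing admissible homotopies $(H_{k,s})_{s\in\R}$ from $H_k$ to $K_k$ and $(G_{k,s})_{s\in\R}$ from $G_k$ to $H_k$ inducing the first two continuation maps. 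For a large gluing parameter, the concatenated homotopy $G_{k,s} \# H_{k,s}$ is an admissible non-increasing homotopy from $G_k$ to $K_k$ whose associated continuation map equals the composition $f^{(a,b)}_{G_k H_k} \circ f^{(a,b)}_{H_k K_k}$ by the standard gluing formula at chain level. On the other hand, $f^{(a,b)}_{G_k K_k}$ is the continuation map associated to any other choice of non-increasing admissible homotopy from $G_k$ to $K_k$, and by Lemma~\ref{lem:continuation_maps} any two such continuation maps are chain homotopic via a $1$-homotopy of homotopies. Hence $f^{(a,b)}_{G_k K_k}$ and $f^{(a,b)}_{G_k H_k} \circ f^{(a,b)}_{H_k K_k}$ agree in $\mathsf{Kom}$, and therefore in homology.

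Passing to the direct limit over the cofinal triples $(K_k, H_k, G_k)$, then to the inverse/direct limits in $a$ and $b$ dictated by the flavor $\heartsuit$, yields $f^\heartsuit_{UW} = f^\heartsuit_{UV} \circ f^\heartsuit_{VW}$. The main technical point is the homotopy invariance of continuation maps with respect to the choice of admissible non-increasing homotopy, and its compatibility with gluing; once this is granted (which is the content of Lemma~\ref{lem:continuation_maps}), the argument is formal. Nothing here requires special behavior at the action levels $0$ or $\pm\infty$, so the same argument applies uniformly for all $\heartsuit \in \{\varnothing, \ge 0, >0, =0, \le 0, <0\}$.
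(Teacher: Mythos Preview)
Your proof is correct and follows essentially the same approach as the paper: both reduce to the composition property of Floer continuation maps by realizing all three transfer maps as limits of continuation maps for Hamiltonians on the common completion $\wh W_F$, then use that the concatenation of two monotone homotopies induces the composite continuation map up to chain homotopy. The paper phrases this slightly more concretely in terms of explicit ``one-step'' and ``two-step'' Hamiltonian shapes (see Figure~\ref{fig:KHG-new}) rather than the abstract classes $\cH^W(V;F)\subset\cH^W(U;F)$, but the content is the same.
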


\begin{proof} 
{\color{black}This is a direct consequence of the definition of the transfer map as a limit continuation map, together with functoriality of continuation maps. To see this, we recall the notation $W=W^{bottom}\circ V\circ W^{top}$ and $V=V^{bottom}\circ U \circ V^{top}$, and consider on $W$ the following three types of Hamiltonians, see Figures~\ref{fig:KH-new} and~\ref{fig:KHG-new}: 
\begin{itemize}
\item Hamiltonians $K$ which are admissible for $W$, and thus vanish on $W$ and are linear increasing towards $\p^+ W$. 
\item \emph{one step} Hamiltonians $H$ which vanish on $V$, take a positive constant value on $W^{top}$, and are linear increasing towards $\p^+ V$ and $\p^+ W$.
\item \emph{two step} Hamiltonians $G$ which vanish on $U$, take a constant value on $V^{top}$, take a constant value on $W^{top}$, and are linear increasing towards $\p^+ U$, $\p^+ V$, and $\p^+ W$.
\end{itemize}
The transfer maps $f^\heartsuit_{VW}$ are defined above as limit
continuation maps induced by monotone homotopies from $K$ (at
$+\infty$) to $H$ (at $-\infty$). Similarly, the transfer maps
$f^\heartsuit_{UW}$ can be obtained as limit continuation maps induced
by monotone homotopies from $K$ (at $+\infty$) to $G$ (at $-\infty$),
and the transfer maps $f^\heartsuit_{UV}$ can be obtained as limit
continuation maps induced by monotone homotopies from $H$ (at
$+\infty$) to $G$ (at $-\infty$). We can choose the homotopies from
$K$ to $G$ to factor through $H$, so that they can be expressed as
concatenation of homotopies from $K$ to $H$, and from $H$ to $G$. The
composition of the continuation maps induced by each of these last two
homotopies is equal to the continuation map induced by the
concatenation of the two homotopies -- this is what we call
\emph{functoriality of continuation maps} -- and the same property
holds in the limit. This proves $f_{UW}^\heartsuit=f_{UV}^\heartsuit\circ f_{VW}^\heartsuit$. 
}
\end{proof}

In the sequel we shall often drop the symbol $\heartsuit$ from the notation for the transfer map, and simply write $f_!$ instead of $f^{\heartsuit}_!$.

\begin{figure}
         \begin{center}
\input{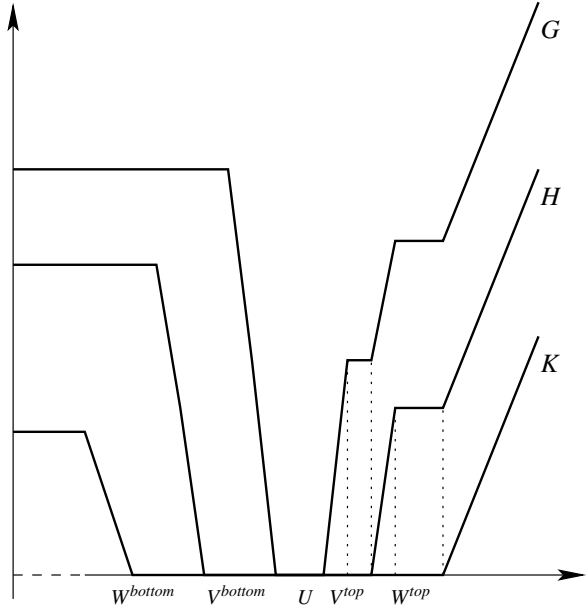}
         \end{center}
\caption{Hamiltonians for the proof of functoriality of the transfer map  \label{fig:KHG-new}}
\end{figure}

\subsection{Homotopy invariance of the transfer map} \label{sec:homotopy_invariance_transfer} 
Given a pair of Liouville cobordisms $(W,V)$ with filling, we denote the transfer map for a given Liouville structure $\lambda$ by 
$$
\xymatrix{
SH_*^\heartsuit(W;\lambda)\ar[r]^{f_{!,\lambda}} & SH_*^\heartsuit(V;\lambda).
}
$$

\begin{proposition}[homotopy invariance of the transfer map]\label{prop:homotopy_invariance_transfer}
Let $(W,V)$ be a pair of Liouville cobordisms with filling. Given a
homotopy of Liouville structures $\lambda_t$ on $W$, $t\in [0,1]$, there are induced isomorphisms $h_W:SH_*^\heartsuit(W;\lambda_0)\to SH_*^\heartsuit(W;\lambda_1)$, $h_V:SH_*^\heartsuit(V;\lambda_0)\to SH_*^\heartsuit(V;\lambda_1)$, and a commutative diagram 
$$
\xymatrix
@C=25pt
{
SH_*^\heartsuit(W;\lambda_0) \ar[r]^{f_{!,\lambda_0}} \ar[d]_\cong^{h_W} & SH_*^\heartsuit(V;\lambda_0) \ar[d]_\cong^{h_V}  \\
SH_*^\heartsuit(W;\lambda_1) \ar[r]_{f_{!,\lambda_1}} & SH_*^\heartsuit(V;\lambda_1) 
}
$$
The isomorphisms $h_W$ and $h_V$ do not depend on the choice of homotopy $\lambda_t$ with fixed endpoints.
\end{proposition}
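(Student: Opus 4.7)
The plan is to build $h_W$ and $h_V$ from parametrized continuation maps and to check commutativity of the square by interpolating between two paths in a two-parameter family of Hamiltonians. The main tool is a Moser-type identification along the Liouville homotopy, together with the higher homotopy invariance of Floer continuation maps recorded in Lemma~\ref{lem:continuation_maps}.

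First I would invoke the standard Moser argument for Liouville cobordisms (see~\cite[Chapter~11]{Cieliebak-Eliashberg-book}) to produce, from the path $\lambda_t$, a smooth family of exact symplectomorphisms $\varphi_t : (\wh W_{F,0},\lambda_0) \to (\wh W_{F,t},\lambda_t)$ with $\varphi_0 = \id$, preserving $V$, and coinciding with the canonical identification on the symplectization ends beyond some compact set. Such $\varphi_t$ carry the classes $\cH(W;F)$, $\cH^W(V;F)$ and $\cH(W,V;F)$ for $\lambda_0$ bijectively to the corresponding classes for $\lambda_t$, and they map cofinal families to cofinal families. The isomorphism $h_W$ is then defined at chain level as the composition of the pull-back isomorphism $\varphi_1^*$ on Floer complexes with a Floer continuation map along a monotone homotopy of Hamiltonians over $(\wh W_{F,0},\lambda_0)$; passing to the appropriate direct and inverse limits as in Definition~\ref{defi:SH(W)} yields a map $h_W : SH_*^\heartsuit(W;\lambda_0) \to SH_*^\heartsuit(W;\lambda_1)$. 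The analogous construction for the pair $(V, F\circ W^{bottom})$, using the restriction of $\lambda_t$ to $V$ and a $\varphi_t$ which sends $V$ to $V$, produces $h_V$. Each of these maps is an isomorphism because its chain-level representative has a two-sided inverse obtained by running the same construction along the reversed path $\lambda_{1-t}$, and the composition of continuation maps along a path with its reverse is chain-homotopic to the identity.

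For the commutativity of the square I would set up a two-parameter family of Hamiltonians on $\wh W_{F,0}$ indexed by $(s,t) \in [0,1]^2$, where the $s$-direction interpolates between a Hamiltonian $K \in \cH(W;F;\lambda_0)$ and a larger Hamiltonian $\varphi_t^*H$ with $H \in \cH^W(V;F;\lambda_t)$, and the $t$-direction implements the Liouville homotopy via the Moser isotopies $\varphi_t^*$. The two compositions $h_V \circ f_{!,\lambda_0}$ and $f_{!,\lambda_1} \circ h_W$ are realized as continuation maps associated to the two distinguished paths along the boundary of $[0,1]^2$ joining $(0,0)$ to $(1,1)$; these paths have the same endpoints and are clearly homotopic rel endpoints. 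By Lemma~\ref{lem:continuation_maps} the associated continuation maps differ by a chain homotopy given by the degree $+1$ continuation map of the two-parameter family, and this chain homotopy respects the action filtration in each window $(a,b)$. Taking the limits $\lim_b \lim_a$ appropriate to $\heartsuit$ yields the commutativity of the square at the level of symplectic homology. Independence of $h_W$ and $h_V$ from the choice of homotopy with fixed endpoints follows by the same mechanism: two such paths $\lambda_t^0$ and $\lambda_t^1$ can be joined by a $2$-parameter family constant at $t=0,1$, and the corresponding degree $+1$ continuation map provides the needed chain homotopy.

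The main obstacle is the $t$-dependence of the admissibility condition: admissible slopes must lie outside $\pm\mathrm{Spec}(\p^\pm W,\alpha^\pm_t)\cup\{0\}$ and $\pm\mathrm{Spec}(\p^\pm V,\lambda_t|_{\p^\pm V})$, and these spectra move with $t$. Along the compact path $t\in[0,1]$ this is not a serious issue because the complement of these spectra is open and dense, so in finite action windows $(a,b)$ one can cover $[0,1]$ by finitely many subintervals on which a uniform slope choice is admissible and piece together the parametrized Floer homologies by continuation on each subinterval. The confinement lemmas of~\S\ref{sec:confinement} (in particular Lemma~\ref{lem:no-escape} for $s$-dependent Hamiltonians) must be applied uniformly in $t$ to keep the parametrized Floer trajectories in the regions where the Hamiltonians used for $SH_*^\heartsuit(W)$ and $SH_*^\heartsuit(V)$ coincide with the global Hamiltonians defined via $\varphi_t^*$; once this uniform control is established, the remainder of the argument is a standard exercise in the homotopy theory of Floer continuation maps.
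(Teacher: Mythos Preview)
Your approach differs from the paper's, and there is a gap in the Moser step. You claim that the standard Moser argument produces exact symplectomorphisms $\varphi_t$ of the completions which \emph{preserve $V$} and equal the canonical identification near infinity. Neither property comes for free when the contact forms $\lambda_t|_{\p^\pm V}$ and $\lambda_t|_{\p^+W}$ vary with $t$: the Moser vector field has no reason to be tangent to $\p V$, so its flow will in general move $V$; and since the symplectization end structure depends on the boundary contact form, $\varphi_t$ cannot be the identity at infinity. Your final paragraph about the moving action spectra is a symptom of exactly this issue --- the spectra move because the boundary contact forms move --- and covering $[0,1]$ by finitely many admissible-slope intervals does not by itself produce a $V$-preserving isotopy or restore the conical structure at infinity.

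The paper avoids this entirely. It first treats the easy case in which $\lambda_t$ is \emph{constant along $\p W$ and $\p V$}: then the transfer map is a limit of ordinary continuation maps and its invariance follows directly from the definition. The general case is reduced to this one by the classical collar trick: one attaches to $\p W$ a topologically trivial cobordism carrying a Liouville structure that interpolates between the two boundary contact forms (see~\cite[Lemma~3.7]{Ci02} and~\cite{Gutt15}), and then uses the functoriality of the transfer map (Proposition~\ref{prop:func-transfer}) to compare the transfer maps before and after this enlargement. Independence from the choice of homotopy then follows from the usual homotopy-of-homotopies argument, which is the one point on which your approach and the paper's coincide.
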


\begin{proof}[Proof] 
The homotopy invariance of the transfer
  map under deformations of the Liouville structure which are constant
  along the boundaries of $W$ and $V$ is a consequence of its
  definition as a limit continuation map. 
In particular, given a Liouville cobordism $W$ with two Liouville structures $\lambda$ and $\lambda'$ which coincide along $\p W$, the transfer map 
$$
SH_*^\heartsuit(W;\lambda)\to SH_*^\heartsuit(W;\lambda')
$$
is an isomorphism. 

The homotopy invariance in the general case is obtained using the functoriality of the transfer map, by a classical geometric construction which consists in attaching to $\p W$ topologically trivial cobordisms with Liouville structures that interpolate between any two given Liouville structures on the boundary of $W$, see~\cite[Lemma~3.7]{Ci02}. A detailed argument is given in~\cite{Gutt15} in an $S^1$-equivariant setting.

That the isomorphisms $h_W$ and $h_V$ do not depend on the choice of homotopy $(\lambda_t)$, $t\in [0,1]$ is a consequence of the fact that any two such homotopies with the same endpoints are homotopic, together with the usual ``homotopy of homotopies" argument in Floer theory (see also the discussion of Floer continuation maps at the end of~\S\ref{sec:cones}). 
\end{proof}

\section{Excision} \label{sec:excision}

Let $(W,V)$ be a pair of Liouville cobordisms and $F$ a filling of $W$, and define $W_F$, $\wh W_F$ as in~\S\ref{sec:SHWA}. Recall the class $\cH(W,V;F)$ of admissible Hamiltonians defined in~\S\ref{sec:SHpair}. 
For $0<r_1<r_2$ and a subset $A\subset\wh W_F$, we denote by $[r_1,r_2]\times A=\phi_{[\log r_1,\log r_2]}(A)$ the image of $A$ under the Liouville flow $\phi_t$ on the time interval $[\log r_1,\log r_2]$. For parameters
$$
   \mu,\nu,\tau>0,\qquad 0<\delta,\eps<1 
$$
(that will be specified later), let $H\in\cH(W,V;F)$ be a ``staircase Hamiltonian" on $\wh W_F$, defined up to smooth approximation as follows (see Figure~\ref{fig:H-cob}):
\begin{itemize}
\item $H\equiv(1-\delta)\mu$ on $F\setminus(\delta,1]\times\p^-W$,
\item $H$ is linear of slope $-\mu$ on $[\delta,1]\times\p^-W$,
\item $H\equiv 0$ on $W^{bottom}$,
\item $H$ is linear of slope $-\nu$ on $[1,1+\eps]\times\p^-V$,
\item $H\equiv -\eps\nu$ on $V\setminus\bigl([1,1+\eps]\times\p^-V\cup [1-\eps,1]\times\p^+V\bigr)$,
\item $H$ is linear of slope $\nu$ on $[1-\eps,1]\times\p^+V$,
\item $H\equiv 0$ on $W^{top}$,
\item $H$ is linear of slope $\tau$ on $[1,\infty)\times\p^+W$. 
\end{itemize} 
{\color{black} A smooth approximation of $H$ will thus be of the form
$H(r,y)=h(r)$ on $[0,\infty)\times\p^+W$ (and similarly near the other
boundary components of $W$ and $V$). Hence $1$-periodic orbits of $X_H$ 
on $\{r\}\times\p^+W$ correspond to Reeb orbits on $\p^+W$ of period
  $h'(r)$, and their Hamiltonian action equals
$$
   rh'(r)-h(r).
$$}
We assume that $\mu,\nu,\nu,\tau$ do not lie in the action spectrum of
$\p^-W,\p^-V,\p^+V,\p^+W$, respectively. We denote by $\eta_\nu>0$
{\color{black}a positive real number smaller than} the distance from
$\nu$ to the union of the action spectra of $\p^-V$ and $\p^+V$, and we define similarly $\eta_\mu,\eta_\tau>0$. The $1$-periodic orbits of $H$ fall into $11$ classes:
\begin{itemize}
\item[($F^0$)] constants in $F\setminus ([\delta,1]\times \p F)$,
\item[($F^+$)] orbits corresponding to negatively parameterized
  closed Reeb orbits on\break $\p F=\p^-W$ and located near $\delta\times\p^-W$,
\item[($I^-$)] orbits corresponding to negatively parameterized closed Reeb orbits on\break $\p^-W^{bottom}=\p^-W$ and located near $\p^-W$,
\item[($I^0$)] constants in $W^{bottom}$,
\item[($I^+$)] orbits corresponding to negatively parameterized closed Reeb orbits on\break $\p^+W^{bottom}=\p^-V$ and located near $\p^-V$,
\item[($II^-$)] orbits corresponding to negatively parameterized closed Reeb orbits on $\p^-V$ and located near $(1+\eps)\times\p^-V$,
\item[($II^0$)] constants in $V\setminus\bigl([1,1+\eps]\times\p^-V\cup [1-\eps,1]\times\p^+V\bigr)$,
\item[($II^+$)] orbits corresponding to positively parameterized closed Reeb orbits on $\p^+V$ and located near $(1-\eps)\times\p^+V$,
\item[($III^-$)] orbits corresponding to positively parameterized closed Reeb orbits on\break $\p^-W^{top}=\p^+V$ and located near $\p^+V$,
\item[($III^0$)] constants in $W^{top}$,
\item[($III^+$)] orbits corresponding to positively parameterized closed Reeb orbits on $\p^+W$ and located near $\p^+W^{top}=\p^+W$. 
\end{itemize}

\begin{figure}
         \begin{center}
\input{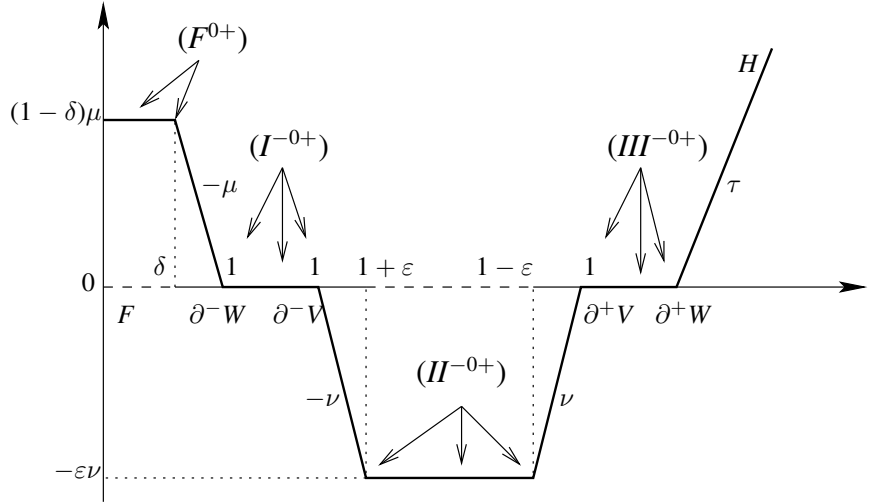}
         \end{center}
\caption{Hamiltonian in $\mathcal{H}(W,V;F)$ \label{fig:H-cob}}
\end{figure}

{\bf Notational convention.} For two classes of orbits $A,B$ we write $A\prec B$ if the homological Floer boundary operator maps no orbit from $A$ to an orbit from $B$. A priori, this relation is not transitive. However, when we write $A\prec B\prec C$ we also mean that $A\prec C$. We write $A<B$ if all orbits in $A$ have smaller action than all orbits in $B$. Note that $A<B$ implies $A\prec B$, and $A<B<C$ implies $A\prec B\prec C$.    

\begin{lemma}\label{lem:order}
Fix $a<b$. If the parameters $\mu,\nu,\tau,\delta,\eps$ above satisfy
\begin{equation}\label{eq:par}
   (1-\delta)\mu>\min\{-a,\nu-\eta_\nu\} \quad\text{and}\quad \eps\nu > \min\{b,\tau-\eta_\tau\},
\end{equation}
and if we use an almost complex structure that is cylindrical and has a long enough neck 
near $(1-2\varepsilon)\times \p^+V$, 
then the four groups of orbits in the action interval $[a,b]$ satisfy
\begin{equation}\label{eq:order}
   F \prec I \prec III\prec II\quad\text{and}\quad III\prec I.
\end{equation}
Moreover, within each group of orbits we have the relations
\begin{equation}\label{eq:order-within-groups}
\begin{gathered}
   F^+\prec F^0, \qquad
   I^+\prec I^-\prec I^0, \cr 
   II^-\prec II^0\prec II^+, \qquad 
   III^0\prec III^-\prec III^+. 
\end{gathered}
\end{equation}
\end{lemma}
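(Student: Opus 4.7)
The plan is to follow the pattern of~\cite{Viterbo99, BOcont}: compute the action of every orbit class, exploit strict action inequalities as far as possible, and then invoke the confinement lemmas of~\S\ref{sec:confinement} for the remaining relations.

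For the first step I would compute $A_H(x)=rh'(r)-h(r)$ on each of the eleven classes, using that an orbit at level $r$ corresponds to $h'(r)=\pm T$ for a Reeb period $T$ on the relevant boundary. With the non-resonance conditions on $\mu,\nu,\tau$ this yields strictly separated action ranges for most of the classes: $A(F^+)<A(F^0)=-(1-\delta)\mu$; $A(I^{\pm})\subset(-\max(\mu,\nu),0)$ with $A(I^0)=0$; $A(II^-)\subset(-(\nu-\eta_\nu),\eps\nu)$, $A(II^0)=\eps\nu$, $A(II^+)\subset(\eps\nu,\nu-\eta_\nu)$; $A(III^0)=0$, $A(III^-)\subset(0,\nu-\eta_\nu)$, $A(III^+)\subset(0,\tau-\eta_\tau)$. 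Since the Floer differential strictly decreases action, this at once gives $F^+\prec F^0$, $I^{\pm}\prec I^0$, $II^-\prec II^0\prec II^+$, $III^0\prec III^{\pm}$, and $I\prec III$ (the borderline case $A=0$ being handled by the fact that a constant Floer cylinder only connects an orbit to itself).

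The first branch $(1-\delta)\mu>-a$ of the parameter assumption pushes $F$ entirely below the action window and makes $F\prec I$ tautological; the symmetric branch $\eps\nu>b$ of the second assumption pushes $III$ and $II^+$ above the window and trivialises the relations involving them. The real work is thus in the hard branches $(1-\delta)\mu>\nu-\eta_\nu$ and $\eps\nu>\tau-\eta_\tau$, together with the relations $I^+\prec I^-$, $III^-\prec III^+$, $III\prec I$ and $III\prec II$ that are invisible to action alone.

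The main obstacle is the confinement step. For $III\prec I$ and $III\prec II$ I would apply the neck-stretching Lemma~\ref{lem:neck} to the contact-type hypersurface $\Sigma=(1-2\eps)\times\p^+V$, on which $H\equiv-\eps\nu$: after a sufficiently long neck stretch, case~(1) of the lemma excludes any Floer cylinder whose positive asymptote lies on the $\p^+V$-side of $\Sigma$ (the $III$ region) and whose negative asymptote lies on the other side (containing $I$ and $II^-$), provided $A_H(x_+)<\eps\nu$; the parameter hypothesis $\eps\nu>\min\{b,\tau-\eta_\tau\}$ supplies this bound for the $III$-orbits in the window, possibly in combination with the action cap $A_H(x_+)\le b$ coming from the window itself. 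For the within-group relations $I^+\prec I^-$ and $III^-\prec III^+$ I would apply the asymptotic-behaviour Lemma~\ref{lem:asy} at the asymptote lying in a concave smoothing region, which forces the Floer cylinder to rise strictly above that level, and combine this with Lemma~\ref{lem:no-escape} applied to a compact region bounded by a suitable contact hypersurface to contradict the position of the other asymptote; the hard branch of $F\prec I$ is treated analogously via the asymptotic lemma at the $I^-$ output. I expect the hardest part to be the case analysis: the two branches of each of the two parameter conditions combine into four joint cases, and each of the five geometric relations has to be checked in each of them with the right choice of confinement lemma.
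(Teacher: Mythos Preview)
Your overall strategy and action computations are correct, but there is a genuine gap in how you distribute the work among the confinement lemmas. The main problem is your treatment of $III^- \prec I, II$. You propose to obtain all of $III\prec I,II$ via the neck-stretching Lemma~\ref{lem:neck} at $(1-2\eps)\times\p^+V$, which requires $A_H(x_+)<\eps\nu$. The parameter hypothesis $\eps\nu>\min\{b,\tau-\eta_\tau\}$ supplies this bound for $III^0$ (action $0$) and $III^+$ (action $<\tau-\eta_\tau$ or $\leq b$), but \emph{not} for $III^-$, whose orbits have action up to $\nu-\eta_\nu>\eps\nu$. In the branch $\eps\nu>\tau-\eta_\tau$ with $b$ large there are $III^-$ orbits with action in $(\eps\nu,b]$, still in the window, and Lemma~\ref{lem:neck} says nothing about them. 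The paper instead obtains $III^-\prec F,I,II$ directly from Lemmas~\ref{lem:no-escape} and~\ref{lem:asy}: since $III^-$ sits at a concave bend, Lemma~\ref{lem:asy}(ii) forces any cylinder with $x_+\in III^-$ to rise above $r_+$, while both asymptotes lie in the compact region $\{r\leq r_+\}$ (on the $\p^+V$ collar), contradicting Lemma~\ref{lem:no-escape}. Neck-stretching is reserved only for the residual relations $III^{0+}\prec I,II^-$, where the action bound genuinely holds.

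A related error appears in your plan for $III^-\prec III^+$: applying Lemma~\ref{lem:asy} at the concave input $III^-$ and then Lemma~\ref{lem:no-escape} on $\{r\leq r_+\}$ fails because the output $x_-\in III^+$ lies \emph{outside} that region (it sits in $W^{top}$, at larger $r$). The correct argument works at the convex output: Lemma~\ref{lem:asy}(i) at $x_-\in III^+$ combined with Lemma~\ref{lem:no-escape} on the full filled domain $\{r\leq r_-\}$ (in the $\p^+W$ collar) gives the contradiction. In fact the paper extracts an entire batch of relations ($F\prec I^-$; $F,I\prec II^\pm$; $F,I,II,III^{-0}\prec III^+$; $I^+\prec F,I^{-0}$; $III^-\prec F,I,II$) from Lemmas~\ref{lem:no-escape} and~\ref{lem:asy} \emph{before} any action inequality is invoked, and this is where most of the work actually happens. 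Two smaller points: your claim that $\eps\nu>b$ ``pushes $III$ above the window'' is wrong since $A(III^0)=0$ (it is $II^{0+}$ that is pushed above); and the borderline $I^0\prec III^0$, $III^0\prec I^0$ requires a monotonicity argument (a strictly positive lower bound on the energy of cylinders traversing $V$) to survive the Morse perturbation, not just the observation that zero-energy cylinders are constant.
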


\begin{proof}
The combination of Lemmas~\ref{lem:no-escape} and~\ref{lem:asy} yields the relations
\begin{gather*}
   F\prec I^-,\qquad F,I\prec II^{-+},\qquad F,I,II,III^{-0}\prec III^+, \cr
   I^+\prec F,I^{-0},\qquad III^-\prec F,I,II.
\end{gather*}
For any choice of parameters, the actions satisfy
$$
   F^+<F^0,\qquad F,I^{-+} < I^0=III^0 < II^{0+},III^{-+},\qquad II^-<II^0<II^+.
$$
We see that $F\prec I^{-0},II,III$. The remaining relation $F\prec I^+$ follows if the actions satisfy $F^0<I^+$, i.e., $-(1-\delta)\mu < \max\{a,-(\nu-\eta_\nu)\}$, which is the first condition in~\eqref{eq:par}. Next we see that $I\prec II,III$ and $III^-\prec I,II$. For the remaining relation $III^{0+}\prec I,II$ we arrange the actions to satisfy $III^+<II^0$, i.e., $\min\{b,\tau-\eta_\tau\}<\eps\nu$, which is the second condition in~\eqref{eq:par}. Then we have $III^0<III^+<II^0<II^+$. The relations $I^0\prec III^0$ and $III^0\prec I^0$ follow from monotonicity: there is an \emph{a priori} strictly positive lower bound on the energy of trajectories traversing $V$, and this rules out trajectories running between $III^0$ and $I^0$ which after small Morse perturbation of $H$ have arbitrarily small energy. The remaining relation $III^{0+}\prec I,II^-$ now follows from Lemma~\ref{lem:neck}, stretching the neck at the hypersurface $(1-2\eps)\times\p^+V$ where $H\equiv-\eps\nu$, and $\eps\nu$ is bigger than all actions in the groups $III^0$ and $III^+$. This proves~\eqref{eq:order}. 
The relations in~\eqref{eq:order-within-groups} also follow from the preceding discussion.
\end{proof}

\begin{remark}
Under the conditions of Lemma~\ref{lem:order}, the Floer boundary operator has upper triangular form if the periodic orbits are ordered by increasing action within each class and the classes are ordered (for example) as 
$$
   F^+\prec F^0\prec I^+\prec I^-\prec I^0 \prec
   III^0\prec III^-\prec III^+\prec
   II^-\prec II^0\prec II^+.
$$
\end{remark}

Let us fix $a<0<b$ and $0<\delta,\eps<1$ and consider $\mu,\nu,\tau>0$ subject to the conditions
\begin{equation}\label{eq:par2}
   \mu>-a/(1-\delta),\qquad \tau>b,\qquad \nu>\max\{-a,b/\eps\}. 
\end{equation}
Note that these conditions allow us to make $\mu,\nu,\tau$ arbitrarily large, independently of each other. They ensure condition~\eqref{eq:par} in Lemma~\ref{lem:order}. Moreover, the actions of all orbits in the classes $F,II^0,II^+$ lie outside the interval $[a,b]$. So the Floer chain complex can be written as
{\color{black}
$$
   FC^{(a,b)} = FC^{(a,b)}_{III} \oplus FC^{(a,b)}_{I} \oplus FC^{(a,b)}_{II^-}  
$$
and with respect to this decomposition the Floer boundary operator has the form
\begin{equation}\label{eq:upper-triang}
        \begin{pmatrix}* & 0 & *\\ 0 & * & *\\ 0 & 0 & *
        \end{pmatrix}\;.
\end{equation}
Let us fix $\mu,\tau$ and consider $\nu<\nu'$ both satisfying~\eqref{eq:par2}. We denote the corresponding Hamiltonians by $H_{\nu'}\leq H_\nu$ and consider the continuation maps 
$$
\phi_{\nu\nu'}:FC^{(a,b)}(H_{\nu'})\to FC^{(a,b)}(H_\nu)
$$
induced by convex interpolation between $H_\nu$ and $H_{\nu'}$. These
continuation maps 
{\color{black}may not have the upper triangular form~\eqref{eq:upper-triang} 
since the combination of Lemmas~\ref{lem:no-escape} and~\ref{lem:asy}
does not apply to the current homotopy situation. Therefore, we
decompose the above chain complex instead as 
$$
   FC^{(a,b)} = FC^{(a,b)}_{III} \oplus FC^{(a,b)}_{I,II^-},   
$$
with differential written in upper triangular form as
$\left(\begin{array}{cc} * & * \\ 0 &
  * \end{array}\right)$. The continuation maps $\phi_{\nu\nu'}$ have
upper triangular form with respect to this decomposition and we obtain
the commuting diagram with exact rows 
}
\begin{equation}\label{eq:filt}
\xymatrix
{
0\ar[r] & FC^{(a,b)}_{III}(H_{\nu'}) \ar[r] \ar[d] & FC^{(a,b)}(H_{\nu'}) \ar[r] \ar[d] & FC^{(a,b)}_{I,II^-}(H_{\nu'}) \ar[r] \ar[d] & 0 \\
0\ar[r] & FC^{(a,b)}_{III}(H_{\nu}) \ar[r] & FC^{(a,b)}(H_{\nu}) \ar[r] & FC^{(a,b)}_{I,II^-}(H_{\nu}) \ar[r] & 0\;, \\
}
\end{equation}
where $FC^{(a,b)}_{I,II^-}$ denotes the quotient complex $FC^{(a,b)} / FC^{(a,b)}_{III}$.

\begin{lemma}\label{lem:Hnu-III}
$$
   \lim^{\longleftarrow}_{\scriptsize \nu\to\infty}FH^{(a,b)}_{III}(H_\nu) \cong SH^{(a,b)}(W^{top},\p^+V).
$$ 
\end{lemma}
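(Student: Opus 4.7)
The plan is to construct, for each $\nu$, a Hamiltonian $K_\nu\in\cH(W^{top};F\circ W^{bottom}\circ V)$ whose $(a,b)$-filtered Floer chain complex is isomorphic to the subcomplex $FC^{(a,b)}_{III}(H_\nu)$, and then to show that the inverse system $(K_\nu)_\nu$ is cofinal for the presentation of $SH^{(a,b)}(W^{top},\p^+V)$.

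First, I would modify $H_\nu$ inside the filling as follows. Define $K_\nu:S^1\times\wh W_F\to\R$ to coincide with $H_\nu$ on $[1-\eps,1]\times\p^+V\cup W^{top}\cup[1,\infty)\times\p^+W$, and to be identically equal to $-\eps\nu$ on the complementary region of $F\circ W^{bottom}\circ V$ (with a smooth interpolation near $(1-\eps)\times\p^+V$). Since $K_\nu$ vanishes on $W^{top}$ and is linear of slope $\tau$ at infinity, $K_\nu\in\cH(W^{top};F\circ W^{bottom}\circ V)$. Under the hypotheses of Lemma~\ref{lem:order} together with $\tau>b$ and $\eps\nu>b$, the orbits of $K_\nu$ with action in $(a,b)$ are precisely those of types $III^-, III^0, III^+$: the constants in the bulk of the filling have action $\eps\nu>b$; the orbits at the bend $r\approx 1-\eps$ on the filling side (the $II^+$-type orbits for $K_\nu$) have action at least $\eps\nu>b$; and the remaining orbits lie in the region $[1-\eps,1]\times\p^+V\cup W^{top}\cup[1,\infty)\times\p^+W$ where $K_\nu=H_\nu$.

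Second, I would show that the Floer differentials coincide. Choose an admissible almost complex structure $J$ that is cylindrical on a neighborhood of the hypersurface $\Sigma:=\{1-2\eps\}\times\p^+V$, on which both $H_\nu$ and $K_\nu$ are constant equal to $-\eps\nu$. Let $V_1\subset\wh W_F$ denote the region above $\Sigma$, so that every orbit of type $III$ lies in $V_1$. Applying the no-escape Lemma~\ref{lem:no-escape} to $V_1$, whose boundary $\Sigma$ carries the positive contact form $(1-2\eps)\alpha^+|_{\p^+V}$, one concludes that every Floer cylinder for $H_\nu$ (respectively $K_\nu$) with both asymptotes of type $III$ is contained in $V_1$. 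Since $H_\nu=K_\nu$ on $V_1$, the two moduli spaces coincide, yielding an isomorphism of chain complexes
\begin{equation*}
   FC^{(a,b)}(K_\nu)\cong FC^{(a,b)}_{III}(H_\nu)
\end{equation*}
for a consistent choice of auxiliary data. In particular, the differential induced on $FC^{(a,b)}_{III}(H_\nu)$ as a subcomplex of $FC^{(a,b)}(H_\nu)$ (the upper-left block in~\eqref{eq:upper-triang}) agrees with the Floer differential of $K_\nu$.

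Third, I would pass to the inverse limit. For $\nu<\nu'$ we have $K_{\nu'}\leq K_\nu$ pointwise; the same no-escape argument on $V_1$ applied to an $s$-decreasing convex homotopy shows that the continuation map $FC^{(a,b)}(K_{\nu'})\to FC^{(a,b)}(K_\nu)$ corresponds, under the isomorphism above, to the map induced on the $III$-subcomplex by the continuation map $FC^{(a,b)}(H_{\nu'})\to FC^{(a,b)}(H_\nu)$. The family $(K_\nu)$ is cofinal among Hamiltonians in $\cH(W^{top};F\circ W^{bottom}\circ V)$ going to $-\infty$ on the interior of the filling. For $\tau>b$ fixed, the direct limit on $\tau$ in the presentation~\eqref{eq:SH*abWA} of $SH^{(a,b)}(W^{top},\p^+V)$ stabilizes because continuation maps between such Hamiltonians induce isomorphisms on filtered Floer homology (standard confinement argument at $[1,\infty)\times\p^+W$). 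Consequently
\begin{equation*}
   \lim^{\longleftarrow}_{\nu\to\infty}FH^{(a,b)}(K_\nu)\cong SH^{(a,b)}(W^{top},\p^+V),
\end{equation*}
which combined with the previous step yields the claim.

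The main technical obstacle is the second step, namely the verification that the Floer trajectories between $III$-orbits are confined to $V_1$ and yield the same moduli spaces for $H_\nu$ and $K_\nu$. This requires arranging simultaneously the conditions of Lemma~\ref{lem:no-escape} (radial dependence of the Hamiltonian and cylindrical almost complex structure near $\Sigma$), regularity of the Floer equation, and nondegeneracy of the relevant $1$-periodic orbits; this can be achieved by standard perturbation arguments, localised away from $V_1$ where the Hamiltonians must coincide.
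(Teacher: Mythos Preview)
Your overall strategy---replace $H_\nu$ by a Hamiltonian $K_\nu$ that is constant below the $III$-region and identify the filtered complexes---is natural and in fact your $K_\nu$ is essentially the paper's $\ol H_\nu$. The gap is in your second step, the confinement argument via Lemma~\ref{lem:no-escape}.

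Lemma~\ref{lem:no-escape} only prevents escape through a hypersurface where the Liouville vector field points \emph{out} of the region containing the asymptotes. In its proof one replaces $V$ locally by $\{r\le r_0\}$, and the final inequality $\int_{\p S}dr\circ du\circ(-j)\le 0$ uses that leaving $V$ means $r$ increases. For your region $V_1=\{r\ge 1-2\eps\}$ above $\Sigma$, the Liouville vector field $r\p_r$ points \emph{into} $V_1$; leaving $V_1$ means $r$ decreases, and the same computation gives the useless inequality $\ge 0$. This is the familiar asymmetry between convex and concave ends: the maximum principle bounds $r$ from above, not from below. Floer cylinders with both asymptotes in $III$ can genuinely dip below $\Sigma$ into the region where $H_\nu$ and $K_\nu$ differ, so there is no direct chain-level identification of the moduli spaces. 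The same issue recurs in your treatment of the continuation maps in step~3.

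The paper circumvents this by not attempting a chain-level identification. Instead it runs a homotopy from $H_\nu$ to $\ol H_\nu$ that is constant on $V\cup W^{top}\cup[1,\infty)\times\p^+W$ and applies the neck-stretching Lemma~\ref{lem:neck} at a hypersurface in $V$ where $H\equiv -\eps\nu$. Since all $III$-orbits in the window $(a,b)$ have action below $\eps\nu$ (using $\eps\nu>b$), the action hypothesis of Lemma~\ref{lem:neck} is satisfied, and one concludes that no continuation trajectory connects the $III$-orbits to orbits on the other side of the neck. The usual continuation argument then shows $FH^{(a,b)}_{III}$ is invariant under the homotopy. Your argument can be repaired along these lines: replace the appeal to Lemma~\ref{lem:no-escape} by a continuation argument backed by Lemma~\ref{lem:neck}.
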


\begin{proof}
We consider a homotopy of Hamiltonians which on $V\cup W^{top}\cup [1,\infty)\times\p^+W$ is constant and which on $F\cup W^{bottom}$ is a convex interpolation between the Hamiltonian $H_\nu$ and the Hamiltonian $\ol H_\nu$ that is constant equal to $-\varepsilon\nu$. Since the homotopy is constant on the cobordism $V$, Lemma~\ref{lem:neck} applies and shows that there is no interaction between the orbits in $III$ and the orbits appearing in $F\cup W^{bottom}$. The usual continuation argument then shows that the homology $FH^{(a,b)}_{III}$ is invariant during this homotopy. Since $\lim\limits^{\longleftarrow}_{\scriptsize \nu\to\infty} FH^{(a,b)}_{III}(\ol H_\nu)=SH^{(a,b)}(W^{top},\p^+V)$ by definition, we obtain the desired isomorphism.
\end{proof}

\begin{lemma}\label{lem:Hnu-I,II-}
$$
   \lim^{\longleftarrow}_{\scriptsize \nu\to\infty}FH^{(a,b)}_{I,II^-}(H_\nu) \cong SH^{(a,b)}(W^{bottom},\p^-V).
$$ 
\end{lemma}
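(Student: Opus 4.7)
Mirroring the strategy of Lemma~\ref{lem:Hnu-III}, I would construct on $\wh{F\circ W^{bottom}}$ a cofinal family $K_\nu$ of admissible Hamiltonians computing $SH^{(a,b)}(W^{bottom},\p^-V;F)$, shaped so as to match $H_\nu$ on the relevant region. Explicitly, $K_\nu$ should coincide with $H_\nu$ on $F\circ W^{bottom}\cup[1,1+\varepsilon]\times\p^-V$, then take the constant value $-\varepsilon\nu$ on $[1+\varepsilon,R_\nu]\times\p^-V$, and finally be linear of small slope $-m$ on $[R_\nu,\infty)\times\p^-V$, where $m>0$ is chosen smaller than the shortest Reeb period on $\p^-V$. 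With this choice, the bending at $r=R_\nu$ introduces no new $1$-periodic orbits, the Morse-perturbed constants on the plateau $[1+\varepsilon,R_\nu]\times\p^-V$ have action close to $\varepsilon\nu$ and therefore fall above the window $(a,b)$ for $\nu$ sufficiently large, and the $1$-periodic orbits of $K_\nu$ in the action window $(a,b)$ are precisely those of types $I^-, I^0, I^+$ and $II^-$. Letting $R_\nu\to\infty$ suitably with $\nu$ ensures that the family $\{K_\nu\}$ is cofinal among Hamiltonians computing $SH^{(a,b)}(W^{bottom},\p^-V;F)$.

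Next I would identify the Floer differentials on $FC^{(a,b)}(K_\nu)$ and on $FC^{(a,b)}_{I,II^-}(H_\nu)$. Choose $r_0$ slightly larger than $1+\varepsilon$ so that both Hamiltonians equal $-\varepsilon\nu$ in a neighborhood of $\{r=r_0\}\times\p^-V$, and use almost complex structures that are cylindrical there. Set $U:=F\circ W^{bottom}\cup[1,r_0]\times\p^-V$, which is canonically identified as a subset of both $\wh W_F$ and $\wh{F\circ W^{bottom}}$; under this identification $H_\nu|_U=K_\nu|_U$. Since every orbit of type $I$ or $II^-$ lies in $U$ and $\p U=\{r=r_0\}\times\p^-V$ is of contact type, Lemma~\ref{lem:no-escape} guarantees that every Floer cylinder for $H_\nu$ (respectively $K_\nu$) with both asymptotes in $U$ is entirely contained in $U$. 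This produces a bijection of Floer trajectories, which on the $H_\nu$ side counts precisely the trajectories contributing to the quotient differential on $FC^{(a,b)}_{I,II^-}(H_\nu)$ (trajectories with output in the subcomplex $FC^{(a,b)}_{III}$ are killed in the quotient). We thus obtain a chain isomorphism $FC^{(a,b)}(K_\nu)\cong FC^{(a,b)}_{I,II^-}(H_\nu)$.

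Finally, I would verify compatibility of this chain isomorphism with the structure maps in $\nu$. Continuation maps between the $K_\nu$'s can be induced by decreasing homotopies that are constant in a neighborhood of $\{r=r_0\}\times\p^-V$, so the $s$-dependent version of Lemma~\ref{lem:no-escape} confines the continuation Floer cylinders to $U$, where they agree with the corresponding continuation cylinders for the $H_\nu$'s on $\wh W_F$. Passing to the inverse limit as $\nu\to\infty$ on both sides yields
$$
\lim^{\longleftarrow}_{\nu\to\infty}FH^{(a,b)}_{I,II^-}(H_\nu)\;\cong\;\lim^{\longleftarrow}_{\nu\to\infty}FH^{(a,b)}(K_\nu)\;=\;SH^{(a,b)}(W^{bottom},\p^-V).
$$

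The main difficulty will be the careful choice of the parameters $R_\nu$ and $m$ guaranteeing simultaneously that the family $\{K_\nu\}$ is genuinely cofinal for $SH^{(a,b)}(W^{bottom},\p^-V)$ and that the orbits introduced on $[1+\varepsilon,\infty)\times\p^-V$ (constants on the plateau and orbits emerging from the final bending) all have action outside $(a,b)$. Once these action-window estimates are in place via arguments of the type used in Lemma~\ref{lem:order}, the remainder of the argument is a systematic application of the no-escape lemma, just as in the proof of Lemma~\ref{lem:Hnu-III}.
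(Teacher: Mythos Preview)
Your chain-level identification $FC^{(a,b)}(K_\nu)\cong FC^{(a,b)}_{I,II^-}(H_\nu)$ for each \emph{fixed} $\nu$ is fine: both Hamiltonians agree on $U$ and the no-escape lemma confines the relevant Floer cylinders there. The gap is in your treatment of the continuation maps, and it is exactly the difficulty that forces the paper into a more elaborate route.

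You assert that the continuation homotopies can be chosen ``constant in a neighborhood of $\{r=r_0\}\times\p^-V$'', but this is impossible: at $r_0$ one has $H_\nu\equiv-\varepsilon\nu$ while $H_{\nu'}\equiv-\varepsilon\nu'$, and likewise for your $K_\nu,K_{\nu'}$. Any homotopy must interpolate between these distinct values. Worse, the $s$-dependent no-escape Lemma~\ref{lem:no-escape} requires $\p_s A_{h_s}(r_0)\leq 0$, but here $A_{h_s}(r_0)=r_0\cdot 0-h_s(r_0)=\varepsilon\nu_s$ is \emph{increasing} in $s$ (since $\nu_s$ runs from $\nu$ to $\nu'>\nu$). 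So no-escape genuinely fails to confine the continuation trajectories to $U$, and you have no mechanism to identify the induced maps on the quotient $FC_{I,II^-}(H_{\nu'})\to FC_{I,II^-}(H_\nu)$ with your $K$-continuation maps. A secondary issue is that your family $\{K_\nu\}$, with fixed small terminal slope $-m$, is not cofinal in the obvious sense among admissible Hamiltonians for $SH^{(a,b)}(W^{bottom},\p^-V)$.

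The paper handles this by deforming on the \emph{other} side: its auxiliary $K_\nu$ agrees with $H_\nu$ on $F\cup W^{bottom}\cup V$ and is modified only on $W^{top}\cup[1,\infty)\times\p^+W$. The continuation isomorphism $FH_{I,II^-}(K_\nu)\cong FH_{I,II^-}(H_\nu)$ then follows from an upper-triangular argument (the homotopy is constant where the $I,II^-$ orbits live). The new complex $FC_{I,II^-}(K_\nu)$ splits further as $FC_I\oplus FC_{II^-}$, the continuation maps preserve this splitting (Lemma~\ref{lem:no-escape-homotopies}), and a separate action-shift argument (Lemma~\ref{lem:vanishing}) shows that the $II^-$ piece dies in the inverse limit. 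Only then does one reach $\lim_\nu FH_I(K_\nu)$, which is identified with $SH^{(a,b)}(W^{bottom},\p^-V)$ via genuinely cofinal Hamiltonians $\ol K_\nu$. The orbits of type $II^-$ are not part of $SH^{(a,b)}(W^{bottom},\p^-V)$; they are artefacts that must be shown to vanish in the limit, and your attempt to absorb them directly into a cofinal family is what breaks.
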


\begin{proof}
We consider a homotopy of Hamiltonians which on $F\cup W^{bottom}\cup V$ is constant and which on $W^{top}\cup [1,\infty)\times\p^+W$ is a convex interpolation between the Hamiltonian $H_\nu$ and the Hamiltonian $K_\nu$ that is constant equal to $-\varepsilon \nu$ on $V\cup W^{top}$ and is linear of slope $\tau$ (the same as the slope of $H_\nu$) on $[1,\infty)\times\p^+W$. 
See Figure~\ref{fig:H-cob-K}. 

We have $FH^{(a,b)}(K_\nu) = FH^{(a,b)}_{I,II^-}(K_\nu)$ and so we have a well-defined continuation map $\phi^{HK}_\nu:FH^{(a,b)}_{I,II^-}(K_\nu)\to FH^{(a,b)}_{I,II^-}(H_\nu)$ obtained by composing the continuation map $FH^{(a,b)}(K_\nu)\to FH^{(a,b)}(H_\nu)$ with the map induced by projection $FH^{(a,b)}(H_\nu)\to FH^{(a,b)}_{I,II^-}(H_\nu)$. Since the homotopy is constant in the region $F\cup W^{bottom}\cup V$, which contains the orbits of type $I,II^-$, it follows that this continuation map is an isomorphism. Indeed, the generators of the two chain complexes are canonically identified and upon arranging them in increasing order by the action the continuation map at chain level has upper triangular form with $+1$ on the diagonal. (Note that we do not use at this point Lemma~\ref{lem:neck}.)
 
For $\nu\leq\nu'$ we get commutative diagrams in which all maps are continuation morphisms
$$
\xymatrix{
FH^{(a,b)}_{I,II^-}(H_\nu) & \ar[l]^\cong_{\phi^{HK}_\nu} FH^{(a,b)}_{I,II^-}(K_\nu) \\
FH^{(a,b)}_{I,II^-}(H_{\nu'}) \ar[u]^{\phi_{\nu\nu'}} & FH^{(a,b)}_{I,II^-}(K_{\nu'}) \ar[l]^\cong_{\phi^{HK}_{\nu'}} \ar[u]_{\psi_{\nu\nu'}}\,.
}
$$
Here $\psi_{\nu\nu'}:FH^{(a,b)}_{I,II^-}(K_{\nu'})\to FH^{(a,b)}_{I,II^-}(K_\nu)$ is the continuation map induced by a convex interpolation between $K_\nu$ and $K_{\nu'}$. As a consequence we have a canonical isomorphism 
\begin{equation} \label{eq:inverse-limit-H-K}
\xymatrix{
\lim\limits^{\longleftarrow}_{\scriptsize \nu\to\infty}FH^{(a,b)}_{I,II^-}(H_\nu) & & \ar[ll]^\cong_{\lim\limits^{\longleftarrow}_{\scriptsize \nu\to\infty}\phi^{HK}_\nu} \lim\limits^{\longleftarrow}_{\scriptsize \nu\to\infty}FH^{(a,b)}_{I,II^-}(K_\nu).
} 
\end{equation}

The complex $FC^{(a,b)}_{I,II^-}(K_\nu)$ can be decomposed as 
\begin{equation} \label{eq:decomposition-FCKnu}
FC^{(a,b)}_{I,II^-}(K_\nu)=FC^{(a,b)}_I(K_\nu)\oplus FC^{(a,b)}_{II^-}(K_\nu),
\end{equation}
with differential of upper triangular form $\left(\begin{array}{cc} * & * \\ 0 & *\end{array}\right)$. Lemma~\ref{lem:no-escape-homotopies} below shows that this decomposition is preserved by the continuation maps $\psi_{\nu\nu'}$, which also have upper triangular form. (That this precise property could \emph{a priori} fail for the Hamiltonians $H_\nu$ was the reason to deform them to the Hamiltonians $K_\nu$.) In particular, there is a well-defined inverse system of quotient homologies $FH^{(a,b)}_{II^-}(K_\nu)$, $\nu\to\infty$. Lemma~\ref{lem:vanishing} below shows that the inverse limit of this system vanishes, and we thus obtain a canonical isomorphism 
\begin{equation} \label{eq:inverse-limit-K-I}
\xymatrix{
\lim\limits^{\longleftarrow}_{\scriptsize \nu\to\infty}FH^{(a,b)}_{I}(K_\nu) \ar[rr]^\cong & &  \lim\limits^{\longleftarrow}_{\scriptsize \nu\to\infty}FH^{(a,b)}_{I,II^-}(K_\nu),
}
\end{equation} 
the map being induced in the limit by the inclusions $FC^{(a,b)}_I(K_\nu)\hookrightarrow FC^{(a,b)}_{I,II^-}(K_\nu)$. 

We now prove the isomorphism 
\begin{equation} \label{eq:inverse-limit-K-I-SH}
\xymatrix{
\lim\limits^{\longleftarrow}_{\scriptsize \nu\to\infty}FH^{(a,b)}_{I}(K_\nu) \cong SH^{(a,b)}(W^{bottom},\p^- V).
}
\end{equation} 
The Floer trajectories which are involved in the definition of the
Floer differential for $FC^{(a,b)}_I(K_\nu)$ are contained in a
neighborhood of $F\cup W^{bottom}$ by Lemma~\ref{lem:no-escape}. The
key point is that the Floer trajectories involved in the definition of
the continuation maps $FC^{(a,b)}_I(K_{\nu'})\to FC^{(a,b)}_I(K_\nu)$
are also contained in a neighborhood of $F\cup W^{bottom}$. For this
purpose we choose the Hamiltonians $K_\nu$ such that for $\nu'\ge \nu$
the Hamiltonian $K_{\nu'}$ coincides with $K_\nu$ on a neighborhood of
$F\cup W^{bottom}$ where the orbits in group $I$ for $K_\nu$ are
located. This ensures that the assumptions in the last paragraph of
Lemma~\ref{lem:no-escape} are satisfied for the homotopy obtained by
convex interpolation between $K_\nu$ and $K_{\nu'}$. Denote $\ol
K_\nu$ the Hamiltonian defined on $F\cup W^{bottom}\cup
[1,\infty)\times\p^-V$ which is equal to $K_\nu$ on $F\cup W^{bottom}$
and linear of slope $-\nu$ (the same as the slope of $K_\nu$) on
$[1,\infty)\times\p^- V$. The previous argument then shows the equality  
$$
\xymatrix{
\lim\limits^{\longleftarrow}_{\scriptsize \nu\to\infty}FH^{(a,b)}_{I}(K_\nu)  = \lim\limits^{\longleftarrow}_{\scriptsize \nu\to\infty}FH^{(a,b)}_{I}(\ol K_\nu),
}
$$
and the right hand side is $SH^{(a,b)}(W^{bottom},\p^- V)$ by definition. 

The conclusion of Lemma~\ref{lem:Hnu-I,II-} now follows by combining the isomorphisms~\eqref{eq:inverse-limit-H-K},~\eqref{eq:inverse-limit-K-I}, and~\eqref{eq:inverse-limit-K-I-SH}.
\end{proof}

\begin{figure}
         \begin{center}
\input{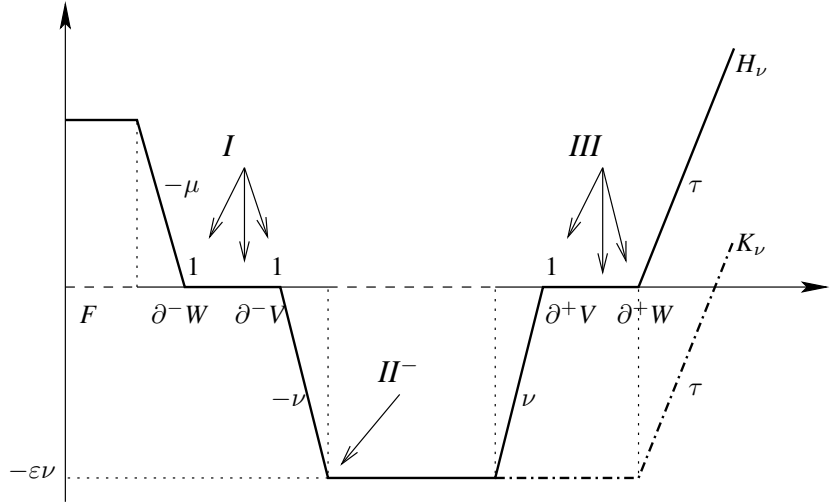}
         \end{center}
\caption{The Hamiltonians $H_\nu$ and $K_\nu$  \label{fig:H-cob-K}}
\end{figure}

The next lemma was used in the previous proof. We recall that $K_\nu$ denotes a Hamiltonian which coincides with $H_\nu$ on $F\cup W^{bottom}\cup V$, is constant equal to $-\varepsilon \nu$ on $V\cup W^{top}$, and is linear of slope $\tau$ (the same as the slope of $H_\nu$) on $[1,\infty)\times\p^+ W$. We choose the smoothings of the Hamiltonians $K_{\nu'}$ and $K_{\nu}$ to coincide up to a translation by $\epsilon(\nu'-\nu)$ in the region $II^-$ \textcolor{black}{but only for} slopes in the interval $(-\nu+\eta_\nu, 0)$. We recall the decomposition~\eqref{eq:decomposition-FCKnu} of $FC^{(a,b)}_{I,II^-}(K_\nu)$, with respect to which the differential has upper triangular form. 

\begin{lemma} \label{lem:no-escape-homotopies} 
The Floer continuation map $\psi_{\nu\nu'}:FC^{(a,b)}_{I,II^-}(K_{\nu'})\to FC^{(a,b)}_{I,II^-}(K_\nu)$ 
induced by a non-increasing $s$-dependent convex interpolation from $K_\nu$ at $-\infty$ to $K_{\nu'}$ at $+\infty$ has upper-triangular form with respect to the decompositions $FC^{(a,b)}_{I,II^-}=FC^{(a,b)}_I\oplus FC^{(a,b)}_{II^-}$ for $K_\nu$ and $K_{\nu'}$. 
\end{lemma}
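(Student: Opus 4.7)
The goal is to rule out continuation Floer trajectories $u$ with positive asymptote $x_+\in FC^{(a,b)}_{I}(K_{\nu'})$ and negative asymptote $x_-\in FC^{(a,b)}_{II^-}(K_\nu)$. I plan to adapt the argument for the relation $I\prec II^-$ from Lemma~\ref{lem:order} to the present $s$-dependent continuation setting, by combining the $s$-dependent form of Lemma~\ref{lem:no-escape} with Lemma~\ref{lem:asy}, and (in a residual case) with Lemma~\ref{lem:neck}.

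First I would fix a radius $r_0\in(1,1+\varepsilon)$ so that the hypersurface $\{r_0\}\times\p^-V$ strictly separates the supports of all class-$I$ orbits from those of all class-$II^-$ orbits for both $K_\nu$ and $K_{\nu'}$. This is possible because, by the prescribed smoothings, every $I^+$ orbit sits in a small sub-collar near $\{1\}\times\p^-V$ and every $II^-$ orbit sits in a small sub-collar near $\{1+\varepsilon\}\times\p^-V$. The Liouville subdomain $V_0:=F\cup W^{bottom}\cup[1,r_0]\times\p^-V$ then contains every $I$-orbit while its complement contains every $II^-$-orbit. Next I would verify the hypotheses of the $s$-dependent part of Lemma~\ref{lem:no-escape} with $V=V_0$: on the collar the convex interpolation has radial form $K_s=h_s(r)=-\nu_s(r-1)$ with $\nu_s$ nondecreasing and interpolating from $\nu$ to $\nu'$, so $\p_s K_s=-(\p_s\nu_s)(r-1)\le 0$ and $A_{h_s}(r_0)=-\nu_s$ is nonincreasing in $s$; the almost complex structure is taken cylindrical near $\p V_0$.

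Now I would run the main argument. Assume $u$ is a continuation trajectory as above. Since $x_-\notin V_0$, the set $S:=u^{-1}(\wh W\setminus V_0)$ contains a neighborhood of $s=-\infty$. Running the integrated maximum-principle computation that underlies the proof of Lemma~\ref{lem:no-escape} in this $s$-dependent half-open setting, picking up the asymptotic contribution at $-\infty$, yields
\[
   0\le E(u|_S)\le\int_{u^{-1}(\p V_0)\cap\p S}\!(\lambda-K_\nu\,dt)-A_{K_\nu}(x_-)+\int_S u^*\p_s K_s\,ds\wedge dt\le -A_{K_\nu}(x_-),
\]
so $A_{K_\nu}(x_-)\le 0$. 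Combined with the general action inequality $A_{K_{\nu'}}(x_+)\ge A_{K_\nu}(x_-)$ for decreasing homotopies, this already excludes every $x_-\in II^-$ of strictly positive $K_\nu$-action. For the remaining case $A_{K_\nu}(x_-)\le 0$, I would arrange the homotopy to have a plateau $K_s\equiv K_\nu$ on $(-\infty,s_0]$ and apply Lemma~\ref{lem:asy}(i) on that half-cylinder: since $h_\nu''(r_-)>0$ at the convex smoothing point $r_-\approx 1+\varepsilon$, either $u\equiv x_-$ on $(-\infty,s_0]\times S^1$, in which case smoothness of $\p_s u$ at $s=s_0$ combined with the Floer equation and unique continuation forces $u\equiv x_-$ globally and hence $x_+=x_-$, contradicting $x_+\in I$; or $u$ strictly exceeds $r_-$ at some point, forcing the cylinder to traverse $\{r_0\}\times\p^-V$, which I would then exclude by a neck-stretching argument (Lemma~\ref{lem:neck}) at an intermediate hypersurface $\{r_1\}\times\p^-V$ with $r_0<r_1<r_-$.

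The hard part will be implementing the neck-stretching step inside the $s$-dependent continuation, since $K_\nu$ and $K_{\nu'}$ disagree precisely on the collar where we stretch. The saving grace is that both Hamiltonians coincide on $F\cup W^{bottom}$ and are linear with the same sign of slope on the collar, differing only through the monotone parameter $\nu_s$; so after a mild reparametrization the SFT compactness theorem produces in the limit a broken configuration whose piece near $x_+$ has a positive puncture at $x_+$ and negative punctures on positively parameterized Reeb orbits of $\p^-V$, and whose piece near $x_-$ has a negative puncture at $x_-$ and positive punctures on the matching Reeb orbits. The matching condition together with the already-derived bound $A_{K_\nu}(x_-)\le 0$ and the positivity of Reeb periods on $\p^-V$ produces the required contradiction, as in the proof of Lemma~\ref{lem:order}.
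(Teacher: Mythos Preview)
Your opening strategy---running the energy estimate of Lemma~\ref{lem:no-escape} on the half-open region $S=u^{-1}(\wh W_F\setminus\mathrm{int}\,V_0)$ containing the negative end---is sound and, carried out sharply, already proves the lemma without any case split. The point you miss is that the finite boundary term is not merely $\le 0$ but in fact $\le -\nu$. Indeed, on $u^{-1}(\p V_0)$ one has $u^*\lambda-K_s(u)\,dt=\lambda(du-X_{K_s}\otimes dt)+A_{h_s}(r_0)\,dt$ with $A_{h_s}(r_0)=-\nu_s$; the first summand integrates to something $\le 0$ as in Lemma~\ref{lem:no-escape}, while Stokes for the $1$-form $-\nu_s\,dt$ on $S_R=S\cap\{s\ge -R\}$ gives
\[
\int_{u^{-1}(\p V_0)}(-\nu_s)\,dt \;=\; \int_{S_R}(-\p_s\nu_s)\,ds\wedge dt \;-\;\nu_{-R}\;\le\;-\nu.
\]
Altogether $0\le E(u|_S)\le -\nu-A_{K_\nu}(x_-)$, hence $A_{K_\nu}(x_-)\le -\nu<a$ by~\eqref{eq:par2}, contradicting $x_-\in FC^{(a,b)}(K_\nu)$.

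As written, your crude bound $A_{K_\nu}(x_-)\le 0$ forces the subsequent case analysis, and both branches have genuine gaps. In the constant branch, unique continuation needs the constant cylinder at $x_-$ to solve the continuation equation for \emph{all} $s$, not just on the plateau $(-\infty,s_0]$; for $s>s_0$ this requires $X_{K_s}(x_-)=X_{K_\nu}(x_-)$, which holds only because of the special smoothing choice recorded just before the lemma (``coincide up to a translation in the region $II^-$''). You never invoke this, and without it the step fails. In the nonconstant branch, you appeal to Lemma~\ref{lem:neck} at a hypersurface $\{r_1\}\times\p^-V$ lying in the linear part of the collar, where the Hamiltonians are \emph{not} constant; that lemma requires $H\equiv c$ near the stretching hypersurface, so it does not apply as stated.

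For comparison, the paper's proof is organized quite differently: it exploits exactly the special smoothing choice so that on the region beyond $II^-$ the homotopy is a mere $s$-dependent additive shift, hence $X_{K_s}=X_{K_\nu}$ there. This makes the Floer equation effectively $s$-independent on that region, so Lemma~\ref{lem:asy} and the $s$-\emph{independent} Lemma~\ref{lem:no-escape} (not neck-stretching) apply directly to dispatch the two cases.
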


\begin{proof} The only problematic relation is $I_{K_{\nu'}}\prec
II^-_{K_\nu}$. To prove it we use the fact that in the region $II^-$
the two Hamiltonians coincide up to a translation, so in this region
the homotopy is simply given by adding to the Hamiltonian $K_\nu$ some
function $\R\to [-\epsilon(\nu'-\nu),0]$ of $s$ with compactly supported
derivative. As such, the constant trajectories at the orbits in
$II^-_{K_\nu}$ solve the $s$-dependent continuation Floer equation.  

Assume there exists a continuation Floer trajectory $u:\R\times S^1\to
\wh W_F$ from some orbit $x_+=\lim_{s\to+\infty} u(s,\cdot)$ in
$I_{K_{\nu'}}$ to some orbit $x_-=\lim_{s\to-\infty} u(s,\cdot)$ in
$II^-_{K_\nu}$. By Lemma~\ref{lem:asy}, either $u$ is constant equal
to $x_-$ for very negative values of the parameter $s$, or there
exists $(s,t)\in\R\times S^1$ with $s$ very negative such that 
{\color{black}$r(u(s,t))>r_-=r(x_-(t))$.} 
In the first situation the Floer trajectory would need to be constant
equal to $x_-$ for all values of $s$ because of unique continuation
and the fact that the constant trajectory at $x_-$ solves the same
equation. This is a contradiction since $x_+\neq x_-$. In the second
situation we reach a contradiction using Lemma~\ref{lem:no-escape}, 
{\color{black}which we can apply in the $s$-independent case because the
homotopy is just given by a shift by a function of $s$ on $V\cup
W^{top}\cup[1,\infty)\times\p^+W$.}    
\end{proof}

The next lemma was used in the proof of Lemma~\ref{lem:Hnu-I,II-} as well. By Lemma~\ref{lem:no-escape-homotopies} we have a well-defined inverse system $FH^{(a,b)}_{II^-}(K_\nu)$, $\nu\to\infty$. 

\begin{lemma}\label{lem:vanishing}
$$
   \lim^{\longleftarrow}_{\scriptsize \nu\to\infty}FH^{(a,b)}_{II^-}(K_\nu) = 0.
$$ 
\end{lemma}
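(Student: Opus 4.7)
My plan is to show that for every fixed $\nu_0>0$, the continuation map $\psi_{\nu_0\nu}\colon FH^{(a,b)}_{II^-}(K_\nu)\to FH^{(a,b)}_{II^-}(K_{\nu_0})$ vanishes once $\nu$ is sufficiently larger than $\nu_0$; the vanishing of the inverse limit is then immediate. The starting point is an action computation. A generator of $FC^{(a,b)}_{II^-}(K_\nu)$ corresponds to a negatively parameterized closed Reeb orbit $\gamma$ on $\p^-V$ of period $T_\gamma>0$, sitting in the convex smoothing of $h_\nu$ near $r=1+\eps$ where $h_\nu(r)\approx -\eps\nu$. Its Hamiltonian action is $A_{K_\nu}(x_\gamma)=rh_\nu'(r)-h_\nu(r)\approx \eps\nu-(1+\eps)T_\gamma$, with error bounded by the (arbitrarily small) width of the smoothing region. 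Consequently, the generators of $FC^{(a,b)}_{II^-}(K_\nu)$ correspond precisely to Reeb orbits whose period lies in the window $W_\nu:=\bigl((\eps\nu-b)/(1+\eps),\,(\eps\nu-a)/(1+\eps)\bigr)$, which has fixed width $(b-a)/(1+\eps)$, slides linearly to $+\infty$ with $\nu$, and becomes disjoint from $W_{\nu_0}$ as soon as $\nu>\nu_0+(b-a)/\eps$.

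The central step is to prove that the chain-level continuation $\psi_{\nu_0\nu}$ restricted to the $II^-$ quotient vanishes for such $\nu\gg\nu_0$. A hypothetical Floer continuation cylinder from $x_+\in II^-(K_\nu)$ (Reeb period $T_+\in W_\nu$) to $x_-\in II^-(K_{\nu_0})$ (Reeb period $T_-\in W_{\nu_0}$) would then connect orbits associated to Reeb orbits whose periods are separated by an arbitrarily large gap. I intend to rule out such trajectories by a stretch-of-the-neck argument adapted from Lemma~\ref{lem:neck} to the $s$-dependent continuation homotopy $H_s$ convexly interpolating between $K_{\nu_0}$ and $K_\nu$. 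Choose a hypersurface $\Sigma=\{r_*\}\times\p^-V$ with $r_*$ close to $1+\eps$ in the interior of the convex smoothing, arrange $H_s$ to be $s$-independent in a neighborhood of $\Sigma$ (possible since the homotopy on the bend is controlled by only a few parameters), and stretch the almost complex structure along $\Sigma$. By the SFT compactness theorem, the putative trajectory degenerates to a broken configuration consisting of Floer pieces in the two sides of $\Sigma$, joined by $J$-holomorphic components in the symplectization $\R\times\p^-V$ along closed Reeb orbits. A careful bookkeeping of actions and Reeb periods along the various levels of the breaking then forces the periods of the connecting Reeb orbits to lie simultaneously in $W_\nu$ and $W_{\nu_0}$, contradicting $W_\nu\cap W_{\nu_0}=\varnothing$.

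The main technical obstacle is twofold: first, to extend Lemma~\ref{lem:neck} to $s$-dependent continuation homotopies in the same spirit as the $s$-dependent version of Lemma~\ref{lem:no-escape}, which requires verifying an appropriate action monotonicity at $\Sigma$; and second, to carry out the period bookkeeping across all levels of the SFT breaking, ruling out intermediate breakings whose Reeb orbits may a priori have periods outside either window. Once the chain-level vanishing $\psi_{\nu_0\nu}=0$ is established for $\nu>\nu_0+C(a,b,\eps)$, the conclusion is formal: any compatible family $\{\alpha_\nu\}$ in $\lim^{\longleftarrow}_\nu FH^{(a,b)}_{II^-}(K_\nu)$ satisfies $\alpha_{\nu_0}=\psi_{\nu_0\nu}(\alpha_\nu)=0$ for $\nu$ sufficiently large, and since $\nu_0$ is arbitrary the inverse limit vanishes.
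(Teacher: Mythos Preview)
Your action computation and the observation that the period windows $W_\nu$ and $W_{\nu_0}$ become disjoint once $\nu>\nu_0+(b-a)/\eps$ are exactly right, and this is indeed the heart of the matter. However, the route you propose for converting this into vanishing of $\psi_{\nu_0\nu}$ --- an $s$-dependent neck-stretching argument and SFT bookkeeping across multiple levels --- is both unnecessarily heavy and, as you yourself note, incomplete. The technical obstacles you flag (extending Lemma~\ref{lem:neck} to $s$-dependent homotopies, controlling the periods of intermediate Reeb orbits in the broken building) are real, and you do not resolve them.

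The paper avoids all of this with a simple factorization. Your action formula shows that the generators of $FC^{(a,b)}_{II^-}(K_\nu)$ are exactly the same Reeb orbits as the generators of $FC^{(a+\eps(\nu_0-\nu),\,b+\eps(\nu_0-\nu))}_{II^-}(K_{\nu_0})$: only the action is shifted by the constant $\eps(\nu_0-\nu)$. Because the homotopy from $K_\nu$ to $K_{\nu_0}$ is, near the region $II^-$, just a global additive shift (the bend at $1+\eps$ moves rigidly), one obtains a continuation \emph{isomorphism}
\[
FH^{(a+\eps(\nu_0-\nu),\,b+\eps(\nu_0-\nu))}_{II^-}(K_{\nu_0})\ \stackrel{\cong}{\longrightarrow}\ FH^{(a,b)}_{II^-}(K_\nu)
\]
by sliding the slope and the action window simultaneously so that no orbit crosses the window boundary. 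The map $\psi_{\nu_0\nu}$ then factors as this isomorphism followed by the tautological action-restriction map
\[
\pi\colon FH^{(a+\eps(\nu_0-\nu),\,b+\eps(\nu_0-\nu))}_{II^-}(K_{\nu_0})\longrightarrow FH^{(a,b)}_{II^-}(K_{\nu_0}),
\]
and $\pi=0$ once $b+\eps(\nu_0-\nu)<a$, i.e.\ once the two action intervals on the \emph{same} Hamiltonian $K_{\nu_0}$ are disjoint. No analysis of continuation trajectories between distinct Hamiltonians is needed; the vanishing happens entirely inside the filtration structure of a single Floer complex.
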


\begin{proof}
For $\nu'>\nu$, generators of $FC^{(a,b)}_{II^-}(K_{\nu'})$ correspond to closed Reeb orbits $\gamma$ on $\p^-V$ with Hamiltonian action satisfying
$$
   A_{K_{\nu'}}(\gamma) = -(1+\eps)\Bigl(\int_\gamma\lambda\Bigr) + \eps\nu' \in (a,b). 
$$
Since this condition is equivalent to
$$
   A_{K_{\nu}}(\gamma) = -(1+\eps)\Bigl(\int_\gamma\lambda\Bigr) + \eps\nu \in (a+\eps(\nu-\nu'),b+\eps(\nu-\nu')),
$$
we see that the same Reeb orbits also correspond to generators of the Floer chain group $FC^{(a+\eps(\nu-\nu'),b+\eps(\nu-\nu'))}_{II^-}(K_{\nu})$. Varying the slope continuously from $\nu'$ to $\nu$, we obtain a continuation isomorphism between these two groups fitting into the commuting diagram
\begin{equation*}
\xymatrix{
   FH^{(a+\eps(\nu-\nu'),b+\eps(\nu-\nu'))}_{II^-}(K_{\nu}) \ar[r]^-{\cong} \ar[rd]^-{\pi} & FH^{(a,b)}_{II^-}(K_{\nu'}) \ar[d]^-{\psi_{\nu\nu'}} \\
   & FH^{(a,b)}_{II^-}(K_{\nu}).
}
\end{equation*}
That the horizontal map is an isomorphism follows from the fact that
the Hamiltonian is deformed outside a compact set only by a global shift by a constant, and from the
fact that there are no orbits that cross the boundary of the moving
action window during the homotopy. The horizontal map can be expressed
as a composition of small-time continuation maps induced by homotopies
for fixed action windows, which are isomorphisms since each of these
homotopies can be followed backwards, and of tautological isomorphisms
given by shifting the action window by some small amount in the
complement of the action spectrum.  

Now if $b+\eps(\nu-\nu')<a$, then the intervals $[a+\eps(\nu-\nu'),b+\eps(\nu-\nu')]$ and $[a,b]$ do not overlap and thus the projection $\pi$ vanishes in homology. Hence the Floer chain map $\psi_{\nu\nu'}$ vanishes whenever $\nu'-\nu>(b-a)/\eps$, from which the lemma follows. 
\end{proof}

\begin{proposition}[excision for filtered symplectic homology]\label{prop:excision}
Let $(W,V)$ be a pair of Liouville cobordisms with filling and consider parameters $-\infty<a<b<\infty$. There is a short exact sequence 
$$
0\to SH^{(a,b)}_*(W^{top},\p^+V)\to SH^{(a,b)}_*(W,V)\to SH^{(a,b)}_*(W^{bottom},\p^-V)\to 0.
$$
Moreover, this short exact sequence splits canonically, so that we have a canonical isomorphism
$$
SH^{(a,b)}_*(W,V)\cong SH^{(a,b)}_*(W^{top},\p^+V)\oplus SH^{(a,b)}_*(W^{bottom},\p^-V). 
$$
\end{proposition}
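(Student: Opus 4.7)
The strategy is to exploit the chain-level decomposition provided by Lemma~\ref{lem:order} and to pass to inverse limits using the two identification lemmas~\ref{lem:Hnu-III} and~\ref{lem:Hnu-I,II-} together with the vanishing Lemma~\ref{lem:vanishing}. Fix $-\infty<a<0<b<\infty$ and parameters $\mu,\tau,\delta,\eps$ satisfying~\eqref{eq:par2}. Lemma~\ref{lem:order} asserts that for every $\nu$ the Floer complex $FC^{(a,b)}(H_\nu)$ is supported on orbits in the three classes $III$, $I$, $II^-$, and that the Floer differential takes the upper-triangular form~\eqref{eq:upper-triang}. In particular both $FC^{(a,b)}_{III}(H_\nu)$ and $FC^{(a,b)}_I(H_\nu)$ are subcomplexes, their sum $FC^{(a,b)}_{III,I}(H_\nu)$ is a subcomplex with diagonal differential, and the quotient $FC^{(a,b)}(H_\nu)/FC^{(a,b)}_{III,I}(H_\nu)$ is $FC^{(a,b)}_{II^-}(H_\nu)$ with the induced differential.

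Next I pass to the short exact sequence~\eqref{eq:filt}, apply homology, and take the inverse limit over $\nu\to\infty$. This limit is exact in our setting since we work over a field $\mathfrak{k}$ and every $FH^{(a,b)}$ is finite dimensional, so we obtain a six-term long exact sequence. Lemma~\ref{lem:Hnu-III} identifies the term coming from the subcomplex with $SH^{(a,b)}(W^{top},\p^+V)$, while Lemma~\ref{lem:Hnu-I,II-} identifies the term coming from the quotient with $SH^{(a,b)}(W^{bottom},\p^-V)$. The middle term $\lim_\nu FH^{(a,b)}(H_\nu)$ becomes $SH^{(a,b)}(W,V)$ after the further direct limits over $\mu,\tau\to\infty$, which stabilize because increasing these slopes adds only orbits outside the action window $(a,b)$.

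It remains to show that the connecting morphism in the resulting six-term sequence vanishes, yielding the short exact sequence of the statement, and that this exact sequence splits canonically. The key observation is that $FC^{(a,b)}_I(H_\nu)$ is itself a subcomplex of $FC^{(a,b)}(H_\nu)$, and that the component $\p_{III,I}$ of the differential vanishes by the upper-triangular form. The proof of Lemma~\ref{lem:Hnu-I,II-} --- specifically the transition from the family $H_\nu$ to the deformed family $K_\nu$ and the application of Lemma~\ref{lem:vanishing} --- shows that after passing to the inverse limit every class in $SH^{(a,b)}(W^{bottom},\p^-V)$ admits a representative lying in the subcomplex $FC^{(a,b)}_I$. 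Such a representative, viewed inside $FC^{(a,b)}(H_\nu)$, has zero image under $\p_{III,I}$, so the connecting morphism annihilates it; moreover, the chain-level inclusion $FC^{(a,b)}_I\hookrightarrow FC^{(a,b)}$ induces, in the inverse limit, a canonical section
$$
SH^{(a,b)}(W^{bottom},\p^-V)\longrightarrow SH^{(a,b)}(W,V)
$$
of the quotient map, which yields the canonical splitting.

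The main obstacle will be the strict compatibility of the finer decomposition $FC^{(a,b)}=FC^{(a,b)}_{III}\oplus FC^{(a,b)}_I\oplus FC^{(a,b)}_{II^-}$ with the continuation maps $\phi_{\nu\nu'}$ of the inverse system. For the family $H_\nu$ only the coarser filtration by $FC^{(a,b)}_{III}$ is \emph{a priori} preserved, which is why the passage through the deformed family $K_\nu$ --- where Lemma~\ref{lem:no-escape-homotopies} ensures that continuation maps preserve the full decomposition and Lemma~\ref{lem:vanishing} provides the needed vanishing of $II^-$ in the limit --- is unavoidable. Once these compatibilities are carefully tracked through the inverse limit (combining Lemmas~\ref{lem:no-escape},~\ref{lem:neck}, and~\ref{lem:no-escape-homotopies}), the splitting becomes canonical because it is induced by the inclusion of a specific subcomplex rather than by an arbitrary choice of section.
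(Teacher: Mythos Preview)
Your proposal is correct and follows essentially the same approach as the paper's proof: you use the short exact sequence~\eqref{eq:filt}, pass to the inverse limit over $\nu$ (exact on finite-dimensional vector spaces), invoke Lemmas~\ref{lem:Hnu-III} and~\ref{lem:Hnu-I,II-} to identify the outer terms, and then argue that the connecting morphism vanishes and the sequence splits canonically because classes in $SH^{(a,b)}(W^{bottom},\p^-V)$ admit representatives supported on orbits of type $I$, on which the $III$-component of the differential vanishes. Your final paragraph correctly identifies the one delicate point---compatibility of the finer decomposition with the $H_\nu$-continuation maps---and correctly locates its resolution in the passage through the $K_\nu$ family and Lemmas~\ref{lem:no-escape-homotopies} and~\ref{lem:vanishing}; the paper handles the splitting by the closely related assertion that the $\phi_{\nu\nu'}$ preserve $I\prec III$, which is what makes the sequence of $I$-supported cycles descend to an element of the inverse limit $\lim_\nu FH^{(a,b)}(H_\nu)$.
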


\begin{proof} We fix the parameters $0<\delta,\eps<1$ and $\mu,\tau>0$ such that the first
two conditions in~\eqref{eq:par2} hold, and we work with the family of Hamiltonians $H_\nu=H_{\mu,\nu,\tau}$, $\nu\to\infty$ discussed above. Then 
$$
\lim^{\longleftarrow}_{\scriptsize \nu\to\infty}FH^{(a,b)}_*(H_\nu) \cong SH^{(a,b)}_*(W, V)
$$
by definition. The short exact sequence of inverse
systems~\eqref{eq:filt} determines an inverse system of homology exact
triangles in which each term is a finite dimensional vector space. In
this case the inverse limit preserves exactness and we obtain using
Lemmas~\ref{lem:Hnu-III} and~\ref{lem:Hnu-I,II-} an exact triangle  
$$
\xymatrix
@C=20pt
{
SH^{(a,b)}_*(W^{top},\p^+V) \ar[rr] & & 
SH^{(a,b)}_*(W,V)  \ar[dl] \\ & SH^{(a,b)}_*(W^{bottom},\p^-V) \ar[ul]^{[-1]}\;.  
}
$$

The proof of Lemma~\ref{lem:Hnu-I,II-} shows that each class in
$SH^{(a,b)}_*(W^{bottom},\p^-V)$ is represented by a sequence 
{\color{black}(indexed by $\nu$ and representing an element of the inverse limit)} 
of classes in $FH^{(a,b)}_{I,II^-}(H_\nu)$ which are each
represented by a cycle that is a linear combination of orbits in
$I_{H_\nu}$. Indeed, the proof provides such a representative by a
cycle in $FC^{(a,b)}_I(K_\nu)$, and we have
$FC^{(a,b)}_I(K_\nu)=FC^{(a,b)}_I(H_\nu)$; on the other hand, since
$I_{H_\nu}\prec II^-_{H_\nu}$ as already seen
in~\eqref{eq:upper-triang}, this continues to be a cycle in
$FC^{(a,b)}_{I,II^-}(H_\nu)$.  

To prove the existence of the short exact sequence in the statement we use that the degree $-1$ connecting map $FH^{(a,b)}_{I,II^-}(H_\nu) \to FH^{(a,b)}_{III}(H_\nu)$ vanishes on elements of $I_{H_\nu}$ by~\eqref{eq:upper-triang}. Thus the connecting map in the above exact triangle vanishes, and the latter becomes the short exact sequence 
$$
0\to SH^{(a,b)}_*(W^{top},\p^+V)\to SH^{(a,b)}_*(W,V)\to SH^{(a,b)}_*(W^{bottom},\p^-V)\to 0.
$$

To prove the existence of a canonical splitting for this exact
sequence we use again that $I\prec III$ for $H_\nu$. Thus a cycle in $FC^{(a,b)}_{I,II^-}(H_\nu)$ which is a linear combination of orbits in $I_{H_\nu}$ is canonically also a cycle in $FC^{(a,b)}(H_\nu)$. The splitting $SH^{(a,b)}_*(W^{bottom},\p^-V)\to SH^{(a,b)}_*(W,V)$ associates to each class, represented by a sequence of classes of cycles in $FC^{(a,b)}_{I,II^-}(H_\nu)$ which are linear combinations of orbits in $I_{H_\nu}$, the sequence of classes represented by the same cycles viewed in $FC^{(a,b)}(H_\nu)$. The latter represents indeed an element in the inverse limit of $FH^{(a,b)}(H_\nu)$, $\nu\to\infty$ because the continuation maps $\phi_{\nu\nu'}:FC^{(a,b)}(H_{\nu'})\to FC^{(a,b)}(H_\nu)$ preserve the relation $I\prec III$. 
\end{proof}

Taking limits over $a$ and $b$, Proposition~\ref{prop:excision} implies

\begin{theorem}[excision]\label{thm:excision}
Let $(W,V)$ be a pair of Liouville cobordisms with filling.   
Then for each flavour $\heartsuit$ we have canonical isomorphisms 
$$
   SH_*^\heartsuit(W,V) \cong SH_*^\heartsuit(W^{bottom},\p^-V) \oplus SH_*^\heartsuit(W^{top},\p^+V). 
$$
\hfill{$\square$}
\end{theorem}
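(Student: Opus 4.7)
The plan is to deduce Theorem~\ref{thm:excision} from the filtered excision statement of Proposition~\ref{prop:excision} by passing to the direct/inverse limits over the action endpoints $a$ and $b$ that define each flavour $SH_*^\heartsuit$ in Definition~\ref{defi:SH(W)} (applied to pairs via Definition~\ref{defi:SHWV}). The key observation is that all six flavours are obtained from the two-parameter family $\{SH_*^{(a,b)}(W,V)\}$ by the same limit procedures used for $SH_*^\heartsuit(W^{top},\p^+V)$ and $SH_*^\heartsuit(W^{bottom},\p^-V)$, so once we know that the canonical splitting from Proposition~\ref{prop:excision} is natural with respect to the action window, we can simply take the limits on both sides.

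First, I would verify the naturality claim: for any $a\le a'$ and $b\le b'$ in the appropriate range, the canonical splitting
$$
SH_*^{(a,b)}(W,V)\cong SH_*^{(a,b)}(W^{top},\p^+V)\oplus SH_*^{(a,b)}(W^{bottom},\p^-V)
$$
commutes with the structure maps induced by shifting the action window. This is built into the construction: the splitting in the proof of Proposition~\ref{prop:excision} is defined at the level of the inverse systems $\{FH_*^{(a,b)}(H_\nu)\}_\nu$ by exploiting the block-triangular form of the Floer differential coming from Lemma~\ref{lem:order}, and the inclusions/projections between different action windows are just induced by the tautological maps $FC^{(a,b)}(H_\nu)\to FC^{(a',b')}(H_\nu)$, which obviously preserve the subcomplex generated by orbits of type $III$ and the quotient generated by orbits of types $I$ and $II^-$.

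Next I would take the appropriate limits. Since we use field coefficients and the filtered groups $SH_*^{(a,b)}$ are finite-dimensional (as observed after Proposition~\ref{prop:taut-triang-WV}), the inverse limit functor is exact when applied to these systems, and the direct limit functor is always exact. In particular, both direct and inverse limits commute with finite direct sums. Applying the appropriate double limit to the split short exact sequence of Proposition~\ref{prop:excision}, as prescribed by the symbol $\heartsuit$, we obtain
$$
SH_*^\heartsuit(W,V)\cong SH_*^\heartsuit(W^{top},\p^+V)\oplus SH_*^\heartsuit(W^{bottom},\p^-V),
$$
and the isomorphism is canonical because the splitting already was.

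The main obstacle I anticipate is administrative rather than conceptual: one must carefully check that the same cofinal families of Hamiltonians and the same continuation maps that define $SH_*^\heartsuit(W,V)$ also compute, via the projection to type-$III$ orbits and the inclusion of type-$I$ orbits, the groups $SH_*^\heartsuit(W^{top},\p^+V)$ and $SH_*^\heartsuit(W^{bottom},\p^-V)$ respectively, and that this identification is compatible with all the action-window shifts used in the six flavours. This compatibility is essentially the content of Lemmas~\ref{lem:Hnu-III} and~\ref{lem:Hnu-I,II-}, so no new analytic input is required beyond what has already been established in~\S\ref{sec:excision}; one only needs to keep track of the direction of each limit as dictated by $\heartsuit\in\{\varnothing,>0,\ge 0,=0,\le 0,<0\}$.
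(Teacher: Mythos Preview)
Your proposal is correct and follows exactly the paper's approach: the paper's proof is literally the one-liner ``Taking limits over $a$ and $b$, Proposition~\ref{prop:excision} implies'' Theorem~\ref{thm:excision}. Your more detailed discussion of naturality of the splitting with respect to action-window shifts and of the exactness properties of the limit functors simply spells out what the paper leaves implicit.
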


}

In Proposition~\ref{prop:excision} and Theorem~\ref{thm:excision} we
allow $W^{bottom}$ or $W^{top}$ to be empty, in which case the 
corresponding term is not present in the diagram. In particular,
taking $V$ to be a collar neighbourhood of some boundary components we
obtain  

\begin{corollary}\label{cor:tub-nbhd}
Given a Liouville cobordism $W$ and an admissible union of connected
components $A\subset \p W$, we have  
$$
SH_*^\heartsuit(W,A)\cong SH_*^\heartsuit(W,I\times A),
$$
where $I\times A$ is a collar neighborhood of $A$ in $W$ which we view
as a trivial cobordism, so that $(W,I\times A)$ is a Liouville
pair. 
\hfill$\square$
\end{corollary}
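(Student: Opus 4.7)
The strategy is to deduce the corollary from the Excision Theorem~\ref{thm:excision} together with Liouville homotopy invariance (Proposition~\ref{prop:invariance-SHWV}). Write $A=A^-\sqcup A^+$ with $A^\pm=A\cap\p^\pm W$, and set $V:=I\times A$. In the pair framework of~\S\ref{sec:SHpair} (extended to the multilevel setting of~\S\ref{sec:multilevel} if $A$ meets both $\p^- W$ and $\p^+ W$), the triple $(W,V)$ is a legitimate Liouville pair, and the two components $I\times A^\pm$ of $V$ abut $\p W$, so that in any decomposition $W=W^{bottom}\circ V\circ W^{top}$ the slots adjacent to these components are empty.

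First I would apply excision, iterated across $I\times A^-$ and $I\times A^+$ if necessary, to obtain
$$
SH_*^\heartsuit(W,I\times A)\ \cong\ SH_*^\heartsuit(W',A'),
$$
where $W':=\overline{W\setminus V}$ is the ``bulk'' of $W$ and $A'\subset\p W'$ is the union of the inner ends of the collar $V$, canonically contactomorphic to $A$. All other summands produced by excision are symplectic homologies of empty cobordisms and therefore vanish. Next I would observe that $(W',A')$ is obtained from $(W,A)$ simply by flowing inward along the Liouville vector field for a short time, and therefore the two pairs are Liouville isomorphic; Proposition~\ref{prop:invariance-SHWV} then gives
$$
SH_*^\heartsuit(W',A')\ \cong\ SH_*^\heartsuit(W,A),
$$
and composing with the excision isomorphism above yields the stated identification.

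The main obstacle is essentially formal rather than analytic: when $A$ has components in both $\p^-W$ and $\p^+W$, one must phrase both $(W,V)$ and the excision step in the multilevel language of~\S\ref{sec:multilevel} and verify that iterated excision really does collapse all non-bulk summands to zero. Once this bookkeeping is in hand, the argument calls only on invariance results already established and requires no new instance of the confinement lemmas of~\S\ref{sec:confinement}; all the delicate analytic work has been packaged inside Theorem~\ref{thm:excision}.
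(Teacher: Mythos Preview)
Your proposal is correct and follows essentially the same approach as the paper: the corollary is stated there with a $\square$ as an immediate consequence of Theorem~\ref{thm:excision} applied with $V$ a collar neighbourhood of $A$, noting that the empty side of the decomposition contributes nothing. You have made explicit the two points the paper leaves implicit---the use of Liouville homotopy invariance to identify $(W',A')$ with $(W,A)$, and the need for the multilevel framework of~\S\ref{sec:multilevel} when $A$ meets both $\p^-W$ and $\p^+W$---which is a fair elaboration of what the paper compresses into one sentence.
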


This is the precise sense in which Definitions~\ref{defi:SHWA}
and~\ref{defi:SHWV} are compatible. 

In order to make the excision theorem resemble the one in algebraic topology,
we introduce the following notion. 

\begin{definition}
A 
{\color{black}\emph{Liouville cobordism triple}} 
$(W,V,U)$ consists of three Liouville cobordisms $U\subset V\subset W$
such that $(W,V)$ and $(V,U)$ are Liouville cobordism pairs.   
{\color{black}A {\em filling} of a Liouville cobordism triple is a filling of $W$,
which induces fillings of $V$ and $U$ in the obvious way.}
\end{definition}

Then we have

\begin{theorem}[excision for triples] \label{thm:excision-triple} 
Let $(W,V,U)$ be a 
{\color{black}filled Liouville cobordism triple.} 
Then for each flavour $\heartsuit$ we have canonical isomorphisms  
$$
SH_*^\heartsuit(W,V)\cong SH_*^\heartsuit(\overline{W\setminus U},\overline{V\setminus U})\,.
$$
\end{theorem}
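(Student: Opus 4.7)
The proof will proceed by applying Theorem~\ref{thm:excision} to both sides of the claimed isomorphism and matching the resulting direct sum decompositions. On the left, excision for the pair $(W, V)$ with $W = W^{bottom}\circ V\circ W^{top}$ gives a canonical isomorphism
$$SH_*^\heartsuit(W, V) \cong SH_*^\heartsuit(W^{bottom}, \p^-V) \oplus SH_*^\heartsuit(W^{top}, \p^+V),$$
with $W^{bottom}$ filled by $F$ and $W^{top}$ filled by $F\circ W^{bottom}\circ V$.

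On the right, we write $V = V^{bottom}\circ U\circ V^{top}$ so that $\overline{W\setminus U}$ is the disjoint union of the two single-level Liouville cobordisms $W^{bottom}\circ V^{bottom}$ and $V^{top}\circ W^{top}$, while $\overline{V\setminus U} = V^{bottom}\sqcup V^{top}$. This is naturally a two-level Liouville cobordism pair in the sense of \S\ref{sec:multilevel}, filled by $F\sqcup U$, whose symplectic homology decomposes as a direct sum over its two disjoint components, each being a standard single-level pair equipped with the filling induced by the multilevel filling. Applying the single-level Theorem~\ref{thm:excision} to each component---noting that $V^{bottom}$ sits at the positive boundary of $W^{bottom}\circ V^{bottom}$, so the ``top'' summand in excision vanishes, and symmetrically that $V^{top}$ sits at the negative boundary of $V^{top}\circ W^{top}$---we obtain a canonical isomorphism
$$SH_*^\heartsuit(\overline{W\setminus U}, \overline{V\setminus U}) \cong SH_*^\heartsuit(W^{bottom}, \p^-V^{bottom}) \oplus SH_*^\heartsuit(W^{top}, \p^+V^{top}).$$
The filling of $W^{bottom}$ here is again $F$, while the filling of $W^{top}$ is the Liouville domain $(F\circ W^{bottom}\circ V^{bottom}\circ U)\circ V^{top} = F\circ W^{bottom}\circ V$, matching the first decomposition exactly.

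Combining the two isomorphisms and using $\p^-V^{bottom}=\p^-V$, $\p^+V^{top}=\p^+V$ produces the desired canonical isomorphism. The main bookkeeping obstacle is to verify the multilevel framework of \S\ref{sec:multilevel}---in particular, that the symplectic homology of a pair of multilevel Liouville cobordisms splits as the direct sum of the symplectic homologies of its disjoint levels, and that the fillings induced on each level agree with those produced by excision on the left-hand side. Identifying the resulting canonical isomorphism with the transfer map $i_!$ induced by the inclusion $(\overline{W\setminus U}, \overline{V\setminus U}) \hookrightarrow (W, V)$, as required for the Eilenberg--Steenrod formulation in Theorem~\ref{thm:ESSH}(iii), is then immediate from the construction of both the transfer map and the excision isomorphism as limits of compatible Floer continuation maps, which can be chosen coherently across all applications of Theorem~\ref{thm:excision} involved.
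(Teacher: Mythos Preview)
Your proof is correct and follows essentially the same approach as the paper: both sides are decomposed via Theorem~\ref{thm:excision} into $SH_*^\heartsuit(W^{bottom},\p^-V)\oplus SH_*^\heartsuit(W^{top},\p^+V)$ and then matched. You are somewhat more explicit than the paper about the multilevel bookkeeping and the matching of fillings, and your final paragraph identifying the isomorphism with the transfer map $i_!$ anticipates a separate proposition that the paper states later rather than including it in this proof.
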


Here if some boundary component $A$ of $V$ and $U$ coincides, then the
homology on the right hand side has to be understood relative to $A$. 
{\color{black} (Alternatively, one can use Proposition~\ref{prop:triv-cob} below 
to move $U$ into the interior of $V$ and avoid this situation.)}
Also, if $\overline{W\setminus U}$ contains both a bottom and an upper
part then the right hand side has to be understood  
{\color{black}according to Section~\ref{sec:multilevel}} 
as a direct sum, as in the statement of Theorem~\ref{thm:excision}. 

\begin{proof}
Let us write 
$$
   \ol{W\setminus V} = W^{bottom}\amalg W^{top},\qquad 
   \ol{V\setminus U} = V^{bottom}\amalg V^{top}. 
$$
Then 
$$
   \ol{W\setminus U} = (W^{bottom}\cup V^{bottom})\amalg (W^{top}\cup V^{top}) 
$$
and we find
\begin{align*}
   SH_*^\heartsuit(\ol{W\setminus U},\ol{V\setminus U}) 
   &= SH_*^\heartsuit(W^{bottom}\cup V^{bottom},V^{bottom}) \oplus
   SH_*^\heartsuit(W^{top}\cup V^{top},V^{top}) \cr
   &\cong SH_*^\heartsuit(W^{bottom},\p^-V) \oplus
   SH_*^\heartsuit(W^{top},\p^+V) \cr
   &\cong SH_*^\heartsuit(W,V), 
\end{align*}
{\color{black} where the first equality is the definition and the other two
isomorphisms follow from Theorem~\ref{thm:excision}.} 
\end{proof}

\pagebreak

\section{The exact triangle of a pair of filled Liouville cobordisms} \label{sec:exact-triangle-pair}

The main result of this section is

\begin{theorem}[exact triangle of a pair]\label{thm:exact-triangle-pair}
For each filled Liouville cobordism pair $(W,V)$ and $\heartsuit\in\{\varnothing,\ge 0, >0,=0,\le 0,<0\}$ there exist exact triangles 
\begin{equation*} 
\xymatrix
@C=10pt
@R=18pt
{
SH_*^\heartsuit(W,V) \ar[rr] & & 
SH_*^\heartsuit(W) \ar[dl] \\ & SH_*^\heartsuit(V) \ar[ul]^{[-1]}  
}
\end{equation*}
and
\begin{equation*}
\xymatrix
@C=10pt
@R=18pt
{
SH^*_\heartsuit(W,V) \ar[dr]_{[+1]} & & 
SH^*_\heartsuit(W) \ar[ll] \\ & SH^*_\heartsuit(V) \ar[ur]   
}
\end{equation*}
These triangles are functorial with respect to inclusions of Liouville pairs. 
\end{theorem}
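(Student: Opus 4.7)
The strategy is to realize the symplectic homology of the pair $SH_*^\heartsuit(W,V)$, up to a shift in degree, as the homological mapping cone of the transfer map $f_!^\heartsuit:SH_*^\heartsuit(W)\to SH_*^\heartsuit(V)$ constructed in~\S\ref{sec:transfer}. Once such an identification $H_*(C(f_!^\heartsuit))\cong SH_{*-1}^\heartsuit(W,V)$ is in place, the desired exact triangle is the homology long exact sequence of the distinguished triangle of a mapping cone (see~\S\ref{sec:triangles}).

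First I would work at the chain level. By Lemma~\ref{lem:SHV-alternate-direct-limit}, $SH_*^\heartsuit(V)$ may be computed as a direct/inverse limit of Floer complexes for Hamiltonians $H\in\cH^W(V;F)$ defined on the ambient completion $\wh W_F$, and Definition~\ref{defi:transfer-map} presents $f_!^\heartsuit$ as the same limit of Floer continuation maps $f_{HK}:FC^{(a,b)}(K)\to FC^{(a,b)}(H)$ for $K\in\cH(W;F)$ with $K\le H$. The mapping cone $C(f_{HK})$ then makes sense in every finite action window. The next, central, step would be to compare $C(f_{HK})$ with the chain complex computing $SH_*^\heartsuit(W,V)$. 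Given such a pair $(K,H)$, one constructs as in~\S\ref{sec:excision} a staircase Hamiltonian $\tilde H=\tilde H_{\mu,\nu,\tau}\in\cH(W,V;F)$ (Figure~\ref{fig:H-cob}) interpolating the slopes of $K$ at infinity with the shape of $H$ near $V$. Applying the confinement lemmas of~\S\ref{sec:confinement} in the spirit of Lemma~\ref{lem:order} produces an upper-triangular structure on $FC^{(a,b)}(\tilde H)$ whose associated subquotients identify canonically with $FC^{(a,b)}(H)$ and with a shift of $FC^{(a,b)}(K)$, and whose connecting differential is homotopic to $f_{HK}$. This yields a chain-level quasi-isomorphism $FC^{(a,b)}(\tilde H)[1]\simeq C(f_{HK})$ in each finite action window, separately for each flavor $\heartsuit$ (the truncations by action commuting with the decomposition).

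I would then pass to the limits by invoking the formalism of mapping cones in doubly directed systems developed in~\S\ref{sec:directed-systems}--\S\ref{sec:Floer-continuation}. Higher homotopy invariance of Floer continuation maps (Lemma~\ref{lem:continuation_maps}) allows verification of condition~$(\tilde{\mathrm{B}})$ for the relevant systems of Hamiltonians and continuation maps, so that the cone construction descends coherently to the direct and inverse limits of Definition~\ref{defi:SH(W)}; exactness of the inverse limit is ensured by the finite-dimensionality of the filtered Floer groups over the field $\mathfrak{k}$. Functoriality of the resulting triangle under inclusions of filled pairs follows from functoriality of the transfer map (Proposition~\ref{prop:func-transfer}) together with naturality of the cone under homotopy-commutative squares (Proposition~\ref{prop:diag}). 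The cohomological triangle is obtained by the same scheme applied to the dualized Floer complexes of~\S\ref{sec:coh-and-duality}, the only difference being the reversal of the direction of arrows and the corresponding swap of direct and inverse limits.

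The main obstacle is the chain-level identification $FC^{(a,b)}(\tilde H)[1]\simeq C(f_{HK})$, together with its compatibility with the cofinal families involved. The orbit classes of the staircase Hamiltonian do not in general form honest subcomplexes; the upper-triangular decomposition of Lemma~\ref{lem:order} only holds after carefully calibrating the parameters $\mu,\nu,\tau$ against the action window, stretching the neck of the almost complex structure across $\p^+V$, and applying the three confinement tools in combination. One must therefore select interpolating staircases and auxiliary homotopies consistently across all the cofinal families that define $SH_*^\heartsuit$, and appeal to the higher homotopy invariance of Lemma~\ref{lem:continuation_maps} to ensure that the resulting cone isomorphism is independent of these choices up to coherent higher homotopy; this is precisely the point where the machinery of~\S\ref{sec:cones} replaces what would otherwise be a large and fragile set of diagram chases.
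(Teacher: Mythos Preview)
Your overall strategy---identify $SH_*^\heartsuit(W,V)[-1]$ with the homology of the cone of the chain-level transfer map, then read off the exact triangle from the distinguished triangle of a cone---is exactly the paper's approach, and your invocation of the doubly-directed-system machinery of~\S\ref{sec:cones} and Lemma~\ref{lem:continuation_maps} is on target. The functoriality and the cohomological version follow as you say.

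However, the central technical step is misidentified. The staircase Hamiltonian $\tilde H\in\cH(W,V;F)$ of Figure~\ref{fig:H-cob} does \emph{not} have subquotients canonically identified with $FC^{(a,b)}(H)$ and $FC^{(a,b)}(K)[-1]$: its orbit groups $I,II,III$ do not coincide with those of $K$ (admissible for $W$) together with those of $H$ (admissible for $V$), and the actions of the $II$ orbits diverge as $\nu\to\infty$. What the paper does instead (Proposition~\ref{prop:SHrel-dynamical-cone}) is to work with the \emph{one-step} Hamiltonians $H_{i,j}$ of Figure~\ref{fig:H-one-step}, adapted to $V$ inside $\wh W_F$, together with the Hamiltonians $L_{i,j}$ of Figure~\ref{fig:L} adapted to $W$. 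The key geometric input is that the continuation map $FC_I(L)\to FC_{I,II,III}(H)$ is a \emph{homotopy equivalence}, established via a moving family $H^t$ (Figure~\ref{fig:L}) and moving action windows. Lemma~\ref{lem:ip}(ii) then identifies the cone of the composite $f:FC_I(L)\to FC_{I,II,III}(H)\to FC_{II}(H)$ with $FC_{I,III}(H)[-1]$. The link to $SH_*^\heartsuit(W,V)$ is made separately in Corollary~\ref{cor:SHWVgeometric-WV}, via the Excision Theorem~\ref{thm:excision} and Lemmas~\ref{lem:SHWVgeometric-WV-bottom-top} identifying $FH_{I^\heartsuit}$ and $FH_{III^\heartsuit}$ with the relative groups of $W^{bottom}$ and $W^{top}$.

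A second point you underestimate: the identification $C(f^\heartsuit)\cong FC_{(I,III)^\heartsuit}[-1]$ is \emph{not} uniform in $\heartsuit$. The paper treats $\heartsuit=\varnothing$, $=0$, and $<0$ by genuinely different arguments (the last requiring an auxiliary map $\varphi$ and a grid diagram), then derives the remaining three flavors formally from these via Proposition~\ref{prop:diag}. The paper in fact remarks (end of the Introduction's discussion of methods) that a direct action-filtration argument works for $\heartsuit=\varnothing$ but fails for the truncated versions; your sketch does not acknowledge this difficulty.
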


This theorem will be proved in Section~\ref{sec:ex-triangle-pair} below.

\subsection{Cofinal families of Hamiltonians} \label{sec:adapted} 
{\color{black}
As a preparation, we now recast the definition of the symplectic homology groups $SH_*^\heartsuit(W)$, $SH_*^\heartsuit(V)$ and of the transfer map $f_!^\heartsuit:SH_*^\heartsuit(W)\to SH_*^\heartsuit(V)$ at chain level in terms of some carefully chosen cofinal families of Hamiltonians. This will allow us to further express the relative symplectic homology groups $SH_*^\heartsuit(W,V)$ in terms of the cone construction. 

Let $(W,V)$ be a Liouville pair with filling $F$.

{\bf Notational convention.} Let $P$, $Q$ denote sets of $1$-periodic
orbits of a given Hamiltonian $H$. Recall that we write $Q<P$
if all the orbits in group $Q$ have strictly smaller action than all
the orbits in group $P$, and we write $Q\prec P$ 
if there is no Floer trajectory for $H$ asymptotic at the positive puncture to an orbit in $Q$ and asymptotic at the negative puncture to an orbit in $P$. This implies that the expression of the Floer boundary operator on any orbit in $Q$ does not contain any element in $P$. It is understood that the Floer equation involves some almost complex structure which is not specified in the notation.  

Similarly, given two Hamiltonians $H_\pm$ and a homotopy $H_s$, $s\in \R$ equal to $H_\pm$ near $\pm\infty$, and given sets of $1$-periodic orbits $P_{H_\pm}$ for $H_\pm$, we write 
$$
P_{H_+}\prec P_{H_-}
$$
if there is no Floer continuation trajectory for the homotopy $H_s$ asymptotic at the positive puncture to an orbit in $P_{H_+}$ and asymptotic at the negative puncture to an orbit in $P_{H_-}$. This implies that the expression of the Floer continuation map on any orbit in $P_{H_+}$ does not contain any element in $P_{H_-}$. Here it is again understood that the Floer continuation equation involves some almost complex structure which is not specified in the notation. 
In the previous context, we write 
$$
P_{H_+}<P_{H_-}
$$
if the $H_+$-action of any orbit in $P_{H_+}$ is smaller than the $H_-$-action of any orbit in $P_{H_-}$. This implies $P_{H_+}\prec P_{H_-}$ if $H_+\le H_-$ and the homotopy $H_s$ is non-increasing with respect to the $s$-variable. 

Given $c\in \R$, we write 
$$
P_{H_+}< P_{H_-} - c
$$
if the difference between the $H_+$-action of any orbit in $P_{H_+}$
and the $H_-$-action of any orbit in $P_{H_-}$ is smaller than $-c$. 

{\color{black}
\begin{lemma}\label{lem:gap}
Consider Hamiltonians $H_+\ge H_-$ and a homotopy $H_s$ which is a
convex interpolation between $H_+$ and $H_-$ given by a non-decreasing
$s$-dependent function, i.e., $H_s= H_-+f(s)(H_+-H_-)$ with $f:\R\to[0,1]$, $f'\ge
0$, $f=0$ near $-\infty$, $f=1$ near $+\infty$. Then $P_{H_+} <
P_{H_-} -\|H_+-H_-\|_\infty$ implies $P_{H_+}\prec P_{H_-}$. 
\end{lemma}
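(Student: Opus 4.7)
The plan is to derive an a priori lower bound on $A_{H_+}(x_+)-A_{H_-}(x_-)$ valid for every solution of the $s$-dependent Floer equation between asymptotes in $P_{H_\pm}$, and then to observe that the action gap postulated in the hypothesis is incompatible with this bound, so that no such trajectory can exist.

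The first step is the standard action identity for a solution $u:\R\times S^1\to\wh W$ of $\p_s u+J_s(u)(\p_t u-X_{H_s}(u))=0$ with $u(\pm\infty,\cdot)=x_\pm$. With the paper's conventions (so that $u$ is a positive gradient line of $A_{H_s}$) it reads
\[
   A_{H_+}(x_+)-A_{H_-}(x_-) \;=\; \int_{\R\times S^1}|\p_s u|^2\,ds\,dt \;-\; \int_{\R\times S^1}(\p_s H_s)(u)\,ds\,dt,
\]
where the second term records the $s$-dependence of $H_s$. The key step is then to control this error term: since $\p_s H_s=f'(s)(H_+-H_-)$ with $f'\ge 0$ and $H_+-H_-\ge 0$, the integrand is pointwise bounded by $f'(s)\|H_+-H_-\|_\infty$, and the normalization $\int_\R f'(s)\,ds=f(+\infty)-f(-\infty)=1$ together with $\int_{S^1}dt=1$ yields $\int (\p_s H_s)(u)\,ds\,dt\le \|H_+-H_-\|_\infty$. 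Discarding the nonnegative $L^2$-energy then gives the a priori bound
\[
   A_{H_+}(x_+)-A_{H_-}(x_-)\;\ge\;-\|H_+-H_-\|_\infty.
\]

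The final step is to match this with the hypothesis. Under $P_{H_+}<P_{H_-}-\|H_+-H_-\|_\infty$, any pair $x_\pm\in P_{H_\pm}$ satisfies the strict inequality $A_{H_+}(x_+)-A_{H_-}(x_-)<-\|H_+-H_-\|_\infty$, which contradicts the bound just derived. Hence no Floer continuation trajectory can connect an orbit in $P_{H_-}$ to one in $P_{H_+}$, which is precisely $P_{H_+}\prec P_{H_-}$. I do not anticipate any serious obstacle; the only point that requires care is to keep the paper's sign convention straight, so that the $\p_s H_s$ term appears with the minus sign in the action identity and its contribution is actually bounded above (rather than below) by $\|H_+-H_-\|_\infty$.
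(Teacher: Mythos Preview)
Your proof is correct and follows essentially the same route as the paper's: derive the action--energy identity for an $s$-dependent Floer cylinder, bound the $\int (\p_sH_s)(u)$ term by $\|H_+-H_-\|_\infty$ using $f'\ge 0$, $H_+-H_-\ge 0$, and $\int_\R f'=1$, and conclude by contradiction with the strict action gap hypothesis. The paper's proof is the same computation written out line by line.
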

}

\begin{proof}
If there is a continuation Floer trajectory $u:\R\times S^1\to \wh W_F$ 
solving $\p_s u + J_{s,t}(u)(\p_t u - X_{H_s}(u))=0$ 
with $\lim_{s\to\pm\infty}u(s,\cdot)=x_\pm(\cdot)$,
where $x_\pm$ are $1$-periodic orbits of $H_\pm$, 
then we have
\begin{eqnarray*}
A_{H_+}(x_+)-A_{H_-}(x_-) & = & \int_{-\infty}^\infty \frac d{ds}A_{H_s}(u(s,\cdot))\, ds \\
& = & \int_{-\infty}^\infty dA_{H_s}(u(s,\cdot))\cdot \p_s u \,ds - \int_{-\infty}^\infty \int_0^1 \p_s H_s(t,u(s,t))\, dt\, ds \\
& = & \int_{-\infty}^\infty\int_0^1 \|\p_s u(s,t)\|^2\, dt\, ds \\
& & -\int_{-\infty}^\infty\int_0^1 f'(s) \Bigl(H_+(t,u(s,t))-H_-(t,u(s,t))\Bigr)\, dt \, ds  \\
& \ge & -\int_{-\infty}^\infty\int_0^1 f'(s) \sup_{t,x} \Bigl(H_+(t,x)-H_-(t,x)\Bigr) \, dt \, ds \\
& = & -\|H_+-H_-\|_\infty. 
\end{eqnarray*}
\end{proof}

Since the domain of definition of the Hamiltonians that we use in this paper is a noncompact manifold, it is appropriate to discuss the degree of applicability of the previous principle: it holds for compactly supported homotopies, so that $\|H_+-H_-\|_\infty$ is finite (and can be explicitly computed), but it also holds for non-compactly supported homotopies if one knows \emph{a priori} that the continuation Floer trajectories are contained in a compact set, in which case it is enough to estimate $\|H_+-H_-\|_\infty$ on that compact set.

\subsubsection{Hamiltonians for $SH_*^\heartsuit(W)$.}\label{sec:Ham-W} 
Let 
$$
\mu,\tau>0
$$ 
be such that $\mu\notin \mbox{Spec}(\p^-W)$ and $\tau\notin \mbox{Spec}(\p^+W)$. Denote by
$\eta_\mu>0$ 
the distance from $\mu$ to $\mbox{Spec}(\p^- W)$ and let $\delta>0$ 
be such that 
\begin{equation} \label{eq:deltamuetamu}
\delta\mu<\eta_\mu. 
\end{equation}
We denote by
$$
K_{\mu,\tau}=K_{\mu,\tau,\delta}:\wh W_F\to\R
$$
the Hamiltonian which is defined up to smooth approximation as follows: it is constant equal to $\mu(1-\delta)$ on $F\setminus [\delta,1]\times\p F$, it is linear equal to $\mu(1-r)$ on $[\delta,1]\times\p F$, it is constant equal to $0$ on $W$, and it is linear equal to $\tau(r-1)$ on $[1,\infty)\times\p^+ W$. See Figure~\ref{fig:Kmutau}. 

\begin{figure}
         \begin{center}
\input{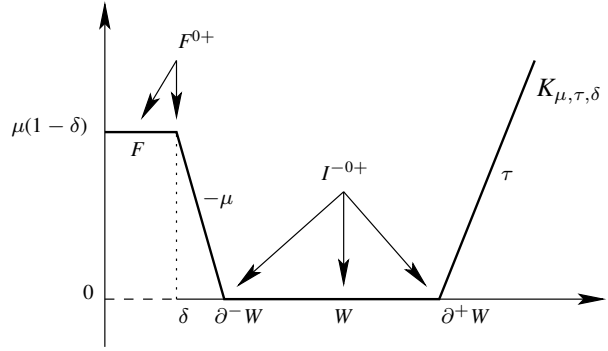}
         \end{center}
\caption{Hamiltonians $K_{\mu,\tau,\delta}$ for the definition $SH_*^\heartsuit(W)$  \label{fig:Kmutau}}
\end{figure}

A smooth approximation of $K_{\mu,\tau}$ will thus be of the form $K_{\mu,\tau}(r,y)=k(r)$ on $[1,\infty)\times\p^+W$ (and similarly near the negative boundary $\p^-W$). The $1$-periodic orbits of $X_{K_{\mu,\tau}}$ on $\{r\}\times \p^+W$ correspond to Reeb orbits on $\p^+W$ of period $k'(r)$, and their Hamiltonian action equals 
$$
rk'(r)-k(r). 
$$
Since we assumed that $\mu$ and $\tau$ are not equal to the period of a closed Reeb orbit on the respective boundaries of $W$, it follows that $K_{\mu,\tau}$ has no $1$-periodic orbits in the regions where it is linear. 

The $1$-periodic orbits of the Hamiltonian $K_{\mu,\tau}$ naturally fall into 5 classes as follows:
\begin{itemize}
\item[($F^0$)] constants in $F\setminus [\delta,1]\times\p F$. 
\item [($F^+$)] orbits corresponding to negatively parameterized closed Reeb orbits on $\p F=\p^-W$ and located near $\{\delta\}\times\p^-W$. 
\item[($I^-$)] orbits corresponding to negatively parameterized closed Reeb orbits on $\p^-W$ and located near $\p^-W$. 
\item [($I^0$)] constants in $W$.
\item [($I^+$)] orbits corresponding to positively parameterized closed Reeb orbits on $\p^+W$ and located near $\p^+W$. 
\end{itemize}
We denote by $F$ the group of orbits $F^{0+}$, and by $I$ the group of orbits $I^{-0+}$. The maximal action of an orbit in group $F$ is $-\mu(1-\delta)=-\mu+\delta\mu$, while the minimal action of an orbit in group $I$ is $-\mu+\eta_\mu$. Condition~\eqref{eq:deltamuetamu} implies  
$F<I$, and in particular 
$$
F\prec I.
$$
This last relation actually holds regardless of the choice of parameters by combining Lemmas~\ref{lem:no-escape} and~\ref{lem:asy} in order to prohibit trajectories from $F$ to $I^-$ with the relation $F<I^{0+}$ which prohibits trajectories from $F$ to $I^{0+}$. Alternatively, the relation $F\prec I^0$ is also a consequence of Lemma~\ref{lem:constant}, while $F\prec I^+$ is also a consequence of Lemmas~\ref{lem:no-escape} and~\ref{lem:asy}. 

Let now $(\mu_i)$, $i\in\Z_-$ and $(\tau_j)$, $j\in\Z_+$ be two
sequences which do not contain elements in
${\mbox{Spec}}(\p^-W)\cup{\mbox{Spec}}(\p^+W)$ and such that
$\mu_{i'}>\mu_i$ for $i'<i$ and $\tau_j<\tau_{j'}$ for $j<j'$. We
moreover require that $\mu_i\to\infty$ as $i\to-\infty$ and
$\tau_j\to\infty$ as $j\to\infty$. Choose a sequence $(\delta_i)$,
$i\in\Z_-$ of positive numbers such that $\delta_{i'}<\delta_i$ for
$i'<i$, such that $\delta_i\to 0$ as $i\to-\infty$, and such that
condition~\eqref{eq:deltamuetamu} is satisfied: 
$$
\delta_i\mu_i<\eta_{\mu_i} \qquad \mbox{ for all } i\in\Z_-. 
$$
We denote 
$$
K_{i,j}:=K_{\mu_i,\tau_j,\delta_i},\qquad i\in \Z_-,\quad j\in\Z,
$$
so that $K_{i',j}\ge K_{i,j}$ for $i'\le i$, and $K_{i,j}\le K_{i,j'}$ for $j\le j'$. We consider $FC_*(K_{i,j})$ as a doubly-directed system in {\sf Kom}, inverse on $i\to-\infty$ and direct on $j\to\infty$, with maps
$$
FC_*(K_{i',j})\to FC_*(K_{i,j}),\qquad i'\le i
$$
induced by non-decreasing homotopies, and maps 
$$
FC_*(K_{i,j})\to FC_*(K_{i,j'}),\qquad j\le j'
$$
induced by non-increasing homotopies. 
Denote $FC_F(K_{i,j})$ the Floer subcomplex of $FC_*(K_{i,j})$ generated by orbits in the group $F$, and denote $FC_I(K_{i,j})$ the Floer quotient complex generated by orbits in the group $I$. The groups of orbits $I^-$, $I^0$, $I^+$ are ordered by action as $I^-<I^0<I^+$ within the group of orbits $I$, so that we have corresponding sub- and quotient complexes
$FC_{I^\heartsuit}(K_{i,j})$
for $\heartsuit\in\{\varnothing,\ge 0, >0,=0,\le 0,<0\}$, where $I^\heartsuit$ has the following meaning: 
$$
I^\varnothing=I, \quad  I^{\le 0}=I^{-0}, \quad  I^{>0}=I^+, \quad  I^{<0}=I^-, \quad  I^{=0}=I^0, \quad  I^{\ge 0}=I^{0+}.
$$ 

\begin{lemma} The homotopies that define the doubly-directed system $FC_*(K_{i,j})$, $i\in\Z_-$, $j\in\Z_+$ induce doubly-directed systems  
$$
FC_{I^\heartsuit}(K_{i,j}),\qquad i\in\Z_-,\quad j\in \Z_+,\quad \heartsuit\in\{\varnothing,\ge 0, >0,=0,\le 0,<0\}.
$$
\end{lemma}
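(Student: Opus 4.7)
The assertion amounts to two statements: (a) the chain maps in the doubly directed system $\{FC_*(K_{i,j})\}$ send $FC_F(K_{i',j})$ into $FC_F(K_{i,j})$, inducing well-defined maps on the quotients $FC_I$; and (b) the induced quotient maps respect the further filtration $FC_{I^-}\subset FC_{I^{\le 0}}\subset FC_I$ and the matching quotient filtration $FC_I\twoheadrightarrow FC_{I^{\ge 0}}\twoheadrightarrow FC_{I^{>0}}$ that were established for each single Hamiltonian $K_{i,j}$ via the strict action inequalities $F<I$ (which uses the standing assumption $\delta_i\mu_i<\eta_{\mu_i}$) and $I^-<I^0<I^+$. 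These inequalities hold uniformly in $j$ for the groups $F$, $I^-$ and $I^0$, and for $I^+$ the action lies in the positive range $(0,\tau_j)$, so the separations between $F$, $I^-$, $I^0$ and the lower edge of $I^+$ are all bounded below independently of $j$.

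First I will handle the direct direction $j\le j'$. Here the map $FC_*(K_{i,j})\to FC_*(K_{i,j'})$ is a standard continuation map induced by a decreasing homotopy (only the slope at the positive end grows), and the standard energy identity shows that it decreases the Hamiltonian action. Because the action thresholds separating $F$ from $I$ and the action thresholds separating $I^-$, $I^0$ and $I^+$ inside $I$ are the same for $K_{i,j}$ and $K_{i,j'}$ (they depend on $\mu_i,\delta_i$ and on the sign $0$, not on $\tau_j$), an orbit in $F_{K_{i,j}}$ is sent to a sum of orbits of strictly lower action in $K_{i,j'}$, which must again lie in $F$; similarly the filtration by $I^-$, $I^{\le 0}$ is preserved modulo $F$.

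For the inverse direction $i'\le i$ the homotopy changes only $\mu$ and $\delta$ and is supported in the filling $F$; it is non-decreasing in $s$, so the action of a continuation trajectory is not monotone and the argument of the previous paragraph does not apply. This is the main obstacle. I will get around it exactly as in the proof of Lemma~\ref{lem:no-escape-homotopies}: I choose the homotopy between $K_{i,j}$ and $K_{i',j}$ so that on the region $V:=W\cup([1,\infty)\times\p^+W)$ it is identically equal to the common value of the two Hamiltonians (hence in particular $H_s\equiv 0$ on a neighbourhood of $\p^-W$ throughout the homotopy), and I work with cylindrical almost complex structures near $\p^-W$. Then the confinement Lemma~\ref{lem:no-escape}, applied in its $s$-dependent form with $c=0$ (so that $A_{h_s}(1)$ is constant in $s$), prohibits continuation trajectories both of whose asymptotes lie in $V$ from entering the filling, and dually prohibits continuation trajectories both of whose asymptotes lie in the filling from crossing into $V$; a neck-stretching argument along $\p^-W$ as in Lemma~\ref{lem:neck} excludes the remaining mixed trajectories with one asymptote in $F_{K_{i',j}}$ and the other in $I_{K_{i,j}}$. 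This yields $F_{K_{i',j}}\prec I_{K_{i,j}}$ and $I_{K_{i',j}}\prec F_{K_{i,j}}$ for the continuation map, establishing (a).

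Finally, for (b) in the inverse direction: once the map descends to $FC_I$, it is again induced by a homotopy that is $s$-independent and identically zero on the open neighbourhood of $\p^-W$ and on $V$, so on the groups $I^-,I^0,I^+$ (which all lie in $V$) the continuation moduli spaces are exactly the Floer moduli spaces for the unperturbed Hamiltonian, for which $I^-<I^0<I^+$ is just the action filtration. This preserves each of the six flavours $FC_{I^\heartsuit}$ and hence produces the claimed doubly directed systems.
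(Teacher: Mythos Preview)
Your treatment of the direct direction $j\le j'$ is fine and matches the paper. The inverse direction $i'\le i$, however, has a genuine gap.

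For part~(a), the no-escape Lemma~\ref{lem:no-escape} cannot be applied with $V=W\cup([1,\infty)\times\p^+W)$ to prevent trajectories from entering the filling: the proof of that lemma requires that leaving $V$ corresponds to the radial coordinate $r$ increasing, whereas at $\p^-W$ leaving this $V$ means $r$ decreasing. So the confinement direction you need simply is not available from Lemma~\ref{lem:no-escape}. The ``dual'' application (both asymptotes in $F$ stay in $F$) is correct but irrelevant for $F\prec I$. Your appeal to the neck-stretching Lemma~\ref{lem:neck} is also problematic: that lemma is stated for $s$-independent Hamiltonians, and even granting an $s$-dependent version with $c=0$, case~(ii) forbids only trajectories with $x_+\in F$ and $A_H(x_-)>0$ --- this does not exclude outputs in $I^-$, which have negative action.

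For part~(b), your claim that ``$I^-,I^0,I^+$ all lie in $V$'' is incorrect: the $I^-$ orbits sit in the bending region at $r$ slightly below~$1$, hence in the filling, not in $V$. And even for orbits genuinely inside $W$, continuation trajectories between them may well descend into the filling (same obstruction to no-escape as above), where the homotopy is $s$-dependent; so they are not moduli spaces for a fixed Hamiltonian and you cannot simply invoke the action ordering $I^-<I^0<I^+$.

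The paper's route is different and more direct. For $F_{K_{i',j}}\prec I_{K_{i,j}}$ it uses the gap Lemma~\ref{lem:gap}: one computes $\|K_{i',j}-K_{i,j}\|_\infty=\mu_{i'}(1-\delta_{i'})-\mu_i(1-\delta_i)$, and the inequality $F_{K_{i',j}}<I_{K_{i,j}}-\|K_{i',j}-K_{i,j}\|_\infty$ reduces exactly to the standing hypothesis $\delta_i\mu_i<\eta_{\mu_i}$. For the filtration inside $I$ in the inverse direction the paper uses confinement lemmas, but not the ones you invoke: $I^-_{K_{i',j}}\prec I^{0}_{K_{i,j}}$ follows from Lemma~\ref{lem:constant} (constant orbits), applied with the Morse perturbation on $W$ whose gradient points outward along $\p^-W$; and $I^{-0}_{K_{i',j}}\prec I^+_{K_{i,j}}$ follows from Lemmas~\ref{lem:no-escape} and~\ref{lem:asy} at $\p^+W$, where $h''>0$ forces a nonconstant trajectory to rise above its output level, contradicting no-escape for the region below.
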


\begin{proof}
Our choice of parameters ensures that 
\begin{equation}\label{eq:FprecI}
F_{K_{i',j}}\prec I_{K_{i,j}},\qquad F_{K_{i,j}}\prec I_{K_{i,j'}}
\end{equation}
for $i'\le i$ and $j\le j'$. To prove these relations denote
$\mu'=\mu_{i'}$, $\tau'=\tau_{j'}$, $\delta'=\delta_{i'}$, and
similarly $\mu,\tau,\delta$ for the corresponding numbers not
decorated with primes. The first relation follows from
Lemma~\ref{lem:gap} and the relation $F_{K_{i',j}}<I_{K_{i,j}} -
\|K_{i',j}-K_{i,j}\|_\infty$: the maximal action of an orbit in
$F_{K_{i',j}}$ is $-\mu'(1-\delta')$, the minimal action of an orbit
in $I_{K_{i,j}}$ is $-\mu+\eta_\mu$, and the maximal oscillation of
the homotopy is $\|K_{i',j}-K_{i,j}\|_\infty =
\mu'(1-\delta')-\mu(1-\delta)$; the desired relation then follows
from~\eqref{eq:deltamuetamu}. The second relation in~\eqref{eq:FprecI}
follows from $F_{K_{i,j}}<I_{K_{i,j'}}$ because in this case the
homotopy is non-increasing. Now we have already seen that
$F_{K_{i,j}}<I_{K_{i,j}}$, while the action of the orbits in
$I_{K_{i,j'}}$ is never smaller than the action of the orbits in
$I_{K_{i,j}}$. This proves the relations~\eqref{eq:FprecI}. They imply
that we have a doubly-directed subsystem $FC_F(K_{i,j})$   
and a doubly-directed quotient system $FC_I(K_{i,j})$, $i\in\Z_-$, $j\in\Z_+$.

To prove that we have doubly-directed systems $FC_{I^\heartsuit}(K_{i,j})$, $i\in\Z_-$, $j\in\Z_+$ for all values of $\heartsuit$ we need to show the relations 
$$
I^-_{K_{i',j}}\prec I^{0+}_{K_{i,j}} \quad \mbox{and }\quad I^{-0}_{K_{i',j}}\prec I^+_{K_{i,j}} \quad \mbox{ for }\quad i'\le i,
$$
$$
I^-_{K_{i,j}}\prec I^{0+}_{K_{i,j'}} \quad \mbox{and }\quad I^{-0}_{K_{i,j}}\prec I^+_{K_{i,j'}} \quad \mbox{ for }\quad j\le j'. 
$$
The last two relations follow from the fact that the non-increasing homotopies which induce maps $FC_*(K_{i,j})\to FC_*(K_{i,j'})$ for $j\le j'$ preserve the filtration by the action. In contrast, this argument cannot be used to prove the first two relations since non-decreasing homotopies typically do not preserve the action filtration. Instead we argue using the confinement lemmas in~\S\ref{sec:confinement}: the first relation follows from Lemma~\ref{lem:constant}, and the second relation follows from Lemmas~\ref{lem:no-escape} and~\ref{lem:asy}.
\end{proof}

\begin{lemma}\label{lem:SHWgeometric-W} We have isomorphisms 
$$
SH_*^\heartsuit(W)\cong \lim^{\longrightarrow}_j \lim^{\longleftarrow}_i FH_{I^\heartsuit}(K_{i,j})
$$
for $\heartsuit\in\{\varnothing,\ge 0, >0,=0,\le 0,<0\}$.
\end{lemma}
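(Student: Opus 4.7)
The plan is to use the cofinality of the family $\{K_{i,j}\}$ in $\mathcal{H}(W;F)$ together with the action separation between the $1$-periodic orbit classes in order to identify, for each flavor $\heartsuit$, the double limit on the right hand side with the defined $SH_*^\heartsuit(W)$. First I would verify that $\{K_{i,j}\}$ is cofinal in the partial order on $\mathcal{H}(W;F)$: for any admissible $H$, taking $\mu_i>\sup_F H$ and $\tau_j$ larger than the slope of $H$ at infinity yields $K_{i,j}\geq H$. Since the filtered Floer groups $FH_*^{(a,b)}(K_{i,j})$ are finite dimensional and, for fixed $(a,b)$, stabilize both as $i\to-\infty$ and as $j\to+\infty$ (there are only finitely many Reeb orbits on $\p^\pm W$ with action of absolute value at most $\max(|a|,|b|)$), the single direct limit defining $SH_*^{(a,b)}(W)$ can be rewritten as the iterated limit
$$
SH_*^{(a,b)}(W)\;=\;\lim^{\longrightarrow}_{j}\;\lim^{\longleftarrow}_{i}\;FH_*^{(a,b)}(K_{i,j}).
$$

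Next, for any $(a,b)$ with $a>-\mu_i(1-\delta_i)$ (which holds once $i$ is sufficiently negative), every orbit in group $F$ has action below $a$ and is excluded from the window; combined with the relation $F\prec I$ established just before the lemma, this identifies $FH_*^{(a,b)}(K_{i,j})$ with the subquotient $FH_I^{(a,b)}(K_{i,j})$. To single out the flavor $\heartsuit$ I would then exploit the action gaps within $I$: orbits of type $I^-$ have action bounded above by minus the smallest period of a Reeb orbit on $\p^-W$, orbits of type $I^+$ have action bounded below by the smallest period of a Reeb orbit on $\p^+W$, and by our convention on the cofinal family the orbits of type $I^0$ have actions concentrated in an arbitrarily small neighborhood of $0$. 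A direct case-by-case inspection over the six values of $\heartsuit$ shows that, for $(a,b)$ sufficiently close to the prescribed endpoints, the filtered subquotient $FC_I^{(a,b)}(K_{i,j})$ coincides with $FC_{I^\heartsuit}(K_{i,j})$, so the residual $(a,b)$-limits are trivial.

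The main obstacle will be a bookkeeping one: justifying that the four limits on $a,b,i,j$ commute in the required fashion. This is handled by the finite dimensionality of each action-filtered Floer group, together with the fact that, for fixed $(a,b)$, both the inverse limit over $i$ and the direct limit over $j$ stabilize at finite indices; hence they trivially commute with each other and with the outside $(a,b)$-limits. Assembling the preceding steps then yields the claimed isomorphism
$$
SH_*^\heartsuit(W)\;\cong\;\lim^{\longrightarrow}_{j}\;\lim^{\longleftarrow}_{i}\;FH_{I^\heartsuit}(K_{i,j}).
$$
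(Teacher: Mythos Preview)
Your argument has a genuine gap at the very first displayed identity
\[
SH_*^{(a,b)}(W)\;=\;\lim^{\longrightarrow}_{j}\;\lim^{\longleftarrow}_{i}\;FH_*^{(a,b)}(K_{i,j}).
\]
The direct limit defining $SH_*^{(a,b)}(W)$ over $\mathcal{H}(W;F)$ uses continuation maps induced by \emph{non-increasing} homotopies, which go from the smaller Hamiltonian to the larger one; in the $i$-direction these point $FH_*^{(a,b)}(K_{i,j})\to FH_*^{(a,b)}(K_{i',j})$ for $i'\le i$. By contrast, the doubly-directed system in the statement of the lemma uses, in the $i$-direction, maps $FC_*(K_{i',j})\to FC_*(K_{i,j})$ induced by \emph{non-decreasing} homotopies. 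These latter maps do \emph{not} preserve the action filtration (the action can jump by as much as $\|K_{i',j}-K_{i,j}\|_\infty$, cf.\ the gap lemma), so there is no induced inverse system on $FH_*^{(a,b)}(K_{i,j})$ and your displayed identity is not even well-posed. Saying that ``both limits stabilize'' does not resolve this: you would be comparing two genuinely different families of maps, and the inverse-limit side only makes sense once you pass from the action filtration to the $I^\heartsuit$-filtration established in the preceding lemma.

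The paper circumvents this by proving directly the key identification
\[
FH_I(K_{i,j})\;\cong\;SH_*^{(-\mu_i,\tau_j)}(W),
\]
obtained by computing $SH_*^{(a,b)}$ along a diagonal cofinal sequence $K_{i_k,j_k}$ and showing the continuation maps are upper-triangular with $+1$ on the diagonal (since the smoothings can be made to coincide in the region where the $I$-orbits live). The essential step you are missing is then to check that, under this isomorphism, the doubly-directed structure maps $FH_I(K_{i',j})\to FH_I(K_{i,j})$ and $FH_I(K_{i,j})\to FH_I(K_{i,j'})$ coincide with the canonical action-window maps $SH_*^{(-\mu_{i'},\tau_j)}(W)\to SH_*^{(-\mu_i,\tau_j)}(W)$ and $SH_*^{(-\mu_i,\tau_j)}(W)\to SH_*^{(-\mu_i,\tau_{j'})}(W)$. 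This identification is what converts the $(i,j)$-limit into the $(a,b)$-limit defining $SH_*^\heartsuit(W)$; without it, your ``bookkeeping'' paragraph is comparing apples and oranges. The analogous isomorphisms for the other flavors then follow by replacing $(-\mu_i,\tau_j)$ with the appropriate truncated windows $(0^+,\tau_j)$, $(0^-,\tau_j)$, $(-\mu_i,0^+)$, etc.
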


\begin{proof}
Recall that the slopes of $K_{i,j}$ are  $-\mu_i$ and $\tau_j$, with
$-\mu_i<0<\tau_j$. We claim that
\begin{equation}\label{eq:tradeactionforI}
SH_*^{(-\mu_i,\tau_j)}(W)\cong FH_{I}(K_{i,j}). 
\end{equation}
To prove~\eqref{eq:tradeactionforI} recall that $SH_*^{(a,b)}(W)={\displaystyle\lim^{\longrightarrow}_K} \, FH_*^{(a,b)}(K)$, where $K$ ranges over the space $\cH(W;F)$ of admissible Hamiltonians on $\wh W_F$ with respect to the filling $F$ and the direct limit is considered with respect to non-increasing homotopies, see~\S\ref{sec:SHWA}. Consider a decreasing sequence $i_k\to -\infty$ and an increasing sequence $j_k\to\infty$ as $k\to\infty$. The sequence of Hamiltonians $K_{i_k,j_k}$, $k\in\Z_+$ is then cofinal in $\cH(W;F)$ and we have $SH_*^{(a,b)}(W)={\displaystyle \lim^{\longrightarrow}_{k\to\infty}} \, FH_*^{(a,b)}(K_{i_k,j_k})$, where the direct limit is considered with respect to continuation maps $FH_*^{(a,b)}(K_{i_k,j_k})\to FH_*^{(a,b)}(K_{i_{k'},j_{k'}})$ induced by non-increasing homotopies. We can assume without loss of generality that $-\mu_{i_k}\le a$ and $\tau_{j_k}\ge b$. The smoothings of any such two Hamiltonians $K_{i_k,j_k}$ and $K_{i_{k'},j_{k'}}$, $k\le k'$ can be constructed so that they coincide in the neighborhood of $W$ where the periodic orbits in group $I$ for $K_{i_k,j_k}$ appear. As such, the continuation map $FC_*^{(a,b)}(K_{i_k,j_k})\to FC_*^{(a,b)}(K_{i_{k'},j_{k'}})$, which is upper triangular if we arrange the generators in increasing order of the action, has diagonal entries equal to $+1$ and is therefore an isomorphism. This proves that the canonical map $FH_*^{(a,b)}(K_{i_k,j_k})\to SH_*^{(a,b)}(W)$ is an isomorphism for all $k$ (such that $-\mu_{i_k}\le a$ and $\tau_{j_k}\ge b$). 

The isomorphism~\eqref{eq:tradeactionforI} is proved by considering the following three isomorphisms: we have $FH_I(K_{i,j})=FH_*^{(-\mu_i+\eta,\tau_j)}(K_{i,j})$ for any $\eta>0$ such that $\delta_i\mu_i<\eta<\eta_{\mu_i}$; we have $FH_*^{(-\mu_i+\eta,\tau_j)}(K_{i,j})\cong SH_*^{(-\mu_i+\eta,\tau_j)}(W)$ by the above; and we have 
$SH_*^{(-\mu_i+\eta,\tau_j)}(W)\cong SH_*^{(-\mu_i,\tau_j)}(W)$ since there is no periodic Reeb orbit on $\p^-W$ with period in the interval $(\mu_i-\eta,\mu_i)$. 

A variant of the same argument shows that, under the
isomorphism~\eqref{eq:tradeactionforI}, the continuation maps
$FH_I(K_{i',j})\to FH_I(K_{i,j})$, $i'\leq i$ and $FH_I(K_{i,j})\to
FH_I(K_{i,j'})$, $j\leq j'$ induced by a non-decreasing homotopy,
respectively by a non-increasing homotopy, coincide with the canonical
maps $SH_*^{(-\mu_{i'},\tau_j)}(W)\to SH_*^{(-\mu_i,\tau_j)}(W)$ and
$SH_*^{(-\mu_i,\tau_j)}(W)\to SH_*^{(\mu_i,\tau_{j'})}(W)$,
respectively. From this the conclusion of the lemma follows in the case $\heartsuit=\varnothing$. 

The proof in the case $\heartsuit\neq\varnothing$ is similar in view of the isomorphisms 
\begin{gather*}
SH_*^{(0^+,\tau_j)}(W)\cong FH_{I^{>0}}(K_{i,j}),\qquad
SH_*^{(0^-,\tau_j)}(W)\cong FH_{I^{\ge 0}}(K_{i,j}),\cr
SH_*^{(0^-,0^+)}(W)\cong FH_{I^{= 0}}(K_{i,j}),\qquad
SH_*^{(-\mu_i,0^+)}(W)\cong FH_{I^{\le 0}}(K_{i,j}),\cr
SH_*^{(-\mu_i,0^-)}(W)\cong FH_{I^{< 0}}(K_{i,j}).
\end{gather*}
Here $0^-$ and $0^+$ denote a negative, respectively a positive real number which is close enough to zero (with absolute value smaller than the minimal period of a closed Reeb orbit on $\p^-W$, respectively $\p^+W$). 
\end{proof}

{\color{black}
\subsubsection{Hamiltonians for $SH_*^\heartsuit(W,\p^\pm W)$.} 
We shall need in the sequel (Lemma~\ref{lem:SHWVgeometric-WV-bottom-top}) alternative descriptions of the homology groups $SH_*^\heartsuit(W,\p^\pm W)$ in the spirit of the previous section, which we now explain. We refer freely to the notation of~\S\ref{sec:Ham-W}. 

Given $\mu,\tau>0$ such that $\mu\notin\mbox{Spec}(\p^-W)$ and $\tau\notin\mbox{Spec}(\p^+W)$, and given $\delta\in (0,1)$, we consider Hamiltonians $K^\pm=K^\pm_{\mu,\tau,\delta}:\wh W_F\to\R$ defined as follows: 
\begin{itemize}
\item the Hamiltonian $K^-_{\mu,\tau,\delta}$ coincides with the Hamiltonian $K_{\mu,\tau,\delta}$ of~\S\ref{sec:Ham-W} on $W\cup[1,\infty)\times\p^+W$ and is equal to $-K_{\mu,\tau,\delta}$ on $F$. See Figure~\ref{fig:Kplusminus}.
\item the Hamiltonian $K^+_{\mu,\tau,\delta}$ coincides with the Hamiltonian $K_{\mu,\tau,\delta}$ on $F\cup W$ and is equal to $-K_{\mu,\tau,\delta}$ on $[1,\infty)\times\p^+W$. See Figure~\ref{fig:Kplusminus}.
\end{itemize}

\begin{figure}
         \begin{center}
\input{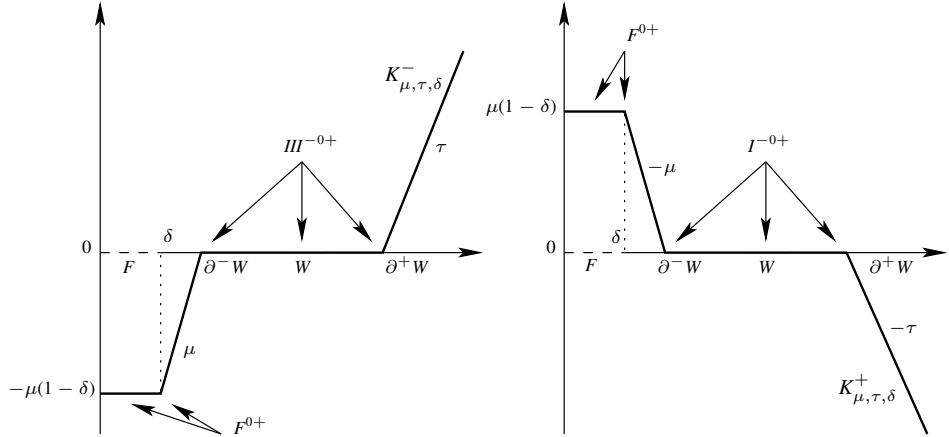}
         \end{center}
\caption{Hamiltonians $K^\pm$ for the definition of $SH_*^\heartsuit(W,\p^\pm W)$ \label{fig:Kplusminus}}
\end{figure} 

The $1$-periodic orbits of each of these Hamiltonians naturally fall into 5 groups, which we denote by $F^{0+},III^{-0+}$ for $K^-$, and by $F^{0+},I^{-0+}$ for $K^+$. We denote as usual by $\eta_\mu,\eta_\tau>0$ positive numbers smaller than the distance from $\mu$ to $\mbox{Spec}(\p^- W)$, respectively smaller than the distance from $\tau$ to $\mbox{Spec}(\p^+W)$. If the parameters are chosen such that 
\begin{equation} \label{eq:another-equation}
\delta\mu<\eta_\mu \qquad \mbox{and}\qquad \mu-\eta_\mu>\tau-\eta_\tau
\end{equation}
then we have $F< I$ for $K^+$, respectively $III<F$ for $K^-$. We denote $III^{=0}=III^0$, $III^{>0}=III^{-+}$, and also $I^{=0}=I^0$, $I^{<0}=I^{-+}$. 

This construction is well-behaved in families, just like the construction in the previous section. 
Consider first an indexing parameter $j\in\Z_+$. We choose sequences $\mu_j\to\infty$, $\tau_j\to\infty$, $\delta_j\to 0$ as $j\to\infty$, such that $\mu_j\notin\mbox{Spec}(\p^-W)$, $\tau_j\notin\mbox{Spec}(\p^+W)$, such that $(\mu_j)$ and $(\tau_j)$ are increasing and $(\delta_j)$ is decreasing, and such that~\eqref{eq:another-equation} is satisfied for each $j$. We define $K^-_j=K^-_{\mu_j,\tau_j,\delta_j}$. Given $j\le j'$ we consider the interpolating homotopy from $K^-_j$ at $+\infty$ to $K^-_{j'}$ at $-\infty$ 
which is the concatenation of the following two monotone homotopies: first keep $K^-_j$ fixed on $W\cup[1,\infty)\times\p^+W$ and interpolate between $K^-_j$ and $K^-_{\mu_{j'},\tau_j,\delta_{j'}}$ on $F$, then keep the Hamiltonian fixed on $F\cup W$ and interpolate between $K^-_{\mu_{j'},\tau_j,\delta_{j'}}$ and $K^-_{j'}$ on $[1,\infty)\times \p^+W$. We claim that for such a homotopy we have
$$
III_{K^-_j}\prec F_{K^-_{j'}}, \qquad III^{=0}_{K^-_j}\prec III^{>0}_{K^-_{j'}}.
$$
The proof of the first relation uses Lemma~\ref{lem:gap}. Since the
homotopy from $K^-_{j'}$ to $K^-_{j'}$ is non-increasing on
$[1,\infty)\times\p^+W$, the continuation Floer trajectories are
contained in $F\cup W$, where the gap between the Hamiltonians is 
$$
gap = \|(K^-_j - K^-_{j'}) \big|_{F\cup W}\|_\infty = \mu_{j'}(1-\delta_{j'})-\mu_j(1-\delta_j). 
$$
In view of Lemma~\ref{lem:gap} it is enough to show that the maximal action of an orbit in $III_{K^-_j}$ is smaller than the minimal action of an orbit in $F_{K^-_{j'}}$ minus the $gap$. This is equivalent to the inequality $\mu_j-\eta_{\mu_j}<\mu_{j'}(1-\delta_{j'})-\big(\mu_{j'}(1-\delta_{j'})-\mu_j(1-\delta_j)\big)$, which is in turn equivalent to $\delta_j\mu_j<\eta_{\mu_j}$. 

To prove the second relation we observe that the map induced by the homotopy is the composition of the maps induced by each of the monotone homotopies which constitute it. For the first homotopy, supported in $F$, there are no trajectories from $III^0_{K^-_j}$ to $III^-_{K^-_{\mu_{j'},\tau_j,\delta_{j'}}}$ by Lemma~\ref{lem:constant}, and there are no trajectories from $III^0_{K^-_j}$ to $III^+_{K^-_{\mu_{j'},\tau_j,\delta_{j'}}}$ by Lemmas~\ref{lem:no-escape} and~\ref{lem:asy}. For the second homotopy, there are no trajectories from $III^0_{K^-_{\mu_{j'},\tau_j,\delta_{j'}}}$ to $III^{>0}_{K^-_{j'}}$ because the homotopy is non-increasing and $III^0_{K^-_{\mu_{j'},\tau_j,\delta_{j'}}}<III^{>0}_{K^-_{j'}}$. This proves the second relation. (Note that one could not argue here using the $gap$.)

As a consequence, we obtain well-defined directed systems in {\sf Kom}
$$
FC_{III^\heartsuit}(K^-_j),\qquad j\to\infty,\qquad \heartsuit\in\{\varnothing,=0,>0\}.
$$

Consider now an indexing parameter $i\in\Z_-$. Given sequences $\mu_i\to\infty$, $\tau_i\to\infty$, $\delta_i\to 0$ as $i\to-\infty$, such that $\mu_i\notin\mbox{Spec}(\p^- W)$, $\tau_i\notin\mbox{Spec}(\p^+W)$, such that $(\mu_i)$ and $(\tau_i)$ are increasing with $|i|$ and $(\delta_i)$ is decreasing with $|i|$, and such that~\eqref{eq:another-equation} is satisfied for each $i$, we define $K^+_i=K^+_{\mu_i,\tau_i,\delta_i}$. Given $i'\le i$ the homotopy from $K^+_{i'}$ at $+\infty$ to $K^+_{i}$ at $-\infty$ defined as the concatenation of the two monotone homotopies from $K^+_{i'}$ to $K^+_{\mu_{i'},\tau_i,\delta_{i'}}$ and from $K^+_{\mu_{i'},\tau_i,\delta_{i'}}$ to $K^+_{i}$ is such that 
$$
F_{K^+_{i'}}\prec I_{K^+_i},\qquad I^{<0}_{K^+_{i'}}\prec I^{=0}_{K^+_i}.
$$
The proof involves arguments entirely similar to the previous ones for the Hamiltonians $K^-$, hence we omit the details. We obtain well-defined inverse systems in {\sf Kom} 
$$
FC_{I^\heartsuit}(K^+_i),\qquad i\to-\infty,\qquad \heartsuit\in\{\varnothing,<0,=0\}. 
$$

\begin{lemma}\label{lem:SHWgeometric-W-rel}
(a) For $\heartsuit\in\{\varnothing,\ge 0, >0,=0,\le 0,<0\}$ we have isomorphisms
$$
SH_*^\heartsuit(W,\p^-W) \cong \lim^{\longrightarrow}_j FH_{III^\heartsuit}(K^-_{j}).
$$

(b) For $\heartsuit\in\{\varnothing,\ge 0, >0,=0,\le 0,<0\}$ we have isomorphisms
$$
SH_*^\heartsuit(W,\p^+W) \cong \lim^{\longleftarrow}_i FH_{I^\heartsuit}(K^+_{i}).
$$
\end{lemma}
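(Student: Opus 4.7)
The plan is to follow the same strategy as in the proof of Lemma~\ref{lem:SHWgeometric-W}, adapted to the relative setting and to the one-parameter cofinal families $\{K^-_j\}$ and $\{K^+_i\}$ in place of the two-parameter family $\{K_{i,j}\}$. Since the two parts are analogous, I will sketch part~(a) in detail and indicate the symmetric modifications for~(b) at the end.

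The first step is to establish the identification at the level of action windows: for any fixed $-\infty<a<b<\infty$,
$$
SH_*^{(a,b)}(W,\p^-W)\cong \lim^{\longrightarrow}_{j} FH_*^{(a,b)}(K^-_j).
$$
By Definition~\ref{defi:SHWA} together with the mnemonic rule, the left-hand side is the direct-then-inverse limit of filtered Floer homology over admissible Hamiltonians tending to $+\infty$ on $(\wh W_F\setminus W)_{\p^+W}=[1,\infty)\times\p^+W$ and to $-\infty$ on $(\wh W_F\setminus W)_{\p^-W}=\mathrm{int}(F)$. The family $\{K^-_j\}$ realizes both behaviors simultaneously as $j\to\infty$: the slope $+\mu_j$ on $[\delta_j,1]\times\p F$ forces $K^-_j\to-\infty$ in the interior of $F$, while the slope $\tau_j\to\infty$ on $[1,\infty)\times\p^+W$ gives the correct unbounded behavior there. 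A standard comparison argument, arranging the smoothings of $K^-_j$ and $K^-_{j'}$ for $j\le j'$ to coincide on a neighborhood of the locus of $III$-orbits (so that the continuation maps between them are upper triangular with identity diagonal on $III$), then yields the cofinality isomorphism above.

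The second step is to trade the action truncation for a restriction to an orbit class. The relation $III<F$ for $K^-_j$, established under conditions~\eqref{eq:another-equation} in the paragraph preceding the lemma, implies that $FC_{III}\subset FC_*(K^-_j)$ is a subcomplex. Since the actions of all $F$-orbits grow like $\mu_j$ (indeed $F^0$ has action $\mu_j(1-\delta_j)$ and $F^+$ has action close to $\mu_j$), for each fixed $(a,b)$ and $j$ large enough all $F$-orbits lie strictly above $b$, so $FH_*^{(a,b)}(K^-_j)=FH_{III}^{(a,b)}(K^-_j)$. Choosing the window wide enough to contain the actions of all $III$-orbits (which are bounded in terms of $\mu_j$ and $\tau_j$) then gives $FH_{III}^{(a,b)}(K^-_j)=FH_{III}(K^-_j)$, and passing to the direct limit in $j$ yields the case $\heartsuit=\varnothing$. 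The remaining cases are handled by choosing action windows with endpoints near $0$ according to the value of $\heartsuit$, exactly as in Lemma~\ref{lem:SHWgeometric-W}, and identifying the resulting sub- or quotient complex of $FC_{III}(K^-_j)$ with $FC_{III^\heartsuit}(K^-_j)$.

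The main obstacle lies in this last identification: unlike for the Hamiltonians $K_{i,j}$ of Lemma~\ref{lem:SHWgeometric-W}, the orbit subclasses $III^\heartsuit$ for $K^-_j$ are decorated by geometric position (near $\p^-W$, in the interior of $W$, or near $\p^+W$) and are \emph{not} separated by action in general—for instance, all of $III^-$, $III^0$, and $III^+$ have nonnegative action. Consequently the subcomplex/quotient structure on $FC_{III}(K^-_j)$, as well as its stability under the continuation maps of the directed system, does not follow from an action filtration alone and relies essentially on the confinement Lemmas~\ref{lem:no-escape},~\ref{lem:asy}, and~\ref{lem:constant}, applied in exactly the same manner as in the construction of the directed system $\{FC_{III^\heartsuit}(K^-_j)\}_j$ recorded in the paragraph preceding the lemma. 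Part~(b) is obtained by reversing the direction of the limit: $\{K^+_i\}_{i\in\Z_-}$ is cofinal as $i\to-\infty$ for computing $SH_*^\heartsuit(W,\p^+W)$, the relation $F<I$ plays the role of $III<F$, and the direct limit is replaced by an inverse limit (which behaves well with finite-dimensional filtered Floer groups).
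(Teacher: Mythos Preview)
Your proposal is correct and follows essentially the same approach as the paper's proof, which also reduces to the strategy of Lemma~\ref{lem:SHWgeometric-W}. One minor simplification worth noting: since by definition $III^{<0}=\varnothing$, $III^{\le 0}=III^{=0}=III^0$, and $III^{\ge 0}=III^\varnothing=III$, only the three cases $\heartsuit\in\{\varnothing,=0,>0\}$ require direct treatment in~(a), the others being formal consequences; in particular the action ordering $III^0<III^{-+}$ already separates $III^{=0}$ from $III^{>0}$ at the level of the differential, and the confinement lemmas are needed only to show that the continuation maps in the directed system respect this decomposition (because the interpolating homotopies are not globally monotone). Part~(b) is handled symmetrically with the three cases $\heartsuit\in\{\varnothing,=0,<0\}$.
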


\begin{proof}
The proof is similar to the one of Lemma~\ref{lem:SHWgeometric-W}. For
part (a) observe first that the right hand side does not depend on the
choice of the family $K_j^-$ subject to conditions~\eqref{eq:another-equation}.
We pick $\mu_j=\tau_j$ outside the action spectra of $\p^-W$ and
$\p^+W$ such that $\eta_{\mu_j}<\eta_{\tau_j}$, and then $\delta_j$
sufficiently small so that~\eqref{eq:another-equation} holds for all $j$.
Then a similar proof to that of equation~\eqref{eq:tradeactionforI} yields
\begin{equation*}
   SH_*^{(-\infty,\tau_j)}(W,\p^-W)\cong FH_*^{(-\infty,\tau_j)}(K_j^-)\cong FH_{III}(K_j^-). 
\end{equation*}
In the direct limit over $j$ we obtain part (a) for $\heartsuit=\varnothing$.
The cases $\heartsuit=''>0''$ and $\heartsuit=''=0''$ are proved
similarly, and the remaining cases are a formal consequence of these three. 
The proof of part (b) is analogous, where now it suffices to treat the
cases $\heartsuit\in\{\varnothing,=0,<0\}$.
\end{proof}
}

\subsubsection{Hamiltonians for $SH_*^\heartsuit(V)$ inside $\wh W_F$.}\label{sec:Ham-V} 
{\color{black}Heuristically, the construction presented in this section can be viewed as the ``gluing" of the three constructions presented in the two previous sections.}

 We consider a Liouville cobordism pair $(W,V)$ with filling $F$ and write $W=W^{bottom}\circ V\circ W^{top}$. Let 
$$
\mu, \quad \nu_\pm,\quad \tau>0
$$
be such that $\mu\notin\mbox{Spec}(\p^-W)$,
$\nu_\pm\notin\mbox{Spec}(\p^\pm V)$,
$\tau\notin\mbox{Spec}(\p^+W)$. Let $\eta_\mu$, $\eta_{\nu_\pm}$,
$\eta_\tau>0$ be positive real numbers smaller than $1/2$ and smaller
than the distances from $\mu$, $\nu_\pm$, $\tau$ to the corresponding
action spectra. Let 
$$
\delta,\epsilon\in (0,1),\quad R\in(1,\infty)
$$
be such that 
\begin{equation}\label{eq:Hmunutau-conditions-1}
\delta\mu<\eta_\mu,\quad \epsilon\nu_-<\eta_{\nu_-},\quad \nu_+<R\,\eta_{\nu_+},
\end{equation}
and
\begin{equation}\label{eq:Hmunutau-conditions-2}
R(\tau-\eta_\tau)< R(\nu_+-\eta_{\nu_+}) < \nu_+(R-1)< \nu_--\eta_{\nu_-}<\mu-\eta_\mu.
\end{equation}
Note that the second inequality in~\eqref{eq:Hmunutau-conditions-2} is automatic in view of~\eqref{eq:Hmunutau-conditions-1}.
\textcolor{black}{Also note that the inequalities in~\eqref{eq:Hmunutau-conditions-2} impose relations between $\mu$, $\nu_+$, $\nu_-$ and $\tau$. Typically, an ordering of the kind 
$$
\tau\le \nu_+,\qquad \nu_+ R\le \nu_-,\qquad \nu_-\le \mu
$$
is enough to ensure condition~\eqref{eq:Hmunutau-conditions-2} if $\eta_\tau > \eta_{\nu_+}$, $\eta_{\nu_-} > \eta_\mu$ and $\nu_+>1$. These last three conditions are not in the least restrictive, since the parameters $\eta_\tau,\eta_{\nu_\pm},\eta_\mu$ are to be thought of as arbitrarily small, and the slope $\nu_+$ is to be thought of as large. However, the previous three conditions on $\tau,\nu_\pm,\mu$ are restrictive, and among these three the most restrictive one is $\nu_+ R\le \nu_-$: it forces $\nu_-$ to be larger than $\nu_+$, and indeed much larger, in an uncontrolled way. This has implications on the kind of doubly directed systems that we will construct, namely systems for which we can consider first an inverse limit as the negative slopes go to $-\infty$, then a direct limit as the positive slopes go to $+\infty$, but not the other way around.}

We denote by 
$$
H_{\mu,\nu_\pm,\tau}=H_{\mu,\nu_\pm,\tau,\delta,\epsilon,R}:\wh W_F\to \R
$$
the Hamiltonian which is defined up to smooth approximation as
follows: it is constant equal to $\epsilon\mu(1-\delta)+
\nu_-(1-\epsilon)$ on $F\setminus [\delta\epsilon,1]\times\p F$, it is
linear equal to $\mu(\epsilon-\delta\epsilon)+ \nu_-(1-\epsilon) +
\mu(\delta\epsilon-r)$ on $[\delta\epsilon,\epsilon]\times\p F$, it is
constant equal to $\nu_-(1-\epsilon)$ on $\epsilon W^{bottom}$, it is
linear equal to $\nu_-(1-\epsilon) + \nu_-(\epsilon-r)$ on
$[\epsilon,1]\times\p^-V$, it is constant equal to $0$ on $V$, it is
linear equal to $\nu_+(r-1)$ on $[1,R]\times \p^+V$, it is constant
equal to $\nu_+(R-1)$ on $R W^{top}$, and it is linear equal to
$\nu_+(R-1)+\tau(r-R)$ on $[R,\infty)\times\p^+ W$. See
  Figure~\ref{fig:H-one-step}.  

\begin{figure}
         \begin{center}
\input{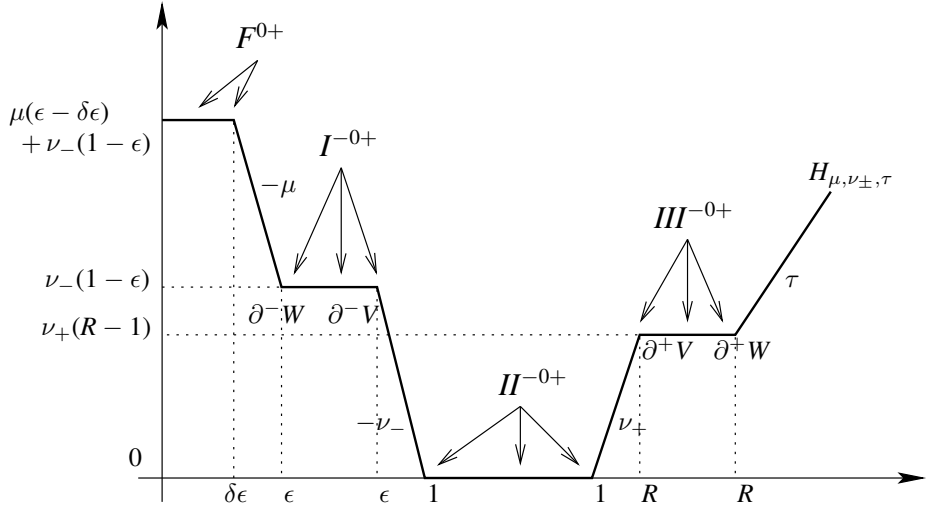}
         \end{center}
\caption{Hamiltonian adapted to the construction of the transfer map $SH_*^\heartsuit(W)\to SH_*^\heartsuit(V)$  \label{fig:H-one-step}}
\end{figure}

The $1$-periodic orbits of the Hamiltonian $H_{\mu,\nu_\pm,\tau}$ fall into 11 classes as follows: 
\begin{itemize}
\item[($F^0$)] constants in $F\setminus ([\delta\epsilon,1]\times \p F)$,
\item[($F^+$)] orbits corresponding to negatively parameterized
  closed Reeb orbits on\break $\p F=\p^-W$ and located near $\delta\epsilon\p^-W$,
\item[($I^-$)] orbits corresponding to negatively parameterized closed Reeb orbits on\break $\p^-W^{bottom}=\p^-W$ and located near $\epsilon\p^-W$,
\item[($I^0$)] constants in $\epsilon W^{bottom}$,
\item[($I^+$)] orbits corresponding to negatively parameterized closed Reeb orbits on\break $\p^+W^{bottom}=\p^-V$ and located near $\epsilon \p^-V$,
\item[($II^-$)] orbits corresponding to negatively parameterized closed Reeb orbits on $\p^-V$ and located near $\p^-V$,
\item[($II^0$)] constants in $V$,
\item[($II^+$)] orbits corresponding to positively parameterized closed Reeb orbits on $\p^+V$ and located near $\p^+V$,
\item[($III^-$)] orbits corresponding to positively parameterized closed Reeb orbits on\break $\p^-W^{top}=\p^+V$ and located near $R\p^+V$,
\item[($III^0$)] constants in $RW^{top}$,
\item[($III^+$)] orbits corresponding to positively parameterized closed Reeb orbits on $\p^+W$ and located near $R\p^+W^{top}=R\p^+W$. 
\end{itemize}

We denote by $F$ the group of orbits $F^{0+}$, and by $J$ the group of orbits $J^{-0+}$ for $J=I,II,III$. 

\begin{lemma}\label{lem:ordering} 
For the previous choices of parameters the above groups of orbits for $H_{\mu,\nu_\pm,\tau}$ are ordered as
$$
   F\prec I\prec III\prec II \quad\text{and}\quad III\prec I,
$$
provided the almost complex structure is cylindrical and stretched enough on a collar neighborhood of $\p^+V$ in $V$.
\end{lemma}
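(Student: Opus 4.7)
This lemma is the direct analogue, for the Hamiltonian $H_{\mu,\nu_\pm,\tau}$, of Lemma~\ref{lem:order} established earlier in the paper, and I would follow the proof of that lemma essentially verbatim. The toolkit is the same: one combines the confinement Lemmas~\ref{lem:no-escape} and~\ref{lem:asy} with action estimates derived from conditions~\eqref{eq:Hmunutau-conditions-1} and~\eqref{eq:Hmunutau-conditions-2}, and finally invokes the neck stretching Lemma~\ref{lem:neck} along a collar hypersurface inside $V$ close to $\p^+V$ to handle the relations that must cross the cobordism $V$.

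First I would record the convexity of $H=h(r)$ at each bending locus: $H$ is strictly concave at the orbits in $F^+, I^-, I^+, III^-$ and strictly convex at those in $II^-, II^+, III^+$. Applying Lemma~\ref{lem:asy} (which forces nonconstant Floer trajectories to rise above a convex output or a concave input asymptote) together with Lemma~\ref{lem:no-escape} applied to the various regions bounded by cylindrical level sets $\{r=r_0\}$ yields the purely topological relations
$$
  F \prec I^-, \qquad  F,I \prec II^{-+}, \qquad   F,I,II,III^{-0} \prec III^+,
$$
$$
  I^+ \prec F,I^{-0}, \qquad  III^- \prec F,I,II,
$$
exactly as in the proof of Lemma~\ref{lem:order}; these are independent of the parameter values as long as the general shape of $H$ is preserved.

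Next I would use the parameter conditions to upgrade these to action-based exclusions. The inequality $\delta\mu < \eta_\mu$ from~\eqref{eq:Hmunutau-conditions-1} forces $F<I$ in action, yielding $F\prec I^+$. The action chain
$$
   R(\tau-\eta_\tau)<R(\nu_+-\eta_{\nu_+})<\nu_+(R-1)<\nu_--\eta_{\nu_-}<\mu-\eta_\mu
$$
in~\eqref{eq:Hmunutau-conditions-2}, together with $\epsilon\nu_-<\eta_{\nu_-}$ and $\nu_+<R\,\eta_{\nu_+}$, yields the action orderings $III^+<II^0<II^+$ and $I<III^{-+}$, which when combined with the topological relations of the first step complete the chain $F\prec I\prec III\prec II$ \emph{except} for the relations $III^{0+}\prec I,II^-$ that must be established across $V$.

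For this last step I would apply Lemma~\ref{lem:neck} with the neck stretched along a cylindrical hypersurface $M=\{r=1-2\epsilon_0\}\times \p^+V\subset V$ for small $\epsilon_0>0$, on which $H\equiv 0$ so that $c=0$. Case~(ii) of the lemma excludes trajectories with input $x_+$ below $M$ (i.e.\ in $I$ or $II^-$) and output $x_-$ above $M$ (i.e.\ in $III$) whenever $A_H(x_-)>0$, which takes care of the orbits in $III^-$ and the positive-action part of $III^+$. The remaining orbits — $III^0$ with action $-\nu_+(R-1)$ and the negative-action part of $III^+$ — are handled by the monotonicity-of-area argument invoked in the proof of Lemma~\ref{lem:order}: there is an \emph{a priori} strictly positive lower bound on the symplectic area of any Floer cylinder traversing $V$, whereas the Hamiltonian perturbation contribution to the energy is arbitrarily small for constants after small Morse perturbation. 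The main obstacle in carrying this out cleanly is the bookkeeping for orbits in $III^+$ whose action sign is not fixed by the conditions; for these one combines the case distinction in Lemma~\ref{lem:neck} with the $III^- \prec F, I, II$ and the convexity-driven $F,I,II,III^{-0}\prec III^+$ relations from the first step to cover all possibilities.
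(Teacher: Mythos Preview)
Your approach mirrors the proof of Lemma~\ref{lem:order}, combining Lemmas~\ref{lem:no-escape} and~\ref{lem:asy} with action estimates and neck stretching. The paper takes a deliberately different route: it establishes all of $F\prec I$, $F\prec II$, $F\prec III$, $I\prec II$, $I\prec III$, and $III\prec II^{0+}$ as pure action inequalities (the extra parameters $\epsilon,R$ are tuned precisely so that these hold, which was not possible for the simpler Hamiltonians of Lemma~\ref{lem:order}), and invokes only Lemma~\ref{lem:neck} for the two remaining relations $III\prec II^-$ and $III\prec I$. Lemmas~\ref{lem:no-escape} and~\ref{lem:asy} are not used at all. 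As the remark following the lemma explains, this matters downstream: action-based exclusions carry over to continuation maps (via Lemma~\ref{lem:gap}) in the proof of Lemma~\ref{lem:ordering-homotopy}, whereas Lemmas~\ref{lem:no-escape}--\ref{lem:asy} do not directly apply to the non-monotone homotopies appearing there.

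Your final step contains a genuine error. For $III^{0+}\prec I,II^-$ you must exclude Floer cylinders with input $x_+\in III^{0+}$ (above $M$) and output $x_-\in I,II^-$ (below $M$); with the region below $M$ playing the role of $V$ in Lemma~\ref{lem:neck} this is case~(i), not case~(ii), and the relevant condition is $A_H(x_+)<-c=0$. You have the roles of input and output reversed. Once corrected, the key point is that conditions~\eqref{eq:Hmunutau-conditions-1}--\eqref{eq:Hmunutau-conditions-2} force \emph{every} orbit in $III$ to have negative action: $III^0$ has action $-\nu_+(R-1)$, the maximal action in $III^-$ is bounded by $\nu_+-R\eta_{\nu_+}<0$, and the maximal action in $III^+$ by $-\nu_+(R-1)+R(\tau-\eta_\tau)<0$. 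Hence case~(i) disposes of $III\prec I,II^-$ in one stroke; there is no ``positive-action part of $III^+$'', and your monotonicity patching is both unnecessary and unjustified (that argument in Lemma~\ref{lem:order} is specific to trajectories between the constant orbits $I^0$ and $III^0$, where energies can be made arbitrarily small after Morse perturbation).
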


\begin{proof}
The relation $F\prec I$ holds because $F<I$. Indeed, the maximal action of an orbit in $F$ equals $-\epsilon\mu(1-\delta)-\nu_-(1-\epsilon)$ (and is attained on $F^0$). The minimal action of an orbit in $I$ is larger than $-\nu_-(1-\epsilon) + \min(-\epsilon(\mu-\eta_\mu),-\epsilon(\nu_--\eta_{\nu_-}))$. The conclusion follows in view of $\delta\mu<\eta_\mu$ and $\mu(1-\delta)>\mu-\eta_\mu>\nu_--\eta_{\nu_-}$.

{\color{black} The relation $I\prec III$ holds because $I<III$. Indeed, the maximal action of an orbit in $I$ equals $-\nu_-(1-\epsilon)$ (and is attained on $I^0$). The minimal action of an orbit in $III$ is equal to $-\nu_+(R-1)$ (and is attained on $III^0$). The conclusion follows in view of $\nu_+(R-1)<\nu_--\eta_{\nu_-}<\nu_-(1-\epsilon)$. 

The relation $F\prec III$ holds because $F<I<III$ by the above. 

The relation $I\prec II$ holds because $I<II$. Indeed, the maximal action of an orbit in $I$ equals $-\nu_-(1-\epsilon)$. The minimal action of an orbit in $II$ is larger than $-\nu_-+\eta_{\nu_-}$. The conclusion follows in view of $\epsilon\nu_-<\eta_{\nu_-}$. 

The relation $F\prec II$ holds because $F<I<II$ by the above.}

The relation $III\prec II$ is seen as follows. On the one hand we have $III<II^{0+}$. Indeed, the maximal action of an orbit in $III$ is smaller than $-\nu_+(R-1)+\max(R(\nu_+-\eta_{\nu_+}),R(\tau-\eta_\tau))$. The minimal action of an orbit in $II^{0+}$ equals $0$, and the conclusion follows in view of $R(\tau-\eta_\tau)<R(\nu_+-\eta_{\nu_+})<\nu_+(R-1)$. On the other hand we have $III\prec II^-$ by Lemma~\ref{lem:neck} for an almost complex structure which is cylindrical and stretched enough within a collar neighborhood of $\p^+V$ in $V$. 

The relation $III\prec I$ (and actually also $III\prec F$) follows also from Lemma~\ref{lem:neck}.
\end{proof}

{\color{black}
{\bf Remark. }Lemma~\ref{lem:ordering} should be compared to Lemma~\ref{lem:order}
which asserts the same ordering of groups of orbits. The latter concerns 
the simpler Hamiltonians in Figure~\ref{fig:H-cob} and its proof
crucially uses Lemmas~\ref{lem:no-escape} and \ref{lem:asy}. 
The former concerns the more complicated Hamiltonians in
Figure~\ref{fig:H-one-step} (with two additional parameters $\epsilon,R$) and its
proof uses only action estimates and Lemma~\ref{lem:neck}. 
This has the advantage that the ordering in Lemma~\ref{lem:ordering} 
is preserved by continuation maps (see the proof of
Lemma~\ref{lem:ordering-homotopy} below), whereas the one in
Lemma~\ref{lem:order} is not. 
}

We now define a special cofinal family of Hamiltonians in $\cH^W(V;F)$ of the form above. Besides conditions~\eqref{eq:Hmunutau-conditions-1} and~\eqref{eq:Hmunutau-conditions-2}, we will also need a finer relation, stated as~\eqref{eq:special-equation} below, which will be used in order to show that the continuation maps preserve the decomposition into groups of orbits given by Lemma~\ref{lem:ordering}. We will first choose the parameters $\nu_+$, $R$, $\tau$ in the region with positive slopes, and then choose the parameters $\nu_-$, $\epsilon$, $\mu$, $\delta$ in the region with negative slopes. 

\emph{(a) Choice of the parameters in the region with positive slopes.} We start with a sequence $(\nu_{+,j})$, $j\in\Z_+$ consisting of real numbers $\nu_{+,j}\ge 1$, which does not contain elements in $\mbox{Spec}(\p^+V)$, such that $\nu_{+,j}<\nu_{+,j'}$ for $j<j'$, and such that $\nu_{+,j}\to\infty$ as $j\to\infty$. 

We further consider a sequence $(\tau_j)$, $j\in\Z_+$ consisting of
positive real numbers such that $\tau_j\in(\nu_{+,j}/4,\nu_{+,j}/2)$, 
which does not contain elements in $\mbox{Spec}(\p^+W)$, and such that $\tau_j<\tau_{j'}$ for $j<j'$.  

We choose the parameters $\eta_{\nu_{+,j}},\eta_{\tau_j}\in(0,1/2)$ such that they form monotone sequences which converge to $0$.

We then choose a sequence $(R_j)$, $j\in\Z_+$ consisting of numbers $R_j\ge 1$, such that $R_j<R_{j'}$ for $j<j'$ and $R_j\to\infty$, $j\to\infty$, and such that the last condition in~\eqref{eq:Hmunutau-conditions-1} is satisfied under the stronger form: 
\begin{equation} \label{eq:Hmunutau-conditions-1-stronger}
R_j\eta_{\nu_{+,j}}>2\nu_{+,j} \qquad \mbox{ for all } j\in\Z_+.
\end{equation}
(This stronger form of~\eqref{eq:Hmunutau-conditions-1} will be used in Lemma~\ref{lem:SHVgeometric-V}.)
The first two inequalities in~\eqref{eq:Hmunutau-conditions-2} are then satisfied. 

\emph{(b) Choice of the parameters in the region with negative slopes.} We start with a sequence $(\nu_{-,i})$, $i\in\Z_-$ consisting of real numbers $\nu_{-,i}\ge 1$, which does not contain elements in $\mbox{Spec}(\p^-V)$, such that 
{\color{black}
\begin{equation}\label{eq:nu-stronger}
   \nu_{-,i-1}\geq \nu_{-,i}+2\qquad\text{ for all }i\in\Z_-.
\end{equation}
This implies $\nu_{-,i'}\geq \nu_{-,i}+2$ for $i'<i$ and $\nu_{-,i}\to\infty$ as $i\to-\infty$.}
We choose the parameters $\eta_{\nu_{-,i}}\in(0,1/2)$ and such that they form a monotone sequence which converges to $0$. We require that the third inequality in~\eqref{eq:Hmunutau-conditions-2} is satisfied: 
$$
\nu_{+,j}(R_j-1)<\nu_{-,i}-\eta_{\nu_{-,i}} \qquad \mbox{ for all } i\le -j. 
$$
This last condition is implied by $\nu_{-,i}>\nu_{+,-i}(R_{-i}-1)+1/2$, $i\in\Z_-$,  which provides an explicit recipe for the construction. 

We choose a sequence $(\epsilon_i)$, $i\in\Z_-$ of numbers 
{\color{black} $\epsilon_i\in(0,1/2)$}
such that $\epsilon_{i'}<\epsilon_i$ for $i'<i$, such that $\epsilon_i\to 0$, $i\to-\infty$, and such that the second condition in~\eqref{eq:Hmunutau-conditions-1} is satisfied:
$$
\epsilon_i\nu_{-,i}<\eta_{\nu_{-,i}} \qquad \mbox{for all } i\in\Z_-.
$$
We also require that the sequence $1/\epsilon_i$ does not contain any element in $\mbox{Spec}(\p^-W)$, which is a generic property.

{\color{black}We then consider two sequences $(\mu_i)$, $(\delta_i)$, $i\in\Z_-$ such that  
\begin{equation} \label{eq:special-equation}
   \epsilon_{i}\mu_{i}(1-\delta_{i}) = 1\qquad \mbox{for all }i,\in\Z_-
\end{equation}
}
and which moreover satisfy the following conditions: the sequence $(\mu_i)$ consists of positive numbers and does not contain elements of $\mbox{Spec}(\p^-W)$, we have $\mu_{i'}>\mu_i$ for $i'<i$ and $\mu_i\to\infty$, $i\to-\infty$; the sequence $(\delta_i)$ is such that $\delta_i\in(0,1)$ for all $i\in\Z_-$, we have $\delta_{i'}<\delta_i$ for $i'<i$ and $\delta_i\to 0$, $i\to-\infty$; the first condition in~\eqref{eq:Hmunutau-conditions-1} is satisfied: 
$$
\delta_i\mu_i<\eta_{\mu_i} \qquad \mbox{ for all } i\in\Z_-.
$$
Such sequences are easily constructed by choosing $\mu_i$ slightly larger than $1/\epsilon_i$ for all $i\in\Z_-$. 

These conditions imply $\mu_i>1/\epsilon_i>\nu_{-,i}/\eta_{\nu_{-,i}}\ge 2\nu_{-,i}$ for all $i\in\Z_-$, so that the last inequality in~\eqref{eq:Hmunutau-conditions-2} is also satisfied since $\nu_{-,i}\ge 1$.

Let now
$$
H_{i,j}:=H_{\mu_i,\nu_{-,i},\nu_{+,j},\tau_j,\delta_i,\epsilon_i,R_j},\qquad i\in \Z_-,\quad j\in\Z, \quad i\le -j.
$$
{\color{black} 
Then we have $H_{i',j}\ge H_{i,j}$ for $i'\leq i$ and $H_{i,j}\le H_{i,j'}$ for $j\le j'$.
Indeed, the first inequality follows from conditions~\eqref{eq:nu-stronger} and~\eqref{eq:special-equation}, which imply that for $i'<i$ the value of $H_{i',j}$ on $\epsilon_i\p_-V$ satisfies $\nu_{-,i'}(1-\epsilon_i)\geq (\nu_{-,i}+2)(1-\epsilon_i)\geq \nu_{-,i}(1-\epsilon_i)+1 = \nu_{-,i}(1-\epsilon_i)+\epsilon_i\mu_i(1-\delta_i)=\max_FH_{i,j}$.
The second inequality follows from the conditions $\nu_{+,j'}\geq\nu_{+,j}\geq\tau_j$ and $R_{j'}\geq R_j\geq 1$, which imply $(\nu_{+,j'}-\tau_j)(R_{j'}-1)\geq (\nu_{+,j}-\tau_j)(R_{j}-1)$, or equivalently $\nu_{+,j'}(R_{j'}-1)\geq \nu_{+,j}(R_{j}-1)+\tau_j(R_{j'}-R_j)$, so $H_{i,j'}\geq H_{i,j}$ on $R_{j'}\p^+W$ and therefore everywhere.}

We consider $FC_*(H_{i,j})$ as a doubly-directed system in {\sf Kom}, inverse on $i\to-\infty$ and direct on $j\to\infty$, with maps
$$
FC_*(H_{i',j})\to FC_*(H_{i,j}),\qquad i'\le i\le -j
$$
induced by non-decreasing homotopies, and maps 
$$
FC_*(H_{i,j})\to FC_*(H_{i,j'}),\qquad j\le j',\quad i\le -j'
$$
induced by non-increasing homotopies. 
{\color{black}(The non-decreasing homotopies will actually be chosen more specifically, as a composition of ``small distance'' homotopies, see the proof of Lemma~\ref{lem:ordering-homotopy} below.) 
}
{\color{black}The choice of parameters ensures that for each $H_{i,j}$ the groups of
orbits are ordered as in Lemma~\ref{lem:ordering}.}
Denote $FC_F(H_{i,j})$ the Floer subcomplex of $FC_*(H_{i,j})$ generated by orbits in the group $F$, denote $FC_{I,II,III}(H_{i,j})$ the Floer quotient complex generated by orbits in the groups $I,II,III$, and consider similarly $FC_{I,III}(H_{i,j})$ and $FC_{II}(H_{i,j})$. The groups of orbits $II^-$, $II^0$, $II^+$ are ordered by the action as $II^-<II^0<II^+$ within the group of orbits $II$, so that we have corresponding sub- and quotient complexes
$FC_{II^\heartsuit}(H_{i,j})$
for $\heartsuit\in\{\varnothing,\ge 0, >0,=0,\le 0,<0\}$, where $II^\heartsuit$ has the following meaning: 
$$
II^\varnothing=II,  \,   II^{\le 0}=II^{-0},  \,   II^{>0}=II^+,  \,   II^{<0}=II^-,  \,   II^{=0}=II^0,  \,   II^{\ge 0}=II^{0+}.
$$ 
Similarly, we have orderings by the action $I^{-+}<I^0$ within the group $I$, and $III^0<III^{-+}$ within the group $III$, as well as orderings $I\prec III$ and $III\prec I$ from Lemma~\ref{lem:ordering}. We thus define $FC_{(I,III)^\heartsuit}(H_{i,j})$ for $\heartsuit\in\{\varnothing,\ge 0, >0,=0,\le 0,<0\}$ via 
$$
(I,III)^\varnothing=(I,III),\quad (I,III)^{\le 0}=(I,III^0),\quad (I,III)^{>0}=III^{-+},
$$
$$
(I,III)^{<0}=I^{-+},\quad (I,III)^{=0}=(I^0,III^0),\quad (I,III)^{\ge 0}=(I^0,III).
$$

\begin{lemma}\label{lem:ordering-homotopy} 
The homotopies that define the doubly-directed system $FC_*(H_{i,j})$ 
{\color{black} can be chosen so that they}
induce doubly-directed systems  
$$
FC_{II^\heartsuit}(H_{i,j}), \qquad FC_{I^\heartsuit}(H_{i,j}), \qquad FC_{III^\heartsuit}(H_{i,j}) 
\qquad \mbox{and }\qquad FC_{(I,III)^\heartsuit}(H_{i,j}) 
$$
for $i\in\Z_-$, $j\in \Z_+$, $i\le -j$ and $\heartsuit\in\{\varnothing,\ge 0, >0,=0,\le 0,<0\}$.
\end{lemma}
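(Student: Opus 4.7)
Our plan is to verify, for each elementary continuation map in the doubly-directed system, that it respects the decomposition of $FC_*(H_{i,j})$ into the sub/quotient complexes associated with the groups $F, I, II, III$ and with the action-refined sub-orderings within each of $I$, $II$, $III$. Any continuation map in the doubly-directed system factors, in {\sf Kom}, through such elementary ones, so it suffices to separately treat (a) the non-decreasing homotopies $FC_*(H_{i',j}) \to FC_*(H_{i,j})$ for $i' \le i$, which affect only the negative-slope region $F \cup \epsilon W^{bottom} \cup [\epsilon,1]\times\p^-V$, and (b) the non-increasing homotopies $FC_*(H_{i,j}) \to FC_*(H_{i,j'})$ for $j \le j'$, which affect only the positive-slope region $[1,R]\times\p^+V \cup RW^{top} \cup [R,\infty)\times\p^+W$.

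For the non-decreasing homotopies, the cross-group relations $F \prec (I,II,III)$, $I \prec (II,III)$, and $(I,III) \prec II$ should follow from Lemma~\ref{lem:gap}: one checks that the action gap $A_{\max}(P_{H_+}) - A_{\min}(P_{H_-}) + \|H_+ - H_-\|_\infty$ is negative for the appropriate pairs of groups. Here condition~\eqref{eq:special-equation} is crucial: it ensures that $\epsilon_{i'}\mu_{i'}(1-\delta_{i'}) = \epsilon_i\mu_i(1-\delta_i)$ is constant across the inverse system, which simultaneously controls the maximal action in $F_{H_{i',j}}$ and the sup-norm of the homotopy on $F$, so that the gap inequality for $F \prec I, II, III$ survives the passage to the limit. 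The remaining cross-group relations $III \prec I$ and $III \prec II^-$ cannot be obtained from action estimates; we would handle them via an $s$-dependent version of the neck-stretching Lemma~\ref{lem:neck}, applied on a contact hypersurface in $V$ sufficiently close to $\p^+V$. Because the homotopy is constant equal to $0$ in a neighbourhood of this hypersurface, the SFT compactness argument underlying Lemma~\ref{lem:neck} adapts \emph{mutatis mutandis} to the $s$-dependent setting.

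For the finer orderings within each group, the analysis splits according to the nature of the relevant orbits. The relations $II^- \prec II^{0+}$, $II^{-0} \prec II^+$, and $III^0 \prec III^{-+}$ follow from action considerations alone, since the orbits all lie in the region $V \cup RW^{top}$ on which the non-decreasing homotopy is constant. The orderings $I^{-+} \prec I^0$ and $I^0 \prec I^{-+}$, needed for the various filtrations of $FC_{I^\heartsuit}$, are more subtle: the first is ruled out by Lemma~\ref{lem:constant}, applied with a Morse perturbation on $\epsilon_i W^{bottom}$ whose gradient points inward along $\p^+W^{bottom} = \p^-V$ and outward along $\p^-W^{bottom}$; the second is ruled out by Lemma~\ref{lem:no-escape} combined with Lemma~\ref{lem:asy}. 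The non-increasing homotopies $FC_*(H_{i,j}) \to FC_*(H_{i,j'})$ are treated by a completely symmetric argument, exchanging the roles of the negative and positive sides and using the corresponding confinement lemmas on the $W^{top}$ side. The assertion for $FC_{(I,III)^\heartsuit}$ is then a direct consequence of the compatibility of the orderings within $I$ and $III$ with the mutual $\prec$-relations $I \prec III$ and $III \prec I$ already established.

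The main obstacle is the $III \prec I$ relation under non-decreasing homotopies, since no choice of parameters yields the action strict inequality $III_{H_{i',j}} < I_{H_{i,j}}$ — so Lemma~\ref{lem:gap} is inapplicable — and only the $s$-dependent neck-stretching argument can exclude the troublesome trajectories. Once this homotopy-version of Lemma~\ref{lem:neck} is in place, the remaining verifications reduce to systematic bookkeeping of action values and repeated applications of the four confinement lemmas of~\S\ref{sec:confinement}, very much in the spirit of the proofs of Lemma~\ref{lem:ordering} and of the analogous Lemma concerning the system $\{FC_{I^\heartsuit}(K_{i,j})\}$ in Section~\ref{sec:Ham-W}.
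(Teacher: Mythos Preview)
Your plan has the right overall shape, but you have swapped the roles of the action/gap estimate and the confinement lemmas for the non-decreasing homotopies, and this swap is fatal in both directions.

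\textbf{The relations within $II$ and $III$ do \emph{not} follow from action alone.} You claim that $II^-_{H'}\prec II^{0+}_H$, $II^{-0}_{H'}\prec II^+_H$, and $III^0_{H'}\prec III^{-+}_H$ hold ``from action considerations alone, since the orbits lie in the region $V\cup RW^{top}$ on which the non-decreasing homotopy is constant.'' This does not work. The fact that the homotopy is constant on $V\cup RW^{top}$ only tells you that the actions $A_{H'}(x_+)$ and $A_H(x_-)$ agree with their $H$-values; it does \emph{not} preclude the continuation trajectory from entering $F\cup W^{bottom}$, where the homotopy is non-constant and $\|H'-H\|_\infty = (1-\epsilon')\nu_-' - (1-\epsilon)\nu_-$ can be arbitrarily large. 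Lemma~\ref{lem:gap} then gives no useful bound. You cannot invoke Lemma~\ref{lem:no-escape} to confine the trajectory to $V\cup RW^{top}$ either, since $\p^-V$ is the \emph{concave} boundary of that region. The paper handles these relations by applying Lemmas~\ref{lem:no-escape}, \ref{lem:asy}, and~\ref{lem:constant} directly (exploiting that the homotopy is $s$-independent outside $F\circ W^{bottom}$, so the escaping part of any trajectory satisfies the $s$-independent hypotheses), and explicitly remarks that the action argument fails here.

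\textbf{Conversely, $I^{-+}_{H'}\prec I^0_H$ does \emph{not} follow from Lemma~\ref{lem:constant} but does follow from action plus gap.} You propose to use Lemma~\ref{lem:constant} with a Morse perturbation on $\epsilon_i W^{bottom}$, but the orbits $I^{-+}_{H'}$ are located near the boundary of $\epsilon_{i'}W^{bottom}$, which is contained \emph{inside} $\epsilon_iW^{bottom}$ since $\epsilon_{i'}<\epsilon_i$; Lemma~\ref{lem:constant} separates constants in $V$ from orbits in the adjacent cobordisms $W^{bottom}$ or $W^{top}$, and does not apply to this configuration. The paper instead checks directly that $\max A_{H'}(I^{-+}_{H'}) + gap < A_H(I^0_H)$: the computation reduces to $-\epsilon'T_- - (1-\epsilon)\nu_- < -(1-\epsilon)\nu_-$, which holds because $-\epsilon'T_-<0$. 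Here the cancellation of the $\nu_-'$-terms is exactly what condition~\eqref{eq:special-equation} was engineered to provide. (Incidentally, the relation $I^0\prec I^{-+}$ that you also list is not needed for any of the filtrations $FC_{I^\heartsuit}$ or $FC_{(I,III)^\heartsuit}$.)
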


\begin{proof} 
(1) We consider first the continuation maps 
$$
FC_*(H_{i',j})\to FC_*(H_{i,j}),\qquad {\color{black}i'\le i \le -j}
$$ 
induced by non-decreasing homotopies equal to $H_{i',j}$ near $+\infty$ and equal to $H_{i,j}$ near $-\infty$. The positive slopes $\nu_{+,j}$, $\tau_j$ are fixed, as well as the parameter $R_j$, and the homotopy is constant outside $F\circ W^{bottom}$. 

Denote for simplicity $H=H_{i,j}$, $H'=H_{i',j}$, and $\nu_-=\nu_{-,i}$, $\nu_-'=\nu_{-,i'}$, $\epsilon=\epsilon_i$, $\epsilon'=\epsilon_{i'}$, $\mu=\mu_i$, $\mu'=\mu_{i'}$. The \emph{gap} $\|H-H'\|_\infty$ between the two Hamiltonians is equal to the biggest value among $(1-\epsilon')\nu'_--(1-\epsilon)\nu_-$ (the difference of values in the region $I^0$) and $(1-\epsilon')\nu'_-+\epsilon'\mu'(1-\delta') - (1-\epsilon)\nu_--\epsilon\mu(1-\delta)$ (the difference of values in the region $F^0$). Condition~\eqref{eq:special-equation} ensures that these two values are equal, hence 
$$
gap:=\|H-H'\|_\infty = (1-\epsilon')\nu'_--(1-\epsilon)\nu_-\,. 
$$
{\color{black}In the sequel we will repeatedly apply Lemma~\ref{lem:gap} (without
further mentioning it), which asserts that for two groups of orbits
$P_{H_+} < P_{H_-} - gap$ implies $P_{H_+}\prec P_{H_-}$.}

We first prove that 
$$
{\color{black}F_{H'},I_{H'}\prec II_H},
$$ 
so that we have induced maps $FC_{II}(H')\to FC_{II}(H)$. We have 
$F^0_{H'}+gap < I^0_{H'}+gap <II^-_H$: the first inequality is obvious, and the second inequality is equivalent to $-(1-\epsilon)\nu_-<-\nu_-+\eta_{\nu_-}$, which is implied by $\epsilon\nu_-<\eta_{\nu_-}$. This ensures $F_{H'}\prec II_H$ and $I_{H'}\prec II_H$. 

{\color{black} We now prove 
$$
III_{H'}\prec (F,I,II)_H.
$$
Note that $H$ and $H'$ coincide in the regions $II^{0+}$ and $III$, and from the proof of Lemma~\ref{lem:ordering} we know that $III_H<II^{0+}_H$. 
{\color{black}The conditions $III_{H'}\prec (F,I,II^-)_H$ follow from Lemma~\ref{lem:neck}.} 
To prove the condition $III_{H'}\prec II^{0+}_H$, we cannot argue directly by action considerations as in the proof of $III_H\prec II^{0+}_H$ since the gap between $H$ and $H'$ could be arbitrarily large. Instead, we use again $III_H<II^{0+}_H$, so we can find some $\epsilon>0$ such that $III_{H}<II^{0+}_H -\epsilon$. \emph{We specialize now to non-decreasing homotopies from $H$ to $H'$ which are compositions of ``small distance" homotopies} with gap smaller than $\epsilon$. (This can alway be achieved by cutting and reparametrizing a given homotopy.) Note that all the homotopies are fixed on $II^{0+}$ and $III$. For each of these small distance homotopies, say running from $H_-$ at $-\infty$ to $H_+$ at $+\infty$, we then have $III_{H_+}\prec II^{0+}_{H_-}$ by Lemma~\ref{lem:gap}, and we also have $III_{H_+}\prec (F,I,II^-)_{H_-}$ by Lemma~\ref{lem:neck}. In other words $III_{H_+}\prec (F,I,II)_{H_-}$ and the image through the continuation map of a generator in $III_{H_+}$ lies in $III_{H_-}$. As a result, the image of a generator in $III_{H'}$ through a composition of such ``small distance" homotopies lies in $III_{H}$ and we have $III_{H'}\prec (F,I,II)_H$. (This reproves in particular $III_{H'}\prec (F,I,II^-)_H$). 
}

We now prove that 
$$
F_{H'}\prec I_{H},III_H,
$$
wherefrom induced maps $FC_{I,II,III}(H')\to FC_{I,II,III}(H)$ and $FC_{I,III}(H')\to FC_{I,III}(H)$. The relation $F_{H'}\prec I_H$ follows from $F^0_{H'}+gap <\min(I^-_H,I^+_H)$, which is $-\epsilon'(1-\delta')\mu'-(1-\epsilon)\nu_-<-(1-\epsilon)\nu_- + \min(-\epsilon(\mu-\eta_\mu),-\epsilon(\nu_--\eta_{\nu_-}) = -(1-\epsilon)\nu_- - \epsilon(\mu-\eta_\mu)$. This is equivalent to $-(1-\delta)\mu<-(\mu-\eta_\mu)$ in view of~\eqref{eq:special-equation}, and holds in view of $\delta\mu<\eta_\mu$. The relation $F_{H'}\prec III_H$ follows from the previous one: indeed $I_H<III_H$, hence $F^0_{H'}+gap<III_H$. 

We also have 
$$
{\color{black}I_{H'}\prec III_H}.
$$
This is a consequence of $I^0_{H'}+gap < III^0_H$, which is $-(1-\epsilon')\nu'_- + (1-\epsilon')\nu'_--(1-\epsilon)\nu_- < -\nu_+(R-1)$, which is equivalent to $\nu_+(R-1)<(1-\epsilon)\nu_-$ and is implied by~\eqref{eq:Hmunutau-conditions-1} and~\eqref{eq:Hmunutau-conditions-2}. {\color{black}Since we already proved $III_{H'}\prec I_H$, we infer that} the continuation maps therefore preserve the decomposition $FC_{I,III}(H)=FC_I(H)\oplus FC_{III}(H)$. 

We now prove that 
$$
II^-_{H'}\prec II^{0+}_H \qquad \mbox{ and } \qquad II^{-0}_{H'}\prec II^+_H,
$$
so that we have induced maps $FC_{II^\heartsuit}(H')\to
FC_{II^\heartsuit}(H)$ for all values of $\heartsuit$. The first
relation follows from Lemmas~\ref{lem:no-escape},~\ref{lem:asy},
and~\ref{lem:constant}, while the last relation follows from
Lemmas~\ref{lem:no-escape} and~\ref{lem:asy}
{\color{black}(using $H'=H$ outside $F\circ W^{bottom}$).} 
Note that in this
situation we cannot argue using the action because the homotopy only
preserves the action filtration up to an error given by the $gap$, and
the latter can be arbitrarily large.  

We now prove that 
$$
I^{-+}_{H'}\prec (I^0_H,III_H) \qquad \mbox{ and } \qquad (I_{H'},III^0_{H'})\prec III^{-+}_H, 
$$
which implies that we have induced maps $FC_{(I,III)^\heartsuit}(H')\to FC_{(I,III)^\heartsuit}(H)$ for all values of $\heartsuit$. 

In view of $I_{H'}\prec III_H$, the first relation is a consequence of
$I^{-+}_{H'}\prec I^0_H$, which is in turn implied by $I^{-+}_{H'}+gap
< I^0_H$. The latter is seen to hold as follows. Denote by
$T_{\p^-V}$, $T_{\p^-W}$ the minimal period of a closed Reeb orbit on
$\p^-V$, respectively on ${\p^-W}$, and set $T_-:=\min(T_{\p^-V},
T_{\p^-W})>0$. The desired inequality is implied by
$-(1-\epsilon')\nu'_--\epsilon'T_- + (1-\epsilon')\nu'_- -
(1-\epsilon)\nu_- < -(1-\epsilon)\nu_-$, which holds because
$-\epsilon'T_-<0$.  

In view of $I_{H'}\prec III_H$, the second relation is a consequence
of $III^0_{H'}\prec III^{-+}_H$. The relation $III^0_{H'}\prec
III^+_H$ is a consequence of Lemmas~\ref{lem:no-escape}
and~\ref{lem:asy} in view of the fact that the homotopy is constant
{\color{black}outside $F\circ W^{bottom}$.} 
The relation $III^0_{H'}\prec III^-_H$ is a consequence of Lemma~\ref{lem:constant}. Note that in both situations we cannot argue using the action because the homotopy only preserves the action filtration up to an error given by the $gap$, and the latter can be arbitrarily large. 

(2) We now consider the continuation maps 
$$
FC_*(H_{i,j})\to FC_*(H_{i,j'}),\qquad {\color{black}j\le j'\le -i} 
$$ 
induced by non-increasing homotopies equal to $H_{i,j}$ near $+\infty$
and equal to $H_{i,j'}$ near $-\infty$. The negative slopes
$\nu_{-,i}$, $\mu_i$ are fixed, as well as the parameters
$\epsilon_i,\delta_i$, and the homotopy is constant on $F\circ
W^{bottom}\circ V$.  
{\color{black}This situation is easier than the one in (1) because 
here the continuation maps preserve the action filtration.}

Denote again for simplicity $H=H_{i,j}$, $H'=H_{i,j'}$, and $\nu_+=\nu_{+,j}$, $\nu_+'=\nu_{+,j'}$, $R=R_j$, $R'=R_j'$, $\tau=\tau_j$, $\tau'=\tau_{j'}$. 

The relations 
$$
F_H\prec I_{H'},II_{H'},III_{H'} \qquad \mbox{ and } \qquad I_H\prec II_{H'}
$$
follow as in Lemma~\ref{lem:ordering}. On the one hand we have
$I_{H'}=I_H$ and $II_{H'}^{-0}=II_H^{-0}$, so that $F_H\prec
I_{H'},II^{-0}_{H'}$ and $I_H\prec II^{-0}_{H'}$. On the other hand we
have $F^0_H<II^0_H=II^0_{H'}<II^+_{H'}$ and
$F^0_H=F^0_{H'}<III^0_{H'}<III^{-+}_{H'}$ for $i\le -j'$ which implies
$F_H\prec II^+_{H'},III_{H'}$. Finally, we also have 
{\color{black}$I_H=I{_H'}<II^0_{H'}<II^+_{H'}$,} 
which implies $I_H\prec II_{H'}$. 

The relation 
$$
III_H\prec II_{H'}
$$
is proved as follows. We have $III_H\prec II^-_{H'}$ as in
Lemma~\ref{lem:ordering}, using Lemma~\ref{lem:neck}. We have
$III^{0+}_H<II^{0+}_{H'}$ by~\eqref{eq:Hmunutau-conditions-2}, namely
$R(\tau-\eta_\tau)<\nu_+(R-1)$. Finally we have 
{\color{black}$III^-_H<II^{0+}_{H'}$}
by~\eqref{eq:Hmunutau-conditions-1}, namely $R\eta_{\nu_+}>\nu_+$. 

The relation 
$$
III_H\prec I_{H'}
$$
is proved as in Lemma~\ref{lem:ordering}, using Lemma~\ref{lem:neck}.

The continuation map 
$$
FC_{II}(H)\to FC_{II}(H') 
$$
is induced by a non-increasing homotopy hence preserves the filtration by the action. As a consequence we obtain well-defined continuation maps 
$$
FC_{II^\heartsuit}(H)\to FC_{II^\heartsuit}(H') 
$$
for all values of $\heartsuit$. 

Let us now prove that the continuation map 
$$
FC_{I,III}(H)\to FC_{I,III}(H')
$$
induces maps 
$$
FC_{(I,III)^\heartsuit}(H)\to FC_{(I,III)^\heartsuit}(H')
$$
for all values of $\heartsuit$. We need to show the relations
$I^{-+}_H\prec I^0_{H'},III_{H'}$ and $I_H,III^0_H\prec
III^{-+}_{H'}$. The first relation follows from
$I^{-+}_H<I^0_H=I^0_{H'}<III^0_{H'}<III^{-+}_{H'}$, where the middle
inequality is ensured 
{\color{black}by~\eqref{eq:Hmunutau-conditions-1} and~\eqref{eq:Hmunutau-conditions-2}, namely 
$\nu_+(R-1)<\nu_{-}-\eta_{\nu_{-}}<\nu_{-}(1-\epsilon_i)$.} 
The second relation follows from $I^0_H<III^0_{H'}<III^{-+}_{H'}$.  

The above shows that we actually have non-interacting doubly-directed systems
$$
FC_{I^\heartsuit}(H_{i,j})\qquad \mbox{and }\qquad FC_{III^\heartsuit}(H_{i,j}) 
$$
for all values of $\heartsuit$ and Lemma~\ref{lem:ordering-homotopy}
is proved.
\end{proof}

\begin{lemma}\label{lem:SHVgeometric-V} We have isomorphisms 
$$
SH_*^\heartsuit(V)\cong \lim^{\longrightarrow}_j \lim^{\longleftarrow}_i FH_{II^\heartsuit}(H_{i,j})
$$
for $\heartsuit\in\{\varnothing,\ge 0, >0,=0,\le 0,<0\}$.
\end{lemma}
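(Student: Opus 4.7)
The plan is to parallel the proof of Lemma~\ref{lem:SHWgeometric-W}, identifying $FH_{II^\heartsuit}(H_{i,j})$ with the analogous $I^\heartsuit$-Floer homology of a standard Hamiltonian built on $\wh V_{F'}$, where $F':=F\circ W^{bottom}$ is the natural Liouville filling of the cobordism $V$.

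For each pair $(i,j)$ with $i\leq -j$, I would introduce on $\wh V_{F'}$ an auxiliary Hamiltonian $\tilde K_{i,j}$ of the form $K_{\mu,\tau,\delta}$ from Section~\ref{sec:Ham-W}, with parameters $(\mu,\tau,\delta)=(\nu_{-,i},\nu_{+,j},\eps_i)$. The condition $\eps_i\nu_{-,i}<\eta_{\nu_{-,i}}$ from~\eqref{eq:Hmunutau-conditions-1} guarantees that this family satisfies the hypotheses of Section~\ref{sec:Ham-W} applied to the filled cobordism $(V;F')$. The crucial geometric observation is that on the common region
$$V^*:=[\eps_i+\sigma,1]\times\p^-V\ \cup\ V\ \cup\ [1,R_j-\sigma]\times\p^+V,$$
with $\sigma>0$ small enough to avoid the smoothings at $r=\eps_i$ and $r=R_j$, viewed naturally as a subset of both $\wh V_{F'}$ and $\wh W_F$, the two Hamiltonians $H_{i,j}$ and $\tilde K_{i,j}$ coincide, both being equal to $\nu_{-,i}(1-r)$, $0$, and $\nu_{+,j}(r-1)$ on the three respective pieces. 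Moreover, all $II$-orbits of $H_{i,j}$ lie in the interior of $V^*$ and are in canonical bijection with the $I$-orbits of $\tilde K_{i,j}$, the bijection preserving Conley--Zehnder index, Hamiltonian action, and hence the inner filtration $II^\pm\leftrightarrow I^\pm$, $II^0\leftrightarrow I^0$.

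Next, I would choose almost complex structures for the two Floer theories that agree on $V^*$ and are cylindrical near $\p V^*$ and invoke Lemma~\ref{lem:no-escape} to show that every Floer cylinder for $H_{i,j}$ (respectively $\tilde K_{i,j}$) with both asymptotes in the $II$-group (resp.~the $I$-group) is entirely contained in $V^*$. Since the two Hamiltonians agree there, this produces a tautological chain isomorphism $FC_{II^\heartsuit}(H_{i,j})\cong FC_{I^\heartsuit}(\tilde K_{i,j})$ for every flavor $\heartsuit$. The compatibility of these isomorphisms with the continuation maps defining the two doubly directed systems is obtained by choosing homotopies on $\wh W_F$ and $\wh V_{F'}$ that coincide on $V^*$: for the non-increasing homotopies in the $j$-direction the $s$-dependent version of Lemma~\ref{lem:no-escape} confines continuation trajectories between $II$/$I$-orbits to $V^*$ and thus forces the two continuation maps to agree; for the non-decreasing homotopies in the $i$-direction, where Lemma~\ref{lem:no-escape} does not directly apply, the same combination of action estimates with Lemmas~\ref{lem:no-escape},~\ref{lem:asy}, and~\ref{lem:constant} already deployed in the proof of Lemma~\ref{lem:ordering-homotopy} yields the required commutativity up to homotopy. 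One thus obtains an isomorphism of doubly directed systems in {\sf Kom}.

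Finally, Lemma~\ref{lem:SHWgeometric-W} applied to the filled cobordism $(V;F')$ yields
$$SH_*^\heartsuit(V)\cong\lim\limits^{\longrightarrow}_{j}\lim\limits^{\longleftarrow}_{i}FH_{I^\heartsuit}(\tilde K_{i,j}),$$
which combined with the isomorphism of doubly directed systems constructed above gives the desired identification. The main technical obstacle lies in the $i$-direction of the compatibility step: the underlying homotopy is non-decreasing, so Lemma~\ref{lem:no-escape} cannot be used to confine continuation trajectories directly, and one must resort to the more delicate confinement mechanism already deployed in the proof of Lemma~\ref{lem:ordering-homotopy}.
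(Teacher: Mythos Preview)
Your approach contains a genuine gap in the confinement step. You invoke Lemma~\ref{lem:no-escape} to argue that every Floer cylinder with both asymptotes in the $II$-group (resp.~$I$-group) is contained in $V^*$. But Lemma~\ref{lem:no-escape} requires $\lambda|_{\p V^*}$ to be a \emph{positive} contact form, i.e., the Liouville vector field must point \emph{out of} $V^*$ along every boundary component (inspect the proof: the final inequality $\int_{\p S}dr\circ du\circ(-j)\leq 0$ uses that exiting $V^*$ means increasing $r$). This holds at the upper boundary $\{r=R_j-\sigma\}\times\p^+V$, but \emph{fails} at the lower boundary $\{r=\eps_i+\sigma\}\times\p^-V$, where the Liouville vector field $r\p_r$ points \emph{into} $V^*$. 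Hence Lemma~\ref{lem:no-escape} does not prevent Floer cylinders between $II^-$-orbits (or between $I^-$-orbits for $\tilde K_{i,j}$) from dipping below $V^*$. Below $V^*$ the two Hamiltonians differ---$\tilde K_{i,j}$ is constant on $F'\setminus[\eps_i,1]\times\p^-V$, while $H_{i,j}$ has the additional $I$-step on $\eps_i W^{bottom}$ and the $F$-staircase further down---so there is no reason the Floer differentials should agree, and your ``tautological chain isomorphism'' is not established.

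The paper's proof avoids this issue entirely by not attempting any geometric confinement. Instead it works within $\wh W_F$ throughout: it invokes Lemma~\ref{lem:SHV-alternate-direct-limit} to express $SH_*^{(a,b)}(V)$ as a direct limit of $FH_*^{(a,b)}(H)$ over $H\in\cH^W(V;F)$, then shows by action estimates (using condition~\eqref{eq:Hmunutau-conditions-1-stronger}) that for $k$ large the truncated complex $FC_*^{(a,b)}(H_{i_k,j_k})$ is generated precisely by the $II$-orbits, and finally argues that continuation maps along a cofinal sequence are upper-triangular with $+1$ on the diagonal, hence isomorphisms. This yields $SH_*^{(-\nu_{-,i},\nu_{+,j})}(V)\cong FH_{II}(H_{i,j})$ directly, and the identification with the doubly-directed limit follows as in Lemma~\ref{lem:SHWgeometric-W}. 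Your strategy could perhaps be repaired by first homotoping $H_{i,j}$ within $\wh W_F$ to a Hamiltonian that agrees with $\tilde K_{i,j}$ on all of $F'$ (as in the passage from $H_\nu$ to $\ol H_\nu$ in Lemma~\ref{lem:Hnu-III}), but justifying that this homotopy induces an isomorphism on $FH_{II}$ is essentially the paper's argument.
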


\begin{proof} The proof is very much similar to that of Lemma~\ref{lem:SHWgeometric-W}.  
Recalling that the slopes near $\p^\pm V$ for $H_{i,j}$ are $-\nu_{-,i}$ and $\nu_{+,j}$, the key identity is 
\begin{equation} \label{eq:tradeactionforII}
SH_*^{(-\nu_{-,i},\nu_{+,j})}(V)\cong FH_{II}(H_{i,j}).
\end{equation}
To prove~\eqref{eq:tradeactionforII} recall from
Lemma~\ref{lem:SHV-alternate-direct-limit} that $SH_*^{(a,b)}(V)$ can
be expressed as a direct limit over Hamiltonians in $H^W(V;F)$ of
Floer homology groups truncated in the action window $(a,b)$. In
particular, considering a decreasing sequence $i_k\to-\infty$ and an
increasing sequence $j_k\to\infty$ as $k\to\infty$
with $i_k\leq -j_k$, we have $SH_*^{(a,b)}(V)={\displaystyle
  \lim^{\longrightarrow}_{k\to\infty}} \,
FH_*^{(a,b)}(H_{i_k,j_k})$. Here the direct limit is understood with
respect to continuation maps $FH_*^{(a,b)}(H_{i_k,j_k})\to
FH_*^{(a,b)}(H_{i_{k'},j_{k'}})$ induced by non-increasing homotopies. 

We claim that for $k$ large enough such that $\nu_{+,j_k}\geq-a$ 
we have $FC_*^{(a,b)}(H_{i_k,j_k})=FC_{II}^{(a,b)}(H_{i_k,j_k})$. The
proof is similar to the proof of Lemma~\ref{lem:SHV-alternate-direct-limit}:
We need to show that the actions of orbits in groups $F$, $I$ and
$III$ are below $a$. For the groups $F$ and $I$ this is obvious. The 
actions within group $III$ are ordered as $III^0<III^{-+}$. The maximal action of 
the orbits in group $III^{-}$ is bounded above by 
$-\nu_+(R-1)+R(\nu_+-\eta_{\nu_+}) = \nu_+-R\eta_{\nu_+} < -\nu_+\leq a$,
where we have dropped the index $j_k$ and the first inequality follows
from condition~\eqref{eq:Hmunutau-conditions-1-stronger}. Similarly, 
the maximal action of the orbits in group $III^+$ is bounded above by 
$-\nu_+(R-1)+R(\tau-\eta_{\tau}) < -\nu_+(R-1)+R(\nu_+-\eta_{\nu_+})<a$, 
where the first inequality follows from~\eqref{eq:Hmunutau-conditions-2} 
and the second one from the one for group $III^-$.   
Combining this with the previous paragraph we obtain
$$
SH_*^{(a,b)}(V)={\displaystyle \lim^{\longrightarrow}_{k\to\infty}}
\, FH_{II}^{(a,b)}(H_{i_k,j_k}).  
$$
Assume now without loss of generality that $-\nu_{-,i_k}\le a$ and
$\nu_{+,j_k}\ge b$. The smoothings of any such two Hamiltonians
$H_{i_k,j_k}$ and $H_{i_{k'},j_{k'}}$, $k\le k'$ can be constructed so
that they coincide in the neighborhood of $V$ where the periodic
orbits in group $II$ for $H_{i_k,j_k}$ appear. As such, the
continuation map $FC_{II}^{(a,b)}(H_{i_k,j_k})\to
FC_{II}^{(a,b)}(H_{i_{k'},j_{k'}})$, which is upper triangular if we
arrange the generators in increasing order of the action, has diagonal
entries equal to $+1$ and is therefore an isomorphism. This proves
that we have a canonical isomorphism
$FH_{II}^{(a,b)}(H_{i_k,j_k})\cong SH_*^{(a,b)}(V)$ for all $k$ (such
that $-\nu_{-,i_k}\le a$ and $\nu_{+,j_k}\ge b$). 
This implies~\eqref{eq:tradeactionforII} by choosing $a=-\nu_{-,i}$ and $b=\nu_{+,j}$. 

A variant of this argument shows that, under the isomorphism~\eqref{eq:tradeactionforII}, the continuation maps $FH_{II}(H_{i',j})\to FH_{II}(H_{i,j})$, $i'\leq i$ and $FH_{II}(H_{i,j})\to FH_{II}(H_{i,j'})$, $j\leq j'$ induced by a non-decreasing homotopy, respectively by a non-increasing homotopy, coincide with the canonical maps $SH_*^{(-\nu_{-,i'},\nu_{+,j})}(V)\to SH_*^{(-\nu_{-,i},\nu_{+,j})}(V)$ and $SH_*^{(-\nu_{-,i},\nu_{+,j})}(V)\to SH_*^{(-\nu_{-,i},\nu_{+,j'})}(V)$, respectively. The conclusion of the Lemma follows in the case $\heartsuit=\varnothing$.

The proof in the case $\heartsuit\neq\varnothing$ is similar, as in Lemma~\ref{lem:SHWgeometric-W}.
\end{proof}

\begin{lemma}\label{lem:SHWVgeometric-WV-bottom-top} We have isomorphisms 
$$
SH_*^\heartsuit(W^{bottom},\p^+W^{bottom})\cong \lim^{\longrightarrow}_j \lim^{\longleftarrow}_i FH_{I^\heartsuit}(H_{i,j})
$$
and
$$
SH_*^\heartsuit(W^{top},\p^-W^{top})\cong \lim^{\longrightarrow}_j \lim^{\longleftarrow}_i FH_{III^\heartsuit}(H_{i,j})
$$
for $\heartsuit\in\{\varnothing,\ge 0, >0,=0,\le 0,<0\}$.
\end{lemma}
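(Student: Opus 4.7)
The plan is to mimic the strategies of Lemmas~\ref{lem:SHWgeometric-W} and~\ref{lem:SHVgeometric-V}, leveraging Lemma~\ref{lem:SHWgeometric-W-rel} applied to the sub-cobordism $W^{bottom}$ (with its induced filling $F$) or $W^{top}$ (with its induced filling $F\circ W^{bottom}\circ V$). We focus on the first isomorphism; the second is entirely analogous by the symmetric role played by group $III$.

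First I would recast the behaviour of $H_{i,j}$ near $W^{bottom}$ as a genuine Hamiltonian of type $K^+$ on $\wh{F\circ W^{bottom}}$ in the sense of~\S\ref{sec:Ham-W}. Concretely, let $K^+_i:\wh{F\circ W^{bottom}}\to\R$ be the Hamiltonian which coincides with $H_{i,j}$ on $F\cup([\delta_i\epsilon_i,\epsilon_i]\times\p F)\cup(\epsilon_i W^{bottom})\cup([\epsilon_i,1]\times\p^- V)$, and which continues linearly with slope $-\nu_{-,i}$ on $[1,\infty)\times\p^+W^{bottom}$ (identifying $\p^+W^{bottom}=\p^- V$ and rescaling the cylindrical coordinate if desired). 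After Liouville rescaling by $1/\epsilon_i$, $K^+_i$ is a Hamiltonian of the type $K^+_{\mu_i,\nu_{-,i},\delta_i}$ for the filled cobordism $(W^{bottom},F)$, and the conditions~\eqref{eq:another-equation} required in Lemma~\ref{lem:SHWgeometric-W-rel}(b) are satisfied by the choices of parameters in~\S\ref{sec:Ham-V}. As $i\to-\infty$ the family $K^+_i$ is cofinal among Hamiltonians of type $K^+$ for $W^{bottom}$, so by Lemma~\ref{lem:SHWgeometric-W-rel}(b):
$$
\lim^{\longleftarrow}_i FH_{I^\heartsuit}(K^+_i)\cong SH_*^\heartsuit(W^{bottom},\p^+W^{bottom}).
$$

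The core step is then to produce a canonical isomorphism of inverse systems $FH_{I^\heartsuit}(H_{i,j})\cong FH_{I^\heartsuit}(K^+_i)$ that is in addition independent of $j$. The generators of $FC_{I^\heartsuit}(H_{i,j})$ and $FC_{I^\heartsuit}(K^+_i)$ are in tautological bijection because the Hamiltonians coincide in the region where the orbits of group $I$ are located. To identify the differentials and the continuation maps, I would use the confinement lemmas from~\S\ref{sec:confinement} exactly as in the proofs of Lemma~\ref{lem:Hnu-I,II-}, Lemma~\ref{lem:SHVgeometric-V} and Lemma~\ref{lem:ordering-homotopy}: choosing an almost complex structure that is cylindrical and sufficiently stretched along $\p^-V$, Lemma~\ref{lem:no-escape} applied to the compact set $F\cup([\delta_i\epsilon_i,\epsilon_i]\times\p F)\cup(\epsilon_i W^{bottom})\cup([\epsilon_i,1]\times\p^-V)$ forces every Floer cylinder with both asymptotes in group $I$ to be contained in this region, while Lemma~\ref{lem:neck} and Lemma~\ref{lem:asy} prohibit the relevant escape scenarios into $V$, $W^{top}$ or the cylindrical end of $\wh{F\circ W^{bottom}}$. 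Hence the $s$-independent Floer differentials agree, as do the continuation chain maps for the inverse system in $i$ (taking the obvious homotopy from $K^+_{i'}$ to $K^+_i$ on $F\cup W^{bottom}\cup[\epsilon_i,\infty)\times\p^+W^{bottom}$, which coincides near the region of group $I$ orbits with the homotopy from $H_{i',j}$ to $H_{i,j}$).

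Finally I would argue that the direct limit over $j$ is trivial, i.e. the continuation map $FH_{I^\heartsuit}(H_{i,j})\to FH_{I^\heartsuit}(H_{i,j'})$ is an isomorphism for $j\le j'$ (with $i$ fixed). This is because one can choose the non-increasing homotopy from $H_{i,j'}$ to $H_{i,j}$ to be constant on $F\cup W^{bottom}\cup([\epsilon_i,1]\times\p^-V)$, modifying the Hamiltonian only in $V\cup W^{top}\cup [R_j,\infty)\times\p^+W$; then the same confinement arguments as in the preceding step force all continuation Floer cylinders between two orbits of group $I$ to lie where the homotopy is trivial, so the chain-level continuation map is the identity under the tautological bijection of generators. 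Combining these three ingredients yields
$$
\lim^{\longrightarrow}_j\lim^{\longleftarrow}_i FH_{I^\heartsuit}(H_{i,j})\cong \lim^{\longleftarrow}_i FH_{I^\heartsuit}(K^+_i)\cong SH_*^\heartsuit(W^{bottom},\p^+W^{bottom}).
$$
The hard part will be the second step: carrying out the confinement arguments uniformly in the $\heartsuit$ decoration (so that action-filtered sub- and quotient complexes are preserved by all the chain-level identifications) and producing a genuine morphism of inverse systems rather than just pointwise isomorphisms. This requires checking that the induced identifications commute with continuation maps up to chain homotopy, which can be done using the higher-homotopy framework of~\S\ref{sec:Floer-continuation} and the fact that our confinement arguments apply equally well in 1-parameter families of homotopies. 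The second isomorphism is proved in the same way, using $K^-$-type Hamiltonians for $W^{top}$ viewed as a filled Liouville cobordism with filling $F\circ W^{bottom}\circ V$, and invoking Lemma~\ref{lem:SHWgeometric-W-rel}(a) in place of Lemma~\ref{lem:SHWgeometric-W-rel}(b); here the inverse limit over $i$ is trivial and the direct limit over $j$ recovers $SH_*^\heartsuit(W^{top},\p^-W^{top})$.
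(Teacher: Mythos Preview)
Your proposal is correct and follows essentially the same route as the paper: argue that one of the two limits is trivial (the direct limit over $j$ for group $I$, the inverse limit over $i$ for group $III$), use Lemma~\ref{lem:no-escape} to confine the relevant Floer and continuation trajectories to a neighborhood of $F\circ W^{bottom}$ (resp.\ $V\circ W^{top}\circ[1,\infty)\times\p^+W$), then shift and Liouville-rescale to obtain a cofinal family of $K^+$-type (resp.\ $K^-$-type) Hamiltonians to which Lemma~\ref{lem:SHWgeometric-W-rel} applies. The paper's argument is slightly leaner in that it invokes only Lemma~\ref{lem:no-escape} for the confinement step (neck-stretching and Lemma~\ref{lem:asy} are not needed here since both asymptotes already lie in group $I$), and it does not single out the $\heartsuit$-compatibility as a separate difficulty, but these are differences in emphasis rather than in strategy.
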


\begin{proof}
(1) We prove the first isomorphism. Since the group of orbits $I$ is
  located in the region where the Hamiltonians $H_{i,j}$ have negative
  slope the direct limit over $j$ plays no role and we can assume
  without loss of generality that $j=j_0$ is constant. The Floer
  trajectories involved in the differential for $FC_I(H_{i,j})$ and
  also the relevant continuation Floer trajectories are confined to a
  neighborhood of $F\circ W^{bottom}$ by Lemma~\ref{lem:no-escape}. We
  can thus replace the Hamiltonians $H_i=H_{i,j_0}$ by Hamiltonians
  $\wt H_i$ which coincide with $H_i$ in $F\circ W^{bottom}\circ V$
  and are constant equal to $0$ on $V\circ
  W^{top}\circ[1,\infty)\times\p^+W$. We can further shift these
    Hamiltonians to $\ol H_i=\wt H_i-\nu_{-,i}(1-\epsilon_i)$ so that
    the orbits in group $I$ lie on level $0$, and further replace $\ol
    H_i$ by $\cH_i=\epsilon_i\ol H_i\circ \varphi_Z^{\ln
      1/\epsilon_i}$, so that the orbits in group $I$ for $\cH_i$ lie
    in a neighborhood of $W^{bottom}$, and the slopes of $\cH_i$ in
    the linear regions are the same as the slopes of $\ol
    H_i$. Finally, we can further replace the Hamiltonians $\cH_i$ by
    $\wt \cH_i$ defined on $\wh W^{bottom}_F$ which coincide with
    $\cH_i$ on $F\circ W^{bottom}$ and continue on
    $[1,\infty)\times\p^+W^{bottom}$ linearly with the same slope
      $-\nu_{-,i}$. 
The resulting inverse system is cofinal and, by Lemma~\ref{lem:SHWgeometric-W-rel}(b), it computes $SH_*^\heartsuit(W^{bottom},\p^+W^{bottom})$.

(2) We prove the second isomorphism. Since the group of orbits $III$
is located in the region where the Hamiltonians $H_{i,j}$ have
positive slope, the inverse limit over $i$ plays no role. {\color{black}
Consider the Hamiltonian $\wt H_j$ which
coincides with $H_{i,j}$ on $V\circ W^{top}\circ
[1,\infty)\times\p^+W$, and is constant equal to $0$ on $F\circ
  W^{bottom}\circ V$. The complex $FC_{III}(\wt H_j)$ is well-defined by the same action considerations which show that $III_{H_{i,j}}<II^{0+}_{H_{i,j}}$. Consider a non-increasing homotopy from $H_{i,j}$ at $-\infty$ to $\wt H_j$ at $+\infty$, and also the reverse non-decreasing homotopy from $\wt H_j$ at $-\infty$ to $H_{i,j}$ at $+\infty$. We claim that these homotopies induce chain maps between $FC_{III}(H_{i,j})$ and $FC_{III}(\wt H_j)$ which are homotopy inverses to each other. We first prove that $III_{\wt H_j}\prec (F,I,II)_{H_{i,j}}$ and $III_{H_{i,j}}\prec (F,II)_{\wt H_j}$, where in the latter case $F$ stands for critical points in $F\circ W^{bottom}$ and $II=II^{0+}$. The first relation follows from Lemma~\ref{lem:neck} for $(F,I,II^-)_{H_{i,j}}$ and from action considerations for $II^{0+}_{H_{i,j}}$ since the homotopy is non-increasing. The second relation follows from Lemmas~\ref{lem:no-escape} and~\ref{lem:asy} for $III^-_{H_{i,j}}$, from Lemma~\ref{lem:constant} for $III^0_{H_{i,j}}$, and it also follows for $III^+_{H_{i,j}}$ by specializing to homotopies which are compositions of ``small distance" homotopies as in the proof of Lemma~\ref{lem:ordering-homotopy}. As a result, the induced chain maps between $FC(H_{i,j})$ and $FC(\wt H_j)$ preserve the subcomplexes generated by $III_{H_{i,j}}$ and $III_{\wt H_j}$. These chain maps are homotopy inverses of each other, and a similar argument shows that the corresponding chain homotopies also preserve the subcomplexes generated by $III_{H_{i,j}}$ and $III_{\wt H_j}$. This proves the claim. 
}

We can now further shift these Hamiltonians $\wt H_j$ to $\ol
  H_j=\wt H_j-\nu_{+,j}(R_j-1)$ so that the orbits in group $III$ lie
  on level $0$, and further replace $\ol H_j$ by $\cH_j=R_j\ol
  H_j\circ \varphi_Z^{\ln 1/R_j}$, so that the orbits in group $III$
  for $\cH_j$ lie in a neighborhood of $W^{top}$. 
The resulting direct system is cofinal and, by Lemma~\ref{lem:SHWgeometric-W-rel}(a), it computes $SH_*^\heartsuit(W^{top},\p^- W^{top})$. 
\end{proof}

{\color{black}
Lemmas~\ref{lem:SHVgeometric-V} and~\ref{lem:SHWVgeometric-WV-bottom-top} imply
that for all flavors $\heartsuit$ we have isomorphisms 
$$
SH_*^\heartsuit(W^{top},\p^-W^{top})\oplus SH_*^\heartsuit(W^{bottom},\p^+W^{bottom})\cong \lim^{\longrightarrow}_j \lim^{\longleftarrow}_i FH_{(I,III)^\heartsuit}(H_{i,j}).
$$
On the other hand, by the Excision Theorem~\ref{thm:excision} we have isomorphisms
$$
SH_*^\heartsuit(W,V) \cong SH_*^\heartsuit(W^{bottom},\p^-V) \oplus SH_*^\heartsuit(W^{top},\p^+V). 
$$
Combining these isomorphisms we obtain

\begin{corollary} \label{cor:SHWVgeometric-WV} 
We have isomorphisms 
$$
SH_*^\heartsuit(W,V) \cong \lim^{\longrightarrow}_j \lim^{\longleftarrow}_i FH_{(I,III)^\heartsuit}(H_{i,j})
$$
for $\heartsuit\in\{\varnothing,\ge 0, >0,=0,\le 0,<0\}$.
\hfill{$\square$}
\end{corollary}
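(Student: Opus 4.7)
The plan is to show that the doubly-directed system $FC_{(I,III)^\heartsuit}(H_{i,j})$ splits as a direct sum of subsystems indexed by the groups $I$ and $III$ of periodic orbits, and then identify each summand with the corresponding relative symplectic homology group via Lemma~\ref{lem:SHWVgeometric-WV-bottom-top}, before finally reassembling the result using the Excision Theorem~\ref{thm:excision}.

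First, I would note that Lemma~\ref{lem:ordering} gives both $I\prec III$ and $III\prec I$ for each individual Hamiltonian $H_{i,j}$, so that the Floer chain complex splits at chain level as
$$
  FC_{(I,III)^\heartsuit}(H_{i,j}) = FC_{I^\heartsuit}(H_{i,j}) \oplus FC_{III^\heartsuit}(H_{i,j}).
$$
Next, Lemma~\ref{lem:ordering-homotopy} shows that this decomposition is preserved by the continuation maps defining both the direct system in $j$ and the inverse system in $i$, so the splitting is a splitting of doubly-directed systems in $\mathsf{Kom}$. Since direct and inverse limits of field-valued vector spaces commute with finite direct sums (and the inverse limits here are taken over systems of finite-dimensional vector spaces, where exactness presents no issue), passing to the limit gives
$$
  \lim^{\longrightarrow}_j \lim^{\longleftarrow}_i FH_{(I,III)^\heartsuit}(H_{i,j})
  \;\cong\; \lim^{\longrightarrow}_j \lim^{\longleftarrow}_i FH_{I^\heartsuit}(H_{i,j}) \;\oplus\; \lim^{\longrightarrow}_j \lim^{\longleftarrow}_i FH_{III^\heartsuit}(H_{i,j}).
$$

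Now I would apply Lemma~\ref{lem:SHWVgeometric-WV-bottom-top} to identify the two summands on the right with $SH_*^\heartsuit(W^{bottom},\p^+W^{bottom})$ and $SH_*^\heartsuit(W^{top},\p^-W^{top})$ respectively. Under the identifications $\p^+W^{bottom}=\p^-V$ and $\p^-W^{top}=\p^+V$, this reads
$$
  \lim^{\longrightarrow}_j \lim^{\longleftarrow}_i FH_{(I,III)^\heartsuit}(H_{i,j})
  \;\cong\; SH_*^\heartsuit(W^{bottom},\p^-V) \oplus SH_*^\heartsuit(W^{top},\p^+V).
$$
Finally, the Excision Theorem~\ref{thm:excision} identifies the right-hand side with $SH_*^\heartsuit(W,V)$, completing the argument.

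Since all pieces of the argument are already in place, there is no substantive obstacle; the only point requiring care is the bookkeeping that the chain-level splitting from Lemma~\ref{lem:ordering} is compatible with \emph{both} the non-increasing homotopies in $j$ and the non-decreasing homotopies in $i$ used to define the doubly-directed system, which is precisely the content of the last part of Lemma~\ref{lem:ordering-homotopy}.
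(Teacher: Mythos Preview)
Your proposal is correct and follows essentially the same approach as the paper: the paper combines the splitting of $FC_{(I,III)^\heartsuit}(H_{i,j})$ into non-interacting summands $FC_{I^\heartsuit}\oplus FC_{III^\heartsuit}$ (established at the end of the proof of Lemma~\ref{lem:ordering-homotopy}) with Lemma~\ref{lem:SHWVgeometric-WV-bottom-top} and the Excision Theorem~\ref{thm:excision}, exactly as you do.
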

}
}

{\color{black}
\subsubsection{The transfer map revisited} \label{sec:transfer-map-revisited}
Consider again a Hamiltonian $H=H_{\mu,\nu_\pm,\tau}$ as in Figure~\ref{fig:H-one-step} above. 
We associate to it a new Hamiltonian $L\leq H$ defined as follows: it
is constant equal to $\mu(\eps-\delta\eps) + \nu_-(1-\eps)$ on
$F\setminus[\delta\eps,1]\times\p F$, it is linear of slope $-\mu$ on
$[\delta\eps,\xi]\times\p F$, it is constant equal to $0$ on
$[\xi,1]\times\p F\cup W\cup[1,R]\times\p^+W$, and it is linear of
slope $\tau$ on $[R,\infty)\times\p^+W$. See Figure~\ref{fig:L}. 

\begin{figure}
         \begin{center}
\input{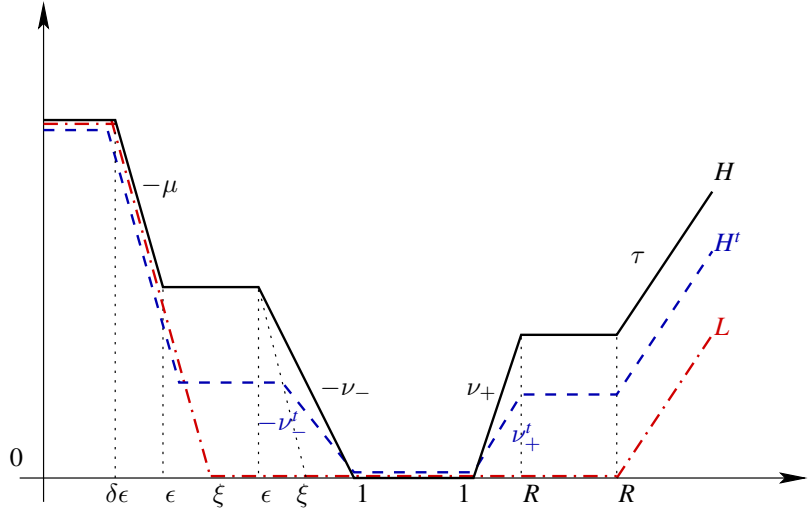}
         \end{center}
\caption{Hamiltonian $L$ for the construction of the transfer map \label{fig:L}}
\end{figure}

Here the constant $\xi$ is determined by the construction and given by
$$
   \xi = \frac{\nu_-}{\mu}(1-\eps)+\eps \in (\eps,1). 
$$
The orbits of the Hamiltonian $L$ fall as usual into 5 groups
$F^{0+},I^{-0+}$ and we have $F<I^-<I^0<I^+$. Indeed, the smallest
action of an orbit in group $I^-$ is $-\xi(\mu-\eta_\mu)$,
whereas the largest action of an orbit in group $F$ is
$-\mu(\xi-\delta\epsilon)$, and we have
$-\mu(\xi-\delta\epsilon)<-\xi(\mu-\eta_\mu)$, which is equivalent to
$\mu\delta\eps<\xi\eta_\mu$, in view of $\mu\delta<\eta_\mu$ and
$\epsilon<\xi$. {\color{black}Arguing differently, for the Hamiltonian $L$ we have $F\prec I$ regardless of the choice of parameters using Lemmas~\ref{lem:no-escape},~\ref{lem:asy} and~\ref{lem:constant}, and the orbits within each of the groups $F$ and $I$ are naturally ordered by the action as $F^+<F^0$ and $I^-<I^0<I^+$.}

Consider now a Hamiltonian $K:=K_{\mu,\tau,\delta'}$ as in
Figure~\ref{fig:Kmutau}, with $\delta'\in(0,1)$ such that
$\mu\delta'<\eta_\mu$ and $\mu(1-\delta')>\mu(\xi-\delta\eps)$,
i.e. the maximal level of $K$ is larger than the maximal level of $L$.  
We then have $L\leq K$.

{\color{black}\begin{lemma}\label{lem:L-K}
The homotopy from $K$ to $L$ given by slow convex interpolation induces 
for all flavors $\heartsuit$ homotopy equivalences
$$
   FC_{I^\heartsuit}(L)\stackrel{\sim}\longrightarrow FC_{I^\heartsuit}(K).
$$
\end{lemma}

\begin{proof}
Although the homotopy is decreasing in the convex end, the Floer equation remains unchanged in the region $\{r\ge R\}$ where the Hamiltonians $L$ and $K$ have the same slope. So the maximum principle applies and the continuation map $FC(L)\to FC(K)$ is well-defined. It is a homotopy equivalence with homotopy inverse given by the continuation map induced by the reverse homotopy from $L$ to $K$. 

We assume without loss of generality that $L$ has no critical points in $[\xi,1]\times \p F \cup [1,R]\times \p^+W$ and that it coincides with $K$ on $W$. 

It is useful to define the following Hamiltonians: $LK$ is equal to $L$ on $F$ and is equal to $K$ on $W \cup [1,\infty)\times \p_+W$, and $KL$ is equal to $K$ on $F\circ W$ and is equal to $L$ on $[1,\infty)\times \p^+W$. We accordingly have chain homotopy equivalences $FC(L)\to FC(KL)\to FC(K)$ and also $FC(L)\to FC(LK)\to FC(K)$ induced respectively by homotopies supported in the positive/negative end. We will show that we have corresponding chain homotopy equivalences $FC_{I^\heartsuit}(L)\to FC_{I^\heartsuit}(KL)\to FC_{I^\heartsuit}(K)$ for all flavors $\heartsuit$. The same statement holds if we replace $KL$ with $LK$, but we will not use it. 

We first consider the homotopies connecting $L$ and $KL$, supported in the negative end, and show that they induce chain maps $FC_{I^\heartsuit}(L)\to FC_{I^\heartsuit}(KL)$ and $FC_{I^\heartsuit}(KL)\to FC_{I^\heartsuit}(L)$ which are homotopy inverses of each other for all flavors $\heartsuit$. We first consider the non-decreasing homotopy from $L$ to $KL$, constant on $W\cup [1,\infty)\times\p^+W$. Each element in the homotopy is of the following form: outside $F$ it coincides with $L$, and inside $F$ it is  linear of slope $-\mu$ in some region $[a,b]\times \p F$ with $0<a<b\le 1$ depending continuously on the Hamiltonian; it is constant equal to $0$ on $\{b\le r\le 1\}$ and it is constant equal to $\mu(b-a)$ on $\{r\le a\}$. Also, each element in the homotopy satisfies $F<I^-<I^0<I^+$. We can decompose the homotopy into ``small distance" homotopies of gap $e>0$ small enough so that, at the endpoints $L_\pm$ of each such homotopy, we have $F_{L_+}<I_{L_-}-e$, $I^-_{L_+}<I^{0+}_{L_-}-e$, $I^0_{L_+}<I^+_{L_-}-e$. This ensures that we have induced chain maps $FC_{I^\heartsuit}(L_+)\to FC_{I^\heartsuit}(L_-)$ for all flavors $\heartsuit$, and the result of the composition is a continuation chain map $FC_{I^\heartsuit}(KL)\to FC_{I^\heartsuit}(L)$. By considering the reverse homotopy, the same argument produces a chain map $FC_{I^\heartsuit}(L)\to FC_{I^\heartsuit}(KL)$. The same argument applied in $1$-parametric families shows that each of the small distance chain maps $FC_{I^\heartsuit}(L_+)\to FC_{I^\heartsuit}(L_-)$ is a chain homotopy equivalence, and so is their composition. 

The same arguments show that we have chain homotopy equivalences $FC_{I^\heartsuit}(KL)\to FC_{I^\heartsuit}(K)$ for all flavors $\heartsuit$. By composition we obtain chain homotopy equivalences $FC_{I^\heartsuit}(L)\to FC_{I^\heartsuit}(K)$ for all flavors $\heartsuit$. 
\end{proof}

\noindent {\bf Remark.} We have used an argument based on ``small distance" isomorphisms also in the proof of Lemma~\ref{lem:ordering-homotopy}. It is likely that it can be used in order to simplify further the proof of Lemma~\ref{lem:ordering-homotopy}. 

}

Consider now a doubly-directed system $H_{i,j}$ as in Section~\ref{sec:Ham-V}.
Let $L_{i,j}$ and $K_{i,j}$ be the Hamiltonians associated
to $H_{i,j}$ as in the previous paragraph. We turn $L_{i,j}$ into a doubly
directed system in {\sf Kom} by composing the continuation maps $FC(K_{i',j})\to
FC(K_{i,j})$ and $FC(K_{i,j})\to FC(K_{i,j'})$ with the canonical maps in
Lemma~\ref{lem:L-K} and their inverses. (Note that in general we do
not have $L_{i',j}\geq L_{i,j}$ for $i'\leq i\leq -j$.) Then all the
results for the system $K_{i,j}$ in \S\ref{sec:Ham-W} carry over
to the system $L_{i,j}$. 

Recall that $L_{i,j}\leq H_{i,j}$ and the orbits in group $F$ for
$L_{i,j}$ and $H_{i,j}$ coincide. Therefore, by Lemma~\ref{lem:ordering} 
the actions of the orbit groups satisfy $F_{L_{i,j}}<
(I,II,III)_{H_{i,j}}$. We thus obtain induced chain maps  
$$
   f_{i,j}:FC_{I}(L_{i,j})\to FC_{I,II,III}(H_{i,j}) \to FC_{II}(H_{i,j})
$$
which define a morphism of doubly-directed systems in {\sf Kom}. Here
the first map is the continuation map and the second one the
projection onto the quotient complex in view of Lemma~\ref{lem:ordering}. 
Since these maps preserve the filtration by action, we also have
induced chain maps
$$
   f_{i,j}^\heartsuit:FC_{I^\heartsuit}(L_{i,j})\to FC_{II^\heartsuit}(H_{i,j})
$$
for $\heartsuit\in\{\varnothing,\ge 0, >0,=0,\le 0,<0\}$, which define
morphisms of doubly-directed systems in {\sf Kom}.  
We denote $(f_{i,j}^\heartsuit)_*$ the maps induced in homology. 

\begin{lemma}
Under the isomorphisms of Lemmas~\ref{lem:SHWgeometric-W},
\ref{lem:SHVgeometric-V} and~\ref{lem:L-K} we have 
$$
f_!^\heartsuit = \lim^{\longrightarrow}_j \lim^{\longleftarrow}_i \, (f_{i,j}^\heartsuit)_*,
$$
where $f_!^\heartsuit: SH_*^\heartsuit(W)\to SH_*^\heartsuit(V)$ is
the transfer map from Definition~\ref{defi:transfer-map}. 
\end{lemma}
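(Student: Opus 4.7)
The plan is to exhibit both the transfer map $f_!^\heartsuit$ and the limit of the induced maps $(f_{i,j}^\heartsuit)_*$ as coming from the same collection of Floer continuation maps between admissible Hamiltonians, after suitable cofinal replacements. The three main ingredients will be: (i) replacing the cofinal family $K_{i,j}$ of Lemma~\ref{lem:SHWgeometric-W} by the family $L_{i,j}$; (ii) showing that the continuation map $FC(L_{i,j})\to FC(H_{i,j})$ automatically lands, after projection, in $FC_{II}(H_{i,j})$; and (iii) identifying this projected continuation map with the chain-level transfer map from the definition.

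First, observe that each $L_{i,j}$ belongs to $\cH(W;F)$ (it vanishes on $W$ and is linear at infinity with non-critical slopes), while each $H_{i,j}$ belongs to $\cH^W(V;F)$ and satisfies $L_{i,j}\le H_{i,j}$. By Lemma~\ref{lem:L-K} and Lemma~\ref{lem:SHWgeometric-W}, the system $\{FC_{I^\heartsuit}(L_{i,j})\}$ is isomorphic (as a doubly directed system in {\sf Kom}) to $\{FC_{I^\heartsuit}(K_{i,j})\}$, and its double limit computes $SH_*^\heartsuit(W)$. Moreover, $\{L_{i,j}\}$ and $\{H_{i,j}\}$ are each cofinal in $\cH(W;F)$ and $\cH^W(V;F)$ respectively, so by the very definition of $f_!^\heartsuit$ (Definition~\ref{defi:transfer-map}), combined with Lemma~\ref{lem:SHV-alternate-direct-limit}, the transfer map is computed as the double limit of the Floer continuation maps $FH_*^{(a,b)}(L_{i,j})\to FH_*^{(a,b)}(H_{i,j})$ induced by any non-increasing homotopy from $H_{i,j}$ to $L_{i,j}$, where $(a,b)$ is chosen (depending on $\heartsuit$, $i$, $j$) to single out the groups $I^\heartsuit$ on the source and $II^\heartsuit$ on the target.

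Next, I would analyze the continuation map $\Phi_{i,j}: FC(L_{i,j})\to FC(H_{i,j})$ itself. By the action inequality $F_{L_{i,j}}<(I,II,III)_{H_{i,j}}$ noted just before the statement, together with Lemma~\ref{lem:gap}, the continuation map factors through the quotient $FC_{I,II,III}(H_{i,j})$, hence its restriction to $FC_I(L_{i,j})$ lands in $FC_{I,II,III}(H_{i,j})$. Composing with the canonical projection $FC_{I,II,III}(H_{i,j})\to FC_{II}(H_{i,j})$ (which is legitimate thanks to the orderings $I,III\prec II$ established in Lemma~\ref{lem:ordering}) yields precisely $f_{i,j}$. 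For general $\heartsuit$ one uses that this continuation map preserves the action filtration that distinguishes $I^\heartsuit$ on the source and $II^\heartsuit$ on the target, giving the maps $f_{i,j}^\heartsuit$ as in the statement preceding the lemma. The only non-trivial point is to verify that for the chosen action windows, the continuation map from $FC_{I^\heartsuit}(L_{i,j})$ into $FC(H_{i,j})$ has no component in the complementary quotients $FC_{I}(H_{i,j})\oplus FC_{III}(H_{i,j})$ after one quotients by $FC_F(H_{i,j})$; this follows again from the confinement lemmas of~\S\ref{sec:confinement} applied to the homotopy, exactly as in the proof of Lemma~\ref{lem:ordering-homotopy}.

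Finally, passing to the double limit and using naturality of continuation maps under composition (which holds up to coherent homotopy as discussed in~\S\ref{sec:Floer-continuation}), we conclude that the map induced by $\{f_{i,j}^\heartsuit\}$ coincides with $f_!^\heartsuit$ under the isomorphisms from Lemmas~\ref{lem:SHWgeometric-W}, \ref{lem:SHVgeometric-V} and~\ref{lem:L-K}. I expect the main obstacle to be purely bookkeeping: verifying that the projection $FC_{I,II,III}(H_{i,j})\to FC_{II}(H_{i,j})$ is compatible with all the ingredients of the doubly directed systems (the maps induced by monotone homotopies in $i$ and $j$), and that the isomorphism of Lemma~\ref{lem:L-K} intertwines the continuation maps $FC(K_{i,j})\to FC(H_{i,j})$ and $FC(L_{i,j})\to FC(H_{i,j})$ up to homotopy, which in turn reduces to the homotopy invariance of Floer continuation maps from Lemma~\ref{lem:continuation_maps}.
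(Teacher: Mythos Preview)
Your overall strategy matches the paper's: compute both sides via the cofinal pairs $(L_{i,j},H_{i,j})$ and identify the limit of $(f_{i,j})_*$ with the transfer map. However, there is a genuine imprecision at the key step. You assert that ``$(a,b)$ is chosen \dots\ to single out the groups $I^\heartsuit$ on the source and $II^\heartsuit$ on the target,'' but no single action window does both jobs: the orbits of type $I$ for $L_{i,j}$ live in the window $(-\mu_i,\tau_j)$, while the orbits of type $II$ for $H_{i,j}$ live in the window $(-\nu_{-,i},\nu_{+,j})$, and by construction $\mu_i>\nu_{-,i}$ and $\tau_j<\nu_{+,j}$. Your attempt to patch this in the next paragraph by invoking confinement lemmas to show the continuation map has no $I$- or $III$-component is both unnecessary (the map $f_{i,j}$ is \emph{defined} by composing with the projection to $FC_{II}$) and not established (Lemma~\ref{lem:ordering-homotopy} treats homotopies among the $H_{i,j}$, not the homotopy from $L_{i,j}$ to $H_{i,j}$).

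The paper resolves this by making the mismatch of action windows explicit and factoring through an intermediate object. Concretely, it shows that under the isomorphisms $FH_I(L_{i,j})\cong SH_*^{(-\mu_i,\tau_j)}(W)$ and $FH_{II}(H_{i,j})\cong SH_*^{(-\nu_{-,i},\nu_{+,j})}(V)$ (equations~\eqref{eq:tradeactionforI} and~\eqref{eq:tradeactionforII}), the map $(f_{i,j})_*$ fits into a commuting triangle
\[
\xymatrix{
SH_*^{(-\mu_i,\tau_j)}(W) \ar[rr]^{(f_{i,j})_*} \ar[dr]_{f_!^{(-\mu_i,\tau_j)}} & & SH_*^{(-\nu_{-,i},\nu_{+,j})}(V) \\
& SH_*^{(-\mu_i,\tau_j)}(V) \ar[ur] &
}
\]
where the diagonal map is the filtered transfer map and the right arrow is the canonical action-window enlargement/restriction. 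This factorization is what justifies the identification; once it is in place, the limits over $i\to-\infty$ and $j\to\infty$ give $f_!$ because $-\nu_{-,i}\to-\infty$ and $\tau_j\to\infty$, and the structure maps in both doubly directed systems correspond to action-window changes by Lemmas~\ref{lem:SHWgeometric-W} and~\ref{lem:SHVgeometric-V}.
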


\begin{proof} 
Recall from~\eqref{eq:tradeactionforI} and Lemma~\ref{lem:L-K} the isomorphisms 
$$
   SH_*^{(-\mu_i,\tau_j)}(W)\cong FH_{I}(K_{i,j})\cong FH_{I}(L_{i,j}).
$$
Recall also from~\eqref{eq:tradeactionforII} the isomorphism
$$
SH_*^{(-\nu_{-,i},\nu_{+,j})}(V)\simeq FH_{II}(H_{i,j}).
$$
Recall that $\mu_i\ge\nu_{-,i}$ and $\tau_j\le \nu_{+,j}$.
It follows from the proofs of Lemmas~\ref{lem:SHWgeometric-W}
and~\ref{lem:SHVgeometric-V} that the continuation map
$(f_{i,j})_*:FH_{I}(L_{i,j})\to FH_{II}(H_{i,j})$ coincides via the
above isomorphisms with the composition of the transfer map
$f_!^{(-\mu_i,\tau_j)}:SH_*^{(-\mu_i,\tau_j)}(W)\to
SH_*^{(-\mu_i,\tau_j)}(V)$ with the canonical map given by enlarging/restric\-ting
the action window $SH_*^{(-\mu_i,\tau_j)}(V)\to
SH_*^{(-\nu_{-,i},\nu_{+,j})}(V)$, i.e. 
$$
\xymatrix
{
SH_*^{(-\mu_i,\tau_j)}(W) \ar[rr]^{(f_{i,j})_*} \ar[dr]_{f_!^{(-\mu_i,\tau_j)}} & & SH_*^{(-\nu_{-,i},\nu_{+,j})}(V) \\
& SH_*^{(-\mu_i,\tau_j)}(V) \ar[ur] &
}. 
$$
Since $-\nu_{-,i}\to -\infty$ as $i\to-\infty$ and $\tau_j\to+\infty$ as $j\to+\infty$, and since the continuation maps in the doubly-directed systems for $L_{i,j}$ and $H_{i,j}$ correspond under the previous isomorphisms to enlarging/restricting the action windows (Lemmas~\ref{lem:SHWgeometric-W} and~\ref{lem:SHVgeometric-V}), we obtain 
$$
f_! = \lim^{\longrightarrow}_j \lim^{\longleftarrow}_i \, (f_{i,j})_*.
$$
This proves the lemma for $\heartsuit=\varnothing$. The proof for the
other values of $\heartsuit$ is entirely analogous. 
\end{proof}
}

\subsection{Symplectic homology of a pair as a homological mapping cone}

Let $f_{i,j}^\heartsuit$ be the chain maps
constructed in~\S\ref{sec:transfer-map-revisited}. The
discussion in~\S\ref{sec:cones} shows that the cones
$C(f_{i,j}^\heartsuit)$ form a doubly-directed system, and we define
(compare with Corollary~\ref{cor:SHWVgeometric-WV})
$$
   SH^{\heartsuit,cone}_*(W,V):=\lim^{\longrightarrow}_j \lim^{\longleftarrow}_i \, H_*(C(f_{i,j}^\heartsuit)).
$$

The goal of this section is to prove the following proposition.

\begin{proposition} \label{prop:SHrel-dynamical-cone}
Let $(W,V)$ be a cobordism pair. Then we have an isomorphism 
$$
   SH_*^{\heartsuit,cone}(W,V)\cong SH_*^\heartsuit(W,V)[-1]
$$
for $\heartsuit\in\{\varnothing,\ge 0, >0,=0,\le 0,<0\}$. 
\end{proposition}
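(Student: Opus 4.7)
The plan is to apply the octahedral axiom to a factorization of the chain-level transfer map, identify one of the resulting cones explicitly via the homological algebra of~\S\ref{sec:cones}, and then reduce to showing that an auxiliary continuation map becomes a quasi-isomorphism in the limit. First I would write $f_{i,j}^\heartsuit = p_{i,j} \circ g_{i,j}^\heartsuit$, where $g_{i,j}^\heartsuit : FC_{I^\heartsuit}(L_{i,j}) \to FC_{(I,II,III)^\heartsuit}(H_{i,j})$ is the Floer continuation map (well-defined since the relation $F_{L_{i,j}} \prec (I,II,III)_{H_{i,j}}$ was established in~\S\ref{sec:transfer-map-revisited} via Lemma~\ref{lem:gap}) and $p_{i,j} : FC_{(I,II,III)^\heartsuit}(H_{i,j}) \to FC_{II^\heartsuit}(H_{i,j})$ is the projection onto the quotient by the subcomplex $FC_{(I,III)^\heartsuit}(H_{i,j})$ (a subcomplex by the orderings $I, III \prec II$ of Lemma~\ref{lem:ordering}). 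Applying Lemma~\ref{lem:ip}(ii) to the short exact sequence
\[
0 \to FC_{(I,III)^\heartsuit}(H_{i,j}) \to FC_{(I,II,III)^\heartsuit}(H_{i,j}) \xrightarrow{p_{i,j}} FC_{II^\heartsuit}(H_{i,j}) \to 0,
\]
which is split as a sequence of $\mathfrak{k}$-vector spaces, yields a canonical isomorphism $C(p_{i,j}) \simeq FC_{(I,III)^\heartsuit}(H_{i,j})[-1]$ in {\sf Kom}, compatible with the doubly-directed structure maps.

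Next I would apply the octahedral axiom (TR5) to $f_{i,j}^\heartsuit = p_{i,j} \circ g_{i,j}^\heartsuit$ to produce the exact triangle in {\sf Kom}
\[
C(g_{i,j}^\heartsuit) \to C(f_{i,j}^\heartsuit) \to FC_{(I,III)^\heartsuit}(H_{i,j})[-1] \to C(g_{i,j}^\heartsuit)[-1].
\]
This triangle is natural with respect to the doubly-directed structure maps via the framework of~\S\ref{sec:directed-systems} applied to continuation maps as in~\S\ref{sec:Floer-continuation} (checking the doubly-directed analog of condition~(B) for continuation-map systems). Passing to homology and then to the first-inverse-then-direct limit, which is exact on doubly-directed systems of finite-dimensional $\mathfrak{k}$-vector spaces, then yields the exact triangle
\[
\lim{}^{\longrightarrow}_j \lim{}^{\longleftarrow}_i H_*(C(g_{i,j}^\heartsuit)) \to SH^{\heartsuit,cone}_*(W,V) \to SH^\heartsuit_*(W,V)[-1] \to \cdots,
\]
where the third term has been identified via Corollary~\ref{cor:SHWVgeometric-WV}. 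The proposition therefore reduces to showing that the first term vanishes, i.e. that $g_{i,j}^\heartsuit$ becomes a quasi-isomorphism in the limit.

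This vanishing will be the main obstacle. Since $\lim_j \lim_i FH_{I^\heartsuit}(L_{i,j}) \cong SH_*^\heartsuit(W)$ by combining Lemma~\ref{lem:L-K} with Lemma~\ref{lem:SHWgeometric-W}, I need to show that $\lim_j \lim_i FH_{(I,II,III)^\heartsuit}(H_{i,j}) \cong SH_*^\heartsuit(W)$ via $\lim g_{i,j}^\heartsuit$. The plan here is to observe that in any fixed finite action window the orbits of $H_{i,j}$ in group $F$ eventually have action below the window and drop out, so $\lim FH_{(I,II,III)^\heartsuit}(H_{i,j})$ coincides with the limit of the full Floer homology of $H_{i,j}$ truncated in that window. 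Comparing this to the Floer homology of a cofinal family in $\cH(W;F)$ computing $SH_*^\heartsuit(W)$ via suitable interpolating Hamiltonian homotopies, and controlling the relevant continuation trajectories via the confinement tools of~\S\ref{sec:confinement} (Lemma~\ref{lem:no-escape} to isolate trajectories from the bumps of $H$ inside $V$ and $W^{top}$, and Lemma~\ref{lem:neck} to prohibit crossings of the collar of $\p V$), one should identify $\lim g_{i,j}^\heartsuit$ with the canonical comparison isomorphism between two cofinal presentations of $SH_*^\heartsuit(W)$.

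Once this vanishing is established, the exact triangle above collapses to yield the canonical isomorphism $SH^{\heartsuit,cone}_*(W,V) \cong SH^\heartsuit_*(W,V)[-1]$ for each flavor $\heartsuit$. The hard part throughout will be the final identification, where the intricate shapes of $L_{i,j}$ and $H_{i,j}$ demand careful use of the confinement lemmas to verify that $g_{i,j}^\heartsuit$ realizes the natural comparison between two cofinal presentations of $SH_*^\heartsuit(W)$.
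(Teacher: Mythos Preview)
Your factorization $f^\heartsuit = p \circ g^\heartsuit$ through a complex $FC_{(I,II,III)^\heartsuit}(H_{i,j})$ is where the argument breaks down for $\heartsuit \neq \varnothing$. The groups $(I,III)^\heartsuit$ defined in \S\ref{sec:Ham-V} are \emph{not} cut out by the action filtration on $FC_{I,III}(H_{i,j})$: for instance $(I,III)^{<0} = I^{-+}$, whereas all of $I$ and $III$ have strictly negative $H$-action. By contrast, $II^\heartsuit$ \emph{is} defined by action. So there is no natural intermediate complex sitting in a short exact sequence with $FC_{(I,III)^\heartsuit}$ as subcomplex and $FC_{II^\heartsuit}$ as quotient; if you define the middle term by action, the kernel of $p$ will be the action-$\heartsuit$ part of $I \cup III$, which does not match $(I,III)^\heartsuit$, and then Lemma~\ref{lem:ip}(ii) does not produce $FC_{(I,III)^\heartsuit}[-1]$ as required for Corollary~\ref{cor:SHWVgeometric-WV}.

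For $\heartsuit = \varnothing$ your approach is essentially the paper's: the paper shows directly that the continuation map $g: FC_I(L) \to FC_{I,II,III}(H)$ is a homotopy equivalence at each finite level (via a moving-action-window argument along an explicit homotopy $H^t$), which immediately gives $C(f) \cong C(p) \cong FC_{I,III}(H)[-1]$ without invoking the octahedron. For the other flavours, the paper does \emph{not} attempt a uniform factorization. Instead it treats $\heartsuit = {=}0$ and $\heartsuit = {<}0$ separately with different auxiliary complexes --- for $\heartsuit = {<}0$ it uses the intermediate $FC_{I,II^-}(H)$ and a comparison map from $FC_{I^{-,0\mathrm{bottom}}}(L)$, which requires splitting the constant orbits of $L$ according to whether they lie in $W^{bottom}$, $V$, or $W^{top}$ --- and then deduces the remaining three flavours formally via the grid diagrams of Proposition~\ref{prop:diag}. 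The case analysis is not an artefact: it reflects exactly the incompatibility between the $(I,III)^\heartsuit$ and $II^\heartsuit$ filtrations that obstructs your uniform approach.
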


\begin{proof}
In view of Corollary~\ref{cor:SHWVgeometric-WV} it will be enough to prove 
\begin{equation}\label{eq:cone-lim}
   \lim^{\longrightarrow}_j \lim^{\longleftarrow}_i \,H_*(C(f_{i,j}^\heartsuit)) 
   = \lim^{\longrightarrow}_j\lim^{\longleftarrow}_i \, FH_{(I,III)^\heartsuit}(H_{i,j})[-1]  
\end{equation}
for all values of $\heartsuit$. 

{\color{black}
We recall the notation $W=W^{bottom}\circ V\circ W^{top}$. Recall the
families of Hamiltonians $H_{i,j}$ and $L_{i,j}$ from~\S\ref{sec:transfer-map-revisited}.
For a fixed value of the double index $(i,j)$ we denote for
readability $H=H_{i,j}$ and $L=L_{i,j}$.

Let $\heartsuit=\varnothing$. We claim that any monotone homotopy from
$L$ to $H$ induces a homotopy equivalence 
$$
   FC_{I}(L)\stackrel\sim \longrightarrow FC_{I,II,III}(H).
$$ 
To see this, consider for $t\in[0,1]$ the non-increasing homotopy of
Hamiltonians $H^t$ as in Figure~\ref{fig:L} from $H^0=H$ to $H^1=L$. 
Each $H^t$ has the shape considered in Section~\ref{sec:Ham-V} with parameters
$$
   \mu^t=\mu,\ \nu^t_-\in[0,\nu_-],\ \nu^t_+\in[0,\nu_+],\ \tau^t=\tau,\ \delta^t>0,\ \eps^t\in[\eps,\xi],\ R^t=R
$$
satisfying
$$
   \delta^t\eps^t=\delta\eps,\qquad \nu^t_-(1-\eps^t)=\mu(\xi-\eps^t).
$$
Thus $\eps^t$ increases with $t$, while $\delta^t$ and $\nu^t_-$ decrease with $t$. 
The actions of orbits in the regions $I$, $II$ and $III$ are
bounded below by $-\mu(\xi-\eps^t)-\eps^t(\mu-\eta_\mu)=-\mu\xi+\eps^t\eta_\mu$,
$-\wt\nu_-^t$ and $-\nu^t_+(R-1)$, respectively, all of which
increase with $t$. Here 
$\wt\nu_-^t$ denotes $\nu_--\eta_{\nu_-}$ for $\nu_-^t\geq\nu_--\eta_{\nu_-}$ and $\nu_-^t$ otherwise.
Since the action of orbits in region $F$ is independent of $t$
and the actions satisfy $F<I,II,III$ for $t=0$, it follows that
$F<I,II,III$ holds for all $t\in[0,1]$. 
Considering a moving action window separating the orbit group $F$ from
the groups $I,II,III$,
we see that the continuation map $FH_{I}(L)\to FH_{I,II,III}(H)$ is a
composition of small distance {\color{black}homotopy equivalences} and thus an isomorphism. 
This proves the claim.
}

Let us consider the commutative diagram 
$$
\xymatrix{
FC_I(L)\ar[rr]^f \ar[dr]^\sim_{h.e.}& & FC_{II}(H) \\
& FC_{I,II,III}(H)\ar[ur]_p & 
}
$$
in which $p$ is the projection induced by the ordering $I,III\prec II$. By Lemma~\ref{lem:ip}(ii) we have an isomorphism in {\sf Kom} 
$$
C(f)\cong C(p)\cong FC_{I,III}(H)[-1]. 
$$
This isomorphism is compatible with continuation maps, and hence with the structure of a doubly-directed system. 
In the first-inverse-then-direct limit this yields~\eqref{eq:cone-lim} for $\heartsuit=\varnothing$.

Let $\heartsuit=``=0"$. The orbits of $L$ in the group $I^0$ are constants, and we separate them as $I^0=I^{0bottom}\sqcup I^{0V}\sqcup I^{0top}$, according to whether they lie in $W^{bottom}$, $V$, respectively $W^{top}$, with the orbits lying in $W^{bottom}\cup W^{top}$ forming a subcomplex, and the orbits lying in $V$ forming a quotient complex (this is achieved by perturbing $L$ along $W$ by a Morse function whose restriction to $V$ is smaller than its restriction to $W^{bottom}\cup W^{top}$). The Floer complex reduces to the Morse complex by symplectic asphericity~\cite{SZ92}, and we therefore have canonical identifications $FC_{I^{0bottom}}(L)\equiv FC_{I^0}(H)$, $FC_{I^{0V}}(L)\equiv FC_{II^0}(H)$, and $FC_{I^{0top}}(L)\equiv FC_{III^0}(H)$. 

The continuation map $f^{=0}:FC_{I^0}(L)\to FC_{II^0}(H)$ is identified with the projection $FC_{I^0}(L)\to FC_{I^{0V}}(L)$, and by Lemma~\ref{lem:ip}(ii) we have an isomorphism in {\sf Kom}
$$
C(f^{=0})\cong FC_{I^{0bottom,0top}}(L)[-1] \equiv FC_{I^0,III^0}(H)[-1].
$$
This identification is compatible with continuation maps, and hence with the structure of a doubly-directed system. 
In the first-inverse-then-direct limit this yields~\eqref{eq:cone-lim} for $\heartsuit=``=0"$.

Let $\heartsuit=``<0"$. 
{\color{black} 
We denote $FC_{I^{0bottom}}(L)$ the complex generated by the critical
points of $L$ inside $W^{bottom}$, and we recall the canonical
identification $FC_{I^{0bottom}}(L)\simeq FC_{I^0}(H)$ which we
already discussed in the case $\heartsuit=``=0"$ above. 
We claim that any monotone homotopy from $L$ to $H$ induces a homotopy
equivalence 
$$
   FC_{I^{-,0bottom}}(L)\stackrel\sim\longrightarrow FC_{I,II^-}(H). 
$$
To see this, consider the composition
$$
   g:FC_{I^{-,0bottom}}(L)\longrightarrow FC_{I,II^-,III}(H)\longrightarrow FC_{I,II^-}(H),
$$
where the first map is the continuation map and the second one is the
quotient projection according to Lemma~\ref{lem:ordering}. Note that
the subcomplexes $FC_{I^{-,0bottom}}(L)$ and $FC_{I,II^-,III}(H)$
correspond to the negative action parts if we choose the perturbing
Morse functions to be positive on $W^{bottom}$ and negative on $V\cup W^{top}$.
Since the homotopy is constant on $V$, Lemma~\ref{lem:no-escape} shows
that the Floer cylinders counted by the map $g$ lie entirely in $F\cup W^{bottom}$.
Therefore, the map $g$ agrees with the continuation map $FC^{<0}(\wt
L)\to FC^{<0}(\wt H)$, where $\wt L$, $\wt H$ are the Hamiltonians
that agree with $L$, $H$ on $F\cup W^{bottom}$ and are equal to zero on
$V\cup W^{top}$. The argument in the case $\heartsuit=\varnothing$, 
setting the Hamiltonians $H^t$ also to zero on $V\cup W^{top}$, shows that
this map is a homotopy equivalence. This proves the claim.
}

Consider now the commutative diagram  
$$
\xymatrix{
FC_{I^{-,0bottom}}(L)\ar[rr]^\varphi \ar[dr]^\sim_{h.e.}& & FC_{II^-}(H) \\
& FC_{I,II^-}(H) \ar[ur]_p & 
}
$$
in which $p$ is the projection determined by the ordering $I\prec II^-$. It follows from Lemma~\ref{lem:ip}(ii) that we have an isomorphism in {\sf Kom} 
$$
C(\varphi)\cong C(p)\cong FC_{I}(H)[-1].
$$
We then consider the diagram of short exact sequences of complexes
$$
\xymatrix
@C=40pt
{
FC_{I^{-}}(L)\ar[r] \ar[d]^{f^{<0}} & FC_{I^{-,0bottom}}(L) \ar[r] \ar[d]^\varphi & FC_{I^{0bottom}}(L) \ar[d]\\
FC_{II^-}(H) \ar@{=}[r] \ar[d] & FC_{II^{-}}(H) \ar[r] \ar[d] & 0 \ar[d] \\
C(f^{<0}) & C(\varphi)\cong FC_{I}(H)[-1]  \ar[r]^{\cong \, proj[-1]} & C(0)\cong FC_{I^0}(H)[-1]
}
$$
The top right square commutes up to homotopy by Proposition~\ref{prop:diag} because the cone of the identity map on the second line is homotopic to zero. 
The cone of $\varphi$ has been identified above, and the bottom right map induced between the cones is homotopic to the projection $FC_I(H)[-1]\stackrel{proj[-1]}\longrightarrow FC_{I^0}(H)[-1]$. 
It then follows from Proposition~\ref{prop:diag} and Lemma~\ref{lem:ip}(ii) that we have isomorphisms in {\sf Kom}
$$
C(f^{<0})\cong C(proj[-1])[1]\cong C(proj) \cong FC_{I^{-+}}(H)[-1].
$$
For the middle isomorphism see~\eqref{eq:Jf}. The identification
$C(f^{<0})\cong FC_{I^{-+}}(H)[-1]$ is compatible with continuation
maps, and hence with the structure of a doubly-directed system. 
In the first-inverse-then-direct limit this yields~\eqref{eq:cone-lim} for $\heartsuit=``<0"$.

Let $\heartsuit=``\ge 0"$. This is a consequence of the cases
$\heartsuit=\emptyset$ and $\heartsuit=``<0"$. 
For this, we consider the diagram
$$
\xymatrix
@C=40pt
{
FC_{I^-}(L)\ar[r] \ar[d]^{f^{<0}} & FC_{I}(L)\ar[r] \ar[d]^{f} & FC_{I^{0+}}(L) \ar[d]^{f^{\ge 0}} \\
FC_{II^-}(H)\ar[r] \ar[d] & FC_{II}(H) \ar[r] \ar[d] & FC_{II^{0+}}(H) \\
C(f^{<0})\cong FC_{I^{-+}}(H)[-1] \ar[r]^{\cong\, incl[-1]} & C(f) \cong FC_{I,III}(H)[-1] &
}
$$
The cones of $f^{<0}$ and $f$ have been identified above, and the map induced between the cones is homotopic to the inclusion $FC_{I^{-+}}(H)[-1] \stackrel{incl[-1]}\longrightarrow FC_{I,III}(H)[-1]$. It then follows from Proposition~\ref{prop:diag} and Lemma~\ref{lem:ip}(ii) that we have isomorphisms in {\sf Kom} 
$$
C(f^{\ge 0})\cong C(incl[-1])\cong C(incl)[-1]\cong FC_{I^0,III}(H)[-1].
$$
For the middle isomorphism see~\eqref{eq:Jf}. The
identification $C(f^{\ge 0})\cong FC_{I^0,III}(H)[-1]$ is compatible
with continuation maps, and hence with the structure of a
doubly-directed system. 
In the first-inverse-then-direct limit this yields~\eqref{eq:cone-lim} for $\heartsuit=``\ge 0"$.

Let $\heartsuit=``>0"$. This is a consequence of the cases $\heartsuit=``=0"$ and $\heartsuit=``\ge 0"$. The proof is similar to that of the case $\heartsuit=``\ge 0"$.  

Let $\heartsuit=``\le 0"$. This is a consequence of the cases $\heartsuit=``>0"$ and $\heartsuit=\varnothing$. The proof is similar to that of the case $\heartsuit=``\ge 0"$.   
\end{proof}

\emph{Remarks on the proof of Proposition~\ref{prop:SHrel-dynamical-cone}.} It is worth noting that we really needed to consider only three cases: $\heartsuit=\varnothing$, $\heartsuit=``=0"$, and $\heartsuit=``<0"$, the other three cases being in a sense formal consequences. As a matter of fact, given $\heartsuit=\varnothing$ and $\heartsuit=``=0"$, any of the four remaining cases suffices in order to deal with the other remaining three. A strategy that would have worked is to have considered the case $\heartsuit=``>0"$, i.e. work our way from the convex end throughout the cobordism (instead of starting from the concave end as in the proof). Should one wish to start with one of the cases $\heartsuit=``\le 0"$ or $\heartsuit=``\ge 0"$, an additional argument would be needed, related to excision, that would allow to decouple $I$ from $III^0$, respectively $I^0$ from $III$. 

We can see \emph{a posteriori} that the proof consists in a suitable iterative application of  the following two elementary steps. (i) Identify suitable complexes for $L$ and $H$ which are homotopy equivalent via the continuation map. (ii) Embed the maps $f^\heartsuit$ whose cone we wish to compute inside grid diagrams of the type considered in Proposition~\ref{prop:diag}, in which the other maps are either some of the homotopy equivalences of Step (i), or maps $f^\heartsuit$ whose cones have been already computed, or natural projections/inclusions for which the cones are known via Lemma~\ref{lem:ip}.

\subsection{The exact triangle of a pair} \label{sec:ex-triangle-pair}

The homotopy invariance of the transfer map, together with the identification between the dynamical definition of the relative symplectic homology groups and the definition using cones given by Proposition~\ref{prop:SHrel-dynamical-cone} implies that for any exact inclusion of pairs $(W,V)\stackrel{f}\longrightarrow (W',V')$ and for any $\heartsuit\in\{\varnothing,\ge 0, >0,=0,\le 0,<0\}$ we have an induced transfer map 
$$
SH_*^\heartsuit(W',V')\stackrel{f_!}\longrightarrow SH_*^\heartsuit(W,V).
$$
The following proposition establishes Theorem~\ref{thm:exact-triangle-pair} (the case of symplectic cohomology is completely analogous to that of symplectic homology). 

\begin{proposition} \label{prop:invariance-pair-WV}
Let $(W,V)$ be a cobordism pair for which we denote the inclusions $V\stackrel{i}\longrightarrow W\stackrel{j}\longrightarrow (W,V)$. Given $\heartsuit\in\{\varnothing,\ge 0, >0,=0,\le 0,<0\}$ the following hold. 

(i) For any Liouville structure $\lambda$ there exists an exact triangle  
\begin{equation*} 
\xymatrix
@C=10pt
@R=18pt
{
SH_*^\heartsuit(W,V;\lambda) \ar[rr]^-{j_!} & & 
SH_*^\heartsuit(W;\lambda) \ar[dl]^-{i_!} \\ & SH_*^\heartsuit(V;\lambda) \ar[ul]^-\p_-{[-1]}  
}
\end{equation*}
where the various symplectic homology groups are understood to be computed with respect to the Liouville structure $\lambda$.

(ii) Given a homotopy of Liouville structures $\lambda_t$, $t\in
[0,1]$, there are induced isomorphisms
$h_W:SH_*^\heartsuit(W;\lambda_0)\to
SH_*^\heartsuit(W;\lambda_1)$,
$h_V:SH_*^\heartsuit(V;\lambda_0)\to
SH_*^\heartsuit(V;\lambda_1)$, and
$h_{W,V}:SH_*^\heartsuit(W,V;\lambda_0)\to
SH_*^\heartsuit(W,V;\lambda_1)$ which define a morphism between
the exact triangles in~(i) corresponding to $\lambda_0$ and $\lambda_1$. 

(iii) Given an exact inclusion of pairs $(W,V)\stackrel{f}\longrightarrow (W',V')$, the transfer maps $f_!:SH_*^\heartsuit(V')\to SH_*^\heartsuit(V)$, $f_!:SH_*^\heartsuit(W')\to SH_*^\heartsuit(W)$, and $f_!:SH_*^\heartsuit(W',V')\to SH_*^\heartsuit(W,V)$ define a morphism between the exact triangles of the pairs $(W',V')$ and $(W,V)$.
\end{proposition}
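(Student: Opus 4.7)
The plan is to deduce all three statements from Proposition~\ref{prop:SHrel-dynamical-cone} together with the machinery of directed systems of mapping cones from~\S\ref{sec:cones}, which packages a model distinguished triangle for each map $f_{i,j}^\heartsuit:FC_{I^\heartsuit}(L_{i,j})\to FC_{II^\heartsuit}(H_{i,j})$ and shows that condition~($\tilde{\mathrm{B}}$) is satisfied by Floer continuation maps, so that the cones form a doubly directed system in {\sf Kom}.

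First I would prove (i). For each pair of indices $(i,j)$ the short exact sequence~\eqref{les:cone} associated to $f_{i,j}^\heartsuit$ produces, at the level of the chain complexes of~\S\ref{sec:adapted}, a model distinguished triangle
$$
FC_{I^\heartsuit}(L_{i,j}) \xrightarrow{f_{i,j}^\heartsuit} FC_{II^\heartsuit}(H_{i,j}) \xrightarrow{\alpha} C(f_{i,j}^\heartsuit) \xrightarrow{\beta} FC_{I^\heartsuit}(L_{i,j})[-1].
$$
Passing to homology gives finite-dimensional exact triangles which are natural in $(i,j)$ because of condition~($\tilde{\mathrm{B}}$). Because the inverse limit functor is exact on inverse systems of finite-dimensional $\mathfrak{k}$-vector spaces and the direct limit is always exact, applying $\lim^{\longrightarrow}_j\lim^{\longleftarrow}_i$ yields an exact triangle
$$
SH_*^\heartsuit(W) \xrightarrow{i_!} SH_*^\heartsuit(V) \longrightarrow SH_*^{\heartsuit,cone}(W,V) \xrightarrow{[-1]} SH_*^\heartsuit(W)[-1].
$$
Combined with the isomorphism $SH_*^{\heartsuit,cone}(W,V)\cong SH_*^\heartsuit(W,V)[-1]$ of Proposition~\ref{prop:SHrel-dynamical-cone} and a degree shift, this is the triangle of the statement, with connecting map of degree $-1$. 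The identification of the first map with $i_!$ was carried out in~\S\ref{sec:transfer-map-revisited}; the identification of the second map with $j_!$ is definitional, once one recalls that by Corollary~\ref{cor:SHWVgeometric-WV} the group $SH_*^\heartsuit(W,V)$ is computed by the same system $FC_{(I,III)^\heartsuit}(H_{i,j})$ that arises in the cone identifications in the proof of Proposition~\ref{prop:SHrel-dynamical-cone}, and that $j_!$ corresponds at chain level to the tautological projection onto this quotient.

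Next I would prove (iii). An exact embedding of pairs $f:(W,V)\hookrightarrow (W',V')$ allows one to choose cofinal families of Hamiltonians $H_{i,j},L_{i,j}$ on $\wh W_F$ and $H'_{i,j},L'_{i,j}$ on $\wh W'_{F'}$ that are compatible in the sense that monotone homotopies from the primed ones to the unprimed ones induce the transfer maps $f_!$ on $V$, on $W$, and (by Proposition~\ref{prop:SHrel-dynamical-cone}) on $(W,V)$. The resulting chain maps intertwine the morphisms $f_{i,j}^\heartsuit$ and $f'^{\heartsuit}_{i,j}$ up to a higher homotopy provided by a $1$-homotopy of Hamiltonian homotopies, which is exactly the datum needed in~\S\ref{sec:cone-uniqueness} to induce a canonical (in {\sf Kom}) chain map between the cones. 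Naturality of the whole construction in the doubly directed index, again by condition~($\tilde{\mathrm{B}}$), gives in the limit a commutative diagram of exact triangles, which is the assertion of~(iii).

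Finally, (ii) is the special case of the general naturality argument in (iii) applied to the identity embedding, combined with Proposition~\ref{prop:homotopy_invariance_transfer} on homotopy invariance of the transfer map under a homotopy $\lambda_t$ of Liouville structures: the isomorphisms $h_V$ and $h_W$ are those of Proposition~\ref{prop:invariance-SHW}, and $h_{W,V}$ is obtained as the induced map on cones, which is an isomorphism by the five lemma applied to the exact triangle of~(i). The independence of $h_{W,V}$ on the choice of homotopy with fixed endpoints comes from the usual homotopy-of-homotopies argument in Floer theory, now applied at the level of cones via Lemma~\ref{lem:continuation_maps}. The main technical obstacle in all of the above is bookkeeping: one must verify that the many ``commutative up to specified homotopy'' diagrams at chain level (transfer versus continuation, cone construction, changes of parameters in the families $H_{i,j},L_{i,j}$, and deformations of $\lambda$) assemble into genuine morphisms of doubly directed systems in the sense of~\S\ref{sec:directed-systems}; once condition~($\tilde{\mathrm{B}}$) is checked in each instance, everything passes to the limit formally.
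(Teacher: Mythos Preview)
Your proposal is correct and follows essentially the same approach as the paper: part (i) comes from the tautological homology exact triangle of a cone~\eqref{eq:hlescone} combined with the identification $SH_*^{\heartsuit,cone}(W,V)\cong SH_*^\heartsuit(W,V)[-1]$ of Proposition~\ref{prop:SHrel-dynamical-cone}, while (ii) and (iii) follow from naturality of the cone exact triangle with respect to chain maps, together with Proposition~\ref{prop:homotopy_invariance_transfer} and Proposition~\ref{prop:func-transfer} respectively. Your write-up is considerably more detailed than the paper's (which dispatches all three parts in three sentences), and your ordering differs slightly in that you treat (iii) before (ii); note however that (ii) is not literally a special case of (iii) with the identity embedding, since (ii) concerns a deformation of Liouville structures rather than an inclusion of pairs---both rest on the same cone-naturality principle, but the inputs are different.
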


\begin{proof}
The existence of the exact triangle in (i) is a consequence of the tautological homology exact triangle of a cone~\eqref{eq:hlescone} and of the identification between $SH_*^\heartsuit(W,V)[-1]$ and $SH_*^{\heartsuit,cone}(W,V)$ proved in Proposition~\ref{prop:SHrel-dynamical-cone}.  

Part (ii) follows from the naturality of the homology exact triangle of a cone with respect to chain maps, and from the naturality of the absolute transfer map $SH_*^\heartsuit(W;\lambda)\to SH_*^\heartsuit(V;\lambda)$ with respect to homotopies of Liouville structures. 

Part (iii) follows from the naturality of the homology exact triangle of a cone and from the functoriality of transfer maps (Proposition~\ref{prop:func-transfer}). 
\end{proof}

The Excision Theorem~\ref{thm:excision-triple} can also be reinterpreted using transfer maps. The proof uses the same kind of arguments as above and we shall omit it. 

\begin{proposition}
Given a Liouville cobordism triple $(W,V,U)$, denote the inclusion 
$$
(\overline{W\setminus U},\overline{V\setminus U})\stackrel{i}\longrightarrow (W,V).
$$ 
The excision isomorphism in Theorem~\ref{thm:excision-triple} is induced by the transfer map $i_!$.

\hfill{$\square$}
\end{proposition}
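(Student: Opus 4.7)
The plan is to reduce to the excision theorem for pairs (Theorem~\ref{thm:excision}) and then identify the transfer map at chain level with the splitting produced there.

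First, I would decompose the situation along connected components. Writing $\overline{W\setminus V} = W^{bottom}\amalg W^{top}$ and $\overline{V\setminus U} = V^{bottom}\amalg V^{top}$, we have $\overline{W\setminus U} = (W^{bottom}\cup V^{bottom})\amalg (W^{top}\cup V^{top})$ and the inclusion $i$ is the disjoint union of two inclusions, to which Proposition~\ref{prop:func-transfer} (functoriality of the transfer) applies. This reduces the claim to showing that under the two applications of Theorem~\ref{thm:excision}, namely
\[
   SH_*^\heartsuit(W,V)\cong SH_*^\heartsuit(W^{bottom},\p^-V)\oplus SH_*^\heartsuit(W^{top},\p^+V)
\]
and
\[
   SH_*^\heartsuit(\overline{W\setminus U},\overline{V\setminus U})\cong SH_*^\heartsuit(W^{bottom},\p^-V)\oplus SH_*^\heartsuit(W^{top},\p^+V),
\]
the transfer map $i_!$ becomes the identity on each direct summand. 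By the same functoriality, it suffices to treat one summand, say $SH_*^\heartsuit(W^{bottom},\p^-V)$, i.e.~to verify the claim for the inclusion $i^{bot}:(W^{bottom}\cup V^{bottom},V^{bottom})\hookrightarrow (W,V)$.

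Second, I would exhibit a single cofinal family of Hamiltonians that realizes both the excision isomorphism and the transfer map simultaneously. Concretely, take the staircase Hamiltonians $H_{\mu,\nu,\tau}\in\cH(W,V;F)$ of Figure~\ref{fig:H-cob} used in the proof of Proposition~\ref{prop:excision}, and build a companion family $H'_{\mu,\nu,\tau}\in\cH(\overline{W\setminus U}\,,\overline{V\setminus U};F')$ that agrees with $H_{\mu,\nu,\tau}$ on a neighbourhood of $F\cup W^{bottom}\cup V^{bottom}$ and is linear of appropriate negative slope $-\nu$ on $[1,\infty)\times\p^+V^{bottom}$. Since $H'_{\mu,\nu,\tau}\le H_{\mu,\nu,\tau}$ on $\wh W_F$, the transfer map $i^{bot}_!$ is represented by the continuation map induced by a monotone (non-increasing in $s$) homotopy between these two Hamiltonians.

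Third, I would analyse this continuation map on the subquotient complex generated by the orbit group $I$ (the orbits corresponding to negatively parameterized Reeb orbits on $\p^-W$, constants in $W^{bottom}$, and negatively parameterized Reeb orbits on $\p^-V=\p^+W^{bottom}$). For $H_{\mu,\nu,\tau}$, the proof of Proposition~\ref{prop:excision} (Lemma~\ref{lem:Hnu-I,II-}) identifies $FH^{(a,b)}_I$ with $SH^{(a,b)}(W^{bottom},\p^-V)$ via a further continuation to a Hamiltonian $K_\nu$ constant equal to $-\eps\nu$ on $V\cup W^{top}$. The same identification applies to the family $H'_{\mu,\nu,\tau}$. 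The key point is that the monotone homotopy from $H'_{\mu,\nu,\tau}$ to $H_{\mu,\nu,\tau}$ can be arranged to be constant on a neighbourhood of $F\cup W^{bottom}$ containing the orbits of group $I$, so by Lemma~\ref{lem:no-escape} the relevant continuation cylinders stay within that neighbourhood. Ordering the generators of $FC_I$ by increasing action, the continuation map becomes an upper-triangular matrix with $+1$ on the diagonal, hence the identity on homology. Taking the limits over $a,b,\mu,\nu,\tau$ as in Definition~\ref{defi:SHWV}, we conclude that $i^{bot}_!$ coincides with the excision isomorphism onto the summand $SH_*^\heartsuit(W^{bottom},\p^-V)$.

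The main obstacle will be arranging the cofinal families on $\wh W_F$ and on $\wh{(W^{bottom}\cup V^{bottom})}_{F'}$ (\,i.e.~reconciling the two completions via the Liouville flow near $\p^+V^{bottom}$\,) so that the confinement from Lemma~\ref{lem:no-escape} genuinely applies to the interpolating homotopy; once this is set up, the chain-level identification above is essentially forced. An analogous argument on the complex generated by group $III$ handles the top summand, completing the proof.
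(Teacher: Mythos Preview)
The paper actually omits the proof of this proposition entirely, saying only that it ``uses the same kind of arguments as above'' (referring to Proposition~\ref{prop:invariance-pair-WV}). So there is no paper proof to compare against; I can only assess whether your argument stands on its own and whether it matches the hint the authors give.

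Your overall strategy---reduce to the two summands via Theorem~\ref{thm:excision}, then identify the transfer map with a continuation map on staircase Hamiltonians that is upper-triangular with $+1$ on the diagonal---is sound in spirit and is the kind of confinement-plus-continuation argument the paper uses throughout. The reduction in your first paragraph is correct, and the chain-level analysis you sketch in the third paragraph is essentially how the paper proves Lemmas~\ref{lem:Hnu-III} and~\ref{lem:Hnu-I,II-}.

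There is, however, one genuine gap. In the paper, the transfer map for \emph{pairs} is not defined directly via continuation on staircase Hamiltonians in $\cH(W,V;F)$; it is defined (see the paragraph preceding Proposition~\ref{prop:invariance-pair-WV}) by taking the absolute transfer maps $SH_*^\heartsuit(W')\to SH_*^\heartsuit(W)$ and $SH_*^\heartsuit(V')\to SH_*^\heartsuit(V)$, observing that they commute, and passing to the induced map on cones via the identification $SH_*^\heartsuit(W,V)[-1]\cong SH_*^{\heartsuit,cone}(W,V)$ of Proposition~\ref{prop:SHrel-dynamical-cone}. Your assertion ``the transfer map $i^{bot}_!$ is represented by the continuation map induced by a monotone homotopy between these two Hamiltonians'' therefore needs justification: you must either verify that your direct continuation map agrees with the cone-induced map (i.e.\ that the isomorphism of Proposition~\ref{prop:SHrel-dynamical-cone} is natural with respect to continuation maps of this type), or reorganise the argument to go through cones from the start. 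The paper's hint (``same kind of arguments as above'') points toward the second route: show that the absolute transfer maps for $W\supset \overline{W\setminus U}$ and $V\supset \overline{V\setminus U}$ intertwine the splittings of Proposition~\ref{prop:excision}, and then invoke naturality of the cone exact triangle. Once you close this gap, your argument should go through; the obstacle you flag about reconciling completions is real but is handled exactly as in Lemma~\ref{lem:SHV-alternate-direct-limit} and the proof of Lemma~\ref{lem:Hnu-I,II-}.
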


\subsection{Exact triangle of a triple and Mayer-Vietoris exact triangle}\label{sec:Mayer-Vietoris}

\begin{proposition}[Exact triangle of a triple]\label{prop:triple}
Let $U\subset V\subset W$ be a triple of Liouville cobordisms with filling, meaning that $(V,U)$ and $(W,V)$ are pairs of Liouville cobordisms with filling, and denote the inclusions by $(V,U)\stackrel{i}\longrightarrow (W,U)\stackrel{j}{\longrightarrow} (W,V)$. For $\heartsuit\in\{\varnothing,\ge 0, >0,=0,\le 0,<0\}$ there exists an exact triangle
\begin{equation*} 
\xymatrix
@C=10pt
@R=18pt
{
SH_*^{\heartsuit}(W,V) \ar[rr]^-{j_!} & & 
SH_*^\heartsuit(W,U) \ar[dl]^-{i_!} \\ & SH_*^\heartsuit(V,U) \ar[ul]^-\p_-{[-1]}  
}
\end{equation*}
which is functorial with respect to inclusions of triples, and which is invariant under homotopies of the Liouville structure that preserve the triple. 
\end{proposition}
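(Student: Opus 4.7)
The plan is to deduce the exact triangle of a triple from Proposition~\ref{prop:SHrel-dynamical-cone} and the $3 \times 3$ lemma (Proposition~\ref{prop:diag}) via a standard octahedron-style argument, mirroring the way the exact triangle of a pair was obtained in Proposition~\ref{prop:invariance-pair-WV}.

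By Proposition~\ref{prop:SHrel-dynamical-cone}, the three shifted groups $SH_*^\heartsuit(W,V)[-1]$, $SH_*^\heartsuit(W,U)[-1]$ and $SH_*^\heartsuit(V,U)[-1]$ are realized, as first-inverse-then-direct limits over the doubly-directed systems of Hamiltonians from \S\ref{sec:adapted}, by chain-level mapping cones of continuation maps inducing the transfer maps $f_{VW}$, $f_{UW}$ and $f_{UV}$ respectively. By Proposition~\ref{prop:func-transfer} we have $f_{UW}^\heartsuit = f_{UV}^\heartsuit \circ f_{VW}^\heartsuit$, and using Lemma~\ref{lem:continuation_maps} this factorization can be lifted to a commutative-up-to-specified-homotopy square at chain level whose horizontal arrows are continuation maps inducing $f_{VW}^\heartsuit$ and $f_{UW}^\heartsuit$, whose left vertical arrow is the identity, and whose right vertical arrow induces $f_{UV}^\heartsuit$. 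Applying Proposition~\ref{prop:diag} to this square produces a grid whose rows and columns are distinguished triangles. The left column, being the cone of the identity, is contractible; the second column is by construction a cone representing $SH_*^\heartsuit(V,U)[-1]$; the top and bottom rows are cones representing $SH_*^\heartsuit(W,V)[-1]$ and $SH_*^\heartsuit(W,U)[-1]$; and the third column is therefore a distinguished triangle whose third vertex is canonically isomorphic to $SH_*^\heartsuit(V,U)[-1]$, since the third row collapses to a triangle with trivial left-hand term. Passing to the first-inverse-then-direct limit, using as in \S\ref{sec:directed-systems} that the relevant inverse systems consist of finite dimensional vector spaces, and shifting by $[+1]$, yields the desired exact triangle.

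The main obstacle is the construction of the chain-level square together with the prescribed chain homotopy witnessing its commutativity, in a way compatible with the doubly-directed systems simultaneously computing all three relative groups. This requires choosing a common cofinal family of Hamiltonians adapted simultaneously to the three pairs $(W,V)$, $(W,U)$ and $(V,U)$ (a three-step variant of the construction in \S\ref{sec:adapted}), together with monotone homotopies realizing the three transfer maps and factoring the homotopy from $W$-adapted to $U$-adapted Hamiltonians through $V$-adapted ones; the higher chain homotopy filling in the square is then produced by Lemma~\ref{lem:continuation_maps}. Once the square is in place, everything else is formal homological algebra as developed in \S\ref{sec:cones}.

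Functoriality with respect to inclusions of triples and invariance under homotopies of the Liouville structure follow, exactly as in Proposition~\ref{prop:invariance-pair-WV}, from the naturality of the cone construction and of Proposition~\ref{prop:diag}, combined with Propositions~\ref{prop:homotopy_invariance_transfer} and~\ref{prop:func-transfer}.
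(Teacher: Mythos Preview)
Your approach is correct in outline but substantially more laborious than the paper's. The paper's proof is a single sentence: the exact triangle of a triple is a \emph{formal} consequence of the functorial exact triangle of a pair (already established in Proposition~\ref{prop:invariance-pair-WV}), by the classical Eilenberg--Steenrod argument (Theorem~I.10.2 in~\cite{ES52}). That argument is pure diagram chasing at the level of homology long exact sequences and requires no further chain-level input.

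By contrast, you go back to chain level and re-derive everything via the octahedron/$3\times 3$ lemma applied to the factorization $f_{UW}^\heartsuit = f_{UV}^\heartsuit \circ f_{VW}^\heartsuit$. This is a legitimate route and yields the same triangle, but the ``main obstacle'' you correctly identify --- building a single doubly-directed family of three-step Hamiltonians simultaneously adapted to all three pairs so that the cone identifications of Proposition~\ref{prop:SHrel-dynamical-cone} hold compatibly --- is genuine additional work that the paper simply does not need. You would also have to check that the induced maps between cones are the transfer maps $j_!$ and $i_!$, which you do not address. None of this is wrong, but once Proposition~\ref{prop:invariance-pair-WV} is in hand, the Eilenberg--Steenrod argument gives the result for free, together with functoriality and homotopy invariance as immediate byproducts.
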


\begin{proof}
The proof is a formal consequence of the functorial properties of the long exact sequence of a pair. The proof of Theorem~I.10.2 in~\cite{ES52} holds verbatim. 
\end{proof}

\begin{theorem}[Mayer-Vietoris exact triangle] \label{thm:MV}
Let $U,V\subset W$ be Liouville cobordisms such that $W=U\cup V$ and $Z:=U\cap V$ is a Liouville cobordism such that 
$$
U=U^{bottom}\circ Z,\qquad V=Z\circ V^{top},\qquad W=U^{bottom}\circ Z\circ V^{top},
$$
with $U^{bottom}=\overline{U\setminus Z}$, $V^{top}=\overline{V\setminus Z}$. We denote the inclusion maps by 
$$
\xymatrix
@C=50pt
@R=10pt
{& U \ar[dr]^-{j_U} & \\
Z \ar[ur]^-{i_U} \ar[dr]_-{i_V} & & W\\
& V \ar[ur]_-{j_V} & 
}
$$
There is a functorial \emph{Mayer-Vietoris exact triangle} 
\begin{equation*} 
\xymatrix
@C=20pt
@R=18pt
{
SH_*^\heartsuit(W) \ar[rr]^-{(j_{U!},j_{V!})} & & 
SH_*^\heartsuit(U)\oplus SH_*^\heartsuit(V) \ar[dl]^-{i_{U!}-i_{V!}} \\ & SH_*^\heartsuit(Z) \ar[ul]_-{[-1]}^\delta  
}
\end{equation*}
For $SH^{=0}$ this exact triangle is isomorphic to the Mayer-Vietoris exact triangle in singular cohomology. 
\end{theorem}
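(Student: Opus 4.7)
The plan is to derive the Mayer--Vietoris triangle as a formal consequence of the exact triangle of a pair (Theorem~\ref{thm:exact-triangle-pair}), the Excision Theorem~\ref{thm:excision-triple}, and the algebraic Barratt--Whitehead lemma.

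First, since $Z \subset V$ the inclusion $U \hookrightarrow W$ restricts to an inclusion of filled Liouville pairs $(U,Z) \hookrightarrow (W,V)$. By Proposition~\ref{prop:invariance-pair-WV}(iii) it induces a morphism from the exact triangle of $(W,V)$ to the exact triangle of $(U,Z)$, whose middle vertical arrows are the transfer maps $j_{U!}:SH_*^\heartsuit(W)\to SH_*^\heartsuit(U)$ and $i_{V!}:SH_*^\heartsuit(V)\to SH_*^\heartsuit(Z)$. Composed with the horizontal arrows of the two triangles (namely $j_{V!}$ and $i_{U!}$), these assemble all six morphisms that appear in the Mayer--Vietoris triangle.

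Second, I would show that the leftmost vertical arrow $SH_*^\heartsuit(W,V)\to SH_*^\heartsuit(U,Z)$ is an isomorphism. The hypotheses $W=U^{bottom}\circ Z\circ V^{top}$ and $V=Z\circ V^{top}$ give $\overline{W\setminus V^{top}}=U$ and $\overline{V\setminus V^{top}}=Z$, so $(W,V,V^{top})$ is a filled Liouville cobordism triple. The Excision Theorem~\ref{thm:excision-triple}, combined with the proposition immediately following it which identifies the excision isomorphism with the transfer map of the inclusion $(\overline{W\setminus V^{top}},\overline{V\setminus V^{top}})\hookrightarrow (W,V)$, provides the required isomorphism.

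Third, I would apply the Barratt--Whitehead lemma, a purely homological-algebraic statement: a morphism of exact triangles whose leftmost arrow is an isomorphism produces an exact triangle
\begin{equation*}
\xymatrix @C=8pt @R=16pt {
SH_*^\heartsuit(W) \ar[rr]^-{(j_{U!},\, j_{V!})} & & SH_*^\heartsuit(U) \oplus SH_*^\heartsuit(V) \ar[dl]^-{i_{U!} - i_{V!}} \\
& SH_*^\heartsuit(Z) \ar[ul]^-{\delta}_-{[-1]} &
}
\end{equation*}
in which the connecting morphism $\delta$ is the composition $SH_*^\heartsuit(Z)\to SH_*^\heartsuit(U,Z)[-1]\xrightarrow{\cong} SH_*^\heartsuit(W,V)[-1]\to SH_*^\heartsuit(W)[-1]$, using the inverse of the excision isomorphism from the second step. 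The signs in the middle morphisms are dictated by the algebraic lemma and match the stated ones after fixing conventions. Functoriality of the resulting triangle under inclusions of Mayer--Vietoris data follows from Proposition~\ref{prop:func-transfer} together with the naturality of the Barratt--Whitehead construction in its input diagram.

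For the comparison with singular cohomology in the case $\heartsuit=``=0\text{''}$, Propositions~\ref{prop:SH=0W} and~\ref{prop:SH=0WV} identify $SH_*^{=0}$ with $H^{n-*}$ in a way that intertwines the exact triangle of a pair and the transfer maps with the cohomology long exact sequence of a pair and the cohomology pullback, and that intertwines the excision isomorphism with cohomology excision. Since the Barratt--Whitehead construction is natural in the input data, the Mayer--Vietoris triangle produced in $SH^{=0}$ coincides with the classical Mayer--Vietoris triangle in singular cohomology for $W=U\cup V$ with $Z=U\cap V$. The main obstacle in the whole argument is the identification of the excision isomorphism with an honest transfer map induced by an inclusion of pairs, which is handled by the proposition following Theorem~\ref{thm:excision-triple}; with this in hand, the remaining steps are purely formal homological algebra.
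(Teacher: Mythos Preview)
Your proof is correct and follows essentially the same route as the paper: deduce Mayer--Vietoris formally from the exact triangle of a pair, its naturality, and the excision isomorphism $SH_*^\heartsuit(W,V)\cong SH_*^\heartsuit(U,Z)$ coming from the triple $(W,V,V^{top})$. The only cosmetic difference is that the paper invokes Eilenberg--Steenrod's hexagonal lemma and the diagram chase of \cite[\S I.15]{ES52} directly, whereas you package the same homological algebra under the name ``Barratt--Whitehead lemma''; the hexagonal lemma additionally records that the connecting map can be defined symmetrically via either excision isomorphism (differing only in sign), a point you do not need for the statement as phrased.
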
 

\begin{figure}
         \begin{center}
\input{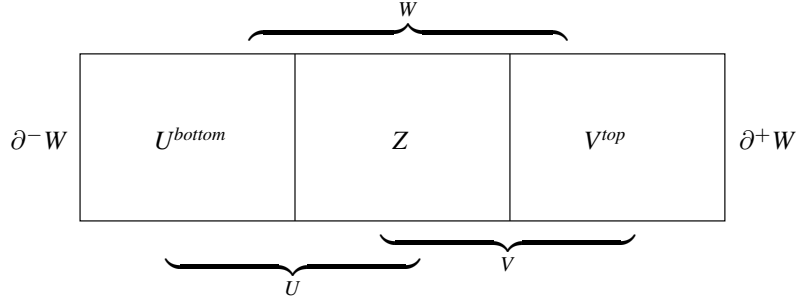}
         \end{center}
\caption{Cobordisms for the Mayer-Vietoris theorem \label{fig:MV}}
\end{figure}

\begin{proof}
The Mayer-Vietoris exact triangle follows by a purely algebraic argument from the exact triangle of a pair and its naturality, and from the Excision Theorem~\ref{thm:excision-triple}. The idea is to consider the following commutative diagram.
$$
{
\xymatrix
@C=20pt
{
& & SH^\heartsuit_{*-1}(W) 
\ar@(d,u) @{<..} [ldd]_(.3){\color{black}\delta'}
& &  \\
& SH^\heartsuit_{*-1}(V,Z) & SH^\heartsuit_{*-1}(W,U) \ar[u] \ar[l]_-{excision}^-\cong & & \\
SH^\heartsuit_{*-1}(U,Z) & SH^\heartsuit_*(Z) \ar[l] \ar[u] & SH^\heartsuit_*(U) \ar[l] \ar[u] & SH^\heartsuit_*(U,Z) \ar[l] & 
SH^\heartsuit_{*+1}(Z) \ar[l] \\
SH^\heartsuit_{*-1}(W,V) \ar[u]_(.4){excision}^(.4)\cong & SH^\heartsuit_*(V) \ar[l] \ar[u] & SH^\heartsuit_*(W)  \ar[l] \ar[u] 
 \ar@(r,l) @{<..} [urr]_(.3){\delta''}
 \ar@(d,u) @{<..} [ldd]_(.3){\delta'}
 & SH^\heartsuit_*(W,V) \ar[l] \ar[u]^(.4)\cong_(.4){excision} & \\
& SH^\heartsuit_*(V,Z) \ar[u] & SH^\heartsuit_*(W,U) \ar[l]_-{excision}^-\cong \ar[u] & & \\
& SH^\heartsuit_{*+1}(Z) \ar[u] & 
}
}
$$
The isomorphism $SH^\heartsuit_*(W,V)\stackrel\sim \longrightarrow SH^\heartsuit_*(U,Z)$ follows from the Excision Theorem~\ref{thm:excision-triple} for the exact triple $(W,V,V^{top})$. Similarly, we have an isomorphism
$SH^\heartsuit_*(W,U)\stackrel \sim\longrightarrow SH^\heartsuit_*(V,Z)$. The maps $\delta'$ and $\delta''$ are obtained by inverting the corresponding excision isomorphisms, and we actually have $\delta''=-\delta'$ by the ``hexagonal lemma" of Eilenberg and Steenrod~\cite[Lemma~I.15.1]{ES52} which we recall below. We define the map $\delta$ in the statement of Theorem~\ref{thm:MV} to be equal to $\delta''$, and a direct check by diagram chasing shows that the Mayer-Vietoris triangle is exact, see~\cite[\S~I.15]{ES52} for details. 
\end{proof}

\begin{lemma} \cite[Hexagonal Lemma~I.15.1]{ES52} Consider the following diagram of groups and homomorphisms
$$
\xymatrix
@C=50pt
@R=15pt
{& G_0 \ar[dl]_{\ell_1} \ar[dr]^{\ell_2} \ar[dd]_{i_0} & \\
G'_1 & & G'_2 \\
& G \ar[ul]_{j_1} \ar[ur]^{j_2} \ar[dd]_{j_0} & \\
G_2 \ar[uu]_\cong^{k_1} \ar[ur]^{i_2} \ar[dr]_{h_1} & & G_1 \ar[uu]^\cong_{k_2} \ar[ul]_{i_1} \ar[dl]^{h_2} \\
& G'_0 &
}
$$
Assume that each triangle is commutative, that $k_1$ and $k_2$ are isomorphisms, that the two diagonal sequences are exact at $G$, and that $j_0i_0=0$. Then the two homomorphisms from $G_0$ to $G'_0$ obtained by skirting the sides of the hexagon differ in sign only. 
\hfill{$\square$}
\end{lemma}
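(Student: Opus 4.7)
The plan is to perform an element-chase comparing the two compositions $h_1 \circ k_1^{-1} \circ \ell_1$ and $h_2 \circ k_2^{-1} \circ \ell_2$ from $G_0$ to $G'_0$. The decisive observation is that the two diagonal sequences through $G$ other than the main vertical one are $G_1 \stackrel{i_1}{\longrightarrow} G \stackrel{j_1}{\longrightarrow} G'_1$ and $G_2 \stackrel{i_2}{\longrightarrow} G \stackrel{j_2}{\longrightarrow} G'_2$, so exactness of these at $G$ gives both the vanishing conditions $j_1 i_1 = 0$, $j_2 i_2 = 0$ and a lifting property: any element of $G$ killed by $j_1$ lies in the image of $i_1$, and symmetrically.

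First I would fix $x \in G_0$ and introduce the preimages $y_1 := k_1^{-1}\ell_1(x) \in G_2$ and $y_2 := k_2^{-1}\ell_2(x) \in G_1$, so that the two hexagon-skirting compositions evaluated at $x$ become $h_1(y_1)$ and $h_2(y_2)$ respectively; the goal is thus to prove $h_1(y_1) + h_2(y_2) = 0$. Using the commutativity relations $\ell_1 = j_1 i_0$ and $k_1 = j_1 i_2$ in the upper half of the diagram, a short computation shows that the element $i_2(y_1) - i_0(x) \in G$ is annihilated by $j_1$. Exactness of the first diagonal at $G$ then supplies some $u \in G_1$ with $i_1(u) = i_2(y_1) - i_0(x)$.

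The crux is to pin down $u$: applying $j_2$ to the latter equation and using $j_2 i_2 = 0$ (from exactness of the second diagonal), $j_2 i_0 = \ell_2$, and $j_2 i_1 = k_2$, one obtains $k_2(u) = -\ell_2(x) = -k_2(y_2)$, and since $k_2$ is an isomorphism this forces $u = -y_2$. Applying instead $j_0$ to the same equation, and using the standing hypothesis $j_0 i_0 = 0$ together with $h_1 = j_0 i_2$ and $h_2 = j_0 i_1$, one reads off $-h_2(y_2) = h_2(u) = h_1(y_1)$, which is exactly the claimed sign relation. The argument is purely formal diagram-chasing; the only obstacle is organizational, namely keeping track of which diagonal's exactness is invoked at each step and of the signs so that the final equation comes out with the correct one.
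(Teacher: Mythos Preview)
Your proof is correct. The paper does not actually supply a proof of this lemma: it is quoted from Eilenberg--Steenrod and marked with a \hfill$\square$, so there is nothing to compare against beyond noting that your diagram chase is exactly the standard one underlying the cited reference.
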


The hexagonal lemma of Eilenberg and Steenrod is applied in the proof of Theorem~\ref{thm:MV} to the following configuration. 
$$
\xymatrix
@C=40pt
@R=20pt
{& SH_{*+1}^\heartsuit(Z) \ar[dl]_{\ell_1} \ar[dr]^{\ell_2} \ar[dd]_{i_0} & \\
SH_*^\heartsuit(V,Z) & & SH_*^\heartsuit(U,Z) \\
& SH_*^\heartsuit(W,Z) \ar[ul]_{j_1} \ar[ur]^{j_2} \ar[dd]_{j_0} & \\
SH_*^\heartsuit(W,U) \ar[uu]_\cong^{k_1} \ar[ur]^{i_2} \ar[dr]_{h_1} & & SH_*^\heartsuit(W,V) \ar[uu]^\cong_{k_2} \ar[ul]_{i_1} \ar[dl]^{h_2} \\
& SH_*^\heartsuit(W) &
}
$$
The vertical isomorphisms $k_1$ and $k_2$ are the excision isomorphisms. The connecting homomorphism $\delta$ in the Mayer-Vietoris exact sequence, or the homomorphism $\delta''$ in the notation of the proof of Theorem~\ref{thm:MV}, is defined to be $h_2k_2^{-1}\ell_2$.

\subsection{Compatibility between exact triangles}
In this section we use the notation $(\heartsuit,\heartsuit',\heartsuit'/\heartsuit)$ for any one of the triples $(<0,\varnothing,\ge 0)$, $(\le 0,\varnothing,>0)$, $(<0,\le 0,=0)$, or $(=0,\ge 0,>0)$. To any such triple there corresponds a tautological exact triangle (see Propositions~\ref{prop:taut-triang-W} and~\ref{prop:taut-triang-WV})
$$
\xymatrix
@C=30pt
@R=30pt
{
SH_*^{\heartsuit} \ar[rr] & & 
SH_*^{\heartsuit'} \ar[dl] \\ & SH_*^{\heartsuit'/\heartsuit} \ar[ul]^{[-1]}  
}
$$

\begin{proposition}\label{prop:compatibility-exact-triangles} 
Let $(W,V)$ be a Liouville pair of cobordisms with filling. Let $(\heartsuit,\heartsuit',\heartsuit'/\heartsuit)$ be a triple as above. 

(i) The transfer maps $f^\heartsuit_{WV}$, $f^{\heartsuit'}_{WV}$, and $f^{\heartsuit'/\heartsuit}_{WV}$ induce a morphism between the tautological exact triangles corresponding to $(\heartsuit,\heartsuit',\heartsuit'/\heartsuit)$ for $W$ and $V$. 

(ii) The exact triangles of the pair $(W,V)$ for $\heartsuit,\heartsuit',\heartsuit'/\heartsuit$ determine ``triangles of triangles" together with the corresponding tautological exact triangles. More precisely, upon expanding the exact triangles of a pair and the tautological ones into long exact sequences, we obtain the following diagram in which all squares are commutative, except the bottom right one which is anti-commutative
$$
\xymatrix
@C=30pt
@R=30pt
{
SH_*^\heartsuit(W,V) \ar[r] \ar[d] & SH_*^\heartsuit(W) \ar[r]^{f^\heartsuit_!} \ar[d] & SH_*^\heartsuit(V) \ar[r] \ar[d] & SH_{*-1}^\heartsuit(W,V) \ar[d] \\
SH_*^{\heartsuit'}(W,V) \ar[r] \ar[d] & SH_*^{\heartsuit'}(W) \ar[r]^{f^{\heartsuit'}_!} \ar[d] & SH_*^{\heartsuit'}(V) \ar[r] \ar[d] & SH_{*-1}^{\heartsuit'}(W,V) \ar[d] \\
SH_*^{\heartsuit'/\heartsuit}(W,V) \ar[r] \ar[d] & SH_*^{\heartsuit'/\heartsuit}(W) \ar[r]^{f^{\heartsuit'/\heartsuit}_!} \ar[d] & SH_*^{\heartsuit'/\heartsuit}(V) \ar @{} [dr] |{-} \ar[r] \ar[d] & SH_{*-1}^{\heartsuit'/\heartsuit}(W,V) \ar[d] \\
SH_{*-1}^\heartsuit(W,V) \ar[r] & SH_{*-1}^\heartsuit(W) \ar[r]^{f^\heartsuit_!} & SH_{*-1}^\heartsuit(V) \ar[r] & SH_{*-2}^\heartsuit(W,V) 
}
$$

(iii) The exact triangle of a pair $(W,V)$ for $SH_*^{=0}$ is isomorphic to the exact triangle of the pair $(W,V)$ in singular cohomology $H^{n-*}$. 
\end{proposition}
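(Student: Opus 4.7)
For (i), the key observation is that the transfer map $f_!^\heartsuit$ is realized at the chain level (via the construction in \S\ref{sec:transfer-map-revisited}) by a continuation map $f_{i,j}$ composed with a projection onto a quotient complex, and that this chain-level map strictly preserves the filtration by the Hamiltonian action at the value $0$. Consequently, for each $(i,j)$ the maps $f_{i,j}$ fit into a commutative diagram of short exact sequences of chain complexes of the form
$$
\xymatrix
@C=20pt
@R=20pt
{
0 \ar[r] & FC^\heartsuit(L_{i,j}) \ar[r] \ar[d]_{f_{i,j}^\heartsuit} & FC^{\heartsuit'}(L_{i,j}) \ar[r] \ar[d]_{f_{i,j}^{\heartsuit'}} & FC^{\heartsuit'/\heartsuit}(L_{i,j}) \ar[r] \ar[d]_{f_{i,j}^{\heartsuit'/\heartsuit}} & 0 \\
0 \ar[r] & FC^\heartsuit(H_{i,j}) \ar[r] & FC^{\heartsuit'}(H_{i,j}) \ar[r] & FC^{\heartsuit'/\heartsuit}(H_{i,j}) \ar[r] & 0\,,
}
$$
where the horizontal arrows are the inclusion/projection defining the action-filtered subcomplex/quotient complex. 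Passing to homology and taking the relevant inverse/direct limit yields the desired morphism between tautological exact triangles.

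For (ii), note that Proposition~\ref{prop:SHrel-dynamical-cone} identifies $SH_*^\heartsuit(W,V)[-1]$ with the cone of the chain-level transfer map $f^\heartsuit:FC^\heartsuit_{I}(L)\to FC^\heartsuit_{II}(H)$. Combining this with the previous diagram, we obtain at chain level a commutative diagram of three short exact sequences of chain complexes (indexed by $\heartsuit,\heartsuit',\heartsuit'/\heartsuit$), in which the horizontal arrows are the transfer maps and the vertical arrows are the action-filtration inclusions/projections. This is precisely the setup of Proposition~\ref{prop:diag}, and its chain-level grid yields, upon passing to homology and taking limits, the desired $4\times 4$ diagram in which all squares commute except the bottom right one, which anti-commutes. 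The sign there is the one produced by Proposition~\ref{prop:diag} via the identification $C(f)[-1]\cong C(f[-1])$ of~\eqref{eq:Jf}.

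For (iii), we invoke Proposition~\ref{prop:SH=0WV} together with the chain-level description of $SH_*^{=0}$. Choosing a Morse function $h$ on $W$ with $V$ as a sublevel set and using that for a sufficiently $C^2$-small, time-independent Hamiltonian the Floer complex reduces to the Morse cochain complex of $h$ by symplectic asphericity (see~\cite[Theorem~7.3]{SZ92} and the proof of Proposition~\ref{prop:SH=0W}), one obtains at chain level a short exact sequence of Morse cochain complexes
$$
   0\to C^*_{\mathrm{Morse}}(W,V)\to C^*_{\mathrm{Morse}}(W)\to C^*_{\mathrm{Morse}}(V)\to 0
$$
which, after the degree shift $*\mapsto n-*$ dictated by the Conley--Zehnder index convention, matches at the chain level the exact sequence realizing the exact triangle of the pair for $SH_*^{=0}$ via the cone construction of Proposition~\ref{prop:SHrel-dynamical-cone}. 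The induced long exact sequence in homology is, on the one hand, the cohomological exact sequence of the pair $(W,V)$ and, on the other hand, the exact triangle of the pair for $SH_*^{=0}$, so the two are isomorphic.

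The main obstacle is the bookkeeping in (ii): one must verify that the action-filtration chain maps for $(W,V)$ are genuinely compatible with the chain-level transfer maps constructed in \S\ref{sec:transfer-map-revisited}, rather than only compatible up to homotopy that would destroy the sign in the bottom right square. This is ensured by the confinement lemmas of \S\ref{sec:confinement} (notably Lemmas~\ref{lem:no-escape}, \ref{lem:asy} and \ref{lem:constant}), which give the strict orderings $F<I<III<II$ used in Lemma~\ref{lem:ordering} and force both the transfer map and the action-filtration maps to be honest chain maps with respect to the same splittings of the Floer complex; this places us squarely in the framework of Proposition~\ref{prop:diag}.
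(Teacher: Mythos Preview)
Your proof is correct and follows essentially the same route as the paper. The only cosmetic difference is that the paper invokes Lemma~\ref{lem:diag} (the morphism-of-short-exact-sequences version) rather than Proposition~\ref{prop:diag} directly for part~(ii), but since Lemma~\ref{lem:diag} is derived from Proposition~\ref{prop:diag} in the paper this is immaterial; for part~(iii) the paper simply cites \cite[Proposition~1.4]{Cieliebak-Frauenfelder-Oancea} whereas you spell out the Morse-theoretic identification, which is in the same spirit.
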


\begin{proof}
Assertion (i) follows from the fact that continuation maps induced by increasing homotopies respect the action filtration.

Assertion (ii) follows from Lemma~\ref{lem:diag}, and from our identification of the relative symplectic homology groups with limit homology groups of mapping cones corresponding to chain level continuation maps (Proposition~\ref{prop:SHrel-dynamical-cone}). 

Lemma~\ref{lem:diag} is applied to the following morphism between action filtration short exact sequences given by the chain level continuation maps:
$$
\xymatrix
{
0 \ar[r] & FC_{I^\heartsuit}(K_{i,j}) \ar[r] \ar[d]^{f^\heartsuit_{i,j}} & FC_{I^{\heartsuit'}}(K_{i,j}) \ar[r] \ar[d]^{f^{\heartsuit'}_{i,j}} & FC_{I^{\heartsuit'/\heartsuit}}(K_{i,j}) \ar[r] \ar[d]^{f^{\heartsuit'/\heartsuit}_{i,j}} & 0 \\
0 \ar[r] & FC_{II^\heartsuit}(K_{i,j}) \ar[r] & FC_{II^{\heartsuit'}}(H_{i,j}) \ar[r] & FC_{II^{\heartsuit'/\heartsuit}}(H_{i,j}) \ar[r] & 0 
}
$$

Assertion (iii) is proved \emph{mutatis mutandis} like~\cite[Proposition~1.4]{Cieliebak-Frauenfelder-Oancea}. We omit the details. 
\end{proof}

Finally, we prove the following compatibility between the tautological
exact triangles. 

\begin{proposition}\label{prop:taut-triangles}
For every filled Liouville pair $(W,V)$ the four tautological
exact triangles fit into the commuting diagram
$$
\xymatrix
@C=30pt
@R=30pt
{
   & SH_{*+1}^{>0}(W,V) \ar@{=}[r] \ar[d] & SH_{*+1}^{>0}(W,V) \ar[d] & \\
   SH_*^{<0}(W,V) \ar[r] \ar@{=}[d] & SH_*^{\leq 0}(W,V) \ar[r] \ar[d] &
   SH_*^{=0}(W,V) \ar[r] \ar[d] & SH_{*-1}^{<0}(W,V) \ar@{=}[d] \\ 
   SH_*^{<0}(W,V) \ar[r] & SH_*(W,V) \ar[r] \ar[d] & SH_*^{\geq
    0}(W,V) \ar[r] \ar[d] & SH_{*-1}^{<0}(W,V)\\ 
   & SH_*^{>0}(W,V) \ar@{=}[r] & SH_*^{>0}(W,V) & 
}
$$
\end{proposition}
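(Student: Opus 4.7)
The plan is to derive the diagram as a homological shadow of Verdier's $3\times 3$ lemma (Proposition~\ref{prop:diag}), applied at chain level to a strictly commuting square encoding the filtration underlying the four tautological short exact sequences of Proposition~\ref{prop:taut-triang-WV}. I work throughout with the doubly-directed cofinal family of Hamiltonians $H_{i,j}$ of \S\ref{sec:Ham-V}, for which Corollary~\ref{cor:SHWVgeometric-WV} gives $SH^\heartsuit_*(W,V)=\lim^{\longrightarrow}_j\lim^{\longleftarrow}_i FH_{(I,III)^\heartsuit}(H_{i,j})$. For fixed $(i,j)$ abbreviate $C^\heartsuit:=FC_{(I,III)^\heartsuit}(H_{i,j})$; the subcomplex/quotient structure described in \S\ref{sec:Ham-V} provides chain-level identifications $C^{=0}=C^{\leq0}/C^{<0}$, $C^{\geq0}=C^{\varnothing}/C^{<0}$, and $C^{>0}=C^{\varnothing}/C^{\leq0}=C^{\geq0}/C^{=0}$ that realize the four short exact sequences behind the tautological triangles. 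Since we work over a field these sequences split, so Lemma~\ref{lem:ip} applies throughout.

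The key step is to apply Proposition~\ref{prop:diag} to the strictly commuting square in {\sf Ch}
$$
\xymatrix@C=35pt@R=22pt{
C^{<0}\ar@{^{(}->}[r]\ar@{=}[d] & C^{\leq0}\ar@{^{(}->}[d]\\
C^{<0}\ar@{^{(}->}[r] & C^{\varnothing}
}
$$
whose horizontal arrows are the subcomplex inclusions and whose right vertical arrow is the natural inclusion. By Lemma~\ref{lem:ip}, the cones of the horizontal inclusions are $C^{=0}$ and $C^{\geq0}$ in {\sf Kom}, the cone of the left vertical identity is $0$, and the cone of the right vertical inclusion is $C^{>0}$. The $3\times 3$ grid produced by Proposition~\ref{prop:diag} has rows $1$ and $2$ and columns $2$ and $3$ realizing precisely the four tautological distinguished triangles
\begin{gather*}
   C^{<0}\to C^{\leq0}\to C^{=0}\to C^{<0}[-1],\qquad
   C^{<0}\to C^{\varnothing}\to C^{\geq0}\to C^{<0}[-1],\\
   C^{\leq0}\to C^{\varnothing}\to C^{>0}\to C^{\leq0}[-1],\qquad
   C^{=0}\to C^{\geq0}\to C^{>0}\to C^{=0}[-1],
\end{gather*}
while row $3$ reads $0\to C^{>0}\xrightarrow{\cong}C^{>0}\to 0$ and column $1$ reads $C^{<0}=C^{<0}\to 0\to C^{<0}[-1]$. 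Every square commutes in {\sf Kom} except the bottom-right $2\times 2$ block, whose four corners are $C^{>0}$, $0$, $C^{=0}[-1]$, $C^{<0}[-2]$; since the corner $0$ forces one of the two compositions around the square to vanish, anti-commutativity makes the other one vanish as well, so both compositions agree trivially.

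Passing to homology and then to $\lim^{\longrightarrow}_j\lim^{\longleftarrow}_i$, the two middle rows and the second and third columns of the grid recover the four tautological long exact sequences of Proposition~\ref{prop:taut-triang-WV} that appear as the middle rows and columns of the target diagram. The top row $SH^{>0}_{*+1}=SH^{>0}_{*+1}$ appears as the $[+1]$-shift of row $3$ of the grid, viewed as the cyclic predecessor of the column triangles of columns $2$ and $3$; the bottom row $SH^{>0}_{*}=SH^{>0}_{*}$ is row $3$ itself; and the outer columns $SH^{<0}_{*}=SH^{<0}_{*}$ and $SH^{<0}_{*-1}=SH^{<0}_{*-1}$ are column $1$ and its $[-1]$-shift. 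Combining these pieces yields the claimed commuting $4\times 4$ diagram.

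The main obstacle is to ensure that the $3\times 3$ grids produced by Proposition~\ref{prop:diag} assemble into a doubly-directed system in {\sf Kom} with respect to $(i,j)$, so that the cone identifications and the commutativity relations pass to the limit. This reduces to checking Condition~$(\tilde{\mathrm{B}})$ of \S\ref{sec:directed-systems} for the morphisms of cones induced by continuation, which follows from the filtration-preserving properties of continuation maps proved in Lemma~\ref{lem:ordering-homotopy} together with the higher-homotopy invariance of Lemma~\ref{lem:continuation_maps}. Exactness of the direct limit and of inverse limits of finite-dimensional vector spaces then transports the chain-level commutativity to the symplectic homology limit.
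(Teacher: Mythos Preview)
Your argument is essentially correct, but it is far more elaborate than what the situation requires, and the paper's own proof is almost a one-liner by comparison. The paper works directly with the action filtration: for fixed $\epsilon>0$ and any $a<-\epsilon<\epsilon<b$, the strictly commuting morphism of short exact sequences
\[
\xymatrix{
0\ar[r]& FC_*^{(a,-\epsilon)}\ar[r]\ar@{=}[d] & FC_*^{(a,\epsilon)}\ar[r]\ar[d] & FC_*^{(-\epsilon,\epsilon)}\ar[r]\ar[d] & 0 \\
0\ar[r]& FC_*^{(a,-\epsilon)}\ar[r] & FC_*^{(a,b)}\ar[r] & FC_*^{(-\epsilon,b)}\ar[r] & 0
}
\]
immediately yields a commutative ladder of long exact sequences in homology, and passing to the limits over the Hamiltonian and over $a\to-\infty$, $b\to\infty$ gives the two horizontal rows of the target diagram together with all the squares between them. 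The two vertical columns are obtained analogously from the corresponding morphism of vertical short exact sequences. No $3\times 3$ lemma, no special Hamiltonians $H_{i,j}$, no Condition~$(\tilde{\mathrm B})$.

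Your route via Proposition~\ref{prop:diag} and the $(I,III)^\heartsuit$ model does work, and has the aesthetic merit of extracting all four triangles and all the commutativity from a single strictly commuting square. But it introduces an extra verification you gloss over: the tautological triangles of Proposition~\ref{prop:taut-triang-WV} are \emph{defined} via the action filtration on $SH_*^{(a,b)}(W,V)$, whereas you work in the orbit-group model of Corollary~\ref{cor:SHWVgeometric-WV}. You would need to check that the isomorphisms of that corollary carry your orbit-group inclusions and projections to the action-filtration tautological maps; this is true but deserves a sentence. The paper's approach sidesteps this entirely by never leaving the action filtration, which is why all diagrams are strictly commutative at chain level and compatibility with continuation and limits is automatic.
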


\begin{proof}
Fix $\epsilon>0$ small enough. For any choice of real numbers $a,b$ such that $-\infty<a<-\epsilon<\epsilon<b<\infty$, and for any choice of admissible Hamiltonian and almost complex structure, we have a commutative diagram of short exact sequences
$$
\xymatrix{
0\ar[r]& FC_*^{(a,-\epsilon)}\ar[r] \ar@{=}[d] & FC_*^{(a,\epsilon)}\ar[r]\ar[d] & FC_*^{(-\epsilon,\epsilon)}\ar[r]\ar[d] & 0 \\
0\ar[r]& FC_*^{(a,-\epsilon)}\ar[r] & FC_*^{(a,b)}\ar[r]& FC_*^{(-\epsilon,b)}\ar[r]& 0 
}
$$
in which the various maps are inclusions or projections. This induces a commutative diagram between the corresponding long exact sequences in homology, and by passing to the limit on the Hamiltonian and then on $a\to-\infty$, $b\to\infty$ as in Section~\ref{sec:SHpair} we obtain the commutativity of the diagram formed by the two horizontal lines in the statement. 

The commutativity of the diagram formed by the two vertical lines in the statement is proved analogously.
\end{proof}

{\color{black}
We conclude this subsection with a compatibility result between the
exact triangle of a triple and Poincar\'e duality. 

\begin{proposition}[Poincar\'e duality and long exact sequence of a triple]\label{prop:PD-triple}
For every triple $(W,V,U)$ of filled Liouville cobordisms and $\heartsuit\in\{\varnothing, >0,\ge
0, =0, \le 0, <0\}$ there exists a commuting diagram
\begin{equation*}
{\small
\xymatrix
@C=10pt
@R=20pt
{
     SH_*^\heartsuit(W,V) \ar[r] \ar[d]^\cong_{exc} & SH_*^\heartsuit(W,U)
   \ar[r] \ar[d]^\cong_{exc} & SH_*^\heartsuit(V,U) \ar[r] \ar[d]^\cong_{exc} &
   SH_{*-1}^\heartsuit(W,V)   \ar[d]^\cong_{exc} \\
     SH_*^\heartsuit(W\setminus V,\p V) \ar[r] \ar[d]^\cong_{PD} &
   SH_*^\heartsuit(W\setminus U,\p U)
   \ar[r] \ar[d]^\cong_{PD} & SH_*^\heartsuit(V\setminus U,\p U) \ar[r] \ar[d]^\cong_{PD} &
   SH_{*-1}^\heartsuit(W\setminus V,\p V)   \ar[d]^\cong_{PD} \\
     SH^{-*}_\heartsuit(W\setminus V,\p W) \ar[r] \ar@{=}[d] &
   SH^{-*}_\heartsuit(W\setminus U,\p W)
   \ar[r] \ar@{=}[d] & SH^{-*}_\heartsuit(V\setminus U,\p V) \ar[r] \ar[d]^\cong_{exc} &
   SH^{1-*}_\heartsuit(W\setminus V,\p W)   \ar@{=}[d] \\
     SH^{-*}_{-\heartsuit}(W\setminus V,\p W) \ar[r] &
   SH^{-*}_{-\heartsuit}(W\setminus U,\p W) \ar[r] &
   SH^{1-*}_{-\heartsuit}(W\setminus U,W\setminus V) \ar[r] &
   SH^{1-*}_{-\heartsuit}(W\setminus V,\p W)   \\
}
}
\end{equation*}
where the first and last row are the long exact sequences of the
triples $(W,V,U)$ and $(W\setminus U,W\setminus V,\p W)$,
respectively, and the vertical arrows are the Poincar\'e duality
and excision isomorphisms from Theorem~\ref{thm:Poincare} and
Theorem~\ref{thm:excision}. 
(The remaining horizontal maps are defined by this diagram.)  
\end{proposition}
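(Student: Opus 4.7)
The plan is to build the four rows one at a time, using at each stage a different natural construction from the paper, and then to verify the three bands of squares using naturality.

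The first row is the long exact sequence associated to the exact triangle of the triple $(W,V,U)$ provided by Proposition~\ref{prop:triple}. The first band of vertical arrows consists of the excision isomorphisms of Theorem~\ref{thm:excision-triple}, which identify $SH_*^\heartsuit(W,V)\cong SH_*^\heartsuit(\ol{W\setminus V},\p V)$ and similarly for the other two pairs (with $\p V$ and $\p U$ understood as the collection of boundary components of the complement inherited from $V$ and $U$). Commutativity of the squares in this band is formal: as noted at the end of Section~\ref{sec:ex-triangle-pair}, the excision isomorphism is itself realized by a transfer map associated to the inclusion of the complement, and transfer maps are functorial (Proposition~\ref{prop:func-transfer}) and compatible with the exact triangle of a pair (Proposition~\ref{prop:invariance-pair-WV}).

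For the second band I would apply Poincar\'e duality (Theorem~\ref{thm:Poincare}) column by column, with admissible subset $A=\p V$ inside the boundary of $\ol{W\setminus V}$ and complementary subset $A^c=\p W\cap(W\setminus V)$, which is what the notation $\p W$ abbreviates in row three (and similarly for the other columns). Commutativity of this band reduces to the naturality of Poincar\'e duality with respect to transfer maps and to the connecting homomorphism of a pair. At chain level, the duality is implemented by the explicit orientation-reversing involution $x\mapsto \bar x$, $u\mapsto \bar u$ of~\eqref{eq:PDchainlevel}, which intertwines the Floer continuation map induced by a decreasing homotopy $H_s$ with the continuation map induced by $\bar H_{-s}$; this immediately dualises the transfer maps. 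For the connecting homomorphism, I would invoke the mapping cone description of $SH_*^\heartsuit(W,V)$ from Proposition~\ref{prop:SHrel-dynamical-cone} and use that the involution carries the cone of a transfer chain map to the cone of its dual, compatibly with the structural maps $\alpha,\beta$ of~\eqref{les:cone} up to the canonical sign conventions of Lemma~\ref{lem:rotate}(ii).

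The fourth row is the long exact sequence of the triple $(W\setminus U,W\setminus V,\p W)$ in symplectic cohomology, where the innermost term $\p W$ is treated as a trivial multilevel collar cobordism in the sense of Section~\ref{sec:multilevel}. The arrows in the third band are the identity in columns $1,2,4$ and the excision isomorphism $SH^{1-*}_{-\heartsuit}(V\setminus U,\p V)\cong SH^{1-*}_{-\heartsuit}(W\setminus U,W\setminus V)$ of Theorem~\ref{thm:excision-triple} in column $3$. Commutativity of this band is then another instance of naturality of excision with respect to the long exact sequence of a triple, proved exactly as for the first band.

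The main obstacle I expect is the naturality of Poincar\'e duality against the degree $-1$ connecting homomorphism of the pair, since this map is only defined up to homotopy via the mapping cone of a chain-level transfer map. The subtle point is to check that the involution $u\mapsto\bar u$ intertwines the cone $C(f_{i,j}^\heartsuit)$ built in Section~\ref{sec:transfer-map-revisited} with the dual mapping cocone of the dual continuation map, after the degree shift dictated by Proposition~\ref{prop:SHrel-dynamical-cone}. Once this sign and degree bookkeeping is carried out using Lemma~\ref{lem:rotate}, all three bands of squares commute for purely formal reasons and the proposition follows.
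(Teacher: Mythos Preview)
Your band-by-band strategy is sound and would eventually give the result, but it takes a considerably longer path than the paper does. The paper's proof is essentially a one-line geometric observation that bypasses precisely the obstacle you single out as the main difficulty.

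Recall from the proof of Theorem~\ref{thm:Poincare} that Poincar\'e duality is implemented at chain level by the involution $H\mapsto \bar H$, $\bar H(t,x)=-H(-t,x)$, together with $x\mapsto\bar x$ and $u\mapsto\bar u$. The paper's observation is that if $G$ is a two-step Hamiltonian as in Figure~\ref{fig:KHG-new}, adapted to the nested transfer maps for the triple $(W,V,U)$, then $-G$ (hence $\bar G$) is a Hamiltonian of exactly the shape adapted to the triple $(W\setminus U,\,W\setminus V,\,\p W)$. Since the chain complexes, the continuation maps defining the transfer maps, and therefore the connecting homomorphisms of the triple are all built from the same Floer data, the involution carries the entire chain-level apparatus for the first row to that for the last row in one stroke. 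Commutativity of all three bands follows simultaneously, and there is no separate verification needed for the degree $-1$ connecting map.

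By contrast, your route requires unwinding the connecting homomorphism of the triple through the Eilenberg--Steenrod reduction to pairs (Proposition~\ref{prop:triple}), then through the mapping-cone identification of Proposition~\ref{prop:SHrel-dynamical-cone}, and finally checking that the involution respects the cone structure up to the signs of Lemma~\ref{lem:rotate}. This is doable but is exactly the bookkeeping the paper's direct Hamiltonian observation makes unnecessary. The paper also notes an intermediate route: reduce algebraically to the case $U=\varnothing$ and then use the same negation trick for the one-step Hamiltonians of Figures~\ref{fig:KH-new} and~\ref{fig:H-one-step}.
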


{\color{black}
\begin{proof} 
The conclusion follows directly 
from the definition of the Poincar\'e
duality isomorphism in Theorem~\ref{thm:Poincare} and the observation
that for a Hamiltonian $G$ as in Figure~\ref{fig:KHG-new} adapted to the
triple $(W,V,U)$, the Hamiltonian $-G$ is adapted to the triple
$(W\setminus U,W\setminus V,\p W)$. 

Alternatively, one can reduce the general case by a purely algebraic argument to the case $U=\varnothing$, as in the proof of Proposition~\ref{prop:triple}. The case $U=\varnothing$ is in turn treated by noting that for a Hamiltonian $H$ as in Figures~\ref{fig:KH-new} or~\ref{fig:H-one-step} adapted to the
pair $(W,V)$, the Hamiltonian $-H$ is adapted to the triple
$(W,W\setminus V,\p W)$.
\end{proof}
}
}

\subsection{The exact triangle of a pair of Liouville domains revisited}

The exact triangle 
\begin{equation*} 
\xymatrix
@C=10pt
@R=18pt
{
SH_*^\heartsuit(W,V) \ar[rr] & & 
SH_*^\heartsuit(W) \ar[dl] \\ & SH_*^\heartsuit(V) \ar[ul]^-\p_-{[-1]}  
}
\end{equation*}
can be established in a more direct way for a pair $(W,V)$ of Liouville domains since there is no need to first identify the symplectic homology of the pair with a homological mapping cone. Instead, one can argue directly on the chain complexes using truncation by the action. We find it instructive to spell out the argument. This proof is only apparently simpler: since the transfer maps induced by the inclusions $V\hookrightarrow W$ and $W\hookrightarrow (W,V)$ are only implicitly constructed, this proof would require additional arguments in order to incorporate it into the larger framework that we discuss in this paper, and these additional arguments would essentially amount to reinterpret this diagram in terms of transfer maps.

For a pair of Liouville domains we only need to consider three flavors $\heartsuit\in\{\varnothing,=0,>0\}$. We prove below the compatibility of the exact triangle of the pair with the tautological exact triangle relating these three flavors.

Let $V\subseteq W$ be an inclusion of Liouville domains and denote by
$\wh W$ the symplectic completion of $W$. Let $H=H_{\nu,\tau}$,
$\nu>0$, $\tau>0$ be a one step Hamiltonian on $\wh W$, defined up to
smooth approximation as follows (Figure~\ref{fig:SHpairdomains}):  
\begin{itemize}
\item $H=0$ on $W\setminus V$,
\item $H$ is linear of slope $\tau$ on $\wh W\setminus W$,
\item $H$ is linear of slope $\nu$ on a collar $]\delta,1]\times \p V\subseteq V$ for some $0<\delta<1$,
\item $H$ is constant equal to $-\nu(1-\delta)$ on the complement of this collar in $V$.
\end{itemize} 

\begin{figure}
         \begin{center}
\input{SHpairdomains.pstex_t}
         \end{center}
\caption{Hamiltonian for a pair of Liouville domains \label{fig:SHpairdomains}}
\end{figure}

For $\nu$ and $\tau$ not lying in the action spectrum of $\p V$, respectively $\p W$, the $1$-periodic orbits of $H$ fall into five classes:
\begin{itemize}
\item[($II^0$)] constants in the complement of the collar in $V$,
\item[($II^+$)] orbits corresponding to characteristics on $\p V$ and located in the region $\{\delta\}\times\p V$,
\item[($III^-$)] orbits corresponding to characteristics on $\p V$ and located in the region $\p V$,
\item[($III^0$)] constants in $W\setminus V$,
\item[($III^+$)] orbits corresponding to characteristics on $\p W$ and located in the region $\p W$.
\end{itemize}
Suitable choices of the parameters $\tau$ and $\delta$ as a function of $\nu$ ensure that the various classes of orbits are ordered according to the action as follows: 
$$
III^0 < III^-,III^+ < II^0 < II^+.
$$
As $\nu\to \infty$ we can allow $\tau\to\infty$. In general we need to let $\delta\to 0$ if we wish to acquire $III^-<II^0$. However, by Lemmas~\ref{lem:asy} and~\ref{lem:no-escape} we have 
$$
III^-\prec II^0,II^+
$$
for any fixed choice of $\delta>0$, independently of the choice of $\nu$.   
Also, by Lemma~\ref{lem:asy} we have 
$$
III^-\prec III^+,\qquad II^0,II^+\prec III^+.
$$
The outcome is that for suitable choices of the parameters we have 
$$
III^0< III^-\prec II^0 < II^+ \prec III^+
$$
and 
$$
III^0<III^-\prec III^+ <II^0<II^+.
$$

Let $FC_{tot}$ be the total Floer complex for the Hamiltonian $H$. For a subset $\mathcal I\subset \{II^0,II^+,III^-,III^0,III^+\}$ denote by $FC_{\mathcal I}$ the complex generated by the orbits in the classes belonging to $\mathcal I$. For example, $FC_{III^-,III^0,III^+}$ stands for the subcomplex generated by the orbits in the classes $III^-,III^0,III^+$, and $FC_{III^-,III^+}$ stands for its quotient complex modulo $FC_{III^0}$ etc. We will also abbreviate $FC_{II}=FC_{II^0,II^+}$ and $FC_{III}=FC_{III^-,III^0,III^+}$.

Let us consider the following diagram whose first two rows and first two columns are exact
$$
\xymatrix
{
& 0 \ar[d] & 0 \ar[d] & & \\
0\ar[r] & FC_{III^0} \ar[r] \ar[d] & FC_{II,III^-,III^0} \ar[r] \ar[d] & FC_{II,III^-} \ar[r] \ar[d]^p & 0 \\
0\ar[r] & FC_{III} \ar[r] \ar[d] & FC_{tot} \ar[r] \ar[d] & FC_{II} \ar[r] \ar@{.>}[d]^-f & 0 \\
& FC_{III^-,III^+} \ar[r]_q \ar[d] & FC_{III^+} \ar@{.>}[r]_-g \ar[d] & FC_{III^-}[-1] & \\
& 0 & 0 & & 
}
$$

Here the chain maps $f:FC_{II}\to FC_{III^-}[-1]$ and $g:FC_{III^+}\to FC_{III^-}[-1]$ are uniquely determined so that we have natural identifications 
$$
FC_{II,III^-}=C(f)[1], \quad p=\beta(f),\qquad \qquad FC_{III^-,III^+}=C(g)[1],\quad q=\beta(g).
$$

Proposition~\ref{prop:diag} and its proof ensure that the bottom right square in the above diagram is commutative in {\sf Kom}, and moreover the diagram can be completed to a diagram in {\sf Kom}  whose lines and columns are distinguished triangles, and all of whose squares are commutative except the bottom-right one which is anti-commutative:  
\begin{equation} \label{eq:grid-chain}
\xymatrix
{
FC_{III^0} \ar[r] \ar[d] & FC_{II,III^-,III^0} \ar[r] \ar[d] & FC_{II,III^-} \ar[r] \ar[d]^p & FC_{III^0}[-1] \ar[d] \\
FC_{III} \ar[r] \ar[d] & FC_{tot} \ar[r] \ar[d] & FC_{II} \ar[r] \ar@{.>}[d]^-f & FC_{III}[-1] \ar[d] \\
FC_{III^-,III^+} \ar[r]_q \ar[d] & FC_{III^+} \ar@{.>}[r]_-g \ar[d] & FC_{III^-}[-1] \ar@{} [dr] |{-} \ar[r] \ar[d] &      FC_{III^-,III^+}[-1] \ar[d] \\
FC_{III^0}[-1] \ar[r] & FC_{II,III^-,III^0}[-1] \ar[r] & FC_{II,III^-}[-1] \ar[r] & FC_{III^0}[-2] 
}
\end{equation}

Indeed, the term $FC_{III^-}[-1]$ is isomorphic in {\sf Kom} to $C(p)[-1]$ on the one hand, and to $C(-q)[-1]$ on the other hand, and these two complexes are isomorphic as seen in the proof of Proposition~\ref{prop:diag}. 

We now remark that we have a homotopy equivalence that is well-defined up to homotopy 
$$
FC_{III^-}[-1]\cong FC_{II^+}.
$$
This follows again from Proposition~\ref{prop:diag}. For the proof we
consider a homotopy from a Hamiltonian $K=K_\tau$ which is zero on $W$
and coincides with $H_{\nu,\tau}$ outside $W$ to the Hamiltonian
$H$. We denote $FC_V(K)$ the subcomplex of $FC(K)$ generated by critical
points inside the domain $V$, so that the continuation map induces a
homotopy equivalence $FC_V(K)\simeq FC_{II,III^-}$. On the other hand we
have a canonical identification $FC_V(K)\equiv FC_{II^0}$, and a commutative
diagram up to homotopy  
$$
\xymatrix{
FC_V(K) \ar@{=}[r] \ar[d]_\simeq^{h.e.} & FC_{II^0} \ar[d]^{incl} \\
FC_{II,III^-} \ar[r]_{proj} & FC_{II}\;.
}
$$
Then Proposition~\ref{prop:diag} yields the desired homotopy equivalence $FC_{III^-}[-1]\cong FC_{II^+}$.

\begin{remark}
This chain homotopy equivalence provides one point of view on the vanishing of $SH_*(I\times \p V,\p^-(I\times \p V))$ proved in Proposition~\ref{prop:triv-cob}. 
\end{remark}

Diagram~\eqref{eq:grid-chain} can now be used as a building block to prove the existence of a diagram with exact lines and columns and in which all squares are commutative except the one marked ``$-$'', which is anti-commutative. 
\begin{equation} \label{eq:grid-SH}
\xymatrix
{
H^{n-*}(W,V) \ar[r] \ar[d] & H^{n-*}(W) \ar[r] \ar[d] & H^{n-*}(V) \ar[r] \ar[d] & H^{n-*+1}(W,V) \ar[d] \\
SH_*(W,V) \ar[r] \ar[d] & SH_*(W) \ar[r] \ar[d] & SH_*(V) \ar[r] \ar[d] & SH_{*-1}(W,V) \ar[d] \\
SH_*^{>0}(W,V) \ar[r] \ar[d] & SH_*^{>0}(W) \ar[r] \ar[d] & SH_*^{>0}(V) \ar@{} [dr] |{-} \ar[r] \ar[d] & SH_{*-1}^{>0}(W,V) \ar[d] \\
H^{n-*+1}(W,V) \ar[r] & H^{n-*+1}(W) \ar[r] & H^{n-*+1}(V) \ar[r] & H^{n-*+2}(W,V) 
}
\end{equation}

This grid diagram expresses the compatibility between the exact triangle of a pair of Liouville domains $(W,V)$ and the tautological exact triangle involving singular cohomology, symplectic homology, and positive symplectic homology. One relevant ingredient here is the chain homotopy equivalence $C^{III}[-1]\cong C^{II}$. The other ingredient is that all the above homological constructions are compatible with continuation maps and with direct limits.

\section{Variants of symplectic homology groups} \label{sec:variants}

\subsection{Rabinowitz-Floer homology}
Given a pair of Liouville domains $(W,V)$, Rabinowitz-Floer
homology $RFH_*(\p V,W)$ was defined in~\cite{Cieliebak-Frauenfelder}
as a Floer-type theory associated to the Rabinowitz action functional  
$$
\tilde A_H:\cL\wh W\times \R\to \R,\qquad \tilde A_H(\gamma,\eta)=A_{\eta H}(\gamma),
$$
where $H:\wh W\to \R$ is a Hamiltonian such that $\p V=H^{-1}(0)$ is a regular level, $H|_V\le 0$, and $H|_{\wh W\setminus V}\ge 0$. The dynamical significance of Rabinowitz-Floer homology is that it counts leafwise intersection points of $\p V$ under Hamiltonian motions~\cite{AF10}, and one of its most useful properties is that Hamiltonian displaceability of $\p V$ (and hence of $V$) implies vanishing. 

It was proved in~\cite{Cieliebak-Frauenfelder-Oancea} that $RFH_*(\p
V,W)$ does not depend on $W$, so we will denote it by $RFH(\p V)$ (it
does however depend on the filling $V$ of $\p V$). The main result
of~\cite{Cieliebak-Frauenfelder-Oancea} is that, with our current notation, we have an isomorphism  
\begin{equation}\label{eq:RFH}
   RFH_*(\p V)\cong SH_*(\p V),
\end{equation}
i.e. \emph{Rabinowitz-Floer homology is the symplectic homology of the
  trivial cobordism over $\p V$}. As such, Rabinowitz-Floer homology is naturally incorporated within the setup that we develop in this paper.  

\subsection{$S^1$-equivariant symplectic homologies}\label{sec:S1equiv} 
The circle $S^1=\R/\Z$ acts on the free loop space by shifting the
parametrisation. As such, one can define $S^1$-equivariant flavors of
symplectic homology groups. In the case of Liouville domains relevant
instances have been defined
in~\cite{Viterbo99,Seidel07,BO3+4,Zhao14,ACF14}. Following
Seidel~\cite{Seidel07} and~\cite{BO3+4,Zhao14}, the relevant structure
is that of an \emph{$S^1$-complex}, meaning a $\Z$-graded chain
complex $(C_*,\p)$ together with a sequence of maps $\p_i:C_*\to
C_{*+2i-1}$, $i\ge 0$ such that $\p_0=\p$ and   
\begin{equation} \label{eq:S1complex}
\sum_{i+j=k} \p_i\p_j=0
\end{equation}
for all $k\ge 0$. An $S^1$-complex for which $\p_i=0$ for $i\ge 2$ is
called a \emph{mixed complex} in the literature on cyclic
homology. One should view $S^1$-complexes as being
$\infty$-mixed complexes, or mixed complexes up to
homotopy, see~\cite{BO3+4} and the references therein. Given a
Hamiltonian $H$ one can endow $FC_*^{(a,b)}(H)$ with the structure of an $S^1$-complex
that is canonical up to homotopy equivalence. Moreover, a
homotopy of Hamiltonians induces a morphism between the
$S^1$-complexes defined on the Floer chain groups at the endpoints. 

{\color{black}Recall that we work with coefficients in a field $\K$.} Denote by $u$ a
formal variable of degree $-2$. 
Given an $S^1$-complex $\cC=(C_*,\{\p_i\}_{i\ge 0})$ we define
following Jones~\cite{Jones} and Zhao~\cite{Zhao14} 
the \emph{periodic cyclic chain complex}
$$
   C_*[u,u^{-1}],\qquad \p_u = \sum_{i\ge 0}u^{i}\p_i,\qquad |u|=-2.
$$
Here elements in $C_*[u,u^{-1}]$ of degree $k$ are by definition
Laurent polynomials $\sum_{j=-N}^N x_ju^j$ with $x_j\in
C_{k+2j}$. Then $\p_u^2=0$ as a consequence of~\eqref{eq:S1complex}
and the map $\p_u$ is $\K[u]$-linear. 
We consider 
the sub/quotient complexes 
$$
   C_*[u],\qquad 
   C_*[u^{-1}] = C_*[u,u^{-1}]/uC_*[u]
$$
with differential induced by $\p_u$ and the induced 
$\K[u]$-module structure. The homologies 
\begin{gather*}
   HC_*^{[u]}(\cC):=H_*(C_*[u]),\qquad 
   HC_*^{[u,u^{-1}]}(\cC) := H_*(C_*[u,u^{-1}]),\cr 
   HC_*(\cC) := HC_*^{[u^{-1}]}(\cC) := H_*(C_*[u^{-1}])
\end{gather*}
correspond to certain versions of the \emph{negative cyclic homology},  
\emph{periodic (or Tate) cyclic homology}, 
respectively \emph{cyclic homology} 
of the $S^1$-complex $\cC$ in the literature. We will not use these
names but rather indicate in the notation which version of
(Laurent) polynomials we are using.    
Due to the short exact sequence of complexes of $\K[u]$-modules  
$$
   0\to C_*[u]\to C_*[u,u^{-1}] \to C_*[u,u^{-1}]/C_*[u]\cong C_*[u^{-1}][-2]\to 0, 
$$
these homology groups fit into the {\em fundamental exact triangle} 
$$
\xymatrix
@C=20pt
{
HC_*^{[u]}(\cC) \ar[rr] & & 
HC_*^{[u,u^{-1}]}(\cC)  \ar[dl]^{[-2]} \\ & HC_{*}(\cC) \ar[ul]^{[+1]}\;.  
}
$$

\begin{example} 
Given an $S^1$-space $X$, its singular chain complex with arbitrary
coefficients $C_*=(C_*(X),\p)$ carries the structure of a mixed complex
$\cC=(C_*,\p,\p_1)$ such that~\cite{Hingston-Delta,Hess12} 
$$
   HC_*(\cC)\cong H_*^{S^1}(X).
$$
Here $H_*^{S^1}(X)=H_*(X\times_{S^1}ES^1)$ is the
usual $S^1$-equivariant homology group of $X$ defined by the Borel construction. 
The map $\p_1:C_*\to C_{*+1}$ is defined by inserting a suitable
representative of the fundamental class of the oriented circle $S^1$ into the first
argument of the composite map $C_*(S^1)\otimes
C_*(X)\stackrel{EZ}\longrightarrow C_*(S^1\times
X)\stackrel{\mu_*}\longrightarrow C_*(X)$, where $\mu:S^1\times X\to
X$ is the $S^1$-action and $EZ$ is the Eilenberg-Zilber equivalence,
explicitly described by the Eilenberg-McLane shuffle
map~\cite[p.64]{EMacL53b}. Define the homology groups
$$
   H_*^{[u,u^{-1}]}(X) = HC_*^{[u,u^{-1}]}(\cC),\qquad 
   H_*^{[u]}(X) = HC_*^{[u]}(\cC).
$$ 
While these groups cannot be described as homology groups of a
topological space in the manner of $H_*^{S^1}(X)$ -- they typically
have infinite support in the negative range -- they are nevertheless
unavoidable should one wish to formulate duality. More precisely, let
us assume that $X$ is an oriented manifold of dimension $n$ with
boundary preserved by the $S^1$-action. Denoting by
$H^*_{S^1}(X)=H^*(X\times_{S^1} ES^1)$ the usual $S^1$-equivariant
cohomology groups, Poincar\'e duality in the
$S^1$-equivariant setting takes the form  
$$
   H^i_{S^1}(X)\cong H_{n-i}^{[u]}(X,\p X).
$$
More generally, dualizing the mixed complex structure on $C_*(X)$ and
changing the degree of $u$ to $+2$, one can define two other versions
$H^*_{[u,u^{-1}]}(X)$ and $H^*_{[u^{-1}]}(X)$ of \emph{$S^1$-equivariant
cohomology}, with Poincar\'e duality isomorphisms  
$$
   H^i_{[u,u^{-1}]}(X)\cong H_{n-i}^{[u,u^{-1}]}(X,\p X),\qquad 
   H^i_{[u^{-1}]}(X)\cong H_{n-i}^{[u^{-1}]}(X,\p X) = H_{n-i}^{S^1}(X).
$$
See~\cite{Hood-Jones,Chen-Lee} for proofs of related statements. We
shall use below the following simple instance of duality: Consider an
oriented manifold $X$ of dimension $n$ with boundary viewed as an
$S^1$-space with trivial action. Then  
$$
   H_i^{S^1}(X)=\bigoplus_{j\ge 0}H_{i-2j}(X) 
$$
and
\begin{equation} \label{eq:product-sum}
   H^i_{[u^{-1}]}(X,\p X)=\prod_{j\ge 0}H^{i+2j}(X,\p X)=\bigoplus_{j\ge 0}H^{i+2j}(X,\p X),
\end{equation}
so that we indeed have $H_i^{S^1}(X)\cong H^{n-i}_{[u^{-1}]}(X,\p X)$
as a consequence of classical Poincar\'e duality.
\end{example}

In order to define $S^1$-equivariant symplectic homology and
cohomology groups, we use the structure of an $S^1$-complex on each
truncated Floer chain complex $\cC:=FC_*^{(a,b)}(H)$ and
cochain complex $\cC^\vee:=FC^*_{(a,b)}(H)$ constructed
in~\cite{BO3+4,Zhao14}. We set
\begin{gather*}
   FH_*^{(a,b),S^1}(H)=HC_*(\cC),\quad 
   FH_*^{(a,b),[u,u^{-1}]}(H)=HC_*^{[u,u^{-1}]}(\cC),\cr 
   FH_*^{(a,b),[u]}(H)=HC_*^{[u]}(\cC)
\end{gather*}
and
\begin{gather*}
   FH^*_{(a,b),S^1}(H)=HC^*(\cC^\vee),\quad 
   FH^*_{(a,b),[u,u^{-1}]}(H)=HC^*_{[u,u^{-1}]}(\cC^\vee),\cr 
   FH^*_{(a,b),[u^{-1}]}(H)=HC^*_{[u^{-1}]}(\cC^\vee)
\end{gather*}
and use these groups in formulas~\eqref{eq:SH*abW},
\eqref{eq:SH*abWA}, \eqref{eq:SH*abWV}, \eqref{eq:cohSH*abWA},
and~\eqref{eq:cohSH*abWV}, as well as in Definitions~\ref{defi:SH(W)},
\ref{defi:SHWA}, \ref{defi:SHWV}, \ref{defi:cohWA},
and~\ref{defi:cohWV}. The outcome for a pair $(W,V)$ of Liouville
cobordisms with filling are {\em $S^1$-equivariant symplectic homology
groups}  
$$
   SH_*^{S^1,\heartsuit}(W,V),\qquad SH_*^{[u,u^{-1}],\heartsuit}(W,V),\qquad SH_*^{[u],\heartsuit}(W,V),
$$
and {\em $S^1$-equivariant symplectic cohomology groups} 
$$
   SH^*_{S^1,\heartsuit}(W,V),\qquad SH^*_{[[u,u^{-1}]],\heartsuit}(W,V),\qquad S^*_{[[u^{-1}]],\heartsuit}(W,V),
$$
with $\heartsuit\in \{\varnothing,>0,\ge 0, =0, \le 0, <0\}$ as usual. 

\begin{remark} \label{rmk:localization-chain-level}
The notation $[[u]]$ and $[[u,u^{-1}]]$ in the equivariant symplectic cohomology
groups is a reminder that, in the case of a Liouville domain, the inverse
limit in the definition leads in general to formal power series rather
than polynomials. It also indicates the analogy to the
$S^1$-equivariant cohomology groups defined by Jones and
Petrack~\cite{Jones-Petrack}. Indeed, it is proved
in~\cite{Zhao14,ACF14} that for a Liouville domain $W$ and with
rational coefficients the second group satisfies fixed point localization
\begin{equation}\label{eq:fixed}
   SH^*_{[[u,u^{-1}]]}(W;\Q) \cong H_{n+*}(W,\p
   W;\Q)\otimes_\Q\Q[u,u^{-1}].  
\end{equation}
One can define several other potentially interesting versions of 
$S^1$-equivariant symplectic homology by applying the direct/inverse
limit over the bounds of the action window $(a,b)$, the homology
functor, and the completions with respect to $u,u^{-1}$ in different
orders~\cite{ACF14}. In particular, this gives rise to a version of 
periodic/Tate symplectic cohomology of a Liouville domain that equals
the localization of $S^1$-equivariant cohomology and obeys
Goodwillie's theorem~\cite{Goodwillie}. This can also serve as a motivation to phrase the theory of symplectic homology at chain level, see also the discussion of coefficients in the Introduction regarding this point. 
\end{remark}

The equivariant symplectic (co)homology groups are connected to each
other by fundamental exact triangles similar to the one for cyclic homology above, namely 
$$
\xymatrix
@C=10pt
@R=18pt
{
SH_*^{[u],\heartsuit}  \ar[rr] & & 
SH_*^{[u,u^{-1}],\heartsuit} ,  \ar[dl]^{[-2]} \\ & SH_{*}^{S^1,\heartsuit}  \ar[ul]^{[+1]}  
}
\qquad
\xymatrix
@C=10pt
@R=18pt
{
SH^*_{S^1,\heartsuit}  \ar[rr] & & 
SH^*_{[[u,u^{-1}]],\heartsuit} .  \ar[dl]^{[+2]} \\ & SH^{*}_{[[u^{-1}]],\heartsuit}  \ar[ul]^{[-1]}  
}
$$
The non-equivariant and equivariant theories are connected by {\em Gysin exact triangles} 
$$
\xymatrix
@C=18pt
@R=10pt
{
SH_*^{\heartsuit}  \ar[rr] & & 
SH_*^{S^1,\heartsuit} ,  \ar[dl]^{[-2]} \\ & SH_{*}^{S^1,\heartsuit}  \ar[ul]^{[+1]}  
}
\qquad
\xymatrix
@C=18pt
@R=10pt
{
SH^{*}_{S^1,\heartsuit}  \ar[rr]^{[+2]} & & 
SH^*_{S^1,\heartsuit},   \ar[dl] \\ & SH^*_{\heartsuit}  \ar[ul]^{[-1]}  
}
$$
respectively
$$
\xymatrix
@C=18pt
@R=10pt
{
SH_{*}^{[u],\heartsuit}  \ar[rr]^{[-2]} & & 
SH_*^{[u],\heartsuit} ,  \ar[dl] \\ & SH_*^{\heartsuit}  \ar[ul]^{[+1]}  
}
\qquad
\xymatrix
@C=18pt
@R=10pt
{
SH^{*}_{\heartsuit}  \ar[rr] & & 
SH^*_{[[u^{-1}]],\heartsuit}.   \ar[dl]^{[+2]} \\ & SH^{*}_{[[u^{-1}]],\heartsuit}  \ar[ul]^{[-1]}  
}
$$
By construction, all $S^1$-equivariant symplectic homology and cohomology groups
are modules over $\K[u]$.
Moreover, the periodic versions are actually modules over the
larger ring $\K[u,u^{-1}]$. In particular, this module
structure induces periodicity isomorphisms 
$$
SH_*^{[u,u^{-1}],\heartsuit}\cong SH_{*+2}^{[u,u^{-1}],\heartsuit},\qquad
SH^*_{[[u,u^{-1}]],\heartsuit}\cong SH^{*+2}_{[[u,u^{-1}]],\heartsuit}.  
$$
All the exact triangles above are obtained at the level of truncated
Floer homology by writing the complex that computes $HC_*^{[u,u^{-1}]}(\cC)$ as the
product total complex of a multicomplex of the form  
$$
\xymatrix{
& \ar[d] & \ar[d] & \ar[d] & \ar[d] & \ar@{.}[dl] \\
& C_3 \ar[l] \ar[d]^{\p} & C_2 \ar[l] \ar[d] & C_1 \ar[l]^-{\p_1} \ar[d]^{\p} & C_0 \ar[l]^-{\p_1} \ar@{-->}[ull]_(.3){\p_2} & \\
& C_2 \ar[l] \ar[d] & C_1 \ar[l] \ar[d]^\p & C_0 \ar[l]^{\p_1} \ar@{-->}[ull]^(.65){\p_2} \ar@{.>}[uulll]_(.2){\p_3} & \boldsymbol{u^{-1}} & \\
& C_1 \ar[l] \ar[d]^\p & C_0 \ar[l]^-{\p_1} \ar@{-->}[ull]_(.3){\p_2} & \boldsymbol{u^0} & & \\
& C_0 \ar[l] & \boldsymbol{u^{1}} & & & \\
\ar@{.}[ur] & & & & &  
}
$$
and considering natural subcomplexes and quotient
complexes, see~\cite{Jones,BO3+4}. The $[u^{-1}]$-complex sits on the right
half-plane with respect to the $0$-th column, the $[u]$-complex sits
on the left half-plane, and the non-equivariant theory 
sits on the $0$-th column. For cohomology the arrows need to be
reversed. The resulting exact triangles for truncated Floer
(co)homology pass to the limit in symplectic (co)homology due to our
choice of order in the first-inverse-then-direct limit. Note that,
since for a given Hamiltonian $H$ and finite action window $(a,b)$
the complex $FC_*^{(a,b)}(H)$ has finite rank, it actually does not
matter whether we consider the product total complex or the direct sum
total complex to compute $HC_*^{[u,u^{-1}]}(\cC)$.

Here are some further properties of these symplectic (co)homology groups. 

(1) At action level zero we have
$$
   SH_*^{S^1,=0}(W,V)\cong H^{n-*}_{[u^{-1}]}(W,V),\qquad SH_*^{[u],=0}(W,V)\cong H^{n-*}_{S^1}(W,V),
$$
and 
$$
   SH_*^{[u,u^{-1}],=0}(W,V)\cong H^{n-*}_{[u,u^{-1}]}(W,V). 
$$
In particular, for a Liouville domain $W$ of dimension $2n$ we have 
$$
SH_*^{S^1,=0}(W)\cong H^{n-*}_{[u]}(W)\cong H_{*+n}^{S^1}(W,\p W).
$$
This formula appears already in~\cite{Viterbo99}. We interpret in the Introduction this formula as a motivation for viewing the transfer maps as shriek maps.

(2) For a Liouville domain $W$, it is proved in~\cite{BO3+4}
that $SH_*^{S^1,>0}(W)$ is isomorphic over $\Q$ to linearized
contact homology of $\p W$ whenever the latter is defined, see
also~\cite{Gutt15,Gutt-Kang,Kwon-vanKoert} for applications.  

(3) The arguments in~\cite{BO3+4} carry over to the setting of pairs of
Liouville cobordisms with filling in order to show that there is a
spectral sequence converging to $SH_*^{S^1,\heartsuit}(W,V)$ with
second page given by
$E^2=SH_*^\heartsuit(W,V)\otimes\K[u^{-1}]$. In combination
with the Gysin exact triangle 
this yields the fact that the non-equivariant symplectic homology of a
pair $(W,V)$ vanishes if and only if its $S^1$-equivariant symplectic
homology vanishes. The fixed point localization~\eqref{eq:fixed} shows
that this is not true anymore for $SH^{[u,u^{-1}]}_*$. 

(4) The above flavors of $S^1$-equivariant symplectic homology satisfy
Poincar\'e duality in the following general form: given a Liouville
cobordism $W$ and $A\subset \p W$ an admissible union of boundary components, for
any $\heartsuit\in\{\varnothing,>0,\ge 0, =0, \le 0, <0\}$ we have
isomorphisms 
$$
   SH_*^{S^1,\heartsuit}(W,A)\cong SH^{-*}_{[[u^{-1}]],-\heartsuit}(W,A^c),\qquad
   SH_*^{[u],\heartsuit}(W,A)\cong SH^{-*}_{S^1,-\heartsuit}(W,A^c), 
$$
$$ 
   SH_*^{[u,u^{-1}],\heartsuit}(W,A)\cong SH^{-*}_{[[u,u^{-1}]],-\heartsuit}(W,A^c),
$$
where the notation $-\heartsuit$ has the same meaning as
in~\S\ref{sec:Poincare_duality}. There are also algebraic dualities
over the ring $\K[u]$ analogous to those in~\cite{Hood-Jones}  
which pair $SH^*_{S^1,\heartsuit}$ with
$SH_*^{[u],\heartsuit}$, $SH^*_{[[u^{-1}]],\heartsuit}$ with
$SH_*^{S^1,\heartsuit}$, and $SH^*_{[[u,u^{-1}]],\heartsuit}$ with
$SH_*^{[u,u^{-1}],\heartsuit}$. 

Each of the these flavors of $S^1$-equivariant symplectic homology
groups obeys the same set of Eilenberg-Steenrod type axioms as their
nonequivariant counterparts. Transfer maps and invariance for the case of Liouville domains were previously discussed in~\cite{Viterbo99,Zhao14,Gutt15}. Moreover, it follows from the
construction that the Gysin and fundamental exact triangles are
functorial with respect to the tautological exact triangles and also
with respect to the exact triangles of pairs, see
also~\cite{BOGysin,BO3+4} for a basic instance of this phenomenon.

\subsection{Lagrangian symplectic homology, or wrapped Floer homology}\label{sec:LagSH}
Let $W$ be a Liouville cobordism. 
An \emph{exact Lagrangian cobordism in $W$} or, for short, a \emph{Lagrangian cobordism}, is an exact Lagrangian $L\subset W$ which intersects the boundary $\p W$ transversally along a Legendrian submanifold $\p L=L\cap \p W$. This means that $\lambda|_L$ is an exact $1$-form which vanishes when restricted to $\p L$. We denote $\p^\pm L=L\cap \p^\pm W$. Up to applying a Hamiltonian isotopy that fixes $\p W$ one can assume without loss of generality that $L$ is invariant under the Liouville flow near the boundary~\cite[\S3a]{Abouzaid-Seidel}. This means that near its negative or positive boundary we can identify $L$ via the Liouville flow with $[1,1+\epsilon]\times\p^-L$, respectively with $[1-\epsilon,1]\times \p^+L$. We interpret $L$ as a cobordism from $\p^+L$ to $\p^-L$. We refer to $\p^-L$ and $\p^+L$ as being the \emph{positive}, respectively \emph{negative (Legendrian) boundary of $L$}. 

Let $F$ be a Liouville filling of $\p^-W$. An \emph{exact Lagrangian
  filling of $\p^-L$} or, for short, a \emph{filling of
  $\p^-L$}, is a Lagrangian cobordism $F_L\subset F$ whose positive
Legendrian boundary is $\p^-L$ (and which has empty negative boundary). 

One can associate to a Lagrangian cobordism $L$ with filling $F_L$ \emph{Lagrangian symplectic homology groups} 
$$
SH_*^{\heartsuit}(L) , \qquad \heartsuit\in \{\varnothing,>0,\ge 0, =0, \le 0, <0\}.
$$
Similarly, given a pair of Lagrangian cobordisms $K\subset L$ inside a pair of Liouville cobordisms $V\subset W$, with Lagrangian filling $F_L$ inside a Liouville filling $F$, we define Lagrangian symplectic homology groups of the pair $(L,K)$:\footnote{\color{black} Not to be confused with the (wrapped) Lagrangian intersection Floer homology of a pair of Lagrangians.}
$$
SH_*^{\heartsuit}(L,K) , \qquad \heartsuit\in \{\varnothing,>0,\ge 0, =0, \le 0, <0\}.
$$
These are ``open string analogues" of the symplectic homology groups defined for the filled Liouville cobordism $W$, respectively for the pair of Liouville cobordisms $(W,V)$ with filling. They are defined using exactly the same shape of Hamiltonian as in the ``closed string" case. Given such a Hamiltonian, the generators of the corresponding chain complexes are Hamiltonian chords with endpoints on $L$ 
$$
\gamma:[0,1]\to W,\qquad \gamma(\{0,1\})\subset L,\qquad \dot\gamma = X_H\circ\gamma,
$$
and the Floer differential counts strips with Lagrangian boundary
condition on $L$ which are finite energy solutions of the Floer
equation   
$$
u:\R\times[0,1]\to W,\qquad u(\R\times\{0,1\})\subset L, \qquad 
\p_su+J(u)(\p_t u - X_H\circ u)=0.
$$
The theory is naturally defined over $\Z/2$, and an additional assumption on the Lagrangian is needed (e.g. relatively spin) in order to define the theory with more general coefficients. 

\begin{example}
Let $L$ be a Lagrangian cobordism inside a Liouville domain $W$, so
that $L$ has empty negative boundary and empty filling. The Lagrangian
symplectic homology group  
$SH_*(L)$ coincides with the wrapped Floer homology group of $L$
introduced in~\cite{Abouzaid-Seidel, FSS}. 
The Lagrangian symplectic homology group $SH_*^{>0}(L)$ is isomorphic
to the linearized Legendrian contact homology group of
$\p^+L$~\cite{Ekholm, EHK}. The Lagrangian symplectic homology group
$SH_*^{=0}(L)$ is isomorphic to the singular cohomology group $H^{n-*}(L)$ of
$L$. The Lagrangian symplectic homology group of the trivial cobordism
$I\times \p^+L\subset I\times \p^+W$, with $I$ a closed interval in
$(0,\infty[$, is isomorphic to the Lagrangian Rabinowitz-Floer
homology group of $\p^+W$~\cite{Merry,Bounya}. 
\end{example}

The Lagrangian symplectic homology groups obey the same formal
properties as their closed counterparts, reminiscent of the
Eilenberg-Steenrod axioms: functoriality, homotopy invariance, exact
triangle of a pair, excision. Also, the various flavors
$SH_*^\heartsuit(L,K)$ fit into tautological exact triangles, which
are compatible with the exact triangles of pairs. The proofs of all
these properties are word for word the same as for Liouville
cobordisms, using Lagrangian analogues of our confinement
lemmas~\ref{lem:no-escape}, \ref{lem:asy}, \ref{lem:neck},
see also~\cite{Ekholm-Oancea}. 

\noindent \emph{Open-closed theory. } Let $(W,V)$ be a pair of
Liouville cobordisms with filling $F$, and $(L,K)\subset (W,V)$ be a
pair of Lagrangian cobordisms with filling $F_L$. One can define
\emph{open-closed symplectic homology groups}  
$$
SH_*^{\heartsuit}((W,V),(L,K)) , \qquad \heartsuit\in \{\varnothing,>0,\ge 0, =0, \le 0, <0\}
$$
by simultaneously taking into account closed Hamiltonian orbits in $W$
and Hamiltonian chords with endpoints on $L$, using the same shape of
Hamiltonians as in the closed or open setting (see
also~\cite{Ekholm-Oancea}). These homology groups fit into exact
triangles  
\begin{equation*} 
\xymatrix
@C=10pt
@R=18pt
{
SH_*^{\heartsuit}(W,V) \ar[rr] & & 
SH_*^{\heartsuit}((W,V),(L,K)) \ar[dl] \\ & SH_*^{\heartsuit}(L,K) \ar[ul]^{[-1]}  
}
\end{equation*}
and can be thought of as the homology groups of the cone of the
open-closed map,  defined by the count of solutions of a Hamiltonian
Floer equation on a disk with one interior negative puncture and one
boundary positive puncture. The Eilenberg-Steenrod package holds in
this extended setup as well.

\section{Applications} \label{sec:applications}

\subsection{Ubiquity of the exact triangle of a pair} 
A certain number of previous computations in the literature can be
reinterpreted from a unified point of view and generalized from our
perspective.  

(1) One of our original motivations for the definition of the
symplectic homology groups of a Liouville cobordism was the
\emph{exact triangle relating symplectic homology and Rabinowitz-Floer
  homology}~\cite{Cieliebak-Frauenfelder-Oancea} 
\begin{equation*} \label{eq:SHRFH}
\xymatrix
{
SH^{-*}(V) \ar[rr] & & SH_*(V)
\ar[dl] \\ & RFH_*(\p V) \ar[ul]^{[-1]}  
}
\end{equation*}
In view of Poincar\'e duality $SH^{-*}(V)\cong SH_*(V,\p V)$ and the
isomorphism~\eqref{eq:RFH}, this is just the exact triangle of the
pair $(V,\p V)$. See Theorem~\ref{thm:duality-sequence} below for a
more detailed discussion of this triangle. 

(2) The \emph{subcritical and critical handle attaching exact
  triangles} from~\cite{Ci02} and~\cite{Bourgeois-Ekholm-Eliashberg-1}
are special instances of the exact triangle of a pair, see
Sections~\ref{sec:subcritical-handle} and~\ref{sec:critical-handle}
below. Moreover, the surgery exact triangles for linearized contact
homology appear as formal consequences of the corresponding triangles
for symplectic homology, via the relations between equivariant and
non-equivariant symplectic homologies; see
Section~\ref{sec:handle-equivariant} below. 

(3) Let $L\subset V$ be an exact Lagrangian in a Liouville domain $V$
satisfying $SH_*(L)=0$. For example, by a straightforward adaptation
of the vanishing results in~\cite{Cieliebak-Frauenfelder, Kang14} this
is the case if the completion $\wh L$ is displaceable from $V$ in the
completion $\wh V$. Then the tautological sequence yields the
isomorphism 
\begin{equation*}
   SH_*^{>0}(L)\cong SH_{*-1}^{\leq 0}(L)\cong H^{n-*+1}(L),
\end{equation*}
which was previously conjectured by Seidel,
see~\cite[Conjecture~1.2]{Ekholm}, and proved from a Legendrian
contact homology perspective by Dimitroglou
Rizell~\cite[Theorem~2.5]{Rizell-lifting}. This isomorphism implies the 
refinement of Arnold's chord conjecture given
in~\cite{Ekholm-Etnyre-Sabloff}, see Corollary~\ref{cor:chord}
below. A combination of the tautological sequence with the exact
sequence of the pair $(L,\p L)$ and Poincar\'e duality yields the 
\emph{Poincar\'e duality long exact sequence for Legendrian contact
  homology} in~\cite{Ekholm-Etnyre-Sabloff} 
\begin{eqnarray*}\label{eq:EESduality}
\xymatrix{
   H^{n-*}(\p L) \ar[rr] & & SH_{>0}^{-*+2}(\p L) \ar[dl] \\ 
   & SH_*^{>0}(\p L) \ar[ul]^{[-1]}
}
\end{eqnarray*}
as well as its refinement in~\cite[Corollary~1.3]{Ekholm}
and~\cite[Corollary~2.6]{Rizell-lifting}; see
Proposition~\ref{prop:duality-pos} below. 

(4) The results of Chantraine, Dimitroglou Rizell, Ghiggini, and Golovko from~\cite{CDRGG15,Golovko13} can also be reinterpreted from the perspective of the exact triangle of a pair. As an example, consider the following setup: $L$ is an exact Lagrangian cobordism, $\p^-L$ has an exact Lagrangian filling $F_L$, and we assume that $\wh{F_L\circ L}$ is displaceable from the Liouville domain which contains $F_L\circ L$ in the symplectic completion of the ambient exact symplectic manifold. Then $SH_*(F_L\circ L)=0$ and $SH_*(F_L)=0$ (cf. Theorems~\ref{thm:disp-van} and~\ref{thm:van-inherited}), hence also $SH_*(L,\p^-L)=0$. The second long exact sequence in~\cite[Theorem~1.2]{Golovko13} is the exact triangle of the pair $(F_L\circ L,F_L)$ for $SH_{>0}^*$. The setup considered in~\cite{CDRGG15} is that in which $L$ is a Lagrangian concordance, so that the transfer map $SH_{*}^{=0}(F_L\circ L)\stackrel \cong \longrightarrow SH_{*}^{=0}(F_L)$ is an isomorphism. In view of the commutative diagram given by the compatibility of tautological exact triangles with the exact triangle of the pair $(F_L\circ L,F_L)$, 
$$
\xymatrix{
SH_*^{>0}(F_L\circ L)\ar[r] \ar[d] & SH_*^{>0}(F_L) \ar[d] \\
SH_{*-1}^{=0}(F_L\circ L)\ar[r]^\cong & SH_{*-1}^{\color{black}=0}(F_L) 
}
$$
the vertical arrows being isomorphisms since $SH_*(F_L\circ L)$ and $SH_*(F_L)$ vanish, we obtain that the top transfer map is an isomorphism. This is the content of the main result of~\cite{CDRGG15} in the case of linearized Legendrian contact homology, see also~\cite{Golovko13}. The more general bilinearized setup in~\cite{CDRGG15} can be reinterpreted in a similar way.

This circle of ideas should be compared with the results of Biran and Cornea~\cite{BC13}, and also with the results of Dimitroglou Rizell and Golovko~\cite{DRG14}.

\subsection{Duality results}

The following consequence of the long exact sequence of a pair and
Poincar\'e duality is proved in
~\cite{Cieliebak-Frauenfelder-Oancea}. For convenience, we provide 
the short proof in our framework. 

\begin{theorem}[duality sequence~\cite{Cieliebak-Frauenfelder-Oancea}]\label{thm:duality-sequence}
For a Liouville domain $V$ there is a commuting diagram with exact
upper row
\begin{equation}\label{eq:duality}
\xymatrix
{
\cdots SH^{-*}(V) \ar[d] \ar[r]^\phi & SH_*(V) \ar[r]^\psi & SH_*(\p V) \ar[r] &
SH^{1-*}(V) \cdots \\
H_{n+*}(V) \ar[r] & H^{n-*}(V) \ar[u].
}
\end{equation}
Here the horizontal maps come from the long exact sequences of
the pair $(V,\p V)$ in view of Poincar\'e duality $SH_*(V,\p V)\cong
SH^{-*}(V)$ and $H_{n+*}(V)\cong H^{n-*}(V,\p V)$, and the vertical
maps are given by the compositions 
\begin{gather*}
   SH^{-*}(V)\to SH^{-*}_{\leq 0}(V) = SH^{-*}_{=0}(V)\cong H_{n+*}(V), \cr 
   H^{n-*}(V) \cong SH_*^{=0}(V) = SH_*^{\leq 0}(V) \to SH_*(V).
\end{gather*} 
\end{theorem}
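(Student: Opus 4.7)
The plan is to extract the upper exact row directly from the general exact triangle of a pair, and to obtain the vertical identifications and commutativity from the compatibility of that exact triangle with the tautological triangles comparing $SH_*$ to $SH_*^{=0}$. First, apply Theorem~\ref{thm:exact-triangle-pair} to the filled Liouville pair $(V,\p V)$ with $\heartsuit=\varnothing$ to obtain the long exact sequence
\[
  \cdots\to SH_*(V,\p V)\to SH_*(V)\to SH_*(\p V)\to SH_{*-1}(V,\p V)\to\cdots,
\]
and then use the Poincar\'e duality isomorphism of Theorem~\ref{thm:Poincare}, applied with $W=V$, $A=\p V$, $A^c=\varnothing$, to rewrite $SH_*(V,\p V)\cong SH^{-*}(V)$. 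This gives the upper row of the claimed diagram, with $\phi$ and $\psi$ the two maps of the pair.

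Next, the two vertical arrows are defined using the tautological triangles and the fact that for the Liouville domain $V$ one has $SH_*^{<0}(V)=0$, hence $SH_*^{\le0}(V)=SH_*^{=0}(V)\cong H^{n-*}(V)$ by Proposition~\ref{prop:SH=0W}. The right vertical map is the composition $H^{n-*}(V)\cong SH_*^{=0}(V)=SH_*^{\le0}(V)\to SH_*(V)$ from the tautological triangle. The left vertical map is obtained dually: under the Poincar\'e duality isomorphism $SH^{-*}(V)\cong SH_*(V,\p V)$ it corresponds to the canonical map $SH_*(V,\p V)\to SH_*^{\ge0}(V,\p V)$ from the (dual) tautological triangle, which via Poincar\'e duality is $SH^{-*}_{\le0}(V)\cong SH^{-*}_{=0}(V)$ and then via Proposition~\ref{prop:cohSH=0WA} is identified with $H_{n+*}(V)$. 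Equivalently, one can factor it directly as $SH^{-*}(V)\to SH^{-*}_{\le0}(V)=SH^{-*}_{=0}(V)\cong H_{n+*}(V)$, as stated in the theorem. The bottom horizontal arrow is by definition the map $H_{n+*}(V)\cong H^{n-*}(V,\p V)\to H^{n-*}(V)$ coming from the long exact sequence of the topological pair $(V,\p V)$, using classical Poincar\'e-Lefschetz duality.

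The commutativity of both squares is then a direct consequence of Proposition~\ref{prop:compatibility-exact-triangles} applied to the pair $(V,\p V)$ and to the pair of flavors $(``=0",\varnothing)$ (and its dual for the cohomology side). Part~(ii) of that proposition yields a commuting square
\[
  \xymatrix{
    SH_*^{=0}(V,\p V) \ar[r] \ar[d] & SH_*^{=0}(V) \ar[d] \\
    SH_*(V,\p V) \ar[r] & SH_*(V)
  }
\]
and part~(iii) identifies the top row with the map $H^{n-*}(V,\p V)\to H^{n-*}(V)$ from the exact sequence of $(V,\p V)$ in singular cohomology. Combining this with the Poincar\'e duality identifications introduced above, the right square of~\eqref{eq:duality} is precisely this commuting square, and the left square is obtained by the symmetric argument using the dual flavor compatibility and the isomorphism $SH_*(V,\p V)\cong SH^{-*}(V)$.

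The main obstacle is the purely algebraic bookkeeping of the various Poincar\'e duality isomorphisms and of the directions of the maps coming from the tautological triangles: one must check that the two ways to define the left vertical map (either as a composition going through $SH^{-*}_{=0}(V)$ in the cohomology world, or as a composition going through $SH_*^{\ge0}(V,\p V)$ in the homology world) agree up to the Poincar\'e duality isomorphism. This is a formal consequence of the naturality of the Poincar\'e duality isomorphism of Theorem~\ref{thm:Poincare} with respect to the morphisms of the tautological triangles, which in turn is visible at chain level from the isomorphism~\eqref{eq:PDchainlevel} since the chain level Poincar\'e duality intertwines the sub/quotient complexes defining the flavors $\heartsuit$ and $-\heartsuit$.
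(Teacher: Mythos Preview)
Your overall strategy is correct and mirrors the paper's: extract the exact row from the exact triangle of the pair $(V,\p V)$ plus Poincar\'e duality, and deduce commutativity from the compatibility between the exact triangle of a pair and the tautological triangles. However, the specific commuting square you invoke is not well-formed. You write
\[
  \xymatrix{
    SH_*^{=0}(V,\p V) \ar[r] \ar[d] & SH_*^{=0}(V) \ar[d] \\
    SH_*(V,\p V) \ar[r] & SH_*(V)
  }
\]
and say this comes from Proposition~\ref{prop:compatibility-exact-triangles} for ``the pair of flavors $(=0,\varnothing)$''. But there is no tautological map $SH_*^{=0}\to SH_*$ in general, and $(=0,\varnothing)$ is not among the four admissible triples $(\heartsuit,\heartsuit',\heartsuit'/\heartsuit)$. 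The right vertical arrow happens to exist for the domain $V$ because $SH_*^{<0}(V)=0$ allows you to identify $SH_*^{=0}(V)=SH_*^{\le 0}(V)$ and use $\le 0\to\varnothing$. But on the left, for $(V,\p V)$, one has $SH_*^{<0}(V,\p V)\neq 0$ (the negatively parameterised Reeb orbits), so $SH_*^{=0}(V,\p V)\neq SH_*^{\le 0}(V,\p V)$ and there is no map $SH_*^{=0}(V,\p V)\to SH_*(V,\p V)$. What one has instead is $SH_*^{>0}(V,\p V)=0$, hence $SH_*^{=0}(V,\p V)=SH_*^{\ge 0}(V,\p V)$ and a tautological map in the \emph{opposite} direction $SH_*(V,\p V)\to SH_*^{\ge 0}(V,\p V)$.

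The paper's proof handles exactly this asymmetry: it sets up an auxiliary diagram whose relevant square has vertical arrows pointing in opposite directions (down on the $(V,\p V)$ side, up on the $V$ side), and establishes its commutativity by the direct geometric observation that the three-map composition ``large negative slope $\to$ small negative slope $\to$ small positive slope $\to$ large positive slope'' is the single continuation map defining $\phi$. Your approach can be repaired formally by applying Proposition~\ref{prop:compatibility-exact-triangles} twice, once for the triple $(\le 0,\varnothing,>0)$ and once for $(<0,\le 0,=0)$, together with Proposition~\ref{prop:taut-triangles} to reconcile the two resulting identifications of $SH_*^{=0}(V)$ inside $SH_*(V)$; but you should spell this out rather than appeal to a non-existent single square.
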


\begin{proof}
Commutativity of the diagram~\eqref{eq:duality} follows from
commutativity of the diagram
\begin{equation*}
{\small 
\xymatrix
@C=34pt
{
SH^{-*}(V) \ar[d] \ar[r]^-\cong & SH_*(V,\p V) \ar[d] \ar[r] & SH_*^{\ge 0}(V)=SH_*(V) \\
SH^{-*}_{\le 0}(V)=SH^{-*}_{=0}(V) \ar[d]^\cong \ar[r]^-\cong & SH_*^{=0}(V,\p V)=SH_*^{\ge 0}(V,\p V) \ar[d]^\cong \ar[r] \ar[ur] & SH_*^{=0}(V) \ar[u] \\
H_{n+*}(V) \ar[r]^-\cong & H^{n-*}(V,\p V) \ar[r] & H^{n-*}(V). \ar[u]^\cong
}
}
\end{equation*}
Here the left horizontal maps are Poincar\'e duality isomorphisms and 
the lower right square commutes by
Proposition~\ref{prop:compatibility-exact-triangles}. 
The commutativity of the upper right square can be interpreted as follows: 
by definition of the symplectic homology groups, the composition of
the three maps around the upper square is obtained by considering a
Hamiltonian vanishing on $V$ and increasing its slope near $\p V$ from
large negative to small negative to small positive to large positive,
which yields the upper horizontal map. 
\end{proof}

Here is a computational application of the Poincar\'e Duality
Theorem~\ref{thm:Poincare}, which will be needed for the discussion of
products in Section~\ref{sec:products}.  

\begin{proposition} \label{prop:cob-to-filling}
Let $W$ be a Liouville cobordism with Liouville filling $F$. Then we
have a canonical isomorphism
$$
SH_*^{<0}(W)\cong SH_{>0}^{-*+1}(F).
$$
\end{proposition}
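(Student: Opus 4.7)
The plan is to derive the isomorphism by chaining two applications of the exact triangle of a pair with two applications of Poincar\'e duality.

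\emph{Step 1.} I would first show that $SH_*^{<0}(W)\cong SH_*^{<0}(\p^- W)$, where the right-hand side denotes the symplectic homology of the trivial collar cobordism over $\p^- W$ (equipped with the filling $F$). The exact triangle of the pair $(W,\p^- W)$ from Theorem~\ref{thm:exact-triangle-pair}, in flavor $\heartsuit=<0$, reads
\[
SH_*^{<0}(W,\p^- W)\to SH_*^{<0}(W)\to SH_*^{<0}(\p^- W)\to SH_{*-1}^{<0}(W,\p^- W),
\]
so it suffices to establish the vanishing $SH_*^{<0}(W,\p^- W)=0$. Inspecting the cofinal family of Hamiltonians that computes $SH_*^\heartsuit(W,\p^- W)$ (cf.~Figure~\ref{fig:1}, bottom-left): they vanish on $W$, tend to $-\infty$ on $F$, and tend to $+\infty$ on $[1,\infty)\times\p^+W$. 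The $1$-periodic orbits that persist in any finite action window are constants in $W$ (action $\approx 0$), positively parameterized closed Reeb orbits on $\p^- W$ in the transition region from $F$, and positively parameterized closed Reeb orbits on $\p^+W$; an action computation $A_H=rh'(r)-h(r)$ shows that all such orbits have action $\ge 0$, so $SH_*^{<0}(W,\p^- W)=0$. (Equivalently, this can be verified via Poincar\'e duality $SH_*^{<0}(W,\p^- W)\cong SH^{-*}_{>0}(W,\p^+W)$ by a symmetric analysis.)

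\emph{Step 2.} Under the identification $\p^- W=\p^+F=\p F$, the group $SH_*^{<0}(\p^- W)$ computed with filling $F$ coincides with $SH_*^{<0}(\p F)$, where $\p F$ is viewed as the trivial collar cobordism in the Liouville domain $F$ (the filling of this collar being, up to Liouville homotopy, $F$ itself). I then apply the exact triangle of the Liouville pair $(F,\p F)$ in flavor $<0$:
\[
SH_*^{<0}(F,\p F)\to SH_*^{<0}(F)\to SH_*^{<0}(\p F)\to SH_{*-1}^{<0}(F,\p F).
\]
Since $F$ is a Liouville domain, the remark following Definition~\ref{defi:SH(W)} yields $SH_*^{<0}(F)=0$ in every degree, so the connecting map provides a canonical isomorphism $SH_*^{<0}(\p F)\stackrel{\cong}\longrightarrow SH_{*-1}^{<0}(F,\p F)$. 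A second application of Poincar\'e duality (Theorem~\ref{thm:Poincare}), this time to $(F,\p F)$ with $A=\p F$, $A^c=\emptyset$, and $-\heartsuit=>0$, then gives
\[
SH_{*-1}^{<0}(F,\p F)\cong SH^{-*+1}_{>0}(F,\emptyset)=SH^{-*+1}_{>0}(F).
\]

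Chaining the isomorphisms from Steps~1 and~2 produces the desired identification $SH_*^{<0}(W)\cong SH^{-*+1}_{>0}(F)$.

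The main obstacle is the vanishing $SH_*^{<0}(W,\p^-W)=0$ in Step~1, which requires a careful action-based inspection of the cofinal family of Hamiltonians (or, equivalently via Poincar\'e duality, of the family computing $SH^{-*}_{>0}(W,\p^+W)$); everything else is a formal consequence of the functorial exact triangle of a pair, the vanishing of negative symplectic homology on a Liouville domain, and Poincar\'e duality.
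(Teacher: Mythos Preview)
Your proof is correct and arrives at the same intermediate point $SH_{*-1}^{<0}(F,\p F)$ before the final Poincar\'e duality step, but the route differs from the paper's. The paper argues via
\[
SH_*^{<0}(W)\;\cong\;SH_{*-1}^{<0}(F\cup W,W)\;\cong\;SH_{*-1}^{<0}(F,\p F)\;\cong\;SH^{-*+1}_{>0}(F),
\]
using the exact triangle of the pair $(F\cup W,W)$ together with $SH_*^{<0}(F\cup W)=0$ (immediate, since $F\cup W$ is a Liouville domain), then the Excision Theorem~\ref{thm:excision} applied to this pair, and finally Poincar\'e duality. You instead pass through the trivial cobordism $\p^-W=\p F$: two exact triangles (for $(W,\p^-W)$ and for $(F,\p F)$), no excision. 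The trade-off is that the paper's single vanishing input $SH_*^{<0}(F\cup W)=0$ is entirely formal, whereas your route needs the additional vanishing $SH_*^{<0}(W,\p^-W)=0$, which---though straightforward from the shape of the Hamiltonians as you outline---is not recorded elsewhere in the paper and requires the short action computation you sketch. Both arguments are of comparable length; the paper's is marginally cleaner in that every step invokes a named result.
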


\begin{proof}
We successively have 
$$
SH_*^{<0}(W)\cong SH_{*-1}^{<0}(F\cup W,W)\cong SH_{*-1}^{<0}(F,\p F) \cong SH^{-*+1}_{>0}(F).
$$
The first isomorphism follows from the exact triangle of the pair
$(F\cup W,W)$ for $SH_*^{<0}$ (cf.~\S\ref{sec:exact-triangle-pair})
taking into account that $SH_*^{<0}(F\cup W)=0$ because $F\cup W$ has
empty negative boundary. The second isomorphism is the Excision
Theorem~\ref{thm:excision}. The third isomorphism is Poincar\'e
duality. 
\end{proof}

For further duality results we will need the following vanishing result.

\begin{proposition}\label{prop:triv-cob}
Let $V$ be a Liouville domain. Then 
$$
   SH_*^\heartsuit([0,1]\times\p V,0\times \p V)=0.
$$
{\color{black} for $\heartsuit\in\{\varnothing, >0,\ge 0, =0, \le 0, <0\}$.} 
\end{proposition}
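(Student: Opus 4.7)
The plan is to apply the exact triangle of the pair $(W,\partial^-W)$ with $W=[0,1]\times\partial V$ from Theorem~\ref{thm:exact-triangle-pair}:
\[
\cdots\to SH_*^\heartsuit(W,\partial^-W)\to SH_*^\heartsuit(W)\xrightarrow{i_!}SH_*^\heartsuit(\partial^-W)\to SH_{*-1}^\heartsuit(W,\partial^-W)\to\cdots
\]
and thereby reduce the vanishing statement to the claim that the transfer map $i_!$ induced by the inclusion $i:\partial^-W\hookrightarrow W$ is an isomorphism for every flavor $\heartsuit$.

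By Corollary~\ref{cor:tub-nbhd}, $\partial^-W$ is identified with a thin collar $[0,\epsilon]\times\partial V\subset W$, and both this collar and $W$ itself are trivial Liouville cobordisms over $\partial V$ with the same filling $V$. They are Liouville homotopic through the family $[0,s]\times\partial V$, $s\in[\epsilon,1]$, with fixed contact boundary $\partial V$ and fixed filling $V$. By Proposition~\ref{prop:invariance-SHW} this yields a canonical isomorphism $SH_*^\heartsuit(W)\cong SH_*^\heartsuit(\partial^-W)$, and Propositions~\ref{prop:func-transfer} together with~\ref{prop:homotopy_invariance_transfer} imply that $i_!$ coincides with this isomorphism.

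Equivalently, one can argue directly at chain level using the cofinal families of Section~\ref{sec:adapted}. Choose adapted Hamiltonians for $W$ and for the collar $\partial^-W$ that agree on a neighborhood of the collar and on the filling $V$, so that the $1$-periodic orbits contributing to both Floer complexes live in this common region and are in canonical bijection. The confinement lemmas of Section~\ref{sec:confinement}, in particular Lemma~\ref{lem:no-escape}, force the Floer trajectories relevant to both the differentials and to the continuation map realizing $i_!$ to remain in this common region. Consequently, the continuation map is a chain-level isomorphism on the relevant sub/quotient complexes filtered by the orbit groups, and passing to the direct/inverse limits defining $SH_*^\heartsuit$ yields the isomorphism in homology.

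The main subtlety is the precise matching of cofinal families so that continuation maps yield chain isomorphisms on the appropriate filtered pieces; this is in the spirit of, but considerably simpler than, the chain-level analysis behind the Excision Theorem~\ref{thm:excision}, because no ``bending'' orbits appear in the interior of $W$ under the homotopy. Once $i_!$ is established to be an isomorphism, the exact triangle above forces $SH_*^\heartsuit([0,1]\times\partial V,\{0\}\times\partial V)=0$ for every flavor $\heartsuit$.
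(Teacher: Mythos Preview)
Your approach differs from the paper's, and the crucial step has a gap.

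The paper argues directly. Writing $W=[0,1]\times\partial V$, the orbits contributing to $SH_*(W,\partial^-W)$ all have nonnegative action, so the inverse limit over $a$ stabilizes and one is reduced to showing $SH_*^{(-\epsilon,b)}(W,\partial^-W)=0$ for each $b>0$. This group is computed from a Hamiltonian that vanishes on $W$, has positive slope $b$ near both boundary components, and is constant in the filling; one then deforms it through a compactly supported homotopy to a Hamiltonian of constant slope $b$ on all of $W$, whose Floer chain complex in the given action window is empty (Figure~\ref{fig:SHtrivial-cobordism}). This gives the case $\heartsuit=\varnothing$ (equivalently $\heartsuit={\ge}0$); the case $\heartsuit={=}0$ follows from $H^*(W,\partial^-W)=0$, and the remaining flavors from the tautological triangles.

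Your reduction via the exact triangle is legitimate, but the claim that $i_!:SH_*^\heartsuit(W)\to SH_*^\heartsuit(\partial^-W)$ is an isomorphism is not established by the statements you invoke. Proposition~\ref{prop:invariance-SHW} yields an abstract isomorphism between the two groups, but does not assert that it is realized by $i_!$; Propositions~\ref{prop:func-transfer} and~\ref{prop:homotopy_invariance_transfer} concern compositions of transfer maps and homotopies of Liouville structures on a \emph{fixed} pair, neither of which identifies $i_!$ with the homotopy isomorphism. One can extract what you need by unpacking the \emph{proof} of homotopy invariance (attach a trivial collar at the positive boundary and show that the resulting transfer map is an isomorphism), but that last fact is---via excision and the very exact triangle you use---equivalent to the proposition being proved. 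So closing the gap requires precisely the direct Hamiltonian-deformation computation the paper supplies.

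Your chain-level sketch is also not correct as stated. For an admissible Hamiltonian for $W$ the constants $I^0$ fill all of $W$ and the positive orbits $I^+$ sit near $\partial^+W$, whereas for an admissible Hamiltonian for the collar $V'$ the constants fill only $V'$ and the positive orbits sit near $\partial^+V'$. These generators do not lie in a common region and are not in canonical bijection, so the continuation map is not a chain isomorphism for the reason you describe.
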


\begin{proof} We are computing the symplectic homology group of a cobordism relative to the concave part of the boundary and therefore the relevant Floer complexes do not involve orbits with negative action. Thus $SH_*^{(a,b)}([0,1]\times\p V,0\times \p V)=SH_*^{(-\epsilon,b)}([0,1]\times\p V,0\times \p V)$ for all $a<0$, $b>0$ and $\epsilon>0$ smaller than the period of a closed Reeb orbit on $\p V$. In the definition of symplectic homology  the inverse limit over $a\to-\infty$ therefore stabilizes and we have 
$SH_*([0,1]\times\p V,0\times \p V)=\lim\limits^{\longrightarrow}_{b\to\infty} SH_*^{(-\epsilon,b)}([0,1]\times\p V,0\times \p V)$. 

The point now is that $SH_*^{(-\epsilon,b)}([0,1]\times\p V,0\times \p V)=0$ for all $b>0$. Indeed, for $b>0$ not lying in the action spectrum of $\p V$, this homology group is computed using the Floer complex generated by closed orbits near $[0,1]\times\p V$ for a Hamiltonian which vanishes on $[0,1]\times \p V$, which has positive slope $b$ near $\{0,1\}\times \p V$, and which is constant in $V$ away from $[0,1]\times \p V$. But such a Hamiltonian can be deformed to one which has constant slope equal to $b$ all over $[0,1]\times \p V$ and for which the corresponding chain complex is zero. See Figure~\ref{fig:SHtrivial-cobordism}, in which the deformed Hamiltonian is drawn with a dashed line. The conclusion follows using the homotopy invariance of the homology under compactly supported deformations. 

{\color{black} This proves $SH_*^{\geq 0}([0,1]\times\p V,0\times \p V)=0$. 
Vanishing of $SH_*^{=0}([0,1]\times\p V,0\times \p V)$ follows from
vanishing of relative singular cohomology, and vanishing of
$SH_*^{>0}([0,1]\times\p V,0\times \p V)$ then follows from the
truncation exact triangle. Since there are no other versions to
consider, this proves the proposition.} 
\end{proof}

\begin{figure} [ht]
\centering
\input{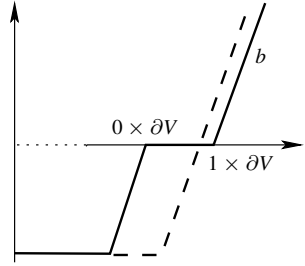}
\caption{Symplectic homology relative to the negative boundary for a trivial cobordism}
\label{fig:SHtrivial-cobordism}
\end{figure}

{\color{black}
\begin{theorem}[Poincar\'e duality for a trivial cobordism]\label{thm:PD-triv-cob}
For every Liouville domain $V$ there exist canonical isomorphisms 
between the symplectic homology and cohomology groups of the trivial
cobordism over $\p V$,
$$
   PD: SH_*^\heartsuit(\p V)\stackrel{\cong}\longrightarrow
   SH^{1-*}_{-\heartsuit}(\p V)
$$
for $\heartsuit\in\{\varnothing, >0,\ge 0, =0, \le 0, <0\}$. 
\end{theorem}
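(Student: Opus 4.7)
The strategy is to combine the exact triangle of a triple with the vanishing from Proposition~\ref{prop:triv-cob} and the Poincar\'e duality Theorem~\ref{thm:Poincare}. Set $W:=[0,1]\times\p V$, filled by $V$, and consider the Liouville cobordism triple $(W,\p W,\p^-W)$, where $\p^-W=\{0\}\times\p V$ is viewed as the collar sub-cobordism of the negative boundary and $\p W=\p^-W\sqcup\p^+W$ is interpreted as a two-level Liouville sub-cobordism as in Section~\ref{sec:multilevel}. The exact triangle of this triple (Proposition~\ref{prop:triple}, in its multilevel form) reads
\begin{equation*}
\cdots\to SH_*^\heartsuit(W,\p W)\to SH_*^\heartsuit(W,\p^-W)\to SH_*^\heartsuit(\p W,\p^-W)\stackrel{\p}{\to} SH_{*-1}^\heartsuit(W,\p W)\to\cdots.
\end{equation*}

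I would then identify the two middle terms. By Proposition~\ref{prop:triv-cob} the first of them vanishes, $SH_*^\heartsuit(W,\p^-W)=0$. The Excision Theorem~\ref{thm:excision-triple} together with the definition of symplectic homology of $\p V$ as the symplectic homology of its trivial cobordism yields a canonical isomorphism
\begin{equation*}
SH_*^\heartsuit(\p W,\p^-W)\cong SH_*^\heartsuit(\p^+W)=SH_*^\heartsuit(\p V).
\end{equation*}
Substituting these into the triangle, the connecting map $\p$ becomes a canonical isomorphism $SH_*^\heartsuit(\p V)\stackrel{\cong}{\longrightarrow} SH_{*-1}^\heartsuit(W,\p W)$.

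Next, I would apply Poincar\'e duality (Theorem~\ref{thm:Poincare}) to the pair $(W,\p W)$ with $A=\p W$, so that $A^c=\varnothing$, obtaining
\begin{equation*}
SH_{*-1}^\heartsuit(W,\p W)\cong SH^{-(*-1)}_{-\heartsuit}(W)=SH^{1-*}_{-\heartsuit}(\p V),
\end{equation*}
where the last equality uses that $W$ is a trivial cobordism over $\p V$ and computes $SH^{1-*}_{-\heartsuit}(\p V)$ by definition. Composing with the connecting isomorphism above defines the map $PD$; its canonicity follows from that of each ingredient (excision, the connecting homomorphism, and the chain-level Poincar\'e duality map of Theorem~\ref{thm:Poincare}).

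The main obstacle is not analytic but notational: the pair $(W,\p W)$ escapes the one-level conventions of Section~\ref{sec:SHpair}, since $\p W$ has both a positive and a negative component. One must therefore invoke the triple $(W,\p W,\p^-W)$ within the multilevel framework of Section~\ref{sec:multilevel} so that the exact triangle of a triple and the excision theorem are available in the required form. Once this setup is in place, no new geometric input beyond Proposition~\ref{prop:triv-cob}, Theorem~\ref{thm:excision-triple} and Theorem~\ref{thm:Poincare} is needed.
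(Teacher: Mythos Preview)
Your proposal is correct and follows essentially the same route as the paper. The paper also works with the trivial cobordism $W=[0,1]\times\p V$ and uses the exact triangle of a multilevel triple together with Proposition~\ref{prop:triv-cob}, excision, and Theorem~\ref{thm:Poincare}; the only cosmetic difference is that the paper takes the triple $(W,\p W,\p^+W)$ and organises the argument via the compatibility diagram of Proposition~\ref{prop:PD-triple}, whereas you use $(W,\p W,\p^-W)$ and assemble the pieces by hand. Both choices yield the same composite isomorphism $SH_*^\heartsuit(\p V)\cong SH_{*-1}^\heartsuit(W,\p W)\cong SH^{1-*}_{-\heartsuit}(W)$, and your remark that the only delicate point is the need for the multilevel setup of Section~\ref{sec:multilevel} is exactly the caveat the paper itself makes (see the Remark following Theorem~\ref{thm:PD-LES}).
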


\begin{proof}
We consider the trivial cobordism $W=I\times\p V$ and apply
Proposition~\ref{prop:PD-triple} to the triple $(W,\p W,\p_+ W)$ to obtain
the commuting diagram
\begin{equation*}
\xymatrix
@C=17pt
{
   SH_*^\heartsuit(W,\p_+W) \ar[r] \ar@{=}[d] &
   SH_*^\heartsuit(\p W,\p_+W) \ar[r]^\cong \ar[d]^\cong_{exc} &
   SH_{*-1}^\heartsuit(W,\p W) \ar[r] \ar[d]^\cong_{PD} & SH_{*-1}^\heartsuit(W,\p_+W) \ar@{=}[d] \\
   0 \ar[r] \ar@{=}[d] & SH_*^\heartsuit(W) \ar[r]^\cong \ar[d]^\cong_{PD} &
   SH^{1-*}_\heartsuit(W) \ar[r] \ar[d]^\cong_{exc} & 0 \ar@{=}[d] \\
   SH^{-*}_{-\heartsuit}(W,\p_-W) \ar[r] &
   SH^{1-*}_{-\heartsuit}(W,\p W) \ar[r]^\cong &
   SH^{1-*}_{-\heartsuit}(\p W,\p_-W) \ar[r] & SH^{-*}_{-\heartsuit}(W,\p_-W) \\
}
\end{equation*}
where the first and last row are the long exact sequences of the
triples $(W,\p W,\p_+W)$ and $(W,\p W,\p_-W)$,
respectively, and the vertical arrows are the Poincar\'e duality
and excision isomorphisms. The groups
$SH_*^\heartsuit(W,\p_+W)$ and $SH^{-*}_{-\heartsuit}(W,\p_-W)$ vanish
by Proposition~\ref{prop:triv-cob}. 
The middle horizontal map defined by this diagram is the desired Poincar\'e duality
isomorphism from $SH_*^\heartsuit(\p V)=SH_*^\heartsuit(W)$ to
$SH^{1-*}_{-\heartsuit}(W)=SH^{1-*}_{-\heartsuit}(\p V)$. 
\end{proof}

\begin{theorem}[Poincar\'e duality and exact triangle of $(V,\p V)$]\label{thm:PD-LES}
For every Liouville domain $V$ and $\heartsuit\in\{\varnothing, >0,\ge
0, =0, \le 0, <0\}$ there exists a commuting diagram
\begin{equation}\label{eq:PD-LES}
\xymatrix
@C=20pt
{
    SH_*^\heartsuit(V,\p V) \ar[r] \ar[d]^\cong_{PD} & SH_*^\heartsuit(V)
   \ar[r] \ar[d]^\cong_{PD} & SH_*^\heartsuit(\p V) \ar[r] \ar[d]^\cong_{PD} &
   SH_{*-1}^\heartsuit(V,\p V)  \ar[d]^\cong_{PD} \\
    SH^{-*}_{-\heartsuit}(V) \ar[r] & SH^{-*}_{-\heartsuit}(V,\p
   V) \ar[r] & SH^{1-*}_{-\heartsuit}(\p V) \ar[r] & SH^{1-*}_{-\heartsuit}(V)  \\
}
\end{equation}
where the rows are the long exact sequences of the pair $(V,\p V)$
and the vertical arrows are the Poincar\'e duality
isomorphisms from Theorem~\ref{thm:PD-triv-cob} (the third one) and
Theorem~\ref{thm:Poincare} (the other ones). 
Moreover, the Poincar\'e duality isomorphisms are compatible with 
filtration exact sequences. 
\end{theorem}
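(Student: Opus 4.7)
The plan is to realize both rows of the diagram as images of a single chain-level mapping cone under the Floer Poincar\'e duality isomorphism of~\eqref{eq:PDchainlevel}, which underlies all three flavours of Poincar\'e duality appearing in the vertical arrows: Theorem~\ref{thm:Poincare} for columns~1, 2 and~4, and Theorem~\ref{thm:PD-triv-cob} for column~3.

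First I would realize the top exact triangle at chain level via Proposition~\ref{prop:SHrel-dynamical-cone}, using the adapted families of Hamiltonians $H_{i,j}$ of \S\ref{sec:adapted} and the transfer Hamiltonians $L_{i,j}\leq H_{i,j}$ of \S\ref{sec:transfer-map-revisited}. Then $i_!:SH_*^\heartsuit(V)\to SH_*^\heartsuit(\p V)$ is the continuation chain map $f_{i,j}^\heartsuit$, the relative group $SH_*^\heartsuit(V,\p V)[-1]$ is the homology of the mapping cone $C(f_{i,j}^\heartsuit)$, and $j_!,\p$ are the projection and inclusion attached to the cone short exact sequence.

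Next, the chain-level Poincar\'e duality $x\mapsto\bar x$, $H\mapsto\bar H$ intertwines continuation maps (Proposition~2.2 of~\cite{Cieliebak-Frauenfelder-Oancea}) and exchanges action sub-complexes with action quotient complexes via $FC_*^{(a,b)}\leftrightarrow FC^{-*}_{(-b,-a)}$. Applied to the system $(H_{i,j},L_{i,j},f_{i,j}^\heartsuit)$, it produces a cofinal system computing the cohomology groups in the bottom row, together with a chain isomorphism of the corresponding mapping cones. In the first-inverse-then-direct limit this yields an isomorphism of exact triangles whose columns are, by construction, the Poincar\'e duality isomorphisms of Theorem~\ref{thm:Poincare} in columns~1, 2 and~4; the three squares not touching column~3 therefore commute tautologically. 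For column~3, Theorem~\ref{thm:PD-triv-cob} identifies $SH_*^\heartsuit(\p V)\cong SH^{1-*}_{-\heartsuit}(\p V)$ via the same chain-level duality composed with the excision and connecting map of Proposition~\ref{prop:PD-triple} applied to the triple $(W,\p W,\p_+W)$ with $W=[0,1]\times\p V$; the two squares bordering this column then commute by naturality of that construction along the inclusion of Liouville triples $([0,1]\times\p V,\{1\}\times\p V,\emptyset)\hookrightarrow(V,\p V,\emptyset)$.

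Compatibility with the filtration exact sequences of Proposition~\ref{prop:taut-triang-WV} follows for free: the chain-level duality carries action sub-complexes to action quotient complexes, exchanges $\heartsuit$ with $-\heartsuit$, and therefore intertwines the tautological triangles on the two sides; combining this with Proposition~\ref{prop:compatibility-exact-triangles} propagates the compatibility with the pair exact triangle. The main obstacle in this plan is the column~3 square: one must check that the algebraic $+1$ shift of the pair connecting homomorphism agrees with the geometric $+1$ shift produced by Theorem~\ref{thm:PD-triv-cob}, which ultimately reduces to the naturality of Proposition~\ref{prop:PD-triple} with respect to the inclusion of triples indicated above, a careful but essentially bookkeeping exercise in matching the connecting maps of two nested triples through the same chain-level duality.
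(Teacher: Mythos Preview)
Your overall strategy---realize both rows via the chain-level Poincar\'e duality $H\mapsto\bar H$ and then handle the third column separately through the construction of Theorem~\ref{thm:PD-triv-cob}---is exactly right, and it is the same mechanism underlying the paper's proof. The paper packages the chain-level step by applying Proposition~\ref{prop:PD-triple} directly to the triple $(V,W_{\rm collar},\varnothing)$: since $W_{\rm collar}\simeq\p V$, the first row of that diagram is the homology exact sequence of $(V,\p V)$, while the last row is the cohomology exact sequence of the triple $(V,U\cup\p V,\p V)$ with $U=\overline{V\setminus W_{\rm collar}}$. After an excision $SH^{-*}_{-\heartsuit}(U\cup\p V,\p V)\cong SH^{-*}_{-\heartsuit}(V)$, columns~1, 2 and~4 match immediately; this is equivalent to your chain-level dualization of the mapping cone.

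The genuine gap is in your column-3 argument. After dualizing, the third term you obtain is $SH^{-*}_{-\heartsuit}(W,\p W)$ for $W=I\times\p V$ (equivalently $SH^{-*}_{-\heartsuit}(V,U\cup\p V)$ via excision), not $SH^{1-*}_{-\heartsuit}(\p V)$; the identification between these is precisely the content of Theorem~\ref{thm:PD-triv-cob}, as you say. But the inclusion of triples you invoke, $(I\times\p V,\{1\}\times\p V,\varnothing)\hookrightarrow(V,\p V,\varnothing)$, is not the relevant one: it neither matches the triple $(W,\p W,\p_+W)$ used to define the isomorphism of Theorem~\ref{thm:PD-triv-cob}, nor does it land in the triple governing the bottom row. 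The correct inclusion is
\[
(W,\p W,\p_+W)\;\hookrightarrow\;(V,\,U\cup\p V,\,\p V),
\]
and one must then check three separate identifications: that the column-3 composite $SH_*^\heartsuit(\p V)\to SH^{-*}_{-\heartsuit}(V,U\cup\p V)\to SH^{-*}_{-\heartsuit}(W,\p W)\to SH^{1-*}_{-\heartsuit}(\p_-W)$ is the isomorphism of Theorem~\ref{thm:PD-triv-cob}; that the resulting bottom map $SH^{-*}_{-\heartsuit}(V,\p V)\to SH^{1-*}_{-\heartsuit}(\p V)$ is the connecting homomorphism of the cohomology pair sequence; and that the column-4 upward composite is the cohomology transfer map for $\p V\hookrightarrow V$. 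These are the three explicit verifications at the end of the paper's proof, and they do not follow from a single naturality statement.
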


\begin{proof} 
Denote by $W$ the trivial cobordism given by a collar neighborhood of
the boundary $\p V$ in $V$. Denote $U=\overline{V\setminus W}$, so
that $\p_+W=\p V$ and $\p _-W=\p U\simeq \p V$. Consider the following
diagram.  
$$
\small
\xymatrix
@C=19pt
{
   SH_*^\heartsuit(V,\p V) \ar[r] \ar[d]^\cong_{PD} & SH_*^\heartsuit(V)
   \ar[r] \ar[d]^\cong_{PD} & SH_*^\heartsuit(\p V) \ar[r] \ar[d]^\cong_{PD} &
   SH_{*-1}^\heartsuit(V,\p V)   \ar[d]^\cong_{PD} \\
      SH^{-*}_{-\heartsuit}(V) \ar[r] \ar[d]^\cong_{exc.} & SH^{-*}_{-\heartsuit}(V,\p
   V) \ar[r] \ar@{=}[d] & SH^{-*}_{-\heartsuit}(V,U\cup \p V) \ar[r] \ar@{=}[d] & SH^{1-*}_{-\heartsuit}(V) \ar[d]^\cong_{exc.} \\
      SH^{-*}_{-\heartsuit}(U\cup \p V,\p V) \ar[r] & SH^{-*}_{-\heartsuit}(V,\p
   V) \ar[r] \ar@/_2.5pc/[ddrr] & SH^{-*}_{-\heartsuit}(V,U\cup \p V) \ar[r] & SH^{1-*}_{-\heartsuit}(U\cup \p V,\p V)    \\
 & & SH^{-*}_{-\heartsuit}(W,\p W) \ar[u]_\cong^{exc.} \ar[r]^\cong \ar[dr] & SH^{-*+1}_{-\heartsuit} (\p W,\p_+W) \ar[u] \\
& & & SH^{-*+1}_{-\heartsuit}(\p _-W) \ar[u]_\cong^{exc.}    
}
$$
The diagram is commutative. The first three rows with their vertical maps
correspond to the commutative diagram in Proposition~\ref{prop:PD-triple} applied to
the triple $(V,W,\varnothing)$, so the first and third rows are the long
exact sequences of the triples $(V,W,\varnothing)\cong(V,\p
V,\varnothing)$ and $(V,U\cup\p V,\p V)$, respectively.  
The right bottom most square is commutative because the maps are induced by the inclusion of triples $(W,\p W,\p_+W)\hookrightarrow (V,U\cup \p V,\p V)$. The bottom right triangle is commutative by definition. 

The third column vertical downward composition 
$$
{\small
\xymatrix
@C=15pt
{
SH_*^\heartsuit(\p V) \ar[r] & SH^{-*}_{-\heartsuit}(V,U\cup \p V)\ar[r] & SH^{-*}_{-\heartsuit}(W,\p W) \ar[r] & SH^{-*+1}_{-\heartsuit}(\p_- W)\simeq SH^{-*+1}_{-\heartsuit}(\p V)
}
}
$$
is the Poincar\'e duality isomorphism of
Theorem~\ref{thm:PD-triv-cob} (by inspection of the diagram in its proof). 
The bottom arrow composition 
$$
{\small
\xymatrix
@C=11pt
{SH^{-*}_{-\heartsuit}(V,\p V)\ar[r] & SH^{-*}_{-\heartsuit}(V,U\cup \p V) \ar[r] & SH^{-*}_{-\heartsuit}(W,\p W)\ar[r] & SH^{-*+1}_{-\heartsuit}(\p _-W)\simeq SH^{-*+1}_{-\heartsuit}(\p V)
}
}
$$ 
is the connecting homomorphism in the cohomology long exact sequence of the pair $(V,\p V)$. 
Finally, the fourth column vertical upward composition 
$$
{\small 
\xymatrix
@C=12pt
{SH^{1-*}_{-\heartsuit}(\p V) \simeq SH^{1-*}_{-\heartsuit}(\p_-W) \ar[r] & SH^{1-*}(\p W,\p _+W) \ar[r] & SH^{1-*}(U\cup \p V,\p V) \ar[r] & SH^{1-*}_{-\heartsuit}(V)
}
}
$$
is the cohomology transfer map for the inclusion $\p V\hookrightarrow V$. 
\end{proof}

{\color{black} \begin{remark} Upon considering the triple $(W,\p W,\p_+W)$ in the proof of Theorem~\ref{thm:PD-triv-cob} and the triple $(V,U\cup \p V,\p V)$ in the proof of Theorem~\ref{thm:PD-LES} we formally enter the setup of multilevel cobordisms discussed in~\S\ref{sec:multilevel}. While we have not explicitly provided proofs for the excision theorem and for the existence of the homology long exact sequences of pairs/triples in that setup, the particular situations that we consider in Theorems~\ref{thm:PD-triv-cob} and~\ref{thm:PD-LES} are the simplest possible and the proofs of those results clearly follow from the corresponding theorems for cobordisms with one level. See also the discussion at the end of~\S\ref{sec:multilevel}.
\end{remark} 
}

Recall that at action zero symplectic homology specialises to singular
cohomology, $SH_*^{=0}(V) \cong H^{n-*}(V)$, 
and similarly for the other versions. Therefore, we obtain

\begin{corollary}
The commuting diagram in Theorem~\ref{thm:PD-LES} specialises at
action zero to
\begin{equation}\label{eq:PD-LES-zero}
\xymatrix{
    H^{n-*}(V,\p V) \ar[r] \ar[d]^\cong_{PD} & H^{n-*}(V)
   \ar[r] \ar[d]^\cong_{PD} & H^{n-*}(\p V) \ar[r] \ar[d]^\cong_{PD} &
   H^{n-*+1}(V,\p V)  \ar[d]^\cong_{PD} \\
    H_{n+*}(V) \ar[r] & H_{n+*}(V,\p
   V) \ar[r] & H_{n+*-1}(\p V) \ar[r] & H_{n+*-1}(V)  \\
}
\end{equation}
where the rows are the long exact sequences of the pair $(V,\p V)$
and the vertical arrows are the Poincar\'e duality
isomorphisms for the closed manifold $\p V$ (the third one) and the
manifold-with-boundary $V$ (the other ones). \hfill$\square$
\end{corollary}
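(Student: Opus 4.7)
The plan is to specialise Theorem~\ref{thm:PD-LES} to the flavour $\heartsuit=\,``=0"$ (noting that $-\heartsuit=\,``=0"$ as well) and identify all groups involved with their singular (co)homological counterparts. First, I would apply Propositions~\ref{prop:SH=0W}, \ref{prop:SH=0WA} and~\ref{prop:SH=0WV} to get canonical isomorphisms $SH_*^{=0}(V,\p V)\cong H^{n-*}(V,\p V)$ and $SH_*^{=0}(V)\cong H^{n-*}(V)$, and similarly $SH_*^{=0}(\p V)\cong H^{n-*}([0,1]\times \p V)\cong H^{n-*}(\p V)$ since the trivial cobordism $[0,1]\times\p V$ has dimension $2n$ and deformation retracts onto $\p V$. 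Dually, Proposition~\ref{prop:cohSH=0WA} yields $SH^{-*}_{=0}(V)\cong H_{n+*}(V)$, $SH^{-*}_{=0}(V,\p V)\cong H_{n+*}(V,\p V)$ and $SH^{1-*}_{=0}(\p V)\cong H_{n+*-1}(\p V)$.

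The next step is to verify that, under these identifications, the horizontal maps in the diagram of Theorem~\ref{thm:PD-LES} become the long exact sequences of the pair $(V,\p V)$ in singular cohomology (top row) and singular homology (bottom row). For the top row this is a direct application of Proposition~\ref{prop:compatibility-exact-triangles}(iii), which asserts precisely that the exact triangle of the pair $(V,\p V)$ at the zero action level is isomorphic to the singular cohomology exact triangle. The analogous statement for cohomology follows by dualising the argument of Proposition~\ref{prop:compatibility-exact-triangles} (or equivalently by combining the homological statement with the algebraic duality results of Proposition~\ref{prop:dualityk}, applied to the finite-dimensional groups $SH^{=0}$).

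What remains is to check that the Poincar\'e duality isomorphisms of Theorem~\ref{thm:PD-LES} specialise, at action zero, to the classical Poincar\'e duality isomorphisms between $H^{n-*}$ and $H_{n+*}$. This is the most delicate step and constitutes the main obstacle. The Poincar\'e duality isomorphism on symplectic (co)homology is induced at chain level by the involution $x\mapsto\bar x$, $H\mapsto \bar H$ with $\bar H(t,x)=-H(-t,x)$, described in the proof of Theorem~\ref{thm:Poincare}. At action zero one computes the relevant groups using a $C^2$-small autonomous Morse function, whose Floer complex coincides with the Morse complex by~\cite{SZ92}. Under the involution, $H$ is sent to $-H$ and a critical point $x$ of $H$ with Morse index $k$ is sent to the same point viewed as a critical point of $-H$ of Morse index $2n-k$. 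This is exactly the Morse-theoretic realisation of Poincar\'e duality for the underlying manifold, which is known to agree with the singular Poincar\'e duality isomorphism (via the standard identification between Morse (co)homology and singular (co)homology).

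Concretely, to finish I would fix an auxiliary Morse function $f$ on $V$ which is constant on $\p V$ and whose gradient points outwards along $\p V$, with no boundary critical points, so that the Morse complex of $f$ computes $H_*(V,\p V)$ and the Morse complex of $-f$ computes $H_*(V)$; the chain level involution then implements on the nose the classical Morse-theoretic Poincar\'e duality and its compatibility with the long exact sequences of the pair. The same Morse-theoretic model, applied to $\p V$ viewed as the base of the trivial cobordism, handles the third vertical arrow. Commutativity of the resulting diagram then follows from that of the diagram of Theorem~\ref{thm:PD-LES}, completing the proof.
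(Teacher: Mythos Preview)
Your proposal is correct and follows the same approach as the paper, which in fact gives no explicit proof: the corollary carries a terminal $\square$ and is introduced by the remark that $SH_*^{=0}$ specialises to $H^{n-*}$, so the authors treat the result as an immediate specialisation of Theorem~\ref{thm:PD-LES} to $\heartsuit=``=0"$. Your argument simply fills in the details the paper leaves implicit---the identification of groups via Propositions~\ref{prop:SH=0W}--\ref{prop:cohSH=0WA}, of the horizontal maps via Proposition~\ref{prop:compatibility-exact-triangles}(iii), and of the vertical maps via the Morse-theoretic model of the involution $H\mapsto\bar H$---and each of these verifications is sound.
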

}

We conclude this subsection with an example illustrating that full symplectic homology and
cohomology do not obey any kind of algebraic duality for general
Liouville cobordisms.  

\begin{example}\label{ex:Peter}
Let $V$ be the canonical Liouville filling of a Brieskorn manifold
$\{z\in\C^{n+1}\mid \sum_{j=0}^nz^{a_j}=0,\;|z|=1\}$ with $n\geq 3$
and integers $a_j\geq 2$ satisfying
$\sum_{j=0}^n\frac{1}{a_j}=1$. P.~Uebele~\cite{Uebele} has shown that
with $\Z_2$-coefficients its symplectic homology in degrees $n$ and
$1-n$ is an infinite direct sum 
$$
   SH_k(V;\Z_2)\cong\bigoplus_{\N}\Z_2\quad\text{for $k=n$ and $k=1-n$}.
$$
By algebraic duality, it follows that its symplectic cohomology in
these degrees is an infinite direct product
$$
   SH^k(V;\Z_2)\cong SH_k(V;\Z_2)^\vee\cong \prod_{\N}\Z_2\quad\text{for $k=n$ and $k=1-n$}.
$$
In view of the exact sequence~\eqref{eq:duality} with the map $\phi$
of finite rank, $SH_k(\p V;\Z_2)$ agrees with $SH_k(V)\oplus
SH^{1-k}(V)$ up to an error of finite dimension, hence 
$$
   SH_k(\p V;\Z_2) \cong \bigoplus_{\N}\Z_2\oplus \prod_{\N}\Z_2\quad\text{for $k=n$ and $k=1-n$}.
$$
By Theorem~\ref{thm:PD-triv-cob}, the symplectic cohomology groups in
these degrees are the same,
$$
   SH^k(\p V;\Z_2) \cong \bigoplus_{\N}\Z_2\oplus \prod_{\N}\Z_2\quad\text{for $k=n$ and $k=1-n$}.
$$
Since the dual of the infinite direct product is not the infinite
direct sum, this shows that for $k=n,1-n$ neither $SH^k(\p
V;\Z_2)=SH_k(\p V;\Z_2)^\vee$ nor $SH_k(\p V;\Z_2)=SH^k(\p
V;\Z_2)^\vee$. 
\end{example}

\subsection{Vanishing and finite dimensionality} \label{sec:vanishing}
In this subsection we give some conditions under which symplectic
homology groups are zero or finite dimensional. We begin with a simple
consequence of the duality sequence~\eqref{eq:duality}. 

\begin{corollary} \label{cor:vanishing-finitedim}
For a Liouville domain $V$ the following hold using field coefficients:

(a) If one among $SH_n(V)$, $SH^{-n}(V)$, $SH_n(\p V)$, or $SH_n(V,\p V)$ vanishes, then
   all of $SH_*(V)$, $SH^{-*}(V)$, $SH_*(\p V)$, and $SH_*(V,\p V)$ vanish. 

(b) If one among $SH_*(V)$, $SH^*(V)$, $SH_*(\p V)$, or $SH_*(V,\p V)$ is finite
   dimensional, then so are the other three. 
\end{corollary}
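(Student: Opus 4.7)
The proof rests on three pillars: the duality long exact sequence \eqref{eq:duality}, the algebraic duality $SH^k(V)\cong SH_k(V)^\vee$ from Corollary~\ref{cor:dualityk}(b), and the Poincar\'e duality $SH^{-*}(V)\cong SH_*(V,\p V)$ from Theorem~\ref{thm:Poincare}. Together these identify three of the four groups in the statement, namely $SH_*(V)$, $SH^{-*}(V)$, and $SH_*(V,\p V)$, up to algebraic duality, so vanishing (respectively finite-dimensionality) of any one of them is equivalent to that of the others.

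For part~(a), the crucial additional input is the unital ring structure on $SH_*(V)$ with unit $1_V\in SH_n(V)$ (see \S\ref{sec:products}), together with the fact that the transfer map $\psi:SH_*(V)\to SH_*(\p V)$ is a unital ring homomorphism. First, $SH_n(V)=0$ forces $1_V=0$ and hence $SH_*(V)=0$; by the equivalences above also $SH^{-*}(V)=SH_*(V,\p V)=0$, and $SH_*(\p V)=0$ follows from the long exact sequence. To handle $SH_n(\p V)=0$, I would examine the duality sequence in degree $n$: the commuting diagram in Theorem~\ref{thm:duality-sequence} shows that $\phi_n$ factors through $H_{2n}(V)$, which vanishes because $V$ has nonempty boundary, so by exactness $\psi_n$ is injective. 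Unitality then gives $\psi_n(1_V)=1_{\p V}=0$, whence $1_V=0$ and we reduce to the previous case. Finally, for $SH^{-n}(V)=0$ (equivalently $SH_n(V,\p V)=0$ by Poincar\'e duality), I would exploit the $SH_*(V)$-module structure on $SH_*(V,\p V)$ to propagate the vanishing in degree $n$ to the whole group, then reduce to one of the preceding cases via the established equivalences.

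For part~(b), the same duality isomorphisms reduce the claim to the equivalence between finite-dimensionality of $SH_*(V)$ and that of $SH_*(\p V)$. If $SH_*(V)$ is finite-dimensional, so are $SH^{-*}(V)$ and $SH_*(V,\p V)$ by duality, and then $SH_*(\p V)$ is sandwiched between finite-dimensional terms in the long exact sequence. For the converse, the commuting diagram in Theorem~\ref{thm:duality-sequence} shows that in each degree $k$ the image of $\phi_k$ is contained in the image of the finite-dimensional singular cohomology group $H^{n-k}(V)$, so $\dim\im\phi_k\le\dim H^{n-k}(V)$. Exactness then gives $\dim SH_k(V)\le\dim\im\phi_k+\dim SH_k(\p V)$, and summing over $k$ yields $\dim SH_*(V)\le\dim H^*(V)+\dim SH_*(\p V)<\infty$.

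The main obstacle will be the case $SH^{-n}(V)=0$ in part~(a): unlike the other three conditions, this one does not directly engage the unit of $SH_*(V)$ in degree $n$, so the argument must invoke either the dual module structure on $SH_*(V,\p V)$ or a suitable dualization of the unit argument that respects the Poincar\'e duality identifications.
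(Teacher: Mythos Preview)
Your argument for part~(b) is correct and essentially identical to the paper's: both use Poincar\'e duality and algebraic duality to reduce to the equivalence between finite-dimensionality of $SH_*(V)$ and of $SH_*(\p V)$, and both exploit that $\phi$ factors through finite-dimensional singular (co)homology. The paper phrases the converse as ``$\psi$ has finite rank and $\phi$ has finite rank, hence $\dim SH_*(V)<\infty$''; your inequality $\dim SH_k(V)\le\dim\im\phi_k+\dim SH_k(\p V)$ is the same computation.

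For part~(a), your treatment of the cases $SH_n(V)=0$ and $SH_n(\p V)=0$ is correct and in fact more explicit than the paper, which simply cites Ritter's Theorem~13.3. Your observation that $\phi_n$ factors through $H_{2n}(V)=0$, making $\psi_n$ injective and hence $1_{\p V}=0\Rightarrow 1_V=0$, is exactly the right mechanism.

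The genuine gap is in the remaining case $SH^{-n}(V)=0$ (equivalently $SH_n(V,\p V)=0$), which you rightly flag as the obstacle. The $SH_*(V)$-module structure on $SH_*(V,\p V)$ cannot, by itself, propagate vanishing from a single degree to all degrees: nothing in a module structure singles out degree~$n$, and there is no reason for $SH_*(V,\p V)$ to be cyclic over $SH_*(V)$ with generator in degree~$n$. What Ritter actually uses is that $SH^{-*}(V)$ (his $SH_*$) carries its \emph{own} unital ring structure---coming from TQFT operations in which positive punctures play the role of outputs, not just the module action of $SH_*(V)$---with unit living precisely in the degree appearing in the hypothesis. This structure is not developed in the present paper's \S\ref{sec:products} (Proposition~\ref{prop:ops} case~(iv) gives only $p=1$, $q\ge 1$ for $A=\p V$, hence a product but no unit), which is why the paper simply defers to Ritter. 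Your alternative suggestion of ``a suitable dualization of the unit argument'' points in the right direction but would need to be made precise via this second ring structure rather than via modules.
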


\begin{proof}
Part (a) is~\cite[Theorem~13.3]{Ritter}, except for the statement
involving $SH_*(V,\p V)$, which is a consequence of Poincar\'e
duality. For part (b), in view of Poincar\'e duality $SH_*(V,\p
V)\cong SH^{-*}(V)$ we only need to deal with $SH_*(V)$, $SH^*(V)$,
and $SH_*(\p V)$. 
Since $SH^k(V)\cong \Hom\bigr(SH_k(V),\K\bigr)$ in each degree,
$SH^*(V)$ is finite dimensional iff $SH_*(V)$ is. If both are finite
dimensional, then two out of three terms in the exact sequence~\eqref{eq:duality} are
finite dimensional, so the third term $SH_*(\p V)$ is finite
dimensional as well. Conversely, suppose that $\dim SH(\p V)<\infty$. 
Then the map $\psi$ in~\eqref{eq:duality} has finite rank, as does the
map $\phi$ (because it factors through singular homology), and thus $\dim SH_*(V)<\infty$. 
Alternatively, one could argue by contradiction: If $\dim SH(\p
V)<\infty$ and $SH_*(V)$, $SH^*(V)$ were infinite dimensional, then
the long exact sequence~\eqref{eq:duality} would imply $\dim SH_*(V) = \dim SH^*(V)$,
which is impossible by Remark~\ref{rem:univ-coeff} below. 
\end{proof}

\begin{remark}\label{rem:univ-coeff}
A $\K$-vector space is isomorphic to its dual space if and
only if it is finite dimensional (see~\cite{Dubuque} for a nice proof 
-- we thank I.~Blechschmidt for pointing this out). Hence for a pair
of Liouville cobordisms with filling $(W,V)$ and using field
coefficients we obtain that $SH^k_\heartsuit(W,V)$ is isomorphic to
$SH_k^\heartsuit(W,V)$ for $\heartsuit\in\{<0\le 0,=0,\ge 0,>0\}$ if
and only if both vector spaces are finite dimensional. 
\end{remark}

We say that a subset of a symplectic manifold is {\em
displaceable} if it can be displaced from itself by a compactly
supported Hamiltonian isotopy. It has been known for a while that
displaceability implies vanishing of Rabinowitz-Floer
homology~\cite{Cieliebak-Frauenfelder} and symplectic
homology~\cite{Kang14} of a Liouville domain. In the context of this
paper, these appear as special cases of the following general
vanishing result, whose proof is a straightforward adaptation of the
ones in~\cite{Cieliebak-Frauenfelder} and~\cite{Kang14}. 

\begin{theorem}[displaceability implies vanishing]\label{thm:disp-van}

\hfill

(a) Let $(W,V)$ be a Liouville cobordism pair with filling $F$ such that $V$ is
displaceable in the completion of $F\circ W$. Then $SH_*(V)=0$. 

(b) Let $L\subset V$ be an exact Lagrangian in a Liouville domain $V$ whose
completion $\wh L$ is displaceable from $V$ in the completion
$\wh V$. Then $SH_*(L)=0$. 
\hfill$\square$
\end{theorem}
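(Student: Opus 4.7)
The plan is to adapt the displaceability-vanishing arguments of Cieliebak-Frauenfelder for Rabinowitz-Floer homology and of Kang for symplectic homology of Liouville domains to the present setting of cobordisms and Lagrangians in cobordisms.

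For part (a), let $\phi \colon S^1 \times \wh{F \circ W} \to \R$ be a compactly supported Hamiltonian whose time-one map displaces $V$, and denote its Hofer norm by $E = \|\phi\|$. By Lemma~\ref{lem:SHV-alternate-direct-limit}, the group $SH_*^{(a,b)}(V)$ is a direct limit over $H \in \cH^W(V; F)$ of $FH_*^{(a,b)}(H)$. The key step is to show that, for each finite action window $(a,b)$ and for every cofinal $H$ with sufficiently steep slopes, the identity on $FH_*^{(a,b)}(H)$ factors through a Floer group that vanishes. Concretely, I would construct the composition of continuation maps $FH_*^{(a,b)}(H) \to FH_*^{(a-E, b+E)}(H \# \phi) \to FH_*^{(a-2E, b+2E)}(H)$ corresponding to inserting and then removing $\phi$ through the loop $(H, H \# \phi, H)$ of Hamiltonians, and verify that this composition is homotopic to the canonical map induced by enlarging the action window, the homotopy being controlled by the standard energy estimate $E(u) \le \|\phi\|$ for continuation cylinders associated with a displacing loop. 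Displaceability then forces the middle group $FH_*^{(a-E, b+E)}(H \# \phi)$ to vanish for cofinal $H$: its nontrivial generators would correspond to $1$-periodic orbits of $H \# \phi$ meeting $V$, which in turn correspond to leafwise intersections of $V$ with $\phi_1(V)$, of which there are none. Passing to the cofinal limit over $H$ and then to the double limit of Definition~\ref{defi:SH(W)} yields $SH_*(V) = 0$.

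Part (b) follows by the analogous argument in the open-string setting: a Hamiltonian $\phi$ displacing $\wh L$ off $V$ in $\wh V$ plays the role of the displacing isotopy, and one works with Hamiltonian chords having endpoints on $L$ (rather than closed orbits) and with Floer strips having Lagrangian boundary condition on $L$. Displaceability precludes chords of $\phi \# H$ with both endpoints on $L$ whose action lies in any fixed window, so the middle Lagrangian Floer group again vanishes and the same algebraic argument gives $SH_*(L) = 0$.

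The main obstacle will be verifying that the energy and action estimates transfer faithfully through the cofinal limits that define $SH_*(V)$ for a cobordism. Unlike the case of a Liouville domain, the Hamiltonians in $\cH^W(V; F)$ carry several slope parameters $\mu, \nu_\pm, \tau$ tending to infinity independently, and the relevant Floer trajectories have to be confined to a neighborhood of $V$ in which $\phi$ is defined and its Hofer-norm estimate applies. This confinement is provided by the no-escape Lemma~\ref{lem:no-escape} applied on contact-type hypersurfaces enclosing the support of $\phi$, exactly as in the proofs of the excision Theorem~\ref{thm:excision} and of Lemma~\ref{lem:SHV-alternate-direct-limit}. Once this uniform confinement is in place, the displacement-vanishing argument on $\wh V$ carries over essentially verbatim from the Liouville-domain case.
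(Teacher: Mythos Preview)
The paper does not give a proof of this theorem at all: the statement ends with a $\square$, preceded only by the remark that the result ``is a straightforward adaptation of the ones in~\cite{Cieliebak-Frauenfelder} and~\cite{Kang14}.'' So your proposal cannot be compared to a proof in the paper; the question is whether your sketch of the adaptation is sound.

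Your overall architecture --- sandwich the identity between continuation maps through a displacing Hamiltonian, control action shifts by the Hofer norm, and use the confinement lemmas (Lemma~\ref{lem:no-escape}) to make the estimates survive the cofinal limits defining $SH_*(V)$ --- is exactly the shape of argument the paper is pointing to, and your identification of the confinement issue as the main new difficulty in the cobordism setting is correct.

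There is, however, a genuine gap at the heart of your sketch: the claim that the middle group $FH_*^{(a-E,b+E)}(H\#\phi)$ \emph{vanishes} because its generators ``correspond to leafwise intersections of $V$ with $\phi_1(V)$.'' The leafwise-intersection interpretation belongs to the Rabinowitz functional in~\cite{Cieliebak-Frauenfelder}; it does not carry over verbatim to $1$-periodic orbits of the Hamiltonian $H\#\phi$. A $1$-periodic orbit of $H\#\phi$ is a fixed point of $\psi_1^\phi\circ\psi_1^H$, and by naturality the Floer complex of $H\#\phi$ is isomorphic (with an action shift) to that of $H$ itself --- it does not become empty just because $\phi$ displaces $V$. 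What actually makes the argument work is not vanishing of the middle group but a \emph{shift in action}: the naturality isomorphism identifies $FH_*^{(a-E,b+E)}(H\#\phi)$ with a filtered group for $H$ whose window is translated relative to $(a,b)$, and iterating (or comparing with the enlargement map) shows that the canonical map $SH_*^{(a,b)}(V)\to SH_*^{(a-2E,b+2E)}(V)$ factors through a group that vanishes once the windows become disjoint. This is the mechanism in Kang's argument and is what you need to articulate; as written, your vanishing claim for the middle group is not justified.
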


For example, the displaceability hypothesis in (a) is always satisfied if the
completion of $F\circ W$ is a subcritical Stein manifold, or more
generally the product of a Liouville manifold with $\C$. 

\begin{remark}
(i) If in Theorem~\ref{thm:disp-van}(a) the cobordism $V$ as well as its
filling $E=F\cup W^{bottom}$ are connected, then displaceability of $V$
implies displaceability of $E\cup V$ and the vanishing of $SH_*(V)$
follows from the vanishing of symplectic homology of the Liouville
domains $E$ and $E\cup V$. 

(ii) In the situation of Theorem~\ref{thm:disp-van}(a),
displaceability of $V$ implies that of $\p V$, so 
we also have $SH_*(\p_\pm V)=SH_*(\p V)=0$ and (via exact sequences of
triples) $SH_*(V,\p_\pm V)=SH_*(V,\p V)=0$. 
\end{remark}

Another condition that ensures vanishing of $SH_*(V)$ is the vanishing
of $SH_*(W)$ for a pair $(W,V)$. This was observed for Liouville
domains by Ritter~\cite{Ritter} as a consequence of the product
structure: vanishing of $SH_*(W)$ implies that its unit $1_W$
vanishes, hence so does its image $1_V$ under the transfer map
$SH_*(W)\to SH_*(V)$, which implies $SH_*(V)=0$. In view of
Theorem~\ref{thm:product}, the same argument proves

\begin{theorem}[vanishing is inherited]\label{thm:van-inherited}
Let $(W,V)$ be a Liouville cobordism pair. 
Then $SH_*(W)=0$ implies $SH_*(V)=0$. \hfill$\square$ 
\end{theorem}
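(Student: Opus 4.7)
The plan is to follow the argument of Ritter for Liouville domains verbatim, using the ring/unit/transfer-homomorphism package for cobordisms (Theorem~\ref{thm:product}). Concretely, I will invoke three facts: (i) the full symplectic homology $SH_*(W)$ of a filled Liouville cobordism is a unital graded ring, with a canonical unit $1_W \in SH_0(W)$; (ii) the same holds for $SH_*(V)$, with unit $1_V$; and (iii) the Viterbo transfer map $f_!: SH_*(W) \to SH_*(V)$ associated with the inclusion $V \hookrightarrow W$ is a unital ring homomorphism, i.e.\ $f_!(1_W) = 1_V$.

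Granting these, the argument is one line. Suppose $SH_*(W) = 0$. Then in particular the unit vanishes, $1_W = 0$. Since $f_!$ is a unital ring homomorphism, $1_V = f_!(1_W) = f_!(0) = 0$. But a unital graded ring in which the unit is zero is the zero ring: for every $x \in SH_*(V)$,
\[
   x = 1_V \cdot x = 0 \cdot x = 0,
\]
hence $SH_*(V) = 0$, as desired.

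The only non-routine input is item (iii), that the transfer map respects the ring units. This is established in Section~\ref{sec:products} (Theorem~\ref{thm:product}), where the pair-of-pants product on $SH_*$ of a filled Liouville cobordism is constructed and the behavior of the transfer map with respect to it is verified; the confinement tools of Section~\ref{sec:confinement} (used to localize the relevant Floer cylinders and pairs-of-pants to the appropriate region) are what allow the same recipe that works for Liouville domains to go through here. The main potential obstacle, before Theorem~\ref{thm:product} is available, would be to check that the transfer map truly does send the unit to the unit; once that theorem is in hand, there is nothing more to do.
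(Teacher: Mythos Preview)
Your proof is correct and is exactly the argument the paper gives: invoke Theorem~\ref{thm:product}(a),(d),(e) to get that $SH_*(W)$ and $SH_*(V)$ are unital rings and the transfer map is a unital ring homomorphism, then run Ritter's one-line argument. One small correction: the unit lives in $SH_n(W)$, not $SH_0(W)$, since the pair-of-pants product has degree $-n$.
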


Again, the hypothesis $SH_*(W)=0$ is satisfied if the
completion of $F\circ W$ is a subcritical Stein manifold, or more
generally the product of a Liouville manifold with $\C$. However,
there exist Liouville domains $W$ that are not of this type and still
have vanishing symplectic homology, e.g.~flexible Stein
domains~\cite{Cieliebak-Eliashberg-book} as well as certain
non-flexible Stein
domains~\cite{Maydanskiy,Abouzaid-Seidel-recombination,Murphy-Siegel,Ritter09}. 
Conversely, there exist many examples of Liouville pairs $(W,V)$ with
$V$ displaceable and $SH_*(W)\neq 0$. So neither of the two Vanishing
Theorems~\ref{thm:disp-van} and~\ref{thm:van-inherited} implies the
other.

\subsection{Consequences of vanishing of symplectic homology} 
Suppose that $V$ is a Liouville domain with $SH_*(V)=0$. Then the
tautological sequence yields
\begin{equation}\label{eq:vanishing-hom}
   SH_*^{>0}(V)\cong SH_{*-1}^{\leq 0}(V)\cong H^{n-*+1}(V)\neq 0. 
\end{equation}
Similarly, if $L\subset V$ is an exact Lagrangian with $SH_*(L)=0$, then 
\begin{equation}\label{eq:vanishing-hom-Lag}
   SH_*^{>0}(L)\cong SH_{*-1}^{\leq 0}(L)\cong H^{n-*+1}(L)\neq 0. 
\end{equation}
This has the following dynamical consequences~\cite{Viterbo99,Ritter}.

\begin{corollary}\label{cor:chord}
(a) Let $V$ be a Liouville domain with $SH_*(V)=0$ (e.g., this is the case
if $\p V$ is displaceable in $\wh V$). Then there exists at least one closed Reeb orbit. 

(b) Let $L$ be an exact Lagrangian $L\subset V$ with
$SH_*(L)=0$ (e.g., this is the case if $\wh L$ is displaceable from
$V$ in $\wh V$). Then there exists at least one Reeb chord with boundary on $\p L$.
If all the Reeb chords are nondegenerate their number is bounded from below
by $\mbox{rk}\,H_*(L)\geq \mbox{rk}\,H_*(\p L)/2$. 
\end{corollary}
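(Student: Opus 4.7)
The plan is to derive both parts of the corollary from the two displayed isomorphisms~\eqref{eq:vanishing-hom} and~\eqref{eq:vanishing-hom-Lag} that precede the statement, combined with the fact that the chain complexes computing $SH_*^{>0}$ are generated by Reeb orbits, respectively Reeb chords. Since these isomorphisms are already established in the discussion just before the corollary, the main task is to translate non-vanishing of homology into existence (and then count) of generators.

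For part (a), I would argue as follows. Since $V$ is a non-empty Liouville domain of dimension $2n$, we have $H^0(V)\neq 0$, so~\eqref{eq:vanishing-hom} gives $SH_n^{>0}(V)\cong H^0(V)\neq 0$. By Definition~\ref{defi:SH(W)} and the description in Section~\ref{sec:SHWA}, $SH_*^{>0}(V)$ is the direct limit over Hamiltonians $K$ of Floer homology generated by those $1$-periodic orbits of positive action, which for a cofinal family of Hamiltonians with large positive slope on the symplectization correspond bijectively to closed Reeb orbits on $\p V$. Therefore $SH_*^{>0}(V)\neq 0$ forces the underlying chain complex to be non-trivial, which produces at least one closed Reeb orbit on $\p V$.

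For part (b), the same principle applies to the Lagrangian setting described in Section~\ref{sec:LagSH}. The isomorphism~\eqref{eq:vanishing-hom-Lag} gives, in field coefficients,
\[
\sum_{k}\operatorname{rk} SH_k^{>0}(L)=\sum_{k}\operatorname{rk} H^{n-k+1}(L)=\operatorname{rk} H_*(L),
\]
using universal coefficients. On the other hand, for a cofinal family of admissible Hamiltonians the chain complex computing $SH_*^{>0}(L)$ is generated by Hamiltonian chords that, in the limit, correspond to Reeb chords with endpoints on $\p L$; if all Reeb chords are non-degenerate the total number of such chords is finite and equals the total rank of the chain complex, which is an upper bound for $\operatorname{rk} SH_*^{>0}(L)$. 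Combining these facts yields a lower bound of $\operatorname{rk} H_*(L)$ on the number of Reeb chords, and in particular at least one Reeb chord exists (this last conclusion requiring only non-vanishing of $H^*(L)$, which holds since $L$ is non-empty).

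It remains to establish the purely topological inequality $\operatorname{rk} H_*(L)\geq \operatorname{rk} H_*(\p L)/2$. This is a classical consequence of Poincaré--Lefschetz duality applied to the compact manifold-with-boundary $L$ (working over $\mathfrak{k}$, and using that $L$ is orientable over $\mathfrak{k}$, which holds automatically over $\Z_2$ and may be assumed over other fields under the standard relative spin hypothesis). Indeed, from the long exact sequence of the pair $(L,\p L)$ and the duality $H_k(L,\p L)\cong H^{n-k}(L)$, one shows in the standard way that the kernel of the map $i_*:H_*(\p L)\to H_*(L)$ is a Lagrangian subspace for the intersection pairing on $H_*(\p L)$, so $\operatorname{rk}\operatorname{im} i_*=\operatorname{rk} H_*(\p L)/2$, and in particular $\operatorname{rk} H_*(L)\geq \operatorname{rk} H_*(\p L)/2$. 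The main (mild) subtlety in the whole argument is to be sure that in the limit defining $SH_*^{>0}$ one has a clean identification of the chain-level generators with Reeb orbits/chords and that the rank of the direct limit is bounded by the number of such generators, which is standard from the cofinal families constructed in Section~\ref{sec:adapted}.
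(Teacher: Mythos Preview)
Your proposal is correct and follows essentially the same approach as the paper: both parts are deduced from the isomorphisms~\eqref{eq:vanishing-hom} and~\eqref{eq:vanishing-hom-Lag} together with the fact that the chain complexes underlying $SH_*^{>0}$ are generated by closed Reeb orbits (resp.\ Reeb chords), and the topological inequality $\mathrm{rk}\,H_*(L)\geq\mathrm{rk}\,H_*(\p L)/2$ is obtained from the long exact sequence of the pair $(L,\p L)$ combined with Poincar\'e--Lefschetz duality. Your write-up supplies more detail than the paper (in particular the Lagrangian-subspace interpretation of ``half lives, half dies''), but the argument is the same.
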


\begin{proof} 
{\color{black}The assertion in (a) follows immediately from~\eqref{eq:vanishing-hom} because
$SH_*^{>0}(V)$ is generated by closed Reeb orbits. Similarly, the
first assertion in (b) follows from~\eqref{eq:vanishing-hom-Lag}.}  

The second assertion in (b) also follows
from~\eqref{eq:vanishing-hom} because, if all Reeb chords are
nondegenerate, their number is bounded from below by $\mbox{rk}\,
SH_*^{>0}(V)=\mbox{rk}\, H^*(V)$. The estimate
$\mbox{rk}\,H_*(V)\geq \mbox{rk}\,H_*(\p V)/2$ follows readily 
from the long exact sequence of the pair $(V,\p V)$ in singular
homology and Poincar\'e duality. 
\end{proof}

Vanishing of symplectic homology also implies the following refinement
of the duality sequence~\eqref{eq:duality}.  

\begin{proposition}[duality sequence for positive symplectic homology]
\label{prop:duality-pos}
(a) Let $V$ be a Liouville domain with $SH_*(V)=0$ (e.g., this is the case
if $\p V$ is displaceable in $\wh V$). Then there exists a commuting
diagram with exact rows
\begin{equation*}
\xymatrix{
 H^{n-*}(\p V) \ar[r]^-\sigma \ar[d]^= & SH^{2-*}_{>0}(\p V) \ar[r]^-\tau &
SH_*^{>0}(\p V) \ar[r]^-\rho \ar[d]_g^\cong & H^{n-*+1}(\p V) \ar[d]^= \\
 H^{n-*}(\p V) \ar[r]^-{\sigma_0} & H^{n-*+1}(V,\p V) \ar[r]^-{\tau_0} \ar[u]_f^\cong &
H^{n-*+1}(V) \ar[r]^-{\rho_0} & H^{n-*+1}(\p V) 
}
\end{equation*}
(b) Let $L\subset V$ be an exact Lagrangian in a Liouville domain with
$SH_*(L)=0$ (e.g., this is the case if $\wh L$ is displaceable from
$V$ in $\wh V$). Then there exists a commuting diagram with exact rows
\begin{equation*}
\xymatrix{
 H^{n-*}(\p L) \ar[r]^-\sigma \ar[d]^= & SH^{2-*}_{>0}(\p L) \ar[r]^-\tau &
SH_*^{>0}(\p L) \ar[r]^-\rho \ar[d]_g^\cong & H^{n-*+1}(\p L) \ar[d]^= \\
 H^{n-*}(\p L) \ar[r]^-{\sigma_0} & H^{n-*+1}(L,\p L) \ar[r]^-{\tau_0} \ar[u]_f^\cong &
H^{n-*+1}(L) \ar[r]^-{\rho_0} & H^{n-*+1}(\p L) 
}
\end{equation*}
\end{proposition}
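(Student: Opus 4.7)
The plan is to combine the vanishing-inheritance of Corollary~\ref{cor:vanishing-finitedim}, the tautological and pair exact triangles, and Poincar\'e duality (Theorems~\ref{thm:Poincare} and~\ref{thm:PD-triv-cob}) to construct natural isomorphisms $f$ and $g$ identifying the top row with the cohomology long exact sequence of the pair $(V,\p V)$.

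First I would harvest the vanishing. From $SH_*(V)=0$, Corollary~\ref{cor:vanishing-finitedim}(a) gives $SH^*(V)=SH_*(V,\p V)=SH^*(V,\p V)=SH_*(\p V)=SH^*(\p V)=0$. Since $V$ is a Liouville domain we also have $SH_*^{<0}(V)=SH^*_{<0}(V)=0$, whence the tautological triangles yield $SH_*^{\ge 0}(V)=SH_*(V)=0$ and $SH^*_{\ge 0}(V)=SH^*(V)=0$.

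Next I would construct $g\colon SH_*^{>0}(\p V)\stackrel{\cong}\longrightarrow H^{n-*+1}(V)$. By Theorem~\ref{thm:Poincare}, $SH_*^{\le 0}(V,\p V)\cong SH^{-*}_{\ge 0}(V)=0$, so the exact triangle of the pair for $SH^{\le 0}$ (Theorem~\ref{thm:exact-triangle-pair}) yields
\[
   SH_*^{\le 0}(\p V)\cong SH_*^{\le 0}(V)=SH_*^{=0}(V)\cong H^{n-*}(V).
\]
Combined with $SH_*(\p V)=0$, the tautological triangle $SH_*^{\le 0}(\p V)\to SH_*(\p V)\to SH_*^{>0}(\p V)\to SH_{*-1}^{\le 0}(\p V)$ then produces $g$. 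To construct $f\colon H^{n-*+1}(V,\p V)\stackrel{\cong}\longrightarrow SH^{2-*}_{>0}(\p V)$, I compose four steps: Theorem~\ref{thm:PD-triv-cob} gives $SH^{2-*}_{>0}(\p V)\cong SH_{*-1}^{<0}(\p V)$; the exact triangle of the pair for $SH^{<0}$ together with $SH_*^{<0}(V)=0$ gives $SH_*^{<0}(\p V)\cong SH_{*-1}^{<0}(V,\p V)$; Theorem~\ref{thm:Poincare} identifies $SH_*^{<0}(V,\p V)\cong SH^{-*}_{>0}(V)$; and the cohomology tautological triangle combined with $SH^*(V)=0$, Proposition~\ref{prop:cohSH=0WA}, and classical Poincar\'e duality for $V$ yields $SH^{-*}_{>0}(V)\cong H_{n+*+1}(V)\cong H^{n-*-1}(V,\p V)$. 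Tracking degrees produces $f$.

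Finally, the commutative diagram itself is obtained from Theorem~\ref{thm:PD-LES} applied to $(V,\p V)$ with flavour $<0$: its top row is the pair long exact sequence for $SH^{<0}$, and after substituting the identifications of the previous paragraph and applying Theorem~\ref{thm:PD-triv-cob} on the $\p V$ terms, it turns into precisely the sought-after diagram; commutativity and exactness of the top row are then built into the naturality statement~\eqref{eq:PD-LES}. The main obstacle will be to verify that the several identifications entering the definitions of $f$ and $g$ fit coherently into that naturality square, which reduces to compatibility of Poincar\'e duality with excision and with the tautological action filtration, covered by Proposition~\ref{prop:compatibility-exact-triangles} and Proposition~\ref{prop:PD-triple}. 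Part (b) is proved by the same argument applied to the open-string Eilenberg--Steenrod package sketched in Section~\ref{sec:LagSH}: the Lagrangian analogues of Corollary~\ref{cor:vanishing-finitedim}, Theorem~\ref{thm:exact-triangle-pair}, and Poincar\'e duality hold verbatim via the chord-based Floer complex, with $L$ replacing $V$ and $\p L$ replacing $\p V$ throughout.
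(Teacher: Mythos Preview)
Your constructions of the isomorphisms $f$ and $g$ are essentially correct, but the final step is flawed. Applying Theorem~\ref{thm:PD-LES} to $(V,\p V)$ with flavour $<0$ gives a diagram whose top row is the pair sequence
\[
\cdots\to SH_*^{<0}(V,\p V)\to SH_*^{<0}(V)\to SH_*^{<0}(\p V)\to SH_{*-1}^{<0}(V,\p V)\to\cdots,
\]
and since $V$ is a Liouville domain, $SH_*^{<0}(V)=0$ for \emph{every} $*$. Thus every other term vanishes and the row collapses to a string of isomorphisms $SH_*^{<0}(\p V)\cong SH_{*-1}^{<0}(V,\p V)$; the bottom row degenerates the same way because $SH^*_{>0}(V,\p V)\cong SH_{-*}^{<0}(V)=0$. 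No amount of substitution turns such a diagram into the desired four-term exact sequence with nonzero terms $H^{n-*}(\p V)$, $SH^{2-*}_{>0}(\p V)$, $SH_*^{>0}(\p V)$, $H^{n-*+1}(\p V)$.

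The point you are missing is that the top row of the target diagram is not a pair sequence at all: it is the \emph{tautological} sequence $SH_*^{=0}(\p V)\to SH_*^{\ge 0}(\p V)\to SH_*^{>0}(\p V)\to SH_{*-1}^{=0}(\p V)$ for the trivial cobordism $\p V$, after the single substitution $SH_*^{\ge 0}(\p V)\cong SH^{2-*}_{>0}(\p V)$. The paper obtains the whole commuting diagram in one stroke from the grid of Proposition~\ref{prop:compatibility-exact-triangles} whose rows are the tautological sequences $(=0,\ge 0,>0)$ and whose columns are the pair sequences of $(V,\p V)$; after inserting $SH_*^{\ge 0}(V)=0$, $SH_*^{>0}(V,\p V)=0$, and the identification $SH_{*-1}^{\ge 0}(V,\p V)\cong SH_{*-2}^{<0}(V,\p V)\cong SH^{2-*}_{>0}(V)=SH^{2-*}_{>0}(\p V)$, the middle row and the left column of that grid become exactly the two exact rows of the proposition, with $f,g$ arising as edge isomorphisms. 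Your isolated constructions of $f$ and $g$ can be reconciled with this, but to prove exactness and commutativity you must invoke the tautological/pair compatibility grid rather than Theorem~\ref{thm:PD-LES}.
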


\begin{proof}
For part (a) consider the commuting diagram whose columns are the exact sequences of
the pair $(V,\p V)$ and whose rows are the tautological sequences
$$
\xymatrix
@C=20pt
{
 SH_*^{=0}(V) \ar[r] \ar[d] & SH_*^{\geq 0}(V) \ar[r] \ar[d] &
SH_*^{>0}(V) \ar[r] \ar[d] & SH_{*-1}^{=0}(V) \ar[d] \\
 SH_*^{=0}(\p V) \ar[r] \ar[d] & SH_*^{\geq 0}(\p V) \ar[r] \ar[d] &
SH_*^{>0}(\p V) \ar[r] \ar[d] & SH_{*-1}^{=0}(\p V) \ar[d] \\
 SH_{*-1}^{=0}(V,\p V) \ar[r] \ar[d] & SH_{*-1}^{\geq 0}(V,\p V) \ar[r] &
SH_{*-1}^{>0}(V,\p V) \ar[r] & SH_{*-2}^{=0}(V,\p V) \\
SH_{*-1}^{=0}(V)
}
$$
We replace the groups $SH_*^{=0}$ by the corresponding singular
cohomology groups, and insert $SH_*^{\geq 0}(V)=SH_*(V)=0$ (which holds by
hypothesis) and $SH_{*-1}^{>0}(V,\p V)=0$ (which always
holds). Moreover, we replace $SH_{*-1}^{\geq 0}(V,\p V)$ by the
isomorphic group $SH_{*-1}^{\geq 0}(V,\p V)\cong SH_{*-2}^{<0}(V,\p
V)\cong SH^{2-*}_{>0}(V) = SH^{2-*}_{>0}(\p V)$, where the first
isomorphism comes from the tautological sequence in view of $SH_*(V,\p
V)=0$ (which follows from the hypothesis $SH_*(V)=0$ via
Corollary~\ref{cor:vanishing-finitedim}) and the second one is
Poincar\'e duality. Then the diagram becomes
$$
\xymatrix{
 H^{n-*}(V) \ar[r] \ar[d] & 0 \ar[r] \ar[d] &
SH_*^{>0}(V) \ar[r]^\cong \ar[d]^\cong & H^{n-*+1}(V) \ar[d]^{\rho_0} \\
 H^{n-*}(\p V) \ar[r] \ar[d]^{\sigma_0} \ar[rd]_\sigma & SH_*^{\geq 0}(\p V) \ar[r] \ar[d]^\cong &
SH_*^{>0}(\p V) \ar[r]_-\rho \ar[d] \ar[ru]_g^\cong & H^{n-*+1}(\p V) \ar[d] \\
 H^{n-*+1}(V,\p V) \ar[r]_-f^-\cong \ar[d]^{\tau_0} 
& SH^{2-*}_{>0}(\p V) \ar[r] \ar[ru]_\tau & 0 \ar[r] & H^{n-*+2}(V,\p V) \\
H^{n-*+1}(V)
}
$$
From this we read off the commuting diagram in
Proposition~\ref{prop:duality-pos}(a). Part (b) is proved analogously. 
\end{proof}

Corollary~\ref{cor:chord}(b) and the upper long exact sequence in
Proposition~\ref{prop:duality-pos}(b) were proved
in~\cite{Ekholm-Etnyre-Sabloff} in the context of contact manifolds of
the form $P\times \R$ (compare also with~\cite{Ritter}).
The commuting diagram in Proposition~\ref{prop:duality-pos}(b) appears
in~\cite[Corollary~1.3]{Ekholm}
and~\cite[Corollary~2.6]{Rizell-lifting}.

\subsection{Invariants of contact manifolds}\label{sec:contact} 
We describe in this subsection how to obtain invariants of contact
manifolds from the various symplectic homology groups that we defined
in this paper.  
Recall that a contact manifold with chosen contact form
$(M^{2n-1},\alpha)$ is called {\em hypertight} if it has no
contractible closed Reeb orbits. Following~\cite{Uebele-products} we
call $(M,\alpha)$ {\em index-positive} if $\xi=\ker\alpha$ satisfies
either
\renewcommand{\theenumi}{\roman{enumi}}
\begin{enumerate}
\item {\color{black} $c_1(\xi)|_{\pi_2(M)}=0$ and} the Conley-Zehnder index of every contractible closed Reeb orbit
  $\gamma$ in $M$ satisfies $\CZ(\gamma)+n-3>1$, or
\item {\color{black} $(M,\alpha)$ admits a Liouville filling $F$ with
  $c_1(F)|_{\pi_2(F)}=0$ such that $\CZ(\gamma)+n-3>0$ for every
  closed Reeb orbit $\gamma$ in $M$ which is contractible in $F$.} 
\end{enumerate}
We will call a (as always,
cooriented) contact manifold $(M,\xi)$ {\em hypertight} resp.~{\em
  index-positive} if it admits a defining contact form with this
property.  

{\color{black}
\begin{remark}\label{rem:subcritical}
Condition (ii) is in particular satisfied if $(M,\alpha)$ admits a subcritical Stein filling $F$ of dimension $2n\geq 4$ with $c_1(F)|_{\pi_2(F)}=0$. Indeed, $M=\p F$ then admits a contact form so that all Conley-Zehnder indices of closed Reeb orbits which are contractible in $\p F$ are $>1$~\cite{MLYau}, and therefore $>3-n$ provided that $n\geq 2$. Since $F$ is Stein subcritical, the map $\pi_1(\p F)\to \pi_1(F)$ induced by the inclusion is injective. Indeed, the subcritical skeleton has codimension $\ge n+1\ge 3$ and a generic homotopy of paths will avoid it, so that it can afterwards be pushed by the Liouville flow to the boundary. Thus any loop in $\p F$ which is contractible in $F$ is also contractible in $\p F$ and the condition on the indices therefore holds for all loops which are contractible in $F$. 
\end{remark}
}

{\color{black} The following result follows in the index-positive case
(ii) from the arguments of \cite{BOcont}, as remarked
in~\cite{Cieliebak-Frauenfelder-Oancea,BO3+4}. For the hypertight case or the  
index-positive case (i) see~\cite{Uebele-products,BO3+4}. For another instance in the
$S^1$-equivariant case see~\cite{Gutt15}. We sketch below a short unified proof. 

\begin{proposition}\label{prop:SH-without-filling}
Given a Liouville cobordism $W$ whose negative boundary $\p^-W$ is
hypertight or index-positive, the symplectic homology groups  
$$
   SH_*^{\heartsuit}(W) \mbox{\quad and \quad} 
   SH_*^{S^1,\heartsuit}(W), \qquad \heartsuit\in \{\varnothing,>0,\ge
   0, =0, \le 0, <0\} 
$$
are defined, independent of the contact form $\alpha$ on $\p^-W$ in
the given class, and independent of the filling in case (ii). 
\end{proposition}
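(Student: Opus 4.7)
The plan is to repeat the construction of $SH_*^\heartsuit(W)$ from~\S\ref{sec:SHWA} directly on the completion
$$\overline{W} := (0,1]\times\p^-W \;\cup_{\Psi^-}\; W \;\cup_{\Psi^+}\; [1,\infty)\times\p^+W,$$
substituting the negative symplectization of $\p^-W$ for the role previously played by the filling. An admissible Hamiltonian is now a $1$-periodic $H:S^1\times\overline{W}\to\R$ that vanishes on $W$, is linear of non-critical positive slope $\tau$ on $[1,\infty)\times\p^+W$ and linear of non-critical negative slope $-\mu$ on $(0,1]\times\p^-W$, smoothed near the two boundary collars. Filtered Floer chain complexes $FC_*^{(a,b)}(H)$ truncated in an action window $(a,b)\subset\R$ are defined exactly as in~\S\ref{sec:SHWA}, and the groups $SH_*^\heartsuit(W)$ and $SH_*^{S^1,\heartsuit}(W)$ are obtained by the limiting procedures of Definition~\ref{defi:SH(W)} and~\S\ref{sec:S1equiv} (limits in $\mu,\tau$ and in $a,b$ dictated by $\heartsuit$).

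The essential task is to establish finiteness of the relevant Floer moduli spaces on $\overline{W}$ without invoking a filling. In the hypertight case, a stretch-of-the-neck argument along $\{r_0\}\times\p^-W$ with $r_0\searrow 0$, combined with the SFT compactness theorem of~\cite{BEHWZ,CM} in the spirit of Lemma~\ref{lem:neck}, shows that if a sequence of Floer trajectories with asymptotes of bounded action were to escape to arbitrarily small $r$, the limit would produce a finite-energy punctured holomorphic building in $\R\times\p^-W$ asymptotic to closed Reeb orbits contractible in $\R\times\p^-W$; since $\alpha$ has no such orbits by hypothesis, this is a contradiction and all relevant trajectories remain in a compact subset of $\overline{W}$. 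In the index-positive case~(i), the hypothesis $c_1(\xi)|_{\pi_2(\p^-W)}=0$ makes the Conley-Zehnder index well-defined, and the lower bound $\CZ(\gamma)+n-3>1$ yields an a priori lower bound on the action of any generator appearing in a fixed degree $k$; this confines the contributing orbits together with their connecting trajectories to a compact region, guaranteeing finite-dimensionality of $FH_k^{(a,b)}(H)$ and well-definedness of continuation maps. The same kind of index-based confinement applies in case~(ii), now with the stretching taking place along $\p^-W$ viewed as a hypersurface inside the filling $F\circ W$.

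Independence of the contact form $\alpha$ in the given hypertight or index-positive class is a standard continuation argument: a homotopy $\alpha_s$, $s\in[0,1]$, through such forms lifts to a homotopy of admissible Hamiltonians on $\overline{W}$ and induces continuation maps between the corresponding chain complexes, independence of all auxiliary choices then following from the homotopy-of-homotopies arguments of~\S\ref{sec:cones}. For the index-positive case~(ii), independence of the filling is proved by comparing the filling-free construction on $\overline{W}$ with the filling-based construction on $\wh W_F$: one picks cofinal Hamiltonians that agree on $W\cup[1,\infty)\times\p^+W$ and differ only inside $F\cup(0,1]\times\p^-W$, and uses the index-positivity bound together with a stretch-of-the-neck argument along $\p^-W\subset W_F$ to show that orbits contributing to a fixed action window and fixed degree are confined to a neighborhood of $W\cup(0,1]\times\p^-W$, so that the continuation map between the two constructions is an isomorphism in the limit. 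The hardest point will be this last identification, as it requires using the index bound, and not merely the action bound, to exclude spurious contributions coming from deep inside the filling; the hypertight case and the unfilled index-positive case~(i) are technically simpler since the argument then takes place entirely inside $\overline{W}$.
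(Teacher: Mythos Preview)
Your overall architecture is right---work on the two-sided completion $\overline W$ in the hypertight and index-positive case~(i), and compare with the filling-based construction in case~(ii)---but the mechanism you invoke for the index-positive case is incorrect, and this is the heart of the argument.

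You write that the bound $\CZ(\gamma)+n-3>1$ ``yields an a priori lower bound on the action of any generator appearing in a fixed degree $k$; this confines the contributing orbits together with their connecting trajectories to a compact region.'' This does not work. The index hypothesis is a condition on closed \emph{Reeb} orbits $\gamma$ on $\p^-W$, not on the Hamiltonian $1$-periodic orbits generating the Floer complex, and there is no general principle converting an index bound on Reeb orbits into an action bound on generators. In fact the generators are already confined by the shape of $H$ (linear of non-critical slope on the negative end, so no orbits there); finite-dimensionality of $FH_k^{(a,b)}(H)$ is automatic from the finite action window. The genuine issue is compactness of moduli spaces of Floer \emph{trajectories}, which could a priori escape into the negative end, and this is not addressed by any action estimate.

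The correct mechanism, as in the paper, is a Fredholm index computation on the components of a putative SFT-broken limit. If a sequence of Floer cylinders contributing to the differential escapes, the limit contains a top component $C_+$ in $\wh W$ with the two Floer ends at $x_\pm$ and $p\ge 1$ additional negative punctures at Reeb orbits $\gamma_i$. Since $C_+$ lives in a transversely cut out moduli space carrying an $\R$-action, $\ind(C_+)\ge 1$; combining this with the index formula
\[
\ind(C_+)=\CZ(x_+)-\CZ(x_-)-\sum_{i=1}^p\bigl(\CZ(\gamma_i)+n-3\bigr)
\]
and $\CZ(x_+)-\CZ(x_-)=1$ gives $\sum_i\bigl(\CZ(\gamma_i)+n-3\bigr)\le 0$, contradicting index-positivity. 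This is what rules out escape; no action bound is involved. You also miss the reason case~(i) requires the stronger bound $>1$ while case~(ii) only needs $>0$: without a filling one must separately verify $\p^2=0$, which means ruling out breaking for index difference $2$ and leads to $\sum_i\bigl(\CZ(\gamma_i)+n-3\bigr)\le 1$. In case~(ii) the differential is already known to square to zero because it agrees with the differential on $\wh W_F$. Finally, in your hypertight argument the assertion that the limiting Reeb orbits are contractible is not justified; you should explain why the relevant bottom components of the building are planes (or otherwise argue contractibility), since hypertightness only excludes \emph{contractible} closed Reeb orbits.
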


\begin{proof}
We will discuss the case $SH_*^{\heartsuit}(W)$, the equivariant case being analogous. 

In case (ii) we define $SH_*^{\heartsuit}(W)$ as the usual symplectic
homology group with respect to a filling $F$ in the given class. To
show independence of the filling, fix a finite action window $(a,b)$
and consider a Hamiltonian $H$ on the
completion $\wh{W}_F$ as in Figure~\ref{fig:H-heuristic}. We perform
neck stretching as described in the proof of Lemma~\ref{lem:neck},
inserting cylindrical pieces $[-R_k,R_k]\times M$ with
$R_k\to\infty$, at the hypersurface $M:=\{\delta\}\times\p^-W$ where
$H\equiv c$ for a constant $c>-a$. We claim that for $k$
sufficiently large, Floer cylinders appearing in the differential between $1$-periodic orbits $x_\pm$ of
$H$ of types $I^-,I^0,I^+$ with action in $(a,b)$ do not enter the region $F\setminus[\delta,1]\times\p F$. 
Then it follows that all these Floer cylinders can be viewed as lying in the
$2$-sided completion $\wh{W}$, so $FH_*^{(a,b)}(H)$ is independent of
the filling. By the same claim applied to continuation morphisms, we
deduce independence of the filling for the filtered symplectic
homology groups $SH_*^{(a,b)}(W)$ and the groups $SH_*^{\heartsuit}(W)$.

To prove the claim, we argue by contradiction and suppose that for all
$k$ there exist Floer cylinders $u_k$ as above entering $F\setminus[\delta,1]\times\p F$.
In the limit $k\to\infty$ they converge {\color{black}in the SFT sense~\cite{BEHWZ,CM}} to a broken
holomorphic curve $C$ with punctures asymptotic to closed Reeb orbits
on $M$. {\color{black}Here it is understood that the almost complex structure is chosen to be cylindrical and time-independent in the neck $[-R_k,R_k]\times M$ that is inserted near the hypersurface $M=\{\delta\}\times \p^-W$.} We first observe that $C$ can have only one component in $\wh{W}$. 
This follows by the argument in the proof of Lemma~\ref{lem:neck}:
Otherwise there would exist for large $k$ a separating loop $\delta_k$
on the domain $\R\times S^1$, winding around in the negative $S^1$-direction, 
such that $u_k(\delta_k)$ is $C^1$-close to a (positively parameterized)
closed Reeb orbit $\gamma$ on $M$, and the resulting estimate
$A_H(x_-)\leq -c<a$ would contradict the condition $A_H(x_-)>a$. 
It follows that $C$ consists of a Floer cylinder $C_+$ in $\wh{W}$ with
$p\geq 1$ negative punctures asymptotic to closed Reeb orbits $\gamma_i$ and
holomorphic planes $C_i$ in $\wh{F}$ asymptotic to $\gamma_i$. 
{\color{black}
In particular, the orbits $\gamma_i$ are contractible, and this already leads to a contradiction in the hypertight case. 
To reach a contradiction in the index-positive case, we remark that the component $C_+$ belongs to a moduli space which is transversely cut out. Indeed, the equation is perturbed by an $S^1$-dependent Hamiltonian term near the punctures where $C_+$ converges to Hamiltonian periodic orbits, and the almost complex structure is chosen to be generic and time-dependent in the region where all the Hamiltonian orbits are located, hence transversality follows as in Hamiltonian Floer theory, see e.g.~\cite{SZ92}. If non-empty, the moduli space to which $C_+$ belongs has dimension at least $1$ (due to $\R$-translations in the domain), so
the Fredholm index of $C_+$ satisfies $\ind(C_+)\geq 1$. 
}
On the other hand, the index of $C_+$ is given by
$$
   \ind(C_+) = \CZ(x_+)-\CZ(x_-)-\sum_{i=1}^p\bigl(\CZ(\gamma_i)+n-3\bigr),
$$
which in view of $\CZ(x_+)-\CZ(x_-)=1$ for contributions to the Floer
differential yields
$$
   \sum_{i=1}^p\bigl(\CZ(\gamma_i)+n-3\bigr) \leq 0.
$$ 
{\color{black}Now the assumption of index-positivity and the fact that the orbits $\gamma_i$ are contractible implies $\CZ(\gamma_i)+n-3>0$. This contradicts the fact that $p\geq 1$, and proves case (ii).}

The proof in case (i) is very similar. We again consider $(a,b)$ and
$H$ as above, where $H$ is now defined on the $2$-sided completion
$\wh{W}$ rather than $\wh{W}_F$. We define the Floer differential for
$H$ by counting Floer cylinders between orbits $x_\pm$ in
$\wh{W}$. This is well-defined because SFT type breaking of Floer cylinders 
at the negative end of $\wh{W}$ is ruled out by exactly the same
argument as in case (ii). In contrast to case (ii) where this was
automatic, we now must also show that the Floer differential squares
to zero. For this, we must rule out SFT type breaking of Floer cylinders
connecting orbits $x_\pm$ of index difference $2$. If such breaking
occurs {\color{black}the argument in case (ii) directly leads to a contradiction in the hypertight case, while in the index-positive case it leads to $p\geq 1$ contractible orbits $\gamma_i$ 
satisfying}
$$
   \sum_{i=1}^p\bigl(\CZ(\gamma_i)+n-3\bigr) \leq 1.
$$ 
Under the stronger hypothesis $\CZ(\gamma_i)+n-3>1$ this is again a contradiction
and case (i) is proved.
\end{proof}

This proposition leads to the definition of homological invariants of
hypertight or index-positive contact manifolds,
$$
   SH_*^{[S^1,] \heartsuit}(M,\xi) = SH_*^{[S^1,] \heartsuit}(I\times
   M),\qquad \heartsuit\in \{\varnothing,>0,\ge 0, =0, \le 0, <0\}, 
$$
where $I=[0,1]$ and $I\times M$ is the trivial Liouville cobordism. 
Here the notation $SH_*^{[S^1,] \heartsuit}$ means that the symbol $S^1$
is optional.}

\begin{example}
In view of \cite{Cieliebak-Frauenfelder-Oancea}, the group $SH_*(M,\xi)$ can be interpreted as the Rabinowitz-Floer homology group of $(M,\xi)$. A construction of Rabinowitz-Floer homology for hypertight contact manifolds has been recently carried out in \cite{Albers-Fuchs-Merry}.
\end{example}

These contact invariants satisfy various functoriality relations, as
dictated by our functoriality relations for Liouville cobordisms. The
general picture is the following: Given a Liouville cobordism $W$
whose negative 
boundary is hypertight or index-positive, we have maps 
$$
SH_*^{[S^1,] \heartsuit}(\p^-W) \longleftarrow  SH_*^{[S^1,] \heartsuit}(W) \longrightarrow SH_*^{[S^1,] \heartsuit}(\p^+W)
$$
determined by the embedding of trivial cobordisms 
$$
I\times \p^-W\subset W \supset I\times \p^+W.
$$
Since $I\times \p^-W$ and $W$ share the same negative boundary we have an isomorphism $SH_*^{[S^1,] <0}(\p^-W) \stackrel \cong\longleftarrow SH_*^{[S^1,]<0}(W)$, and since $W$ and $I\times \p^+W$ share the same positive boundary we have an isomorphism $SH_*^{[S^1,] >0}(W)\stackrel \cong \longrightarrow SH_*^{[S^1,] >0}(\p^+W)$. In particular we obtain maps 
$$
SH_*^{[S^1,] >0}(\p^-W)\longleftarrow SH_*^{[S^1,] >0}(\p^+W) 
$$
and 
$$
SH_*^{[S^1,] <0}(\p^-W)\longrightarrow SH_*^{[S^1,] <0}(\p^+W).
$$
In the equivariant case and under slightly different assumptions the
first of these two maps was previously constructed by Jean Gutt in
\cite{Gutt15}. 
Such direct maps do not exist for the other versions $\heartsuit\in \{\varnothing,\ge 0, =0, \le 0\}$. In general the cobordism $W$ has to be interpreted as providing a \emph{correspondence}, and this holds in particular for the case of Rabinowitz-Floer homology. 

\noindent \emph{Invariants of Legendrian submanifolds. }
Let $(M^{2n-1},\alpha)$ be a manifold with chosen contact form and
$\Lambda^{n-1}\subset M$ a Legendrian submanifold. 
Extending the earlier definitions to the open case, we call $\Lambda$  
{\em hypertight} if $(M,\alpha)$ is hypertight and $\Lambda$ has
no contractible Reeb chords. We call $\Lambda$ {\em index-positive}
if $(M,\alpha)$ is index-positive and in addition 
\renewcommand{\theenumi}{\roman{enumi}}
\begin{enumerate}
\item in case (i) the Maslov class of $\Lambda$ vanishes on $\pi_2(M,\Lambda)$
  and every Reeb chord $c$ that is trivial in $\pi_1(M,\Lambda)$ 
  satisfies $\CZ(c)>1$;
\item {\color{black} in case (ii) $\Lambda$ admits an exact Lagrangian
  filling $L\subset F$ in the filling $F$ whose Maslov class vanishes on
  $\pi_2(F,L)$ such that $\CZ(c)>0$ for every
  Reeb chord $c$ for $\Lambda$ that is trivial in $\pi_1(F,L)$.} 
\end{enumerate}
We call a Legendrian submanifold in a contact manifold $(M,\xi)$ {\em
  hypertight} resp.~{\em index-positive} if it admits a
defining contact form with this property. 
 
The arguments given in the closed case adapt in a straightforward way
in order to define invariants of hypertight or index-positive
Legendrian submanifolds by  
$$
SH_*^{\heartsuit}(\Lambda)=SH_*^{\heartsuit}(I\times \Lambda), \qquad
\heartsuit\in \{\varnothing,>0,\ge 0, =0, \le 0, <0\}. 
$$

\subsection{Subcritical handle attaching} \label{sec:subcritical-handle}
In this subsection we compute the symplectic homology groups
corresponding to a subcritical handle in the sense of~\cite{Ci02}, {\color{black}with coefficients in a field $\K$.} 

\begin{proposition}\label{prop:subcrit}
Let $W^{2n}$ be a filled Liouville cobordism corresponding to a subcritical
handle of index $k<n$. Then  
\begin{align*}
   SH_*(W,\p^-W) =0, & \qquad SH_{*}(W,\p^+W)=0,\cr
   SH_*^{=0}(W,\p^-W) &\cong SH_{-*}^{=0}(W,\p^+W)=\begin{cases}
   \K & *=n-k, \\ 0 & \text{else,} \end{cases}
\end{align*}
and the restriction maps induce isomorphisms
$$
   SH_*(\p^-W)\stackrel{\cong}\longleftarrow
   SH_*(W)\stackrel{\cong}\longrightarrow SH_*(\p^+W). 
$$
\end{proposition}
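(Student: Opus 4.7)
The plan is to reduce Proposition~\ref{prop:subcrit} to Cieliebak's invariance theorem~\cite{Ci02} for symplectic homology under subcritical handle attaching, combined with the Eilenberg--Steenrod package (excision, exact triangle of a pair, Poincar\'e duality) established earlier in this paper.

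First I would handle the zero-level pieces via Proposition~\ref{prop:SH=0WA}, which gives $SH_*^{=0}(W,\p^-W)\cong H^{n-*}(W,\p^-W)$. Since $W$ is obtained from a collar of $\p^-W$ by attaching a single $k$-handle, the pair $(W,\p^-W)$ is homotopy equivalent to $(D^k,S^{k-1})$, so $H^*(W,\p^-W)$ equals $\mathfrak{k}$ concentrated in degree $k$. This gives $SH_*^{=0}(W,\p^-W)=\mathfrak{k}$ in degree $*=n-k$, and the identification with $SH_{-*}^{=0}(W,\p^+W)$ follows from Poincar\'e duality (Theorem~\ref{thm:Poincare}) together with the finite-dimensionality of the groups involved (which allows one to pass from $SH^{-*}_{=0}$ to $SH_{-*}^{=0}$).

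The vanishing $SH_*(W,\p^-W)=0$ is the crux of the argument. Viewing $F\subset F\circ W$ as a Liouville sub-cobordism with $W^{bottom}=\varnothing$ and $W^{top}=W$, the Excision Theorem~\ref{thm:excision} identifies $SH_*(F\circ W,F)$ with $SH_*(W,\p^-W)$. The exact triangle of the pair $(F\circ W,F)$ reads
$$
SH_*(F\circ W,F)\to SH_*(F\circ W)\xrightarrow{i_!} SH_*(F)\to SH_{*-1}(F\circ W,F),
$$
where $i_!$ is the Viterbo transfer map for the inclusion $F\hookrightarrow F\circ W$ of Liouville domains. Cieliebak's theorem~\cite{Ci02} for subcritical handle attaching asserts that $i_!$ is an isomorphism, forcing $SH_*(W,\p^-W)=0$. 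The vanishing $SH_*(W,\p^+W)=0$ then follows from Poincar\'e duality $SH_*(W,\p^+W)\cong SH^{-*}(W,\p^-W)$ combined with the cohomological analogue of the previous argument: the cohomological transfer $SH^*(F\circ W)\to SH^*(F)$ is dual to $i_!$ by Corollary~\ref{cor:dualityk}(b) and is therefore also an isomorphism, so the cohomology exact triangle of $(F\circ W,F)$ together with excision gives $SH^*(W,\p^-W)=0$.

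The restriction isomorphisms then follow formally. By Corollary~\ref{cor:tub-nbhd}, $SH_*(W,\p^\pm W)$ may be computed with $\p^\pm W$ replaced by a collar neighborhood, and the exact triangle of the pair $(W,I\times\p^-W)$ reads
$$
SH_*(W,\p^-W)\to SH_*(W)\to SH_*(\p^-W)\to SH_{*-1}(W,\p^-W).
$$
Since both relative groups vanish, the middle arrow -- which by Theorem~\ref{thm:ESSH} is precisely the transfer map induced by inclusion of the collar -- is an isomorphism; the analogous argument using $SH_*(W,\p^+W)=0$ handles the convex side. The essential obstacle is the geometric input from Cieliebak's theorem; once this is granted, everything else is a formal manipulation of the structural results proved in the earlier sections.
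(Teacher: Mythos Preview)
Your proof is correct and follows essentially the same outline as the paper's, with one minor difference in presentation. For the vanishing of $SH_*(W,\p^-W)$, the paper cites directly from~\cite{Ci02} the filtered vanishing statement (for each degree $i$ there exists $b_i>0$ with $SH_i^{(a,b)}(W,\p^-W)=0$ for $a<0$, $b\ge b_i$), whereas you derive it from Cieliebak's transfer-map isomorphism via excision and the exact triangle of the pair $(F\circ W,F)$. These are two equivalent formulations of the same geometric input, so there is no substantive difference.

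For the vanishing of $SH_*(W,\p^+W)$, the paper takes a slightly shorter route: since working relative to $\p^-W$ means there are no negative-action orbits, one has $SH_*(W,\p^-W)=SH_*^{\ge 0}(W,\p^-W)$, and Proposition~\ref{prop:dualityk} (for $\heartsuit=\,\ge 0$) then gives $SH^*(W,\p^-W)=SH^*_{\ge 0}(W,\p^-W)=0$ directly, without re-running the cohomological exact triangle of the pair. Your route via Corollary~\ref{cor:dualityk}(b) and the cohomological exact triangle is equally valid but a bit longer. The computation of the zero-level groups and the final restriction isomorphisms are handled identically in both arguments.
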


\begin{proof}
The vanishing of $SH_*(W,\p^-W)$ is proved in~\cite{Ci02} with
arbitrary coefficients as a consequence of the following fact: for
each degree $i$ there exists $b_i>0$ such that
$SH_i^{(a,b)}(W,\p^-W)=0$ for any $a<0$ and $b\ge b_i$.

Since
$SH_*(W,\p^-W)=SH_*^{\ge 0}(W,\p^-W)$, we can apply the algebraic
duality Proposition~\ref{prop:dualityk} to obtain
$SH^*(W,\p^-W)=SH^*_{\ge 0}(W,\p^-W)=0$, which implies by Poincar\'e
duality $SH_{-*}(W,\p^+W)=0$.  

Since $H^*(W,\p^-W)$ equals $\K$ in degree $k$ and vanishes in all the
other degrees, we obtain 
$$
   SH_*^{=0}(W,\p^-W) \cong H^{n-*}(W,\p^-W) = \begin{cases}
   \K & *=n-k, \\ 0 & \text{else}. \end{cases}
$$
The remaining two isomorphisms follow from the long exact sequences
\begin{gather*}
   0=SH_*(W,\p^-W) \to SH_*(W) \to SH_*(\p^-W) \to SH_{*-1}(W,\p^-W)
   =0, \cr
   0=SH_*(W,\p^+W) \to SH_*(W) \to SH_*(\p^+W) \to SH_{*-1}(W,\p^+W)
   =0.
\end{gather*}
\end{proof}

\begin{remark}
(a) From Proposition~\ref{prop:subcrit} and the tautological sequence we can compute the remaining 
relevant symplectic homology groups of the pair $(W,\p_\pm W)$, namely
$$
   SH_*^{>0}(W,\p^-W) \cong SH_{-*}^{<0}(W,\p^+W)=\begin{cases}
   \K & *=n-k+1, \\ 0 & \text{else}. \end{cases}
$$
Note that the symplectic homology groups relative to one boundary
component only depend on the index $k$, whereas the group $SH_*(W)$ 
depends on the whole hypersurface $\p^-W$ and its filling. 

(b) In view of~\eqref{eq:RFH}, the last statement in
Proposition~\ref{prop:subcrit} gives in particular the isomorphism of
Rabinowitz Floer homology groups
$$
   RFH(\p^+W) \cong RFH(\p^-W). 
$$

(c) Suppose that $(W,V,U)$ is a Liouville cobordism triple such that $W\setminus
   V$ is subcritical. Then Proposition~\ref{prop:subcrit} implies
   $SH_*(W,V)=0$, which together with the exact sequence of the triple (Proposition~\ref{prop:triple}) yields the isomorphism
$$
   SH_*(W,U)\stackrel{\cong}\longrightarrow SH_*(V,U). 
$$
In particular, for $U=\emptyset$ we recover by induction the vanishing
of symplectic homology for subcritical Stein domains. 

{\color{black}(d) The computation of Proposition~\ref{prop:subcrit} is valid more generally with coefficients in an abelian group, but the proof uses filtered symplectic homology and a more general universal coefficients theorem. } 
\end{remark}

Together with the exact triangle of a pair, these computations
provide a complete understanding of the behaviour of all the flavors of 
non-equivariant symplectic homology groups under subcritical handle
attachment, as a consequence of the exact triangle of the pair
$(V\circ W,V)$, where $V$ is a Liouville domain.  
The equivariant case is discussed in
Section~\ref{sec:handle-equivariant} below.

\subsection{Critical handle attaching}\label{sec:critical-handle}
{\color{black}Recall that we use coefficients in a field $\K$. }
In the previous section we saw that the key computation was that of $SH_*(W,\p^-W)$, and the key exact triangle was the exact triangle of the pair $(V',V)$, where $V$ is the filling of $\p^-W$ and $V'=V\circ W$ is the Liouville domain obtained after attaching the handle. 
These same objects form the relevant structure in the case of a critical handle attachment. 

Let $V$ be a Liouville domain, let
$\Lambda=\Lambda_1\sqcup\dots\sqcup\Lambda_\ell$ be a collection of
disjoint Legendrian spheres in $\p V$, denote by $W$ the cobordism
obtained by attaching $\ell$ critical handles (of index $n$) along
these spheres, and denote $V'=V\circ W$. 
Bourgeois, Ekholm, and Eliashberg~\cite{Bourgeois-Ekholm-Eliashberg-1}
assert the existence of {\em surgery exact triangles}\footnote{Since at the time of
writing this article the proof of this result is not yet completed, 
we formulate its consequences below as conjectures.}  
\begin{equation}  \label{eq:BEE} 
\xymatrix
@C=10pt
@R=18pt
{
L\H^{\text{Ho}}(\Lambda)_* \ar[rr] & & 
SH_*(V') \ar[dl] \\ & SH_*(V) \ar[ul]^{[-1]}  
}
\qquad
\xymatrix
@C=10pt
@R=18pt
{
L\H^{\text{Ho}+}(\Lambda)_* \ar[rr] & & 
SH_*^{>0}(V') \ar[dl] \\ & SH_*^{>0}(V) \ar[ul]^{[-1]}  
}
\end{equation}
in which $L\H_*^{\text{Ho}}(\Lambda)$ and
$L\H^{\text{Ho}+}(\Lambda)_*$ are homology groups of Legendrian
contact homology flavour, see also~\cite[\S2.8]{EGH}
\cite{Ekholm-Etnyre-Sullivan}. More precisely,
$L\H^{\text{Ho}+}(\Lambda)_*$ is defined as the homology of a complex
$LH^{\text{Ho}+}(\Lambda)_*$ whose generators are words in Reeb chords
on $\p V$ with endpoints on $\Lambda$, and whose differential counts
certain pseudo-holomorphic curves in the symplectization of $\p V$
with boundary on the conical Lagrangian $S\Lambda$ determined by
$\Lambda$, with a certain number of interior and boundary punctures at
which rigid pseudo-holomorphic planes in $\wh V$, respectively rigid
pseudo-holomorphic half-planes in $\wh V$ with boundary on $S\Lambda$
are attached (following the terminology
of~\cite{Bourgeois-Ekholm-Eliashberg-1} we call such curves {\em
  anchored in $V$}). The homology group $L\H_*^{\text{Ho}}(\Lambda)$
is defined as the cone of a map $LC^{\text{Ho}+}(\Lambda)_*\to
C^{n-*+1}$, where $C^{n-*+1}$ is the cohomological Morse complex for
the pair $(W,\p^-W)$, which has rank $\ell$ in degree $n-*+1=n$ and
vanishes otherwise, and with zero differential. This map counts curves
of the type taken into account in $LH^{\text{Ho}+}(\Lambda)_*$,
rigidified by imposing an intersection with an unstable manifold of a
critical point in $W$.  
The exact sequence of the cone of a map reads in this case 
\begin{equation} \label{eq:LHcone}
\xymatrix
{
H^{n-*}(W,\p^-W) \ar[rr] & & L\H^{\text{Ho}}(\Lambda)_*
\ar[dl] \\ & L\H^{\text{Ho}+}(\Lambda)_* \ar[ul]^{[-1]}  
}
\end{equation}
The surgery exact triangles of Bourgeois, Ekholm, and Eliashberg can be
reinterpreted in our language as follows. 

\begin{conjecture}\label{conj:criticalBEE}
Let $W$ be a filled Liouville cobordism corresponding to attaching $\ell\ge 1$ critical
handles of index $k=n$ along a collection $\Lambda$ of disjoint Legendrian spheres. With field coefficients we have isomorphisms
$$
SH_*^{>0}(W,\p^-W)\cong L\H^{\text{Ho}+}(\Lambda)_*,\qquad 
SH_*(W,\p^-W)\cong L\H^{\text{Ho}}(\Lambda)_*
$$
such that the following hold: 

(i) the tautological exact triangle involving $SH_*^{=0}$, $SH_*$, and $SH_*^{>0}$ for the pair $(W,\p^-W)$ is isomorphic to~\eqref{eq:LHcone};

(ii) the exact triangles~\eqref{eq:BEE} are isomorphic to the exact triangles of the pair $(V',V)$ for $SH_*$, respectively $SH_*^{>0}$. 
\end{conjecture}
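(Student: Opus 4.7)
The plan is to combine the formal machinery of the exact triangle of a cobordism pair developed in this paper with the chain-level SFT analysis of~\cite{Bourgeois-Ekholm-Eliashberg-1}. The only inputs from BEE that my argument would take as given are the chain-level identifications between the Floer complexes computing $SH_*^{>0}(W,\partial^-W)$, $SH_*(W,\partial^-W)$ and the Legendrian contact homology complexes $LH^{\text{Ho}+}(\Lambda)_*$, $LH^{\text{Ho}}(\Lambda)_*$; once these are in place, everything else is a formal consequence of the Eilenberg--Steenrod package established here.

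First I would observe that the cobordism $V'=V\circ W$ together with its sub-cobordism $V$ defines a pair $(V',V)$ with empty bottom, middle $V$ and top $W$, so the Excision Theorem~\ref{thm:excision} yields canonical isomorphisms $SH_*^\heartsuit(V',V)\cong SH_*^\heartsuit(W,\partial^+V)=SH_*^\heartsuit(W,\partial^-W)$ for every flavor $\heartsuit$. Combined with the exact triangle of the pair $(V',V)$ from Theorem~\ref{thm:exact-triangle-pair} and the assumed chain-level identifications, this immediately produces the two surgery triangles~\eqref{eq:BEE} as the exact triangles of the pair $(V',V)$ for $\heartsuit\in\{\varnothing,>0\}$, which gives part~(ii).

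Next, for part~(i), I would apply Proposition~\ref{prop:taut-triang-WV} to the pair $(W,\partial^-W)$ to produce a tautological exact triangle connecting $SH_*^{=0}(W,\partial^-W)$, $SH_*^{\ge 0}(W,\partial^-W)=SH_*(W,\partial^-W)$ and $SH_*^{>0}(W,\partial^-W)$; by Proposition~\ref{prop:SH=0WA} its vertex at action zero is $SH_*^{=0}(W,\partial^-W)\cong H^{n-*}(W,\partial^-W)$, which for $\ell$ attached index-$n$ handles reduces to a $\mathfrak{k}$-vector space of rank $\ell$ concentrated in cohomological degree $n$, i.e.\ in homological degree $0$. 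This matches exactly the Morse complex $C^{n-*+1}$ appearing in~\eqref{eq:LHcone}, and under the BEE identifications from Step~1 the tautological triangle becomes~\eqref{eq:LHcone}, proving part~(i).

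The main obstacle, and the reason the statement is phrased as a conjecture, is precisely the chain-level identification of $SH_*^\heartsuit(W,\partial^-W)$ with the BEE complexes. My plan to establish it would be to choose a cofinal family of Hamiltonians $H_{\mu,\tau}$ adapted to $(W,\partial^-W)$ as in Definition~\ref{defi:SHWA}, stretch the almost complex structure along a collar of $\partial V=\partial^-W$ inside $\wh{V'}_{F\circ W^{bottom}}$, and invoke SFT compactness~\cite{BEHWZ,CM}. In the limit, Floer cylinders asymptotic to Reeb orbits on $\partial^+W$ should split into rigid pseudo-holomorphic anchors in $\wh V$ and punctured curves in the surgered region with boundary on the conical Lagrangians $S\Lambda$, which are exactly the curves counted by the BEE differential; rigidification by unstable manifolds of the critical points in $W$ accounts for the passage from the positive version to the full version. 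Carrying out the requisite transversality and gluing analysis for these broken curves is the content of the BEE program and constitutes the main outstanding difficulty.
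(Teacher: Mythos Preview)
Your overall architecture---excision to identify $SH_*^\heartsuit(V',V)\cong SH_*^\heartsuit(W,\p^-W)$, then plug in the BEE chain-level identifications---matches the paper's outline in spirit, and your treatment of part~(i) via the tautological triangle for $(W,\p^-W)$ together with $SH_*^{\ge 0}(W,\p^-W)=SH_*(W,\p^-W)$ is exactly what the paper indicates.

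There is, however, a genuine gap in your argument for part~(ii). You claim that once the chain-level identifications $SH_*^\heartsuit(W,\p^-W)\cong L\H^{\text{Ho}\heartsuit}(\Lambda)_*$ are in hand, the exact triangle of the pair $(V',V)$ ``immediately produces'' the BEE surgery triangles~\eqref{eq:BEE}. But an isomorphism of the third vertex does not by itself yield an isomorphism of exact triangles: you must also show that the map $SH_*^\heartsuit(V')\to SH_*^\heartsuit(V)$ appearing in the BEE triangle coincides with the Viterbo transfer map $i_!$ appearing in the exact triangle of the pair. In~\cite{Bourgeois-Ekholm-Eliashberg-1} this map is defined via anchored pseudo-holomorphic curves in the symplectization of the cobordism, not as a Floer continuation map, so equality is not formal.

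The paper takes precisely this map identification as the \emph{main step} for~(ii): one shows, by the methods of~\cite{BOcont} applied to the monotone homotopies defining the transfer map, that the transfer map agrees with the BEE cobordism map. Once that is known, the identification of the third term follows because both $L\H^{\text{Ho}\heartsuit}(\Lambda)_*$ and $SH_*^\heartsuit(V',V)$ are, up to rotation, cones on the same map. In other words, the paper's logic runs in the opposite direction from yours: first identify the map, then deduce the vertex identification as a cone; your proposal identifies the vertex first and then tacitly assumes the maps line up, which is the step that actually needs the analysis.
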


Let us explain how this conjecture would follow from the surgery exact
triangle in~\cite{Bourgeois-Ekholm-Eliashberg-1}.
To establish the first two isomorphisms, the first step is to give a description of $SH_*(W,\p^-W)$ and $SH_*^{>0}(W,\p^-W)$ in terms of pseudo-holomorphic curves in a symplectization; this is similar to the description of $SH_*^{>0}(V)$ as a non-equivariant linearized contact homology group given in~\cite{BOcont} and used in~\cite{Bourgeois-Ekholm-Eliashberg-1} as a definition of $SH_*^{>0}(V)$. The second step is to apply to this formulation of $SH_*^\heartsuit(W,\p^-W)$ with $\heartsuit=\{\varnothing,>0\}$ the methods of~\cite{Bourgeois-Ekholm-Eliashberg-1}. The proof of (i) is then straightforward, since $SH_*$ can naturally be expressed as the homology of a cone using the action filtration. 

To prove (ii), the main step is to establish an isomorphism between the transfer map $SH_*^\heartsuit(V')\to SH_*^\heartsuit(V)$ and the map with the same source and target that appears in~\eqref{eq:BEE} for $\heartsuit\in\{\varnothing,>0\}$. The latter map is described in terms of anchored pseudo-holomorphic curves in the symplectization of the cobordism $W$, and the proof of the isomorphism between these maps follows the same ideas as those in~\cite{BOcont}, applied to the monotone homotopies which induce in the limit the transfer map. The claim in (ii) then follows from the results of~\cite{Bourgeois-Ekholm-Eliashberg-1} because, up to rotating a triangle, the groups $L\H^{\text{Ho}+}(\Lambda)_*$ and $L\H^{\text{Ho}}(\Lambda)_*$ can be expressed as homology groups of cones of such maps induced by cobordisms. 

\begin{remark} \label{rmk:SFT}
Following~\cite{BOcont,BO3+4}, all the constructions that we describe in the setup of symplectic homology can be replicated in the language of symplectic field theory, or SFT (with the usual caveat regarding the analytical foundations of the latter). One outcome of this parallel is that our six flavors of symplectic homology provide some new linear SFT-type invariants (the group $SH_*(\p V)$ for $V$ a Liouville domain is the most prominent of these). It is a general fact that the Viterbo transfer maps for symplectic homology correspond to the well-known SFT cobordism maps. 
\end{remark}

As in the proof of Proposition~\ref{prop:subcrit},
Conjecture~\ref{conj:criticalBEE} would imply 

\begin{conjecture} \label{cor:criticalrelhom}
With coefficients in a field $\K$ the following isomorphisms hold:

(i) $SH^{-*}(W,\p^+W)\cong L\H^{\text{Ho}}(\Lambda)_*$ and 
$$
SH_{-*}(W,\p^+W)\cong SH^*(W,\p^-W)\cong (L\H^{\text{Ho}}(\Lambda)_*)^\vee.
$$

(ii) $SH^{-*}_{<0}(W,\p^+W)\cong L\H^{\text{Ho}+}(\Lambda)_*$ and 
$$
SH_{-*}^{<0}(W,\p^+W)\cong SH^*_{>0}(W,\p^-W)\cong (L\H^{\text{Ho}+}(\Lambda)_*)^\vee.
$$
\end{conjecture}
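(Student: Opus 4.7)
The strategy mirrors the proof of Proposition~\ref{prop:subcrit}, combining Poincar\'e duality (Theorem~\ref{thm:Poincare}), algebraic duality (Proposition~\ref{prop:dualityk}), and the content of Conjecture~\ref{conj:criticalBEE}.

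The first step is to apply Poincar\'e duality with $A=\p^-W$, $A^c=\p^+W$ in the flavors $\heartsuit=\varnothing$ and $\heartsuit={>}0$, yielding the canonical isomorphisms
$$
SH^{-*}(W,\p^+W)\cong SH_*(W,\p^-W),\qquad SH^{-*}_{<0}(W,\p^+W)\cong SH_*^{>0}(W,\p^-W).
$$
Combined with Conjecture~\ref{conj:criticalBEE} these give the first isomorphism in each of (i) and (ii). Interchanging the roles of $\p^-W$ and $\p^+W$ in Poincar\'e duality yields the middle isomorphisms
$$
SH_{-*}(W,\p^+W)\cong SH^*(W,\p^-W),\qquad SH_{-*}^{<0}(W,\p^+W)\cong SH^*_{>0}(W,\p^-W).
$$

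The final identification in (ii) is then immediate from Proposition~\ref{prop:dualityk} applied to $\heartsuit={>}0$: algebraic duality gives $SH^*_{>0}(W,\p^-W)\cong SH_*^{>0}(W,\p^-W)^\vee$, and the right-hand side is isomorphic to $(L\H^{\text{Ho}+}(\Lambda)_*)^\vee$ by Conjecture~\ref{conj:criticalBEE}(ii).

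The analogue in (i), namely $SH^*(W,\p^-W)\cong(L\H^{\text{Ho}}(\Lambda)_*)^\vee$, is the main obstacle, since the full symplectic (co)homology groups do not in general satisfy algebraic duality (cf.\ Example~\ref{ex:Peter}). The key observation is that Conjecture~\ref{conj:criticalBEE}(i) posits an exact triangle involving $SH_*^{=0}$, $SH_*$ and $SH_*^{>0}$ for the pair $(W,\p^-W)$; comparing this with the tautological triangle $SH_*^{\le 0}\to SH_*\to SH_*^{>0}$ of Proposition~\ref{prop:taut-triang-WV} forces $SH_*^{<0}(W,\p^-W)=0$, and algebraic duality for $\heartsuit={<}0$ then gives $SH^*_{<0}(W,\p^-W)=0$ as well. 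The tautological triangles in homology and cohomology therefore reduce to isomorphisms $SH_*(W,\p^-W)\cong SH_*^{\ge 0}(W,\p^-W)$ and $SH^*(W,\p^-W)\cong SH^*_{\ge 0}(W,\p^-W)$, so Proposition~\ref{prop:dualityk} applied to $\heartsuit={\ge}0$ delivers the desired duality.
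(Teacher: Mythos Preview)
Your proof is essentially correct and follows the same route as the paper, which explicitly says the result follows ``as in the proof of Proposition~\ref{prop:subcrit}''.

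The one place where you take an unnecessary detour is your justification of $SH_*^{<0}(W,\p^-W)=0$ in part (i). You extract it from Conjecture~\ref{conj:criticalBEE}(i) via a comparison of triangles; but the very phrase ``the tautological exact triangle involving $SH_*^{=0}$, $SH_*$, and $SH_*^{>0}$'' in that conjecture already presupposes $SH_*(W,\p^-W)=SH_*^{\ge 0}(W,\p^-W)$. This identity is elementary and does not depend on the conjecture: the Hamiltonians computing $SH_*(W,\p^-W)$ go to $-\infty$ near $\p^-W$ and to $+\infty$ near $\p^+W$, so all periodic orbits have nonnegative action (cf.\ the first sentence of the proof of Proposition~\ref{prop:triv-cob}, and the line ``Since $SH_*(W,\p^-W)=SH_*^{\ge 0}(W,\p^-W)$'' in the proof of Proposition~\ref{prop:subcrit}). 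Your triangle-comparison argument, by contrast, only gives an abstract isomorphism $SH_*^{\le 0}\cong SH_*^{=0}$ of cones and does not immediately identify it with the canonical map, so it is both more fragile and superfluous. Once you replace that step by the direct observation, the rest of your argument (Poincar\'e duality plus Proposition~\ref{prop:dualityk} for $\heartsuit={\ge}0$ and $\heartsuit={>}0$) is exactly the paper's.
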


We also have the obvious 
$$
   SH_*^{=0}(W,\p^-W) \cong SH_{-*}^{=0}(W,\p^+W)=\begin{cases}
   \K & *=0, \\ 0 & \text{else.} \end{cases}
$$
Together with the long exact sequence of a pair, these computations
provide a theoretically complete understanding of the behaviour of all
the flavors of symplectic homology groups under critical handle
attachment.  

A particular case of interest is that of comparing $SH_*(\p^-W)$ and $SH_*(\p^+W)$. The answer does not take the form of a long exact sequence because these groups do not sit naturally in a long exact sequence of a pair. The best answer that one can give in such a generality is that we have a correspondence 
$$
SH_*(\p^-W)\longleftarrow SH_*(W)\longrightarrow SH_*(\p^+W)
$$
in which the kernel and cokernel of each arrow can be described in terms of the groups $SH_*(W,\p^-W)$, respectively $SH_*(W,\p^+W)$, which in turn are described in terms of the groups $L\H^{\text{Ho}}(\Lambda)$ as above, using the long exact sequences of the pairs $(W,\p^-W)$ and $(W,\p^+W)$. 
This situation parallels the one encountered when comparing the singular cohomology groups of a manifold before and after surgery (in this case $\p^+W$ is obtained by surgery of index $n$ on $\p^-W$).

\subsection{Handle attaching and $S^1$-equivariant symplectic
  homologies}\label{sec:handle-equivariant}
 The discussion in~\S\ref{sec:subcritical-handle} and~\S\ref{sec:critical-handle} has $S^1$-equivariant analogues. We treat here only $S^1$-equivariant symplectic homology, since negative $S^1$-equivariant symplectic homology and also (negative) $S^1$-equivariant symplectic cohomology can be reduced to the former using Poincar\'e and algebraic duality. 

\noindent \emph{Subcritical handle attaching.} 

\begin{proposition}\label{prop:subcritS1}
Let $W$ be a Liouville cobordism corresponding to a subcritical
handle of index $k<n$. Then with $\K$-coefficients we have 
\begin{align*}
   SH_*^{S^1}(W,\p_\pm W) &= 0,\cr
   SH_*^{S^1,=0}(W,\p^-W) &=\begin{cases}
   \K & *=n-k+2\N, \\ 0 & \text{else,} \end{cases}, \cr
   SH_{*}^{S^1,=0}(W,\p^+W) & =\begin{cases}
   \K & *=k-n+2\N, \\ 0 & \text{else,} \end{cases} \cr
   SH_*^{S^1,>0}(W,\p^-W) &=\begin{cases}
   \K & *=n-k+1+2\N, \\ 0 & \text{else,} \end{cases}, \cr
   SH_{*}^{S^1,<0}(W,\p^+W) & =\begin{cases}
   \K & *=k-n-1+2\N, \\ 0 & \text{else,} \end{cases}
\end{align*}
and the restriction maps induce isomorphisms
$$
   SH_*^{S^1}(\p^-W)\stackrel{\cong}\longleftarrow
   SH_*^{S^1}(W)\stackrel{\cong}\longrightarrow SH_*^{S^1}(\p^+W). 
$$
\end{proposition}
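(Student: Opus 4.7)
The proof will lift Proposition~\ref{prop:subcrit} to the $S^1$-equivariant setting. The essential bridge is the general fact recalled in \S\ref{sec:S1equiv} that for every flavor $\heartsuit$ there is a spectral sequence with $E^2=SH_*^\heartsuit(W,V)\otimes\mathfrak{k}[u^{-1}]$ converging to $SH_*^{S^1,\heartsuit}(W,V)$, so vanishing of any non-equivariant flavor forces vanishing of its equivariant counterpart; moreover, each $S^1$-equivariant flavor satisfies the same exact triangles of pairs and tautological exact triangles as its non-equivariant counterpart.

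The first step is to deduce $SH_*^{S^1}(W,\p^\pm W)=0$ from $SH_*(W,\p^\pm W)=0$ (Proposition~\ref{prop:subcrit}) via the spectral sequence. The second step is the computation of $SH_*^{S^1,=0}(W,\p^\pm W)$: using the identification $SH_*^{S^1,=0}(W,V)\cong H^{n-*}_{[u^{-1}]}(W,V)=\bigoplus_{j\ge 0}H^{n-*+2j}(W,V)$ from \S\ref{sec:S1equiv}, together with the elementary cellular computation $H^*(W,\p^-W)=\mathfrak{k}$ concentrated in degree $k$ and the Lefschetz-dual statement $H^*(W,\p^+W)=\mathfrak{k}$ concentrated in degree $2n-k$, the two asserted formulas drop out immediately.

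The third and most delicate step is to derive $SH_*^{S^1,>0}(W,\p^-W)$ and $SH_*^{S^1,<0}(W,\p^+W)$ from the tautological exact triangles of Proposition~\ref{prop:taut-triang-WV} in the $S^1$-equivariant theory. For the first, the triangle $SH_*^{S^1,\le 0}\to SH_*^{S^1}\to SH_*^{S^1,>0}\to SH_{*-1}^{S^1,\le 0}$ combined with $SH_*^{S^1}(W,\p^-W)=0$ gives $SH_*^{S^1,>0}(W,\p^-W)\cong SH_{*-1}^{S^1,\le 0}(W,\p^-W)$, and the triangle $SH_*^{S^1,<0}\to SH_*^{S^1,\le 0}\to SH_*^{S^1,=0}\to SH_{*-1}^{S^1,<0}$ combined with $SH_*^{S^1,<0}(W,\p^-W)=0$ yields $SH_*^{S^1,\le 0}(W,\p^-W)\cong SH_*^{S^1,=0}(W,\p^-W)$; composing gives the claimed formula. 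The argument for $SH_*^{S^1,<0}(W,\p^+W)$ is perfectly dual, invoking $SH_*^{S^1,>0}(W,\p^+W)=0$ instead. Both auxiliary vanishings are obtained by transferring the non-equivariant facts $SH_*^{<0}(W,\p^-W)=SH_*^{>0}(W,\p^+W)=0$ through the same spectral sequence argument; the latter two are implicit in the Remark following Proposition~\ref{prop:subcrit} and follow either from each other via Poincar\'e duality (Theorem~\ref{thm:Poincare}) and algebraic duality (Proposition~\ref{prop:dualityk}), or directly from the cofinal family of Hamiltonians used in the proof of Proposition~\ref{prop:subcrit}.

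Finally, the restriction isomorphisms $SH_*^{S^1}(\p^\pm W)\cong SH_*^{S^1}(W)$ follow from the $S^1$-equivariant exact triangle of the pair $(W,I\times \p^\pm W)$, using the equivariant excision (the equivariant analogue of Corollary~\ref{cor:tub-nbhd}, which follows by the same proof) and the vanishing $SH_*^{S^1}(W,\p^\pm W)=0$ from Step~1. The main obstacle lies in Step~3, specifically in ensuring the auxiliary vanishings $SH_*^{S^1,<0}(W,\p^-W)=SH_*^{S^1,>0}(W,\p^+W)=0$; once these are established by the spectral sequence transfer from the non-equivariant setting, the remainder of the argument is pure diagram chasing in the $S^1$-equivariant octahedron~\eqref{eq:SHoctahedron}.
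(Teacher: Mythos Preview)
Your proof is correct and follows essentially the same strategy as the paper's: vanishing via the spectral sequence, the $=0$ flavor via the identification with $H^{n-*}(W,\p^\pm W)\otimes\mathfrak{k}[u^{-1}]$, the $>0$ and $<0$ flavors via tautological triangles, and the boundary isomorphisms via the exact triangles of the pairs $(W,\p^\pm W)$. The only difference is cosmetic: in Step~3 the paper observes directly that $SH_*^{S^1}(W,\p^-W)=SH_*^{S^1,\ge 0}(W,\p^-W)$ and $SH_*^{S^1}(W,\p^+W)=SH_*^{S^1,\le 0}(W,\p^+W)$ \emph{by definition} (the relevant Hamiltonians have no orbits of the wrong action sign), so a single tautological triangle suffices rather than the two you chain together via the auxiliary vanishings $SH_*^{S^1,<0}(W,\p^-W)=SH_*^{S^1,>0}(W,\p^+W)=0$.
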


\begin{proof}
The vanishing of $SH_*^{S^1}(W,\p_\pm W)$ follows from that
of $SH_*(W,\p_\pm W)$ using the spectral sequence from non-equivariant
to equivariant symplectic homology. The statement concerning
$SH_*^{S^1,=0}(W,\p_\pm W)$ is a direct computation, using the fact
that the Floer complex reduces in low energy to the Morse complex, see
also~\cite{Viterbo99,BO3+4}:
$$
   SH_*^{S^1,=0}(W,\p_\pm W) \cong H_{S^1}^{n-*}(W,\p_\pm W) \cong
   H^{n-*}(W,\p_\pm W)\otimes\K[u^{-1}].
$$ 
The statement concerning $SH_*^{S^1,>0}(W,\p^- W)$ and $SH_*^{S^1,<0}(W,\p^+ W)$ follows from tautological exact triangles in view of the fact that, by definition, $SH_*^{S^1}(W,\p^- W)=SH_*^{S^1,\ge 0}(W,\p^- W)$ and $SH_*^{S^1}(W,\p^+ W)=SH_*^{S^1,\le 0}(W,\p^+ W)$. The last statement follows from the exact triangles of the pairs $(W,\p_\pm W)$. 
\end{proof}

\begin{remark}
Let $D^{2n}$ be the unit ball in $\R^{2n}$. Then $SH_*^{S^1}(D^{2n})=0$ and a direct computation, together with the tautological exact triangle, shows that 
\[
SH_*^{S^1,=0}(W,\p^- W)\cong SH_*^{S^1,=0}(D^{2(n-k)})
\]
and 
\[
SH_*^{S^1,>0}(W,\p^- W)\cong SH_*^{S^1,>0}(D^{2(n-k)}).
\]
These isomorphisms are not just algebraic or formal, but have the following geometric interpretation~\cite{Ci02}: for any given finite action window there exists a Liouville structure on $W$ for which the periodic Reeb orbits on $\p^-W$ in the given action window survive to $\p^+W$, and the new periodic Reeb orbits which are created after handle attachment are in one-to-one bijective correspondence with the periodic Reeb orbits on the boundary of the symplectic reduction of the coisotropic cocore disk in the handle, which is a symplectic ball $D^{2(n-k)}$. 
\end{remark}

\begin{corollary} \label{cor:subcritexseqS1} Let $V$ be a Liouville domain of dimension $2n$ and $V'$ be obtained from $V$ by attaching a subcritical handle of index $k<n$. We then have an exact triangle 
\[
\xymatrix
@C=20pt
{
SH_*^{S^1,>0}(D^{2(n-k)}) \ar[rr] & & 
SH_*^{S^1,>0}(V')  \ar[dl] \\ & SH_*^{S^1,>0}(V) \ar[ul]^{[-1]}  
}
\]
in which the map $SH_*^{S^1,>0}(V') \to SH_*^{S^1,>0}(V)$ is the transfer map. 
\end{corollary}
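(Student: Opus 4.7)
The plan is to derive the desired exact triangle by applying the exact triangle of the pair $(V',V)$ to $SH_*^{S^1,>0}$, then using excision together with the explicit computation of Proposition~\ref{prop:subcritS1}.

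First I would observe that, since $V'=V\circ W$, the inclusion $V\hookrightarrow V'$ exhibits $(V',V)$ as a Liouville cobordism pair, naturally filled by $V$ itself (so in the notation of Section~\ref{sec:excision} we have $W^{bottom}=\varnothing$, the middle piece of the pair is $V$, and $W^{top}=W$). Since $SH_*^{S^1,>0}$ is a Liouville homology theory (see~\S\ref{sec:S1equiv}), the $S^1$-equivariant analogue of Theorem~\ref{thm:exact-triangle-pair} yields an exact triangle
\[
\xymatrix@C=18pt{
 SH_*^{S^1,>0}(V',V) \ar[rr] & & SH_*^{S^1,>0}(V') \ar[dl] \\
 & SH_*^{S^1,>0}(V) \ar[ul]^{[-1]} &
}
\]
in which the right-hand arrow is the transfer map associated to the inclusion $V\hookrightarrow V'$, by the $S^1$-equivariant version of Proposition~\ref{prop:invariance-pair-WV}.

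Next, applying the $S^1$-equivariant Excision Theorem~\ref{thm:excision} (again part of the Liouville homology theory package in~\S\ref{sec:S1equiv}) to the pair $(V',V)$, and using that $W^{bottom}=\varnothing$ while $\partial^+V=\partial^-W$, I would obtain the natural isomorphism
\[
 SH_*^{S^1,>0}(V',V) \;\cong\; SH_*^{S^1,>0}(W,\partial^-W).
\]

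It then remains to identify $SH_*^{S^1,>0}(W,\partial^-W)$ with $SH_*^{S^1,>0}(D^{2(n-k)})$. For the right-hand side, viewing $D^{2(n-k)}$ as a Liouville domain and using $SH_*^{S^1}(D^{2(n-k)})=0$ (vanishing for subcritical manifolds), the tautological exact triangle from Proposition~\ref{prop:taut-triang-W} gives $SH_*^{S^1,>0}(D^{2(n-k)})\cong SH_{*-1}^{S^1,=0}(D^{2(n-k)})\cong H^{n-k-*+1}_{[u^{-1}]}(D^{2(n-k)})$, which equals $\mathfrak{k}$ in degrees $*\in n-k+1+2\mathbb{N}$ and vanishes otherwise. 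This matches precisely the computation of $SH_*^{S^1,>0}(W,\partial^-W)$ given in Proposition~\ref{prop:subcritS1}. The hard part will be to promote this degree-by-degree equality of ranks to a canonical isomorphism; the most conceptual route is to observe that the Morse--Bott chain complexes underlying both sides, computed along the lines of~\cite{BO3+4}, have generators in bijection with the (positively parameterised) closed Reeb orbits on the boundary of the symplectic reduction of the coisotropic cocore of the handle $W$, which is exactly $\partial D^{2(n-k)}$, as alluded to in the remark following Proposition~\ref{prop:subcritS1}. Once this canonical identification is in place, substituting it into the exact triangle above yields the statement of the corollary.
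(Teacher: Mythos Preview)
Your proposal is correct and follows essentially the same route as the paper: exact triangle of the pair $(V',V)$ for $SH_*^{S^1,>0}$, excision to identify $SH_*^{S^1,>0}(V',V)\cong SH_*^{S^1,>0}(W,\partial^-W)$, and then the identification of the latter with $SH_*^{S^1,>0}(D^{2(n-k)})$. The paper's proof is one sentence, because the identification $SH_*^{S^1,>0}(W,\partial^-W)\cong SH_*^{S^1,>0}(D^{2(n-k)})$ is already stated in the Remark immediately preceding the corollary, with exactly the geometric justification you describe (the Reeb orbits created by the handle correspond to those on the boundary of the symplectic reduction of the cocore, which is $D^{2(n-k)}$, citing~\cite{Ci02}); so what you flag as ``the hard part'' is treated by the paper as already done.
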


\begin{proof}
This is simply a reformulation of the exact triangle of the pair $(V',V)$, using excision and the computation of $SH_*^{S^1,>0}(W,\p^-W)$ above, with $W=\overline{V'\setminus V}$.
\end{proof}

This statement can be interpreted as a subcritical surgery exact triangle for linearized contact homology in view of~\cite{BO3+4}. In that formulation, the case $k=1$ of contact connected sums was
proved using different methods by Bourgeois and van
Koert~\cite{BvK}. Also in that formulation, the exact triangle implies
Espina's  formula~\cite[Corollary~6.3.3]{Espina} for the behaviour of
the mean Euler characteristic of linearized contact homology under
subcritical surgery. By induction over the handles, it yields
M.-L.~Yau's formula for the linearized contact homology of subcritical
Stein manifolds~\cite{MLYau}. 

\bigskip

\noindent \emph{Critical handle attaching. } 
We restrict to rational coefficients, and recall the geometric setup
of section~\S\ref{sec:critical-handle}: $V\subset V'$ is a pair of
Liouville domains of dimension $2n$ such that $V'$ is obtained by
attaching $\ell\ge 1$ handles of index $n$ to $\p V$ along a
collection $\Lambda$ of $\ell$ disjoint embedded Legendrian
spheres. Following~\cite{ Bourgeois-Ekholm-Eliashberg-1} we denote
$C\H(V)$ the linearized contact homology of $\p V$. One of the main
statements in~\cite{ Bourgeois-Ekholm-Eliashberg-1} is the existence
of a surgery exact triangle 
\begin{equation} \label{eq:BEES1}
\xymatrix
@C=20pt
{
L\H^{\text{cyc}}(\Lambda)_* \ar[rr] & & 
C\H(V'),  \ar[dl] \\ & C\H(V) \ar[ul]^{[-1]}  
}
\end{equation}
where $L\H^{\text{cyc}}(\Lambda)_*$ is a homology group of Legendrian contact homology flavour. More precisely, $L\H^{\text{cyc}}(\Lambda)_*$ is defined as the homology of a complex $LH^{\text{cyc}}(\Lambda)_*$ whose generators are \emph{cyclic} words in Reeb chords on $\p V$ with endpoints on $\Lambda$, and whose differential counts certain pseudo-holomorphic curves in the symplectization of $\p V$, anchored in $V$, with boundary on the conical Lagrangian $S\Lambda$ determined by $\Lambda$. 
This exact triangle can be reinterpreted in our language as follows.

\begin{conjecture}\label{conj:criticalBEES1}
Let $W$ be a Liouville cobordism corresponding to attaching $\ell\ge 1$ critical
handles of index $k=n$ along a collection $\Lambda$ of disjoint Legendrian spheres. With rational coefficients we have an isomorphism
$$
SH_*^{S^1,>0}(W,\p^-W)\cong L\H^{\text{cyc}}(\Lambda)_*
$$
such that the exact triangle~\eqref{eq:BEES1} is isomorphic to the exact triangle of the pair $(V',V)$ for $SH_*^{S^1,>0}$.
\end{conjecture}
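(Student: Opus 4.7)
The plan is to mimic the strategy outlined for Conjecture~\ref{conj:criticalBEE}, adding the $S^1$-equivariant layer on both sides of the isomorphism and on the comparison maps. First I would give an alternative description of $SH_*^{S^1,>0}(W,\p^-W)$ in terms of anchored pseudo-holomorphic curves in the symplectization of $\p^+ V = \p^-W$ with boundary on the conical Lagrangian $S\Lambda$. The idea is the same as in the non-equivariant case: stretch the neck along $\p^- W$ and use the confinement lemmas of~\S\ref{sec:confinement} together with the exact triangle of a pair to identify $SH_*^{>0}(W,\p^-W)$ with a Legendrian-flavour complex $LH^{\text{Ho}+}(\Lambda)_*$ whose generators are words in Reeb chords on $\p^- W$ with endpoints on $\Lambda$. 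On top of that complex one imposes the $S^1$-equivariant structure coming from~\S\ref{sec:S1equiv}, using the $S^1$-complex constructed in~\cite{BO3+4,Zhao14} on the Floer chain complexes associated to the staircase Hamiltonians of~\S\ref{sec:excision}. The $S^1$-complex structure must then be transported through the neck-stretching limit so that words of Reeb chords inherit the structure of a (pre-)cyclic complex whose cyclic homology is $L\H^{\text{cyc}}(\Lambda)_*$.

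Second, I would prove that the transfer map $SH_*^{S^1,>0}(V')\to SH_*^{S^1,>0}(V)$ coincides with the map appearing in the Bourgeois--Ekholm--Eliashberg triangle~\eqref{eq:BEES1}. This is parallel to what is sketched after Conjecture~\ref{conj:criticalBEE}: interpret the transfer map as a continuation map induced by the monotone homotopy defining it (cf.~\S\ref{sec:transfer-map-revisited}), and apply the neck-stretching and curve-counting arguments of~\cite{BOcont,Bourgeois-Ekholm-Eliashberg-1} in the presence of the $S^1$-equivariant structure. The identification is then a consequence of the fact that the $S^1$-equivariant structure on the continuation complex coincides with the one induced on the cyclic Legendrian homology complex via the very same cobordism map.

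Third, I would deduce the exact triangle claim directly from the exact triangle of the pair $(V',V)$ for $SH_*^{S^1,>0}$ established in~\S\ref{sec:S1equiv}, applying the excision theorem to identify $SH_*^{S^1,>0}(V',V)$ with $SH_*^{S^1,>0}(W,\p^-W) \cong L\H^{\text{cyc}}(\Lambda)_*$, and using that transfer and connecting maps intertwine via the first step. The compatibility with the Gysin and tautological exact triangles (which is automatic from the construction of~\S\ref{sec:S1equiv}) then yields functoriality and the correct identifications in the limit.

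The main obstacle will be the first step: establishing in a fully rigorous way the $S^1$-equivariant pseudo-holomorphic curve description of $SH_*^{S^1,>0}(W,\p^-W)$ and in particular showing that the Seidel-type $S^1$-complex structure on the Floer side corresponds, under neck-stretching and SFT compactness, to the cyclic structure on words of Reeb chords which defines $L\H^{\text{cyc}}(\Lambda)_*$. This is exactly the same analytical hurdle one encounters in upgrading the non-equivariant comparison of~\cite{BOcont} to the equivariant comparison $SH_*^{S^1,>0}(V)\cong CH(V)$ of~\cite{BO3+4}, and it is for this reason that we prefer to phrase the statement as a conjecture contingent on~\cite{Bourgeois-Ekholm-Eliashberg-1}; once the non-equivariant case of Conjecture~\ref{conj:criticalBEE} is established with its full $S^1$-naturality, our plan should yield the equivariant statement as a formal enhancement.
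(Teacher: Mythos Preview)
Your proposal is correct and follows essentially the same approach as the paper, which does not give a proof but only the one-line indication that ``the proof should go along the same lines as the one of Conjecture~\ref{conj:criticalBEE}, adding on top the isomorphism between $SH_*^{S^1,>0}(V)$ and $C\H(V)$ whenever the latter is defined~\cite{BO3+4}.'' You have elaborated this sketch in considerably more detail and correctly identified the main analytical obstacle as the $S^1$-equivariant upgrade of the SFT/Floer comparison, which is precisely why the statement remains a conjecture.
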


The proof should go along the same lines as the one of Conjecture~\ref{conj:criticalBEE}, adding on top the isomorphism between $SH_*^{S^1,>0}(V)$ and $C\H(V)$ whenever the latter is defined~\cite{BO3+4}. There is also an $S^1$-equivariant counterpart of Conjecture~\ref{cor:criticalrelhom}(ii), which involves duality and hence the groups $SH_*^{[u],>0}$. 

\begin{remark} One can also give a Legendrian interpretation of $SH_*^{S^1}(W,\p^-W)$. This can be obtained either formally algebraically by computing ranks from the $S^1$-equivariant tautological exact triangle of the pair $(W,\p^-W)$ using the fact that $SH_*^{S^1,=0}(W,\p^-W)$ is supported in positive degrees, or geometrically along the lines of~\cite{BO3+4}, where a linearized contact homology counterpart of $SH_*^{S^1}(V)$ is defined. 
\end{remark}

\pagebreak

\section{Product structures}\label{sec:products}

\subsection{TQFT operations on symplectic homology}\label{ss:products}
{\color{black} As before, we use coefficients in a field $\K$}.  
Recall from~\cite{Seidel07,Ritter} the definition of TQFT operations on the
Floer homology of a Hamiltonian $H$ on a completed Liouville domain
$\wh V$. We freely use in this section the terminology therein, namely ``negative punctures", ``positive punctures", ``cylindrical ends", ``weights", see also~\cite{Ekholm-Oancea}. Consider a punctured Riemann surface $S$ with $p$ negative
and $q$ positive punctures. Pick positive weights $A_i,B_j>0$ and a
$1$-form $\beta$ on $S$ with the following properties:
\begin{enumerate}
\item[(i)] $H\,d\beta\leq 0$;
\item[(ii)] $\beta=A_idt$ in cylindrical coordinates $(s,t)\in\R_-\times
  S^1$ near the $i$-th negative puncture;
\item[(iii)] $\beta=B_jdt$ in cylindrical coordinates $(s,t)\in\R_+\times
  S^1$ near the $j$-th positive puncture.
\end{enumerate}
Note that $\beta$ and the weights are related by Stokes' theorem
$$
   \sum_{i=1}^pA_i - \sum_{j=1}^qB_j = -\int_Sd\beta. 
$$
Conversely, if the quantity on the left-hand side is nonnegative (zero,
nonpositive), then we find a $1$-form $\beta$ with properties (ii) and
(iii) such that $d\beta\leq 0$ ($=0$, $\geq0$). Thus we can arrange
conditions (i)--(iii) in the following situations:
\begin{enumerate}
\item[(a)] $H$ arbitrary, $d\beta\equiv 0$, $p,q\geq 1$;
\item[(b)] $H\geq 0$, $d\beta\leq 0$, $p\geq 1$;
\item[(c)] $H\leq 0$, $d\beta\geq 0$, $q\geq 1$.
\end{enumerate}
Note that the condition $H\geq0$ is satisfied for admissible
Hamiltonians on a Liouville cobordism. 

We consider maps $u:S\to\wh V$ that are holomorphic in the sense that
$(du-X_H\otimes\beta)^{0,1}=0$ and have finite energy
$E(u)=\frac{1}{2}\int_S|du-X_H\otimes\beta|^2\mbox{vol}_S$. They
converge at the negative/positive punctures to $1$-periodic orbits
$x_i,y_j$ and satisfy the energy estimate
\begin{equation}\label{eq:energy}
   0 \leq E(u) \leq \sum_{j=1}^qA_{B_jH}(y_j) -
   \sum_{i=1}^pA_{A_iH}(x_i)
\end{equation}
(beware that our action is minus that in~\cite{Ritter}). 
The signed count of such holomorphic maps yields an operation
$$
   \psi_S:\bigotimes_{j=1}^qFH_*(B_jH) \to
   \bigotimes_{i=1}^pFH_*(A_iH).  
$$
of degree $n(2-2g-p-q)$ which does not increase action. These
operations are graded commutative if degrees are shifted by $-n$ and
satisfy the usual TQFT composition rules. Let us pick
real numbers $a_j<b_j$, $j=1,\dots,q$ and $a_i'<b_i'$, $i=1,\dots,p$
satisfying 
\begin{equation}\label{eq:op-actions}
   \sum_ia_i' = \max_j\Bigl(a_j+\sum_{j'\neq j}b_{j'}\Bigr),\qquad 
   b_i'=\sum_jb_j-\sum_{i'\neq i}a_{i'}'.
\end{equation}
Consider a term {\color{black} $x_1\otimes\cdots\otimes x_p$ appearing in
$\psi_S(y_1\otimes\cdots\otimes y_q)$.} If $A_{B_jH}(y_j)\leq a_j$ for some $j$ and
$A_{B_{j'}H}(y_{j'})\leq b_{j'}$ for all $j'\neq j$, then the energy estimate
and the first condition in~\eqref{eq:op-actions} yield 
$$
   \sum_{i=1}^pA_{A_iH}(x_i)\leq a_j+\sum_{j'\neq j}b_{j'} \leq
   \sum_ia_i',
$$
thus $A_{A_iH}(x_i)\leq a_i'$ for at least one $i$. This shows that
$\psi_S$ is well-defined as an operation
$$
   \psi_S:\bigotimes_{j=1}^qFH_*^{(a_j,b_j]}(B_jH) \to
   \bigotimes_{i=1}^pFH_*^{(a_i',\infty)}(A_iH).  
$$
Similarly, if $A_{B_jH}(y_j)\leq b_j$ for all $j$ and
$A_{A_iH}(x_i)>a_i'$ for all $i$ (so that $a_1\otimes\cdots\otimes a_p\neq
0$ in the quotient space), then for each $i$ the energy estimate yields
$$
   A_{A_iH}(x_i)+\sum_{i'\neq i}a_{i'}' \leq
   A_{A_iH}(x_i)+\sum_{i'\neq i}A_{A_iH}(x_{i'}) \leq \sum_jb_j, 
$$
thus $A_{A_iH}(x_i)\leq b_i'$ by the second condition
in~\eqref{eq:op-actions}. It follows that $\psi_S$ induces an
operation on filtered Floer homology 
$$
   \psi_S:\bigotimes_{j=1}^qFH_*^{(a_j,b_j]}(B_jH) \to
   \bigotimes_{i=1}^pFH_*^{(a_i',b_i']}(A_iH).  
$$
To proceed further, let us first assume $p,q\geq 1$, so we are in case
(a) above. We specialise the choice of actions to $a_j=a$, $b_j=b$ 
for all $i$ and $a_i'=a'$, $b_i'=b'$ for all
$i$. Then~\eqref{eq:op-actions} becomes  
\begin{equation}\label{eq:op-actions2}
   pa' = a+(q-1)b,\qquad 
   b' = qb - (p-1)a',
\end{equation}
and under these conditions $\psi_S$ induces an operation 
$$
   \psi_S:\bigotimes_{j=1}^qFH_*^{(a,b]}(B_jH) \to
   \bigotimes_{i=1}^pFH_*^{(a',b']}(A_iH).  
$$
We now apply this to admissible Hamiltonians for a Liouville cobordism
$W$ relative to some admissible union $A$ of boundary components as in~\S\ref{sec:SHWA}. The map $\psi_S$ is compatible with
continuation maps for $H\leq H'$ in the obvious way, and therefore
passes through the inverse and direct limit to define a map on
filtered symplectic homology
$$
   \psi_S:\bigotimes_{j=1}^qSH_*^{(a,b]}(W,A) \to
   \bigotimes_{i=1}^pSH_*^{(a',b']}(W,A).  
$$
Let us first consider the case $p=1$. Then $a'\to-\infty$ and $b'=qb$
remains constant as $a\to-\infty$, so we can pass to the inverse limits 
to obtain an operation
$$
   \psi_S:\bigotimes_{j=1}^qSH_*^{(-\infty,b]}(W,A) \to
   SH_*^{(-\infty,qb]}(W,A). 
$$
In the direct limit as $b\to\infty$ this yields an operation 
$$
   \psi_S:\bigotimes_{j=1}^qSH_*(W,A) \to SH_*(W,A). 
$$
Taking instead limits as $b\searrow 0$ and $b\nearrow 0$,
respectively, we see that this operation restricts to operations
\begin{align*}
   \psi_S:\bigotimes_{j=1}^qSH_*^{\leq 0}(W,A) &\to SH_*^{\leq 0}(W,A), \cr 
   \psi_S:\bigotimes_{j=1}^qSH_*^{<0}(W,A) &\to SH_*^{<0}(W,A). 
\end{align*}
In the case $p>1$ this procedure fails because $b'\to\infty$ as
$a\to-\infty$, so we cannot take the inverse limits $a,a'\to-\infty$
keeping $b,b'$ fixed. If all actions are nonnegative, as in the case
of a Liouville domain or a pair $(W,\p^-W)$, then there is no need to
take the inverse limit $a,a'\to-\infty$, but we can simply fix
$a,a'<0$ and take the direct limits $b,b'\to\infty$ to obtain
operations $\psi_S$ on all symplectic homology groups. 

Next consider the case $q=0$, $p\geq 1$, which is possible for $H\geq
0$ (and thus $A=\emptyset$) according to case (b) above. Pick $a'\leq
0$ and consider the associated map
$$
   \psi_S:{\color{black}\K} \to \bigotimes_{i=1}^pSH_*^{(a',\infty)}{\color{black}(W)},  
$$
with {\color{black}$\K$ the ground field}. For a nonzero term $x_1\otimes\cdots\otimes x_p$ appearing in
$\psi_S(1)$ we have $A_{A_iH}(x_i)>a'$ for all $i$, so the energy
estimate yields
$$
   A_{A_iH}(x_i)+(p-1)a' \leq
   A_{A_iH}(x_i)+\sum_{i'\neq i}A_{A_iH}(x_{i'}) \leq 0, 
$$
thus $A_{A_iH}(x_i)\leq -(p-1)a'$. So we obtain a map
$$
   \psi_S:{\color{black}\K} \to \bigotimes_{i=1}^pSH_*^{(a',-(p-1)a']}{\color{black}(W)}.  
$$
If $p=1$, then we take the inverse limit as $a'\to-\infty$ to
obtain the unit
$$
   \psi_S:{\color{black}\K} \to SH_*^{\leq 0}{\color{black}(W)}.  
$$
If $p>1$, then we set $a'=0$ to obtain the operation
$$
   \psi_S:{\color{black}\K} \to \bigotimes_{i=1}^pSH_*^{=0}{\color{black}(W)}.  
$$
So we have proved

\begin{proposition}\label{prop:ops}
For a filled Liouville cobordism $W$ and an admissible union $A$ of boundary
components, there exist operations 
$$
   \psi_S:\bigotimes_{j=1}^qSH_*^{\heartsuit}(W,A) \to
   \bigotimes_{i=1}^pSH_*^\heartsuit(W,A),\qquad
   \heartsuit\in\{\emptyset,\leq 0,<0\}  
$$
of degree $n(2-2g-p-q)$ associated to punctured Riemann surfaces $S$
with $p$ negative and $q$ positive punctures, graded commutative if
degrees are shifted by $-n$ and satisfying the usual
TQFT composition rules, in each of the following situations:
\begin{enumerate}
\item $\p^-W=A=\emptyset$, $p\geq 1$, $q\geq 0$;
\item $A=\p^-W$, $p\geq 1$, $q\geq 1$;
\item $A=\emptyset$, $p=1$, $q\geq 0$;
\item $A$ arbitrary, $p=1$, $q\geq 1$.\hfill$\square$
\end{enumerate}
\end{proposition}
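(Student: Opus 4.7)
My strategy is to reduce each of the four situations (i)--(iv) to one of the setups (a), (b) analysed in the paragraphs preceding the proposition, and then verify that the action relations~\eqref{eq:op-actions2} are compatible with the inverse/direct limits defining $SH_*^\heartsuit$. I would begin with situation (iv), which is essentially already established in the excerpt: for $p=1$ and $q\geq 1$ one uses setup (a) with $d\beta\equiv 0$, the action relations collapse to $a'=a$ and $b'=qb$, so the inverse limit $a\to-\infty$ passes through the operation, and the direct limits $b\to\infty$, $b\searrow 0$, $b\nearrow 0$ yield the three flavours on $SH_*^\heartsuit(W,A)$. Situation (iii) for $q\geq 1$ is a special case of (iv). For the remaining $q=0$ part of (iii) I would use setup (b) with a cofinal family of Hamiltonians on $\widehat W$ chosen nonnegative (zero on $W$, positive outside, up to arbitrarily small perturbation), giving the unit $\mathfrak{k}\to SH_*^{\leq 0}(W)$ whose image supplies units in $SH_*^\heartsuit(W)$ for $\heartsuit\in\{\emptyset,\leq 0\}$, while in $SH_*^{<0}(W)$ the operation is forced to be zero.

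For situations (i) and (ii) the key step is the observation, highlighted in the excerpt, that on a Liouville domain and on a pair $(W,\partial^-W)$ the surviving $1$-periodic orbits in any fixed finite action window have nonnegative action. For $(W,\partial^-W)$ this holds because the Hamiltonians go to $-\infty$ on the filling, pushing constants there to action $+\infty$ and out of the window, while the orbits in the bending regions near $\partial^\pm W$ correspond to positively parameterised Reeb orbits whose action $A_H(x)=rh'(r)-h(r)$ is nonnegative. Consequently the inverse limits $a,a'\to-\infty$ in~\eqref{eq:op-actions2} are vacuous: I fix $a,a'<0$ smaller in absolute value than the minimal Reeb period on $\partial W$ and take only the direct limits $b,b'\to\infty$, which yields operations for any $p\geq 1$ and $q\geq 0$ (situation (i), with $q=0$ again handled by setup (b)) or $q\geq 1$ (situation (ii)). The restriction to $\heartsuit\in\{\leq 0,<0\}$ flavours is obtained afterwards by the usual action truncation limits $b,b'\searrow 0$ or $\nearrow 0$.

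Graded commutativity up to the degree shift by $-n$ and the TQFT composition rules are inherited from the unfiltered constructions in~\cite{Seidel07,Ritter}: all the relevant gluing and degeneration arguments take place within compact subsets and respect the action estimate~\eqref{eq:energy}, so they commute with the filtrations and with the continuation-induced direct/inverse limits used to assemble $SH_*^\heartsuit$. The main obstacle I anticipate is the action bookkeeping that excludes $p>1$ from situation (iv) while allowing it in (i) and (ii). The relation $b'=qb-(p-1)a'$ forces $b'\to\infty$ as $a'\to-\infty$ whenever $p>1$, which is harmless only when all relevant orbit actions are a priori nonnegative; verifying this nonnegativity for the explicit cofinal Hamiltonian family in situation (ii), and checking that $\psi_S$ maps nonnegative-action classes to nonnegative-action classes for arbitrary $p,q\geq 1$, is the most delicate step of the argument and is where the precise choice of Hamiltonian profile (vanishing on $W$, going to $-\infty$ on $A$, and to $+\infty$ on $\partial W\setminus A$) plays an essential role.
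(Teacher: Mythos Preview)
Your proposal is correct and follows essentially the same approach as the paper, whose proof is precisely the discussion in \S\ref{ss:products} preceding the proposition (note the $\square$ after the statement). One minor imprecision: for $p=1$ and $q\geq 1$ the relation~\eqref{eq:op-actions2} gives $a'=a+(q-1)b$ rather than $a'=a$, but what matters---and what you correctly use---is that $a'\to-\infty$ as $a\to-\infty$ with $b$ fixed while $b'=qb$ stays bounded.
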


As a consequence, we have

\begin{theorem}\label{thm:product}
(a) For a filled Liouville cobordism $W$ and an admissible union $A$ of boundary
components, the pair-of-pants product on Floer homology induces a
product on $SH_*(W,A)$. The product has degree $-n$, and it is
associative and graded commutative when degrees are shifted by $-n$. 

(b) The symplectic homology groups $SH_*^{\leq 0}(W,A)$ and
$SH_*^{<0}(W,A)$ also carry induced products which are compatible with
the tautological maps $SH_*^{<0}(W,A)\to SH_*^{\leq 0}(W,A)\to
SH_*(W,A)$. The image of the map $SH_*^{<0}(W,A)\to SH_*^{\leq
  0}(W,A)$ is an ideal in $SH_*^{\leq 0}(W,A)$.  

(c) The symplectic homology group $SH_*^{=0}(W,A)$ carries a product,
which coincides with the cup product in cohomology via the isomorphism
$SH_*^{=0}(W,A)\cong H^{n-*}(W,A)$. The map $SH_*^{\leq 0}(W,A)\to
SH_*^{=0}(W,A)$ is compatible with the product structures.  

(d) In the case $A=\emptyset$, the products on $SH_*^{\le 0}(W)$,
$SH_*(W)$, and $SH_*^{=0}(W)$ have units, and the tautological maps
$SH_*^{\leq 0}(W)\to SH_*(W)$ and $SH_*^{\le 0}(W)\to SH_*^{=0}(W)$
are morphisms of rings with unit.  

(e) For a filled Liouville cobordism pair $(W,V)$, the transfer map
$SH_*^\heartsuit(W)\to SH_*^\heartsuit(V)$ 
is a morphism of rings for $\heartsuit\in \{<0,\le 0,\varnothing\}$,
and a morphism of rings with unit for $\heartsuit\in \{\le 0,\varnothing\}$.  
\end{theorem}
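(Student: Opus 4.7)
The plan is to deduce all five parts from the TQFT operations of Proposition~\ref{prop:ops}, together with the confinement results of~\S\ref{sec:confinement}. For parts~(a), (b), (c) the product on $SH_*^\heartsuit(W,A)$ with $\heartsuit\in\{\varnothing,\le 0,<0\}$ will be taken to be the operation $\psi_S$ associated to the pair-of-pants $S = S^2\setminus\{3 \text{ pts}\}$ with $p=1$ negative puncture and $q=2$ positive punctures; this configuration falls under case~(4) of Proposition~\ref{prop:ops} and has degree $n(2-0-1-2)=-n$. Associativity and graded commutativity (when the grading is shifted by $-n$) are formal consequences of the TQFT composition rules: two ways of iterating the pair-of-pants can be interpolated through a family of $4$-punctured spheres, and transposing two positive punctures is isotopic to the identity through such punctured surfaces. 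Compatibility of the products with the tautological maps $SH_*^{<0}\to SH_*^{\le 0}\to SH_*$ is immediate from the fact that all three products are induced by the same TQFT operation restricted to different action windows.

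For the ideal property in~(b), I will use the action estimates~\eqref{eq:op-actions2}: for $p=1$ and $q=2$ they become $a'=\max(a_1+b_2,\,a_2+b_1)$ and $b'=b_1+b_2$. Taking $b_1=b_2\searrow 0$ and $a_1\to-\infty$ while keeping $a_2<0$ fixed yields $a'=a_2<0$ and $b'\searrow 0$, so in the limit the product of a class in $SH_*^{\le 0}(W,A)$ with a class in $SH_*^{<0}(W,A)$ lies in $SH_*^{<0}(W,A)$, proving the ideal property. For the cup product identification in~(c), I will pick a $C^2$-small Morse-type Hamiltonian $H=\eps f$ for which the Floer chain complex reduces to the Morse cochain complex of $f$ by~\cite{SZ92}, as in the proof of Proposition~\ref{prop:SH=0W}; for such small data the pair-of-pants operation reduces to a count of $Y$-shaped gradient flow trees, one of the standard models for cup product on Morse cohomology. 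Compatibility of the map $SH_*^{\le 0}(W,A)\to SH_*^{=0}(W,A)$ with the products is again a consequence of the single underlying TQFT operation.

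For the unit in~(d), case~(3) of Proposition~\ref{prop:ops} applies when $A=\varnothing$ to the disk $D$ with a single negative puncture ($p=1$, $q=0$), producing a map $\mathfrak{k}\to SH_*^{\le 0}(W)$. That the image of $1$ is a two-sided unit for the pair-of-pants product is the standard TQFT argument: capping one positive puncture of the pair-of-pants with this disk produces a Riemann surface isotopic to a cylinder whose associated operation is a continuation map, hence the identity after passing to the direct limit. Pushing the unit forward along the tautological maps gives units for $SH_*(W)$ and $SH_*^{=0}(W)$, and the tautological maps are then morphisms of rings with unit by construction.

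Finally, for part~(e), recall from~\S\ref{sec:transfer} that the transfer map $f_!^\heartsuit$ is a limit of Floer continuation maps associated to non-increasing homotopies between admissible Hamiltonians for $W$ and for $V$. Continuation maps commute up to chain homotopy with TQFT operations by the standard parametrized moduli space argument: one interpolates the continuation data along the cylindrical ends of the pair-of-pants, and the count of index $-1$ solutions in the resulting one-parameter family supplies the chain homotopy. The same argument applied to the capped disk shows that the unit is preserved. The main subtlety, and the step I expect to require the most care, is to ensure compactness of all parametric moduli spaces involved for the specific families of Hamiltonians underlying $f_!^\heartsuit$; this will require choosing the homotopies and the $1$-form $\beta$ on the pair-of-pants carefully so that the hypothesis $Hd\beta\le 0$ of Lemma~\ref{lem:no-escape} holds throughout (in the Riemann-surface version stated in the remark following its proof), and combining the no-escape principle with the asymptotic confinement Lemma~\ref{lem:asy} to prevent curves from escaping into the filling or out through the positive end during the transfer.
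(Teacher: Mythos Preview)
Your proposal is correct and follows the paper's approach. Parts (a)--(d) are indeed formal consequences of Proposition~\ref{prop:ops} together with the action estimates, and the paper disposes of them in one sentence (``Parts (a)--(d) follow directly from the preceding discussion'').

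For part (e) the paper likewise builds a chain homotopy by parametrising the continuation data along the pair-of-pants; concretely it pulls back the $s$-dependent Hamiltonian $\wh H$ through a degree-$2$ branched cover $\tau:S\to\R\times S^1$ and considers the family $\wh H_\sigma=(1-\chi_\sigma\circ\tau)H+\chi_\sigma K$, obtaining the relation $\mu_H(f\otimes f)-f_2\mu_K=\p_{2H}\theta-\theta\p_K$. One technical point you slightly mislocate: the confinement Lemmas~\ref{lem:no-escape} and~\ref{lem:asy} are needed not primarily for compactness (the maximum principle already holds since $H\,d\beta=0$ and $d_z\wh H_\sigma\wedge\beta\le 0$), but to show that the subcomplex $FC^{(a,b]}_{III^-}(H)$ is a \emph{two-sided ideal} for the product $\mu_H$. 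This is what allows $\mu_H$, and with it the chain-homotopy relation, to descend to the quotient $FC^{(a,b]}_{II}(H)$ that actually computes $SH_*^{(a,b]}(V)$ via Hamiltonians on $\wh W_F$. Your final paragraph names the right lemmas but frames their role as analytic (preventing escape) rather than structural (ideal property enabling the projection); in practice this is the step where the argument would stall if left as you wrote it.
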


{\color{black}
\begin{proof}
Parts (a)--(d) follow directly from the preceding discussion, so it
remains to prove part (e). For this, fix a finite action interval
$(a,b)$ and consider two Hamiltonians $K\leq
H$ for the Liouville cobordism pair $(W,V)$ as in Figure~\ref{fig:KH-new}. 

Let us first describe more explicitly the transfer map from
Section~\ref{sec:transfer}. For this, let $\chi:\R\to[0,1]$ be a
smooth nondecreasing function with $\chi(s)=0$ for $s\leq 0$ and
$\chi(s)=1$ for $s\geq 1$ and define the $s$-dependent Hamiltonian 
$$
   \wh H := \bigl(1-\chi(s)\bigr) H + \chi(s)K,
$$ 
where $(s,t)$ are coordinates on the cylinder $\R\times S^1$. Then
$\p_s\wh H\leq 0$ and the count of Floer cylinders for $\wh H$ defines
a chain map $f:FC^{(a,b]}(K)\to FC^{(a,b]}(H)$. 

Now we describe the products. Let $S$ be the Riemann sphere with two
positive punctures and one negative puncture. Let $\tau:S\to\R\times S^1$ be a
degree $2$ branched cover with $\tau(s,t)=(s,t)$ in cylindrical
coordinates $(s,t)\in[1,\infty)\times S^1$ near the positive punctures
and $\tau(s,t)=(s,t)$ in cylindrical coordinates
$(s,t)\in(-\infty,-1]\times S^1$ near the negative puncture. 
We use the $1$-form $\beta:=\tau^*dt$ on $S$ (with $d\beta=0$) and
weights $B_1=B_2=1$ and $A_1=2$ at the positive/negative punctures to
define the pair-of-pants product
$$
   \mu_K:FC^{(a,b]}(K)\otimes FC^{(a,b]}(K)\to FC^{(a+b,2b]}(2K),
$$
and similar $\mu_H$. Next, consider for $\sigma\in\R$ the function
$\chi_\sigma(s,t):=\chi(s-\sigma)$ and the Hamiltonian
$$
   \wh H_\sigma := (1-\chi_\sigma\circ\tau) H + \chi_\sigma K
$$
depending on points $z\in S$. Since $H\,d\beta=0$ and
$d_zH\wedge\beta\leq 0$ as $2$-forms on $S$, the maximum principle
holds for the Floer equation of $\wh H_\sigma$ (see
e.g.~\cite{Abouzaid-Seidel,Ekholm-Oancea,Ritter}). It follows that the moduli spaces $\MM_\sigma(y_1,y_2;x_1)$
of pairs-of-pants for $\wh H_\sigma$ are compact modulo breaking,
where $y_1,y_2$ and $x_1$ are $1$-periodic orbits of $K$ and $2H$, respectively.
Considering for index $\CZ(y_1)+\CZ(y_2)-\CZ(x_1)-n=0$ the natural
compactifications of the $1$-dimensional moduli spaces
$\bigcup_{\sigma\in\R}\{\sigma\}\times\MM_\sigma(y_1,y_2;x_1)$, we
obtain the relation
\begin{equation}\label{eq:theta}
   \mu_H(f\otimes f)-f_2\mu_K = \p_{2H}\theta-\theta\p_K.
\end{equation}
Here $\p_K$ and $\p_{2H}$ are the Floer boundary operators for $K$ and
$2H$, respectively, $f_2:FC^{(a,b]}(2K)\to FC^{(a,b]}(2H)$ is the chain map defined
by $2\wh H$, and 
$$
   \theta:FC^{(a,b]}(K)\otimes FC^{(a,b]}(K)\to FC^{(a+b,2b]}(2H)
$$
counts index $-1$ pairs-of-pants for $\wh H_\sigma$ occurring at
isolated values of $\sigma$. 

Let us now choose $K,H$ such that the orbits in group $F$ for $K$ and
in groups $F,I,III^{0+}$ for $H$ have actions below $a$, so that
$FC^{(a,b]}(K)=FH^{(a,b]}_I(K)$ and $FC^{(a,b]}(H)=FH^{(a,b]}_{II,III^{-}}(H)$. 
By Lemma~\ref{lem:no-escape} and Lemma~\ref{lem:asy},
$FH^{(a,b]}_{III^{-}}(H)$ is a $2$-sided ideal for the product $\mu_H$, 
so the latter passes to the quotient as a product
$$
   \mu_H:FC^{(a,b]}_{II}(H)\otimes FC^{(a,b]}_{II}(H)\to FC^{(a+b,2b]}_{II}(2H).
$$
It follows that relation~\eqref{eq:theta} persists when we compose the
maps $f$ and $f_2,\theta$ with their projections to
$FC^{(a,b]}_{II}(H)$ and $FC^{(a,b]}_{II}(2H)$, respectively (keeping
the same notation for the new maps). Passing to homology and the
direct limit over $K,H$ we obtain the commuting diagram on filtered
symplectic homology
\begin{equation*}
\xymatrix{
   SH^{(a,b]}(W)\otimes SH^{(a,b]}(W) \ar[r]^-{\mu_W} \ar[d]_{f\otimes
    f} & SH^{(a+b,2b]}(W) \ar[d]^f\\
   SH^{(a,b]}(V)\otimes SH^{(a,b]}(V) \ar[r]^-{\mu_V} & SH^{(a+b,2b]}(V)\,. 
}
\end{equation*}
Passing to the limits $a\to-\infty$ and $b\nearrow 0$, $b\searrow 0$,
or $b\to\infty$, we conclude that the transfer map
$SH_*^\heartsuit(W)\to SH_*^\heartsuit(V)$ preserves the product for
$\heartsuit\in \{<0,\le 0,\varnothing\}$. A similar argument shows
that the transfer map preserves the unit for $\heartsuit\in \{\le
0,\varnothing\}$ and Theorem~\ref{thm:product} is proved. 
\end{proof}
}

In particular, Theorem~\ref{thm:product} provides a product of
degree $-n$ with unit and a coproduct of degree $-n$ (without counit) on
$SH_*(W)$ for every filled Liouville cobordism $W$. Applied to the
trivial cobordism, this yields via the isomorphism~\eqref{eq:RFH} a
corresponding product and coproduct on Rabinowitz--Floer homology.  We
refer to Uebele~\cite{Uebele-products} {\color{black} and Appendix~\ref{app}} for a discussion of conditions
under which the product is defined in the absence of a filling if the
negative boundary is index-positive.

If $W$ is a Liouville cobordism with filling and $L\subset W$ is an
exact Lagrangian cobordism with filling, then the preceding discussion
shows that Lagrangian symplectic homology $SH_*^\heartsuit(L)$ is a
module over $SH_*^\heartsuit(W)$ for $\heartsuit\in \{<0,\le
0,\varnothing\}$, see also~\cite{Ritter}.  

\subsection{Dual operations} \label{sec:products-dual}

Combining Proposition~\ref{prop:ops} with the Poincar\'e duality
isomorphism $S^*_{\heartsuit}(W,A)\cong SH_{-*}^{-\heartsuit}(W,A^c)$, we obtain

\begin{proposition}\label{prop:ops-dual}
Consider a filled Liouville cobordism $W$ and an admissible union $A$ of boundary components. Then there exist operations 
$$
   \psi_S:\bigotimes_{j=1}^qSH^*_{\heartsuit}(W,A) \to
   \bigotimes_{i=1}^pSH^*_\heartsuit(W,A),\qquad
   \heartsuit\in\{\emptyset,\geq 0,>0\}  
$$
of degree $-n(2-2g-p-q)$, 
graded commutative if degrees are shifted by $n$ and satisfying the usual
TQFT composition rules, in the following situations: 
\begin{enumerate}
\item $\p^-W=\emptyset$, $A=\p^+W$, $p\geq 1$, $q\geq 0$;
\item $A=\p^+W$, $p\geq 1$, $q\geq 1$;
\item $A=\p W$, $p=1$, $q\geq 0$;
\item $A$ arbitrary, $p=1$, $q\geq 1$.\hfill$\square$
\end{enumerate}
\end{proposition}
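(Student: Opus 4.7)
The plan is to deduce Proposition~\ref{prop:ops-dual} from Proposition~\ref{prop:ops} by transporting the operations across the Poincar\'e duality isomorphism $SH_*^{\heartsuit}(W,A)\cong SH^{-*}_{-\heartsuit}(W,A^c)$ of Theorem~\ref{thm:Poincare}, together with its chain-level refinement~\eqref{eq:PDchainlevel}. The involution $\heartsuit\mapsto-\heartsuit$ sends the collection $\{\varnothing,\le 0,<0\}$, on which Proposition~\ref{prop:ops} produces operations, bijectively onto $\{\varnothing,\ge 0,>0\}$, which is the relevant collection for Proposition~\ref{prop:ops-dual}.

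First I would match up the four situations. Writing $A$ for the admissible subset appearing in Proposition~\ref{prop:ops-dual} and $A':=A^c\subset\p W$ for its complement, one verifies directly: in situation (1), $\p^-W=\varnothing$ and $A=\p^+W$ give $A'=\varnothing$, which is precisely the hypothesis of situation (1) of Proposition~\ref{prop:ops}; in situation (2), $A=\p^+W$ gives $A'=\p^-W$, as in (2); in situation (3), $A=\p W$ gives $A'=\varnothing$, as in (3); and (4) imposes no restriction on $A$, hence none on $A'$. In each case and for each flavour $\heartsuit\in\{\varnothing,\ge 0,>0\}$, I would then apply Poincar\'e duality to each tensor factor of the operation
\[
   \psi_S:\bigotimes_{j=1}^q SH_*^{-\heartsuit}(W,A')\to\bigotimes_{i=1}^p SH_*^{-\heartsuit}(W,A')
\]
supplied by Proposition~\ref{prop:ops}, thereby producing a map between the corresponding tensor powers of $SH^{-*}_\heartsuit(W,A)$. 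Reindexing the cohomological grading by $k=-*$ turns the homological degree $n(2-2g-p-q)$ into the cohomological degree $-n(2-2g-p-q)$ that appears in the statement, and graded commutativity with shift $-n$ in homology becomes graded commutativity with shift $+n$ in cohomology. The TQFT composition rules, being purely formal, transfer without modification.

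The main point that requires verification is that the Poincar\'e duality isomorphism intertwines the chain-level TQFT operations, not only the Floer differential. This is a straightforward extension of the argument already given for the cylinder in Theorem~\ref{thm:Poincare}: the orientation-reversing involution $u(z)\mapsto u(\bar z)$ on a punctured Riemann surface $S$, together with the replacement $(H,J,\beta)\mapsto(\bar H,\bar J,\bar\beta)$, identifies the relevant moduli spaces of solutions to the parametrised Floer equation on $S$ and on the conjugate surface $\bar S$, while exchanging positive and negative punctures. The bookkeeping of actions at the punctures matches the one already carried out in~\S\ref{ss:products}, so the action windows used to pass to the various truncated flavours transform correctly under duality. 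Once this compatibility is recorded, each of the four cases of Proposition~\ref{prop:ops-dual} follows by applying the corresponding case of Proposition~\ref{prop:ops} to $(W,A^c)$ and transporting the result across Poincar\'e duality as above.
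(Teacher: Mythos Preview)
Your proposal is correct and follows exactly the paper's approach: the proposition is stated there with a $\square$ and no written proof, preceded only by the sentence ``Combining Proposition~\ref{prop:ops} with the Poincar\'e duality isomorphism $SH^*_{\heartsuit}(W,A)\cong SH_{-*}^{-\heartsuit}(W,A^c)$, we obtain\dots'', and the paper remarks afterwards that the conditions on $p,q$ are unchanged while $\heartsuit$ is replaced by $-\heartsuit$ and $A$ by $A^c$---precisely the matching you carry out.

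One minor comment: your chain-level discussion (conjugating the surface, replacing $(H,J,\beta)$ by $(\bar H,\bar J,\bar\beta)$) is a correct and illuminating way to see the compatibility, but it is not strictly required. Since Poincar\'e duality is already an isomorphism of graded vector spaces, the new operation can simply be \emph{defined} as the conjugate $PD^{\otimes p}\circ\psi_S\circ (PD^{-1})^{\otimes q}$, and the TQFT composition rules then transfer formally. This is all the paper intends.
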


Note that in Propositions~\ref{prop:ops} and~\ref{prop:ops-dual} the
conditions on $p,q$ are the same, whereas $\heartsuit$ is replaced by
$-\heartsuit$ and $A$ by $A^c$. 

Suppose now that the filled Liouville cobordism $W$ has vanishing first Chern class and that $\p W$ carries only finitely many closed Reeb
orbits of any given Conley-Zehnder index. Using field
coefficients Corollary~\ref{cor:dualityk} yields canonical
isomorphisms $SH_k^\heartsuit(W,A)\cong SH^k_\heartsuit(W,A)^\vee$ 
for all $A$ and all flavors $\heartsuit$. The dualization of the
operations in Proposition~\ref{prop:ops-dual} then yields

\begin{corollary}\label{cor:ops-dual}
Consider a filled Liouville cobordism $W$ with vanishing first Chern
class and an admissible union $A$ of boundary components. 
Suppose that $\p W$ carries only finitely many closed Reeb
orbits of any given Conley-Zehnder index.
Then with field coefficients there exist operations (note the reversed
roles of $p$ and $q$)
$$
   \psi_S^\vee:\bigotimes_{i=1}^pSH_*^{\heartsuit}(W,A) \to
   \bigotimes_{j=1}^qSH_*^\heartsuit(W,A),\qquad
   \heartsuit\in\{\emptyset,\geq 0,>0\}  
$$
of degree $n(2-2g-p-q)$, 
graded commutative if degrees are shifted by $-n$ and satisfying the usual
TQFT composition rules, in the following situations: 
\begin{enumerate}
\item $\p^-W=\emptyset$, $A=\p^+W$, $p\geq 1$, $q\geq 0$;
\item $A=\p^+W$, $p\geq 1$, $q\geq 1$;
\item $A=\p W$, $p=1$, $q\geq 0$;
\item $A$ arbitrary, $p=1$, $q\geq 1$.\hfill$\square$
\end{enumerate}
\end{corollary}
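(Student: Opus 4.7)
The plan is to obtain the operations $\psi_S^\vee$ by literally dualizing the cohomological operations $\psi_S$ of Proposition~\ref{prop:ops-dual}, using the hypotheses to ensure that dualization behaves functorially on tensor products. The finite-dimensionality assumption on Reeb orbits is exactly what makes Corollary~\ref{cor:dualityk}(a) applicable: together with $c_1(W)=0$, it provides, for each flavor $\heartsuit$ and each degree $k$, canonical isomorphisms
\[
  SH^k_\heartsuit(W,A) \;\cong\; \Hom_\mathfrak{k}\bigl(SH_k^\heartsuit(W,A),\mathfrak{k}\bigr)
\]
between finite-dimensional $\mathfrak{k}$-vector spaces. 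Since these spaces are finite-dimensional in each degree, finite tensor products of duals agree canonically with duals of tensor products, so one obtains
\[
  \bigotimes_{i=1}^p SH^{*_i}_\heartsuit(W,A)
  \;\cong\; \Bigl(\bigotimes_{i=1}^p SH_{*_i}^\heartsuit(W,A)\Bigr)^{\!\vee}
\]
and similarly with $q$ factors.

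With these identifications in place, applying $\Hom_\mathfrak{k}(-,\mathfrak{k})$ to an operation
\[
  \psi_S : \bigotimes_{j=1}^q SH^{*}_{\heartsuit}(W,A) \to
  \bigotimes_{i=1}^p SH^{*}_\heartsuit(W,A)
\]
produces, degree by degree, a map in the reverse direction between the dual spaces, which via the identifications above is precisely the desired
\[
  \psi_S^\vee : \bigotimes_{i=1}^p SH_*^{\heartsuit}(W,A) \to
  \bigotimes_{j=1}^q SH_*^\heartsuit(W,A).
\]
A map of degree $d$ on cohomology (so $\sum k'_i - \sum k_j = d$) dualizes, under $SH^k \cong (SH_k)^\vee$, to a map of degree $-d$ on homology, which turns the cohomological degree $-n(2-2g-p-q)$ into the homological degree $n(2-2g-p-q)$ asserted in the statement. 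Likewise the graded-commutativity shift flips sign from $+n$ to $-n$, because the symmetry constraint on $(V\otimes W)^\vee \cong V^\vee\otimes W^\vee$ is the one induced by the constraint on $V\otimes W$ after reversing the sign of the grading shift.

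The TQFT composition rules transfer formally: dualization is a contravariant symmetric monoidal functor on the (finite-dimensional) graded vector spaces at hand, and the cobordism composition on Riemann surfaces corresponds on cohomology to composition of the $\psi_S$; dualizing turns this into the cobordism composition for the $\psi_S^\vee$, now read in the opposite direction (consistent with the exchange of $p$ and $q$). The enumeration of admissible cases~(i)--(iv) simply matches the four cases of Proposition~\ref{prop:ops-dual} after swapping $p\leftrightarrow q$ and $A\leftrightarrow A$ (the admissibility of $A$ in each case is self-dual in the relevant sense).

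The only nontrivial point in the argument is the one already used: ensuring that $(V\otimes W)^\vee\cong V^\vee \otimes W^\vee$ and that $\Hom$ commutes with the algebraic operations at hand, which is where the hypothesis that $\p W$ carries only finitely many closed Reeb orbits of any given Conley-Zehnder index enters in an essential way through Corollary~\ref{cor:dualityk}(a). Without it, one would have to contend with the failure of $(\prod_\alpha V_\alpha)^\vee$ to agree with $\bigoplus_\alpha V_\alpha^\vee$, exactly as in Example~\ref{ex:Peter}; the finite-dimensionality hypothesis is what prevents this pathology and makes the dualization procedure work uniformly across all flavors $\heartsuit\in\{\varnothing,\ge 0,>0\}$.
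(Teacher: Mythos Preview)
Your proof is correct and follows essentially the same approach as the paper: the paper's proof is just the sentence preceding the corollary, which invokes Corollary~\ref{cor:dualityk}(a) to obtain $SH_k^\heartsuit(W,A)\cong SH^k_\heartsuit(W,A)^\vee$ and then dualizes the operations of Proposition~\ref{prop:ops-dual}. You have simply unpacked this in more detail, correctly noting where finite-dimensionality is needed to make tensor products commute with dualization. One minor clarification: the cases (i)--(iv) in Proposition~\ref{prop:ops-dual} and Corollary~\ref{cor:ops-dual} are literally identical as conditions on $p$, $q$, and $A$; the ``reversal of roles'' refers only to which index labels inputs versus outputs in the map, not to any swapping of the constraints.
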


\subsection{A coproduct on positive symplectic homology}
Consider a Liouville cobordism $W$ filled by a Liouville domain
$V$. The choice of $W$ will be irrelevant, so we can take
e.g.~$W=I\times \p V$. Proposition~\ref{prop:ops}(iii) provides a product
of degree $-n$ on $SH_*^{<0}(W)$. In view of the isomorphism
$SH_*^{<0}(W) \cong SH_{>0}^{-*+1}(V)$ from
Proposition~\ref{prop:cob-to-filling}, this gives a product of
degree $n-1$ on the symplectic cohomology group $SH_{>0}^*(V)$. 
Note that this cannot be the product arising from
Proposition~\ref{prop:ops-dual}(iv) (with $V$ in place of $W$ and
$A=\emptyset$) because the latter has degree $n$. Under the
finiteness hypothesis in Corollary~\ref{cor:ops-dual}, this gives a
coproduct of degree $1-n$ on the symplectic homology group
$SH_*^{>0}(V)$.  

{\color{black}\begin{remark} Following Seidel, there is another coproduct of degree $1-n$
on $SH_*^{>0}(V)$ obtained as a secondary operation in view of the
fact that the natural coproduct given by counting pairs of pants with
one input and two outputs vanishes, see also~\cite{Ekholm-Oancea} for a generalization and~\cite{Goresky-Hingston} for a topological version of it. These two coproducts of
degree $1-n$ agree. The isomorphism between them is part of a larger picture related to Poincar\'e duality and will be the topic of another paper. 
\end{remark}
}

\appendix


\section{An obstruction to symplectic cobordisms \\ \ (joint with Peter~Albers)}\label{app}

In this joint appendix we use the results of this paper to define an
obstruction to Liouville cobordisms between contact manifolds. 

Consider a Liouville cobordism $W$ whose negative end $\p_-W$ is
hypertight, index-positive, or Liouville fillable. As explained in
Section~\ref{sec:contact}, in these cases one can define symplectic
homology groups $SH_*^\heartsuit(W)$,
$\heartsuit\in\{\varnothing,\le 0,<0,=0,\ge 0, >0\}$ which will be
independent of a filling in the first two cases but may 
depend on the filling in the Liouville fillable case. 
We would like to show that vanishing of $SH_*(\p_+W)$ implies
vanishing of $SH_*(\p_-W)$. However, it is unclear how to deduce this
from the functoriality under cobordisms, which only gives correspondences
$$
\xymatrix{
   & SH_*^\heartsuit(W) \ar[dl]\ar[dr] & \\
   SH_*^\heartsuit(\p_- W)  & & SH_*^\heartsuit(\p_+ W).
}
$$
Instead, we will consider the following property
{\color{black}(using coefficients in a field $\K$).}

\begin{definition}
A Liouville cobordism $W$ is called {\em SAWC} if $1_W$ is mapped to
zero under the map $H^0(W)\cong SH_n^{=0}(W)\to SH_n^{\geq 0}(W)$,
where $1_W$ is the unit in $H^0(W)$.
\end{definition}

For a connected Liouville domain $W$, this agrees with the ``Strong
Algebraic Weinstein conjecture'' property of Viterbo~\cite{Viterbo99}. 
As usual, we define the SAWC property for $\p_\pm W$ via the trivial
cobordism $[0,1]\times\p_\pm W$, where $SH_*(\p_+W)$ is defined 
with respect to the partial filling $W$. 
Then this property is inherited under cobordisms:

\begin{proposition}\label{prop:cob-vanishing}
Let $W$ be a Liouville cobordism {\color{black}with vanishing first Chern class} whose negative end $\p_-W$ is
hypertight, index-positive, or Liouville fillable. 
If $\p_+W$ is SAWC, then so are $W$ and $\p_-W$. 
\end{proposition}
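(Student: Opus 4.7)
The plan is to prove separately that $\p_+W$ SAWC $\Rightarrow W$ SAWC, and then $W$ SAWC $\Rightarrow \p_-W$ SAWC. Both steps will use the Viterbo transfer maps $f_{\pm!}:SH_*^\heartsuit(W)\to SH_*^\heartsuit(\p_\pm W)$ induced by the inclusions of trivial collar neighborhoods $I\times\p_\pm W\hookrightarrow W$, together with their compatibility with the tautological exact triangle $SH^{=0}\to SH^{\ge 0}\to SH^{>0}\stackrel{\delta}{\to} SH^{=0}[-1]$, which is guaranteed by the functoriality Theorem~\ref{thm:ESSH} and the naturality of the octahedron~\eqref{eq:SHoctahedron}.

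For the first implication, naturality of the exact triangle under $f_{+!}$ produces a commutative square
\[
\xymatrix{
SH_{n+1}^{>0}(W)\ar[r]^-\delta\ar[d]_-{f_{+!}} & H^0(W)\ar[d]^-{f_{+!}} \\
SH_{n+1}^{>0}(\p_+W)\ar[r]^-\delta & H^0(\p_+W)
}
\]
in which the right vertical map is the ordinary cohomological restriction, sending $1_W\mapsto 1_{\p_+W}$. The crucial claim is that the left vertical map is an isomorphism. Granted this, SAWC for $\p_+W$ supplies $\beta\in SH_{n+1}^{>0}(\p_+W)$ with $\delta(\beta)=1_{\p_+W}$; its unique preimage $\widetilde\beta$ then satisfies $f_{+!}\bigl(\delta(\widetilde\beta)\bigr)=1_{\p_+W}$, which forces $\delta(\widetilde\beta)=1_W$ provided every component of $W$ meets $\p_+W$, since in that case the cohomological restriction is injective. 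Therefore $1_W$ lies in the image of $\delta$ and vanishes in $SH_n^{\ge 0}(W)$, establishing SAWC for $W$.

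For the second implication, I would first observe that, by combining Theorem~\ref{thm:product}(c)-(d) with the octahedron~\eqref{eq:SHoctahedron}, SAWC for $W$ is equivalent to the unit of the unital ring $SH_n(W)$ vanishing under the tautological map $SH_n(W)\to SH_n^{\ge 0}(W)$: indeed, the two images in $SH_n^{\ge 0}(W)$ of the unit of $SH_n^{\le 0}(W)$ agree, namely $1_W$ along the route through $SH_n^{=0}$ and the unit of $SH_n(W)$ along the route through $SH_n$. Theorem~\ref{thm:product}(e) asserts that $f_{-!}$ is a unital ring homomorphism on $SH_n$ and, by functoriality, it commutes with the tautological map to $SH_n^{\ge 0}$; chasing the resulting commutative square then transports the vanishing of the unit in $SH_n^{\ge 0}(W)$ to the vanishing of the unit in $SH_n^{\ge 0}(\p_-W)$, yielding SAWC for $\p_-W$.

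The hard part will be the isomorphism $f_{+!}:SH^{>0}(W)\stackrel{\cong}{\to}SH^{>0}(\p_+W)$. In the Liouville-fillable case, with filling $F$, it is immediate from the continuation identifications $SH_*^{>0}(W)\cong SH_*^{>0}(W_F)$ recalled at the end of~\S\ref{sec:SHWA}, applied both to $W$ and to the trivial cobordism $I\times\p_+W$ equipped with the filling $W_F$. In the hypertight and index-positive cases it requires adapting the SFT-compactness and Fredholm-index argument of Proposition~\ref{prop:SH-without-filling} to the continuation cylinders that define $f_{+!}$: the same dimensional obstruction that rules out SFT breaking for Floer cylinders will rule it out for the relevant continuation cylinders, and the surviving generators of the two chain complexes are canonically identified. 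A secondary bookkeeping point is that connected components of $W$ disjoint from $\p_+W$ receive no information from the hypothesis; one must either assume that every component of $W$ has nonempty positive boundary (as is in any case needed for the injectivity used in the first step) or read the proposition component by component.
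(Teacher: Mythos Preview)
Your proof is correct and follows essentially the same approach as the paper. The paper assembles the transfer maps $f_{\pm!}$ and the tautological exact sequence $SH^{>0}_{*+1}\to SH^{=0}_*\to SH^{\ge 0}_*\to SH^{>0}_*$ into one $3\times 4$ commutative diagram, uses the isomorphisms $SH^{>0}_*(W)\cong SH^{>0}_*(\p_+W)$ together with the injectivity of $H^0(W)\to H^0(\p_+W)$ (every component of a Liouville cobordism has nonempty positive boundary, by a Stokes-type argument), and then applies a five-lemma argument to deduce injectivity of $SH_n^{\ge 0}(W)\to SH_n^{\ge 0}(\p_+W)$; both conclusions are then read off the two middle squares.

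Your Step~1 is just a rephrasing of this (preimaging $1_{\p_+W}$ under $\delta$ rather than directly using injectivity of the $\ge 0$ map). Your Step~2, however, takes an unnecessary detour through the product structure on $SH_*(W)$ and Theorem~\ref{thm:product}(e): the paper simply uses the commutativity of the square formed by $f_{-!}$ and the map $SH^{=0}\to SH^{\ge 0}$, together with the elementary fact that the cohomological restriction $H^0(W)\to H^0(\p_-W)$ sends $1_W\mapsto 1_{\p_-W}$. No ring structure on $SH_*$ is needed for this implication.
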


\begin{proof}
{\color{black}If the first Chern class of $W$ vanishes the symplectic homology groups $SH_*^\heartsuit$ are canonically graded in the component of constant loops.} Consider thus the diagram with commutative squares and exact rows
$$
{\small
\xymatrix{
   SH_{n+1}^{> 0}(\p_-W)\ar[r] & SH_n^{=0}(\p_-W)\simeq H^0(\p_-W) \ar[r]
   & SH_n^{\ge 0}(\p_-W) \ar[r] & SH_n^{>0}(\p_-W)\\
   SH_{n+1}^{> 0}(W)\ar[r]\ar[d]^\simeq\ar[u] & SH_n^{=0}(W)\simeq
   H^0(W) \ar[r]\ar[d]^{\mbox{injective}}_{1_W\mapsto 1_{\p_+W}}\ar[u]^{1_W\mapsto 1_{\p_-W}} & SH_n^{\ge 0}(W)
   \ar[r]\ar[d]^{\mbox{$\Rightarrow\ $injective}}\ar[u] & SH_n^{>0}(W)\ar[d]^\simeq\ar[u] \\
   SH_{n+1}^{> 0}(\p_+W)\ar[r] & SH_n^{=0}(\p_+W)\simeq H^0(\p_+W)
   \ar[r] & SH_n^{\ge 0}(\p_+W) \ar[r] & SH_n^{>0}(\p_+W).
}
}
$$
The lower vertical arrows at the extremities are isomorphisms since
$W$ and $I\times\p_+W$ share the same positive boundary. The map
$H^0(W)\to H^0(\p_+W)$ is injective because every component of $W$ has
a positive boundary component. Injectivity of the vertical map
$SH_n^{\ge 0}(W)\to SH_n^{\ge 0}(\p_+W)$ then follows from the 5-lemma
as in~\cite[Exercise 1.3.3]{Weibel}. 

Suppose now that $1_{\p_+W}$ is sent to
zero by the map $H^0(\p_+W)\to SH_n^{\geq 0}(\p_+W)$. Then
commutativity of the lower middle square implies that $1_{W}$
goes to zero under the map $H^0(W)\to SH_n^{\geq 0}(W)$, and  
commutativity of the upper middle square implies that $1_{\p_-W}$
goes to zero under the map $H^0(\p_-W)\to SH_n^{\geq 0}(\p_-W)$. 
\end{proof}

Note that Proposition~\ref{prop:cob-vanishing} uses the product
structure on singular cohomology but not on symplectic homology. Using
the latter we will now reformulate the SAWC condition. As observed
by Uebele in~\cite{Uebele-products}, the pair-of-pants product $\cdot$ in
Section~\ref{sec:products} makes 
$SH_*(W)$, $SH_*^{\leq 0}(W)$ and $SH_*^{=0}(W)$ unital graded commutative
rings for $W$ as in Proposition~\ref{prop:cob-vanishing},
provided that in the index-positive case we require {\color{black} the following stronger
condition (called ``product index-positivity'' in~\cite{Uebele-products}): 
 
(i) {\color{black} $c_2(W)|_{\pi_2(\p_-W)}=0$ and} $\pi_1(\p_- W)=1$, and 
\begin{equation}\label{eq:index}
\CZ(\gamma)>3 \,\,\,\text{for every closed Reeb orbit }
    \gamma \text{ in } \p_-W, 
\end{equation}

or

(ii) denoting $\xi_-$ the contact distribution on $\p_- W$, there exists a trivialisation of the square of the canonical bundle $\Lambda_\C^{\max}\xi_-^{\otimes 2}$ such that, with respect to that trivialisation, all closed Reeb orbits $\gamma$ in $\p_-W$ satisfy~\eqref{eq:index}. {\color{black}In addition, we require that the trivialization of $\Lambda_\C^{\max}TW|_{\p_- W}^{\otimes 2}$ determined by the trivialization of $\Lambda_\C^{\max}\xi_-^{\otimes 2}$ extends over $W$. 

\noindent {\bf Remark.} Since the homotopy classes of trivializations of a line bundle are classified by the first integral cohomology group, the extension property in (ii) above is automatic if the map $H^1(W;\Z)\to H^1(\p_-W;\Z)$ is surjective.
}
}

{\color{black}
{\bf Remark.} Examples in which (i) is satisfied are unit cotangent bundles of spheres $S^n$ of dimension $n\ge 5$, and more generally Milnor fibers of $A_k$-singularities $\{z_0^k+z_1^2+\dots+z_n^2=0\}$ for $n\ge 5$, see~\cite[Appendix~A]{Kwon-vanKoert} and also~\cite{Uebele-products}. 
}

{\color{black} The proof of this observation is similar to that of
Proposition~\ref{prop:SH-without-filling}. The new feature is that a
pair-of-pants with inputs $x_1,x_2$ and output $x_-$ might break into
a Floer cylinder $C_1$ connecting $x_1$ and $x_-$ with a negative puncture
asymptotic to a closed Reeb orbit $\gamma_1$, a Floer plane $C_2$
with input $x_2$ and a negative puncture at a closed Reeb orbit
$\gamma_2$, and a holomorphic cylinder with two positive punctures
asymptotic to $\gamma_1,\gamma_2$. The first two components are
regular, so their indices satisfy
\begin{align*}
   \ind(C_1)&=\CZ(x_1)-\CZ(x_-)-\bigl(\CZ(\gamma_1)+n-3\bigr)\geq 0,\cr
   \ind(C_2)&=\CZ(x_2)+n-\bigl(\CZ(\gamma_2)+n-3\bigr)\geq 0.
\end{align*}
When showing well-definedness of the product (resp.~commutativity with
the boundary operator) we consider orbits satisfying
$$
   \CZ(x_1)+\CZ(x_2)-\CZ(x_-)-n = 0 \text{ (resp. $1$).}
$$
Adding the two inequalities and inserting this relation yields
$$
   \bigl(\CZ(\gamma_1)-3\bigr)+\bigl(\CZ(\gamma_2)-3\bigr)\leq 0 \text{ (resp. $1$),}
$$
contradicting condition~\eqref{eq:index}.} 

Let us fix a Liouville form $\lambda$ on $W$ and consider for $b\in\R$
the filtered symplectic homology groups $SH_*^{(-\infty,b)}(W)$ defined in
Section~\ref{sec:SH} (which also exist under the above assumptions on
$W$). We define the \emph{spectral value} of a class $\alpha\in
SH_*(W)$ by 
$$
   c(\alpha) := \inf \{b\in\R \mid
   \alpha\in\mbox{im}(SH_*^{(-\infty,b)}(W)\to SH_*(W))\} \in[-\infty,\infty).
$$
Here $c(\alpha)<\infty$ follows from the definition of
$SH_*(W)=\lim\limits^{\longrightarrow}_{b\to\infty}
SH_*^{(-\infty,b)}(W)$. The fundamental inequality satisfied by
spectral values is   
$$
   c(\alpha\cdot\beta)\le c(\alpha)+c(\beta),
$$
as a consequence of the fact that the pair-of-pants product decreases
action (see inequality~\eqref{eq:energy} with {\color{black}$A_1=2$ and $B_1=B_2=1$}). 

The unit $1_W\in SH_n(W)$ plays a particular role. Indeed, we have
$c(1_W)\le 0$ since $SH_*^{\le 0}(W)\to SH_*(W)$ is a map of
rings with unit, but also  
$$
   c(1_W)=c(1_W\cdot 1_W)\le 2c(1_W).
$$
Thus either $c(1_W)= 0$ or $c(1_W)=-\infty$ (note that these
conditions are independent of the Liouville form $\lambda$). The condition $c(1_W)=-\infty$ is equivalent to the fact that the unit belongs to the image of the map $SH_n^{<0}(W)\to SH_n(W)$. In the latter case we also
obtain $c(\alpha)=-\infty$ for all $\alpha\in SH_*(W)$ since
$c(\alpha)\le c(1_W)+c(\alpha)$. This is in particular the case if
$SH_*(W)=0$, {\color{black}and the converse is also true. Indeed, assume $c(1_W)=-\infty$ and represent $1_W$ as the image of an element $\alpha^b\in SH_*^{(-\infty,b)}(W)$ for some $b<0$. By definition of the inverse limit, such an element is the equivalence class of a sequence $\alpha^b_n\in SH_*^{(-n,b)}(W)$ for $n>|b|$. We claim that each such element $\alpha^b_n$ is zero, hence $1_W=0$. Indeed, for any given $n$ we can choose $b'<-n$ and represent by assumption $1_W$ by an element $\beta^{b'}\in SH_*^{(-\infty,b')}(W)$, given by a sequence $\beta^{b'}_{n'}\in SH_*^{(-n',b')}(W)$ for $n'>|b'|$. But then $\alpha^b_n$ must be the image of $\beta^{b'}_{n'}$ under the map $SH_*^{(-n',b')}(W)\to SH_*^{(-n,b)}(W)$, which is zero for $b'<-n$. 
}

{\color{black}We thus obtain:}

\begin{lemma}\label{lem:SAWC}
Let $W$ be a Liouville cobordism whose negative end $\p_-W$ is
hypertight, Liouville fillable, or index-positive with the
stronger index condition~\eqref{eq:index}. Then $W$ is SAWC 
if and only if {\color{black}$SH_*(W)=0$.} 
\end{lemma}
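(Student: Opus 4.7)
The strategy is to reduce the SAWC condition to the spectral value condition $c(1_W)=-\infty$ via the octahedron diagram~\eqref{eq:SHoctahedron}, and then appeal to the discussion preceding the lemma which already establishes the equivalence $c(1_W)=-\infty \iff SH_*(W)=0$.

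First I would unpack what the SAWC condition says in terms of the unit in the ring $SH_n^{\le 0}(W)$. By Theorem~\ref{thm:product}(d), the natural map $SH_n^{\le 0}(W)\to SH_n^{=0}(W)\cong H^{n-n}(W)=H^0(W)$ is a morphism of rings with unit, so the unit $1_W\in H^0(W)$ lifts canonically to the unit $1_W\in SH_n^{\le 0}(W)$, which in turn maps to the unit $1_W\in SH_n(W)$ via the tautological map $SH_n^{\le 0}(W)\to SH_n(W)$. Consulting the octahedron~\eqref{eq:SHoctahedron}, the two compositions
\[
SH_n^{\le 0}(W)\longrightarrow SH_n^{=0}(W)\longrightarrow SH_n^{\ge 0}(W)\qquad\text{and}\qquad SH_n^{\le 0}(W)\longrightarrow SH_n(W)\longrightarrow SH_n^{\ge 0}(W)
\]
agree, since the relevant triangle of continuous arrows in the octahedron commutes. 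Therefore SAWC is equivalent to the statement that the image of $1_W\in SH_n(W)$ in $SH_n^{\ge 0}(W)$ vanishes.

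Next I would invoke the tautological exact triangle $SH_*^{<0}(W)\to SH_*(W)\to SH_*^{\ge 0}(W)\to SH_{*-1}^{<0}(W)$ from Proposition~\ref{prop:taut-triang-W}. By exactness, the image of $1_W\in SH_n(W)$ vanishes in $SH_n^{\ge 0}(W)$ if and only if $1_W$ lies in the image of $SH_n^{<0}(W)\to SH_n(W)$. But this is exactly the condition $c(1_W)=-\infty$, as observed in the paragraph preceding the lemma. Combining this with the equivalence $c(1_W)=-\infty \iff SH_*(W)=0$ (already proved in that same paragraph via the sequence argument on representatives $\alpha^b_n\in SH_n^{(-n,b)}(W)$) yields the desired conclusion. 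Note that all three cases in the hypothesis (hypertight, Liouville fillable, or index-positive with~\eqref{eq:index}) enter only through guaranteeing that the product and unit exist on $SH_*(W)$ and $SH_*^{\le 0}(W)$, so that the argument above can be carried out without change. The only subtle point---and hence the main thing one must verify---is the commutativity of the relevant square in the octahedron, but this is already part of the structure packaged in~\eqref{eq:SHoctahedron}, so no additional work is required.
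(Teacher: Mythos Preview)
Your proof is correct and follows essentially the same route as the paper: both reduce SAWC to the condition $c(1_W)=-\infty$ (equivalently, $1_W$ lies in the image of $SH_n^{<0}(W)\to SH_n(W)$) via the commuting diagram relating the four tautological triangles, and then invoke the discussion preceding the lemma. Your presentation is in fact slightly more streamlined than the paper's, which performs an explicit chase through $SH_n^{\le 0}(W)$ and the left column of the diagram, whereas you go directly via commutativity of the square $SH_n^{\le 0}\to SH_n^{=0}\to SH_n^{\ge 0}$ versus $SH_n^{\le 0}\to SH_n\to SH_n^{\ge 0}$ together with exactness of the bottom row.

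One small point of precision: the commutativity you need is that of a \emph{square} with antipodal vertices $SH_*^{\le 0}$ and $SH_*^{\ge 0}$ in the octahedron, not of one of its triangular faces. This square is not among the eight faces described after~\eqref{eq:SHoctahedron}, so the phrase ``the relevant triangle of continuous arrows in the octahedron commutes'' is not quite right. The correct reference is Proposition~\ref{prop:taut-triangles}, which is exactly what the paper cites and whose proof establishes precisely this commutativity. With that citation adjusted, your argument is complete.
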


\begin{proof}
Proposition~\ref{prop:taut-triangles} yields the commuting diagram
with exact rows and columns
$$
\xymatrix
@C=30pt
@R=30pt
{
   & SH_{n+1}^{>0}(W) \ar@{=}[r] \ar[d]^f & SH_{n+1}^{>0}(W) \ar[d]^g & \\
   SH_n^{<0}(W) \ar[r]^h \ar@{=}[d] & SH_n^{\leq 0}(W) \ar[r]^i \ar[d]^j &
   SH_n^{=0}(W) \ar[d]^k \\ 
   SH_n^{<0}(W) \ar[r]^\ell & SH_n(W) \ar[r]^m & SH_n^{\geq
    0}(W) \;,
}
$$
where $i$ and $j$ are maps of unital rings. We will denote all units by
$1_W$. {\color{black}We prove that $W$ is SAWC if and only if $c(1_W)=-\infty$, 
which by the discussion above is equivalent to $SH_*(W)=0$.}
Suppose first that $c(1_W)=-\infty$, i.e.~$1_W=\ell\alpha$ for some
$\alpha\in SH_n^{<0}(W)$. Then $1_W-h\alpha=f\beta$ for some $\beta\in
SH_{n+1}^{>0}(W)$, hence $1_W=i(1_W-h\alpha)=g\beta$ is mapped to
zero under $k$, which means that $W$ is SAWC. The converse
implication is proved similarly. 
\end{proof}

\begin{corollary}\label{cor:hypertight}
There is no Liouville cobordism $W$ with $\p_-W$ hypertight and such that
$SH_*(\p_+W)=0$ {\color{black} (where $SH_*(\p_+W)$ is defined 
with respect to the partial filling $W$).} 
\end{corollary}

\begin{proof}
If $\p_-W$ is hypertight then the map $SH_n^{=0}(\p_-W)\to SH_n^{\geq
  0}(\p_-W)$ is an isomorphism, so $\p_-W$ is not SAWC. On the other
hand, $SH_*(\p_+W)=0$ implies by Lemma~\ref{lem:SAWC} that $\p_+W$ is
SAWC. This is impossible by Proposition~\ref{prop:cob-vanishing}.  
\end{proof}

{\color{black}
\begin{corollary}\label{cor:subcritical}
There is no Liouville cobordism $W$ of dimension $2n\geq 4$ with vanishing first Chern class such that $\p_-W$ is hypertight, $\p_+W$ is fillable by a subcritical Stein manifold with vanishing first Chern class, 
and the map $\pi_1(\p_+W)\to \pi_1(W)$ induced by inclusion is injective.
\end{corollary}

\begin{proof} Let $F$ be such a subcritical Stein filling of $\p_+W$. 
Denote ${^F}SH_*(\p_+W)$ the symplectic homology computed with respect to the filling $F$, and ${^W}SH_*(\p_+W)$ the symplectic homology computed with respect to the partial filling $W$. Since $SH_*(F)=0$, we also have ${^F}SH_*(\p_+W)=0$ by Corollary~\ref{cor:vanishing-finitedim}. By Remark~\ref{rem:subcritical}, one can choose on $\p_+W$ a contact form so that all Conley-Zehnder indices of closed Reeb orbits which are contractible in $\p_+W$ are $>3-n$. 
The injectivity of the map $\pi_1(\p_+W)\to \pi_1(W)$ implies that the same condition on the indices holds for all closed Reeb orbits which are contractible in $W$. Hence by Proposition~\ref{prop:SH-without-filling} 
we have ${^W}SH_*(\p_+W)={^F}SH_*(\p_+W)=0$, and the conclusion follows from Corollary~\ref{cor:hypertight}.
\end{proof}

\begin{corollary} \label{cor:subcritical+}
There is no Weinstein cobordism $W$ of dimension $2n\geq 6$ with vanishing first Chern class such that $\p_-W$ is hypertight and $\p_+W$ is fillable by a subcritical Stein manifold with vanishing first Chern class. 
\end{corollary}

\begin{proof}
Indeed, in this situation the skeleton of $W$ has codimension $\ge n\ge 3$ and a generic homotopy of paths will avoid it and can be subsequently pushed by the Liouville flow to $\p_+W$. Thus any loop in $\p_+W$ which is contractible in $W$ is also contractible in $\p_+W$, i.e., the map $\pi_1(\p_+W)\to \pi_1(W)$ induced by the inclusion is injective. We then conclude via Corollary~\ref{cor:subcritical}.   
\end{proof}
}

\noindent {\bf Examples. }

(1) Many examples of contact manifolds $M$ with $SH_*(M)=0$ arise as
boundaries of Liouville domains with vanishing symplectic homology, 
e.g.~subcritical or flexible Stein manifolds~\cite{Cieliebak-Eliashberg-book}. 

(2) Examples of hypertight contact manifolds are the unit
cotangent bundles of Riemannian manifolds of nonpositive
curvature. Other examples are the $3$-torus $T^3$ with a Giroux
contact structure $\xi_k=\ker\bigl(\cos (ks) d\theta+\sin (ks) dt\bigr)$ and its higher-dimensional generalizations
$(T^2\times N,\xi_k)$ by Massot--Niederkr\"uger--Wendl~\cite{Massot-Niederkruger-Wendl}. The
latter are not strongly symplectically fillable (so in particular not
Liouville fillable) for $k\ge 2$. Therefore, it appears that
Corollary~\ref{cor:hypertight} with $\p_-W$ one of these manifolds
cannot be obtained by more classical tools such as symplectic homology
of Liouville domains. 

(3) Let us mention in the same vein the fact that there is no Liouville cobordism $W$ with $\p_-W$ hypertight and $\p_+W$ overtwisted. This is proved in the same way as non-fillability of overtwisted contact manifolds~\cite{BEM,CMP}, using filling by holomorphic discs in the symplectic manifold $(0,1)\times \p_-W \, \cup \, W$. However, this seems to fall outside the scope of our methods, while at the same time the case that we address in Corollary~\ref{cor:hypertight} seems to fall outside the scope of the method of filling by holomorphic discs. 

{\color{black} (4) A contact manifold $(M,\xi)$ fails to satisfy the Weinstein conjecture if there exists a contact form whose Reeb vector field has no periodic orbit. In the simply connected case this is equivalent to the fact that $(M,\xi)$ is cobordant via a trivial cobordism to a hypertight contact manifold. Turning this around, $(M,\xi)$ satisfies the Weinstein conjecture if and only if it is not cobordant by a trivial Liouville cobordism to a hypertight manifold. From this perspective, obstructing the existence of Liouville cobordisms with hypertight negative end can be seen as a geometric generalisation of the Weinstein conjecture.}

%
%
%
%

\bibliographystyle{abbrv}

%

\bibliography{./000_SHpair}

\def\cprime{$'$} \def\cprime{$'$}
\begin{thebibliography}{10}

\bibitem{Abouzaid-cotangent}
M.~Abouzaid.
\newblock Symplectic cohomology and {V}iterbo's theorem.
\newblock In {\em Free loop spaces in geometry and topology}, volume~24 of {\em
  IRMA Lect. Math. Theor. Phys.}, pages 271--485. Eur. Math. Soc., Z\"urich,
  2015.

\bibitem{Abouzaid-Seidel-recombination}
M.~Abouzaid and P.~Seidel.
\newblock Altering symplectic manifolds by homologous recombination.
\newblock arXiv:1007.3281, 2010.

\bibitem{Abouzaid-Seidel}
M.~Abouzaid and P.~Seidel.
\newblock An open string analogue of {V}iterbo functoriality.
\newblock {\em Geom. Topol.}, 14(2):627--718, 2010.

\bibitem{ACF14}
P.~Albers, K.~Cieliebak, and U.~Frauenfelder.
\newblock Symplectic {T}ate homology.
\newblock {\em Proc. London Math. Soc.}, 112(1):169--205, 2016.

\bibitem{AF10}
P.~Albers and U.~Frauenfelder.
\newblock Leaf-wise intersections and {R}abinowitz {F}loer homology.
\newblock {\em J. Topol. Anal.}, 2(1):77--98, 2010.

\bibitem{Albers-Fuchs-Merry}
P.~Albers, U.~Fuchs, and W.~J. Merry.
\newblock Orderability and the {W}einstein conjecture.
\newblock {\em Compos. Math.}, 151(12):2251--2272, 2015.

\bibitem{Ayala-Francis}
D.~Ayala and J.~Francis.
\newblock Factorization homology of topological manifolds.
\newblock {\em J. Topol.}, 8(4):1045--1084, 2015.

\bibitem{BBD}
A.~A. Beilinson, J.~Bernstein, and P.~Deligne.
\newblock Faisceaux pervers.
\newblock In {\em Analysis and topology on singular spaces, {I} ({L}uminy,
  1981)}, volume 100 of {\em Ast\'erisque}, pages 5--171. Soc. Math. France,
  Paris, 1982.

\bibitem{BC13}
P.~Biran and O.~Cornea.
\newblock Lagrangian cobordism. {I}.
\newblock {\em J. Amer. Math. Soc.}, 26(2):295--340, 2013.

\bibitem{BEM}
M.~S. Borman, Y.~Eliashberg, and E.~Murphy.
\newblock Existence and classification of overtwisted contact structures in all
  dimensions.
\newblock {\em Acta Math.}, 215(2):281--361, 2015.

\bibitem{Bounya}
C.~Bounya.
\newblock Lagrangian {R}abinowitz-{F}loer homology.
\newblock Unpublished.

\bibitem{BEE-product}
F.~Bourgeois, T.~Ekholm, and Y.~Eliashberg.
\newblock Symplectic homology product via {L}egendrian surgery.
\newblock {\em Proc. Natl. Acad. Sci. USA}, 108(20):8114--8121, 2011.

\bibitem{Bourgeois-Ekholm-Eliashberg-1}
F.~Bourgeois, T.~Ekholm, and Y.~Eliashberg.
\newblock Effect of {L}egendrian surgery.
\newblock {\em Geom. Topol.}, 16(1):301--389, 2012.
\newblock \ With an appendix by Sheel Ganatra and Maksim Maydanskiy.

\bibitem{BEHWZ}
F.~Bourgeois, Y.~Eliashberg, H.~Hofer, K.~Wysocki, and E.~Zehnder.
\newblock Compactness results in symplectic field theory.
\newblock {\em Geom. Topol.}, 7:799--888, 2003.

\bibitem{BOcont}
F.~Bourgeois and A.~Oancea.
\newblock An exact sequence for contact- and symplectic homology.
\newblock {\em Invent. Math.}, 175(3):611--680, 2009.

\bibitem{BOauto}
F.~Bourgeois and A.~Oancea.
\newblock Symplectic homology, autonomous {H}amiltonians, and {M}orse-{B}ott
  moduli spaces.
\newblock {\em Duke Math. J.}, 146(1):71--174, 2009.

\bibitem{BOGysin}
F.~Bourgeois and A.~Oancea.
\newblock The {G}ysin exact sequence for {$S^1$}-equivariant symplectic
  homology.
\newblock {\em J. Topol. Anal.}, 5(4):361--407, 2013.

\bibitem{BO3+4}
F.~Bourgeois and A.~Oancea.
\newblock {$S^1$}-equivariant symplectic homology and linearized contact
  homology.
\newblock {\em Int. Math. Res. Not. IMRN}, (13):3849--3937, 2017.

\bibitem{BvK}
F.~Bourgeois and O.~{van Koert}.
\newblock Linearized contact homology of connected sums.
\newblock Unpublished.

\bibitem{Br}
G.~E. Bredon.
\newblock {\em Topology and geometry}, volume 139 of {\em Graduate Texts in
  Mathematics}.
\newblock Springer-Verlag, New York, 1993.

\bibitem{CMP}
R.~Casals, E.~Murphy, and F.~Presas.
\newblock Geometric criteria for overtwistedness.
\newblock arXiv:1503.06221, 2015.

\bibitem{CDRGG15}
B.~Chantraine, G.~Dimitroglou~Rizell, P.~Ghiggini, and R.~Golovko.
\newblock Floer homology and {L}agrangian concordance.
\newblock In {\em Proceedings of the {G}\"okova {G}eometry-{T}opology
  {C}onference 2014}, pages 76--113. G\"okova Geometry/Topology Conference
  (GGT), G\"okova, 2015.

\bibitem{Chen-Lee}
H.~Chen and D.~Lee.
\newblock A remark on equivariant {P}oincar\'e duality.
\newblock {\em J. Pure Appl. Algebra}, 74(3):239--245, 1991.

\bibitem{Ci02}
K.~Cieliebak.
\newblock Handle attaching in symplectic homology and the chord conjecture.
\newblock {\em J. Eur. Math. Soc. (JEMS)}, 4(2):115--142, 2002.

\bibitem{Cieliebak-Eliashberg-book}
K.~Cieliebak and Y.~Eliashberg.
\newblock {\em From {S}tein to {W}einstein and back}, volume~59 of {\em
  American Mathematical Society Colloquium Publications}.
\newblock American Mathematical Society, Providence, RI, 2012.
\newblock Symplectic geometry of affine complex manifolds.

\bibitem{CFH95}
K.~Cieliebak, A.~Floer, and H.~Hofer.
\newblock Symplectic homology. {II}. {A} general construction.
\newblock {\em Math. Z.}, 218(1):103--122, 1995.

\bibitem{Cieliebak-Frauenfelder}
K.~Cieliebak and U.~Frauenfelder.
\newblock A {F}loer homology for exact contact embeddings.
\newblock {\em Pacific J. Math.}, 239(2):251--316, 2009.

\bibitem{CF}
K.~Cieliebak and U.~Frauenfelder.
\newblock Morse homology on noncompact manifolds.
\newblock {\em J. Korean Math. Soc.}, 48(4):749--774, 2011.

\bibitem{Cieliebak-Frauenfelder-Oancea}
K.~Cieliebak, U.~Frauenfelder, and A.~Oancea.
\newblock Rabinowitz {F}loer homology and symplectic homology.
\newblock {\em Ann. Sci. \'Ec. Norm. Sup\'er. (4)}, 43(6):957--1015, 2010.

\bibitem{CM}
K.~Cieliebak and K.~Mohnke.
\newblock Compactness for punctured holomorphic curves.
\newblock {\em J. Symplectic Geom.}, 3(4):589--654, 2005.

\bibitem{Rizell-lifting}
G.~Dimitroglou~Rizell.
\newblock Lifting pseudo-holomorphic polygons to the symplectisation of
  {$P\times\Bbb{R}$} and applications.
\newblock {\em Quantum Topol.}, 7(1):29--105, 2016.

\bibitem{DRG14}
G.~Dimitroglou~Rizell and R.~Golovko.
\newblock On homological rigidity and flexibility of exact {L}agrangian
  endocobordisms.
\newblock {\em Internat. J. Math.}, 25(10):1450098, 24, 2014.

\bibitem{Dubuque}
B.~Dubuque.
\newblock {\em
  http://math.stackexchange.com/questions/35779/dual-space-question}.

\bibitem{EMacL53b}
S.~Eilenberg and S.~Mac~Lane.
\newblock On the groups of {$H(\Pi,n)$}. {I}.
\newblock {\em Ann. of Math. (2)}, 58:55--106, 1953.

\bibitem{ES52}
S.~Eilenberg and N.~Steenrod.
\newblock {\em Foundations of algebraic topology}.
\newblock Princeton University Press, Princeton, New Jersey, 1952.

\bibitem{Ekholm}
T.~Ekholm.
\newblock Rational {SFT}, linearized {L}egendrian contact homology, and
  {L}agrangian {F}loer cohomology.
\newblock In {\em Perspectives in analysis, geometry, and topology}, volume 296
  of {\em Progr. Math.}, pages 109--145. Birkh\"auser/Springer, New York, 2012.

\bibitem{Ekholm-Etnyre-Sullivan}
T.~Ekholm, J.~Etnyre, and M.~Sullivan.
\newblock The contact homology of {L}egendrian submanifolds in
  {$\mathbb{R}^{2n+1}$}.
\newblock {\em J. Differential Geom.}, 71(2):177--305, 2005.

\bibitem{Ekholm-Etnyre-Sabloff}
T.~Ekholm, J.~B. Etnyre, and J.~M. Sabloff.
\newblock A duality exact sequence for {L}egendrian contact homology.
\newblock {\em Duke Math. J.}, 150(1):1--75, 2009.

\bibitem{EHK}
T.~Ekholm, K.~Honda, and T.~K\'alm\'an.
\newblock Legendrian knots and exact {L}agrangian cobordisms.
\newblock {\em J. Eur. Math. Soc. (JEMS)}, 18(11):2627--2689, 2016.

\bibitem{Ekholm-Oancea}
T.~Ekholm and A.~Oancea.
\newblock Symplectic and contact differential graded algebras.
\newblock {\em Geom. Topol.}, 21(4):2161--2230, 2017.

\bibitem{EGH}
Y.~Eliashberg, A.~Givental, and H.~Hofer.
\newblock Introduction to symplectic field theory.
\newblock {\em Geom. Funct. Anal.}, (Special Volume, Part II):560--673, 2000.
\newblock GAFA 2000 (Tel Aviv, 1999).

\bibitem{Espina}
J.~R. Espina.
\newblock {\em The mean {E}uler characteristic of contact structures}.
\newblock ProQuest LLC, Ann Arbor, MI, 2011.
\newblock Thesis (Ph.D.)--University of California, Santa Cruz.

\bibitem{FH94}
A.~Floer and H.~Hofer.
\newblock Symplectic homology. {I}. {O}pen sets in {${\bf C}\sp n$}.
\newblock {\em Math. Z.}, 215(1):37--88, 1994.

\bibitem{FSS}
K.~Fukaya, P.~Seidel, and I.~Smith.
\newblock The symplectic geometry of cotangent bundles from a categorical
  viewpoint.
\newblock In {\em Homological mirror symmetry}, volume 757 of {\em Lecture
  Notes in Phys.}, pages 1--26. Springer, Berlin, 2009.

\bibitem{Golovko13}
R.~Golovko.
\newblock A note on {L}agrangian cobordisms between {L}egendrian submanifolds
  of {$\mathbb{R}^{2n+1}$}.
\newblock {\em Pacific J. Math.}, 261(1):101--116, 2013.

\bibitem{Goodwillie}
T.~G. Goodwillie.
\newblock Cyclic homology, derivations, and the free loopspace.
\newblock {\em Topology}, 24(2):187--215, 1985.

\bibitem{Goresky-Hingston}
M.~Goresky and N.~Hingston.
\newblock Loop products and closed geodesics.
\newblock {\em Duke Math. J.}, 150(1):117--209, 2009.

\bibitem{Gutt15}
J.~Gutt.
\newblock The positive equivariant symplectic homology as an invariant for some
  contact manifolds.
\newblock arXiv:1503.01443, 2015. {T}o appear in {\it {J}. {S}ymplectic
  {G}eom.}

\bibitem{Gutt-Kang}
J.~Gutt and J.~Kang.
\newblock On the minimal number of periodic orbits on some hypersurfaces in
  {$\Bbb R^{2n}$}.
\newblock {\em Ann. Inst. Fourier (Grenoble)}, 66(6):2485--2505, 2016.

\bibitem{Hess12}
K.~Hess.
\newblock Multiplicative structure in equivariant cohomology.
\newblock {\em J. Pure Appl. Algebra}, 216(7):1680--1699, 2012.

\bibitem{Hingston-Delta}
N.~Hingston.
\newblock Private communication.
\newblock 2012.

\bibitem{Hood-Jones}
C.~E. Hood and J.~D.~S. Jones.
\newblock Some algebraic properties of cyclic homology groups.
\newblock {\em $K$-Theory}, 1(4):361--384, 1987.

\bibitem{Jones}
J.~D.~S. Jones.
\newblock Cyclic homology and equivariant homology.
\newblock {\em Invent. Math.}, 87(2):403--423, 1987.

\bibitem{Jones-Petrack}
J.~D.~S. Jones and S.~B. Petrack.
\newblock The fixed point theorem in equivariant cohomology.
\newblock {\em Trans. Amer. Math. Soc.}, 322(1):35--49, 1990.

\bibitem{Kang14}
J.~Kang.
\newblock Symplectic homology of displaceable {L}iouville domains and leafwise
  intersection points.
\newblock {\em Geom. Dedicata}, 170:135--142, 2014.

\bibitem{KS}
M.~Kashiwara and P.~Schapira.
\newblock {\em Sheaves on manifolds}, volume 292 of {\em Grundlehren der
  Mathematischen Wissenschaften [Fundamental Principles of Mathematical
  Sciences]}.
\newblock Springer-Verlag, Berlin, 1990.
\newblock With a chapter in French by Christian Houzel.

\bibitem{Kwon-vanKoert}
M.~Kwon and O.~van Koert.
\newblock Brieskorn manifolds in contact topology.
\newblock {\em Bull. Lond. Math. Soc.}, 48(2):173--241, 2016.

\bibitem{Massot-Niederkruger-Wendl}
P.~Massot, K.~Niederkr{\"u}ger, and C.~Wendl.
\newblock Weak and strong fillability of higher dimensional contact manifolds.
\newblock {\em Invent. Math.}, 192(2):287--373, 2013.

\bibitem{May_additivity}
J.~P. May.
\newblock The additivity of traces in triangulated categories.
\newblock {\em Adv. Math.}, 163(1):34--73, 2001.

\bibitem{Maydanskiy}
M.~Maydanskiy.
\newblock {E}xotic symplectic manifolds from {L}efschetz fibrations.
\newblock arXiv:0906.2224, 2009. {T}o appear in {\it {J}. {S}ymplectic {G}eom.}

\bibitem{Merry}
W.~J. Merry.
\newblock Lagrangian {R}abinowitz {F}loer homology and twisted cotangent
  bundles.
\newblock {\em Geom. Dedicata}, 171:345--386, 2014.

\bibitem{Murphy-Siegel}
E.~Murphy and K.~Siegel.
\newblock Subflexible symplectic manifolds.
\newblock arXiv:1510.01867, 2015.

\bibitem{Polterovich-Shelukhin}
L.~Polterovich and E.~Shelukhin.
\newblock Autonomous {H}amiltonian flows, {H}ofer's geometry and persistence
  modules.
\newblock {\em Selecta Math. (N.S.)}, 22(1):227--296, 2016.

\bibitem{Ritter09}
A.~F. Ritter.
\newblock Novikov-symplectic cohomology and exact {L}agrangian embeddings.
\newblock {\em Geom. Topol.}, 13(2):943--978, 2009.

\bibitem{Ritter}
A.~F. Ritter.
\newblock Topological quantum field theory structure on symplectic cohomology.
\newblock {\em J. Topol.}, 6(2):391--489, 2013.

\bibitem{SZ92}
D.~Salamon and E.~Zehnder.
\newblock Morse theory for periodic solutions of {H}amiltonian systems and the
  {M}aslov index.
\newblock {\em Comm. Pure Appl. Math.}, 45(10):1303--1360, 1992.

\bibitem{Seidel07}
P.~Seidel.
\newblock A biased view of symplectic cohomology.
\newblock In {\em Current developments in mathematics, 2006}, pages 211--253.
  Int. Press, Somerville, MA, 2008.

\bibitem{Uebele}
P.~Uebele.
\newblock Private communication, 2015.

\bibitem{Uebele-products}
P.~Uebele.
\newblock Periodic {R}eeb flows and products in symplectic homology.
\newblock arXiv:1512.06208, 2015.

\bibitem{Viterbo99}
C.~Viterbo.
\newblock Functors and computations in {F}loer homology with applications. {I}.
\newblock {\em Geom. Funct. Anal.}, 9(5):985--1033, 1999.

\bibitem{Weibel}
C.~A. Weibel.
\newblock {\em An introduction to homological algebra}, volume~38 of {\em
  Cambridge Studies in Advanced Mathematics}.
\newblock Cambridge University Press, Cambridge, 1994.

\bibitem{MLYau}
M.-L. Yau.
\newblock Cylindrical contact homology of subcritical {S}tein-fillable contact
  manifolds.
\newblock {\em Geom. Topol.}, 8:1243--1280, 2004.

\bibitem{Zhao14}
J.~Zhao.
\newblock Periodic symplectic cohomologies.
\newblock arXiv:1405.2084, 2014.

\end{thebibliography}

\def\cprime{$'$} \def\cprime{$'$}

\end{document}